\newtheorem{thm}{Theorem}[section]
\newtheorem{prop}[thm]{Proposition}
\newtheorem{lemma}[thm]{Lemma}
\newtheorem{cor}[thm]{Corollary}
\newtheorem{claim}[thm]{Claim}
\newtheorem{fact}[thm]{Fact}
\theoremstyle{definition}
\newtheorem{defn}[thm]{Definition}
\theoremstyle{remark}
\newtheorem{rmk}[thm]{Remark}
\newtheorem{example}[thm]{Example}
\newtheorem{warn}[thm]{Warning}
\newtheorem{conv}[thm]{Convention}
\newtheorem{noti}[thm]{Notation}
\newtheorem{simplification}[thm]{Simplification}
\DeclareMathAlphabet{\mathpzc}{OT1}{pzc}{m}{it}
\newcommand{\hh}{\mathpzc{h}}
\newcommand{\C}{\mathbb{C}}
\newcommand{\R}{\mathbb{R}}
\newcommand{\Z}{\mathbb{Z}}
\newcommand{\Q}{\mathbb{Q}}
\newcommand{\N}{\mathbb{N}}
\renewcommand{\P}{\mathbb{P}}
\newcommand{\bdry}{\partial}
\newcommand{\s}{\vskip.1in}
\newcommand{\n}{\noindent}
\newcommand{\F}{\mathbb{F}}
\newcommand{\op}{\operatorname}
\newcommand{\be}{\begin{enumerate}}
\newcommand{\ee}{\end{enumerate}}
\newcommand{\mb}{\mathcal{M}^{\mbox{\tiny MB}}}
\newcommand{\me}{\mathcal{M}_\epsilon}
\newcommand{\aaa}{\mathfrak{a}}
\numberwithin{equation}{subsection}
\numberwithin{thm}{subsection}
\def\foo#1\endgraf\unskip#2\foo{\def\row@to@buffer{#1\endgraf\unskip\unskip#2}}
\begin{document}

\title[Embedded contact homology and open book decompositions]
{Embedded contact homology and open book decompositions}

\author{Vincent Colin}
\address{Universit\'e de Nantes, 44322 Nantes, France}
\email{Vincent.Colin@univ-nantes.fr}

\author{Paolo Ghiggini}
\address{Universit\'e de Nantes, 44322 Nantes, France}
\email{paolo.ghiggini@univ-nantes.fr}
\urladdr{http://www.math.sciences.univ-nantes.fr/\char126 Ghiggini}

\author{Ko Honda}
\address{University of California, Los Angeles, Los Angeles, CA 90095}
\email{honda@math.ucla.edu} \urladdr{http://www.math.ucla.edu/~honda}

\date{This version: August 18, 2023.}

\keywords{contact structure, Reeb dynamics, embedded contact
homology, open book decompositions}

\subjclass[2000]{Primary 57M50; Secondary 53D10,53D40.}

\thanks{VC supported by the Institut Universitaire de France,
ANR Symplexe, ANR Floer Power and ERC G\'eodycon. PG supported by ANR Floer Power and ERC G\'eodycon.
KH supported by NSF Grants DMS-0805352 and DMS-1105432.}

\begin{abstract}
Given a closed oriented contact $3$-manifold $M$, we prove an equivalence between the embedded contact homology of $M$ and a version of embedded contact homology
``relative to the boundary'', defined on the complement of a tubular neighborhood of a null-homologous knot. This paper can be viewed as the first of a series of papers devoted to proving the  isomorphism between Heegaard Floer homology and embedded contact homology.

The appendix, written jointly with Yuan Yao, gives a complete proof of Morse-Bott gluing for one-level cascades in embedded contact homology.
\end{abstract}

\maketitle

\setcounter{tocdepth}{1}
\tableofcontents

\section{Introduction}
\addtocounter{subsection}{1}

Let $M$ be a closed, oriented, connected $3$-manifold and $N\subset M$ the complement of a tubular neighborhood of a  null-homologous knot.  The goal of this paper is  to associate a specific class of contact forms $\alpha$ to $N$, to introduce relative embedded contact homology groups $ECH(N,\bdry N,\alpha)$ and $\widehat{ECH}(N,\bdry N,\alpha)$, and to prove their  isomorphism with the embedded contact homology groups $ECH(M)$ and $\widehat{ECH}(M)$.

The embedded contact homology group $ECH(M)$ of a closed $3$-manifold $M$, due to Hutchings~\cite{Hu}  partially in collaboration with Taubes~\cite{HT1,HT2}, is defined using a contact form $\alpha$ on $M$ and an adapted almost complex structure $J$ on the symplectization $\R\times M$. The variant $\widehat{ECH}(M)$, called {\em ECH hat}, is defined as the mapping cone of a $U$-map (see Section~\ref{subsection: defn of ECH hat}).  There is currently no direct proof of the fact that these groups are invariants of $M$; the only known proof, due to Taubes~\cite{T1,T2}, is a consequence of the  isomorphism between Seiberg-Witten Floer cohomology and embedded contact homology,  combined with the invariance of Seiberg-Witten Floer cohomology established by Kronheimer and Mrowka~\cite{KrM}.

Embedded contact homology groups can be defined over the integers following \cite{BM} or \cite[Section~9]{HT2}. All results in this article hold over the integers as explained in Proposition~\ref{continuation maps are easy} and Remark~\ref{rmk: integer coefficients}, but we will write detailed proofs only over the field $\F = \Z/2\Z$
for simplicity.
Given a compact $3$-manifold $N$ with $\partial N \simeq T^2$, let $\alpha$ be a contact form on $N$ which is nondegenerate on $int(N)$ and negative Morse-Bott on $\partial N$ (see Definition~\ref{defn: positive negative torus}). In particular, the Reeb orbits on $\partial N$ act as sinks for $J$-holomorphic curves in $\R \times N$, i.e., no  non-trivial $J$-holomorphic curve in $\R \times N$ can have a positive end  at an orbit in $\partial N$.   Then there exist relative embedded contact homology groups $ECH(N,\bdry N,\alpha)$ and $\widehat{ECH}(N,\bdry N,\alpha)$, whose definitions will be given in Section~\ref{section: ECH for manifolds with torus boundary}.
Moreover there is a chain map $U$ on the complex
defining $ECH(N,\bdry N, \alpha)$, and the homology of the cone of $U$ is isomorphic to $\widehat{ECH}(N,\bdry N, \alpha)$.

The embedded contact homology group of a contact manifold $(M,\xi )$ has a natural decomposition as a direct sum of groups $ECH(M, \xi, A)$ indexed by homology classes\footnote{Singular homology groups should always be understood over the integers if no coefficient group is explicitly indicated.} $A \in H_1 (M)$. This decomposition depends on the contact structure $\xi$, although very weakly. For this reason we always specify $\xi$ together with the homology class $A$.

Similarly, the groups $ECH(N,\bdry N,\alpha)$ and $\widehat{ECH}(N,\bdry N,\alpha)$ decompose as direct sums of groups  $ECH(N,\bdry N, \alpha, A)$ and  $\widehat{ECH}(N,\bdry N, \alpha, A)$ indexed by relative homology classes $A \in H_1(N, \partial N)$. The maps $U$ in both $ECH(M, \xi)$
and $ECH(N,\bdry N,\alpha)$ preserve the splitting according to homology classes.
Taking into account the fact that $K$ is null-homologous, excision and the relative homology long exact sequence give an isomorphism $\varpi : H_1 (N,\partial N) \stackrel{\simeq} \longrightarrow  H_1(M)$, and the
equivalence between ECH and relative ECH is compatible with the corresponding decompositions.

The main result of this paper is the following:
\begin{thm}\label{thm: equivalence of ECHs}
Let $N \subset M$ be the complement of a tubular neighborhood  $int(V)$ of a null-homologous knot $K$, where $V \simeq K\times D^2$,  $\xi$ a contact form on $M$ which is transverse to the foliation $K\times \{ *\}$ on $V$ and $\alpha$ a contact form on $N$ for the contact structure $\xi|_N$. If the Reeb vector field $R_\alpha$ of $\alpha$ is nondegenerate on $int(N)$, negative Morse-Bott on $\bdry N$,  foliates $\partial N$ by meridians and  all closed Reeb orbits in $N$ have nonnegative linking number with $K$, then  for all $A \in H_1(N, \partial N; \Z)$,
\begin{enumerate}
\item $ECH(N,\bdry N,\alpha,  A) \simeq ECH(M, \xi, \varpi(A))$ and
\item  $\widehat{ECH}(N,\bdry N,\alpha,  A) \simeq \widehat{ECH}(M, \xi, \varpi(A)).$
\end{enumerate}
 Moreover, the first isomorphism is compatible with the $U$-maps on both sides.
\end{thm}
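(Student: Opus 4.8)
The plan is to extend $\alpha$ to a carefully chosen contact form $\alpha_M$ on the closed manifold $M$, reduce the statement to comparing $ECH(M,\alpha_M)$ with the relative theory on $N$, and carry out that comparison by a local analysis near the binding $K$ and the torus $\partial N$; the hypothesis that every closed Reeb orbit in $N$ links $K$ non-negatively supplies the filtration that organizes everything. Concretely, since $R_\alpha$ foliates $\partial N$ by meridians and is negative Morse-Bott there, one extends $\alpha$ across $V\simeq K\times D^2$ to a contact form $\alpha_M$ on $M$ whose Reeb vector field has the core $K$ as an elliptic orbit, is transverse to $K\times\{*\}$, and whose only orbits in $int(V)$ other than $K$ lie on concentric Morse-Bott tori foliated by curves of positive linking with $K$, with only finitely many of these below each action; the torus $\partial N=\partial V$ then becomes an interior Morse-Bott torus of $\alpha_M$, a sink on the $N$-side and a source on the $V$-side. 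Because $\ker\alpha_M$ agrees with $\xi$ on $N$ and both are transverse to $K\times\{*\}$ on $V$, they are homotopic as plane fields on $M$ (indeed isotopic as contact structures), so Taubes' isomorphism with Seiberg-Witten Floer cohomology \cite{T1,T2} gives $ECH(M,\alpha_M,B)\simeq ECH(M,\xi,B)$ and $\widehat{ECH}(M,\alpha_M,B)\simeq\widehat{ECH}(M,\xi,B)$ for each $B\in H_1(M)$, compatibly with the $U$-maps. It thus suffices to prove $ECH(N,\partial N,\alpha,A)\simeq ECH(M,\alpha_M,\varpi(A))$ compatibly with $U$; the hat statement will then follow because $\widehat{ECH}$ is the homology of the cone of $U$ on both sides.

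\textbf{The linking filtration and the algebraic reduction.} Perturb the Morse-Bott tori of $\alpha_M$ (the one on $\partial N$ and those in $int(V)$), each producing an elliptic--hyperbolic pair; write $e,h$ for the pair on $\partial N$. Since $\R\times K$ is a $J$-holomorphic submanifold of $\R\times M$, positivity of intersections shows that the total linking number $\op{lk}(\,\cdot\,,K)$ of an ECH generator is non-increasing along every holomorphic curve counted by $\partial$ or by $U$; being bounded below by hypothesis, it filters the ECH chain complex of $M$, together with the symplectic action, and is preserved by $\partial$, $U$ and the continuation maps of Proposition~\ref{continuation maps are easy}. The generators that involve $K$, the perturbed orbits of $int(V)$, or $e$ or $h$ are to be eliminated: using the explicit holomorphic curves living near $K$ --- a curve asymptotic to $K$ coming from a page, together with cylinders between the perturbed orbits on consecutive Morse-Bott tori, reaching $K$ at one extreme and $e,h$ at the other --- one shows these generators cancel in pairs, by a Gaussian-elimination argument carried out level by level in the action filtration and ordered by $\op{lk}(\,\cdot\,,K)$ so that it terminates. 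What survives is the subquotient generated by the orbit sets contained in $int(N)$; one identifies its induced differential (the ``naive'' count of curves in $\R\times N$ plus the corrections produced by the cancellation) with the differential defining $ECH(N,\partial N,\alpha)$ in Section~\ref{section: ECH for manifolds with torus boundary}, the corrections corresponding to curves near $K$ that, when unbroken, are honest curves in $\R\times N$ or cancel in pairs. Choosing the base point defining $U$ inside $int(N)$, the curves through it also stay in $\R\times N$, so the whole reduction commutes with $U$.

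\textbf{The holomorphic curve input, and the main obstacle.} Making the previous paragraph rigorous requires (i) a no-escape lemma: the holomorphic curves in $\R\times M$ contributing to $\partial$ and $U$ between orbit sets in $int(N)$, other than the explicit model curves near $K$, are contained in $\R\times N$ and hence already counted in the relative theory; and (ii) a precise description of the curves near $K$ and near $\partial N$. For (i) one combines the linking filtration with intersection positivity against $\R\times P$ for $P$ a page, and with the analysis of curves near the interior Morse-Bott torus $\partial V$ (a source on the $V$-side); for (ii), and for the matching of the relative differential in the previous step, one needs the Morse-Bott gluing theorem for one-level cascades proved in the appendix, identifying cascades with honest curves and controlling the Fredholm and ECH indices --- including for multiply covered curves, where transversality fails and obstruction-bundle gluing is required. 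This package --- the no-escape statement together with the Morse-Bott/obstruction-bundle gluing near $K$ and $\partial N$ --- is the serious obstacle; by contrast the solid-torus classification and the Taubes input of Step~1 and the homological algebra of the reduction are essentially formal.
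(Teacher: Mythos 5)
Your proposal captures several of the paper's real ingredients (linking-number filtration, blocking/trapping no-escape lemmas, Morse--Bott gluing), but it has two genuine gaps. The first is at the interface between $N$ and $V$: you posit a \emph{single} Morse--Bott torus $\partial N=\partial V$ that is ``a sink on the $N$-side and a source on the $V$-side.'' This is incompatible with the Trapping Lemma (Lemma~\ref{lemma: trapping}): the sign of a Morse--Bott torus is intrinsic to the linearized return map and does not depend on which side one approaches from, so for a negative torus \emph{every} one-sided end from \emph{either} side is a negative end — a sink from both sides. The paper resolves this by inserting an entire collar $T^2\times[1,2]$ (the ``no man's land'') between a \emph{negative} torus $\partial N=T^2\times\{1\}$ and a \emph{positive} torus $\partial V=T^2\times\{2\}$, with contact forms $\alpha_{\delta_i}$ chosen so all interior Reeb orbits of the collar have action above the truncation level $L_i$. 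Two tori of opposite sign, connected by explicit finite-energy cylinders across the neck (Lemma~\ref{lemma: foliation in T^2 x [1,2]}), are exactly what is needed to split the lowest-filtration differential into the form of Equation~\eqref{equation for bdry 0, second version}; a single torus cannot play both roles.

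The second gap is the algebraic reduction. You propose eliminating the $K$- and $V$-orbits and $e,h$ ``by a Gaussian-elimination argument\ldots so that it terminates,'' and dismiss the ``solid-torus classification'' as essentially formal. The paper does something structurally different: it does not cancel the $V$-generators at all. It computes $ECH^\flat(V,\alpha_V)\simeq\F[e']$ (Theorem~\ref{thm: ECH of  solid torus is Z}(4)) — a nontrivial result resting on Wendl's finite-energy foliation of $\R\times V$ (Propositions~\ref{prop: Wendl V}, \ref{foliation on V}) and a direct-limit/absolute-grading argument — and then uses this as the $V$-factor in a K\"unneth computation of the $E^1$ page of a spectral sequence associated to the linking filtration $\mathcal{F}$, refined by a secondary filtration $\mathcal{G}$. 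The isomorphism with $ECH(N,\partial N,\alpha)$ is established by an \emph{explicit} chain map $\sigma_*$ (defined via $\gamma\otimes\Gamma\mapsto\sum_i (e')^i\gamma\otimes(\partial'_N)^i\Gamma$, where the nilpotency of $\partial'_N$ is exactly what the nonnegative-linking hypothesis buys) and a comparison theorem for spectral sequences, not by cancellation. Your ``ordered so that it terminates'' is unjustified as stated, and more to the point you have no candidate for what survives: without the solid-torus input $\F[e']$ and the homological-algebra lemma (Lemma~\ref{montreal}) that converts it into $\ker f/g(\ker f)$, the reduction cannot be completed.
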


The prototypical situation to which Theorem~\ref{thm: equivalence of ECHs} applies is the case of an open book decomposition with connected binding. In this case $N$ is the mapping torus of a surface diffeomorphism $\hh : S \stackrel \simeq \to S$ and $V= M - int(N)$ is a tubular neighborhood of the binding. In other words, Theorem~\ref{thm: equivalence of ECHs} allows us to rewrite the embedded contact homology groups of $M$ in terms of the relative embedded contact homology groups on the complement of the binding. We remark here that Yau~\cite{Y} and Wendl~\cite{We,We2} have examined related issues in their work.

Theorem~\ref{thm: equivalence of ECHs}, applied to the open book case, is the first step in the proof of the equivalence of embedded contact homology and Heegaard Floer homology,  a Floer homology theory for three-manifolds defined by Ozsv\'ath  and Szab\'o~\cite{OSz1,OSz2}.  Once we express the embedded contact homology of $M$ purely in terms of $N$ using Theorem~\ref{thm: equivalence of ECHs}, it is easier to define chain maps to and from the hat version of Heegaard Floer homology. In fact, the Giroux correspondence~\cite{Gi2} --- the bijection between open book decompositions up to positive stabilization and isotopy classes of contact structures --- provides a bridge between the contact forms used in the definition of ECH and the Heegaard splittings used in the definition of Heegaard Floer homology. We remark that the proof of the equivalence between Heegaard Floer homology and ECH is independent of the hard part of the Giroux correspondence (i.e., the stabilization equivalence of two open book decompositions which support the same contact structure). The rest of the proof of the equivalence has been carried out in \cite{CGH2, CGH3, CGH4}; see \cite{CGH1} for an overview of the
strategy.

\begin{rmk}
 An alternate proof of the equivalence of Heegaard Floer and embedded contact homologies, passing through Seiberg-Witten Floer homology, has been given by Kutluhan, Lee and Taubes (see \cite{KLT1}--\cite{KLT5}).
\end{rmk}

In Section~\ref{section: sutured applications} we present some independent applications of the
techniques developed here to the embedded contact homology for sutured manifolds
defined in~\cite{CGHH}. More precisely, we prove that ECH of a sutured manifold is invariant of the contact form
and the almost complex structure (Theorem~\ref{invariance of sutured ECH}) and we finish
the proof of \cite[Theorem~1.6]{CGHH} by showing that $\widehat{ECH}(M)$, defined
as the homology of the cone of the $U$-map, is isomorphic to the sutured ECH of the
complement of a ball in $M$ (Theorem~\ref{hat and sutured}). Theorem~\ref{invariance of sutured ECH} has
been independently proved by Kutluhan and Sivek in~\cite{KS}.

\s\n {\em Organization of the paper.} Section~\ref{section: review of ECH} gives a brief review of ECH; in particular we define the groups $ECH(M)$ and $\widehat{ECH}(M)$.  We review some technicalities involving direct limits in Section~\ref{section: cobordism maps and direct limits} and some Morse-Bott theory in the context of ECH in Section~\ref{section: Morse-Bott theory}. In Section~\ref{section: topological constraints} we discuss topological constraints of $J$-holomorphic curves arising from the positivity of intersections in dimension four. In Section~\ref{section: contact forms} we construct contact forms on $D^2 \times S^1$ and $T^2 \times [1,2]$ which are used later. Section~\ref{section: ECH for manifolds with torus boundary} is devoted to the definitions of certain ECH groups for compact manifolds with torus boundary and in particular the variants $ECH(N,\bdry N,\alpha)$ and $\widehat{ECH}(N,\bdry N,\alpha)$ which appear in Theorem~\ref{thm: equivalence of ECHs}. In Section~\ref{section: ECH of solid torus} we calculate some ECH
groups of solid tori which are used in the proof of Theorem~\ref{thm: equivalence of ECHs}. Section~\ref{section: proof of theorem equivalence of ECHs} then completes the proof of Theorem~\ref{thm: equivalence of ECHs}.  Finally, Section~\ref{section: sutured applications} relates some of the versions of ECH defined in Section~\ref{section: ECH for manifolds with torus boundary} to some sutured ECH groups defined in \cite{CGHH}.

\s\n {\em Acknowledgements.} We are indebted to Michael Hutchings
for many helpful conversations and for our previous collaboration
which was a catalyst for the present work. We also thank Dusa
McDuff, Ivan Smith, Jean-Yves Welschinger, and Chris Wendl for illuminating
exchanges. Finally, we thank Michael Hutchings again for pointing out a mistake in
the first version of this article, the anonymous referees for extensive lists of corrections, and Roman Krutowski for a careful reading of the appendix.

 Part of this work was done while KH and PG visited MSRI
during the academic year 2009--2010.  We are extremely grateful to
MSRI and the organizers of the ``Symplectic and Contact Geometry and
Topology'' and the ``Homology Theories of Knots and Links'' programs
for their hospitality; this work probably would never have seen the
light of day without the large amount of free time which was made
possible by the visits to MSRI.

\section{Review of embedded contact homology}
\label{section: review of ECH}

{\em  In this paper all manifolds will be oriented and connected, unless stated otherwise. }

In this section we briefly review the basic definitions of embedded contact homology (from now on abbreviated ECH). For more details the reader is referred to \cite{Hu,Hu2} or to \cite{Hu3}. To avoid orienting the moduli spaces, we will work over $\F=\Z/2\Z$.

\subsection{Generators of the ECH chain complex}

Let $M$ be a closed, oriented and connected $3$-manifold with a contact form $\alpha$.  We will denote by $\xi=\ker\alpha$ the contact structure with contact form $\alpha$. The Reeb vector field $R=R_\alpha$ is {\em nondegenerate} if no Reeb orbit\footnote{In this paper we interchangeably use: ``Reeb orbit", ``closed orbit'', and ``closed Reeb orbit''.  A Reeb orbit which is not necessarily closed will be called a ``Reeb trajectory".} has $1$ as eigenvalue of its linearized first return map. This is a generic condition which can achieved by a generic
$C^{\infty}$-small perturbation of the contact form; see for example \cite[Lemma 7.1]{CH2}. For the rest of the section we will assume that $\alpha$ is nondegenerate.  The linearization of the first return map along a Reeb orbit is a symplectic transformation of the symplectic plane $(\xi ,d\alpha )$. This implies that its eigenvalues are $\{ \lambda, \lambda^{-1} \}$, where $\lambda$ is either real or in the unit circle. Then a Reeb orbit is:
\begin{itemize}
\item {\em hyperbolic} if the eigenvalues of its linearized first return map are real; or
\item {\em elliptic} if they lie on the unit circle.
\end{itemize}
This conditions are mutually exclusive because every orbit is assumed to be nondegenerate.

Let $\mathcal{P}$ be the set of simple orbits of the Reeb vector field $R_\alpha$. The ECH chain complex\footnote{The ECH differential depends on the choice of an adapted almost complex structure $J$ (cf.\ Section~\ref{subsection: moduli spaces}), but the generators only depend on $\alpha$.  Hence we suppress $J$ from the notation for the moment.} $ECC(M,\alpha)$, as a vector space, is generated over $\F$ by finite  sets $\gamma=\{(\gamma_i,m_i)\}$, called {\em orbit sets}, where:
\begin{itemize}
\item $\gamma_i\in \mathcal{P}$ and $\gamma_i\not=\gamma_j$ for $i\not=j$;
\item $m_i$ is a positive integer; and
\item if $\gamma_i$ is a hyperbolic orbit, then $m_i=1$.
\end{itemize}
We will say that $ECC(M,\alpha)$ is {\em constructed from
$\mathcal{P}$}. An orbit set $\gamma$ will also be written
multiplicatively as $\prod \gamma_i^{m_i}$, with the convention that
$\gamma_i^2 = 0$ whenever $\gamma_i$ is hyperbolic. The empty orbit set
$\varnothing$ will be written multiplicatively as $1$.

The homology class of an orbit set $\gamma$ is
$$[\gamma]=\sum_i m_i[\gamma_i]\in H_1(M).$$
If we want to specify the direct summand generated by orbit sets of class $A\in H_1(M)$, then we write $ECC(M,\alpha,A)$.

The {\em action} $\mathcal{A}_\alpha(\gamma_i)$ of an orbit $\gamma_i$ is given by $\int_{\gamma_i}\alpha$, and the action of an orbit set $\gamma$ is given by $$\mathcal{A}_\alpha(\gamma) = \sum_i m_i \mathcal{A}_\alpha(\gamma_i).$$

\subsection{Moduli spaces} \label{subsection: moduli spaces}

We choose an almost complex structure $J$ on $\R\times M$, with $\R$-coordinate $s$, which is {\em adapted to the  symplectization of $\alpha$} (or {\em adapted to $\alpha$}), i.e.,
\begin{enumerate}
\item[(i)] $J$ is $s$-invariant;
\item[(ii)] $J$ takes $\xi$ to itself on each $\{s\}\times Y$;
\item[(iii)] $J$ maps $\bdry_s$ to $R_\alpha$;
\item[(iv)] $J|_\xi$ is $d\alpha$- compatible, i.e., $d\alpha( \cdot ,J \cdot)$ defines an Euclidean metric on $\xi$.
\end{enumerate}

Let $\gamma=\{(\gamma_i,m_i)\}$ and $\gamma'=\{(\gamma'_i,m'_i)\}$
be orbit sets with $[\gamma]=[\gamma']\in H_1(M)$.
The set of holomorphic maps
$$u:(F,j)\to (\R\times M,J),$$
modulo holomorphic reparametrizations, which satisfy:
\begin{enumerate}
\item $(F,j)$ is a closed Riemann surface with a finite number of punctures removed;
\item the neighborhoods of the punctures are mapped asymptotically to cylinders over Reeb orbits;
\item  at the positive end of $\R\times M$, $u$ is asymptotic to $\R\times\gamma_i$ with total multiplicity $m_i$  for each pair $(\gamma_i,m_i)$ (more precisely, if we list the positive ends of $u$ that are asymptotic to some multiple cover of $\R\times \gamma_i$ and the covering degrees are $m_{i1},\dots, m_{ij_i}$, then $m_i=m_{i1}+\dots +m_{ij_i}$); and
\item  at the negative end of $\R\times M$, $u$ is asymptotic to $\R\times\gamma'_i$ with total multiplicity $m'_i$  for each pair $(\gamma'_i,m_i')$;
\end{enumerate}
will be denoted by $\mathcal{M}_J(\gamma,\gamma')$.  We often refer
to an element $u$ of $\mathcal{M}_J(\gamma,\gamma')$ as a {\em $J$-holomorphic map (or curve) from $\gamma$ to $\gamma'$}. We stress the fact that,
according to our definition, the genus, the number of connected components, and the
number of punctures of $F$ are not fixed a priori.  If $*$ is a property of $J$-holomorphic curves, we will denote by $\mathcal{M}_J^{*}(\gamma,\gamma')$ the subset of $\mathcal{M}_J(\gamma,\gamma')$ satisfying $*$.  We can similarly define the ``pointed'' moduli space $\mathcal{M}_J(\gamma,\gamma';pt)$ as the set of holomorphic maps
$$u:(F,j,p)\to (\R\times M,J),$$
modulo holomorphic reparametrizations, where $p\in F$.

\begin{defn} \label{defn: regular}
We say that $J$ is {\em regular} if, for all orbit sets
$\gamma,\gamma'$ and $u\in \mathcal{M}_J(\gamma,\gamma')$ which have
no multiply-covered components, $\mathcal{M}_J(\gamma,\gamma')$ is
transversely cut out near $u$ (i.e., the linearized $\overline\bdry$-operator $D_u$ at $u$ from \cite[Proposition 2.10]{Dr} is surjective).
\end{defn}

Regular  adapted almost complex structures form the complement of a first category set (and
therefore are dense)  in the space of smooth adapted almost complex structures with respect to the $C^\infty$ topology by a result of Dragnev~\cite{Dr}. If  no component of $u$ is multiply-covered and  all components of $u$ are
 transversely cut out, then in a neighborhood of $u$ the moduli space
$\mathcal{M}_J(\gamma,\gamma')$ has the structure of a finite-dimensional manifold
of dimension $\op{ind}(u)$, where the Fredholm index $\op{ind}(u)$ is  the formal dimension of the moduli spaces
computed as in the next paragraph; see \cite[Corollary 1]{Dr}.
Our convention throughout the paper will be that the Fredholm index takes into account the dimensions of the Deligne-Mumford moduli space and the automorphism group of the domain of the map.  In particular, $\op{ind}(u) = \op{ind}(D_u) - 3 \chi(F)$, where $\op{ind}(D_u)$ is the Fredholm index of the linearized Cauchy-Riemann operator at $u$, and  $\chi(F)$ is the Euler characteristic of the domain of $u$.

A $J$-holomorphic map $u : F \to \R \times M$ from $\gamma = \{ (\gamma_i, m_i) \}$ to $\gamma' = \{ (\gamma_i', m_i') \}$ determines partitions $\{ m_{ij} \}$ of $m_i$ and $\{m_{ij}'\}$ of $m_i'$ such that $u$ is positively asymptotic to $m_{ij}$-fold covers $\gamma_i^{m_{ij}}$ of the simple Reeb orbits $\gamma_i$ and negatively asymptotic to $m_{ij}'$-fold covers $(\gamma_i')^{m_{ij}'}$ of the simple Reeb orbits $\gamma_i'$. Let $\tau$ be a trivialization of $\xi$ along each orbit in the orbit sets $\gamma,\gamma'$,  let $\mu_\tau(\delta)$  denote the Conley-Zehnder index of a cover $\delta$ of an orbit in $\gamma$ or $\gamma'$ with respect to $\tau$, and  let $c_1(u^*\xi,\tau)$  denote the relative first Chern class of $u^*\xi$ with respect to $\tau$. Then the Fredholm index $\op{ind}(u)$ is given by the formula
\begin{equation} \label{Fredholm index}
\op{ind}(u) = - \chi(F) + 2 c_1(u^*\xi, \tau) + \sum_{ij} \mu_\tau(\gamma_i^{m_{ij}}) - \sum_{ij} \mu_\tau((\gamma_i')^{m_{ij}'}).
\end{equation}
(See the formula in \cite[Theorem 1.8]{Dr}.)

\subsection{The ECH index}
\label{subsection: ECH index}

The index which appears in the definition of ECH is not the Fredholm
index, but the ECH index, which is more topological in nature. In this subsection
we will review its definition.

Let $\gamma=\{(\gamma_i,m_i)\}$ and $\gamma'=\{(\gamma_i',m_i')\}$
be orbit sets. We denote by  $H_2(M,\gamma,\gamma')$ the relative homology classes of surfaces $Z\in H_2 (M,(\bigcup_i \gamma_i ) \cup (\bigcup_{i'} \gamma_{i'} ))$ such that $\partial Z = \sum m_i [\gamma_i] - \sum m_i' [\gamma_i']$, where
$$\partial : H_2 (M,(\bigcup_i \gamma_i ) \cup (\bigcup_{i'} \gamma_{i'} )) \to H_1((\bigcup_i \gamma_i ) \cup (\bigcup_{i'} \gamma_{i'} ))$$
is the connecting homomorphism of the relative homology exact sequence.  By abuse of notation, $Z$ will also denote an embedded surface with boundary which represents that
homology class. We pick a
trivialization $\tau$ of $\xi$ along each orbit in the orbit sets
$\gamma,\gamma'$ and define $c_1(\xi|_Z, \tau)$ as the first Chern class of
$\xi$ evaluated on $Z$, relative to the trivialization $\tau$ on
$\bdry Z$.

If $\gamma=\{(\gamma_i,m_i)\}_{i=1}^k$ is an orbit set, then we define
the {\em ``symmetric'' Conley-Zehnder index} (so called because of
its motivation from studying symplectomorphisms of a symmetric
product of a surface) as follows:
\begin{equation}
\widetilde\mu_\tau(\gamma)= \sum_{i=1}^k \sum_{j=1}^{m_i}
\mu_\tau(\gamma_i^j),
\end{equation}
where $\gamma_i^j$ is the orbit which multiply covers $\gamma_i$
with multiplicity $j$.

We define the {\em relative intersection pairing} $Q_\tau(Z)$ as
follows: Using the trivialization $\tau$, for each simple orbit
$\gamma_i$ of $\gamma$ or $\gamma'$, fix an identification of a
sufficiently small neighborhood $N(\gamma_i)$ of $\gamma_i$ with
$\gamma_i\times D^2$, where $D^2$ has polar coordinates
$(r,\theta)$. Let $\Sigma$ be an oriented embedded surface and
$f:\Sigma\to [-1,1]\times M$ a smooth map which satisfies the
following:
\begin{enumerate}
\item $f$ maps $\bdry \Sigma$ to $\{-1,1\}\times M$,
$f|_{int(\Sigma)}$ is an embedding, and $f$ is transverse to
$\{-1,1\}\times M$.
\item For all $\varepsilon>0$ sufficiently small, $f(\Sigma)\cap
(\{1-\varepsilon\}\times M)$ consists of $m_i$ disjoint circles of
type $\{r=\varepsilon,\theta=const\}$ in $N(\gamma_i)$ for all $i$
(and similarly for $f(\Sigma)\cap (\{-1+\varepsilon\}\times M)$).
\item The composition of $f$ with the projection $[-1,1]\times M\to
M$ is a representative of the class $Z\in H_2(M,\gamma,\gamma')$.
\end{enumerate}
We then choose two maps $f_1,f_2$ satisfying (1)--(3) above, such
that they are disjoint on $\{-1+\varepsilon,1-\varepsilon\}\times M$
and transverse on $[-1+\varepsilon,1 -\varepsilon]\times M$.  Then
$Q_\tau(Z)$ is the signed intersection number of $f_1$ and $f_2$
 in $[-1+\varepsilon,1-\varepsilon]\times M$.

We are now in a position to define the ECH index.

\begin{defn}[{\cite[Definition~1.5]{Hu}}]
The ECH index $I(\gamma,\gamma',Z)$ is given by:
\begin{equation}\label{eqn: ECH index formula}
I(\gamma,\gamma',Z)= c_1(\xi|_Z, \tau)+Q_\tau(Z) +\widetilde\mu_\tau(\gamma)-\widetilde\mu_\tau(\gamma').
\end{equation}
\end{defn}

 The ECH index depends only on the relative homology class $Z\in H_2 (M,\gamma ,\gamma' )$ and not on a particular surface representing it. Moreover
the ECH index is independent also of the choice of trivialization. If $Z' \in H_2 (M,\gamma ,\gamma' )$ is another relative homology class, then (\cite{Hu3})
$$I(\gamma,\gamma',Z') - I(\gamma,\gamma',Z)= \langle Z'-Z,c_1 (\xi )+PD (\sum_i m_i [\gamma_i] )\rangle,$$
where $\sum_i m_i [\gamma_i]$ is the total homology class of $\gamma$ in $H_1 (M)$ and $PD$ is the Poincar\'e duality map.

\begin{rmk}
A finite energy holomorphic map $u$ with asymptotics $\gamma$ and $\gamma'$ defines a relative homology class $Z \in H_2(M, \gamma, \gamma')$.  Hence we can write $I(u)=I(\gamma,\gamma',Z)$.
\end{rmk}

The ECH index and the Fredholm index satisfy the following index inequality, which is one of the basic tools of ECH.

\begin{thm}[{\cite[Theorem 4.15]{Hu2}}] \label{thm: index inequality}
If $u$ is simply-covered, then $\op{ind}(u) \le I(u)$.
\end{thm}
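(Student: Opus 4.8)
The plan is to reduce the global inequality to a sum of purely local contributions at the ends of $u$, comparing the Conley--Zehnder terms appearing in the Fredholm index formula \eqref{Fredholm index} with the ``symmetric'' Conley--Zehnder terms appearing in the ECH index formula \eqref{eqn: ECH index formula}, after matching the two different normalizations (the relative first Chern class $c_1(u^*\xi,\tau)$ versus $c_1(\xi|_Z,\tau)$, and the writhe/self-intersection terms). First I would fix a trivialization $\tau$ of $\xi$ along all the orbits involved. Since $u$ is simply-covered, its projection to $M$ is (generically, after a small perturbation of $J$ within the regular locus) an immersion away from finitely many points, so the adjunction-type formula relating $c_1(u^*\xi,\tau)$ to $c_1(\xi|_Z,\tau)$ and the relative self-intersection number $Q_\tau(Z)$ is available: writing $w_\tau(u)$ for the total writhe of the braids formed by the ends of $u$ around the Reeb orbits and $\delta(u)\ge 0$ for the algebraic count of singularities, one has the relation
\[
c_1(u^*\xi,\tau) = c_1(\xi|_Z,\tau) - Q_\tau(Z) + w_\tau(u) - 2\delta(u).
\]
Substituting this into \eqref{Fredholm index} and comparing with \eqref{eqn: ECH index formula} turns the desired inequality $\operatorname{ind}(u)\le I(u)$ into
\[
w_\tau(u) - 2\delta(u) + \sum_{ij}\mu_\tau(\gamma_i^{m_{ij}}) - \sum_{ij}\mu_\tau((\gamma_i')^{m_{ij}'}) \;\le\; \widetilde\mu_\tau(\gamma) - \widetilde\mu_\tau(\gamma'),
\]
where the left side is read off the Fredholm data (with the actual partitions realized by $u$) and the right side is the symmetric sum over the ``full'' partitions $1,2,\dots,m_i$.

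The heart of the argument is then the \emph{partition inequality}: for each simple orbit $\gamma_i$ at the positive end, with multiplicity $m_i$ and realized partition $\{m_{ij}\}$, the writhe contribution of those ends plus the Conley--Zehnder sum is bounded above by the ``extremal'' quantity that enters $\widetilde\mu_\tau$; symmetrically, at the negative end the inequality reverses. The key input here is the asymptotic analysis of Hofer--Wysocki--Zehnder / Siefring describing the winding numbers of the ends of a pseudoholomorphic curve relative to $\tau$, bounded in terms of the extremal winding numbers associated to the linearized return map; these winding bounds control the writhe of each braid, and a combinatorial optimization (exactly the one defining the ECH partition conditions for elliptic and hyperbolic orbits) shows that the allowed partitions maximize the left-hand side precisely at the value needed, with the $-2\delta(u)$ term only helping. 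I would organize this as: (a) recall the winding number estimates; (b) bound the writhe of each end-braid; (c) run the combinatorial maximization over partitions at elliptic orbits (the hyperbolic case being essentially forced since $m_i=1$); (d) assemble the local bounds into the global inequality, noting the sign flip between positive and negative ends.

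The main obstacle I expect is step (c) together with the precise bookkeeping in step (b): getting the writhe bounds to line up with the combinatorics of the ECH partition conditions requires care about which trivialization-dependent winding numbers appear, about the behavior at ends where several components of $u$ approach the same simple orbit, and about the fact that the partition realized by $u$ need not be the ECH-admissible one (indeed the content of the inequality is that the index only increases when we pass to the admissible partition). The term $\delta(u)\ge 0$ is harmless but must be tracked to see it points the right way. Everything else --- the substitution of the adjunction relation, the reduction to a sum of local terms, the separate treatment of positive versus negative ends --- is routine once the asymptotic winding estimates are in hand, and for a full proof I would simply cite \cite[Theorem~4.15]{Hu2} and the asymptotic results it relies on rather than reproducing the combinatorial optimization in detail.
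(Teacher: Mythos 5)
The paper does not prove this theorem itself; it is recalled from the literature with the citation \cite[Theorem 4.15]{Hu2}, and your proposal faithfully outlines the structure of Hutchings' proof from that reference: substitute the relative adjunction formula into Equation \eqref{Fredholm index}, reduce $\op{ind}(u)\le I(u)$ to a combined writhe/Conley--Zehnder inequality localized at the ends, and prove that inequality via the HWZ asymptotic winding estimates together with the combinatorics underlying the ECH partition conditions, with $\delta(u)\ge 0$ supplying a favorable term.

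One point in your writeup is misstated, though the endgame is right. The two Chern-class terms you distinguish are in fact the same number: $c_1(u^*\xi,\tau)$ is by definition the relative first Chern number of $\xi$ paired with the class of $u$, which is $c_1(\xi|_Z,\tau)$, so the relation you display collapses to the false identity $0=-Q_\tau(Z)+w_\tau(u)-2\delta(u)$. The relative adjunction formula (\cite[Proposition 3.1]{Hu2}) instead relates the Chern class to the Euler characteristic:
\[
c_1(\xi|_Z,\tau) \;=\; \chi(\dot F) + Q_\tau(Z) + w_\tau(u) - 2\delta(u),
\]
i.e.\ it is $\chi(\dot F)$, not $c_1$, that gets traded for $Q_\tau$, $w_\tau$, and $\delta$. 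Substituting this into \eqref{Fredholm index} yields
\[
\op{ind}(u) \;=\; c_1(\xi|_Z,\tau) + Q_\tau(Z) + w_\tau(u) - 2\delta(u) + \sum_{ij}\mu_\tau(\gamma_i^{m_{ij}}) - \sum_{ij}\mu_\tau((\gamma_i')^{m_{ij}'}),
\]
and comparison with \eqref{eqn: ECH index formula} does reduce to precisely the writhe/partition inequality you wrote, so only the intermediate identity needs correcting; the rest of your sketch agrees with the argument in \cite{Hu2}.
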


\subsection{The ECH differential}

In this subsection we define the differential $\bdry$ for the ECH chain complex, after recalling some properties of $J$-holomorphic maps with small ECH index. In the following we will say that a map $u : F \to \R \times M$ is the ``disjoint union'' of maps $u_i : F_i \to \R \times M$ (with $1 \le i \le k$) if $F= F_1 \sqcup \ldots \sqcup F_k$ and the images are pairwise disjoint.  Here each $F_i$ can still be disconnected.  A {\it trivial cylinder} over a (not necessarily simple) orbit $\gamma$ with period $T$ is the $J$-holomorphic map $u : \R \times S^2 \to \R \times M$, $u(s,t) = (Ts, \gamma(Tt))$. By abuse of notation, we will always denote the trivial cylinder over $\gamma$ by $\R \times \gamma$.

\begin{lemma}[{\cite[Proposition 7.15]{HT1}}]
Let $J$ be a regular almost complex structure adapted to $\alpha$. Then:
\begin{enumerate}
\item A $J$-holomorphic map $u$ with $I(u)=0$ is a disjoint union of branched covers of trivial cylinders over simple Reeb orbits. (Such curves are called {\em connectors}.)
\item A $J$-holomorphic map $u$ with $I(u)=1$ (resp.\ $2$) from $\gamma$ to $\gamma'$ is a disjoint union of a connector and an embedding $u'$ with $I(u')=ind(u')=1$ (resp.\ $2$).
\end{enumerate}
\end{lemma}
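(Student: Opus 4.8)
Since this lemma is exactly \cite[Proposition~7.15]{HT1}, I would ultimately just invoke Hutchings--Taubes; but a self-contained argument would use only the ECH index inequality of Theorem~\ref{thm: index inequality}, the relative adjunction formula and the writhe/partition estimates of \cite{Hu,Hu2}, the regularity of $J$, and positivity of intersections in $\R\times M$, and would run as follows. \textbf{Step 1 (separate the connector part).} Given $u\in\mathcal{M}_J(\gamma,\gamma')$, write its image as a union $\bigcup_k C_k$ of pairwise distinct irreducible somewhere-injective curves, so that $u$ is a branched cover of $\bigcup_k C_k$ with multiplicities $d_k\ge 1$. Let $u_0$ be the union of the components of $u$ whose image is a trivial cylinder and $u_1$ the union of the remaining components; then $u=u_0\sqcup u_1$ has disjoint images, $u_0$ is a branched cover of trivial cylinders (a connector), and each $C_k$ underlying $u_1$ is somewhere-injective and is not a trivial cylinder. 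Since $J$ is regular and a non-trivial somewhere-injective curve is not $\R$-invariant, each such $C_k$ moves in a cut-out moduli space of dimension $\op{ind}(C_k)\ge 1$.

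\textbf{Step 2 (estimate the ECH indices).} Using the constant trivialization $\tau$ along the simple orbits covered by $u_0$, one reads off from \eqref{eqn: ECH index formula} that $I(u_0)=0$: the $c_1(\xi|_{Z_0},\tau)$ and $Q_\tau(Z_0)$ terms vanish, and the top and bottom orbit sets of $u_0$ coincide, so their symmetric Conley--Zehnder contributions cancel. For $u_1$, the relative adjunction formula and the writhe bound --- the estimates underlying Theorem~\ref{thm: index inequality} --- applied to each $C_k$, together with positivity of intersections among the distinct $C_k$ and between $u_1$ and the trivial cylinders in $u_0$, yield (schematically) $I(u)\ge I(u_1)\ge 2\delta(u_1)+\sum_k\op{ind}(C_k)$, where $\delta(u_1)\ge 0$ counts the singularities of $u_1$, including those forced by the coverings $d_k\ge 2$. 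In particular $I(u)\ge 0$, with equality only if $u_1=\varnothing$; this proves (1), as then $u=u_0$ is a branched cover of trivial cylinders.

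\textbf{Step 3 ($I(u)=1$ or $2$) and the main obstacle.} If $I(u)\in\{1,2\}$, the estimate of Step~2 forces $\delta(u_1)=0$, all $d_k=1$, and $\sum_k\op{ind}(C_k)\le 2$; running through the equality case of Theorem~\ref{thm: index inequality} (the ECH partition conditions) then shows that $u_1$ is a single embedded curve $u'$ with $\op{ind}(u')=I(u')=I(u)$, giving (2). I expect the crux to be Step~2 --- and, for $I(u)=2$, the step ruling out a two-component $u_1$ with two index-$1$ pieces: the ECH index is not literally additive over $u_0$ and $u_1$, nor over the components of $u_1$, because their ends may cover common simple orbits and produce a defect in the symmetric Conley--Zehnder term; showing that this defect together with the linking of the ends contributes nonnegatively, and vanishes precisely in the stated configurations, is the technical content of the writhe/partition analysis of \cite{Hu2} that \cite[Proposition~7.15]{HT1} relies on.
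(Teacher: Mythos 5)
The paper does not prove this lemma; it states it as a citation to Hutchings--Taubes \cite[Proposition~7.15]{HT1} and moves on. You correctly recognize this and offer a sketch of what the cited proof does, so there is no ``paper's proof'' to match against --- only the standard argument.

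Your reconstruction captures the right skeleton: split $u=u_0\sqcup u_1$ with $u_0$ the union of components over trivial cylinders and $u_1$ the rest, observe $I(u_0)=0$, and then push down to the somewhere-injective curves $C_k$ underlying $u_1$ using regularity (each nonconstant, non-$\R$-invariant $C_k$ has $\op{ind}(C_k)\geq 1$), positivity of intersections, and the index inequality. You also put your finger on the actual technical content: the ECH index is only \emph{super}additive over components, and the defect lives in the symmetric Conley--Zehnder term and the relative intersection pairing, so one needs the writhe/partition analysis of \cite{Hu2} to control it. That is indeed where the work is.

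Two places where the sketch is looser than the argument it is pointing at. First, your key display $I(u)\ge I(u_1)\ge 2\delta(u_1)+\sum_k\op{ind}(C_k)$ conflates two separate inequalities: the superadditivity $I(u_1\cup u_2)\ge I(u_1)+I(u_2)+2\,u_1\cdot u_2$ of \cite[Theorem~5.1]{Hu2}, and the index inequality $I\ge\op{ind}+2\delta$ for somewhere-injective curves. In particular the index inequality of Theorem~\ref{thm: index inequality} does not apply directly to the multiply covered pieces $d_kC_k$; the standard way around this (used repeatedly elsewhere in this very paper, e.g.\ Lemma~\ref{lemma: counting Morse-Bott buildings} and Lemma~\ref{fastidio}) is to invoke $I(v^k)\ge k\,I(v)$ from \cite[Proposition~5.6]{Hu2} rather than to ``absorb'' the multiplicity into a singularity count $\delta$. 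Second, in Step~3 you are right that ruling out two index-$1$ pieces when $I(u)=2$ is delicate, but it is worth being explicit that the resolution is exactly the equality case of the writhe bound and the partition conditions, not positivity of intersections alone (two disjoint index-$1$ curves with non-linked ends can have vanishing intersection number). With those two precisions your outline faithfully reflects the argument in \cite{HT1,Hu2}, and as an answer to ``why is this true'' it is reasonable.
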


In this paper a ``branched cover'' will always refer to a ``branched cover with possibly empty branch locus''.

The ends of a $J$-holomorphic map $u$ from $\gamma$ to $\gamma'$ determine partitions of the multiplicities of the elliptic orbits. It turns out that, when $I(u)=1$ or $I(u)=2$, these partitions must coincide with preferred partitions called the {\em outgoing} and {\em incoming} partitions for positive and negative ends, respectively.
The incoming and outgoing partitions can be computed from the dynamics of the linearized Reeb flow. For their definition see \cite[Section 4.1]{Hu} or \cite[Definition 4.14]{Hu2}. For the relation between these partitions and the ECH index see \cite[Theorem 4.15]{Hu2}, for example. In this article we will not need the precise definition of the incoming or the outgoing partition, except for the following fact, which follows directly from \cite[Definition 4.14]{Hu2}.

\begin{fact}\label{basic fact about partitions}
Let $\gamma$ be a simple elliptic orbit and suppose that its linearized Reeb flow rotates by an angle $2 \pi \theta$. If $0< \theta < {1\over m}$, then the incoming partition of $(\gamma, m)$ is $(m)$ and the outgoing partition is $(1, \ldots, 1)$. On the other hand, if $- {1\over m} < \theta < 0$, then the incoming partition of $(\gamma, m)$ is $(1, \ldots, 1)$ and the outgoing partition is $(m)$.
\end{fact}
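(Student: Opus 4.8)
The statement to prove is Fact~\ref{basic fact about partitions}, which concerns the incoming and outgoing partitions of a covered elliptic orbit when the rotation angle is small.

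\medskip

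The plan is to unwind the combinatorial definition of the incoming and outgoing partitions from \cite[Definition 4.14]{Hu2} (equivalently \cite[Section 4.1]{Hu}) and specialize it to the case where the rotation number $\theta$ of the linearized return map is very close to $0$. Recall that for an elliptic orbit $\gamma$ with rotation angle $2\pi\theta$, the outgoing partition $p^+_\theta(m)$ and incoming partition $p^-_\theta(m)$ of the integer $m$ are defined via the ``maximal concave/convex lattice path'' construction: one looks at the polygonal path from $(0,0)$ to $(m, \lfloor m\theta\rfloor)$ (or the analogous endpoint) lying below the line $y=\theta x$, takes the edges of its lower convex hull, and reads off the partition from the horizontal displacements of those edges; the incoming partition is obtained from the analogous concave construction with $\theta$ replaced by $-\theta$ (or by reflecting). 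The key point I would isolate is that both partitions depend only on the value of $\theta$ modulo $1$ together with which ``Farey/continued-fraction interval'' it lies in, and in particular are locally constant in $\theta$ away from rationals with small denominator.

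\medskip

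The first step is the case $0<\theta<\frac1m$. Then $\lfloor k\theta\rfloor = 0$ for all $1\le k\le m$, so the relevant lattice path runs from $(0,0)$ to $(m,0)$. For the outgoing partition the construction forces the convex hull below the line $y=\theta x$ to be the single horizontal segment from $(0,0)$ to $(m,0)$?—no: one checks that because the line $y=\theta x$ stays strictly between $0$ and $1$ over $[0,m]$, the maximal path with integer vertices staying weakly below it and whose edges have slopes $<\theta$ consists of $m$ unit horizontal steps, giving the outgoing partition $(1,\dots,1)$; dually, the incoming construction (convex hull from above, or equivalently the outgoing construction for $-\theta$ read backwards) yields the single edge of horizontal length $m$, i.e.\ the partition $(m)$. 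I would write this out carefully since it is the heart of the matter, citing the precise inequalities in \cite[Definition 4.14]{Hu2}. The second step, $-\frac1m<\theta<0$, follows by the symmetry exchanging incoming and outgoing partitions under $\theta \mapsto -\theta$ (this symmetry is built into the definitions and is noted in \cite[Section 4.1]{Hu}), so the roles of $(m)$ and $(1,\dots,1)$ are swapped.

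\medskip

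The main obstacle—really the only nontrivial point—is making sure the combinatorial definition is applied with the correct sign and orientation conventions, since the labels ``incoming''/``outgoing'' and the direction of the inequality defining the lattice path are easy to transpose, and the paper's conventions (negative Morse-Bott boundary, Reeb orbits acting as sinks) interact with these signs. Once the conventions are pinned down, the verification is a direct and short computation with floor functions, and no hard analysis is involved; this is why the statement is labeled a Fact rather than a Lemma.
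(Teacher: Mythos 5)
The paper does not actually prove this Fact: it asserts that the claim ``follows directly from [Hu2, Definition 4.14]'' and leaves the verification to the reader, so there is no in-paper proof for your proposal to match. Your strategy --- recall Hutchings' lattice-path definition, specialize to $|\theta|<1/m$, and use the built-in $\theta\mapsto -\theta$ symmetry to obtain the second case from the first --- is the right one, and the symmetry reduction at the end is fine.

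The gap is in the one step that carries all the content, and it is not cosmetic. You first state that the outgoing path degenerates to a single segment from $(0,0)$ to $(m,0)$, then write ``--- no:'' and correct to $m$ unit horizontal steps, justified by the claim that the maximal path's edges must have ``slopes $<\theta$.'' This does not settle anything: the single edge from $(0,0)$ to $(m,0)$ and the path of $m$ unit steps trace out the same set in the plane, and every edge of either has slope $0<\theta$, so the slope criterion you invoke cannot distinguish them, nor does it explain how the definition counts edges. The source of the trouble is that you carry out both calculations with the same endpoint $(m,\lfloor m\theta\rfloor)=(m,0)$, whereas in [Hu2, Definition 4.14] the incoming and outgoing partitions are read off from two \emph{different} lattice paths, one terminating at height $\lfloor m\theta\rfloor$ on one side of the line $y=\theta x$ and the other at height $\lceil m\theta\rceil$ on the other side; for $0<\theta<1/m$ these endpoints are $(m,0)$ and $(m,1)$. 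That asymmetry is exactly what makes one partition $(m)$ and the other $(1,\dots,1)$, and running both computations on the floor path is why your argument stalls and why a reader cannot tell from the proposal whether the two labels have been accidentally swapped. Quote both halves of Definition~4.14 with the correct endpoints and sides of the line, run the two (genuinely one-line) lattice-path computations separately, and replace the mid-sentence self-correction with the resulting argument; then the proof is complete.
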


The boundary  operator in the ECH chain complex is defined by a count of $J$-holomorphic maps with
index $I=1$ for a regular almost complex structure $J$. In order to make the dependence
 on $J$ explicit we write the complex as $ECC(M, \alpha, J)$. However, when $J$ is clear
from the context, it will be dropped from the notation.

\begin{defn}
Let $J$ be a regular almost complex structure adapted to $\alpha$. Then the boundary map $\partial : ECC(M, \alpha, J) \to ECC(M, \alpha, J)$ is defined as:
$$\bdry\gamma= \sum_{\gamma'} \langle\bdry \gamma,\gamma'\rangle ~\gamma',$$
where $\langle\bdry \gamma,\gamma'\rangle$ is the (mod 2) count of curves $u\in \mathcal{M}_J^{I=1}(\gamma,\gamma')/\R$ such that every connector component of $u$ is a trivial cylinder  over a simple orbit.
\end{defn}

The map $\bdry$ was shown to satisfy $\bdry^2=0$ by Hutchings  and Taubes~\cite{HT1,HT2}. The homology of the chain complex
$(ECC(M,\alpha,J),\bdry)$ is the {\em embedded contact homology}
group $ECH(M,\alpha,J)$.  It is independent of the choice of contact
form $\alpha$, the  contact structure $\xi$, and adapted almost
complex structure $J$, by the work of Taubes~\cite{T2}.  Hence we
are justified in writing $ECH(M)$.

\subsection{Definition of $\widehat{ECH}(M)$} \label{subsection: defn of ECH hat}

In this section we define a map $U: ECH(M) \to ECH(M)$ and a variant $\widehat{ECH}(M)$ of $ECH(M)$, called the {\em ECH hat group} in analogy with well-known constructions in Heegaard Floer homology. An a priori different group, also called $\widehat{ECH}(M)$, was defined in \cite{CGHH} using sutured ECH (in analogy with the sutured Floer homology of Juh\'asz~\cite{Ju}). In Section~\ref{section: sutured applications} we will prove that the two approaches yield isomorphic groups.

\begin{defn}
Let $J$ be a regular almost complex structure and  $z\in \R\times M$ a generic point  so that the evaluation map
$$ev:\mathcal{M}_J^{I=2}(\gamma,\gamma';pt)\to \R\times M, \quad (u,p)\mapsto u(p)$$
is transverse to $z$. We define the map $U: ECC(M,\alpha,J)\to ECC(M,\alpha,J)$ as:
$$U\gamma = \sum_{\gamma'} \langle U\gamma,\gamma'\rangle \ \gamma',$$
where $\langle U\gamma,\gamma'\rangle$ is the (mod 2) count of holomorphic maps $u \in \mathcal{M}_J^{I=2}(\gamma,\gamma')$ which pass through the point $z$ and such that every connector component of $u$ is a trivial cylinder  over a simple Reeb orbit.
\end{defn}

The same techniques used to show that $\partial^2=0$ also show that $U$ is a chain map; see \cite[Section 2.5]{HT5} for more details on the $U$-map. Then $\widehat{ECH}(M,\alpha,J)$ is defined as the homology of the mapping cone of $U$.

\section{Cobordism maps and direct limits} \label{section: cobordism maps and direct limits}

In this section we review the work of Hutchings  and Taubes~\cite{HT3} on maps on ECH induced by exact symplectic cobordisms, which in turn makes it possible to define continuation maps and take direct limits in ECH.

\subsection{Maps induced by cobordisms} \label{subsection: maps induced by cobordisms}

Given a contact $3$-manifold $(M,\alpha)$ with $\alpha$ nondegenerate, let $ECC^L(M,\alpha)$ be the subcomplex of $ECC(M,\alpha)$ generated by orbit sets $\gamma$ of action $\mathcal{A}_\alpha(\gamma) < L$, and $ECH^{L}(M,\alpha)$ be the resulting homology group. Given $L<L'$, the inclusion of chain complexes $ECC^{L}(M,\alpha)\subset ECC^{L'}(M,\alpha)$ induces a map
$$i_{L,L'}: ECH^{L}(M,\alpha) \to ECH^{L'}(M,\alpha)$$
on the level of homology.  The following is an immediate consequence of the definition of a direct limit:
$$ECH(M,\alpha)=\lim_{L\to\infty} ECH^L(M,\alpha).$$

Let $(M_1,\alpha_1)$ and $(M_2,\alpha_2)$ be contact $3$-manifolds. An {\em exact symplectic cobordism $(X,\omega)$ from\footnote{This is the convention from symplectic field theory~\cite{EGH} and is opposite from the one used in Heegaard Floer homology, for example.} $(M_1,\alpha_1)$ to $(M_2,\alpha_2)$} is an exact symplectic manifold with boundary $\bdry X= M_1-M_2$ and symplectic form $\omega=d\alpha$, where $\alpha$ restricts to $\alpha_1$ on $M_1$ and $\alpha_2$ on $M_2$.

Given an exact symplectic cobordism $(X, \omega)$, we form its {\em completion} $(\hat{X}, \hat{\omega})$ by attaching  the half positive symplectization of $(M_1, \alpha_1 )$ along $M_1 \subset \partial X$ and the half negative symplectization of $(M_2,\alpha_2 )$ along $M_2 \subset \partial X$.

\begin{defn}
Let $(\hat{X},\hat{\omega})$ be the completed symplectic cobordism with an almost complex structure $J$ which is compatible with $\hat{\omega}$ and is adapted to $\alpha_1$ and $\alpha_2$ at the positive and negative ends. Then the image of an embedding
$$\phi:(\R\times U,d(e^s\alpha_0),J_0) \hookrightarrow (\hat{X},\hat{\omega},J)$$
is called a {\em product region} if $\phi_*(d(e^s \alpha_0))=\hat{\omega}$, $\phi_* J_0=J$,  $J_0$ is adapted to $\alpha_0$ and, at the ends of $\R\times U$, $\phi(s,x)= (s+C_i,\phi_i(x))$, $i=1,2$, for some embedding $\phi_i :U\to M_i$ and constant $C_i$.
\end{defn}

The main technical result of \cite{HT4} is the following (the first item in (i) is a slight improvement due to Cristofaro-Gardiner \cite[Theorem~5.1]{Cr}):

\begin{thm}[Hutchings-Taubes {\cite[Theorem~1.9]{HT4}}] \label{thm: Hutchings Taubes cobordism map}
Let $(M_1,\alpha_1)$ and $(M_2,\alpha_2)$ be contact $3$-manifolds and let $(X,\omega)$ be an
exact symplectic cobordism from $(M_1,\alpha_1)$ to
$(M_2,\alpha_2)$. Suppose the contact forms $\alpha_1$, $\alpha_2$
are nondegenerate. Then for each positive real number $L$ there
exists a map:
$$\Phi^L(X,\omega): ECH^{L}(M_1,\alpha_1)\to ECH^L(M_2,\alpha_2).$$
Moreover, the following are satisfied:
\begin{enumerate}
\item[(i)] Let $J$ be a regular almost complex structure on $\hat{X}$ which is $\hat{\omega}$-compatible and is adapted to $\alpha_i$ at the positive and negative ends. Then $\Phi^L(X,\omega)$ is induced from a (noncanonical) chain map
$$\hat{\Phi}^L(X,\omega,J): ECC^{L}(M_1,\alpha_1,J|_{M_1})\to ECC^L(M_2,\alpha_2,J|_{M_2}),$$
which is {\em supported on the $J$-holomorphic curves}, i.e.,
\begin{itemize}
\item $\langle \hat{\Phi}^L(X,\omega,J)(\gamma),\gamma'\rangle =0$ if there is no $I=0$ $J$-holomorphic building from $\gamma$ to $\gamma'$ in $\hat{X}$.
\item If the only $J$-holomorphic building in $\hat{X}$ from $\gamma$ to $\gamma'$ is a union of covers of product cylinders contained in a product region, then $\langle \hat{\Phi}^L(X,\omega,J)(\gamma),\gamma' \rangle = 1$.
\end{itemize}

\item[(ii)] The map $\Phi^L(X,\omega)$  only depends on $L$
and $(X,\omega)$, and not on any auxiliary almost complex structure
$J$ on $(\hat{X},\hat{\omega})$. Moreover it depends on $\omega$ only through its
homotopy class as an exact symplectic form.

\item[(iii)] If $L<L'$, then the following diagram commutes:
\begin{equation}
\begin{diagram}
ECH^{L}(M_1,\alpha_1) & \rTo^{\Phi^L(X,\omega)} &
ECH^{L}(M_2,\alpha_2)  \\
\dTo^{i_{L,L'}} &
&  \dTo_{i_{L,L'}} \\
ECH^{L'} (M_1,\alpha_1) & \rTo^{\Phi^{L'}(X,\omega)} &
ECH^{L'} (M_2,\alpha_2)
\end{diagram}
\end{equation}
Hence the maps pass to the direct limit:
$$\Phi(X,\omega): ECH(M_1,\alpha_1) \to ECH(M_2,\alpha_2). $$

\item[(iv)] Suppose $(X,\omega)$ is the composition of exact
symplectic cobordisms $(X_1,\omega_1)$ from $(M_1,\alpha_1)$ to
$(M',\alpha')$ and $(X_2,\omega_2)$ from $(M',\alpha')$ to
$(M_2,\alpha_2)$, and $\alpha'$ is nondegenerate. Then
$$\Phi^L(X,\omega) = \Phi^L(X_2,\omega_2)\circ
\Phi^L(X_1,\omega_1).$$

\item[(v)] If $c>0$, then the following diagram commutes:
\begin{equation}
\begin{diagram}
ECH^{L} (M_1,\alpha_1) & \rTo^{\Phi^L(X,\omega)} & ECH^L(M_2,\alpha_2)\\
\dTo^s & & \dTo^s\\
ECH^{cL} (M_1, c\alpha_1) & \rTo^{\Phi^{cL}(X,c\omega)} &
ECH^{cL} (M_2, c\alpha_2),
\end{diagram}
\end{equation}
where $s$ is the canonical rescaling isomorphism.

\item[(vi)] If $X=[0,a]\times M$ and $\omega=d(e^s\alpha)$ where $\alpha$ is nondegenerate, then
$$\Phi^L(X,\omega): ECH^{L}(M,e^a\alpha)\to ECH^L(M,\alpha)$$
is equal to the composition
$$ECH^{L}(M,e^a\alpha)\xrightarrow{s}
ECH^{e^{-a}L}(M,\alpha)\xrightarrow{i_{e^{-a}L,L}} ECH^L(M,\alpha).$$
\end{enumerate}
\end{thm}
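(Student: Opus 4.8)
The plan is to construct $\Phi^L(X,\omega)$ not by a direct count of holomorphic curves in $\hat X$ but through Taubes' isomorphism between embedded contact homology and Seiberg--Witten Floer cohomology \cite{T2}, for which naturality under cobordisms and compatibility with the action filtration come as part of the package. First one recalls that, for a nondegenerate contact form $\alpha$ on a closed $3$-manifold $M$, Taubes' construction already produces canonical isomorphisms between the filtered groups $ECH^L(M,\alpha)$ and the corresponding filtered pieces of Seiberg--Witten Floer cohomology, the filtration on the gauge-theoretic side being induced by the Chern--Simons--Dirac functional evaluated along Taubes' family of perturbations as the perturbation parameter $r \to \infty$: generators and differentials match action level by action level, up to an error that vanishes in the limit. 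An exact symplectic cobordism $(X,\omega)$, after completion to $(\hat X,\hat\omega)$ and equipped with a Taubes-type perturbation of the Seiberg--Witten equations, induces a map on these filtered gauge-theoretic groups; transporting it across the isomorphism above \emph{defines} $\Phi^L(X,\omega)$ and $\hat\Phi^L(X,\omega,J)$.

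With the construction in place, items (ii)--(vi) are essentially formal. For (ii), the perturbed Seiberg--Witten equations on $\hat X$ depend on $\omega$ only through its deformation class as an exact symplectic form, and on $J$ only through auxiliary data that does not affect the induced map on homology. Item (iv) is the functoriality of Seiberg--Witten cobordism maps under gluing of cobordisms, combined with energy estimates showing that the neck-stretching respects the level $L$; item (iii) is the compatibility of the cobordism map with the maps that enlarge the action window, i.e.\ with $i_{L,L'}$. Item (v) records that the rescaling $\alpha \mapsto c\alpha$ is realized by a diffeomorphism of symplectizations which intertwines all of the structures and sends $L$ to $cL$. Finally (vi) is the special case $X = [0,a]\times M$, where $\hat X$ is a genuine symplectization and the relevant perturbed solutions are the translation-invariant ones, so the map is the composite of the rescaling $s$ with $i_{e^{-a}L,L}$; this can alternatively be seen on the ECH side, since the only $I = 0$ holomorphic buildings in $\R\times M$ are unions of trivial cylinders, so $\hat\Phi^L$ is, up to a filtered change of coordinates, the identity.

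The substantive assertion is (i): that $\hat\Phi^L(X,\omega,J)$ is \emph{supported on the $J$-holomorphic curves}, meaning $\langle\hat\Phi^L(X,\omega,J)\gamma,\gamma'\rangle = 0$ whenever there is no $I = 0$ holomorphic building from $\gamma$ to $\gamma'$ in $\hat X$, and $\langle\hat\Phi^L(X,\omega,J)\gamma,\gamma'\rangle = 1$ whenever the only such building is a union of covers of product cylinders inside a product region. This is the core of Hutchings--Taubes' argument: one runs Taubes' ``$SW \Rightarrow Gr$'' analysis on the completed cobordism to show that, as $r \to \infty$, any sequence of perturbed Seiberg--Witten solutions contributing to the cobordism map converges (after passing to a subsequence and rescaling) to a broken $J$-holomorphic current of ECH index $0$ from $\gamma$ to $\gamma'$; hence no contribution is possible when no such current exists. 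The normalization over a product region then reduces to the local model, where a single cover of a product cylinder is unobstructed and regular and contributes exactly $1$ --- this is precisely where obstruction-bundle gluing, and the Cristofaro-Gardiner refinement \cite{Cr} handling the possible branched covers, are needed. I expect this correspondence between filtered Seiberg--Witten cobordism solutions and $I = 0$ holomorphic buildings, together with the transversality and gluing analysis for the product cylinders, to be the main obstacle; the remaining properties are either Taubes' theorem used as a black box or routine bookkeeping with the action filtration.
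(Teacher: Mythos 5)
This is a theorem that the paper does not prove; it is quoted verbatim (with a small improvement noted) from Hutchings--Taubes \cite[Theorem~1.9]{HT4} and Cristofaro-Gardiner \cite[Theorem~5.1]{Cr}, and the paper's only ``proof'' is the remark immediately following the statement, which records that the map is constructed via Taubes' isomorphism with Seiberg--Witten Floer cohomology and that there is, to date, no way to define it by directly counting holomorphic buildings. So there is no internal proof to compare your proposal against.

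As a sketch of the \emph{external} argument in \cite{HT4}, what you write is a reasonable account of the strategy: build $\Phi^L$ and $\hat\Phi^L$ out of the filtered Seiberg--Witten cobordism map transported across Taubes' isomorphism; deduce (ii)--(vi) from functoriality, compatibility with filtration windows, and rescaling on the Seiberg--Witten side; and obtain (i) from the ``$SW\Rightarrow Gr$'' degeneration analysis on the completed cobordism as $r\to\infty$. Two small inaccuracies are worth flagging. First, you attribute the Cristofaro-Gardiner refinement \cite{Cr} to ``handling the possible branched covers'' in the second bullet of (i), whereas the paper explicitly notes that \cite[Theorem~5.1]{Cr} is an improvement to the \emph{first} bullet (the vanishing condition), replacing the original ``broken $J$-holomorphic curve'' hypothesis by the stronger ``$I=0$ $J$-holomorphic building'' formulation. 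Second, the normalization $\langle\hat\Phi^L\gamma,\gamma'\rangle=1$ over a product region is not established by obstruction-bundle gluing on the ECH side --- indeed, the central difficulty that the remark emphasizes is precisely that one \emph{cannot} read the cobordism map off the limiting holomorphic building; that coefficient is computed by a direct Seiberg--Witten count in the product-region local model, with the holomorphic picture only controlling where solutions can concentrate. Apart from these attributions, your sketch matches the intended structure of the \cite{HT4} proof.
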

 \begin{rmk} The map involved in this result is borrowed from Seiberg-Witten theory via Taubes' isomorphism, where one counts solutions of the perturbed Seiberg-Witten equations on the cobordism. As we take a perturbation parameter $r$ to be large, these solutions concentrate near a holomorphic building. It is however not known yet how to reconstruct the count of solutions from just knowing the limit holomorphic building. This explains why there is no direct definition of cobordism maps by a count of holomorphic buildings and also why there is no direct proof of invariance for ECH.
\end{rmk}
\begin{defn}
A contact form $\alpha$ is called {\em $L$-nondegenerate} if all Reeb orbits of action less than $L$ are nondegenerate and there is no orbit set of action exactly $L$.
\end{defn}

The action-truncated $ECH$ groups $ECH^{L}(M,\alpha)$ make sense for contact forms $\alpha$ which are $L$-nondegenerate and Theorem~\ref{thm: Hutchings Taubes cobordism map}(i), (ii), and (iv) hold for $L$-nondegenerate contact forms.

All exact cobordisms considered in this paper will be of the following type:

\begin{defn} \label{defn: interpolating cobordism}
An {\em interpolating cobordism} from $(M, \alpha_1)$ to $(M, \alpha_0)$ is an exact symplectic cobordism $([0,1] \times M, \lambda)$ from $\alpha_1$ to $\alpha_0$ such that $\lambda$ is of the form
$$\lambda = \Phi^*(f \alpha),$$
where $\alpha$ is the pullback to $[0,1] \times M$ of a $1$-form (also called $\alpha$) on $M$, $f : [0,1] \times M \to \R$ is a positive function with $\frac{\partial f}{\partial t}>0$, and $\Phi: [0,1]\times M\stackrel\sim\to [0,1]\times M$ is a diffeomorphism taking $\{i\}\times M$ to itself for $i=0,1$.
\end{defn}

 In this article, interpolating cobordisms are  all constructed as follows: Let $\alpha_0$, $\alpha_1$ be isotopic contact forms on $M$ and let $\{\phi_t: M\stackrel\sim \to M\}_{t\in[0,1]}$ be an isotopy such that:
\begin{itemize}
\item $\phi_t^*(f_t\alpha_0)=\alpha_t$ for all $t\in[0,1]$;
\item $\{f_t\}$ and $\{\alpha_t\}$ are $1$-parameter families of functions and $1$-forms on $M$; and
\item $\phi_0=id$, and  $f_0=1$.
\end{itemize}
Then define $\Phi : [0,1] \times M \to [0,1] \times M$ by $\Phi(t, \mathbf{x})= \phi_t(\mathbf{x})$,  $f: [0,1] \times M \to \R$ by $f(t, \mathbf{x})=f_t(\mathbf{x})$ and $\lambda_{\phi}:= \Phi^*(f \alpha_0)$.  If ${\bdry f_t\over \bdry t}>0$, then
 $$([0,1] \times M, \lambda_\phi)$$
is an interpolating cobordism.  Interpolating cobordisms do not necessarily exist between any two isotopic $\alpha_0$ and $\alpha_1$, but one can always construct them at the small price of scaling one of the two forms by a constant.

\begin{lemma}\label{homotopy of interpolating cobordisms}
Let $([0,1] \times M, \lambda_ \phi)$ and $([0,1] \times M, \lambda_\phi')$ be interpolating cobordisms from $(M, \alpha_1)$ to $(M, \alpha_0)$  defined by contact isotopies $\phi$ and $\phi'$ respectively.
If the isotopies $\{\phi_t\}$ and $\{\phi'_t\}$ are homotopic relative to the endpoints, then $\lambda_\phi$ and $\lambda_\phi'$ are homotopic as exact symplectic forms.
\end{lemma}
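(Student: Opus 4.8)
The plan is to interpolate directly between the primitives $\lambda_\phi = \Phi^*(f\alpha_0)$ and $\lambda'_\phi = (\Phi')^*(f'\alpha_0)$, using a homotopy between the defining isotopies. Write $\Phi(t,\mathbf x)=(t,\phi_t(\mathbf x))$, $\Phi'(t,\mathbf x)=(t,\phi'_t(\mathbf x))$ for the associated diffeomorphisms of $[0,1]\times M$ and $f,f':[0,1]\times M\to\R_{>0}$ for the defining functions, so $\partial_t f,\partial_t f'>0$, $f_0=f'_0\equiv 1$, $\phi_0=\phi'_0=\mathrm{id}$. First I would observe that the shared-endpoint hypothesis forces $\phi_1=\phi'_1$, and that comparing $\phi_1^*(f_1\alpha_0)=\alpha_1=(\phi'_1)^*(f'_1\alpha_0)$ then gives $f_1=f'_1$; this is what will make the interpolated forms have the correct boundary behavior.

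Next I would fix a smooth two-parameter family $\{\phi^s_t\}_{(s,t)\in[0,1]^2}$ of diffeomorphisms of $M$ with $\phi^0=\phi$, $\phi^1=\phi'$, and $\phi^s_0=\mathrm{id}$, $\phi^s_1=\phi_1$ for all $s$ --- the homotopy relative to endpoints --- set $\Phi^s(t,\mathbf x)=(t,\phi^s_t(\mathbf x))$, and linearly interpolate the functions by $f^s:=(1-s)f+sf'$. Positivity of $f,f'$ and of $\partial_t f,\partial_t f'$ is preserved by this convex combination, so $f^s>0$ and $\partial_t f^s>0$, with $f^s_0\equiv 1$ and $f^s_1=f_1$. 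I would then define $\lambda_s:=(\Phi^s)^*(f^s\alpha_0)$, so that $\lambda_0=\lambda_\phi$ and $\lambda_1=\lambda'_\phi$, and check that this path does the job.

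There are two points to verify. For the boundary conditions, a vector tangent to $\{i\}\times M$ is carried by $d\Phi^s$ to $d\phi^s_i$ of it, so $\lambda_s|_{\{1\}\times M}=(\phi_1)^*(f_1\alpha_0)=\alpha_1$ and $\lambda_s|_{\{0\}\times M}=\alpha_0$ for every $s$. For symplecticity, since $\Phi^s$ is a diffeomorphism it is enough to show $\omega^s:=d(f^s\alpha_0)=df^s\wedge\alpha_0+f^s\,d\alpha_0$ is nondegenerate on the four-manifold $[0,1]\times M$; writing $df^s=(\partial_t f^s)\,dt+d_Mf^s$ and using that $\alpha_0$ and $d\alpha_0$ have no $dt$-component, every term of $(\omega^s)^2$ vanishes except one, leaving
$$(\omega^s)^2=2\,f^s\,(\partial_t f^s)\;dt\wedge\alpha_0\wedge d\alpha_0,$$
a nowhere-zero four-form because $\alpha_0$ is contact on $M$ and $f^s,\partial_t f^s>0$. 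Hence $d\lambda_s=(\Phi^s)^*\omega^s$ is symplectic for each $s$, and $\{\lambda_s\}$ is a homotopy through primitives of exact symplectic forms, all defining exact symplectic cobordisms from $(M,\alpha_1)$ to $(M,\alpha_0)$, connecting $\lambda_\phi$ to $\lambda'_\phi$.

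The only genuine computation is this last nondegeneracy check; I expect the one thing to be careful about is simply that the endpoint-matching ($\phi_1=\phi'_1$ and $f_1=f'_1$) really holds and that the convex combination keeps $\partial_t f^s>0$ --- note in particular that the intermediate maps $\phi^s_t$ need not themselves be contact isotopies, since symplecticity of $d\lambda_s$ used nothing beyond $\Phi^s$ being a diffeomorphism.
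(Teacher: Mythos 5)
Your proof is correct and follows essentially the same route as the paper's: interpolate the isotopies via the given homotopy-rel-endpoints and convexly combine the functions, then pull back. You are somewhat more careful than the paper — explicitly noting $\phi_1=\phi'_1$, deducing $f_1=f'_1$, and carrying out the nondegeneracy computation — but the underlying argument is identical.
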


\begin{proof}
Define $\Phi(t, \mathbf{x})= \phi_t(\mathbf{x})$ and $\Phi'(t, \mathbf{x})= \phi_t'(\mathbf{x})$. Without loss of generality we can write $\lambda_0:=\lambda_\phi = \Phi^*(f \alpha)$ and $\lambda_1: = \lambda_\phi'= (\Phi')^*(f' \alpha)$ with the same form $\alpha$ in both definitions. Let $ \{\Phi_s \}$ be a homotopy between $\Phi$ and $\Phi'$ such that:
\begin{itemize}
\item $\Phi_0 = \Phi$ and $\Phi_1= \Phi'$;
\item $\Phi_s(0, \mathbf{x}) = \mathbf{x}$ and $\Phi_s(1, \mathbf{x})= \phi_1(\mathbf{x}) = \phi_1'(\mathbf{x})$ for all $s \in [0,1]$.
\end{itemize}
Also define $F_s(t, \mathbf{x})= (1-s)f(t, \mathbf{x}) + s f'(t, \mathbf{x})$. Then
$$\lambda_s= \Phi_s^*(F_s \alpha)$$
is a homotopy of exact symplectic forms because
$\dfrac{\partial F_s}{\partial t}>0$ for all $s \in [0,1]$.
\end{proof}

\begin{lemma}\label{tired with all these}
Let $\alpha$ be a contact form, $L$, $L'>0$ real numbers, $\phi_t : M \to M$, $t\in[0,1]$, an isotopy such that $\phi_0=id$, and $f$, $f': M \to \R^+$ smooth functions such that $Lf' < L'f$. If $f\alpha$ and $f'\alpha$ are $L$- and $L'$-nondegenerate, respectively, then there is a map
$$ECH^{L}(M, \phi_1^*(f \alpha)) \to ECH^{L'}(M,f' \alpha).$$
Moreover, this map depends only on the homotopy class of $\{\phi_t\}$ relative to the endpoints and  has the following properties:
\begin{itemize}
\item[(a)] if $f=f'$ and $\phi_t \equiv id$, $t\in[0,1]$, then the map is induced by the inclusion of chain complexes, and
\item[(b)] if $L''>0$, $f'' : M \to \R^+$ is another function such that $L'f'' < L''f'$, and $\phi_t : M \to M$, $t\in[1,2]$, is  an extension of the isotopy, then the following triangle commutes:
$$ \xymatrix{ ECH^{L}(M, \phi_2^*(f \alpha)) \ar[rr] \ar[dr] & &  ECH^{L''}(M,f'' \alpha). \\
& ECH^{L'}(M, \phi_1^*(f' \alpha)) \ar[ur] & }$$
\end{itemize}
\end{lemma}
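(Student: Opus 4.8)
The plan is to construct the map as a composition of a rescaling isomorphism, a cobordism map coming from an interpolating cobordism built from the isotopy $\{\phi_t\}$, and an inclusion-induced map, then to read off the stated properties from the corresponding properties of these three pieces as recorded in Theorem~\ref{thm: Hutchings Taubes cobordism map} and Lemma~\ref{homotopy of interpolating cobordisms}. The hypothesis $Lf' < L'f$ is exactly the pointwise inequality that will let us fit everything together at the truncated level without the action levels colliding.

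First I would reduce to the case in which an interpolating cobordism literally exists. Since $\phi_0 = \op{id}$ and $f_0$ can be normalized, the recipe following Definition~\ref{defn: interpolating cobordism} produces an interpolating cobordism from $(M, \phi_1^*(cf\alpha))$ to $(M, f\alpha)$ for a suitable constant $c>0$ (the ``small price of scaling'' remarked on in the text); by rescaling we may as well absorb $c$ and assume we have an interpolating cobordism $([0,1]\times M, \lambda_\phi)$ from $(M, \phi_1^*(f\alpha))$ to $(M, f\alpha)$. Applying Theorem~\ref{thm: Hutchings Taubes cobordism map}(i)--(ii) to this exact symplectic cobordism (with the $L$-nondegenerate refinement noted after the theorem, valid since $f\alpha$ and $f'\alpha$ are assumed $L$- and $L'$-nondegenerate and intermediate levels can be arranged) yields
$$\Phi^{L}(\lambda_\phi) : ECH^{L}(M, \phi_1^*(f\alpha)) \to ECH^{L}(M, f\alpha).$$
Next, since $Lf' < L'f$, every orbit set of $f\alpha$ with $f\alpha$-action $<L$ has $f'\alpha$-action $< \tfrac{L}{L'}\cdot L' \cdot \tfrac{f'}{f}$ — more precisely the pointwise inequality $f' < \tfrac{L'}{L} f$ gives $\mathcal{A}_{f'\alpha}(\gamma) < \tfrac{L'}{L}\mathcal{A}_{f\alpha}(\gamma) < L'$ for such $\gamma$ — so after the rescaling isomorphism $s: ECH^{L}(M, f\alpha)\xrightarrow{\sim} ECH^{L/1}(\cdots)$ identifying the two contact forms' chain groups, the inclusion of subcomplexes induces $ECH^{L}(M, f\alpha)\to ECH^{L'}(M, f'\alpha)$. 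Composing, we set the desired map to be this composite. Property~(a) is then immediate: when $f = f'$ and $\phi_t \equiv \op{id}$, the interpolating cobordism is (homotopic to) a product $[0,1]\times M$ with $\alpha$ constant, so by Theorem~\ref{thm: Hutchings Taubes cobordism map}(vi) $\Phi^L$ is itself the inclusion-induced map, and the composite is just inclusion.

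For well-definedness and the homotopy-invariance statement, I would invoke Theorem~\ref{thm: Hutchings Taubes cobordism map}(ii), which says $\Phi^L$ depends on the exact symplectic form only through its homotopy class, together with Lemma~\ref{homotopy of interpolating cobordisms}, which says that homotopic-rel-endpoints isotopies give homotopic interpolating forms $\lambda_\phi \simeq \lambda_{\phi'}$; the rescaling and inclusion pieces do not involve $\phi$ at all, so the whole composite depends only on the rel-endpoints homotopy class of $\{\phi_t\}$. For the triangle in~(b), I would concatenate the interpolating cobordism for $\{\phi_t\}_{t\in[0,1]}$ (from $\phi_1^*(f\alpha)$ down to $f\alpha$, hence from $\phi_2^*(f\alpha)$ after noting $\phi_2$ just reparametrizes the top) with the one for $\{\phi_t\}_{t\in[1,2]}$ and appeal to the composition law Theorem~\ref{thm: Hutchings Taubes cobordism map}(iv), while the inclusion maps compose by the commuting-square property~(iii) and functoriality of inclusions of action filtrations; the rescaling isomorphisms are compatible by (v). Assembling these three commutativities along the edges of the triangle gives the claim.

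The main obstacle I anticipate is purely bookkeeping with the action windows: one must choose the intermediate truncation levels and the auxiliary scaling constant $c$ so that $f\alpha$ is $L$-nondegenerate, $f'\alpha$ is $L'$-nondegenerate, \emph{and} the strict inequalities $Lf' < L'f$ (and, in (b), $L'f'' < L''f'$) survive all the rescalings, so that each inclusion of subcomplexes is genuinely defined and the squares of Theorem~\ref{thm: Hutchings Taubes cobordism map}(iii),(v),(vi) apply verbatim. This is a finite juggling of constants with no conceptual content, but it is where the hypothesis $Lf' < L'f$ does its real work and where care is needed to keep the three maps composable; everything else is a direct citation of Theorem~\ref{thm: Hutchings Taubes cobordism map} and Lemma~\ref{homotopy of interpolating cobordisms}.
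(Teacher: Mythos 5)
Your factoring of the map does not work as stated. You propose to first produce a cobordism map
$ECH^L(M,\phi_1^*(f\alpha)) \to ECH^L(M,f\alpha)$ and then get from $ECH^L(M,f\alpha)$ to $ECH^{L'}(M,f'\alpha)$ by a rescaling isomorphism followed by an inclusion of subcomplexes. The second step is where the argument breaks. The rescaling isomorphism of Theorem~\ref{thm: Hutchings Taubes cobordism map}(v) only relates $ECH^L(M,\alpha)$ to $ECH^{cL}(M,c\alpha)$ for a \emph{constant} $c>0$; it cannot change $f\alpha$ into $f'\alpha$ when $f$ and $f'$ are genuinely different positive functions. And an inclusion of chain complexes in this context only makes sense between $ECC^{L_0}(M,\beta)$ and $ECC^{L_1}(M,\beta)$ for the \emph{same} contact form $\beta$ (and $L_0 \le L_1$): the generators of $ECC$ are orbit sets for the Reeb vector field of $\beta$, so changing $\beta$ changes the generating set. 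Your action estimate ``$\gamma$ has $\mathcal{A}_{f\alpha}(\gamma)<L$ implies $\mathcal{A}_{f'\alpha}(\gamma)<L'$'' is fine as an inequality of integrals, but a closed Reeb orbit of $R_{f\alpha}$ is not in general a closed Reeb orbit of $R_{f'\alpha}$, so it does not name a generator of the target complex and does not produce an inclusion.

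The paper avoids this by never separating ``do the isotopy'' from ``change $f$ to $f'$.'' It scales the two contact forms by the constants $L'$ and $L$ respectively, so that both truncated groups live at the common action level $L'L$, and then builds a \emph{single} interpolating cobordism from $L'\phi_1^*(f\alpha)$ at the top to $Lf'\alpha$ at the bottom. The hypothesis $Lf'<L'f$ is precisely what makes the interpolating ratio $f_1 = L'f/(Lf')$ exceed $1$, so that such a cobordism exists without any further auxiliary constant. The map in the lemma is then the composition (rescaling) $\circ$ (this one cobordism map at level $L'L$) $\circ$ (rescaling)$^{-1}$, and the claimed properties follow from Theorem~\ref{thm: Hutchings Taubes cobordism map} and Lemma~\ref{homotopy of interpolating cobordisms} exactly as you indicate. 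If you want to repair your proof, replace your ``rescaling + inclusion'' step with a second cobordism map, and then verify independence of all auxiliary choices — but at that point you have essentially reconstructed the paper's single-cobordism argument in two pieces, with extra bookkeeping.
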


\begin{proof}
The inequality $Lf' < L'f$ implies that there is an interpolating cobordism with
$L' \phi_1^*(f \alpha)$ at the positive end and $L f' \alpha$ at the negative end.
We define the map
$ECH^{L}(M, \phi_1^*(f \alpha)) \to ECH^{L'}(M, f' \alpha)$ by the composition
$$\xymatrix{
ECH^{L}(M, \phi_1^*(f \alpha)) \ar[r] &  ECH^{L'L}(M, L'\phi_1^*(f\alpha)) \ar[d] & \\
& ECH^{L'L}(M, L f' \alpha) \ar[r] & ECH^{L'}(M, f' \alpha),}$$
where the map $ECH^{L'L}(M, L'f \alpha) \to ECH^{L'L}(M, L f' \alpha)$ is the map induced by an interpolating cobordism from $L' \phi_1^*(f \alpha)$ to $L f' \alpha$ and the horizontal maps are rescaling isomorphisms. The resulting map depends only on the homotopy class of $\{\phi_t\}$ relative to the endpoints by Lemma~\ref{homotopy of interpolating cobordisms}. The properties of these maps are an immediate consequence of Theorem \ref{thm: Hutchings Taubes cobordism map}.
\end{proof}

\subsection{Direct limits}

One consequence of Theorem~\ref{thm: Hutchings Taubes cobordism map}
is the following theorem, whose statement and proof were communicated to the authors
by Michael Hutchings:

\begin{thm}[Hutchings-Taubes] \label{thm: direct limit}
Let $M$ be a closed oriented $3$-manifold with a nondegenerate contact form $\alpha$ and let $\{f_i\}_{i=1}^\infty$ be a sequence of smooth positive functions such that $1\geq f_1\geq f_2\geq \dots$ and $f_i\alpha$ is $L_i$-nondegenerate for an increasing sequence of positive real numbers $L_i$ such that $\lim \limits_{i \to \infty} L_i= + \infty$. Then there is a canonical isomorphism
$$ ECH(M,\alpha)\simeq \lim_{i\to \infty} ECH^{L_i}(M,f_i\alpha).$$
\end{thm}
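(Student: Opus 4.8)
The plan is to build the isomorphism in two stages, first comparing $ECH(M,\alpha)$ with the direct limit of $ECH^{L_i}(M,\alpha)$ for the \emph{unscaled} form, then comparing the latter with the direct limit involving the scaled forms $f_i\alpha$. For the first stage I would simply invoke the definition of $ECH(M,\alpha)$ as the direct limit $\lim_{L\to\infty}ECH^L(M,\alpha)$ together with the cofinality of the subsequence $\{L_i\}$: since $L_i\to+\infty$, the inclusion-induced maps $i_{L_i,L_j}$ exhibit $ECH(M,\alpha)$ as $\lim_i ECH^{L_i}(M,\alpha)$. (Strictly, one should also check that the $L_i$ can be taken to be regular values of the action spectrum so that the truncated complexes are well defined; this is part of the $L_i$-nondegeneracy hypothesis.) The content is therefore entirely in the second stage.

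For the second stage, the key observation is that since $f_i\le 1$, pointwise, for each $i$ the identity isotopy ($\phi_t\equiv id$) together with the functions $f'=f_i$ and $f=1$ (or more carefully, after scaling, $L' f_i < L f$ holds for a suitable choice of action levels because $f_i < 1$ strictly except possibly where $f_i=1$) gives, via Lemma~\ref{tired with all these}, a map
$$\Psi_i : ECH^{L_i}(M, f_i\alpha) \longrightarrow ECH^{L_i}(M,\alpha),$$
and conversely, since $f_{i}\le 1 = f_0$, an interpolating cobordism runs the other way as well. The point is to show these maps are mutually inverse in the limit. First I would use property (b) of Lemma~\ref{tired with all these} to check that the $\Psi_i$ commute with the structure maps of the two direct systems, so that they assemble into a map $\Psi : \lim_i ECH^{L_i}(M,f_i\alpha) \to \lim_i ECH^{L_i}(M,\alpha) = ECH(M,\alpha)$. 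Then I would construct the inverse: given a class in $ECH(M,\alpha) = ECH^{L}(M,\alpha)$ for $L$ large, it lies in the image of $ECH^{L_i}(M,\alpha)$ for some $i$ with $L_i > L$; applying Lemma~\ref{tired with all these} with a scaling that decreases the action bound enough to land inside $ECH^{L_j}(M,f_j\alpha)$ for some $j\ge i$ produces a preimage, and property (b) guarantees this is well defined on the limit and is a two-sided inverse to $\Psi$. The composition $\Psi_i$ followed by the reverse map should, by Theorem~\ref{thm: Hutchings Taubes cobordism map}(iv) (composition of cobordisms) combined with (vi) (a trivial cobordism $[0,a]\times M$ realizes rescaling composed with an inclusion), reduce to an inclusion-induced map $i_{L_i,L_j}$, which becomes the identity in the limit; the composition in the other order does likewise.

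The main obstacle is bookkeeping with the action truncation levels and the rescaling isomorphisms: the inequality needed to apply Lemma~\ref{tired with all these}, namely $L f' < L' f$, forces one to pass to a higher action level $L_j$ after scaling, and one must verify that all the relevant forms $f_j\alpha$ remain $L_j$-nondegenerate (which is exactly the running hypothesis) and that the cofinality argument still goes through after these shifts. Once the diagrams are set up correctly, commutativity and the inverse property are formal consequences of Theorem~\ref{thm: Hutchings Taubes cobordism map}(iii), (iv), (vi) and Lemma~\ref{tired with all these}(a), (b); no new geometric input is required beyond what Hutchings and Taubes already supply. The canonicity of the isomorphism follows because every map in sight is canonical (independent of auxiliary $J$) by Theorem~\ref{thm: Hutchings Taubes cobordism map}(ii) and Lemma~\ref{tired with all these}.
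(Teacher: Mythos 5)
Your plan is, at its core, the same as the paper's: build a map one way by taking direct limits of interpolating-cobordism maps componentwise, build a candidate inverse the other way, and invoke the properties of Lemma~\ref{tired with all these} together with the homotopy invariance (Lemma~\ref{homotopy of interpolating cobordisms}) and Theorem~\ref{thm: Hutchings Taubes cobordism map} to show the two compositions are inclusions, hence identities in the limit. The preliminary cofinality observation $ECH(M,\alpha)=\lim_i ECH^{L_i}(M,\alpha)$ is also correct and tacitly used in the paper.

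However, the map you call $\Psi_i$ as you have written it does not exist, and the direction confusion there is substantive rather than notational. With $f=1$ on the source and $f'=f_i$ on the target, Lemma~\ref{tired with all these} requires $Lf_i<L'$ and produces a map $ECH^{L}(M,\alpha)\to ECH^{L'}(M,f_i\alpha)$ --- that is, from $\alpha$ to $f_i\alpha$, which is the opposite of your $\Psi_i$. The cobordism from $\alpha$ to $f_i\alpha$ is the \emph{easy} direction: since $f_i\le 1$ it does not increase action, so one may keep $L'=L_i$ (after the usual $\varepsilon$-bump to make the inequality strict, which the $L_i$-nondegeneracy hypothesis accommodates). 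Componentwise, these maps $ECH^{L_i}(M,\alpha)\to ECH^{L_i}(M,f_i\alpha)$ assemble to give the map $ECH(M,\alpha)\to \lim_i ECH^{L_i}(M,f_i\alpha)$, not its inverse. To go the other way, from $f_i\alpha$ to $\alpha$, the relevant inequality is $L_i\cdot 1< L'\cdot f_i$, which forces $L'>L_i/\min f_i$; one cannot keep the same action threshold. This is precisely why the paper introduces an auxiliary increasing sequence $c_i$ with $L_{c_i}f_i>L_i$ and targets $ECH^{L_{c_i}}(M,\alpha)$. Your parenthetical remark even quotes the inequality backwards (you wrote $L'f_i<Lf$ instead of $Lf'<L'f$), which is probably the source of the confusion. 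The remainder of your plan --- once the $c_i$-style bookkeeping is inserted --- would proceed exactly as the paper does: the two composites reduce to inclusion maps by Theorem~\ref{thm: Hutchings Taubes cobordism map}(iv),(vi) and Lemma~\ref{homotopy of interpolating cobordisms}, hence become identities in the direct limit.
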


\begin{proof}
We have a map
$$f: ECH(M, \alpha) \to \lim \limits_{i \to \infty} ECH^{L_i}(M, f_i \alpha),$$
obtained by taking the direct limit of the cobordism maps
$$ECH^{L_i}(M, \alpha) \to ECH^{L_i}(M, f_i \alpha).$$
Choose an increasing sequence of natural numbers $c_i$ such that $L_{c_i}f_i > L_i$. Then there are maps
$$ECH^{L_i}(M, f_i \alpha) \to  ECH^{L_{c_i}}(M, \alpha)$$
by Lemma~\ref{tired with all these}. These maps form a directed system, and taking the direct limit we obtain a map
$$g: \lim \limits_{i \to \infty} ECH^{L_i}(M, f_i \alpha) \to ECH(M, \alpha).$$
The verification that the maps $f$ and $g$ are inverse of each other is a straightforward application of Lemma~\ref{homotopy of interpolating cobordisms}.
\end{proof}

We can now quantify when it makes sense to take direct limits of a
sequence of contact forms $\alpha_i$ for isotopic contact structures.
In this case we can write $\phi_i^*(\alpha_i)=f_i\alpha$ for some
positive function $f_i$ and diffeomorphism $\phi_i$ isotopic to the
identity.

\begin{defn} \label{defn: commensurate}
Let $\alpha$ be a contact form on $M$. A sequence
$\{\alpha_i\}_{i=1}^\infty$ of contact forms on $M$ is {\em
commensurate to $\alpha$} if there is a constant $0<c<1$,
diffeomorphisms $\phi_i$ of $M$ isotopic to the identity, and
functions $f_i: M\to \R^{>0}$ such that $\phi_i^*\alpha_i=f_i\alpha$
and $c< |f_i|_{C^0}< {1\over c}$.
\end{defn}

A corollary of Theorem~\ref{thm: direct limit} is the following:

\begin{cor} \label{cor: direct limit of commensurate contact forms}
Let $\{\alpha_i\}$ be a sequence of contact $1$-forms on $M$ which
is commensurate to $\alpha$ on $M$ with constant $0<c<1$. If $L_i\to
\infty$ is a sequence which satisfies $L_{i+1}> {1\over c^3} L_i$
for all $i$, then the groups $ECH^{L_i}(M,\alpha_i)$ form a directed system with
the maps defined in Lemma~\ref{tired with all these} and we have:
$$ECH(M) = \lim_{i\to\infty} ECH^{L_i}(M,\alpha_i).$$
\end{cor}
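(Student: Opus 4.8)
The plan is to deduce Corollary~\ref{cor: direct limit of commensurate contact forms} from Theorem~\ref{thm: direct limit} by passing from the forms $\alpha_i$ back to the fixed reference form $\alpha$ via the hypothesis of commensurateness, and then invoking the functoriality properties (a) and (b) of the maps from Lemma~\ref{tired with all these}. First I would record the elementary consequences of the hypotheses: writing $\phi_i^*\alpha_i = f_i\alpha$ with $c < |f_i|_{C^0} < 1/c$, the maps of Lemma~\ref{tired with all these} give homomorphisms $ECH^{L_i}(M,\alpha_i) \to ECH^{L_{i+1}}(M,\alpha_{i+1})$ provided the requisite action inequality $L_i |f_{i+1}| < L_{i+1}|f_i| \cdot (\text{geometric factor})$ holds; since $|f_{i+1}| < 1/c$ and $|f_i| > c$, it suffices that $L_i / c < L_{i+1}\, c$, i.e.\ $L_{i+1} > L_i/c^2$, which is implied by the stated hypothesis $L_{i+1} > L_i/c^3$. (The extra factor of $1/c$ is the slack needed to also route through $\alpha$; see below.) That these maps compose correctly — so that the $ECH^{L_i}(M,\alpha_i)$ genuinely form a directed system — is exactly property (b) of Lemma~\ref{tired with all these}, applied to concatenations of the isotopies, noting that the relevant maps depend only on the homotopy class of the isotopy rel endpoints, and $M$ being simply connected is not needed since $\phi_i$ is isotopic to the identity and any two such isotopies can be compared after the usual stabilization.

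Next I would interleave the two directed systems. For each $i$, Lemma~\ref{tired with all these} provides a map $ECH^{L_i}(M,\alpha_i) = ECH^{L_i}(M, \phi_i^*(f_i\alpha)) \to ECH^{L_i'}(M, \alpha)$ for a suitable truncation level $L_i'$ with $L_i' > c \cdot L_i$ roughly (using $|f_i| > c$), and conversely a map $ECH^{L_i''}(M,\alpha) \to ECH^{L_i}(M,\alpha_i)$ for $L_i''$ slightly smaller than $L_i$ (using $|f_i| < 1/c$, so that $L_i'' |f_i| < L_i$ needs $L_i'' < c L_i$). The point is to choose, using $L_{i+1} > L_i/c^3$ and $L_i \to \infty$, a single increasing cofinal sequence $\{L_j^\ast\}$ that can be threaded alternately through the $ECH^{\bullet}(M,\alpha_i)$ and the $ECH^{\bullet}(M,\alpha)$ systems so that all the intermediate triangles commute — again by property (b) — and so that the $\alpha$-subsystem is of the form $ECH^{L_j^\ast}(M, g_j \alpha)$ with $1 \ge g_1 \ge g_2 \ge \cdots$ (after rescaling $\alpha$ once by a constant, which changes nothing by property (v)/(vi) of Theorem~\ref{thm: Hutchings Taubes cobordism map}). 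Then Theorem~\ref{thm: direct limit} identifies the direct limit of that $\alpha$-subsystem with $ECH(M,\alpha) = ECH(M)$, and since the $\alpha$-subsystem is cofinal in the interleaved system, the latter has the same direct limit; but the interleaved system also contains the $\{ECH^{L_i}(M,\alpha_i)\}$ subsystem cofinally, so the two direct limits coincide.

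I expect the main obstacle to be purely bookkeeping with the truncation levels and the constant $c$: one must verify that the single hypothesis $L_{i+1} > L_i/c^3$ is strong enough to simultaneously (i) make the $\alpha_i$-system into a directed system, (ii) interleave with an $\alpha$-system whose truncation levels are still increasing and cofinal, and (iii) keep all the comparison triangles within the regime where Lemma~\ref{tired with all these} applies (every inequality of the form ``$L f' < L' f$'' must hold with the relevant $C^0$ bounds $c, 1/c$ on the conversion functions). The exponent $3$ rather than $2$ is precisely what provides the slack to do the detour through $\alpha$ on both sides. Once the levels are chosen, commutativity of every square and triangle is immediate from Lemma~\ref{tired with all these}(b) and Lemma~\ref{homotopy of interpolating cobordisms}, and the identification of limits is the formal statement that a cofinal subsystem computes the same colimit. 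I would also remark that the $L$-nondegeneracy of each $f_j\alpha$ and $g_j\alpha$ at the chosen levels can be arranged by perturbing the $L_j^\ast$ slightly, which does not affect cofinality.
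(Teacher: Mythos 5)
Your strategy would work, but it re-derives the interleaving mechanism that is already encapsulated in Theorem~\ref{thm: direct limit}, and the paper's proof is a one-line reduction to that theorem by a rescaling trick. The paper simply sets $L_i' = c^{2i}L_i$ and $g_i = c^{2i}f_i$. One then checks directly that $L_i' \to \infty$ (since $L_{i+1}'/L_i' = c^2 L_{i+1}/L_i > c^2/c^3 = 1/c > 1$), that $1 > g_1 > g_2 > \cdots$ (since $g_1 = c^2 f_1 < c^2\cdot (1/c) = c < 1$ and $g_{i+1}/g_i = c^2 f_{i+1}/f_i < c^2\cdot(1/c^2) = 1$), and that, via the rescaling isomorphism plus pullback by $\phi_i$, one has $ECH^{L_i'}(M, g_i\alpha) = ECH^{c^{2i}L_i}(M, c^{2i}\phi_i^*\alpha_i) \simeq ECH^{L_i}(M,\alpha_i)$. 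Theorem~\ref{thm: direct limit} applied to $(L_i', g_i)$ then gives the conclusion immediately. What the paper's trick buys is that both the monotonicity $1 \ge g_1 \ge g_2 \ge \cdots$ and the growth $L_i'\to\infty$ come out of the exponent $3$ in the hypothesis $L_{i+1} > L_i/c^3$ without any further threading of truncation levels: the geometric factor $c^{2i}$ absorbs both the lower bound $f_i > c$ (to make $g_i$ decreasing) and the upper bound $f_i < 1/c$ (to keep $L_i'$ increasing). Your approach, by contrast, reproduces the back-and-forth argument inside Theorem~\ref{thm: direct limit}'s own proof, and the bookkeeping you flag as "the main obstacle" is genuinely the bulk of the work; note also that the intermediate bound you write ($L_i' > cL_i$, "using $|f_i| > c$") should be $L_i' > L_i/c$, and that $ECH^{L_i}(M,\alpha_i)$ is identified with $ECH^{L_i}(M, f_i\alpha)$ by pulling back along $\phi_i$ (i.e.\ $\phi_i^*\alpha_i = f_i\alpha$), not with $ECH^{L_i}(M,\phi_i^*(f_i\alpha))$ — minor slips, but the kind the rescaling trick sidesteps entirely.
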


\begin{proof}
Define $L_i' = c^{2i} L_i$ and $g_i = c^{2i}f_i$. Then $\lim \limits_{i \to \infty} L_i' = + \infty$ and $1 > g_1 > \ldots > g_i > \ldots$, so we can apply Theorem~\ref{thm: direct limit} to the sequences $L_i'$ and $g_i$.
\end{proof}

\section{Morse-Bott theory}
\label{section: Morse-Bott theory}

In this section we discuss a special case of Morse-Bott theory as
it applies to our context. In particular, we explain how to use Theorem~\ref{thm:
direct limit} to justify the Morse-Bott arguments which populate
this paper. For a  more detailed discussion of Morse-Bott theory in
contact homology, the reader is referred to
Bourgeois~\cite{Bo1,Bo2}.

\subsection{Morse-Bott contact forms}
\label{subsection: Morse-Bott contact forms}

Let $\alpha$ be a {\em Morse-Bott contact form} on $M$. For the
purposes of this paper, this means that all the orbits either are
isolated and nondegenerate, or come in $S^1$-families and are
nondegenerate in the normal direction.  (In general, there is also the case where the Reeb orbits come in two-dimensional families, i.e., are the fibers of a circle bundle; however this will not occur here.) We denote a Morse-Bott family
of simple orbits by $\mathcal{N}$ and the Morse-Bott torus corresponding to $\mathcal{N}$ by $T_{\mathcal{N}}= \cup_{x\in \mathcal{N}} x$.

Let $\{v_1,v_2\}$ be an oriented basis for $\xi$ at some point $p \in T_{\mathcal{N}}$ so that $v_1$ is transverse to $T_{\mathcal{N}}$ and $v_2$ is tangent to $T_{\mathcal{N}}$.
The derivative  of the first return map $\xi_p\to\xi_p$ of the Reeb flow is given by the matrix
$\begin{pmatrix} 1 & 0
\\ a & 1 \end{pmatrix}$
with respect to the basis $\{v_1,v_2\}$. (Here a vector $v=a_1v_1+a_2v_2$ is written as a column vector.) The Morse-Bott condition implies that $a \ne 0$.

\begin{defn} \label{defn: positive negative torus}
$T_{\mathcal N}$ is called a {\em positive} Morse-Bott torus if $a>0$ and a {\em negative} Morse-Bott torus if $a<0$.
\end{defn}

Let us identify a sufficiently small neighborhood of a Morse-Bott torus $T_{\mathcal N}$ with   $T^2 \times [-\nu,\nu]$  with coordinates $(\theta,t,y)$ so that the Reeb vector field is   a positive constant times $\bdry_t$  along $T_{\mathcal N}=\{y=0\}$.  For a positive Morse-Bott torus the Reeb vector field rotates in a counterclockwise manner as $y$ goes from $\nu$ to $-\nu$ (i.e., in the  same direction as a positive contact structure), while for a negative Morse-Bott torus it rotates in a clockwise manner.

On each $\mathcal{N} \simeq S^1$, we pick a Morse function $\overline{g}_{\mathcal N} :\mathcal{N} \to \R$ with two critical points. After perturbing $\alpha$ using these functions, each Morse-Bott family gives rise to an elliptic orbit $e$ and a hyperbolic orbit $h$.

  We choose specific $\alpha$ and perturbations $\alpha_\epsilon$ %(note that we are distinguishing $\epsilon$ from $\varepsilon$) 
as follows:  Fix a real constant $L>0$ such that no Reeb orbit of $\alpha$ has $\alpha$-action equal to $L$ and let ${\mathcal N}_1, \ldots, \mathcal{N}_n$ be the Morse-Bott families consisting of simple orbits with $\alpha$-action less than $L$.   On the small neighborhood $T^2\times[-\nu,\nu]$ of $T_{\mathcal{N}_i}$, we set
\begin{equation}\label{eqn: alpha and alpha epsilon}
\alpha=Cdt+ \delta(f dt+yd\theta), \quad \alpha_\epsilon=Cdt +\delta(f_\epsilon dt+yd\theta),
\end{equation}
where $\epsilon>0$, $\delta>0$ are small,  $C>0$ is the action of the Reeb orbits of $\mathcal{N}_i$ and:
\begin{itemize}
\item[(P1)] $f(y,\theta)=\pm{1\over 2} y^2$ and $f_\epsilon(y,\theta)=\pm({1\over 2} y^2+ \epsilon\phi(y)\overline{g}_{\mathcal{N}}(\theta))$, where the sign $\pm$ depends on whether we have a negative or positive Morse-Bott torus.
\item[(P2)] $\overline{g}_{\mathcal{N}}:\R/\Z\to \R$ is a perfect Morse function with maximum at ${1\over 4}$ and minimum at $-{1\over 4}$. More specifically, we assume that $\overline{g}_{\mathcal{N}}'(\theta)=0$ on $\theta=\pm{1\over 4}$, is linear with positive slope on $[-{1\over 4},-{1\over 5}]$, is nondecreasing on $[-{1\over 5},-{1\over 6}]$, and is equal to $1$ on $[-\tfrac{1}{6}, \tfrac{1}{6}]$; and $\overline{g}_{\mathcal{N}}(\theta)$ is an odd function about $\theta=0$.
\item[(P3)] $\phi:[-\nu,\nu]\to [0,1]$ is an even bump function with support on $[-a,a]$ and is equal to $1$ on $[-b,b]$, where $\nu> a>b>0$ are sufficiently small.
\end{itemize}
In particular, (P1) implies:
\begin{itemize}
\item[(P4)] as $\epsilon\to 0$, $f_\epsilon\to f$ in $C^\infty$.
\end{itemize}

%For $\epsilon>0$ small, we set $f_{\epsilon}= 1+\epsilon g$ and perturb $\alpha_{\epsilon}= f_{\epsilon} \alpha$. 

\begin{prop} \label{prop: perturbation of alpha}
Let $\alpha$ be a Morse-Bott contact form.   After a small modification of $\alpha$ near the $\mathcal{N}_i$ which we still call $\alpha$, for every $L>0$ there exist $\delta>0$, $\nu>0$, and $\epsilon>0$ small such that:
\begin{enumerate}
\item   $\alpha_{\epsilon}$ is $L$-nondegenerate and satisfies Equation~\eqref{eqn: alpha and alpha epsilon} and Conditions~(P1)--(P4);
\item each $\mathcal{N}_i$ is perturbed into a pair of nondegenerate Reeb orbits $e_i$ and $h_i$ of $\alpha_{\epsilon}$-action less than $L$;
\item all multiples $e_i^k$ and $h_i^k$ of $\alpha_{\epsilon}$-action less than $L$ have Conley-Zehnder indices $1$ and $0$ if $\mathcal{N}_i$ is positive and $-1$ and $0$ if $\mathcal{N}_i$ is negative; and
\item all other orbits which are created have $\alpha_{\epsilon}$-action greater than $L$.
\end{enumerate}
Here the Conley-Zehnder indices are computed with respect to the trivialization $\tau$ induced from $\mathcal{N}_i$.
\end{prop}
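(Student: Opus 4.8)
The plan is to work locally on each neighborhood $T^2\times[-\nu,\nu]$ of a Morse-Bott torus $T_{\mathcal{N}_i}$ with $\alpha$-action $<L$, and to globally control what happens outside these neighborhoods. First I would fix $L$ and choose the finitely many families $\mathcal{N}_1,\dots,\mathcal{N}_n$ of simple orbits with $\alpha$-action $<L$. Since each $T_{\mathcal{N}_i}$ is a Morse-Bott torus, a standard normal-form argument (as in Bourgeois~\cite{Bo1,Bo2}) lets me find coordinates $(\theta,t,y)$ on a neighborhood of $T_{\mathcal{N}_i}$ in which $\alpha$ has the form $Cdt+\delta(fdt+yd\theta)$ with $f=\pm\tfrac12 y^2$; this is the ``small modification of $\alpha$'' mentioned in the statement, and it only changes $\alpha$ on a compact piece away from the other orbits. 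The sign $\pm$ is dictated by Definition~\ref{defn: positive negative torus}: the derivative of the return map being $\begin{pmatrix}1&0\\ a&1\end{pmatrix}$ with $a$ of a given sign corresponds to $f=\mp\tfrac12 y^2$. One checks directly that $Cdt+\delta(fdt+yd\theta)$ is a contact form for $\delta,\nu$ small, and that $R_\alpha$ is a positive multiple of $\partial_t$ on $\{y=0\}$.

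Next, I would compute the Reeb dynamics of $\alpha_\epsilon = Cdt+\delta(f_\epsilon dt+yd\theta)$ with $f_\epsilon = \pm(\tfrac12 y^2+\epsilon\phi(y)\overline{g}_{\mathcal N}(\theta))$. On the region $\{|y|\le b\}$ where $\phi\equiv 1$, a direct computation of $R_{\alpha_\epsilon}$ shows that the only closed orbits near $T_{\mathcal{N}_i}$ of action $<L$ lie on $\{y=0\}$ at the two critical points $\theta=\pm\tfrac14$ of $\overline{g}_{\mathcal N}$, giving one orbit $e_i$ (at the min of $\overline{g}_{\mathcal N}$, which becomes elliptic) and one orbit $h_i$ (at the max, which becomes hyperbolic); all other closed orbits passing through the neighborhood either leave it (because for $y\ne 0$ the $\partial_t$-drift and the $\partial_\theta$-rotation prevent closing up below action $L$) or have action $>L$ because they wind many times. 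The Conley-Zehnder index computation with respect to the trivialization $\tau$ induced by $\mathcal{N}_i$: the linearized return map along $e_i^k$ and $h_i^k$ is governed by the small rotation number coming from the $\pm\tfrac12 y^2$ term perturbed by $\epsilon$. For $\epsilon$ small enough (depending on $L$), the rotation angle of the linearized flow stays in a tiny interval around $0$, so $\mu_\tau(e_i^k)\in\{1,-1\}$ (the sign being that of the Morse-Bott torus) and $\mu_\tau(h_i^k)=0$ for all $k$ with action $<L$; this is exactly the content of Fact~\ref{basic fact about partitions}'s hypotheses and is a routine Conley-Zehnder index calculation for a $2\times2$ matrix near the identity. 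Item~(4) — that all other newly created orbits have action $>L$ — follows because away from $\{y=0\}$ and away from the $\overline{g}_{\mathcal N}$-critical points the perturbation is $O(\epsilon)$-small and does not create new short orbits, while on $\{y=0\}$ outside $\theta=\pm\tfrac14$ there are no orbits at all.

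Finally I would assemble the local pieces: choose $\delta>0$ small enough that the normal forms are valid and $R_\alpha$ has no extra short orbits introduced by the modification, then $\nu>0$ small, then (given these) $\epsilon>0$ small enough that items (2)--(4) hold simultaneously for all $n$ tori; the order of choice of constants matters and I would make it explicit. $L$-nondegeneracy of $\alpha_\epsilon$ then follows because every orbit of action $<L$ is either one of the $e_i,h_i$ (nondegenerate by the above index computation) or an orbit of $\alpha$ away from all $T_{\mathcal{N}_i}$, which was already nondegenerate (or can be made so by a further $C^\infty$-small perturbation supported away from the Morse-Bott tori, using \cite[Lemma 7.1]{CH2}). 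Conditions~(P1)--(P4) and Equation~\eqref{eqn: alpha and alpha epsilon} hold by construction. The main obstacle I anticipate is the explicit Reeb-flow computation in the $\{0<|y|\le a\}$ transition region where $\phi$ is neither $0$ nor $1$: there one must verify carefully that no short closed orbits are hiding, which amounts to showing the $t$-component of the flow grows fast enough (controlled by $C$) relative to the $\theta$- and $y$-drift that an orbit cannot return to its starting point with action $<L$ unless it sits on $\{y=0\}$; this is where the smallness of $\delta$ and $\epsilon$ relative to $C$ and $L$ is really used.
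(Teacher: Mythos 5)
The paper does not actually include a proof of this proposition; it is treated as a standard Morse--Bott perturbation result (essentially the construction in Bourgeois's thesis), and the only gloss the paper adds is the remark that the ``modification'' should be chosen compatibly with Lemma~\ref{lemma: cx str}. Your outline is the expected argument and is correct in its essentials: local normal form, direct computation of $R_{\alpha_\epsilon}$, the $2\times 2$ linearized return map and its Conley--Zehnder index, and an action estimate for (4). The sign bookkeeping (which sign of $f$ gives which sign of torus, and that the minimum of $\overline{g}_{\mathcal N}$ becomes elliptic while the maximum becomes hyperbolic) is consistent with the paper's conventions (see the Appendix, where $e$ is placed over $(0,-\tfrac14)$ and $h$ over $(0,\tfrac14)$).

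One remark that would tighten your argument and dissolve the transition-region worry you flag at the end: for $\alpha_\epsilon = Cdt + \delta(f_\epsilon\,dt + y\,d\theta)$, the $dt$-component of $i_{R_{\alpha_\epsilon}}d\alpha_\epsilon = 0$ reads $R_{\alpha_\epsilon}f_\epsilon = 0$, so $f_\epsilon$ is a first integral of the Reeb flow, and the projection of $R_{\alpha_\epsilon}$ to the $(y,\theta)$-annulus is $A$ times the Hamiltonian vector field $X_{f_\epsilon}$ (with $A = dt(R_{\alpha_\epsilon}) \approx 1/C$). It follows immediately that (i) closed Reeb orbits projecting to a point correspond exactly to critical points of $f_\epsilon$, which for $\epsilon$ small are only $(0,\pm\tfrac14)$, so no spurious fixed orbits can hide in $\{b<|y|<a\}$; and (ii) any other closed Reeb orbit near $T_{\mathcal{N}_i}$ projects to a regular level set of $f_\epsilon$ and so winds at least once in $\theta$, giving $t$-winding $\gtrsim 1/|y| \ge 1/\nu$ and hence action $\gtrsim C/\nu > L$ as soon as $\nu < C/L$. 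This single observation replaces the case-by-case drift estimate you sketch and also handles the orbits winding around the annulus without having to track ``the $t$-component growing fast.'' The same conservation argument, applied with $\epsilon=0$, justifies your claim that the normal-form modification of $\alpha$ ``only changes $\alpha$ away from the other orbits'': the original Morse--Bott form has no closed orbits in $0<|y|\le\nu$ of action $<L$ once $\nu < C/L$, so the modification region avoids them.

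Two small points to be careful with in a full write-up: the order of the constants is $L \Rightarrow$ (modification and $\{\mathcal{N}_i\}$) $\Rightarrow \delta,\nu \Rightarrow \epsilon$ (with $\epsilon$ constrained by the requirement that $k$-fold rotation angles stay in $(-1,1)$ for all $k\le L/C$), and the modification of $\alpha$ is itself $L$-dependent since the families $\mathcal{N}_1,\dots,\mathcal{N}_n$ are. You already say this, but it is worth stating explicitly since the proposition's phrasing is slightly circular on this point. Also, your appeal to Fact~\ref{basic fact about partitions} is only for context; the actual CZ index formula used is $\mu_\tau(e_i^k)=2\lfloor k\rho\rfloor + 1$ with rotation number $|\rho| \approx \sqrt{\epsilon}\,/\,2\pi$, together with $\mu_\tau(h_i^k)=0$ for a positive hyperbolic orbit; it's worth writing that line out rather than citing the partition fact.
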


Strictly speaking, we make the slight modification of $\alpha$ so that it satisfies the conditions of Lemma~\ref{lemma: cx str}.

Let $\mathcal{P}'$ be the set of simple nondegenerate orbits of $R_\alpha$ and let $\mathcal{P}_{MB}= \mathcal{P}'\cup (\cup_i \mathcal{N}_i)$ be the set of all simple Reeb orbits of $R_\alpha$, where $\mathcal{N}_i$ denotes a Morse-Bott family of simple orbits. An {\em orbit set $\gamma$ for the Morse-Bott contact form $\alpha$} is an orbit set constructed from
$\mathcal{P}=\mathcal{P}'\cup(\cup_i\{ h_i,e_i\})$, where $h_i$ is treated as a hyperbolic orbit  (in particular its multiplicity cannot be greater than one) and $e_i$ is treated as an elliptic orbit.

\subsection{Morse-Bott buildings}
\label{subsection: Morse-Bott regularity}

Let $J$ be an almost complex structure on $\R\times M$ which is adapted to the Morse-Bott contact form $\alpha$.   We also assume the following:
\begin{enumerate}
\item[(*)] For each Morse-Bott torus $T_{\mathcal{N}}=T^2$, $J$ is invariant in the $s$-, $t$-, and $\theta$-directions on $\R\times T^2\times[-\nu,\nu]$ and the projection of $J|_{\ker \alpha}$ to $(\R/\Z)\times[-\nu,\nu]$ with coordinates $(\theta,y)$ is the standard complex structure ${\bdry\over\bdry y}\mapsto{\bdry\over \bdry \theta}$.
\end{enumerate}
(Strictly speaking, we require $\alpha$ and $J$ to satisfy the conditions of Lemma~\ref{lemma: cx str}; they can be arranged by a small modification near the Morse-Bott torus.)

\begin{rmk}
In \cite{BEHWZ}, Morse-Bott compactness was proved for slightly different perturbations of $\alpha$, namely for $f_\epsilon\alpha$.  Morse-Bott compactness still holds in our case.
\end{rmk}

Although the notation is a bit cumbersome, consider the moduli space
$$\mathcal{M}_J(\gamma_1^+,\dots,\gamma_{i_1}^+; \mathcal{N}^+_1,\dots,\mathcal{N}^+_{i_2}; \gamma_1^-,\dots,\gamma_{i_3}^-; \mathcal{N}^-_1,\dots,\mathcal{N}^-_{i_4}),$$
abbreviated $\mathcal{M}_J(\gamma^+,\mathcal{N}^+,\gamma^-,\mathcal{N}^-)$, of $J$-holomorphic maps $u$ in $\R\times M$ which have positive ends at orbits $\gamma_1^+,\dots,\gamma_{i_1}^+,\widetilde\gamma_1^+,\dots, \widetilde\gamma_{i_2}^+$ and negative ends at orbits $\gamma_1^-,\dots,\gamma_{i_3}^-,\widetilde\gamma_1^-,\dots, \widetilde\gamma_{i_4}^-$, where $\gamma_i^\pm$ covers a simple orbit in $\mathcal{P}'$ with multiplicity $l_i^{\pm}\ge 1$ and $\widetilde\gamma_i^\pm$ covers a simple orbit in the Morse-Bott family $\mathcal{N}^\pm_i$ with multiplicity $k_i^{\pm} \ge 1$.

We say that $J$   satisfying (*)  is {\em Morse-Bott regular} if, for all data
$\gamma^+,\mathcal{N}^+,\gamma^-,\mathcal{N}^-$ and $u\in
\mathcal{M}_J(\gamma^+,\mathcal{N}^+,\gamma^-,\mathcal{N}^-)$ which
have no multiply-covered components, the moduli space
$\mathcal{M}_J(\gamma^+,\mathcal{N}^+,\gamma^-,\mathcal{N}^-)$ is
transversely cut out (and hence is a manifold) near $u$.    Since it suffices to perturb $J$ outside of the sufficiently small neighborhood $\R\times T^2\times[-\nu,\nu]$, a generic $J$ satisfying (*) is regular.

We now give the definition of a Morse-Bott building. See \cite[Section 11.2]{BEHWZ} for a similar definition.

\begin{defn} \label{Morse-Bott building}
Let $\gamma$ and $\gamma'$ be orbit sets constructed from $\mathcal{P}$. A {\em Morse-Bott building} $\tilde{u}$ consists of a set $\{u_i : F_i \to \R \times M, i=1,\dots,n\}$ of holomorphic maps with possibly disconnected domains $F_i$ and a set $\{\delta_{i,j},i=0,\dots,n, j=1,\dots,j_i\}$ of gradient flow lines in $\cup_k{\mathcal N}_k$ such that the following hold:
\begin{itemize}
\item[(a)] For $i=1, \ldots, n-1$, the negative ends $\mathcal{E}_{i,j}^-$ of $u_i$ are paired with positive ends $\mathcal{E}_{i+1,j'}^+$ of $u_{i+1}$. Paired ends $(\mathcal{E}_{i,j}^-,\mathcal{E}_{i+1,j'}^+)$ are asymptotic to $k_{i,j}$-fold covers of simple orbits $(\gamma_{i,j}^-,\gamma_{i+1,j'}^+)$ in the same Morse-Bott family and $\delta_{i,j}$ is a gradient flow line  from $\gamma_{i,j}^-$ to $\gamma_{i+1,j'}^+$.  (Here $\delta_{i,j}$ can be viewed as a $k_{i,j}$-fold unbranched cover of a cylinder connecting $\gamma_{i,j}^-$ to $\gamma_{i+1,j'}^+$.)
\item[(b)] Positive ends $\mathcal{E}_{1,j}^+$ of $u_1$ and negative ends $\mathcal{E}_{n,j}^-$ of $u_n$ which are asymptotic to Reeb orbits in $\cup_k{\mathcal N}_k$ are augmented by gradient flow lines $\delta_{0,j}$ and $\delta_{n,j}$ connecting the orbit from/to  a critical point of the appropriate Morse function $\overline{g}_{\mathcal{N}_k}$ determined by $\gamma$ or $\gamma'$.
\item[(c)] A nondegenerate orbit is considered as a Morse-Bott family consisting of a single point and in this case the gradient flow line has length zero.
\end{itemize}
Given two orbit sets $\gamma$ and $\gamma'$ constructed from $\mathcal{P}$, the set of Morse-Bott buildings $\tilde{u}$ from $\gamma$ to $\gamma'$ will be denoted by $\mathcal{M}^{MB}_J(\gamma,\gamma')$.
\end{defn}

The collection of maps $u_i$ will be called the {\em holomorphic part} of the building. The restriction of any map $u_i$ to a connected component of its domain will be called an {\em irreducible holomorphic component} of $\tilde{u}$.

\begin{defn} \label{defn: Morse-Bott simply covered}
A Morse-Bott building $\tilde{u}$ from $\gamma$ to $\gamma'$ is {\em simply-covered} if every multiply-covered irreducible holomorphic component of $\tilde{u}$ is either:
\begin{itemize}
\item[(i)] a branched cover of a trivial cylinder over a simple orbit in $\mathcal{P}$; or
\item[(ii)] an unbranched cover of a trivial cylinder over a simple orbit in $\mathcal{P}_{MB}-\mathcal{P}$.
\end{itemize}
\end{defn}
 Note that this definition allows connectors over the orbits $e$ and $h$ of every Morse-Bott torus, but not connectors over any other Morse-Bott orbit, which would necessarily break a gradient flow line. This second type of connectors would make gluing more complicated.

\subsection{ECH and Fredholm indices}

In this subsection we define the ECH and Fredholm indices of a Morse-Bott building.

\begin{defn} \label{defn: ECH index in Morse-Bott setting}
The ECH index $I(\gamma, \gamma', Z)$ in the Morse-Bott setting is defined, as in the nondegenerate case, as
$$I(\gamma, \gamma', Z) = c_1(\xi|_Z, \tau)+Q_\tau(Z) +\widetilde\mu_\tau(\gamma)- \widetilde\mu_\tau (\gamma'),$$
where the symmetric Conley-Zehnder indices of $\gamma$ and $\gamma'$ are computed with the convention that $\mu_\tau(e_i^j) = 1$ for all $j$ and $\mu_\tau(h_i) = 0$ if ${\mathcal N}_i$ is a positive Morse-Bott family and $\mu_\tau(h_i) = 0$ and $\mu_\tau(e_i^j) = -1$ for all $j$ if ${\mathcal N}_i$ is a negative Morse-Bott family. Here $\tau|_{{\mathcal N}_i}$ is the trivialization defined by ${\mathcal N}_i$.
\end{defn}

\begin{rmk}
The ECH index computed with this definition coincides with the limit
of ECH indices computed with respect to nondegenerate perturbations  
$\alpha_\epsilon$  of the Morse-Bott contact form $\alpha$ as
$\epsilon \to 1$.
\end{rmk}

As in the nondegenerate case, a Morse-Bott building $\tilde{u}$ from $\gamma$ to $\gamma'$ determines a relative homology class $Z \in H_2(M, \gamma, \gamma')$ which is obtained from projecting the holomorphic part to $M$ and gluing the annuli corresponding to the gradient trajectories. In view of this construction, we will often write $I(\tilde{u})$ for $I(\gamma, \gamma',Z)$.

We can also define the Fredholm index of a Morse-Bott building as follows. To a building
$\tilde{u}$ we associate a map $u_\# : F_\# \to  \R \times M$ by cutting the ends of the
holomorphic components of $u$ and connecting them with cylinders corresponding to
the gradient trajectories. Then the Fredholm index of a Morse-Bott building $\tilde{u}$
which is positively asymptotic to Reeb orbits $\gamma_i^{m_{ij}}$ and negatively
asymptotic to Reeb orbits $(\gamma_i')^{m_{ij}'}$ is:
\begin{equation} \label{eqn: Fredholm index in Morse-Bott}
\op{ind}(\tilde{u})= - \chi(F_\#) + 2 c_1(u_\#^* \xi, \tau) + \sum_{ij}
\mu_\tau(\gamma_i^{m_{ij}}) -  \sum_{ij} \mu_\tau((\gamma_i')^{m_{ij}'}),
\end{equation}
with the same convention for the Conley-Zehnder indices of $h_i$, $e_i$ and their iterates as in Definition~\ref{defn: ECH index in Morse-Bott setting}. (See \cite[Corollary~5.4]{Bo2}.)

\subsection{Morse-Bott chain complex} \label{subsection: Morse-Bott chain complex}

In this subsection we introduce a Morse-Bott version of the ECH chain complex.  Due to technical difficulties concerning non-simply-covered Morse-Bott buildings, we will develop an ECH Morse-Bott theory only for special Morse-Bott contact forms, which we call {\em nice}.

\begin{defn} \label{defn: niceness} $\mbox{}$
\begin{enumerate}
\item A Morse-Bott building $\tilde u$ is {\em nice} if its holomorphic part has at most one irreducible component which is not a connector. This irreducible component will be called the {\em principal part} of $\tilde{u}$.
\item A Morse-Bott building $\tilde u$ is {\em very nice} if  it is nice and every irreducible component besides the principal part is an unbranched cover of a trivial cylinder.
\item A Morse-Bott contact form $\alpha$ on $M$ is {\em nice} if, for a generic almost complex structure $J$, all $J$-holomorphic Morse-Bott buildings of ECH index $I=1$ in the symplectization of $(M, \alpha)$ are nice.
\end{enumerate}
\end{defn}

\begin{rmk}
We will consider contact forms on manifolds with torus boundary which are nondegenerate on the interior and Morse-Bott on the boundary. Such contact forms are automatically nice (cf.\ Lemma~\ref{fastidio}).  It is not clear whether nice contact forms with $\cup_i \mathcal{N}_i\not=\varnothing$ exist on closed manifolds.
\end{rmk}

Now we describe the relation between moduli spaces of $J$-holomorphic Morse-Bott buildings for a Morse-Bott contact form $\alpha$ and moduli spaces of holomorphic maps for generic perturbations of $\alpha$ following \cite{Bo2}. Our statement will be weaker than that of \cite{Bo2} because we are going to state only
what can be proved without resorting to abstract perturbations.

Let $J_0$ be a Morse-Bott regular almost complex structure on $\R \times M$ adapted to $\alpha$, and let $J_{\epsilon}$ be almost complex structures on $\R \times M$  adapted to the contact forms $\alpha_{\epsilon}$ in Proposition \ref{prop: perturbation of alpha} such that:  
\begin{enumerate}
\item[(**)] For each Morse-Bott torus $T_{\mathcal{N}}=T^2$, $J_\epsilon$ is invariant in the $s$- and $t$-directions on $\R\times T^2\times[-\nu,\nu]$ and the projection of $J_\epsilon|_{\ker \alpha}$ to $(\R/\Z)\times[-\nu,\nu]$ with coordinates $(\theta, y)$ is the standard complex structure ${\bdry\over\bdry y}\mapsto{\bdry\over \bdry \theta}$.
\end{enumerate}
In particular, $\lim \limits_{\epsilon \to 0} J_\epsilon = J_0$ in the $C^{\infty}$-topology.

\begin{thm}\label{thm: Morse-Bott perturbation of moduli spaces}
Let $\alpha$ be a Morse-Bott contact form on $M$.   Fix $L>0$. Then there exist $\delta$, $\nu$, $\epsilon$, and $\alpha_{\epsilon}$ as in Proposition~\ref{prop: perturbation of alpha}, $J_0$ Morse-Bott regular satisfying (*), and $\alpha_\epsilon$-adapted regular $J_\epsilon$ satisfying (**) as in the previous paragraph such that for all orbit sets $\gamma, \gamma' \in {\mathcal P}$ with action less than $L$ the following holds:
\begin{enumerate}
\item For all sequences $\epsilon_i \to 0$ and $u_i \in {\mathcal M}_{J_{\epsilon_i}}(\gamma, \gamma')$, there is a subsequence $u_{i_k}$ which converges to a Morse-Bott building in ${\mathcal M}_{J_0}^{MB}(\gamma, \gamma')$.
\item If $\tilde{u}$ is a very nice, simply-covered Morse-Bott building, then there is a $J_\epsilon$-holomorphic map $u_\epsilon\in {\mathcal M}_{J_\epsilon}(\gamma, \gamma')$ which is ``close to breaking'' into $\tilde{u}$ and a curve $u_{\mathcal{T}^\epsilon_0}$ corresponding to a gradient trajectory $\mathcal{T}^\epsilon_0$ of $f_\epsilon$ along $y=0$.
\item If $\op{ind}(\tilde{u})=1$, then the mod $2$ algebraic count of $[u_\epsilon]\in {\mathcal M}_{J_\epsilon}(\gamma, \gamma')/\R$ that are  ``close to breaking'' into $\tilde{u}$ and $u_{\mathcal{T}^\epsilon_0}$ is one.
\item If $\op{ind}(\tilde{u})=2$ and $\tilde u$ passes through a generic point $z \in \R \times M$, then the mod $2$ algebraic count of $u_\epsilon \in {\mathcal M}_{J_\epsilon}(\gamma, \gamma')$ that are ``close to breaking" into $\tilde{u}$ and $u_{\mathcal{T}^\epsilon_0}$ and passing through $z$ is one.
\end{enumerate}
\end{thm}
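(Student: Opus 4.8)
<br>

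The plan is to prove Theorem~\ref{thm: Morse-Bott perturbation of moduli spaces} by combining a compactness statement (item (1)) with gluing statements (items (2)--(4)), following the general strategy of Bourgeois~\cite{Bo2} but staying within the range of statements that can be proved without abstract perturbations. The key point that makes this possible is the niceness hypothesis: since $\alpha$ is nondegenerate on $int(N)$ and Morse-Bott only on $\partial N$ (or more generally the relevant Morse-Bott buildings are nice by Lemma~\ref{fastidio}), every Morse-Bott building that arises has at most one non-connector irreducible component, so the gluing problem reduces to gluing a single principal curve to connectors and unbranched trivial cylinders along gradient trajectories --- exactly the situation handled by the Morse-Bott gluing in the appendix (with Yuan Yao).

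First I would fix $L>0$ and invoke Proposition~\ref{prop: perturbation of alpha} to produce $\delta,\nu,\epsilon,\alpha_\epsilon$ satisfying Equation~\eqref{eqn: alpha and alpha epsilon} and (P1)--(P4), so that below action $L$ the only orbits created are the pairs $e_i,h_i$ with the stated Conley--Zehnder indices. Then I would choose $J_0$ Morse-Bott regular satisfying (*) and $J_\epsilon$ adapted to $\alpha_\epsilon$ satisfying (**) with $J_\epsilon\to J_0$ in $C^\infty$; regularity of a generic such $J_\epsilon$ (for curves below action $L$) follows from Dragnev~\cite{Dr} since one is free to perturb $J_\epsilon$ away from the small neighborhoods $\R\times T^2\times[-\nu,\nu]$. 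For item (1), I would appeal to the SFT-type compactness theorem of~\cite{BEHWZ} (Morse-Bott compactness), noting as in the Remark after (*) that although~\cite{BEHWZ} works with the perturbation $f_\epsilon\alpha$, the compactness argument applies verbatim to our $\alpha_\epsilon$ because the two differ only by multiplication by a bounded positive function with bounded derivatives; the limit of a subsequence of $u_i\in\mathcal{M}_{J_{\epsilon_i}}(\gamma,\gamma')$ is a broken $J_0$-holomorphic building whose breaking along the Morse-Bott tori is recorded by gradient trajectories of the $\overline g_{\mathcal N}$, i.e.\ an element of $\mathcal{M}^{MB}_{J_0}(\gamma,\gamma')$.

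For items (2)--(4), the core is a Morse-Bott gluing theorem: given a very nice, simply-covered building $\tilde u$ consisting of a principal curve together with connectors and unbranched covers of trivial cylinders, joined by gradient flow lines of $f_\epsilon$ along $y=0$, there is for small $\epsilon$ a $J_\epsilon$-holomorphic map $u_\epsilon$ close to breaking into $\tilde u$ and the trivial-level gradient trajectory curve $u_{\mathcal{T}^\epsilon_0}$. Here I would apply the one-level cascade gluing result proved in the appendix (which is precisely tailored to this setting): the principal curve is regular by Morse-Bott regularity of $J_0$, the linearized operator of the glued object has cokernel identified with the obstruction bundle of the gradient-flow part, and the niceness hypothesis guarantees there are no higher-genus branched covers of the Morse-Bott trivial cylinder to obstruct (only connectors over $e_i,h_i\in\mathcal P$, never over a general Morse-Bott orbit). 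When $\op{ind}(\tilde u)=1$ the glued curves form a compact oriented (over $\F$, just finite) $0$-manifold, and the appendix's gluing gives a bijection showing the mod $2$ count is $1$; this is item (3), and item (2) is the existence half. Item (4) is the same argument with one extra marked-point/evaluation constraint: for generic $z\in\R\times M$ the evaluation map on the relevant $1$-parameter family is transverse to $z$, and the count of $u_\epsilon$ through $z$ that are close to breaking into $\tilde u$ (passing through $z$) and $u_{\mathcal{T}^\epsilon_0}$ is again $1$.

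The main obstacle is the Morse-Bott gluing in items (2)--(4): establishing that near breaking the moduli space $\mathcal{M}_{J_\epsilon}(\gamma,\gamma')$ is cut out as predicted and that the count matches, in the presence of connectors (branched covers of trivial cylinders over $e_i,h_i$) which themselves are never transversely cut out and require the standard ECH obstruction-bundle analysis of Hutchings--Taubes~\cite{HT1,HT2} layered on top of the Morse-Bott gluing of the gradient-trajectory annuli. This is exactly what the appendix provides, so in the body of the paper I would reduce items (2)--(4) to the appendix's one-level cascade gluing theorem, checking only that our $\tilde u$ satisfies its hypotheses (very nice, simply-covered, principal curve regular, asymptotics as in Proposition~\ref{prop: perturbation of alpha}); the remaining points --- the precise meaning of ``close to breaking,'' $C^\infty$-convergence $J_\epsilon\to J_0$, and genericity of $z$ --- are then routine.
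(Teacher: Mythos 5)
Your proposal follows the paper's own argument exactly: item (1) is deduced from Morse-Bott SFT compactness (\cite{BEHWZ,Bo2}), and items (2) and (3) are deferred to the Morse-Bott gluing theorem proved in the appendix, with (4) being the same gluing argument with an added marked-point constraint. (One small framing remark: Theorem~\ref{thm: Morse-Bott perturbation of moduli spaces} does not assume $\alpha$ is nice --- the relevant hypothesis in (2)--(4) is only that the given building $\tilde u$ is very nice and simply-covered --- so the discussion of niceness of $\alpha$ and of $\partial N$ in your opening paragraph, while true in the downstream applications via Lemma~\ref{fastidio} and Lemma~\ref{cirget lounge}, is not actually part of the hypotheses being invoked here.)
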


\begin{proof}
(1) follows from Morse-Bott SFT compactness~\cite{BEHWZ,Bo2}.  The proofs of (2) and (3) are given in the Appendix;  (4) is similar.
\end{proof}

\begin{lemma} \label{lemma: counting Morse-Bott buildings}
Let $J$ be a Morse-Bott regular almost complex structure and let $\tilde{u} \in
\mathcal{M}_J^{MB} (\gamma,\gamma')$ be a  very nice Morse-Bott building with
$I(\tilde{u})=1$. Then $\tilde{u}$ is simply-covered and $\op{ind}(\tilde{u})=1$.
\end{lemma}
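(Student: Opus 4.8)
The plan is to reduce the statement to the relationship between the ECH index $I(\tilde u)$ and the Fredholm index $\op{ind}(\tilde u)$ of a Morse-Bott building, adapting the nondegenerate index inequality (Theorem~\ref{thm: index inequality}) to the Morse-Bott setting via the perturbation $\alpha_\epsilon$. First I would recall that a very nice building $\tilde u$ has, by Definition~\ref{defn: niceness}(2), a single principal irreducible component $u'$ together with some number of connectors and unbranched covers of trivial cylinders; the latter contribute $0$ to both indices (connectors over simple orbits of $\mathcal{P}$ have $I=\op{ind}=0$, and unbranched covers of trivial cylinders over $e_i$, $h_i$ also contribute nothing because of the Conley-Zehnder conventions in Definition~\ref{defn: ECH index in Morse-Bott setting} and Equation~\eqref{eqn: Fredholm index in Morse-Bott}). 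So everything comes down to showing that the principal part is simply-covered and that $I(u')=\op{ind}(u')=1$.

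The key step is the Morse-Bott index inequality: for the principal part $u'$, which is simply-covered by construction (or, if it happened to be multiply covered, would fall under case (i) or (ii) of Definition~\ref{defn: Morse-Bott simply covered} and hence be a branched or unbranched cover of a trivial cylinder, contradicting that it is \emph{not} a connector), one has $\op{ind}(u') \le I(u')$. I would derive this by the standard device of degenerating: the Morse-Bott building $\tilde u$ is the $\epsilon \to 0$ limit of honest $J_\epsilon$-holomorphic curves $u_\epsilon$ (by part (2) of Theorem~\ref{thm: Morse-Bott perturbation of moduli spaces}, in the direction we want, or by the compactness in part (1) run in reverse), and both indices are continuous under this degeneration by the remark following Definition~\ref{defn: ECH index in Morse-Bott setting} and by Equation~\eqref{eqn: Fredholm index in Morse-Bott}; the nondegenerate inequality $\op{ind}(u_\epsilon)\le I(u_\epsilon)$ of Theorem~\ref{thm: index inequality} then passes to the limit. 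Combined with the additivity of both indices over the pieces of $\tilde u$ and the fact that the non-principal pieces contribute $0$, this gives $\op{ind}(\tilde u) = \op{ind}(u') \le I(u') = I(\tilde u) = 1$.

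To finish, I would rule out $\op{ind}(\tilde u)\le 0$. A principal component with $\op{ind}(u')\le 0$ that is not a trivial cylinder would, after perturbation, yield an honest curve of nonpositive Fredholm index which (for regular $J_\epsilon$, and since $u'$ is simply covered hence its perturbation has a simply-covered non-trivial component) must be empty or a cover of a trivial cylinder — contradiction; alternatively, one invokes that a somewhere-injective curve asymptotic to these Morse-Bott-perturbed orbits has $\op{ind}\ge 1$ because the relevant Conley-Zehnder indices and the partition/writhe bounds force it, exactly as in Hutchings' proof that $I=1$ curves are embedded with $\op{ind}=1$. Hence $\op{ind}(\tilde u)=1$, and then the chain of (in)equalities collapses to $\op{ind}(\tilde u)=I(\tilde u)=1$ with $u'$ simply-covered, which is the assertion. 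The main obstacle I anticipate is justifying the Morse-Bott index inequality rigorously — i.e., making sure the limiting argument correctly accounts for the contributions of the gradient-flow segments $\delta_{i,j}$ (these are what shift Conley-Zehnder indices between the $e_i$/$h_i$ picture and the Morse-Bott torus) and for the branched connectors, so that no "hidden" index is lost in the degeneration; this is essentially the content cross-referenced to \cite[Corollary~5.4]{Bo2} and the Appendix, and I would lean on those for the delicate gluing/index bookkeeping.
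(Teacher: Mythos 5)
Your proposal has a genuine gap precisely at the step that carries the main content of the lemma. You claim that the principal part $u'$ of a very nice building is simply-covered ``by construction,'' arguing that otherwise it would fall under case (i) or (ii) of Definition~\ref{defn: Morse-Bott simply covered} and hence be a connector. This is circular: cases (i) and (ii) are what the \emph{conclusion} (that $\tilde u$ is Morse-Bott simply-covered) \emph{requires} of a multiply-covered component; they are not facts that follow from $\tilde u$ being very nice. Definition~\ref{defn: niceness}(2) only constrains the \emph{non-principal} components (to be unbranched covers of trivial cylinders); it says nothing about the principal part, which could a priori be a $k$-fold branched cover of a nontrivial curve $v$. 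Ruling that out is exactly what needs to be proved. Your subsequent appeal to the index inequality compounds the problem: Theorem~\ref{thm: index inequality} gives $\op{ind}\le I$ only for simply-covered curves, so you cannot apply it (even after perturbing) to a principal part whose multiple-coveredness you have not yet excluded.

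The correct argument --- which is the one in the paper --- does the opposite: it \emph{assumes} the principal part $u$ is a $k$-fold cover of a nontrivial simply-covered $v$, forms the very nice simply-covered auxiliary building $\tilde v$ by augmenting $v$ with gradient trajectories, perturbs $\tilde v$ to a $J_\epsilon$-curve $v_\epsilon$ via Theorem~\ref{thm: Morse-Bott perturbation of moduli spaces}(2), and then compares $\tilde u$ with $k$ translated copies $v_\epsilon^k$. Since $\tilde u$ and $v_\epsilon^k$ represent the same class in $H_2(M,\gamma,\gamma')$, one has $I(\tilde u)=I(v_\epsilon^k)$, and the superadditivity/inequality results of \cite[Theorem~5.1 and Proposition~5.6]{Hu2} give $I(v_\epsilon^k)\ge k\, I(v_\epsilon)\ge k$, forcing $k=1$. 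Only after this step is the principal part known to be simply-covered, at which point your strategy for showing $\op{ind}(\tilde u)=1$ --- perturb to $u_\epsilon$, use $I(u_\epsilon)=I(\tilde u)=1$, and regularity --- is essentially what the paper does. One minor additional point: you assert that connectors contribute $0$ to both indices, but the paper notes that removing trivial cylinders may \emph{decrease} the ECH index (the relative intersection term is not additive in the naive sense); the argument still works because the inequality $I(\tilde u)\ge k$ goes the right way, but the additivity you invoke is not quite correct.
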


\begin{proof}
Assume that $\tilde{u}$ has no trivial cylinders. In the general case, removing the trivial cylinders of $\tilde u$ might decrease the ECH index by \cite[Theorem~5.1]{Hu2} and positivity of intersection, but the same argument holds.

We first consider the case when the principal part $u$ of $\tilde{u}$ is nonempty.  Suppose that $u$ is a $k$-fold branched cover of a nontrivial simply-covered $J$-holomorphic curve $v$. Let $\tilde{v}$ be the Morse-Bott building obtained by augmenting $v$ with gradient trajectories.  If the functions $\overline{g}_{\mathcal N}$ are chosen generically, then $\op{ind}(\tilde{v}) > 0$ by the regularity of $J$.  Since $\tilde{v}$ is a very nice simply-covered $J$-holomorphic building, by Theorem~\ref{thm: Morse-Bott perturbation of moduli spaces}(2), we can perturb it to a $J_\varepsilon$-holomorphic map $v_\varepsilon$ for $\varepsilon$ small.
Then $I(v_\varepsilon) \ge \op{ind}(v_\varepsilon) >0$ by the ECH index inequality (Theorem \ref{thm: index inequality}), so $I(\tilde{v})>0$.  Consider the $J_\varepsilon$-holomorphic curve $v^k_\varepsilon$ given by $k$ translated copies of $v_\varepsilon$. Since both $\tilde{u}$ and $v_\varepsilon^k$  represent the same relative homology class in $H_2(M, \gamma, \gamma')$, we have $I(\tilde{u})= I(v^k_\varepsilon)$. Since $I(v^k_\varepsilon) \ge k I(v_\varepsilon)$ by \cite[Theorem~5.1 and Proposition~5.6] {Hu2}, it follows that $I(\tilde{u}) \ge k$. Hence $k=1$ and $u$ is simply-covered.

Next let $u_\epsilon$ be the simply-covered $J_\epsilon$-holomorphic map which corresponds to $\tilde{u}$ under an arbitrarily small generic perturbation of the Morse-Bott contact form by Theorem \ref{thm: Morse-Bott perturbation of moduli spaces}(2). Clearly $I(u_\epsilon)=I(\tilde{u})=1$, so $\op{ind}(u_\epsilon) =1$ and $\op{ind}(\tilde u)=1$. This implies the lemma when the principal part of $\tilde{u}$ is not empty.

If the principal part is empty, then $\tilde{u}$ consists of a gradient trajectory on a Morse-Bott family and a  gradient trajectory has ECH index one.
\end{proof}

\begin{lemma}\label{cirget lounge}
Let $\alpha$ be a nice Morse-Bott contact form.  If we fix a regular almost complex structure $J_0$ adapted to $\alpha$, then, for any orbit sets $\gamma$ and $\gamma'$ and any $\epsilon>0$ sufficiently small, there is a bijection
$$\mathcal{M}_{J_0}^{MB,I=1,vn}(\gamma,\gamma')/\R\simeq \mathcal{M}_{J_\epsilon}^{I=1,tn}(\gamma,\gamma')/\R.$$
Here the modifier $vn$ stands for ``very nice'' and the modifier $tn$ means that all the connectors are trivial cylinders.
\end{lemma}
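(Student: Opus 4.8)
The plan is to establish the bijection by combining the compactness and gluing statements of Theorem~\ref{thm: Morse-Bott perturbation of moduli spaces} with the index-matching of Lemma~\ref{lemma: counting Morse-Bott buildings}. First I would define the map from right to left: given a $J_\epsilon$-holomorphic curve $u_\epsilon\in\mathcal{M}_{J_\epsilon}^{I=1,tn}(\gamma,\gamma')/\R$, apply Theorem~\ref{thm: Morse-Bott perturbation of moduli spaces}(1) along a sequence $\epsilon_i\to 0$ with $u_{\epsilon_i}:=u_\epsilon$ (using the fact that the $J_\epsilon$ are chosen as in that theorem) to extract a subsequential limit Morse-Bott building $\tilde u\in\mathcal{M}_{J_0}^{MB}(\gamma,\gamma')$. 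The ECH index is preserved in the limit (it is topological, depending only on the relative homology class, as recorded in the remark following Definition~\ref{defn: ECH index in Morse-Bott setting}), so $I(\tilde u)=1$. Since $\alpha$ is a \emph{nice} Morse-Bott contact form, $\tilde u$ is nice; I would then argue that $\tilde u$ is in fact \emph{very nice}, i.e.\ every non-principal irreducible component is an \emph{unbranched} cover of a trivial cylinder. Indeed, any branched-cover connector component contributes a strictly positive amount to the ECH index via positivity of intersection and \cite[Theorem~5.1]{Hu2}, while the principal part carries ECH index at least $1$ by Lemma~\ref{lemma: counting Morse-Bott buildings} reasoning (a nonempty principal part perturbs to a genuine $J_\epsilon$-curve of positive index, or is itself a gradient trajectory of ECH index one); hence with $I(\tilde u)=1$ there is no room for a nontrivial branch locus. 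This places $\tilde u$ in $\mathcal{M}_{J_0}^{MB,I=1,vn}(\gamma,\gamma')/\R$, and by Lemma~\ref{lemma: counting Morse-Bott buildings} it is simply-covered with $\op{ind}(\tilde u)=1$.

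For the reverse direction, given $\tilde u\in\mathcal{M}_{J_0}^{MB,I=1,vn}(\gamma,\gamma')/\R$: by Lemma~\ref{lemma: counting Morse-Bott buildings}, $\tilde u$ is simply-covered and $\op{ind}(\tilde u)=1$, so Theorem~\ref{thm: Morse-Bott perturbation of moduli spaces}(2) produces a $J_\epsilon$-holomorphic map $u_\epsilon\in\mathcal{M}_{J_\epsilon}(\gamma,\gamma')$ close to breaking into $\tilde u$ together with the gradient-trajectory curve $u_{\mathcal{T}_0^\epsilon}$ along $y=0$. Since $I(u_\epsilon)=I(\tilde u)=1$ and $J_\epsilon$ is regular, $u_\epsilon$ is cut out transversally of the correct dimension; and because $\tilde u$ is very nice all its connectors are unbranched covers of trivial cylinders, which under the gluing/perturbation remain trivial cylinders over the perturbed simple orbits $e_i,h_i$ --- so $u_\epsilon$ lies in $\mathcal{M}_{J_\epsilon}^{I=1,tn}(\gamma,\gamma')/\R$. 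The fact that Theorem~\ref{thm: Morse-Bott perturbation of moduli spaces}(3) says the mod~$2$ count of such $u_\epsilon$ near $\tilde u$ is \emph{one} shows that for $\epsilon$ small this gluing produces a \emph{unique} $u_\epsilon$ near the broken configuration, so the assignment $\tilde u\mapsto u_\epsilon$ is well-defined.

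Finally I would check the two compositions are the identity for $\epsilon$ small. Starting from $\tilde u$, gluing to $u_\epsilon$ and then degenerating $u_\epsilon$ back as $\epsilon\to 0$ returns $\tilde u$ by the uniqueness in Theorem~\ref{thm: Morse-Bott perturbation of moduli spaces}(3) together with the compactness in (1): any subsequential Morse-Bott limit of the glued curves is a very nice building of ECH index~$1$ breaking into the same pieces, hence equals $\tilde u$. Conversely, starting from $u_\epsilon\in\mathcal{M}_{J_\epsilon}^{I=1,tn}/\R$, its degeneration $\tilde u$ is very nice and simply-covered as above, and re-gluing $\tilde u$ yields a curve near the broken configuration; the count-one statement of (3) forces this to be $u_\epsilon$ itself (two distinct such curves would make the count at least two). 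Standard diagonal/exhaustion arguments let one choose a single $\epsilon_0(\gamma,\gamma')$ below which all of this holds --- and since $\gamma,\gamma'$ range over finitely many orbit sets of action below a fixed $L$, one uniform $\epsilon$ works. The main obstacle is the very-nice upgrade in the first paragraph: ruling out a nontrivial branch locus on a connector component in the $\epsilon\to 0$ limit using only positivity of intersection and the ECH index additivity of \cite[Theorem~5.1 and Proposition~5.6]{Hu2}, i.e.\ showing the niceness hypothesis plus $I=1$ genuinely forces \emph{unbranched} connectors rather than merely at-most-one-non-connector component; this is exactly where the argument of Lemma~\ref{lemma: counting Morse-Bott buildings} must be re-run in the limiting building.
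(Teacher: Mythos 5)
Your overall architecture matches the paper's: left-to-right via Theorem~\ref{thm: Morse-Bott perturbation of moduli spaces}(2)--(3), right-to-left via Theorem~\ref{thm: Morse-Bott perturbation of moduli spaces}(1) plus an upgrade from ``nice'' to ``very nice,'' and bijectivity from the count-one statement. The one step you flag as the main obstacle --- the very-nice upgrade --- is exactly where your argument breaks, and it breaks in a way that cannot be repaired along the lines you sketch.

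You propose to rule out a nontrivially branched connector by arguing it ``contributes a strictly positive amount to the ECH index.'' This is false: the ECH index is topological, depending only on the relative homology class $Z\in H_2(M,\gamma,\gamma')$, and a branched cover of a trivial cylinder and an unbranched cover of the same multiplicity represent the same relative class. Indeed, a connector by definition is an $I=0$ component. So if $\tilde u'$ is the very-nice building obtained from the principal part of $\tilde u$ by attaching gradient trajectories, then $I(\tilde u)=I(\tilde u')=1$ regardless of whether the connectors of $\tilde u$ are branched; the ECH index simply cannot see the branch locus. The citation of \cite[Theorem~5.1]{Hu2} is misapplied here --- that result gives $I(v_\epsilon^k)\geq kI(v_\epsilon)$ for $k$ translated copies, which the paper does use (inside Lemma~\ref{lemma: counting Morse-Bott buildings}) to show the principal part is simply covered, but it says nothing about branching on connectors.

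The index the paper actually uses for this step is the \emph{Fredholm} index, and the argument is more delicate than an appeal to additivity. One compares $\op{ind}(\tilde u)$ and $\op{ind}(\tilde u')$ using the Fredholm index formula \eqref{eqn: Fredholm index in Morse-Bott}: the difference is $-(\chi(F)-\chi(F'))$ plus Conley--Zehnder contributions at $e$. A branched connector has $\chi<0$ by Riemann--Hurwitz, so $-(\chi(F)-\chi(F'))>0$ when $\tilde u$ is not very nice. The CZ correction could a priori be negative, so one must check via the incoming/outgoing partitions --- which $\tilde u$ satisfies because it is a limit of genuine $J_{\epsilon_i}$-curves, and which $\tilde u'$ satisfies because it perturbs to genuine curves by Theorem~\ref{thm: Morse-Bott perturbation of moduli spaces}(2) --- that this correction is nonnegative in both the positive and negative Morse-Bott cases. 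The conclusion is $\op{ind}(\tilde u)>\op{ind}(\tilde u')\geq 1$, contradicting $\op{ind}(\tilde u)=1$ (which holds because $\tilde u$ is a limit of $\op{ind}=1$ maps). Your proposal omits the Fredholm index entirely and the partition bookkeeping that controls the CZ terms, so this step as written does not go through.
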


\begin{proof}
By Theorem~\ref{thm: Morse-Bott perturbation of moduli spaces} and Lemma~\ref{lemma: counting Morse-Bott buildings}, every  very nice $I=1$ $J_0$-holomorphic Morse-Bott building can be deformed into an $I=1$ $J_\epsilon$-holomorphic map,  all of whose connectors are trivial cylinders.

It remains to show that every sequence $v_i$ of $J_{\epsilon_i}$-holomorphic maps with $I(v_i)=1$ and trivial cylinders as connectors converges to a  very nice $J_0$-holomorphic Morse-Bott building $\tilde{u}$ as $\epsilon_i\to 0$, after possibly passing to a subsequence. Suppose without loss of generality that the domains of the maps $v_i$ are connected. (Indeed, since $I(v_i)=1$, discarding the possible trivial cylinders does not change $I(v_i)$ by \cite[Theorem~5.1]{Hu2} and positivity of intersection.)  By Theorem \ref{thm: Morse-Bott perturbation of moduli spaces} (1), the sequence $v_i$ converges to a Morse-Bott building $\tilde{u}$ with $I(\tilde{u})=1$. Since $\alpha$ is a nice Morse-Bott contact form, the holomorphic part of $\tilde{u}$ has at most one irreducible component which is not a connector. Assume there is a nontrivial principal part $u_0$; the case of $u_0=\varnothing$ is simpler and is left to the reader.  We consider the very nice Morse-Bott building $\tilde{u}'$ obtained by augmenting the Morse-Bott ends of $u_0$ with gradient flow trajectories to the critical
points of the Morse functions on the Morse-Bott tori. Then $I(\tilde{u}') = I(\tilde{u})=1$ because they represent the same relative homology class, and therefore
Lemma~\ref{lemma: counting Morse-Bott buildings} implies that $u_0$ is simply covered.

We claim that every other irreducible component is a trivial cylinder over an orbit in $\mathcal{P}$.  Arguing by contradiction, suppose there  are nontrivial connectors
that  are connected to $u_0$ by  one or more finite length gradient flow  trajectories. We will show that $\op{ind}(\tilde{u})>1$, which contradicts the fact that $\tilde{u}$ is the limit of curves $v_i$ with $\op{ind}(v_i)=1$.  To this end we  consider the Morse-Bott building $\tilde{u}'$  defined above.  We recall that $\tilde{u}'$ is  very nice, simply-covered, and $I(\tilde{u}')=I(\tilde{u}) =1$.

 The ends of the building $\tilde{u}$ satisfy the incoming/outcoming partitions because it is the limit of $J_{\varepsilon_i}$-holomorphic maps $v_i$, while the ends of the building $\tilde{u}'$ satisfy the incoming/outgoing partitions because $\tilde{u}'$ can be deformed to $J_{\varepsilon_i}$-holomorphic maps $v_i'$ for $i \gg 0$ by Theorem~\ref{thm: Morse-Bott perturbation of moduli spaces}(2).

We make now the simplifying hypothesis that $u_0$ has ends only at one Morse-Bott torus. (The general case is more complicated only in the notation.) Then the ends of $\tilde{u}$ and $\tilde{u}'$ differ only for the multiplicity of $e$. We denote by $n_+$ and $n_-$ the positive and negative multiplicities of $e$ in $\tilde{u}$, respectively, and by $n_+'$, $n_-'$ the corresponding multiplicities in $\tilde{u}'$. Moreover, we denote by $\mu(e, n_\pm)$ and $\mu(e, n_\pm')$ the contributions of ends at $e$ to the Fredholm indices of $\tilde{u}$ and $\tilde{u'}$ respectively. (We recall that these contributions are determined by the total multiplicities because $\tilde{u}$ and $\tilde{u'}$ satisfy the incoming/outgoing partition conditions.) We observe that
$n_\pm \ge n_\pm'$ and $n_+ - n_+' = n_- - n_-'$.

Let $F$ be the domain of $v_i$ and $F'$ the domain of $v_i'$ for $i \gg 0$.
Then, by the Fredholm index formula \eqref{eqn: Fredholm index in Morse-Bott}, we have
$$\op{ind}(\tilde{u}) - \op{ind}(\tilde{u}') = - (\chi(F) - \chi(F')) + (\mu(e, n_+) - \mu(e, n_-)) - (\mu(e, n_+') - \mu(e, n_-')).$$
The term $\chi(F) - \chi(F')$ is the sum of the Euler characteristics of the connector components of $\tilde{u}$, and therefore $- (\chi(F) - \chi(F')) >0$ if $\tilde{u}$ is not very nice. Now we claim that the term  $(\mu(e, n_+) - \mu(e, n_+)) - (\mu(e, n_+') - \mu(e, n_+'))$ is always nonnegative. To see this, first we compute the contributions of the ends at $e$ to the Fredholm index.
If the Morse-Bott torus is positive, then
 $$\mu(e, n_+) = n_+, \quad \mu(e, n_-) = \left \{ \begin{array}{l} 0 \text{ if } n_-=0, \\ 1 \text{ if } n_- >0. \end{array} \right.$$

On the other end, if the Morse-Bott torus is negative, then
$$ \mu(e, n_+) = \left \{ \begin{array}{l} 0 \text{ if } n_+=0, \\ - 1 \text{ if } n_+ >0, \end{array} \right. \quad \mu(e, n_-) = -n_-.$$
Similar formulae hold for $\mu(e, n_\pm')$.

Now we focus on the case of a positive Morse-Bott torus. (The case of a negative one is completely symmetric.) Then  $(\mu(e, n_+) - \mu(e, n_-)) - (\mu(e, n_+') - \mu(e, n_-'))= n_+ - n_+' \ge 0$ if $n_-, n_-'>0$ or $n_- = n_-'=0$. On the other hand, if
$n_- >0$ but $n_-'=0$, we have $(\mu(e, n_+) - \mu(e, n_-)) - (\mu(e, n_+') - \mu(e, n_-'))= n_+ - n_+' -1$. However, in this case, $n_+ - n_+' = n_--n_-' >0$, so  $(\mu(e, n_+) - \mu(e, n_-)) - (\mu(e, n_+') - \mu(e, n_-')) \ge 0$.

This proves that, if $\tilde{u}$ is not very nice, $\op{ind}(\tilde{u}) > \op{ind}(\tilde{u}')$.
This is a contradiction because $\op{ind}(\tilde{u})=1$, as $\tilde{u}$ is a limit of $\op{ind}=1$ maps $v_i$, and $\op{ind}(\tilde{u}) > 0$ since $J_0$ is a Morse-Bott regular almost complex structure.
\end{proof}

\begin{defn} \label{defn: Morse bott complex}
Let $\alpha$ be a nice Morse-Bott contact form and $J$ a Morse-Bott regular almost
complex structure adapted to the symplectization of $\alpha$. Then the
Morse-Bott chain complex $(ECC_{MB} (M,\alpha,J),
\bdry_{MB})$ is generated by orbit sets constructed from
$\mathcal{P}$ and the differential counts  very nice Morse-Bott buildings with
$I(\tilde{u})=1$.  We denote by $ECC_{MB}^L(M,\alpha,J)$ the subcomplex generated by orbit sets of action less than $L$.
\end{defn}

\begin{prop}\label{prop: from generic to MB}
Let $\alpha$ be a nice Morse-Bott contact form. If no Reeb orbit of $\alpha$ has action equal to a fixed $L>0$, then there is an isomorphism of chain complexes
$$ECC^{L}(M,\alpha_{\epsilon},J_{\epsilon})\simeq ECC^{L}_{MB}(M,\alpha,J_0),$$
for all sufficiently small $\epsilon>0$. In particular, $\bdry_{MB}^2=0$.
\end{prop}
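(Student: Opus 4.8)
The plan is to construct the claimed isomorphism on the level of generators first, and then to match the differentials using the Morse-Bott perturbation machinery already assembled. On generators, both complexes $ECC^L(M,\alpha_\epsilon,J_\epsilon)$ and $ECC^L_{MB}(M,\alpha,J_0)$ are, by construction, generated over $\F$ by orbit sets built from $\mathcal P = \mathcal P' \cup (\cup_i\{h_i,e_i\})$ of action less than $L$. Here I would invoke Proposition~\ref{prop: perturbation of alpha}: for $\epsilon$ small the orbits of $\alpha_\epsilon$ of action less than $L$ are exactly the nondegenerate orbits $\mathcal P'$ together with the pairs $e_i,h_i$ produced by perturbing each $\mathcal N_i$, and all other newly created orbits have action $>L$; moreover $h_i$ is hyperbolic and $e_i$ is elliptic, so the admissibility conditions (multiplicity one for hyperbolic orbits) agree on both sides. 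The action of an orbit set for $\alpha_\epsilon$ converges to its $\alpha$-action as $\epsilon\to 0$ by (P4), so for $\epsilon$ small enough the condition ``action $<L$'' picks out the same finite set of orbit sets for both complexes (using that no $\alpha$-orbit has action exactly $L$, so there is a gap to exploit). This gives a canonical bijection of generators, hence a canonical $\F$-linear isomorphism $\Psi\colon ECC^L(M,\alpha_\epsilon,J_\epsilon)\stackrel\sim\to ECC^L_{MB}(M,\alpha,J_0)$; note it preserves the action filtration and the decomposition by homology class.

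Next I would check that $\Psi$ intertwines the two differentials, i.e.\ that for orbit sets $\gamma,\gamma'$ of action less than $L$ one has $\langle\bdry_{\epsilon}\gamma,\gamma'\rangle = \langle\bdry_{MB}\gamma,\gamma'\rangle$ mod $2$. By definition the left side counts $u\in\mathcal M_{J_\epsilon}^{I=1}(\gamma,\gamma')/\R$ all of whose connectors are trivial cylinders, i.e.\ $\mathcal M_{J_\epsilon}^{I=1,tn}(\gamma,\gamma')/\R$, and the right side counts very nice Morse-Bott buildings with $I(\tilde u)=1$, i.e.\ $\mathcal M_{J_0}^{MB,I=1,vn}(\gamma,\gamma')/\R$. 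These two counts are equal because Lemma~\ref{cirget lounge} provides an explicit bijection between these two moduli spaces (its hypotheses are met: $\alpha$ is a nice Morse-Bott contact form by assumption, and $J_0$ is regular). One small point to be careful about: Lemma~\ref{cirget lounge} is stated for a fixed regular $J_0$ and ``all sufficiently small $\epsilon$,'' and the identification of generators above also requires ``$\epsilon$ sufficiently small''; so I would simply take $\epsilon$ small enough that both statements apply, using the specific $\delta,\nu,\epsilon,\alpha_\epsilon,J_0,J_\epsilon$ furnished by Theorem~\ref{thm: Morse-Bott perturbation of moduli spaces} for this $L$. Since the bijection of Lemma~\ref{cirget lounge} respects asymptotics, it respects the matrix coefficients, so $\Psi\circ\bdry_\epsilon = \bdry_{MB}\circ\Psi$ on $ECC^L$.

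Finally, the last assertion $\bdry_{MB}^2=0$ is then immediate: $\Psi$ is a chain isomorphism onto $(ECC^L_{MB},\bdry_{MB})$ from $(ECC^L(M,\alpha_\epsilon,J_\epsilon),\bdry_\epsilon)$, and $\bdry_\epsilon^2=0$ holds by the theorem of Hutchings--Taubes quoted in Section~\ref{section: review of ECH} (the truncated complex $ECC^L$ is a genuine subcomplex since $\bdry$ decreases action). Transporting this identity through $\Psi$ gives $\bdry_{MB}^2=0$ on $ECC^L_{MB}(M,\alpha,J_0)$ for every $L$ not equal to the action of a Reeb orbit, and letting $L\to\infty$ yields $\bdry_{MB}^2=0$ on the full Morse-Bott complex. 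The main obstacle in this argument is not any step written above but the input it quietly relies on, namely Lemma~\ref{cirget lounge}, whose proof in turn depends on the Morse-Bott gluing and compactness results of Theorem~\ref{thm: Morse-Bott perturbation of moduli spaces} (parts (1)--(3)); given those, the proof of the proposition is essentially bookkeeping — matching generators via the action estimate (P4) and matching differentials via the established bijection of moduli spaces.
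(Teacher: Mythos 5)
Your argument is exactly the one the paper intends: the paper's proof is the single sentence "The isomorphism follows from Lemma~\ref{cirget lounge}," and you have correctly unwound this into the identification of generators (via Proposition~\ref{prop: perturbation of alpha} and condition (P4)), the matching of differentials (via the moduli-space bijection of Lemma~\ref{cirget lounge}), and the transport of $\bdry_\epsilon^2=0$ to obtain $\bdry_{MB}^2=0$. No gap; you have simply filled in the bookkeeping the paper leaves implicit.
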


\begin{proof}
The isomorphism follows from Lemma \ref{cirget lounge}.
\end{proof}

\subsection{Comparison with the nondegenerate case} \label{subsection: direct limit arguments}

In this subsection we use a direct limit argument to prove the isomorphism between ECH of a nondegenerate contact form and Morse-Bott ECH of a nice Morse-Bott form $\alpha$.

Let $L_i\to \infty$ be an increasing sequence such that each $L_i$ is positive and there is no Reeb orbit of $\alpha$ with action equal to $L_i$. Let $\mathcal{N}_1,\dots,\mathcal{N}_{n(i)}$ be the Morse-Bott families consisting of simple orbits with $\alpha$-action $< L_i$. (In many useful cases $\lim \limits_{i \to + \infty} n(i) = +\infty$.)

The following lemma provides a sequence of perturbing functions and is an immediate corollary of Proposition~\ref{prop: perturbation of alpha} and Theorem \ref{thm: Morse-Bott perturbation of moduli spaces}.

\begin{lemma} \label{bourgeois bis}
Let $\alpha$ be a nice Morse-Bott form and let $L_i$ be a sequence of positive constants such that $L_i \to +\infty$ and no Reeb orbit of $\alpha$ has action equal to $L_i$. There exist sequences of positive numbers $\epsilon_i\to 0$ and functions $g_i: M \to \R^{\geq 0}$ such that $f_i=1+g_i$ and:
\begin{enumerate}
\item $g_i$ is supported in disjoint neighborhoods of $T_{\mathcal{N}_1}\cup \dots \cup T_{\mathcal{N}_{n(i)}}$;
\item the support of $g_i$ is disjoint from all nondegenerate Reeb orbits of $\alpha$ of $\alpha$-action $< L_i$;
\item on a sufficiently small neighborhood $T^2\times [-\varepsilon,\varepsilon]$ of $T_{\mathcal{N}_j}$, $j=1,\dots,n(i)$,   there exist precisely two simple orbits of $f_i\alpha$ of action $\leq L_i$ corresponding to elliptic and hyperbolic orbits of the perturbed Morse-Bott family;
%with coordinates $(\theta,t,y)$, $g_{i}(\theta,t,y)=\phi_{\epsilon_i}(y) \overline{g}_{{\mathcal N}_j}(\theta)$, where $\overline{g}_{{\mathcal N}_j}$ is a Morse function with only two critical points on $\mathcal{N}_j$;
%\item $\phi_{\epsilon_i}:[-1,1]\to [0,\epsilon_i]$ is an even bump function with support  $[-a(\epsilon_i),a(\epsilon_i)]$ and maximum $\epsilon_i$ at $y=0$, and $a(\epsilon_i)\to 0$ as $\epsilon_i\to 0$;
\item $\lim \limits_{i \to + \infty} f_i =1$ in the $C^k$-topology for $k\gg 0$;
\item for every $i$, the contact form $f_i \alpha$ satisfies Conditions~(1)--(4) of Proposition~\ref{prop: perturbation of alpha} and the conclusion of   Theorem~\ref{thm: Morse-Bott perturbation of moduli spaces} for orbits of action $\leq L_i$; and
\item $f_i \alpha$ (resp.\ $f_{i+1}\alpha$) has no Reeb orbits with $f_i \alpha$-action (resp.\ $f_{i+1}\alpha$-action) in the interval $[a_i^{-2} L_i, a_i^2 L_i]$, where $a_i=(1+\epsilon_ic_0)^2$ for some constant $c_0>0$.
\end{enumerate}
\end{lemma}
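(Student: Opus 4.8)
The plan is to apply Proposition~\ref{prop: perturbation of alpha} once for each index $i$, with $L=L_i$, and then to fine-tune the (finitely many, for each fixed $i$) perturbation parameters so that conditions (1)--(6) hold simultaneously.  First I would arrange, as in Proposition~\ref{prop: perturbation of alpha}, that $\alpha$ is already in the normal form~\eqref{eqn: alpha and alpha epsilon} near each Morse--Bott torus $T_{\mathcal N_j}$.  For each $i$ I would take the perturbing form to be the \emph{multiplicative} Morse--Bott perturbation $f_i\alpha=(1+g_i)\alpha$, where near $T_{\mathcal N_j}$ (for $1\le j\le n(i)$) the nonnegative function $g_i$ has the shape $\epsilon_i\,\delta_j\,\phi_j(y)\,\widetilde g_j(\theta)$, with $\phi_j$ a $y$-bump as in (P3) and $\widetilde g_j\colon\R/\Z\to\R^{\ge 0}$ a perfect Morse function with two critical points, and $g_i\equiv0$ away from the $T_{\mathcal N_j}$.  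These are exactly the $f_\epsilon\alpha$-type Morse--Bott perturbations for which Morse--Bott compactness was established in~\cite{BEHWZ}, and I would invoke that the nondegeneracy and orbit-structure conclusions (1)--(4) of Proposition~\ref{prop: perturbation of alpha}, together with Theorem~\ref{thm: Morse-Bott perturbation of moduli spaces}, hold for them by the same proofs.  Shrinking the coordinate neighborhoods $T^2\times[-\nu_j,\nu_j]$ if necessary makes them pairwise disjoint and disjoint from every nondegenerate $\alpha$-orbit of action $<L_i$: in the normal form the Reeb flow preserves the tori $\{y=\mathrm{const}\}$, so for $\nu_j$ small the only $\alpha$-orbit meeting the neighborhood is the Morse--Bott family itself.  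This gives (1) and (2), while (3) and (5) are then direct restatements of the conclusions of Proposition~\ref{prop: perturbation of alpha} together with the applicability of Theorem~\ref{thm: Morse-Bott perturbation of moduli spaces} to $f_i\alpha$.

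For (4) I would observe that, once the profiles $\phi_j,\widetilde g_j$ and the constants $\delta_j$ are fixed, $\|g_i\|_{C^k}$ is $\epsilon_i$ times a quantity depending only on those (finitely many) profiles; shrinking each $\epsilon_i$ --- harmless, since the relevant conclusions of Proposition~\ref{prop: perturbation of alpha} are stable under decreasing $\epsilon$ --- then forces $f_i\to1$ in $C^k$.  The same bookkeeping produces a constant $c_0>0$, independent of $i$, with $1\le f_i\le1+\epsilon_ic_0$ on $M$, so that converting between $\alpha$-actions and $f_i\alpha$-actions distorts them by a factor in $[1,\,1+\epsilon_ic_0]=[1,\,a_i^{1/2}]$.

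The real content is (6).  Since $\alpha$ is Morse--Bott, its action spectrum is a closed discrete subset of $\R$, and by hypothesis it omits $L_i$; hence there is $\eta_i>0$ with no $\alpha$-orbit of action in $(L_i-\eta_i,\,L_i+\eta_i)$.  I would then show that for $\epsilon_i$ small enough neither $f_i\alpha$ nor $f_{i+1}\alpha$ has a Reeb orbit of action in $(L_i-\tfrac{\eta_i}{2},\,L_i+\tfrac{\eta_i}{2})$: indeed, $R_{f\alpha}\to R_\alpha$ in $C^\infty$ as $f\to1$, so by a routine compactness argument any sequence of closed $R_{f\alpha}$-orbits of action (= period) $\le L_i+1$ subconverges, with convergence of periods, to a closed $R_\alpha$-orbit of the same period; thus the actions of such orbits converge into the action spectrum of $\alpha$, which avoids a neighborhood of $L_i$.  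One applies this with $f=f_i$ and with $f=f_{i+1}$, using in the latter case that $L_i<L_{i+1}$, so that $L_i$ lies well inside the range of actions controlled by the $(i+1)$-st perturbation.  Finally, since $a_i=(1+\epsilon_ic_0)^2\to1$, shrinking $\epsilon_i$ once more yields $[a_i^{-2}L_i,\,a_i^2L_i]\subset(L_i-\tfrac{\eta_i}{2},\,L_i+\tfrac{\eta_i}{2})$, which is (6).  The step I expect to require the most care is this last convergence --- that the perturbation creates no orbits whose actions accumulate at $L_i$ from above --- but it is only the easy case of Morse--Bott SFT compactness~\cite{BEHWZ,Bo2} for closed Reeb orbits in place of holomorphic curves.
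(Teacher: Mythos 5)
The paper gives no proof here, declaring the lemma ``an immediate corollary'' of Proposition~\ref{prop: perturbation of alpha} and Theorem~\ref{thm: Morse-Bott perturbation of moduli spaces}. Your reconstruction is consistent with that intent and correctly isolates (6) as the one item that is not a direct restatement of the proposition's output, supplying the necessary discreteness-of-spectrum and compactness argument for the action gap (and, like the paper, quietly identifying the multiplicative perturbation $f_i\alpha$ with the additive model $\alpha_\epsilon$ of the proposition).
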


\begin{warn}\label{just a silly remark}
For all $i$, Morse-Bott theory (and in particular Proposition~\ref{prop: from generic to MB})
gives injections $ECC^{L_i}(M, f_i \alpha) \to ECC^{L_{i+1}}(M, f_{i+1} \alpha)$.
However, the maps induced in homology by these injections {\em a priori} could be
different from the canonical maps given in Lemma~\ref{tired with all these}, and it
is with respect to the latter that the direct limit must be taken.  ({\em A posteriori,} they are shown to be the same in the proof of Theorem~\ref{thm: main result of Morse-Bott}.)
\end{warn}

Observe that $a_i^{-1} f_i < f_{i+1} < a_if_i$ for all $i$. Then Lemma~\ref{tired with all these} gives maps
$$\Phi_+: ECH^{L_i}(M, f_i \alpha) \to  ECH^{a_i L_i}(M, f_{i+1} \alpha),$$
$$\Phi_-: ECH^{a_i L_i}(M,f_{i+1}\alpha) \to ECH^{a_i^2 L_i}(M,f_i\alpha),$$
$$\Phi_-': ECH^{a_i^{-1} L_i}(M,f_{i+1}\alpha) \to ECH^{L_i}(M,f_i\alpha).$$

\begin{lemma}\label{chesialavoltabuona}
The map $\Phi_+$ is an isomorphism.
\end{lemma}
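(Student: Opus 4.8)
\textbf{Proof proposal for Lemma~\ref{chesialavoltabuona}.}

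The plan is to show that $\Phi_+$ is an isomorphism by exhibiting it, on the chain level, as essentially the inclusion-induced map between Morse-Bott chain complexes, and then checking that this inclusion is already surjective onto the relevant action window. First I would use Proposition~\ref{prop: from generic to MB} to identify $ECC^{L_i}(M, f_i\alpha)$ with $ECC^{L_i}_{MB}(M,\alpha,J_0)$ and $ECC^{a_iL_i}(M,f_{i+1}\alpha)$ with $ECC^{a_iL_i}_{MB}(M,\alpha,J_0')$ for appropriate Morse-Bott regular complex structures, valid for $\epsilon_i$ small enough; the Morse-Bott chain complex depends only on $\alpha$ (and $J$), not on the perturbation parameter, so after a further perturbation of $J$ we may take the same Morse-Bott regular $J$ on both sides. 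Under this identification, the generators of both complexes are orbit sets constructed from $\mathcal P$ with action below the respective cutoffs, and by Lemma~\ref{bourgeois bis}(6) there are no orbit sets of $f_i\alpha$-action in $[a_i^{-2}L_i, a_i^2 L_i]$, hence none of $\alpha_\epsilon$-action in that band either, so the set of generators below $L_i$ equals the set below $a_iL_i$: the inclusion of complexes $ECC^{L_i}_{MB}\hookrightarrow ECC^{a_iL_i}_{MB}$ is an \emph{isomorphism} of chain complexes, not merely a quasi-isomorphism.

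Next I would identify $\Phi_+$ itself with this inclusion on homology. By construction $\Phi_+$ is the Lemma~\ref{tired with all these} map associated to the interpolating cobordism from $L'\phi_1^*(f_i\alpha)$ (here $\phi_1 = \mathrm{id}$, since $f_{i+1} = f_i\cdot(f_{i+1}/f_i)$ with both forms proportional to $\alpha$ and no diffeomorphism needed) down to $f_{i+1}\alpha$, composed with rescaling isomorphisms. Because $f_i$ and $f_{i+1}$ differ only in small neighborhoods of the Morse-Bott tori and the inequality $a_i^{-1}f_i < f_{i+1} < a_i f_i$ holds, I would choose the interpolating cobordism to be supported near these tori and invoke Theorem~\ref{thm: Hutchings Taubes cobordism map}(i): outside a product region the cobordism map is counted by $I=0$ holomorphic buildings, and the standard Morse-Bott/symplectization analysis (together with the fact, from the absence of orbit sets in the action band, that no new generators enter) shows that the only such buildings between generators of equal action are unions of covers of product cylinders, forcing $\langle\hat\Phi(\gamma),\gamma\rangle = 1$ and $\langle\hat\Phi(\gamma),\gamma'\rangle = 0$ for $\gamma'\ne\gamma$. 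Thus $\Phi_+$ agrees on the nose with the inclusion-induced map, which we have just seen is an isomorphism of complexes and hence of homologies. The compatibility of the rescaling isomorphisms is Theorem~\ref{thm: Hutchings Taubes cobordism map}(v).

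The main obstacle I anticipate is the middle step: verifying that the cobordism-map count between two generators of the \emph{same} action is trivial, i.e., that the interpolating cobordism between $f_i\alpha$ and $f_{i+1}\alpha$ (near the Morse-Bott tori) contains the relevant tori in a genuine product region, or at least that the only $I=0$ buildings are the trivial ones. The subtlety is that $f_i$ and $f_{i+1}$ \emph{do} differ near the tori, so it is not literally a product there; one must argue that the perturbing functions can be chosen so that the interpolation happens through a family of $L_i$-nondegenerate (indeed Morse-Bott-perturbed) forms with no orbit sets crossing the action window $[a_i^{-2}L_i, a_i^2L_i]$ — which is exactly what Lemma~\ref{bourgeois bis}(6) was arranged to provide — and then conclude by an action/index argument that the cobordism map restricted to generators of action $< L_i$ is the identity followed by inclusion. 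An alternative, perhaps cleaner, route to close this step is to argue by naturality: combine $\Phi_+$ with $\Phi_-$ and $\Phi_-'$ using Lemma~\ref{tired with all these}(b) to get commuting triangles whose composites are inclusion-induced isomorphisms of truncated Morse-Bott complexes (again using the empty action band), and deduce that each of $\Phi_+$, $\Phi_-$ is an isomorphism by a two-out-of-three / telescoping argument; this sidesteps any delicate holomorphic-curve count inside the cobordism.
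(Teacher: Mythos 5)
Your first route---identifying $\hat\Phi_+$ with the inclusion-induced chain map via Theorem~\ref{thm: Hutchings Taubes cobordism map}(i) and arguing that the only $I=0$ buildings are trivial---hits exactly the obstruction you flag, and the paper treats that obstruction as genuinely nontrivial rather than dispatchable by an action/index argument. In the proof of Proposition~\ref{continuation maps are easy} the authors state explicitly that some $I=0$ holomorphic cylinders in the interpolating cobordism from $(M, a_if_i\alpha)$ to $(M,f_{i+1}\alpha)$ are not contained in product regions, so Theorem~\ref{thm: Hutchings Taubes cobordism map}(i) does not pin $\hat\Phi_+$ down to the diagonal identification $e$. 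Worse, the chain-level identification $\hat\Phi_+ = e$ is the content of Proposition~\ref{continuation maps are easy}, whose $\F$-proof \emph{uses} Lemma~\ref{chesialavoltabuona} as input (``Since $\Phi_+$ and $e_*$ are isomorphisms \ldots''), so proving Lemma~\ref{chesialavoltabuona} by first establishing the chain-level identification would be circular; the non-circular route to that identification is the $L$-flat argument sketched after Proposition~\ref{continuation maps are easy}. Warning~\ref{just a silly remark} is in the text precisely to flag this gap.

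Your ``alternative, perhaps cleaner route'' is, up to phrasing, the paper's actual proof, and you should take it as the proof and drop the first route. The paper uses Theorem~\ref{thm: Hutchings Taubes cobordism map}(ii) and (iv) to identify the composition
$$ECH^{L_i}(M, a_i f_{i+1} \alpha) \to  ECH^{L_i}(M, f_{i} \alpha) \to ECH^{L_i}(M,a_i^{-1} f_{i+1} \alpha)$$
with the cobordism map of a piece of symplectization, which by (vi) is a scaling followed by an inclusion; the empty action band of Lemma~\ref{bourgeois bis}(6) then makes the inclusion an isomorphism, giving $\Phi_+ \circ \Phi_-' = \mathrm{id}$ and $\Phi_-\circ\Phi_+ = \mathrm{id}$ (after unwinding the rescalings). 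This is a purely formal argument that never requires knowing what $\hat\Phi_+$ does on the chain level, which is exactly why it works where the first route stalls.
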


\begin{proof}
By Theorems~\ref{thm: Hutchings Taubes cobordism map}(ii) and (iv), the composition
$$ECH^{L_i}(M, a_i f_{i+1} \alpha) \to  ECH^{L_i}(M, f_{i} \alpha) \to ECH^{L_i}(M,a_i^{-1} f_{i+1} \alpha)$$
is equal to the cobordism map induced by a piece of symplectization. Then by Theorem~\ref{thm: Hutchings Taubes cobordism map}(vi) it is a composition of a scaling with an inclusion. From this and Lemma~\ref{bourgeois bis}(9), it follows easily that $\Phi_+ \circ \Phi_-' =id$. Similarly, $\Phi_-\circ \Phi_+=id$.  Hence $\Phi_+$ is an isomorphism.
\end{proof}

Let $([0,1] \times M, d \lambda_i)$ be an interpolating cobordism from  $f_i \alpha$ to $a_i^{-1}f_{i+1} \alpha$ and $(\R \times M, d \widehat{\lambda}_i)$ its completion. Let $\widetilde J_i$ be a regular almost complex structure on $(\R \times M, d \widehat{\lambda}_i)$ which is $d \lambda_i$-compatible and adapted to the symplectizations of  $f_i \alpha$ and $a_i^{-1}f_{i+1} \alpha$ at the ends. We denote the moduli space of $\widetilde J_i$-holomorphic {\em buildings} in $(\R \times M, d \widehat{\lambda}_i)$ from $\gamma$ to $\gamma'$ by ${\mathcal M}^{b}_{\widetilde J_i}(\gamma, \gamma')$.

The following lemma, stated without proof, is a consequence of the Morse-Bott compactness theorem~\cite{Bo2}  and the triviality of $I<0$ moduli spaces in symplectizations.

\begin{lemma}\label{rivopietroso}
If $\epsilon_i>0$ is sufficiently small, then there is a regular almost complex structure  ${\widetilde J_i}$ such that, if $\gamma$ and $\gamma'$ have  $f_i \alpha$-actions less than $L_i$, then the moduli spaces ${\mathcal M}_{\widetilde J_i}^{b,I=0}(\gamma, \gamma')$ and ${\mathcal M}^{b,I=0}_{\widetilde J_i}(\gamma', \gamma)$ are empty if $\gamma \ne \gamma'$ and consist of  branched covers of trivial holomorphic cylinders if $\gamma = \gamma'$.
\end{lemma}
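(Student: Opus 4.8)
The plan is to argue by contradiction, using Morse--Bott SFT compactness to degenerate the cobordisms $(\R\times M, d\widehat\lambda_i)$ to the symplectization of the Morse--Bott form $\alpha$, where the $I=0$ moduli spaces are understood. Since there are only finitely many orbit sets of $f_i\alpha$-action less than $L_i$, it suffices to fix one pair $\gamma,\gamma'$. Suppose the conclusion fails: then there is a sequence of perturbation parameters tending to $0$ (we shrink $\epsilon_i$, and with it $\epsilon_{i+1}$ and $a_i$, so that $f_i,f_{i+1}\to 1$ and $a_i\to 1$), together with regular almost complex structures $\widetilde J^{(n)}_i$ and $\widetilde J^{(n)}_i$-holomorphic buildings $u^{(n)}\in\mathcal{M}^{b,I=0}_{\widetilde J^{(n)}_i}(\gamma,\gamma')$ (or in $\mathcal{M}^{b,I=0}_{\widetilde J^{(n)}_i}(\gamma',\gamma)$, which is treated identically) such that either $\gamma\ne\gamma'$, or $\gamma=\gamma'$ but $u^{(n)}$ is not a disjoint union of branched covers of trivial cylinders. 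Along this sequence the completed cobordisms converge in $C^\infty_{\mathrm{loc}}$ to $(\R\times M, d(e^s\alpha))$, because the nonconstant part of an interpolating cobordism is localized in the shrinking neighborhoods of the $T_{\mathcal N_j}$ of Lemma~\ref{bourgeois bis}. The $d\widehat\lambda_i$-energy of $u^{(n)}$ is controlled by $\mathcal{A}(\gamma)+\mathcal{A}(\gamma')\le 2L_i$, so by Morse--Bott SFT compactness \cite{Bo2,BEHWZ} a subsequence converges to a Morse--Bott building $\widetilde u_\infty$ in the symplectization of $\alpha$, from $\gamma$ to $\gamma'$, with the same relative homology class and hence $I(\widetilde u_\infty)=0$.

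Next I would show that any $I=0$ Morse--Bott building in the symplectization of $\alpha$ with ends of action less than $L_i$ is a disjoint union of branched covers of trivial cylinders over simple $\alpha$-orbits together with zero-length gradient trajectories, which in particular forces $\gamma=\gamma'$. This is the $I=0$ analogue of the argument in the proof of Lemma~\ref{lemma: counting Morse-Bott buildings}: given a non-trivial-cylinder irreducible component $v$ of $\widetilde u_\infty$, replace it by the simply-covered curve it covers if necessary, augment by gradient trajectories to obtain a very nice simply-covered building, perturb to a genuine $\alpha_\epsilon$-holomorphic curve $v_\epsilon$ by Theorem~\ref{thm: Morse-Bott perturbation of moduli spaces}(2), and use the ECH index inequality (Theorem~\ref{thm: index inequality}) together with $\op{ind}(v_\epsilon)>0$ (regularity, after choosing the Morse functions $\overline g_{\mathcal N}$ generically) and the inequalities $I(v_\epsilon^k)\ge kI(v_\epsilon)$ of \cite[Theorem~5.1 and Proposition~5.6]{Hu2} to conclude that every such component would contribute a positive amount to $I(\widetilde u_\infty)=0$; hence there are none. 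Since a disjoint union of branched covers of trivial cylinders (``connectors'') has equal positive and negative orbit sets, $\gamma=\gamma'$, ruling out the first alternative. For the second alternative, where $\gamma=\gamma'$, one additionally notes that exactness of $d\widehat\lambda_i$ gives $\int (u^{(n)})^*d\widehat\lambda_i=\mathcal{A}_{f_i\alpha}(\gamma)-\mathcal{A}_{a_i^{-1}f_{i+1}\alpha}(\gamma)\to 0$, so the limit building has vanishing $d\alpha$-energy and the $u^{(n)}$ are eventually contained in arbitrarily small tubular neighborhoods of the trivial cylinders over the simple orbits underlying $\gamma$; inside these (product) neighborhoods a positive-area nonconstant component is impossible for $n\gg 0$, so $u^{(n)}$ is itself a branched cover of trivial cylinders, contradicting the choice of $u^{(n)}$.

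Finally, the almost complex structure $\widetilde J_i$ can be taken regular for somewhere-injective curves, as this is a Baire-generic condition independent of the smallness condition on $\epsilon_i$. The step I expect to be the main obstacle is making the degeneration to the symplectization rigorous in the Morse--Bott setting --- controlling multiply-covered irreducible components of the limit building and verifying that the ECH index is not lost in the limit --- which is exactly the bookkeeping of the proof of Lemma~\ref{lemma: counting Morse-Bott buildings} and of \cite{Bo2}; a secondary difficulty is the $\gamma=\gamma'$ case, where one must exclude nonconstant curves of small but positive area hiding near the trivial cylinders, using the product structure of the cobordism away from the $T_{\mathcal N_j}$ and, if needed, Theorem~\ref{thm: Hutchings Taubes cobordism map}(i).
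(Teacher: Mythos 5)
Your proposal follows the same outline the paper itself signals: the paper states this lemma ``without proof'' and attributes it to ``the Morse-Bott compactness theorem \cite{Bo2} and the triviality of $I<0$ moduli spaces in symplectizations.'' Degenerating the completed interpolating cobordism $(\R\times M, d\widehat\lambda_i)$ to the symplectization of $\alpha$ as the perturbation parameters shrink, extracting a limit Morse-Bott building of ECH index $0$ by Morse-Bott compactness, and then ruling out non-connector components via the $I=0$ analogue of Lemma~\ref{lemma: counting Morse-Bott buildings} (augment a non-connector irreducible component, apply the ECH index inequality and the multiple-cover bound of [Hu2, Theorem~5.1 and Proposition~5.6]) is exactly the argument the paper has in mind. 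The emptiness for $\gamma\ne\gamma'$ then follows cleanly, since every $I=0$ Morse-Bott building in the symplectization is a union of connectors and hence has equal positive and negative orbit sets.

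The step you should tighten is the $\gamma=\gamma'$ case. ``The $u^{(n)}$ are eventually contained in arbitrarily small tubular neighborhoods of the trivial cylinders'' together with ``a positive-area nonconstant component is impossible'' is not yet a proof: being $C^\infty_{\mathrm{loc}}$-close to a branched cover of trivial cylinders does not by itself force equality, and a somewhere-injective curve of arbitrarily small $d\widehat\lambda_i$-area need not be constant. What you need is to rule out nontrivial somewhere-injective curves near the trivial cylinders in the cobordism itself, e.g., by showing that the normal linearized operator of the trivial cylinder in the interpolating cobordism is an index-zero isomorphism (so the trivial cylinder is rigid and the $I=0$ moduli space nearby consists only of its branched covers), or by applying the $I=0$ analogue of Lemma~\ref{lemma: counting Morse-Bott buildings} directly to the somewhere-injective reduction of any non-connector component of $u^{(n)}$. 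Note also that, as the paper remarks right after the lemma, the $\gamma=\gamma'$ clause is weaker than saying the trivial cylinders lie in product regions (they do not near the perturbed Morse-Bott orbits), which is precisely why Proposition~\ref{continuation maps are easy} requires the additional algebraic trick over $\F$ or the $L$-flat argument over $\Z$ — so you should not try to derive more from the $\gamma=\gamma'$ clause than the paper does.
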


By Morse-Bott theory there is an identification of complexes
$$e: ECC^{L_i}(M, f_i \alpha,J_i) \stackrel{\simeq} \longrightarrow ECC^{a_iL_i}(M, f_{i+1} \alpha,J_{i+1}).$$
In fact, $ECC^{L_i}(M, f_i \alpha)$ and $ECC^{a_iL_i}(M, f_{i+1}\alpha)$ are generated by the same orbit sets and the moduli spaces of $I=1$ holomorphic curves (modulo $\R$-translations) have the same cardinality, by Lemma \ref{bourgeois bis} and Proposition \ref{prop: from generic to MB}. Let $e_*$ be the map induced by $e$ on homology.

\begin{prop} \label{continuation maps are easy}
$e_*= \Phi_+$.
\end{prop}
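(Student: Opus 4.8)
The plan is to show that the isomorphism $e_*$, coming from the Morse-Bott identification of complexes at the levels $L_i$ and $a_iL_i$, agrees with the cobordism map $\Phi_+$, which by construction is a composition of rescaling isomorphisms with the cobordism map induced by the interpolating cobordism $([0,1]\times M, d\lambda_i)$ from $f_i\alpha$ to $a_i^{-1}f_{i+1}\alpha$. Both maps are chain-level maps on complexes that are \emph{literally the same}, being generated by the same orbit sets (the perturbed Morse-Bott families $\mathcal{N}_j$ contribute the same $e_j,h_j$ before and after changing $\epsilon_i$, and the nondegenerate orbits of $\alpha$-action $<L_i$ are untouched by the support condition in Lemma~\ref{bourgeois bis}(1)--(2)). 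So the strategy is to identify the \emph{chain-level} representatives: show that the chain map $\hat\Phi^{L_i}([0,1]\times M, d\lambda_i, \widetilde J_i)$ of Theorem~\ref{thm: Hutchings Taubes cobordism map}(i), for the regular $\widetilde J_i$ provided by Lemma~\ref{rivopietroso}, is precisely the identity (equivalently, the map $e$), after composing with the appropriate rescalings.

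The key steps, in order: First, fix the regular almost complex structure $\widetilde J_i$ on $(\R\times M, d\widehat\lambda_i)$ as in Lemma~\ref{rivopietroso}, so that for orbit sets $\gamma,\gamma'$ of $f_i\alpha$-action less than $L_i$ the moduli space $\mathcal{M}^{b,I=0}_{\widetilde J_i}(\gamma,\gamma')$ is empty unless $\gamma=\gamma'$, in which case it consists only of branched covers of trivial cylinders. Second, invoke Theorem~\ref{thm: Hutchings Taubes cobordism map}(i): the cobordism map is supported on $I=0$ $J$-holomorphic buildings, $\langle\hat\Phi^{L_i}(\gamma),\gamma'\rangle=0$ when there is no such building, and $\langle\hat\Phi^{L_i}(\gamma),\gamma'\rangle=1$ when the only building is a union of covers of product cylinders in a product region. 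Since a neighborhood of every relevant orbit in $[0,1]\times M$ with the form $\lambda_i$ interpolating between $f_i\alpha$ and $a_i^{-1}f_{i+1}\alpha$ can be taken to be a product region (the forms differ only by a $t$-dependent conformal rescaling in the relevant neighborhoods, and we may choose the interpolating cobordism of the special form in Definition~\ref{defn: interpolating cobordism} so that it is a genuine product there), the trivial-cylinder buildings living over each orbit are covers of product cylinders in a product region. Hence $\hat\Phi^{L_i}(\gamma)=\gamma$ on the nose. Third, chase the rescaling isomorphisms in the definition of $\Phi_+$ in Lemma~\ref{tired with all these} and Lemma~\ref{chesialavoltabuona}: the composition $ECH^{L_i}(M, f_i\alpha)\to ECH^{a_iL_i}(M, a_i^{-1}f_{i+1}\alpha)$ given by scaling, followed by $i_{a_i^{-1}L_i, a_iL_i}$, and unwind how these match the inclusion-induced identification $e$ on the common underlying complex; conclude that the homology-level maps $e_*$ and $\Phi_+$ coincide.

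The main obstacle I expect is step two: making precise that the relevant $I=0$ $J$-holomorphic buildings for $\widetilde J_i$ in the completed interpolating cobordism are \emph{exactly} covers of product cylinders sitting inside a product region, with no stray contributions. Concretely, one needs (a) that $\widetilde J_i$ can be chosen simultaneously regular, compatible with $d\lambda_i$, adapted to both ends, \emph{and} equal to a product complex structure on neighborhoods of all orbits of action $<L_i$ (which is possible since the perturbing functions $g_i$ are supported near the Morse-Bott tori and the forms are standard there); (b) the action/energy filtration argument of Lemma~\ref{rivopietroso}, using $I<0$ triviality in symplectizations together with Morse-Bott compactness, genuinely forces the $I=0$ buildings between distinct orbit sets to be empty; and (c) that the ``product region'' hypothesis of Theorem~\ref{thm: Hutchings Taubes cobordism map}(i) is met, i.e., one really has an embedded product region $\phi:(\R\times U, d(e^s\alpha_0),J_0)\hookrightarrow(\hat X,\hat\omega,J)$ covering each orbit. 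Once these are in place the rest is bookkeeping with the rescaling isomorphisms and the direct-limit maps, which is routine given Theorem~\ref{thm: Hutchings Taubes cobordism map}(ii), (v), (vi).
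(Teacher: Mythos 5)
Your step two fails for exactly the reason the paper explicitly flags, and the surrounding steps don't repair it. You assert that ``a neighborhood of every relevant orbit in $[0,1]\times M$ with the form $\lambda_i$ interpolating between $f_i\alpha$ and $a_i^{-1}f_{i+1}\alpha$ can be taken to be a product region... and we may choose the interpolating cobordism of the special form in Definition~\ref{defn: interpolating cobordism} so that it is a genuine product there.'' This is not the case: the orbits $e_j,h_j$ live inside the support of the perturbation $g_i$, and on that support $f_i\ne f_{i+1}$, since the parameter $\epsilon$ changes from $\epsilon_i$ to $\epsilon_{i+1}$ in Equation~\eqref{eqn: alpha and alpha epsilon}. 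A product region requires an embedding $\phi:(\R\times U, d(e^s\alpha_0), J_0)\hookrightarrow(\hat X,\hat\omega,J)$ with $\phi_*(d(e^s\alpha_0))=\hat\omega$ for a \emph{single, $s$-independent} contact form $\alpha_0$ on $U$, which is incompatible with the interpolating cobordism being non-constant near the orbit. Being a conformal rescale of one another at the two ends is strictly weaker than having a literal product region around the trivial cylinder; Theorem~\ref{thm: Hutchings Taubes cobordism map}(i) only lets you pin down the matrix entry $\langle\hat\Phi^{L}\gamma,\gamma'\rangle=1$ when you have the product region, and you do not.

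What Lemma~\ref{rivopietroso} and Theorem~\ref{thm: Hutchings Taubes cobordism map}(i) actually give you is weaker but still useful: $\hat\Phi_+$ is a \emph{diagonal} map (it vanishes between distinct orbit sets because the $I=0$ buildings are empty there), but you cannot immediately conclude the diagonal entries are $1$. The paper's actual argument over $\F=\Z/2\Z$ closes the gap with a short algebraic idempotent trick: identify the two chain groups via $e^{-1}$, observe that $e^{-1}\circ\hat\Phi_+$ is diagonal and hence idempotent over $\F$, and then use the fact that $e_*$ and $\Phi_+$ are both isomorphisms to force $e_*^{-1}\circ\Phi_+=\mathrm{id}$. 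Over $\Z$, the paper instead sketches an argument via Taubes's $L$-flat pairs and \cite[Lemma~3.4(d)]{HT4} (the paper's Lemma~\ref{lemma: canonical bijection}), which provides a canonical bijection of generators for a $1$-parameter family of $L$-flat, $L$-nondegenerate, $L$-regular pairs — avoiding the product-region issue entirely. You should either adopt the idempotent trick, or, if you want to salvage a direct chain-level identification, work with $L$-flat perturbations rather than trying to arrange product regions that cannot exist.
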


\begin{proof}
Let
$$\widehat{\Phi}_+ : ECC^{L_i}(M, f_i \alpha,J_i) \to ECC^{a_i L_i}(M, f_{i+1} \alpha,J_{i+1})$$
be a (noncanonical) chain map which induces $\Phi_+$ and is given by Theorem~\ref{thm: Hutchings Taubes cobordism map} and Lemma~\ref{tired with all these}. Theorem~\ref{thm: Hutchings Taubes cobordism map}(i) and Lemma~\ref{rivopietroso} imply that $\widehat{\Phi}_+$ is a diagonal map. Note that $ECC^{L_i}(M, f_i \alpha)$ and $ECC^{a_iL_i}(M, f_{i+1} \alpha)$ are generated by the same orbit sets.  The reason why we cannot conclude that $\widehat{\Phi}_+=e$ by Theorem~\ref{thm: Hutchings Taubes cobordism map}(i) is that some of the $I=0$ holomorphic cylinders in the interpolating cobordism from $(M, a_i f_i \alpha)$ to $(M,f_{i+1}\alpha)$ are, strictly speaking, not contained in product regions.

For $\F$-coefficients we can use the following algebraic trick to finish the proof: Identify $ECC^{a_iL_i}(M, f_{i+1} \alpha)$ with $ECC^{L_i}(M, f_i \alpha)$ via  $e^{-1}$. Then
$$(e^{-1}\circ\widehat{\Phi}_+) \circ (e^{-1}\circ \widehat{\Phi}_+) = e^{-1}\circ \widehat{\Phi}_+$$ over $\F$.  Since $\Phi_+$ and $e_*$ are isomorphisms, it follows that $e^{-1}_*\circ\Phi_+=id$ and $\Phi_+ = e_*$.
\end{proof}

 Now we give a sketch of the proof  of Proposition~\ref{continuation maps are easy} which applies to  integer coefficients.  The uninterested reader can jump directly to Theorem~\ref{thm: main result of Morse-Bott}. Given a pair $(\lambda,J)$ consisting of a nondegenerate contact form $\lambda$ and a compatible $J$, Taubes~\cite{T2} first perturbs $(\lambda,J)$ into an {\em $L$-flat pair} $(\lambda',J')$ before identifying $ECH^L(\lambda',J')$ with Seiberg-Witten Floer cohomology. A pair $(\lambda',J')$ is {\em $L$-flat} if near each Reeb orbit of length $<L$ it satisfies the conditions in \cite[Equation~(4-1)]{T2}, and  $L$-flat perturbations are constructed in \cite[Proposition~2.5 and Appendix]{T2}. (See \cite[Section~5.c, Part 2]{T2} for the reasons for introducing the $L$-flat condition.)

The following lemma is a slight rephrasing of \cite[Lemma~3.4(d)]{HT4} and will not be proved:

\begin{lemma} \label{lemma: canonical bijection}
If $(\lambda^t,J^t,L^t)$, $t\in[0,1]$, is a $1$-parameter family and $(\lambda^t,J^t)$ is $L^t$-flat, $\lambda^t$ is $L^t$-nondegenerate, and $J^t$ is $L^t$-regular (i.e., Definition~\ref{defn: regular} holds for all $\gamma,\gamma'$ with $\mathcal{A}_{\lambda^t}(\gamma)< L^t$) for all $t\in[0,1]$, then the ECH cobordism map
$$ECH^{L^0}(M,\lambda^0)\to ECH^{L^1}(M,\lambda^1)$$ is induced by the isomorphism
$$ECC^{L^0}(M,\lambda^0,J^0)\stackrel\sim\to ECC^{L^1}(M,\lambda^1,J^1)$$ given by the canonical bijection of generators.
\end{lemma}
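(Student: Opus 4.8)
The plan is to deduce the statement from Taubes's isomorphism between embedded contact homology and Seiberg--Witten Floer cohomology, together with the compatibility of cobordism maps under that isomorphism. First I would recall Taubes's construction~\cite{T2}: to each $L$-flat pair $(\lambda,J)$ for which $\lambda$ is $L$-nondegenerate and $J$ is $L$-regular there is associated a canonical isomorphism $\mathcal{T}_{\lambda,J}\colon ECH^{L}(M,\lambda,J)\xrightarrow{\simeq}\widehat{HM}^{-*}(M)$, where $\widehat{HM}^{-*}(M)$ is a topological invariant of $M$ (with its $\mathrm{Spin}^c$ structure). The $L$-flat hypothesis is precisely what makes this isomorphism visible at the chain level: the ECH generators of action $<L$ are put in bijection with the relevant Seiberg--Witten solutions, and the ECH differential with the Seiberg--Witten differential.

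Next I would invoke the compatibility of cobordism maps: by the results of~\cite{T2} and~\cite[Theorem~1.9]{HT4}, the ECH cobordism map $ECH^{L^0}(M,\lambda^0)\to ECH^{L^1}(M,\lambda^1)$ associated with the path $(\lambda^t)$ (built from interpolating cobordisms and rescalings, as in Lemma~\ref{tired with all these}) corresponds, under $\mathcal{T}_{\lambda^0,J^0}$ and $\mathcal{T}_{\lambda^1,J^1}$, to the Seiberg--Witten cobordism map of the completed cobordism. Since the latter is a (rescaled) piece of symplectization, its Seiberg--Witten cobordism map is the canonical continuation isomorphism of $\widehat{HM}^{-*}(M)$, i.e.\ the identity after the canonical identifications.

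The heart of the argument is to match the canonical bijection of ECH generators along the family $(\lambda^t,J^t,L^t)$ with these identifications. Because $\lambda^t$ is $L^t$-nondegenerate for every $t$, no Reeb orbit ever crosses the action threshold and there is no bifurcation of orbits, so the ECH generators of action $<L^t$ form a canonical locally constant family; because $(\lambda^t,J^t)$ is $L^t$-flat and $J^t$ is $L^t$-regular for every $t$, the corresponding family of chain-level Taubes identifications varies compatibly with the Seiberg--Witten continuation, so that the canonical bijection of generators is a chain isomorphism and represents the continuation map. Combining this with the previous step identifies the ECH cobordism map on homology with the map induced by the canonical bijection.

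The main obstacle is exactly this last matching step: it requires running a parametrized version of Taubes's $L$-flat analysis in~\cite{T2} and checking that the ECH-side bijection of generators agrees with the Seiberg--Witten-side continuation throughout the path --- in particular that $L^t$-flatness together with $L^t$-regularity genuinely forbids wall-crossing in the $I=1$ moduli spaces, so that the bijection is a chain map rather than merely a chain homotopy equivalence. This delicate analysis is the reason the lemma is quoted from~\cite{HT4} rather than proved here.
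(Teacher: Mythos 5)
The paper does not prove this lemma at all: immediately before its statement it says ``The following lemma is a slight rephrasing of~\cite[Lemma~3.4(d)]{HT4} and will not be proved.'' So there is no in-paper proof to compare your proposal against; the lemma is a black box imported from Hutchings--Taubes. That said, your sketch is worth assessing on its own terms, and you have correctly reconstructed the overall strategy that the cited result actually follows: use Taubes's chain-level correspondence between $ECC^L$ and the Seiberg--Witten chain complex, which is visible at the chain level precisely because of the $L$-flatness hypothesis, and then observe that the relevant continuation map on the Seiberg--Witten side is canonically trivial. Your closing sentence also correctly diagnoses that the ``delicate matching'' step is exactly what is delegated to \cite{HT4}, so you have identified the right location of the difficulty.

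One imprecision worth flagging: the phrase ``$L^t$-flatness together with $L^t$-regularity genuinely forbids wall-crossing in the $I=1$ moduli spaces'' mischaracterizes the mechanism. In a $1$-parameter family the $I=1$ moduli spaces certainly can undergo breaking (through $I=0$ connectors), and no ECH-side hypothesis in the lemma rules this out; indeed, proving invariance of the ECH differential under such degenerations directly is exactly the hard problem that ECH cannot currently solve without Seiberg--Witten theory (as Remark after Theorem~\ref{thm: Hutchings Taubes cobordism map} in this very paper emphasizes). The actual argument does not prove ``no wall-crossing'' on the ECH side; it circumvents the issue entirely by showing that, for a family of $L^t$-flat pairs, the parametrized Taubes identification intertwines the canonical bijection of ECH generators with the Seiberg--Witten continuation, and the latter is a canonical isomorphism regardless of what the family does. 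So your sketch would read more accurately if the burden you place on the parametrized $L$-flat analysis were framed as ``the chain-level Taubes correspondence is continuous in $t$ and intertwines the generator bijection with the SW continuation'' rather than as a statement forbidding wall-crossing of $I=1$ curves. With that adjustment your account matches the structure of the result being cited.
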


Setting $\lambda^0=f_i\alpha$ and $\lambda^1=f_{i+1}\alpha$, it is easy to find an extension $\lambda^t$, $t\in[0,1]$, of the form $f_{i+1}^t\alpha$, where $f^0_{i+1}=f_i$, $f^1_{i+1}=f_{i+1}$, and $f_{i+1}^t$ satisfies the conditions of Lemma~\ref{bourgeois bis} with $f_{i+1}$ replaced by $f_{i+1}^t$.  By choosing $f_{i+1}^t$ to be sufficiently close to $1$ and applying Lemma~\ref{cirget lounge}, there exist an extension $L^t$, $t\in[0,1]$, of $L^0=L_i$ and $L^1=a_iL_i$ and an extension $J^t$, $t\in[0,1]$, of $J^0=J_i$ and $J^1=J_{i+1}$, such that $J^t$ is adapted to $\lambda^t$ and is $L^t$-regular.

Next we fix a Riemannian metric on $M$, with respect to which we measure distances. Assume for simplicity that there is a unique Morse-Bott torus $T_{\mathcal{N}}$. Let $\gamma_e$ and $\gamma_h$ be the elliptic and hyperbolic orbits of $\lambda^t$ which are obtained by perturbing $T_{\mathcal{N}}$, where we assume that $\gamma_e\sqcup \gamma_h$ is independent of $t\in[0,1]$. For each $\varepsilon>0$ sufficiently small, we construct an $L^t$-flat family ($t\in[0,1]$) of perturbations $(\lambda^{t,\varepsilon},J^{t,\varepsilon})$ of $(\lambda^t,J^t)$ which are supported on an $\varepsilon$-neighborhood of $\gamma_e\sqcup \gamma_h$. Moreover, $(\lambda^{t,\varepsilon},J^{t,\varepsilon})$ converges (uniformly in $t\in[0,1]$) to $(\lambda^t,J^t)$ in the $C^0$-topology as $\varepsilon\to 0$.  The proof is a $1$-parameter version of the construction of $L$-flat perturbations in \cite[Proposition~2.5 and Appendix]{T2} and will be omitted.

\begin{claim} \label{claim: regularity}
For $\varepsilon>0$ sufficiently small, $J^{t,\varepsilon}$ is $L^t$-regular for all $t\in[0,1]$.
\end{claim}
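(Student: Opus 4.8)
The plan is to establish regularity by a limiting/perturbation argument: the pairs $(\lambda^{t,\varepsilon}, J^{t,\varepsilon})$ converge in $C^0$ to $(\lambda^t, J^t)$ as $\varepsilon \to 0$, and $J^t$ is already $L^t$-regular by construction, so we want to argue that regularity is preserved under sufficiently small perturbations, \emph{uniformly} in $t \in [0,1]$. The key point is that $L^t$-regularity is an open condition in an appropriate sense, and compactness of $[0,1]$ will let us pass from a $t$-by-$t$ statement to a uniform one.

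First I would fix attention on a single parameter $t$ and recall that, for $\gamma, \gamma'$ with $\mathcal{A}_{\lambda^t}(\gamma) < L^t$, the moduli spaces $\mathcal{M}_{J^t}(\gamma, \gamma')$ of simply-covered curves are transversely cut out, i.e., the linearized operators $D_u$ are surjective. The set of such relevant $(\gamma, \gamma')$ is finite (finitely many orbit sets of action $< L^t$), so there are only finitely many moduli spaces to control. For a fixed $(\gamma, \gamma')$, suppose for contradiction that there is a sequence $\varepsilon_i \to 0$, parameters $t_i \to t_\infty$, and curves $u_i \in \mathcal{M}_{J^{t_i, \varepsilon_i}}(\gamma, \gamma')$ with no multiply-covered components at which $D_{u_i}$ fails to be surjective. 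By SFT compactness (the $C^0$-convergence of the data together with action bounds gives uniform energy bounds), a subsequence of the $u_i$ converges to a holomorphic building for $(\lambda^{t_\infty}, J^{t_\infty})$. One then needs to argue that the limit building is in fact a single curve $u_\infty \in \mathcal{M}_{J^{t_\infty}}(\gamma, \gamma')$ with no multiply-covered components and with $D_{u_\infty}$ non-surjective — but that contradicts the $L^{t_\infty}$-regularity of $J^{t_\infty}$, which is already known.

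The main obstacle is handling the possibility that the limit building \emph{breaks} or acquires multiply-covered components (e.g., a curve limiting onto a multiple cover of a trivial cylinder), so that non-surjectivity of $D_{u_i}$ does not immediately contradict regularity of $J^{t_\infty}$. To deal with this I would exploit the $L$-flat structure: near the Reeb orbits the pairs are flat, so the behavior near the ends is completely model and the failure-of-transversality locus can be localized away from the ends; combined with the index inequality (Theorem~\ref{thm: index inequality}) and the classification of low-$I$ curves, one rules out the problematic degenerations, or at worst reduces to checking transversality of the (finitely many) trivial-cylinder-type strata, which hold automatically. The role of the $\varepsilon$-neighborhood support of the perturbation is precisely that outside it the pair equals the $L^t$-regular $(\lambda^t, J^t)$, so any failure of transversality for $J^{t,\varepsilon}$ must be detected by curves passing through these small neighborhoods, and a Gromov-compactness plus dimension-counting argument forces such curves not to exist for $\varepsilon$ small. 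Finally, having obtained for each $t$ an $\varepsilon(t) > 0$ that works, a standard open-cover/Lebesgue-number argument on $[0,1]$ produces a single $\varepsilon > 0$ valid for all $t \in [0,1]$, completing the claim.
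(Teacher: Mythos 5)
Your strategy correctly identifies the right high-level outline (contradiction, SFT compactness, compare with the known regularity of $J^t$), and you even flag the two serious obstacles — degeneration of the limit and multiply-covered components — but you do not actually resolve them. The paper sidesteps both obstacles by an initial reduction you omit: it first assumes $J^{t,\varepsilon}$ is a \emph{generic $1$-parameter family} in $t$, so that any failure of $L^t$-regularity is detected not by an abstract non-surjective $D_{u_j}$ but by the existence of a somewhere injective, non-connector curve $u_j$ with $I(u_j)=\op{ind}(u_j)=0$. This converts the analytic statement into a purely topological/index statement, which \emph{does} survive SFT degeneration. Your version tries to carry the statement ``$D_{u_i}$ not surjective'' through the limit, which is genuinely problematic: the SFT limit may be a multi-level building or have multiply-covered pieces, and there is no clean semicontinuity for cokernels of linearized operators across such degenerations. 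You gesture at $L$-flatness, the index inequality, and ``dimension counting'' to rule out the bad degenerations, but none of these, as stated, closes the gap — this is precisely why the paper makes the $I=\op{ind}=0$ reduction first.

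There is a second, subtler omission. You say ``$C^0$-convergence\ldots together with action bounds gives uniform energy bounds'' and invoke SFT compactness as if it were routine. But $J^{t,\varepsilon} \to J^t$ only in $C^0$, and standard SFT compactness requires stronger convergence of the almost complex structures. The paper isolates exactly this point as Claim~\ref{claim: C zero SFT compactness}, whose proof (sketched in Section~\ref{subsection: blocking lemma}) invokes the Gromov–Taubes compactness theorem (valid for $C^0$ limits as currents), extracts genus and Euler characteristic bounds via the Blocking Lemma and positivity of intersections near $\gamma_e$ and $\gamma_h$, and only then applies SFT compactness on the complement of small neighborhoods of these orbits. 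Without this step your limiting argument does not get off the ground. Finally, the uniformity over $t\in[0,1]$ is handled in the paper simply by letting $t_j$ vary freely along the contradiction sequence; your separate Lebesgue-number step is not wrong but is unnecessary once the argument is phrased that way.
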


\begin{proof}[Proof of Claim~\ref{claim: regularity}]
We may assume that $J^{t,\varepsilon}$, $t\in[0,1]$, is a generic $1$-parameter family of almost complex structures. Arguing by contradiction, there exist orbit sets $\gamma$, $\gamma'$ and sequences $\varepsilon_j\to 0$, $t_j\in[0,1]$, and $u_j:F_j\to \R\times M$, where:
\begin{enumerate}
\item $u_j$ is a somewhere injective $J^{t_j,\varepsilon_j}$-holomorphic curve from $\gamma$ to $\gamma'$;
\item $\gamma$ and $\gamma'$ are constructed from the nondegenerate orbits of $\alpha$ together with $\gamma_e$ and $\gamma_h$ and $\mathcal{A}_{\lambda^{t_j,\varepsilon_j}}(\gamma), \mathcal{A}_{\lambda^{t_j,\varepsilon_j}}(\gamma')< L^{t_j}$;
\item $u_j$ is not a connector and  $I(u_j)=\op{ind}(u_j)=0$.
\end{enumerate}

\begin{claim} \label{claim: C zero SFT compactness}
After passing to a subsequence, there exists an SFT limit $u_j\to u_\infty$, where $I(u_\infty)=\op{ind}(u_\infty)=0$ and $u_\infty$ is not a connector.
\end{claim}

A sketch of Claim~\ref{claim: C zero SFT compactness} is given in Section~\ref{subsection: blocking lemma}.  Since $u_\infty$ is a $J^{t_0}$-holomorphic curve and $J^{t}$ is $L_{t}$-regular for all $t\in[0,1]$ by Lemma~\ref{cirget lounge}, we have a contradiction.  This implies Claim~\ref{claim: regularity}.
\end{proof}

Claim~\ref{claim: regularity} and Lemma~\ref{lemma: canonical bijection} then imply Proposition~\ref{continuation maps are easy}  for integer coefficients.

By passing to direct limits, we obtain the main result of Morse-Bott theory.
\begin{thm} \label{thm: main result of Morse-Bott}
Let $\alpha$ be a nice Morse-Bott form and $J$ a generic almost complex structure
adapted to the symplectization of $\alpha$. Then we have
$$ ECH_{MB}(M,\alpha,J) \simeq ECH(M).$$
\end{thm}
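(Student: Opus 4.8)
The plan is to combine the chain-level Morse--Bott isomorphism of Proposition~\ref{prop: from generic to MB} with the direct limit machinery of Section~\ref{section: cobordism maps and direct limits}, using Proposition~\ref{continuation maps are easy} to identify the two a priori different directed systems. Fix an increasing sequence $L_i\to\infty$ avoiding the $\alpha$-actions of Reeb orbits, and apply Lemma~\ref{bourgeois bis} to obtain perturbing functions $f_i=1+g_i$ with $f_i\alpha$ being $L_i$-nondegenerate, $f_i\to 1$, and satisfying the action-gap condition (6) of that lemma. By Proposition~\ref{prop: from generic to MB}, for $\epsilon_i$ small enough there are chain isomorphisms
$$ECC^{L_i}(M,(f_i\alpha)_{\epsilon_i},J_{\epsilon_i})\simeq ECC^{L_i}_{MB}(M,f_i\alpha,J_0^{(i)})\simeq ECC^{L_i}_{MB}(M,\alpha,J_0),$$
where the last identification uses that the generators (orbit sets built from $\mathcal{P}$) and the counts of very nice $I=1$ buildings are literally the same for $f_i\alpha$ and $\alpha$ once $L_i$ is fixed, since $f_i$ only rescales actions and the relevant moduli spaces near the Morse--Bott tori are unchanged. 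Taking $L_i\to\infty$ on the right-hand side gives $ECH_{MB}(M,\alpha,J)$.

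Next I would organize the left-hand side. The contact forms $\{(f_i\alpha)_{\epsilon_i}\}$ form a sequence commensurate to $\alpha$ (the functions $f_i$ are uniformly pinched near $1$ and the $\epsilon_i$-perturbations are $C^0$-small), so by Corollary~\ref{cor: direct limit of commensurate contact forms} — after possibly passing to a sparser subsequence of the $L_i$ to ensure $L_{i+1}>c^{-3}L_i$ — the groups $ECH^{L_i}(M,(f_i\alpha)_{\epsilon_i})$ form a directed system under the canonical maps of Lemma~\ref{tired with all these} with direct limit $ECH(M)$. Thus $ECH(M)\simeq \varinjlim ECH^{L_i}(M,(f_i\alpha)_{\epsilon_i})$, where the structure maps are the canonical cobordism maps.

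The crux is to show that the directed system just described is isomorphic, compatibly, to the directed system $\{ECC^{L_i}_{MB}(M,\alpha,J_0)\}$ whose structure maps are the inclusions of subcomplexes coming from Morse--Bott theory (these inclusions, and the fact that they are honest chain maps, are exactly the content of Warning~\ref{just a silly remark} together with Proposition~\ref{prop: from generic to MB}). The subtlety, flagged in Warning~\ref{just a silly remark}, is that the Morse--Bott inclusion $ECC^{L_i}(M,f_i\alpha)\hookrightarrow ECC^{L_{i+1}}(M,f_{i+1}\alpha)$ is \emph{a priori} different from the canonical cobordism map $\Phi_+$ of Lemma~\ref{tired with all these}. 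But this is precisely what Proposition~\ref{continuation maps are easy} resolves: $e_*=\Phi_+$ on homology (with the $\F$-coefficient argument there, or the $L$-flat $1$-parameter argument via Lemma~\ref{lemma: canonical bijection} for $\Z$-coefficients), and Lemma~\ref{chesialavoltabuona} moreover shows $\Phi_+$ is an isomorphism. Chaining these identifications through the telescope, the two directed systems agree term-by-term and map-by-map on homology, hence have canonically isomorphic direct limits. Therefore
$$ECH(M)\simeq \varinjlim_i ECH^{L_i}(M,(f_i\alpha)_{\epsilon_i})\simeq \varinjlim_i ECH^{L_i}_{MB}(M,\alpha,J_0)\simeq ECH_{MB}(M,\alpha,J),$$
which is the assertion. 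I expect the main obstacle to be bookkeeping the compatibility of the two systems of structure maps over the whole telescope — i.e. assembling Proposition~\ref{continuation maps are easy} and Lemma~\ref{chesialavoltabuona} into a commuting ladder between the Morse--Bott telescope and the canonical one, and checking that the action truncations and the intermediate rescalings (with the gap condition from Lemma~\ref{bourgeois bis}(6)) line up so that no information is lost in the limit; the geometric input (Morse--Bott compactness and gluing) has already been packaged in Theorem~\ref{thm: Morse-Bott perturbation of moduli spaces} and its corollaries.
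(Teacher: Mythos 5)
Your proposal matches the paper's proof: the same three ingredients (Proposition~\ref{prop: from generic to MB} for the term-by-term chain isomorphism, Corollary~\ref{cor: direct limit of commensurate contact forms} for $ECH(M)=\varinjlim ECH^{L_i}(M,f_i\alpha)$, and Proposition~\ref{continuation maps are easy} together with Lemma~\ref{chesialavoltabuona} to reconcile the Morse--Bott inclusions with the canonical cobordism maps) are assembled in the same order into the same commuting ladder. One small notational slip to correct: in Lemma~\ref{bourgeois bis} the form $f_i\alpha$ is already the $L_i$-nondegenerate perturbation (the $\epsilon_i$-perturbation of Proposition~\ref{prop: perturbation of alpha} is encoded in $f_i=1+g_i$), so the further subscript in $(f_i\alpha)_{\epsilon_i}$ and the intermediate term $ECC^{L_i}_{MB}(M,f_i\alpha,J_0^{(i)})$ are redundant — $f_i\alpha$ is nondegenerate, not Morse--Bott.
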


\begin{proof}
Choose sequences of functions  $f_i: M \to \R$ and constants $L_i \to + \infty$ which
satisfy the hypotheses of Lemmas \ref{bourgeois bis} and \ref{rivopietroso}.
Then
\begin{equation} \label{eqn: Morse-Bott direct limit}
ECH(M) = \lim_{i\to \infty} ECH^{L_i}(M,f_i\alpha)
\end{equation}
by Corollary \ref{cor: direct limit of commensurate contact forms} and
\begin{equation} \label{belin}
ECC^{L_i}(M,f_i\alpha, J_i) \simeq ECC^{L_i}_{MB}(M,\alpha, J)
\end{equation}
for all $i$ by Proposition \ref{prop: from generic to MB}.
Also, tautologically, $$ ECC_{MB}(M,\alpha, J) =  \lim_{i\to \infty} ECC^{L_i}_{MB}(M,
\alpha, J).$$
In order to take the direct limit on both sides of Equation~\eqref{belin} on the level of
homology, we need the commutativity of the following diagram for all $i$:
\begin{diagram}
ECH^{L_i}_{MB}(M,\alpha) & \rTo^{\simeq} &
ECH^{L_i}(M, f_i \alpha)  \\
\dTo & &  \dTo \\
ECH^{L_{i+1}}_{MB} (M,\alpha) & \rTo^{\simeq} &
ECH^{L_{i+1}} (M, f_{i+1} \alpha)
\end{diagram}
where the rightmost vertical arrow is the natural map defined in Lemma~\ref{tired with all these} from interpolating cobordisms. This map coincides with $\Phi_+$ followed by the map induced by the inclusion
$$ECC^{a_iL_i}(M, f_{i+1} \alpha) \hookrightarrow ECC^{L_{i+1}}(M, f_{i+1} \alpha).$$
Therefore the diagram commutes by Proposition~\ref{continuation maps are easy}.
\end{proof}

\section{Topological constraints on holomorphic curves} \label{section: topological constraints}

\subsection{The winding number}

In this subsection we recall the {\em winding number} from \cite[p.\ 290]{HWZ1}: Given a
contact manifold $(M,\xi)$ with $\xi =\ker\alpha$, an $\alpha$-adapted almost
complex structure $J$ on $\R\times M$, and a $J$-holomorphic curve
$u: F \to \R \times M$ between orbits sets, the {\em winding number}
$\op{wind}_\pi(u)$ is an algebraic count of the zeros of the
section:
$$s:F\to \op{Hom}_{\C}(TF,u^*\xi).$$ Here $s$ is obtained by
composing
$$TF\stackrel{u_*}\to T(\R\times M)
\stackrel{(\pi_M)_*}\longrightarrow TM\stackrel{\pi} \to \xi, $$
where $\pi_M:\R\times M\to M$ is the projection onto the second
factor and $\pi$ is the projection along the Reeb vector field
$R_\alpha$.

In \cite{HWZ1}, Hofer-Wysocki-Zehnder prove that
$\op{wind}_\pi(u)$ is finite.  (This is analogous to the elementary complex analysis fact that the number of zeros of a holomorphic function $f:D^2\subset \C\to \C$, counted with multiplicities, is equal to the winding number of $f|_{\bdry D^2}$.)  An immediate corollary is the following lemma:

\begin{lemma} \label{lemma: winding number}
The map $u_M = \pi_M \circ u$ is transverse to $R_\alpha$ away from a
finite number of points on $F$. In particular it is an immersion outside a
finite number of points on $F$.
\end{lemma}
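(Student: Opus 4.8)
The plan is to deduce Lemma~\ref{lemma: winding number} directly from the finiteness of $\op{wind}_\pi(u)$ established by Hofer--Wysocki--Zehnder. First I would unpack what the section $s:F\to \op{Hom}_\C(TF,u^*\xi)$ actually measures: at a point $p\in F$ where $s(p)\ne 0$, the composition $TF\xrightarrow{u_*}T(\R\times M)\xrightarrow{(\pi_M)_*}TM\xrightarrow{\pi}\xi$ is nonzero, which by a dimension count on the $2$-dimensional fibers forces $(\pi_M)_*\circ u_*$ to have rank at least $1$ at $p$, i.e.\ the image of $du_p$ is not contained in $\R R_\alpha$; equivalently $u_M=\pi_M\circ u$ is transverse to $R_\alpha$ at $p$. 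So the zero set $Z(s)\subset F$ is precisely the locus where $u_M$ fails to be transverse to the Reeb flow.

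Next I would invoke that $s$ is a section of a complex line bundle over the (finite-type) Riemann surface $F$ which, away from the punctures, is $J$-antiholomorphic in a suitable sense (this is the content of the winding number being well defined and computed by a relative Chern number / asymptotic winding data); hence its zeros are isolated, come with positive local multiplicities, and their total count is the finite number $\op{wind}_\pi(u)$. In particular $Z(s)$ is a finite set. This is the only nontrivial input and it is exactly \cite{HWZ1}; I would simply cite it as already done in the excerpt.

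Finally I would assemble the two observations: on $F\setminus Z(s)$ the map $u_M$ is transverse to $R_\alpha$, and at such points $du_p$ is injective (its image is a $2$-plane meeting $\R R_\alpha$ only at $0$, or at worst a line transverse to $R_\alpha$ — in either case $du_p$ has rank $\ge 1$, and combined with the standard fact that a nonconstant $J$-holomorphic map has only finitely many critical points one upgrades this to $du_p$ injective off a finite set), so $u_M$ is an immersion there. Thus $u_M$ is transverse to $R_\alpha$, and an immersion, outside the finite set $Z(s)$ (enlarged by the finitely many critical points of $u$ itself, which are also isolated for a nonconstant $J$-holomorphic curve). The main obstacle — really the only one — is making precise why the zeros of $s$ are isolated with finite total count, but since Lemma~\ref{lemma: winding number} is explicitly stated as ``an immediate corollary'' of the cited HWZ finiteness result, I would treat that as a black box and keep the argument to the short linear-algebra unpacking above.
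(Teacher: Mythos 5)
Your plan is the same one the paper intends: the paper declares the lemma ``an immediate corollary'' of the HWZ finiteness of $\op{wind}_\pi(u)$, so all the content is in unpacking what $s(p)\ne 0$ means, which is exactly what you do. The approach is correct, but one step should be tightened. Since $TF$ and $u^*\xi$ are complex line bundles and $s$ is $\C$-linear --- the projection $T(\R\times M)\to\xi$ along $\R\bdry_s\oplus\R R_\alpha$ is complex linear because $J$ preserves that rank-two summand, and $u_*$ is complex linear because $u$ is $J$-holomorphic --- a nonzero $s(p)$ is automatically a $\C$-linear \emph{isomorphism}, so $\pi\circ(\pi_M)_*\circ du_p$ has real rank $2$. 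That single observation yields both conclusions at once: $du_{M,p}$ has rank $2$ (so $u_M$ is an immersion at $p$) and its image meets $\R R_\alpha$ only at $0$ (so $u_M$ is transverse to $R_\alpha$ at $p$).

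Your written inference that rank at least $1$ is ``equivalently'' transversality is not correct as stated --- a rank-one image is never transverse to a line field in a $3$-manifold --- and the later appeal to the finiteness of critical points of the $J$-holomorphic map $u$ is not the right patch: the critical set of $u_M$ is in general strictly larger than that of $u$. For instance a trivial cylinder over a Reeb orbit has $du$ of rank two everywhere, yet $s\equiv 0$ and $u_M$ is nowhere an immersion, so no bound on critical points of $u$ controls the failure locus of $u_M$. The correct and sufficient control is precisely the finiteness of $Z(s)$ from \cite{HWZ1}; once you use the complex-linearity remark above, that finiteness alone gives the lemma with no auxiliary ingredient.
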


Throughout the section we will use the notation $u_M = \pi_M \circ u$.

\subsection{Blocking Lemma} \label{subsection: blocking lemma}

In this subsection we discuss the topological restrictions that a torus foliated by Reeb trajectories imposes on the $J$-holomorphic curves.

Let $\alpha$ be a contact form on $M$ and $T\subset M$ an oriented torus which is linearly foliated by Reeb trajectories of $\alpha$. The foliation can either have closed leaves or dense leaves. We denote by $\P^+ H_1(T; \R)$ the quotient of $H_1(T; \R) - \{ 0 \}$ by multiplication of positive real numbers.  The Reeb flow on $T$ will then have a well-defined ``slope'' $s \in \P^+ H_1(T; \R)$.

Let $\langle,\rangle$ be the intersection pairing on $H_1(T;\R)$.  We then make the following definition:

\begin{defn}
If $\delta \in H_1(T; \Z)$, then we write $\delta \cdot s >0$ (resp.\ $\delta\cdot s=0$) if $\langle \delta,\gamma\rangle>0$ (resp.\ $=0$) for any representative $\gamma  \in H_1 (T;\R)$ of $s  \in \P^+ H_1(T; \R)$.  \end{defn}

Note that if $\delta\cdot s=0$, then $\delta$ represents the slope $s$ or $-s$.

Let $T^2\times[-\varepsilon,\varepsilon]$ be a neighborhood of the Morse-Bott torus $T=T^2\times\{0\}$ with coordinates $(\theta,t,y)$. We assume that the normal vector to $T$ points in the direction of $\bdry_y$.  Let $u : F \to \R \times M$ be a $J$-holomorphic curve such that:
\begin{enumerate}
\item[(C$_1$)] $F$ is a compact Riemann surface with boundary $\bdry F$; and
\item[(C$_2$)] $u_M(\bdry F)\cap (T^2\times[-\varepsilon,\varepsilon])  =\varnothing$.
\end{enumerate}
Then $u_M(F) \cap T$ only has a finite number of singularities by Lemma~\ref{lemma: winding number} and we denote by $\delta \in H_1(T; \Z)$ the homology class of $u_M(F) \cap T$, where the smooth part of $u_M(F) \cap T$ is oriented as the boundary of $u_M(F)\cap (T^2\times[-\varepsilon,0])$.

\begin{lemma}[Positivity of intersections in dimension three] \label{lemma: positive slope}
Let $T \subset M$ be an oriented torus which is linearly foliated by Reeb trajectories of slope $s$. If $u : F \to \R \times M$ is a $J$-holomorphic curve satisfying (C$_1$) and (C$_2$) and $\delta= [u_M(F) \cap T]\in H_1(T; \Z)$, then $\delta \cdot s \ge 0$. Moreover, $\delta \cdot s = 0$ if and only if $u_M(F) \cap T= \varnothing$.
\end{lemma}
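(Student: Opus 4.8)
The plan is to reduce the statement to the winding number/intersection theory of $J$-holomorphic curves near a torus foliated by Reeb trajectories, in the spirit of \cite{HWZ1,HT2}. First I would set up local coordinates $(\theta,t,y)$ on $T^2\times[-\varepsilon,\varepsilon]$ with $T=\{y=0\}$ and the Reeb vector field tangent to $T$ of slope $s$, so that on $T$ it is a positive multiple of $\bdry_t$ after an $SL(2,\Z)$ change of the $(\theta,t)$-basis; by hypothesis (C$_2$) the curve $u_M$ restricted to $u_M^{-1}(T^2\times[-\varepsilon,\varepsilon])$ is a compact surface with boundary mapping outside the slab, so $u_M(F)\cap T$ is a compact $1$-cycle with finitely many singular points by Lemma~\ref{lemma: winding number}, and $\delta=[u_M(F)\cap T]\in H_1(T;\Z)$ is well-defined.

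The key point is a local positivity statement: at each point $p\in u_M^{-1}(T)$ where $u_M$ is transverse to $R_\alpha$, the curve $u$ crosses $\R\times T$ with a sign, and this sign is determined by the orientations and is always the ``correct'' one because $J$ maps $\bdry_s$ to $R_\alpha$ and $J|_\xi$ is $d\alpha$-compatible. Concretely, I would argue that the function $y\circ u_M$ on $F$, near a point of $u_M^{-1}(T)$ where $u_M$ is an immersion transverse to $R_\alpha$, has the property that the tangent plane of $u$ projects to a plane in $TM$ on which the co-orientation $dy$ and the ``rotation direction'' in the $(\theta,t)$-plane relative to $s$ agree in sign. Summing these local contributions over all of $u_M^{-1}(T)$, which is exactly how $\delta\cdot s$ is computed (pair $\delta$ with a curve of slope $s$, i.e.\ count how the $1$-cycle $u_M(F)\cap T$ winds transversally to the Reeb direction), gives $\delta\cdot s\ge 0$, with equality iff there are no such intersection points, i.e.\ $u_M(F)\cap T=\varnothing$. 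This is the three-dimensional shadow of positivity of intersections in dimension four: $u$ and $\R\times T$ sit inside $\R\times M$, and the algebraic intersection number of $u$ with $\R\times T$ is nonnegative and equals $\delta\cdot s$ up to sign; but I would phrase it directly in $M$ using the winding number to avoid compactness issues with the noncompact hypersurface $\R\times T$.

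The main obstacle is handling the singular points of $u_M(F)\cap T$ — the finitely many points where $u_M$ fails to be an immersion or fails to be transverse to $R_\alpha$ — and the points where $u_M(F)$ is tangent to $T$. At such points one must check that the local intersection contribution is still nonnegative; this follows from the fact that $u$ is $J$-holomorphic, so in suitable local coordinates near such a point $u$ looks like a holomorphic map and the local winding/intersection number is a positive integer (the same mechanism that makes $\op{wind}_\pi(u)$ well-defined and finite in \cite{HWZ1}). I would also need to rule out the degenerate possibility that $u_M(F)$ has a whole open piece inside $T$: since $T$ is foliated by Reeb trajectories and $u$ is $J$-holomorphic with $J$ preserving $\xi$, a nonempty open piece of $u$ mapping into $\R\times T$ would force $u$ to be (a cover of) a trivial cylinder over a Reeb orbit in $T$ or to have boundary in the slab, contradicting (C$_2$) unless $u_M(F)\cap T=\varnothing$; this is where the ``moreover'' clause gets its bite. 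Once local nonnegativity and the no-open-piece dichotomy are in hand, summing gives $\delta\cdot s\ge 0$ and the equality characterization immediately.
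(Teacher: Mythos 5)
Your approach is genuinely different from the paper's, but it has a gap at its center. The paper reduces $\delta\cdot s$ to a four-dimensional intersection number: it takes a long Reeb trajectory $\gamma$ in $T$, observes that $\R\times\gamma$ is $J$-holomorphic (since $J(\bdry_s)=R_\alpha$), and by positivity of intersections in dimension four concludes that the finite set $u(F)\cap(\R\times\gamma)$ counts positively; projecting, these points are exactly $\delta\cap\gamma$, so $\delta\cdot[\overline\gamma]>0$ after closing $\gamma$ up to a cycle $\overline\gamma$ that introduces no new intersections, and density of the irrational-slope trajectories in $T$ lets $[\overline\gamma]$ approximate $s$. The cylinder $\R\times\gamma$ is what converts holomorphicity into a sign.

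Your proposal tries to bypass this cylinder. The central claim --- that $u$ ``crosses $\R\times T$ with a sign \ldots which is always the correct one'' at each transverse point of $u_M^{-1}(T)$, because $J(\bdry_s)=R_\alpha$ and $J|_\xi$ is $d\alpha$-compatible --- is asserted, not proved, and the set-up around it is not well posed. The pairing $\delta\cdot s$ is an intersection number in $T$ between the $1$-cycle $\delta$ and a transverse curve of slope $s$; it is not a sum of contributions indexed by the points of $u_M^{-1}(T)$, so ``summing these local contributions over all of $u_M^{-1}(T)$'' does not compute it. Likewise ``the algebraic intersection number of $u$ with $\R\times T$'' is dimensionally undefined: a surface and a hypersurface in a $4$-manifold meet in a $1$-manifold, not in a finite set of signed points. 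You acknowledge this and say you would use the winding number instead, but Lemma~\ref{lemma: winding number} only tells you that $u_M$ is transverse to $R_\alpha$ away from finitely many points --- it carries no information about signs relative to the slope $s$. To make a local positivity claim precise you would have to choose a curve in $T$ of slope $s$ transverse to $\delta$ and show each intersection point is positive, and the natural way to do that is to lift it to a $J$-holomorphic cylinder and invoke four-dimensional positivity of intersections --- which is exactly the paper's argument. Your sketch also does not address the irrational slope case, where no closed curve on $T$ has slope exactly $s$; this is why the paper closes up a long trajectory and passes to a limit, a step absent from your plan.
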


\begin{proof}
We will prove this lemma in the harder case when $T$ is foliated by orbits of irrational slope, leaving the rational slope case to the reader.

By Lemma~\ref{lemma: winding number},  $u_M(F) \cap T$, if not empty, is the union of a finite set of points and curves which are immersed outside a finite number of singularities.

Assume first that $u_M(F) \cap T$ has a one-dimensional component. By abuse of notation, we do not distinguish between the homology class $\delta$ and its representative $u_M(F) \cap T$. A generic finite length Reeb trajectory $\gamma$ on $T$ intersects $\delta$ in finitely many points away from the singularities and isolated points. In fact, $\delta \cap \gamma = \pi_M(u(F) \cap (\R \times \gamma))$ and $u(F) \cap (\R \times \gamma)$ is a finite set by the intersection theory of holomorphic curves in dimension four; see \cite[Theorem 7.1]{MW}.  Since all Reeb trajectories are dense in $T$, we can choose $\gamma$ arbitrarily long so that its endpoints are close to each other and far away from $\delta$.  Hence we can complete $\gamma$ to a homologically nontrivial closed curve $\overline{\gamma}$ without introducing extra intersection points with $\delta$. Then the positivity of intersections in dimension four implies that $\delta \cdot [\overline{\gamma}] >0$.  In particular, $\delta\not=0\in H_1(T;\Z)$.  Since we can make the slope of $\overline{\gamma}$ as close as we want to $s$ by
taking $\gamma$ sufficiently long and $s$ is not an integral homology class, we conclude that $\delta \cdot s > 0$.  (Recall that if $\delta\cdot s=0$ then $\delta$ and $s$ or $-s$ are parallel.)

Assume now that $u_M(F) \cap T$ is a finite set. We claim that $u_M(F) \cap T= \varnothing$. Suppose that $u_M(F) \cap T \ne \varnothing$ by contradiction. Repeating the construction from the previous paragraph with a finite length Reeb trajectory $\gamma$ (resp.\ $\gamma'$) which passes through a point in $u_M(F) \cap T$ (resp.\  is disjoint from $u_M(F)\cap T$), we obtain $\overline\gamma$ and $\overline\gamma'$. Then $[u(F)] \cdot [\R \times \overline{\gamma}] >0$ by the positivity of intersections, while $[u(F)] \cdot [\R \times \overline{\gamma}'] = 0$ because they are disjoint. Since $\R \times \overline{\gamma}$ and $\R \times \overline{\gamma}'$ are homologous, we have a contradiction. This completes the proof of the lemma.
\end{proof}

The following is an immediate consequence of Lemma~\ref{lemma: positive slope}.

\begin{lemma}[Blocking Lemma] \label{lemma: blocking lemma}
Let $T \subset M$ be an oriented torus which is linearly foliated by Reeb trajectories of slope $s$ and let $u : F \to \R \times M$ be a finite energy $J$-holomorphic map, where $F$ is a closed Riemann surface with a finite number of punctures removed. Then:
\begin{enumerate}
\item If $u$ is homotopic, by a compactly supported homotopy, to a map whose image is disjoint from $\R \times T$, then $u_M(F) \cap T = \varnothing$.
\item If $T'$ is a torus which is parallel to and disjoint from $T$, $u$ has no end that limits to a Reeb orbit that intersects the half-open region between $T$ and $T'$ which includes $T'$ but not $T$, and the homology class $[u_M(F)\cap T']$  is nonzero and has slope $\pm s$, then $u$ has an end which is asymptotic to a Reeb orbit in $T$.
\end{enumerate}
\end{lemma}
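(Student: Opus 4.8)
The plan is to deduce both statements from Lemma~\ref{lemma: positive slope}, applied to a suitable compact piece of $F$, using one elementary observation: a class $\delta\in H_1(T;\Z)$ which is nonzero and of slope $\pm s$ satisfies $\delta\cdot s=0$, so by the ``$\delta\cdot s=0$ if and only if $u_M(F)\cap T=\varnothing$'' half of Lemma~\ref{lemma: positive slope} such a $\delta$ is never the intersection class of a curve satisfying (C$_1$)--(C$_2$). I will also use that, because $T$ and $T'$ are linearly foliated by Reeb trajectories, any Reeb orbit meeting $T$ (respectively $T'$) is entirely contained in it; in particular each of the finitely many asymptotic Reeb orbits of $u$ either lies on the torus or stays a positive distance from it.

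For part (1), I would take a compact set $K\subset F$ outside which the (compactly supported) homotopy is constant, so that there $u$ agrees with the homotoped map $u'$ whose image misses $\R\times T$, and enlarge $K$ to a compact subsurface with boundary $K'$ with $K\subset\mathrm{int}(K')$. Since $u_M(\partial K')$ is compact and disjoint from $T$, it avoids a small neighbourhood $T^2\times[-\varepsilon,\varepsilon]$ of $T$, so $(K',u|_{K'})$ satisfies (C$_1$)--(C$_2$) and $u_M(F)\cap T=u_M(K')\cap T$. The homotopy restricts to a homotopy of $u|_{K'}$ rel $\partial K'$ onto $u'|_{K'}$; making it transverse to $\R\times T$ produces a bordism inside $T$ from $u_M(K')\cap T$ to $u'_M(K')\cap T=\varnothing$, so $[u_M(K')\cap T]=0$. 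Lemma~\ref{lemma: positive slope} applied to $(K',u|_{K'},T)$ then gives $u_M(K')\cap T=\varnothing$, which is the claim.

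For part (2), I would argue by contradiction, supposing no end of $u$ is asymptotic to a Reeb orbit contained in $T$. Together with the hypothesis that no end limits onto a Reeb orbit meeting the half-open slab $W$ between $T$ and $T'$, and with the fact just recalled that any Reeb orbit meeting $T$ or $T'$ lies in it, this forces every asymptotic orbit of $u$ to be disjoint from the closed slab $\overline W$, hence (being a compact orbit) to stay a definite positive distance from it. So $u$ truncates to a compact subsurface with boundary $F_0$ with $u_M(\partial F_0)$ disjoint from a neighbourhood of $T'$ and $u_M(F\setminus F_0)$ disjoint from $T'$; thus $(F_0,u|_{F_0})$ satisfies (C$_1$)--(C$_2$) for $T'$ and $u_M(F_0)\cap T'=u_M(F)\cap T'$, which by hypothesis is nonzero and of slope $\pm s$, hence has vanishing intersection with $s$. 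Lemma~\ref{lemma: positive slope} applied to $(F_0,u|_{F_0},T')$ then forces this intersection to be empty --- a contradiction. Hence $u$ has an end asymptotic to a Reeb orbit in $T$.

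The main work, rather than a real difficulty, is the end bookkeeping in part (2): one must verify, using the asymptotic convergence of a finite energy curve to the cylinders over its asymptotic orbits (and Lemma~\ref{lemma: winding number}), that under the two hypotheses all asymptotic orbits of $u$ genuinely avoid a fixed neighbourhood of the slab between $T$ and $T'$, so that a legitimate compact truncation exists whose intersection with $T'$ reproduces that of $u$. Part (1), by contrast, is routine, since the compact support of the homotopy already supplies a compact piece with the required boundary behaviour.
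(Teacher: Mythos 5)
Part (1) is correct and is the intended argument: the compactly supported homotopy gives a compact subsurface $K'$ on which $u$ satisfies (C$_1$)--(C$_2$), and a nullbordism inside $T$ of $u_M(K')\cap T$, so $\delta=0$, hence $\delta\cdot s=0$, and the ``moreover'' clause of Lemma~\ref{lemma: positive slope} gives $u_M(K')\cap T=\varnothing$.

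Part (2) has a genuine gap. You apply Lemma~\ref{lemma: positive slope} to the torus $T'$, but the hypotheses of that lemma require the torus in question to be linearly foliated by Reeb trajectories of slope $s$. The Blocking Lemma only assumes that $T'$ is \emph{parallel to} $T$: it is not required to be foliated by Reeb trajectories at all, and even when it is (as in the applications in this paper, where both $T$ and $T'$ are level tori of the form $T^2\times\{y\}$), the Reeb slope on $T'$ is typically \emph{different} from $s$. For example, in the proof of Lemma~\ref{lemma: mahler}, $T'=\partial V$ carries a Reeb foliation of irrational slope while $[u_M(F)\cap T']$ is a multiple of the meridian, i.e.\ parallel to $s$. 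There $\delta\cdot s = 0$ but $\delta$ pairs nontrivially with the Reeb slope on $T'$, so the conclusion ``$\delta\cdot s=0$ implies $u_M(F_0)\cap T'=\varnothing$'' simply does not follow from Lemma~\ref{lemma: positive slope} applied at $T'$.

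The correct deduction applies Lemma~\ref{lemma: positive slope} to $T$ itself and uses the slab to transport the class. As in your set-up, suppose $u$ has no end asymptotic to an orbit on $T$; together with the stated end hypothesis, all asymptotic orbits of $u$ avoid a neighborhood of the closed slab $\overline W$ between $T$ and $T'$, so a large truncation $F_0$ satisfies (C$_1$)--(C$_2$) with respect to $T$ and $u_M(F_0)\cap T'=u_M(F)\cap T'$. After a small perturbation to make the intersections transverse, $u_M^{-1}(\overline W)\cap F_0$ is a compact surface with boundary lying only on $T$ and $T'$, so in $H_1(\overline W;\Z)\cong H_1(T;\Z)$ one has $[u_M(F_0)\cap T]=\pm[u_M(F_0)\cap T']$. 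By hypothesis this class is nonzero and has slope $\pm s$, hence $\delta_T\cdot s=0$ with $\delta_T:=[u_M(F_0)\cap T]\neq 0$, contradicting Lemma~\ref{lemma: positive slope} applied to $T$, which says $\delta_T\cdot s=0$ forces $u_M(F_0)\cap T=\varnothing$ and hence $\delta_T=0$.
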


We now sketch the proof of Claim~\ref{claim: C zero SFT compactness}.

\begin{proof}[Sketch of proof of Claim~\ref{claim: C zero SFT compactness}]
The consideration that needs slight care is that as $\varepsilon_j\to 0$, $J^{t_j,\varepsilon_j}\to J^{t_\infty}$ only in the $C^0$-topology.  Let $N_{\varepsilon_j}(\gamma_e\sqcup \gamma_h)$ be an $\varepsilon_j$-neighborhood of $\gamma_e\sqcup \gamma_h$ and let $F_j'= u_j^{-1}(\R\times (M-N_{\varepsilon_j}(\gamma_e\sqcup \gamma_h)))$. By the Gromov-Taubes compactness theorem~\cite{T4}, which requires no a priori bound on the genus and is local, there exists a limit $u_\infty$ of $u_j|_{F'_j}$ as currents, after passing to a subsequence.  This implies that the homology class $[u_j(F_j)]$ can be taken to be independent of $j$. The argument from \cite[Theorem~10.1]{Hu} then gives a bound on the genus of $F_j$.

We can then either appeal to the $C^0$-Gromov compactness theorem of Ivashko\-vich-Shevchishin~\cite{IS} or argue as follows using the Blocking Lemma.  We make the simplifying assumption that $\gamma$ and $\gamma'$ do not contain $\gamma_h$ and that $u_j$ does not intersect neighborhoods of $\R\times \gamma_h$ and leave the harder general case to the reader.

We claim that $-\chi(F_j')$ is bounded above. Since we have a genus bound for $F_j$, it suffices to show that the number $\# \bdry F_j'$ of boundary components of $F_j'$ is bounded above. Let $V'_j= N_{\varepsilon_j}(\gamma_e)$ and let $T'_j=\bdry V'_j$ with the boundary orientation.  Choose an oriented identification $T'_j\simeq \R^2/\Z^2$ such that the meridian has slope $0$ and the longitude is determined by the Morse-Bott family and has slope $\infty$. We may assume that $T'_j$ is foliated by Reeb orbits of slope $s'_j\gg 0$ and that there exists a torus $T_j''\subset M- V_j'$ which is parallel to $T'_j$ and is foliated by Reeb orbits of rational slope $s''$, where $s''$ is independent of $j$ and $s'_j> s''>0$.

Let $V_j''\subset M$ be the solid torus bounded by $T_j''$ and let $F_j''= u_j^{-1}(\R\times (M-V''_j))$.  Let $\pi_M: \R\times M\to M$ be the projection onto the second factor. By Lemma~\ref{lemma: positive slope}, if $C$ is a component of $\bdry F_j''$, then $\pi_M\circ u_j(C)\cdot s'<0$. Since $[u_j(F_j)]$ is fixed and $s''$ is rational, $\#\bdry F_j''$ must then be bounded above.  On the other hand, let $V_j^{(0)}\subset V_j'$ be a sufficiently small neighborhood of $\gamma_e$, $T_j^{(0)}=\bdry V_j^{(0)}$, and $F_j^{(0)}=u_j^{-1}(\R\times (M-V^{(0)}_j))$. Since $[u_j(F_j)]$ is fixed, $\#\bdry F_j^{(0)}$ is also bounded above by the positivity of intersections in dimension four and the asymptotics of $u_j$ near their ends.

To obtain the bound on $\#\bdry F_j'$, it then suffices to show that $u_j^{-1}(\R\times (V''_j- V'_j))$ and $u_j^{-1}(\R\times (V'_j-V^{(0)}_j))$ have no disk components $D$ with $\pi_M\circ u_j(\bdry D)\subset T'_j$.  By Lemma~\ref{lemma: positive slope}, $\pi_M\circ u_j(\bdry D)$ represents a nonzero homology class in $T'_j$. On the other hand, the inclusion $T^2\times\{1\}\to T^2\times[0,1]$ induces an isomorphism on homology, which is a contradiction.  This proves the bound on $\#\bdry F_j'$ and $-\chi(F_j')$.

We then apply the SFT compactness theorem to $u_j|_{F_j'}$ to obtain $u_\infty: F_\infty \to \R\times M$.  If $C$ is a component of $\bdry F_\infty$, then $u_\infty(C)\subset \gamma_e$, which in turn implies that $u_\infty$ is a constant.  Hence $\bdry F_\infty=\varnothing$. The punctures of $F_\infty$ are either removable or limit to orbits in $\gamma,\gamma'$.  Finally, since $[u_j(F_j)]$ is not the class given by a connector, $u_\infty$ is also not a connector.
\end{proof}

\subsection{Trapping Lemma}

In this subsection we analyze some topological restrictions on $J$-holomorphic curves with ends at a Morse-Bott torus.

We fix coordinates $(\theta, t, y)$ on a neighborhood $T^2\times[-\varepsilon,\varepsilon]$ of a torus $T=T^2\times\{0\}$ and consider contact forms of the type
$\alpha = g(\theta, t, y) d \theta + f(\theta, t, y) dt$ such that:
\begin{itemize}
\item $f (\partial_y g) - (\partial_y f) g >0$,
 \item $f|_{y=0} =1$,
\item $\partial_\theta f|_{y=0} = \partial_t f|_{y=0} = \partial_y f|_{y=0} =0$,
\item $\partial_\theta g|_{y=0} = \partial_t g|_{y=0} = 0$ and $\partial_y g|_{y=0} =1$,
\item $\partial_{y}^2 f|_{y=0} \neq 0$.
\end{itemize}

These conditions imply that $T$ is a Morse-Bott torus and that the Reeb vector field  $R$ is given by $\bdry_t$ on $T$.

We recall that the asymptotic operator of a closed Reeb orbit $\gamma$ describes the action of the linearized Reeb flow on sections of the (pull-back of the) contact structure $\gamma^*\xi$ along the orbit. More precisely, the linearized Reeb flow gives a symplectic connection $\nabla^R$ for $\gamma^*\xi$ and the asymptotic operator is $J\nabla^R$, where $J$ is an almost complex structure on $\xi$; see \cite{HWZ2} for  more details on the asymptotic operator
and Section~\ref{subsection: Morse-Bott contact forms} for the linearized Reeb vector
field.)

If we choose a generic almost complex structure $J$ adapted to the symplectization of $\alpha$ such that  $\bdry_t J|_{y=0}=0$, then  there is a unitary trivialization of $\xi$ along $T$  such that the asymptotic operator of an end of a holomorphic map converging to a Reeb orbit on $T$ has the form
\begin{equation}
A= - J_0 \frac{d}{dt} + J_0 \begin{pmatrix} 0 & 0 \\ a & 0 \end{pmatrix}, \end{equation}
where $a >0$ if $T$ is a positive Morse-Bott torus, $a < 0$ if $T$ is a
negative Morse-Bott torus, and $J_0=\begin{pmatrix}  0& -1 \\ 1&0 \end{pmatrix}$.  This unitary trivialization is obtained by projecting $(\partial_y, \partial_\theta)$ to $\xi$ along $\partial_t$.

\begin{lemma}\label{lemma: spectral}
The eigenvalues of the asymptotic operator $A$ are $\lambda_0=0$, $\lambda_a =
- a$ and $\lambda_{n}, \lambda_{-n}$, for $n \in \N$, which are the positive and the
negative solutions of the equation $\lambda(\lambda+ a) = n^2$. The
eigenfunctions that correspond to the eigenvalues $\lambda_0$ and $\lambda_{a}$ are
$$f_0(t) = \left ( \begin{matrix}0 \\ 1 \end{matrix} \right ) \quad \text{and} \quad
f_a (t)= \left ( \begin{matrix}1 \\ 0 \end{matrix} \right ).$$
The eigenvalues $\lambda_{\pm n}$ for $n \ge 1$ are degenerate with multiplicity $2$
and their eigenfunctions have winding number $\pm n$.
\end{lemma}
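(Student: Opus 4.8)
\emph{Proof strategy.} The plan is to diagonalise $A$ explicitly by reducing the eigenvalue equation to a scalar constant--coefficient ODE in the first coordinate. Write a section of $\xi$ along $T$, in the given unitary trivialisation, as $f=(x,y)$ with $x,y$ real periodic functions of $t$ (the period being normalised, as is implicit in the statement, so that the nonzero periodic solutions of $x''+\mu x=0$ are exactly those with $\mu=n^2$, $n\in\N$). Since $J_0=\begin{pmatrix}0&-1\\ 1&0\end{pmatrix}$, a direct computation gives
\[
A f=\begin{pmatrix}y'-ax\\ -x'\end{pmatrix},
\]
so $Af=\lambda f$ is equivalent to the first--order system $y'-ax=\lambda x$ and $-x'=\lambda y$.

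\emph{Reduction and the kernel.} First I would eliminate $y$: differentiating $-x'=\lambda y$ and inserting $y'=(\lambda+a)x$ from the first equation yields $x''+\lambda(\lambda+a)x=0$. For $\lambda\neq0$ this step is reversible --- given a periodic solution $x$ of $x''+\mu x=0$ with $\mu=\lambda(\lambda+a)$, the section $y:=-x'/\lambda$ satisfies both equations --- so the $\lambda$--eigenspace of $A$ is canonically identified with the space of periodic solutions of $x''+\lambda(\lambda+a)x=0$. The case $\lambda=0$ is handled directly: then $x'\equiv0$, so $x$ is constant, and periodicity of $y$ forces $y'=ax=0$; since $a\neq0$ (the Morse--Bott condition) this gives $x\equiv0$ and $y$ constant. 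Hence $\lambda_0=0$ is a simple eigenvalue with eigenfunction $f_0=(0,1)^{T}$.

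\emph{Spectrum and multiplicities.} Next I would apply the classification of periodic solutions of $x''+\mu x=0$: a nonzero periodic solution exists iff $\mu=n^2$ with $n\in\{0,1,2,\dots\}$; for $\mu=0$ the solution space is one--dimensional (the constants), and for $\mu=n^2$ with $n\ge1$ it is two--dimensional, spanned by $\cos nt$ and $\sin nt$. The possibility $\mu=0$ with $\lambda\neq0$ forces $\lambda=-a$, with eigenfunction $(x,y)=(\mathrm{const},-x'/\lambda)=(1,0)^{T}$ after normalising, so $\lambda_a=-a$ is a simple eigenvalue. For $\mu=n^2$ with $n\ge1$ the equation $\lambda(\lambda+a)=n^2$ is quadratic in $\lambda$ with product of roots $-n^2<0$, hence has exactly one positive root $\lambda_n$ and one negative root $\lambda_{-n}$, namely $\lambda_{\pm n}=\tfrac12\bigl(-a\pm\sqrt{a^2+4n^2}\bigr)$. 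Since $\lambda_n$ is strictly increasing in $n$ and the roots keep their sign, all the $\lambda_{\pm n}$ are mutually distinct and distinct from $0$ and $-a$; thus the spectrum of $A$ is exactly $\{0,-a\}\cup\{\lambda_{\pm n}:n\ge1\}$, with each $\lambda_{\pm n}$ of multiplicity $2$.

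\emph{Winding numbers and the main difficulty.} For $\lambda=\lambda_{\pm n}$ a general eigenfunction is $x=\alpha\cos nt+\beta\sin nt$, $y=-x'/\lambda=\tfrac n\lambda(\alpha\sin nt-\beta\cos nt)$. The loop $t\mapsto(x(t),y(t))$ avoids the origin whenever $(\alpha,\beta)\neq(0,0)$, because the $2\times2$ matrix sending $(\alpha,\beta)$ to $(x(t),y(t))$ has determinant $-1$; hence its winding number is locally constant on $\R^2\setminus\{0\}$, so independent of $(\alpha,\beta)$, and I would compute it at $(\alpha,\beta)=(1,0)$, where the loop is the ellipse $\bigl(\cos nt,\tfrac n\lambda\sin nt\bigr)$. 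This ellipse winds $n$ times counterclockwise when $\lambda>0$ and $n$ times clockwise when $\lambda<0$, so the eigenfunctions of $\lambda_n$ have winding number $+n$ and those of $\lambda_{-n}$ have winding number $-n$ (while $f_0$ and $f_a$, being constant, have winding number $0$). Everything here is elementary ODE theory; the only points that require attention are the period normalisation making the eigenfrequencies the integers $n$ in $\lambda(\lambda+a)=n^2$, and correctly matching the sign of the winding number to the sign of the root $\lambda_{\pm n}$ --- together with checking that the eigensections are nonvanishing, so that the winding number is well defined and locally constant. I expect this last sign bookkeeping to be the only genuine obstacle.
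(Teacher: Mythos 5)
Your proof is correct and follows essentially the same route as the paper: both turn $Af=\lambda f$ into the constant-coefficient linear system $x'=-\lambda y$, $y'=(\lambda+a)x$ and solve it explicitly, the paper by writing down the fundamental solution $\Phi_\lambda(t)$ and yours by eliminating $y$ to get the scalar equation $x''+\lambda(\lambda+a)x=0$; these are equivalent reorganizations of the same computation, with yours having the small advantage of making the reconstruction $y=-x'/\lambda$ (and hence the multiplicity-two statement) completely transparent. One minor slip: the matrix taking $(\alpha,\beta)$ to $(x(t),y(t))$ has determinant $-n/\lambda$, not $-1$ (you can read this off from the $\frac n\lambda$ scaling in $y$), but since all you need is that it is nonzero and independent of $t$, the argument is unaffected.
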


\begin{proof}
The asymptotic operator is sufficiently simple that we can determine its spectrum by an
explicit computation: the eigenfunctions $\xi$ of $A$ are the $2 \pi$-periodic solutions
of the differential equation
\begin{equation} \label{eqn: system}
\dot \xi = \left ( \begin{matrix} 0 & - \lambda \\ \lambda + a & 0 \end{matrix}
\right ) \xi.
\end{equation}
If $\lambda =0$ or $\lambda = - a$, which are the only cases when the matrix in Equation~\eqref{eqn: system} cannot be diagonalized, the eigenfunctions are
$$f_0(t) = \left ( \begin{matrix}0 \\ 1 \end{matrix} \right ) \quad \text{and} \quad f_a (t)= \left ( \begin{matrix}1 \\ 0 \end{matrix} \right ).$$
If $\lambda (\lambda + a) < 0$, then Equation~\eqref{eqn: system} can be diagonalized over the real numbers, and it is easy to see that it has no periodic solutions. If
$\lambda (\lambda + a) > 0$ a direct computation shows that solutions of Equation~\eqref{eqn: system} are of the form $\xi(t) = \Phi_{\lambda}(t) \xi_0$, where
$$\Phi_{\lambda}(t) = \left ( \begin{matrix} \cos (\sqrt{\lambda(\lambda+a)} t) &
- \frac{\lambda}{\sqrt{\lambda(\lambda+a)}} \sin (\sqrt{\lambda(\lambda+
a)} t) \\ \frac{\lambda+ a}{\sqrt{\lambda(\lambda+a)}} \sin
(\sqrt{\lambda(\lambda+a)} t) & \cos (\sqrt{\lambda(\lambda+a)} t)
\end{matrix} \right ).$$
Then $\Phi_{\lambda}(2 \pi)$ has eigenvalue $1$ if and only if $\lambda (\lambda + a) = n^2 \in \N$, in which case $\Phi_{\lambda}(2 \pi)$ is the identity.
\end{proof}

If $u$ is a $J$-holomorphic map with an end $\mathcal{E}$ which is asymptotic to a Morse-Bott torus $T$, we say that $\mathcal{E}$ is {\em one-sided} if its projection to $M$ does not intersect $T$.

\begin{lemma}[Trapping Lemma] \label{lemma: trapping}
Let $\alpha$ be a contact form, $T$ an $\alpha$-Morse-Bott torus, and $\mathcal{E}$ a one-sided end of a $J$-holomorphic map which is asymptotic to a Reeb orbit in $T$. If $T$ is positive (resp.\ negative), then $\mathcal{E}$ is a positive (resp.\ negative) end.
\end{lemma}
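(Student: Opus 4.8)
The plan is to analyze the asymptotic behavior of the end $\mathcal{E}$ using the spectral data from Lemma~\ref{lemma: spectral}, and to combine it with positivity of intersections as encoded in Lemma~\ref{lemma: positive slope}. Recall that by the asymptotic analysis of Hofer-Wysocki-Zehnder, near the Reeb orbit on $T$ the end $\mathcal{E}$ is modeled (in the trivialization obtained by projecting $(\partial_y, \partial_\theta)$ to $\xi$ along $\partial_t$) by a section of $\xi$ that decays like $e^{\lambda s}$ times an eigenfunction of the asymptotic operator $A$, where $\lambda<0$ for a positive end and $\lambda>0$ for a negative end, and $\lambda$ lies in the spectrum of $A$ computed in Lemma~\ref{lemma: spectral}. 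The key observation is that the eigenfunction $f_0(t)=\binom{0}{1}$ for the eigenvalue $\lambda_0=0$ is exactly the direction tangent to $T$ (the $\partial_\theta$-direction), while $f_a(t)=\binom{1}{0}$ for $\lambda_a=-a$ is the normal direction $\partial_y$; all other eigenfunctions (for $\lambda_{\pm n}$, $n\ge 1$) have nonzero winding number and therefore their projections to $M$ wrap nontrivially around $T$ and in particular intersect $T$.

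First I would observe that since $\mathcal{E}$ is one-sided, its projection $u_M$ does not meet $T$ near the puncture, so the governing eigenfunction cannot be one of the $f_{\pm n}$ with $n\ge 1$: those have winding number $\pm n\ne 0$, which forces the projected end to spiral around and hit $T$. Hence the asymptotic eigenvalue is either $\lambda_0=0$ or $\lambda_a=-a$. The eigenvalue $0$ is excluded because $\mathcal{E}$ is an actual (nontrivial) end of a holomorphic curve: a genuine asymptotic decay rate must be a nonzero eigenvalue (the zero-eigenvalue direction corresponds to moving within the Morse-Bott family, which is the content of the Morse-Bott setup — a curve cannot have an honest asymptotic limit with decay rate $0$). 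Therefore the relevant eigenvalue is $\lambda_a=-a$. Now if $T$ is a positive Morse-Bott torus, then $a>0$ (by the form of the asymptotic operator, matching Definition~\ref{defn: positive negative torus}), so $\lambda_a=-a<0$, which means $\mathcal{E}$ is a positive end. Symmetrically, if $T$ is negative, then $a<0$, so $\lambda_a=-a>0$ and $\mathcal{E}$ is a negative end.

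To make the exclusion of the winding-number-$\pm n$ eigenfunctions rigorous, I would invoke Lemma~\ref{lemma: positive slope} (positivity of intersections in dimension three) applied to a small parallel torus $T'=T^2\times\{y_0\}$ with $y_0$ of the appropriate sign (on the one side of $T$ into which $\mathcal{E}$ is asymptotic): if the controlling eigenfunction had nonzero winding, then $[u_M(F)\cap T']$ would be a nonzero class with slope transverse to the foliation slope $s$ of $T'$, forcing a positive intersection with the Reeb direction and hence, by the geometry, forcing $u_M$ to actually cross $T$ — contradicting one-sidedness. Alternatively one can use the winding-number estimates of Hofer-Wysocki-Zehnder directly: for a one-sided end the relative winding $\op{wind}_\pi$ constraint pins the asymptotic eigenfunction to have winding $0$. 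I expect the main obstacle to be the careful justification that one-sidedness really forces winding number $0$ (rather than just "small") — i.e., ruling out that the end approaches $T$ tangentially from one side while still having a higher-winding correction term — and making precise that the zero eigenvalue cannot occur as an honest asymptotic rate in the perturbed picture; both of these are standard but require invoking the precise asymptotic expansion of $J$-holomorphic ends together with the intersection-positivity input of Lemma~\ref{lemma: positive slope}.
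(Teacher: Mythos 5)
Your argument follows exactly the paper's route: one-sidedness forces the asymptotic eigenfunction to have winding number zero, Lemma~\ref{lemma: spectral} then identifies the eigenvalue (after excluding $0$) as $\lambda_a=-a$, and the HWZ correspondence between end sign and eigenvalue sign finishes the proof; the paper packages the last two steps as ``nonpositive eigenvalue iff nonpositive winding'' together with HWZ.

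One caution on the winding-zero step, which you flag as the point needing rigor: the route you favor, applying Lemma~\ref{lemma: positive slope} to a parallel torus $T'$, is not the right tool. A one-sided end with winding \emph{zero} (for instance the $J$-holomorphic lift of a gradient trajectory) already meets $T'$ and has $\delta\cdot s>0$, so positivity of intersections on $T'$ cannot distinguish winding $0$ from winding $\ne 0$, and the implication ``$\delta\cdot s>0$ on $T'$ implies $u_M$ crosses $T$'' does not hold. What actually works is the observation you list as an ``alternative'': the eigenfunctions $g_{\pm n}$ with $n\ge 1$ have $y$-component proportional to $\cos(2\pi nt)$ or $\sin(2\pi nt)$, which changes sign, so an end whose leading asymptotic term is such an eigenfunction would have $y$-coordinate changing sign for each large $|s|$ and hence would cross $T$, contradicting one-sidedness. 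That is the step the paper states without proof.
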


\begin{proof}
Suppose $T$ is positive. By \cite[Theorem 1.3]{HWZ2}, a positive (resp.\ negative) end $\mathcal{E}$ of a $J$-holomorphic curve approaches a Reeb orbit of $T$ along an eigenfunction of the asymptotic operator with negative (resp.\ positive) eigenvalue. By Lemma~\ref{lemma: spectral}, the eigenfunction has a  nonpositive eigenvalue if and only if it has nonpositive winding number.  On the other hand, if $\mathcal{E}$ is one-sided, then the asymptotic eigenfunction must have winding number zero.  Hence $\mathcal{E}$ must be a positive end.  The case for $T$ negative is similar.
\end{proof}

\section{Construction of contact forms} \label{section: contact forms}

In this section we construct some contact forms on $T^2 \times [1,2]$ and $D^2 \times S^1$ which will be used in the proof of the main theorem.

\subsection{Contact forms on $T^2\times[1,2]$} \label{subsec: contact forms on toric annuli}

Let $(\vartheta,t,y)$ be coordinates on
$$T^2\times[1,2]\simeq (\R^2/\Z^2)\times[1,2].$$
Slopes of essential curves on $T^2$ will be measured with respect to $(\vartheta,t)$,  i.e.~with respect to the basis of $H_1(T^2)$ given by the homology classes of the curves $\vartheta \mapsto (\vartheta, *)$ and $t \mapsto (*, t)$. Let
\begin{equation} \label{eqn: contact form on T2 times 1,2}
\alpha_{f,g}= g(y)d\vartheta+ f(y)dt
\end{equation}
be a contact form on $T^2\times[1,2]$, where $f,g$ are functions on $[1,2]$.  We write $f'=\frac{df}{dy}$ and $g'=\frac{dg}{dy}$.

The following is a straightforward calculation:

\begin{lemma} \label{lemma: form alpha f g}
The form $\alpha_{f,g}$ is a contact form if and only if
\begin{equation}
\label{eqn: contact condition for T2 times 1,2} fg'-f'g>0.
\end{equation}
The kernel $\ker\alpha_{f,g}$ is spanned by $\{\bdry_y,-f\bdry_\vartheta+g\bdry_t\}$.  Assuming $\alpha_{f,g}$ is a contact form, the Reeb vector field is given by:
\begin{equation}
\label{eqn: Reeb} R_{\alpha_{f,g}} ={1\over fg'-gf'}(-f'\bdry_\vartheta+g'\bdry_t),
\end{equation}
\end{lemma}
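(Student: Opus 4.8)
The plan is to prove all three claims by a direct computation in the coordinates $(\vartheta,t,y)$, exploiting the fact that $f$ and $g$ depend on $y$ alone.

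First I would compute $d\alpha_{f,g}=g'\,dy\wedge d\vartheta+f'\,dy\wedge dt$ and then wedge with $\alpha_{f,g}$. Two of the four resulting terms vanish because they repeat a $1$-form, and reordering the wedge factors in the two surviving terms (carefully tracking signs) yields $\alpha_{f,g}\wedge d\alpha_{f,g}=(fg'-f'g)\,d\vartheta\wedge dt\wedge dy$. Hence $\alpha_{f,g}$ is a contact form exactly when $fg'-f'g$ is nowhere zero, and with the orientation fixed by $d\vartheta\wedge dt\wedge dy$ this is the stated inequality $fg'-f'g>0$.

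For the kernel I would evaluate $\alpha_{f,g}$ on the two proposed vector fields: $\alpha_{f,g}(\bdry_y)=0$ since $\alpha_{f,g}$ contains no $dy$, and $\alpha_{f,g}(-f\bdry_\vartheta+g\bdry_t)=-gf+fg=0$. These two fields are pointwise linearly independent: $\bdry_y$ is not in the span of $\bdry_\vartheta,\bdry_t$, and $(f,g)\neq(0,0)$ everywhere because a simultaneous zero of $f$ and $g$ would force $fg'-f'g=0$. Since $\ker\alpha_{f,g}$ has rank two, they span it.

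For the Reeb vector field I would write $R=a\,\bdry_\vartheta+b\,\bdry_t+c\,\bdry_y$ and impose $\iota_R\,d\alpha_{f,g}=0$ and $\alpha_{f,g}(R)=1$. Computing $\iota_R\,d\alpha_{f,g}=g'c\,d\vartheta+f'c\,dt-(g'a+f'b)\,dy$ and using that $f'$ and $g'$ cannot vanish simultaneously (again by the contact condition), one gets $c=0$ and $(a,b)\parallel(-f',g')$; normalizing with $\alpha_{f,g}(R)=1$ forces the proportionality constant to be $1/(fg'-gf')$, giving Equation~\eqref{eqn: Reeb}. I do not expect any genuine obstacle here — the lemma is a routine unwinding of definitions — the only points requiring a little care being the sign bookkeeping in the wedge product and the two applications of the contact inequality used to exclude simultaneous vanishing of $(f,g)$ and of $(f',g')$.
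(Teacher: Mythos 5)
Your proof is correct and is precisely the routine verification the paper has in mind — the paper states only that "the following is a straightforward calculation" and gives no proof, so there is nothing to compare routes against. Your sign bookkeeping checks out: $\alpha_{f,g}\wedge d\alpha_{f,g}=(fg'-f'g)\,d\vartheta\wedge dt\wedge dy$, the kernel is as claimed with the linear-independence argument via the contact inequality being the right little observation, and the Reeb computation from $\iota_R d\alpha_{f,g}=0$ and $\alpha_{f,g}(R)=1$ correctly uses that $(f',g')\neq(0,0)$ to force $c=0$. The one point worth a word in a written-up version: the stated "if and only if" holds because the paper implicitly takes contact forms to be positive with respect to the ambient orientation (here $d\vartheta\wedge dt\wedge dy$); for an unoriented contact condition the statement would read $fg'-f'g\neq0$. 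You already flag this, so no gap.
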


In words, Equation~\eqref{eqn: contact condition for T2 times 1,2} says that the curve in $\R^2$ parametrized by $(f,g)$ is transverse to the radial rays and rotates in the counterclockwise direction.

Later in the article will need the following family of contact forms on $T^2 \times [1,2]$.

\begin{example}\label{alpha delta}
Given a (small) positive irrational parameter $\delta$ we consider pairs of
functions $(f_\delta,g_\delta)$ such that the following hold  (cf.\ Figure~\ref{fig: new f and g part 2}):
\begin{enumerate}
\item $(f_\delta,g_\delta)$ satisfies Equation~\eqref{eqn: contact condition for T2 times 1,2}.
\item $0\leq {f_\delta'(y)\over g_\delta'(y)}\leq \delta$; ${f_\delta'(y)\over g_\delta'(y)}$ is increasing on $(1,{3\over 2})$ and is decreasing on $({3\over 2},2)$, and is equal to $\delta$ at $y={3\over 2}$.
\item $(f_\delta(y),g_\delta(y))=(f_\delta(1)+(y-1)^2,g_\delta(1)+(y-1))$ near $y=1$.
\item $(f_\delta(y),g_\delta(y))=(f_\delta(2)-c_\delta (y-2)^2,g_\delta(2)+ c_\delta (y-2))$ near $y=2$.
\item $(f_\delta(1),f_\delta(1))$ is independent of $\delta$ and all the $(f_\delta(2),f_\delta(2))$ lie on the same line through the origin.
\item The constants $c_\delta$ are chosen so that any two contact forms $\alpha_{\delta}$ and $\alpha_{\delta'}$ are constant multiples of one another near $y=2$.
\end{enumerate}

\begin{figure}[ht]
\begin{overpic}[height=4.5cm]{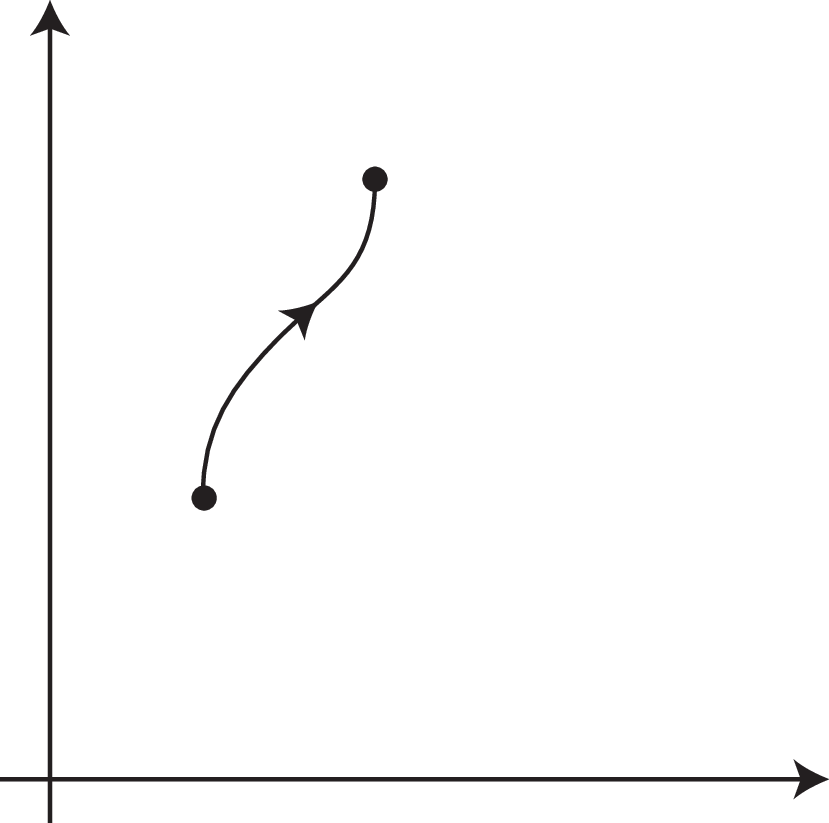}
\put(30,35){\tiny $(f(1),g(1))$} \put(50,-2) {\tiny $f$} \put(0,50)
{\tiny $g$} \put(50,75){\tiny $(f(2),g(2))$}
\end{overpic}
\caption{Trajectory of $(f_\delta(y),g_\delta(y))$.} \label{fig: new f and g part 2}
\end{figure}

The contact form $\alpha_{f_\delta, g_\delta}$ will also be called $\alpha_\delta$.
Its Reeb vector field $R_{\alpha_\delta}$ has Morse-Bott tori whose
Reeb orbits have rational slope in the interval $[-\infty,-{1\over
\delta}]$; each rational slope occurs twice, once on the interval
$[1,{3\over 2}]$ and once on the interval $[{3\over 2},2]$.  Note
that the Reeb orbits in the two Morse-Bott tori of infinite slope
have parallel directions and are in ``elimination position'', i.e., assuming that $(f_\delta,g_\delta)$ is extended slightly to $T^2\times [1-\varepsilon,2+\varepsilon]$ so that the Reeb orbits have positive slope on $y\in [1-\varepsilon,1)\cup (2,2+\varepsilon]$, one could deform the pair $(f_\delta,g_\delta)$ relative to $\{y=1-\varepsilon,2+\varepsilon\}$ to make the slope of the Reeb vector field always positive; during the deformation we would see the two
Morse-Bott tori of infinite slope coming close to each other and finally canceling.  Also,
by taking $\delta$ to be sufficiently small, all the Reeb orbits in
$int(T^2\times[1,2])$ can be made to have arbitrarily large action.
\end{example}

\subsection{Contact forms on $D^2 \times S^1$} \label{subsec: contact form on V}

Let $(\rho,\phi,\theta)$\footnote{We are making a distinction between symbols $\vartheta$ and $\theta$.} be cylindrical coordinates on the solid torus
$$D^2 \times S^1=\{\rho\leq 1\}\times (\R/2\pi\Z).$$
Let $T_{\overline{\rho}} = \{ \rho = \overline{\rho} \}\subset D^2 \times S^1$ for $\overline{\rho} \in (0,1]$.

\begin{conv} \label{convention on solid torus}
Slopes of essential curves on the torus $T_{\overline\rho}$ are measured with respect to $(\theta,\phi)$ instead of $(\phi,\theta)$.
\end{conv}

We consider contact forms which can be written as:
\begin{equation} \label{eqn: contact form on S1 times D2}
\alpha_{f,g}= g(\rho)d\theta +f(\rho) d\phi.
\end{equation}
Here we need to choose $(f(\rho),g(\rho))$ so that $\alpha_{f,g}$ is
smooth on all of $D^2\times S^1$, which means that $f(0)=0$ and the
derivatives of odd degree of both $f$ and $g$ at $\rho=0$ vanish. We write $f'= \frac{df}{d \rho}$ and $g'= \frac{dg}{d \rho}$.
The analog of Lemma~\ref{lemma: form alpha f g} is the following:

\begin{lemma}\label{lemma: contact forms on V}
The form $\alpha_{f,g}$ is a contact form if and only if
\begin{align} \label{eqn: contact condition on solid torus}
& f'g-fg'>0 \quad \text{for } \rho >0, \text{ and } \\
&  \lim_{\rho \to 0} \dfrac{f'g-fg'}{\rho} >0.
\end{align}
The kernel $\ker\alpha_{f,g}$ is spanned by $\{\bdry_\rho,-f\bdry_\theta+ g\bdry_\phi\}$.  Assuming $\alpha_{f,g}$ is a contact form, the Reeb vector field is given by:
\begin{equation}
R_{\alpha_{f,g}}= {1\over f'g-fg'} (f' \bdry_\theta-g' \bdry_\phi).
\end{equation}
In particular, $R_{\alpha_{f,g}}$ is parallel to $\bdry_\theta$ at
$\rho=0$.
\end{lemma}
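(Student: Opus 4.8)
Here is the strategy I would follow to prove Lemma~\ref{lemma: contact forms on V}. The whole statement is a direct computation in the coordinates $(\rho,\phi,\theta)$; the only point requiring real care is the behaviour at the core $\rho=0$, where the polar frame degenerates.

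First I would compute
$$d\alpha_{f,g}= g'(\rho)\, d\rho\wedge d\theta + f'(\rho)\, d\rho\wedge d\phi,$$
and hence, since the $d\theta\wedge d\theta$ and $d\phi\wedge d\phi$ terms drop out and $d\theta\wedge d\rho\wedge d\phi= d\rho\wedge d\phi\wedge d\theta$ while $d\phi\wedge d\rho\wedge d\theta= -\,d\rho\wedge d\phi\wedge d\theta$,
$$\alpha_{f,g}\wedge d\alpha_{f,g}= (f'g-fg')\, d\rho\wedge d\phi\wedge d\theta.$$
For $\rho>0$ this is a volume form exactly when $f'g-fg'>0$, which is the first inequality. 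The description of $\ker\alpha_{f,g}$ is then immediate: $\alpha_{f,g}(\bdry_\rho)=0$ and $\alpha_{f,g}(-f\bdry_\theta+g\bdry_\phi)=-fg+gf=0$, and these two vector fields are pointwise independent for $\rho>0$, so they span the contact plane there.

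Next I would deal with $\rho=0$ by passing to Cartesian coordinates $x_1=\rho\cos\phi$, $x_2=\rho\sin\phi$ on the disk factor. The smoothness hypotheses on $(f,g)$ — $f(0)=0$ and vanishing of all odd-order derivatives of $f$ and $g$ at $\rho=0$ — say precisely that $f$ and $g$ are smooth even functions of $\rho$ with $f(0)=0$, so that $f/\rho^2$, $f'/\rho$ and $g'/\rho$ extend smoothly and $\alpha_{f,g}$, $d\alpha_{f,g}$ are genuine smooth forms across the core. Using $d\rho\wedge d\phi=\rho^{-1}\, dx_1\wedge dx_2$, the computation above becomes
$$\alpha_{f,g}\wedge d\alpha_{f,g}=\frac{f'g-fg'}{\rho}\; dx_1\wedge dx_2\wedge d\theta,$$
and since $f'g-fg'$ is an odd function of $\rho$ vanishing at $0$, the quotient $(f'g-fg')/\rho$ is smooth and even. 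Thus $\alpha_{f,g}$ is a contact form iff this quotient is everywhere positive, i.e.\ iff $f'g-fg'>0$ for $\rho>0$ and $\lim_{\rho\to0}(f'g-fg')/\rho>0$ — the stated condition. (Near the core the contact plane is spanned by $\bdry_\rho$ and the smooth limit of $\rho^{-1}(-f\bdry_\theta+g\bdry_\phi)$.)

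Finally, for the Reeb field I would write $R=a\,\bdry_\theta+b\,\bdry_\phi+c\,\bdry_\rho$ and impose $\iota_R\, d\alpha_{f,g}=0$ and $\alpha_{f,g}(R)=1$. The first condition gives $g'c=f'c=0$ and $g'a+f'b=0$; wherever the contact condition holds $(f',g')\neq(0,0)$, so $c=0$, and then
$$\begin{pmatrix} g & f\\ g' & f'\end{pmatrix}\begin{pmatrix}a\\ b\end{pmatrix}=\begin{pmatrix}1\\ 0\end{pmatrix}$$
has determinant $f'g-fg'>0$, with solution $a=f'/(f'g-fg')$, $b=-g'/(f'g-fg')$, which is the claimed formula. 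At $\rho=0$ the parity conditions force $f(0)=f'(0)=g'(0)=0$, whence $f'g-fg'=f''(0)\,g(0)\,\rho+O(\rho^3)$ (and $g(0)\neq0$ by the second inequality), so $a\to1/g(0)$ and $b$ stays bounded; since $\bdry_\phi$ vanishes at the core, $R_{\alpha_{f,g}}$ is parallel to $\bdry_\theta$ there. The only genuinely delicate step is the passage to Cartesian coordinates at $\rho=0$ to make sense of the second inequality; everything else is bookkeeping with wedge products and a $2\times2$ system.
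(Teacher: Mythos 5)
The paper offers no proof of this lemma: it is introduced as ``the analog of Lemma~\ref{lemma: form alpha f g},'' which in turn is labelled ``a straightforward calculation'' and also left unproved. So there is no author's argument to compare against; what matters is simply whether your computation is correct, and it is. The wedge-product calculation $\alpha_{f,g}\wedge d\alpha_{f,g}=(f'g-fg')\,d\rho\wedge d\phi\wedge d\theta$ is right, and you correctly identify that the only nontrivial point is the degenerate core $\rho=0$. Passing to Cartesian coordinates $x_1=\rho\cos\phi$, $x_2=\rho\sin\phi$, using $d\rho\wedge d\phi=\rho^{-1}dx_1\wedge dx_2$, and invoking the parity/smoothness constraints on $(f,g)$ to see that $(f'g-fg')/\rho$ extends to a smooth even function is exactly the clean way to make the second displayed inequality meaningful; most authors (including here) would wave at this. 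The kernel computation and the $2\times2$ linear system for the Reeb field are correct, as is the observation that $f'(0)=g'(0)=0$ forces a limit computation at $\rho=0$: the leading term $f'g-fg'=f''(0)g(0)\rho+O(\rho^3)$, combined with $\partial_\phi=0$ at the core, gives $R\parallel\partial_\theta$ there. One small remark worth keeping in mind, though it does not affect the logic: the stated condition ``$f(0)=0$ and odd derivatives of $f,g$ vanish'' is necessary but not by itself sufficient for $\alpha_{f,g}$ to extend smoothly over the core; what one really needs is $f/\rho^2$ and $g$ smooth as functions of $\rho^2$. Your proof implicitly uses this stronger smoothness (in the step asserting $f/\rho^2$, $f'/\rho$, $g'/\rho$ extend smoothly), which is what the paper intends.
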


Each torus $T_{\overline{\rho}}$ is linearly foliated by the Reeb flow of $\alpha_{f,g}$.

Since they will be useful later, we present a pair of constructions of contact forms on $D^2\times S^1$ of the form given in Equation~\eqref{eqn: contact form on S1 times D2}.

\begin{example}\label{esempio scemo}
Given $\nu>0$ and $C>1$, let $(f(\rho),g(\rho))=(\nu\rho^2,C-\rho^2)$.  This gives a smooth
contact form on $D^2\times S^1$ and the Reeb vector field on
$T_{\rho}$ has slope $-{g'\over f'}= {1\over \nu}$ for all $\rho>0$.  In particular, if $\nu$ is irrational, then the only simple closed orbit of
$R_{\alpha_{f,g}}$ is the core curve $\{ \rho=0 \}$.
\end{example}

\begin{example} \label{esempio utile}
The following contact forms, which generalize those in Example
\ref{esempio scemo}, will be used later in the paper.
We define $\alpha$ on $D^2 \times S^1$ so that
the following hold:
\begin{enumerate}
\item $(f,g)$ satisfies Equation~\eqref{eqn: contact condition on solid torus}.
\item $(f(\rho),g(\rho)) =(\rho^2,C-\rho^2)$ near $\rho=0$, where $C>0$ is a large constant.
\item
$(f(\rho),g(\rho))=(f(1)-(\rho-1)^2,g(1)-(\rho-1))$ near $\rho=1$\footnote{Here $(f,g)|_{\rho=1}=(f_{\delta},g_{\delta})|_{y=2}$. This allows us to extend $\alpha_\delta$ to $D^2 \times S^1$ for all sufficiently small $\delta>0$ by writing $(f_\delta,g_\delta)$ as a suitable constant multiple of $(f_{\delta_0},g_{\delta_0})$. This is possible because
of Condition~(6) in the definition of $\alpha_\delta$. Observe that the Reeb orbits of $\alpha_\delta$ and $\alpha_{\delta_0}$ agree on $V$, modulo parametrization.}.
\item $-\frac{g'}{f'}$ monotonically increases from $1$ to $+\infty$ as $\rho$ goes from $0$ to $1$.
\end{enumerate}
The profile of the functions $(f, g)$ is shown in  Figure~\ref{fig: new f and g
part 3}.

\begin{figure}[ht]
\begin{overpic}[height=4.5cm]{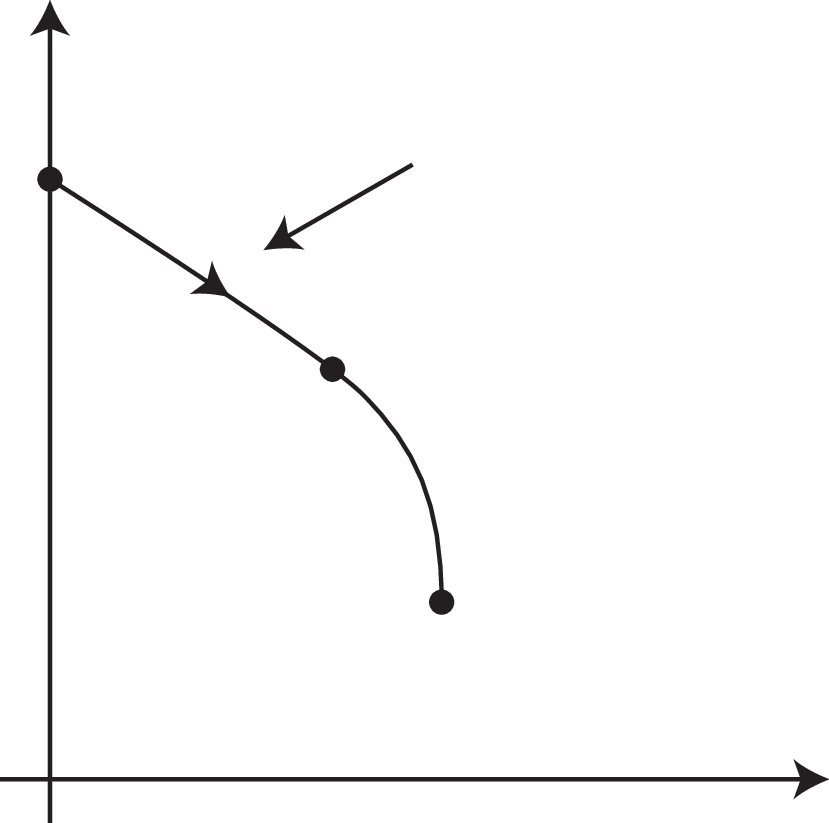}
\put(52.5,81){\tiny Straight line } \put(50,-2) {\tiny $f$}
\put(0,50) {\tiny $g$} \put(57,23){\tiny $(f(1),g(1))$}
\put(10,80){\tiny $(f(0),g(0))$}
\end{overpic}
\caption{Trajectory of $(f(\rho),g(\rho))$. Here the
arrow points in the positive $\rho$-direction.} \label{fig: new f
and g part 3}
\end{figure}

On each torus $T_\rho \subset D^2\times S^1$, the Reeb vector field
$R_{\alpha}$ gives a foliation by Reeb orbits of slope $r$ in
the interval $[1,\infty]$, where there is a unique $\rho$ for each
slope $r \in (1,\infty]$.
\end{example}

\section{ECH for manifolds with torus boundary}
\label{section: ECH for manifolds with torus boundary}
In this section we define several ECH groups on a compact manifold
$M$ with torus boundary $T=\bdry M$.  We fix
 an oriented identification
$T\simeq \R^2/\Z^2$ so that we can refer to slopes of essential
curves on $T$.  Let $\alpha$ be a contact form on $M$ such that $T$
is foliated by Reeb orbits of slope $r$. If $r$ is rational, we
assume that $T$ is Morse-Bott. All ECH groups on $M$ and $int(M)$
are computed using a $C^\infty$-small perturbation of $\alpha$
 so that all Reeb orbits in $int(M)$ are
nondegenerate. Let $J$ be a Morse-Bott regular almost complex
structure on $\R\times M$ adapted to $\alpha$.

\subsection{Definitions} \label{subsection: defns of ECH with t2 bdry}

We introduce several ECH groups:

\s\n 1. $ECH(int(M),\alpha)$. The ECH chain group $ECC(int(M),\alpha)$ is generated
by orbit sets whose simple orbits lie in the interior of $M$. In particular, we are discarding
the Morse-Bott family of orbits on $T$ if $r$ is rational. The differential $\bdry$ is the
usual one, i.e., counts holomorphic curves of ECH index $I(\gamma,
\gamma',Z)=1$ in $\R\times int(M)$ whose connector components are trivial cylinders.
Since $int(M)$ is not closed, we need to verify that $ECC(int(M),\alpha)$ is indeed a
chain complex.

\begin{lemma}
\label{lemma: bdry squared is zero for mfld with torus bdry}
 $\bdry$ is defined and $\bdry^2=0$.
\end{lemma}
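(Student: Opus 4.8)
The plan is to confine every $J$-holomorphic curve that enters the definition of $\bdry$, and the proof of $\bdry^2=0$, to a fixed compact subset of $\R\times int(M)$ disjoint from a collar of $\bdry M$; once this is done, both assertions reduce to the closed case of Hutchings and Taubes~\cite{HT1,HT2}.

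For the confinement, use a collar $T^2\times[0,\varepsilon_0)$ of $\bdry M=T^2\times\{0\}$, with coordinates $(\theta,t,y)$ as in Section~\ref{section: topological constraints}, in which the Reeb vector field is everywhere tangent to the tori $T_y:=T^2\times\{y\}$; such a collar exists because $\bdry M$ is foliated by Reeb orbits (and Morse-Bott when $r$ is rational), and in particular there are tori $T_y$ of irrational slope arbitrarily close to $\bdry M$, each carrying no closed Reeb orbit, unaffected by the perturbation that makes the interior orbits nondegenerate, and linearly foliated by (dense) Reeb trajectories. Now let $\gamma,\gamma'$ be orbit sets whose simple orbits all lie in $int(M)$; then every orbit of $\gamma$ and of $\gamma'$ lies outside $T^2\times[0,\delta)$ for some $\delta>0$, and we fix $y'\in(0,\delta)$ with $T_{y'}$ of irrational slope as above. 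I claim that $u_M(F)\cap T_{y'}=\varnothing$ for every $u\in\mathcal{M}_J(\gamma,\gamma')$. Near each puncture of $F$ the map $u_M=\pi_M\circ u$ is close to an orbit of $\gamma$ or $\gamma'$, hence to a point of $M\setminus T^2\times[0,\delta)$; thus $u_M^{-1}(T^2\times[0,y'])$ is compact and disjoint from the punctures, and flowing it outward through the product collar by a compactly supported cutoff of $\bdry_y$ exhibits $u$ as homotopic, through a compactly supported homotopy, to a map whose image is disjoint from $\R\times T_{y'}$. Lemma~\ref{lemma: blocking lemma}(1) then gives $u_M(F)\cap T_{y'}=\varnothing$. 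Since a finite-energy $J$-holomorphic map from a closed Riemann surface into $\R\times M$ is constant, every nonconstant component of $u$ has a puncture, hence meets $M\setminus T^2\times[0,\delta)$, and, being unable to cross $\R\times T_{y'}$, has image in $\R\times(M\setminus T^2\times[0,y'])$. Therefore $u_M(F)$ lies in the compact set $M_0:=M\setminus T^2\times[0,y'/2]$, with energy bounded in terms of $\mathcal{A}_\alpha(\gamma)$.

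With this in hand, both assertions are a transcription of the closed case. Fix a pair of interior generators $\gamma,\gamma'$. By the action filtration only finitely many orbit sets can occur as intermediate levels in the SFT compactifications of $\mathcal{M}_J^{I=1}(\gamma,\gamma')/\R$ and $\mathcal{M}_J^{I=2}(\gamma,\gamma')/\R$, and all of them, together with $\gamma$ and $\gamma'$, consist of interior orbits lying outside a common $T^2\times[0,\delta_0)$; choosing a single irrationally foliated $T_{y'}$ with $y'<\delta_0$, the previous paragraph confines every component of every building in these compactifications to $\R\times M_0$ and shows that no intermediate orbit set meets $\bdry M$. Inside $\R\times M_0$, SFT compactness, transversality and gluing hold just as for closed $M$, so the analysis of $I=1$ and $I=2$ curves of~\cite{HT1,HT2} (connectors included) applies verbatim: $\mathcal{M}_J^{I=1}(\gamma,\gamma')/\R$ is a finite set of points, so $\langle\bdry\gamma,\gamma'\rangle$ is defined; the target $\gamma'$ is automatically an interior orbit set, so $\bdry$ preserves $ECC(int(M),\alpha)$; and the boundary of the compactified $1$-manifold $\mathcal{M}_J^{I=2}(\gamma,\gamma')/\R$ consists exactly of the two-story buildings between interior generators, whence $\bdry^2=0$.

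The only substantive point is the confinement statement; the rest is formal. It works because $int(M)$ deformation retracts off $\bdry M$, so the homotopy required by the Blocking Lemma can be taken compactly supported (our curves already have all of their ends away from the collar), and because between $\bdry M$ and any prescribed interior orbit there sits an unperturbed, irrationally foliated torus $T_{y'}$, which by Lemma~\ref{lemma: positive slope} acts as a barrier for holomorphic curves.
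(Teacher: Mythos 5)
Your approach is genuinely different from the paper's. You place a barrier torus $T_{y'}$ of irrational slope strictly between $\partial M$ and the fixed generators, use the Blocking Lemma to prevent curves from crossing it, and thereby confine the moduli spaces to $\R\times (M\setminus T^2\times[0,y'])$, after which the closed-manifold analysis of Hutchings--Taubes applies. The paper instead applies the Blocking Lemma at $\partial M$ itself and then invokes the Trapping Lemma (Lemma~\ref{lemma: trapping}) to rule out building ends at boundary orbits: an intermediate level at a boundary orbit would require a paired positive and negative end there, whereas the Trapping Lemma forces every one-sided end at a negative (resp.\ positive) Morse-Bott torus to be negative (resp.\ positive). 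Your route bypasses the Trapping Lemma entirely, relying only on positivity of intersections, which is a real simplification of logic; the price is that you need a collar of $\partial M$ containing invariant, linearly foliated tori of irrational slope that survive the nondegeneracy perturbation. You note this, but it is a stronger structural hypothesis than the Morse--Bott condition at $\partial M$ alone and should be established, not just asserted.

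There is also a circularity at the key compactness step. You write that ``all [intermediate orbit sets], together with $\gamma$ and $\gamma'$, consist of interior orbits lying outside a common $T^2\times[0,\delta_0)$'' --- but this is precisely the statement that has to be proven. You then try to derive it by applying ``the previous paragraph'' to each component of each building, yet that argument required the ends of the curve to already lie away from $\partial M$; for a building component the ends are the intermediate orbit sets, which is what is in question. The fix is available inside your framework but must be spelled out: run the barrier argument on the \emph{sequence} $u_n\in\mathcal{M}_J^{I\le 2}(\gamma,\gamma')$, whose ends are at the fixed $\gamma,\gamma'$ and hence outside $T^2\times[0,\delta)$, to obtain $u_n(\dot F)\subset \R\times(M\setminus T^2\times[0,y'])$. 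SFT convergence (which is local in $M$ and insensitive to the $\R$-translations) then forces every component of any limit building into the closure $\R\times(M\setminus T^2\times[0,y'))$, which is disjoint from $\R\times\partial M$; only then do you know all intermediate orbit sets are interior and can invoke the closed-case machinery. With this reorganization, and a sentence justifying the existence of the irrationally foliated barrier tori, your proof is correct.
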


\begin{proof}
We claim that the SFT compactness theorem holds in $\R \times int(M)$.  This implies that the arguments used in \cite{HT1, HT2} to prove $\partial^2=0$ will then carry over to our setting. Let $u_n$ be a sequence of $J$-holomorphic maps with image in $\R \times int(M)$. After passing to a subsequence, $u_n$ converges to a building $u_{\infty}$ such that all its components have image in $\R \times M$.  By the Blocking Lemma, no component of $u_{\infty}$ can intersect $\partial M$.

We claim that no component of $u_{\infty}$ can have an end at a Reeb orbit in $\partial M$: indeed, if there is a component with a positive (resp.\ negative) end at a Reeb orbit in $\bdry M$, then there is another component of $u_{\infty}$ with a negative (resp.\ positive) end at a Reeb orbit in $\partial M$. By the Trapping Lemma, this is impossible if the image of $u_{\infty}$ is contained in $\R \times M$.
\end{proof}

\n 2a. $ECH(M,\alpha)$ for $r$ irrational.  This is defined to be $ECH(int(M),\alpha)$.

\s\n 2b. $ECH(M,\alpha)$ for $r$ rational. Let $\mathcal{N}$ be the set of simple Reeb
orbits on $T$. The set $\mathcal{N}$ comes with distinguished orbits $e,h$ which
become elliptic and hyperbolic after a suitable perturbation. Writing $\mathcal{P}$ for
the set of simple orbits in $int(M)$, $ECC(M,\alpha)$ is the chain complex which is
generated by orbit sets constructed from $\mathcal{P}\cup\{h,e\}$ and whose
differential counts Morse-Bott buildings of ECH index $1$ in $\R\times M$.

\begin{lemma}\label{fastidio}
If $\alpha$ is nondegenerate on $int(M)$, then it is nice.
\end{lemma}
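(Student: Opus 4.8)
The plan is to exploit the fact that the only Morse-Bott family of $\alpha$ is the family $\mathcal N$ of simple orbits on $T=\partial M$, and that an orbit on the boundary is severely constrained by positivity of intersections. Fix a generic Morse-Bott regular $J$ and let $\tilde u$ be a $J$-holomorphic Morse-Bott building in $\R\times M$ with $I(\tilde u)=1$, with holomorphic part $\{u_1,\dots,u_n\}$; we want to show that $\tilde u$ has at most one non-connector irreducible component (Definition~\ref{defn: niceness}). Since every irreducible component of $\tilde u$ has image in $\R\times M$ and $T$ bounds $M$, each end of a component that is asymptotic to an orbit of $\mathcal N$ is one-sided, so by the Trapping Lemma (Lemma~\ref{lemma: trapping}) it is a negative end if $T$ is a negative Morse-Bott torus and a positive end if $T$ is positive. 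Assume $T$ is negative, the other case being symmetric after exchanging the two asymptotic orbit sets of $\tilde u$. Then no two consecutive levels $u_i,u_{i+1}$ can be glued along an orbit of $\mathcal N$ --- that would require a negative end of $u_i$ and a positive end of $u_{i+1}$ at such an orbit --- and the top asymptotic orbit set of $\tilde u$ contains no orbit of $\mathcal N$; hence all intermediate gluings occur at nondegenerate interior orbits (with length-zero gradient segments), and only the bottom level $u_n$ can carry ends at $\mathcal N$, namely negative ends closed up by gradient flows to the critical points $e,h$ dictated by the bottom orbit set. By the Blocking Lemma (Lemma~\ref{lemma: blocking lemma}(1)) no component of $\tilde u$ meets $\R\times T$ away from these ends.

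First I would bound the number of non-connector components level by level. For $i<n$, $u_i$ is an honest finite-energy $J$-holomorphic curve in $\R\times M$ with ends only at nondegenerate orbits, so \cite[Proposition~7.15]{HT1} applies directly: $I(u_i)\ge 0$, with equality iff $u_i$ is a disjoint union of connectors, and if $I(u_i)=1$ then $u_i$ is such a union together with a single embedded curve of ECH and Fredholm index $1$. For the bottom level I would argue as in Lemma~\ref{lemma: counting Morse-Bott buildings}: a non-connector irreducible component $w$ of $u_n$ is a $k$-fold branched cover ($k\ge 1$) of a simply-covered non-trivial curve $v$; augmenting $v$ by gradient trajectories to $e,h$ yields a very nice simply-covered building $\tilde v$, which has $\op{ind}(\tilde v)>0$ by the Morse-Bott regularity of $J$, so the perturbation $v_\epsilon$ provided by Theorem~\ref{thm: Morse-Bott perturbation of moduli spaces}(2) satisfies $I(v_\epsilon)\ge\op{ind}(v_\epsilon)>0$ by the ECH index inequality (Theorem~\ref{thm: index inequality}); hence the building $\tilde w$ obtained by closing $w$ up with its gradient flows has $I(\tilde w)=I(v_\epsilon^k)\ge kI(v_\epsilon)\ge k\ge 1$ by \cite[Theorem~5.1 and Proposition~5.6]{Hu2}. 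The other (connector) components of $u_n$, together with their length-zero gradient flows, contribute $0$ to the ECH index, so in all cases $I(u_n)\ge 1$ whenever $u_n$ carries a non-connector component, and, as for $i<n$, $u_n$ carries at most one such component.

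Now I would add up the contributions. The ECH index is additive under the concatenation of the relative homology classes realized by the successive levels and super-additive under disjoint union within a level (by positivity of intersections), while connectors contribute $0$; hence $I(\tilde u)\ge\sum_i I(u_i)$, and by the previous paragraph every level that carries a non-connector component contributes at least $1$. Since $I(\tilde u)=1$, at most one level can carry a non-connector component, and that level carries at most one. Therefore the holomorphic part of $\tilde u$ has at most one non-connector irreducible component, i.e.\ $\tilde u$ is nice; as $\tilde u$ and $J$ were arbitrary, $\alpha$ is nice. (If the holomorphic part of $\tilde u$ is empty, or consists only of connectors, there is nothing to prove.)

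I expect the main obstacle to be the index bookkeeping on the bottom level $u_n$: one must verify that closing a boundary-asymptotic component up with the prescribed gradient flows to $e$ and $h$ yields a genuine very nice simply-covered building to which Theorem~\ref{thm: Morse-Bott perturbation of moduli spaces}(2) and the ECH index inequality apply, and that the gradient segments contribute the expected amounts to the ECH index --- this is, essentially, the content of Lemma~\ref{lemma: counting Morse-Bott buildings} transplanted to the boundary torus. The remaining ingredients (the $I\ge 0$ dichotomy for curves in symplectizations, and the Blocking and Trapping Lemmas) are applied levelwise and cause no trouble.
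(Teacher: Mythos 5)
Your overall strategy is essentially the paper's: derive a contradiction from superadditivity of the ECH index, powered by Theorem~\ref{thm: Morse-Bott perturbation of moduli spaces}(2), the ECH index inequality, and \cite[Theorem~5.1 and Proposition~5.6]{Hu2}, by showing that every non-connector component forces a contribution of at least $1$ to $I(\tilde u)$. However, there is an error in your structural reduction to the level decomposition.

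You claim that the Trapping Lemma forbids any gluing of consecutive levels $u_i, u_{i+1}$ at an orbit of $\mathcal N$, and conclude that for $i<n$ the level $u_i$ has ends only at nondegenerate interior orbits, so that \cite[Proposition~7.15]{HT1} applies directly. This is not correct. The Trapping Lemma excludes only \emph{one-sided} positive ends at a negative Morse-Bott torus. A positive end of $u_{i+1}$ asymptotic to an orbit of $\mathcal N$ that belongs to a trivial cylinder over that orbit has image in $\R\times T$ and is therefore not one-sided, so it is not excluded. Hence a non-connector irreducible component of some intermediate level $u_i$ ($i<n$) can perfectly well carry negative ends at $\mathcal N$, paired via a gradient segment to a chain of trivial cylinders in lower levels that eventually terminates (via $\delta_{n,j}$) at $e$ or $h$ dictated by $\gamma'$. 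For such a level \cite[Proposition~7.15]{HT1} cannot be invoked as stated, and your level-by-level bound is not justified.

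The repair is small: run the same Lemma~\ref{lemma: counting Morse-Bott buildings}-style argument that you use for $u_n$ on \emph{every} non-connector component that carries ends at $\mathcal N$, wherever it sits in the building --- close it up with gradient trajectories to $e,h$ directly, noting that the chain of trivial cylinders and intervening gradient segments it replaces represents the same relative homology class and hence the same ECH index. With that change your count goes through. In fact this is exactly what the paper's proof does: it never decomposes by level, but instead takes the non-connector components $u_1,u_2$, augments each with gradient trajectories (absorbing intervening connectors) to form $\tilde u_1, \tilde u_2$, observes $I(\tilde u)=I(\tilde u')$ for the union, and then applies superadditivity. Working with components rather than levels avoids the trap you fell into.
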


\begin{proof}
Suppose that $\partial M$ is a negative Morse-Bott torus; the positive case is analogous. Let ${\mathcal N}$ be the Morse-Bott family corresponding to $\partial M$. If $\alpha$ is not nice, then there is a Morse-Bott building $\tilde{u}$ in $\R \times M$  with ECH index $I(\tilde{u})=1$ whose holomorphic part $u$ has more than one  non-connector irreducible component. Assume that there are exactly two  non-connector components $u_1$ and $u_2$  (this is mostly to simplify notation; the general case is treated in the same way). By the Trapping Lemma, the only ends of $u_1$ and $u_2$ that limit to $\bdry M$ are negative ends.  We form the Morse-Bott buildings $\tilde{u}_1$ and $\tilde{u}_2$ by augmenting the ends of $u_1$ and $u_2$ at $\partial M$ with gradient flow lines and denote the union of these two buildings  by $\tilde{u}'$.

We claim that $I(\tilde{u})= I(\tilde{u}')$. Indeed, all the ends of $u_1$ and $u_2$ that limit to orbits on $\partial M$ are connected to critical points in ${\mathcal N}$ by gradient flow lines, with possible interruptions by connectors. Hence $\tilde{u}$ and $\tilde{u}'$ have the same ends in the ECH sense and define the same relative homology class. This implies that $I(\tilde u)=I(\tilde u')$.

On the other hand,
 let $u_i$, $i=1,2$, be a $k_i$-th cover of a $J$-holomorphic curve $v_i$, and define very nice, simply-covered buildings $\tilde{v}_i$.  By Theorem~\ref{thm: Morse-Bott perturbation of moduli spaces}(2), we can perturb $\tilde{v}_1$ and $\tilde{v}_2$ to $J_\epsilon$-holomorphic maps $v_{1, \varepsilon}$ and $v_{2, \varepsilon}$, respectively. We denote by $v_{i, \varepsilon}^{k_i}$ the $J_\epsilon$-holomorphic map made of $k_i$ parallel copies of $v_{i, \varepsilon}$.
Then, by~\cite[Theorem~5.1]{Hu2},
$$I(\tilde{u}) \ge I(v_{1, \varepsilon}^{k_1}) + I(v_{2, \varepsilon}^{k_2}) \ge
k_1 I(v_{1, \varepsilon}) + k_2 I(v_{2, \varepsilon}).$$
Since $I(v_{i, \varepsilon}) >0$ for $i=1,2$, this is a contradiction.
\end{proof}

 Lemma~\ref{fastidio} implies that $\partial^2=0$, since it guarantees that the Morse-Bott gluing is done at a different end from the gluing of connectors (i.e., the obstruction bundle gluing of Hutchings-Taubes~\cite{HT1,HT2}) and the two kinds of gluing can be done independently.

\s\n 3. $ECH^{\flat}(M, \alpha)$. The chain complex
$ECC^{\flat}(M,\alpha)$ is generated by orbit sets which are
constructed from $\mathcal{P}\cup\{e\}$. As in the case of
$ECC(M,\alpha)$, if $\mathcal{N}$ is a negative Morse-Bott family,
no Morse-Bott building $\tilde u$ in $\R\times M$ besides trivial
cylinders can have $e$ at the positive end. Hence the differential
can be defined by counting Morse-Bott buildings of ECH index $1$ in
$\R\times M$, whose orbit sets are constructed from $\mathcal{P}\cup
\{e\}$.

The verification of $\bdry^2=0$ needs one extra consideration:
An index $2$ family of $J$-holomorphic curves in $\R\times M$
can break into a  Morse-Bott building $\tilde u$ which involves $h$ at the
negative end, followed by a holomorphic cylinder from $h$ to $e$. (Note that,
by the Trapping Lemma, these holomorphic cylinders are the only nontrivial holomorphic curves which go from $h$ to $e$ and so there are no other cases to consider.)

This type of breaking could be a problem because orbit sets containing $h$
are not in the chain complex $ECC^{\flat}(M,\alpha)$. However, since there are
{\em two} gradient trajectories from $h$ to $e$  with ECH index $I=1$ and no other holomorphic curve  (or building) with a positive end at $h$,
the Morse-Bott building $\tilde u$ can be glued onto each of the two gradient
trajectories.  This proves that families breaking at $h$ always come in
pairs, and therefore $\bdry^2=0$ holds even when we discard orbit sets which contain
$h$.

If $\mathcal{N}$ is a positive Morse-Bott family, then $e$ can only
be at the positive end of a $J$-holomorphic curve in $\R \times M$,
and the proof of $\partial^2=0$ remains the same with the obvious
modifications.

\s\n 4. $ECH^{\sharp}(M, \alpha)$. The chain complex
$ECC^{\sharp}(M,\alpha)$ is generated by orbit sets which are
constructed from $\mathcal{P}\cup\{h\}$, and its differential counts
ECH index 1 Morse-Bott buildings which are asymptotic to orbit sets
constructed from $\mathcal{P}\cup\{h\}$.

 \begin{rmk} The differentials of the ECH groups defined in this section
preserve the total homology class of the generators. Then we can define subgroups $ECH(M, \alpha, A)$ for every $A \in H_1(M)$. Similar notations will be used for the variants of this group.
\end{rmk}

\subsection{Well-definition}\label{subsec: well definition of ECH with boundary}

In this subsection we prove that $ECH(M,\alpha)$ is independent of
the choice of $\alpha$, provided the slope $r$ is irrational. The
verification in the other cases will be omitted; we will be careful to use the
invariance of ECH groups for manifolds with torus boundary only in the case where
it is proved. The main result proved in this subsection is the following:

\begin{prop}\label{prop: ECH of two contact forms in M}
Let $\alpha_1$ and $\alpha_2$ be contact forms on $M$  which agree on $\partial M$ to first order (and in particular the Reeb vector fields  and the characteristic foliations of $\alpha_1$ and $\alpha_2$ at $\partial M$ are equal)  and define contact structures $\xi_i = \ker \alpha_i$ which are isotopic relative to the boundary. If $\partial M$ is foliated by Reeb orbits of irrational slope, then there is an isomorphism $$ECH(M, \alpha_1) \simeq ECH(M, \alpha_2).$$
\end{prop}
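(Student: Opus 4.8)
The plan is to reduce to Taubes's invariance theorem~\cite{T2} for closed manifolds by gluing a solid torus onto $M$, and then to recover the statement for $int(M)$ from an action filtration together with the Blocking and Trapping Lemmas. The first step is to close up $M$. Since $\alpha_1$ and $\alpha_2$ agree on $\partial M$ to first order, their Reeb vector fields agree along $\partial M$, so $\partial M$ is foliated by $R_{\alpha_i}$-orbits of the same irrational slope $r$ for $i=1,2$. Fix $L>0$. After normalizing both forms near $\partial M$ to coincide with a fixed model (a modification confined to a collar in which, as in the proof of Lemma~\ref{lemma: bdry squared is zero for mfld with torus bdry}, low-action curves do not penetrate, hence $ECH^L(int(M),\alpha_i)$ is unchanged), I can glue to $M$ a solid torus $V\simeq D^2\times S^1$ carrying a contact form as in Example~\ref{esempio scemo} of irrational slope $r$, with the constant $C$ chosen so large that its core orbit --- the unique closed Reeb orbit in $V$ --- has action exceeding $L$. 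Extending $\alpha_1$ and $\alpha_2$ over $V$ by the \emph{same} contact form (possible precisely because of the first-order agreement along $\partial M$) produces contact forms $\hat\alpha_1,\hat\alpha_2$ on the closed manifold $\hat M_L=M\cup V$. The isotopy rel $\partial M$ from $\ker\alpha_1$ to $\ker\alpha_2$ extends by the identity on $V$, so by Gray stability there is an ambient isotopy $\psi_t$ of $\hat M_L$ with $\psi_0=\mathrm{id}$, equal to the identity on $V$, and $\psi_1^*\ker\hat\alpha_2=\ker\hat\alpha_1$; replacing $\hat\alpha_2$ by $\psi_1^*\hat\alpha_2$ (which changes $ECH(int(M),\alpha_2)$ only by the diffeomorphism $\psi_1|_M$) one may assume $\hat\alpha_2=f\hat\alpha_1$ with $f>0$ and $f\equiv1$ on $V$.

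Next I would establish that the glued region is a barrier. Perturb $\hat\alpha_i$ to nondegenerate forms $\hat\alpha_{i,\epsilon}$ by a perturbation supported in $int(M)$ and equal to the one defining $ECH(int(M),\alpha_i)$. The claim is that any finite-energy holomorphic curve in $\R\times\hat M_L$ --- or in the completion of an interpolating cobordism $[0,1]\times\hat M_L$ --- between orbit sets of action less than $L$ is contained in $\R\times int(M)$. The argument mirrors Lemma~\ref{lemma: bdry squared is zero for mfld with torus bdry}: such a curve has no end on the core of $V$ (action $>L$) and $V$ minus the core has no closed Reeb orbit, so applying Lemma~\ref{lemma: positive slope} and the Blocking Lemma to the irrationally foliated tori $T_\rho\subset V$ shows the curve cannot be linked with the core and hence avoids a neighborhood of it; it therefore lies in $\R\times\big(M\cup(V\setminus\text{core})\big)$, which deformation retracts onto $\R\times M$, and a further application of the Blocking Lemma, together with connectedness of the nontrivial components, forces it off $\R\times\partial M$ into $\R\times int(M)$ (the Trapping Lemma playing the analogous role along any rational-slope pieces appearing in the cobordism). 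Consequently the generators of action $<L$ and the relevant moduli spaces of $I=1$ curves --- and of $I=0$ curves in the cobordism --- coincide for $\hat M_L$ and for $int(M)$, producing canonical isomorphisms $ECH^L(\hat M_L,\hat\alpha_{i,\epsilon})\simeq ECH^L(int(M),\alpha_i)$ for $i=1,2$, compatible with the inclusion-induced maps $i_{L,L'}$ once $V$ is enlarged as $L$ grows.

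With this in hand the invariance is imported from the closed case. On the \emph{closed} manifold $\hat M_L$, the interpolating cobordism between $\hat\alpha_1$ and a suitable rescaling of $\hat\alpha_2=f\hat\alpha_1$, which can be chosen to be a rescaled product near $[0,1]\times V$ so that the core orbit keeps action $>L$, induces a map $ECH^L(\hat M_L,\hat\alpha_{1,\epsilon})\to ECH^L(\hat M_L,\hat\alpha_{2,\epsilon})$ by Theorem~\ref{thm: Hutchings Taubes cobordism map}. Transporting these maps through the isomorphisms of the previous paragraph and passing to the direct limit over $L$ (and over the family $\hat M_L$, with $C=C_L\to\infty$) yields a map $ECH(M,\alpha_1)\to ECH(M,\alpha_2)$. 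The reversed isotopy produces a map in the opposite direction, and by Theorem~\ref{thm: Hutchings Taubes cobordism map}(ii),(iv),(vi) both compositions are induced by pieces of symplectization --- a rescaling followed by an inclusion --- hence are the identity in the direct limit. Therefore the map is an isomorphism.

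I expect the main obstacle to be the barrier statement of the second paragraph: one must show, uniformly over the interpolating cobordism as well as in the symplectization, that finite-action holomorphic curves genuinely stay away from the solid torus $V$ and from $\partial M$. This rests on the positivity-of-intersections arguments behind the Blocking and Trapping Lemmas and on the asymptotic analysis near the ends, and on choosing the closing-up data --- the contact form on $V$ and the interpolating cobordism near $[0,1]\times V$ --- so that these lemmas apply without modification; arranging the preliminary normalization of $\alpha_1,\alpha_2$ near $\partial M$ without affecting $ECH^L$ is a companion technicality.
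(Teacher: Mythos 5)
Your overall strategy matches the paper's — close up $(M,\alpha_i)$ by attaching a solid torus $V$ carrying a contact form with no low-action closed orbits, transfer the question to the closed manifold $M'=M\cup V$ where Taubes's invariance applies, and pass to a direct limit. The Gray stability reduction, the use of Theorem~\ref{thm: Hutchings Taubes cobordism map}, and the compatibility of the identifications as $L$ grows all track the paper's Lemmas~\ref{lemma: mahler}--\ref{lemma: ECH of two contact forms in M - commutativity}.

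The gap is in the ``barrier'' step, and it lies in your choice of contact form on $V$. You glue in the form of Example~\ref{esempio scemo}, for which every torus $T_\rho\subset V$ is foliated by Reeb lines of the \emph{same} irrational slope $1/\nu$, and the only closed orbit is the core. Suppose $u$ is a finite-energy curve in $\R\times M'$ between orbit sets whose orbits lie in $int(M)$, and suppose $u$ enters $V$. Since $u$ has no ends inside $V$ (the core has action $>L$), $[u_{M'}(F)\cap T_\rho]$ is a fixed multiple $k\mu$ of the meridian for all $\rho\in(0,1]$. Lemma~\ref{lemma: positive slope} gives $k\mu\cdot s\geq 0$; because $\mu\cdot s\neq 0$ this pins down the sign of $k$ but does not force $k=0$, and positivity of intersections of $u$ with $\R\times\mathrm{core}$ in dimension four imposes the \emph{same} sign constraint, so there is no contradiction. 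Blocking Lemma~\ref{lemma: blocking lemma}(1) needs $u$ to be compactly-supported homotopic off $\R\times T_\rho$, which is precisely the unverified claim when $k\neq 0$; part~(2) needs $k\mu$ to have the Reeb slope of some nearby torus, which never occurs here because the Reeb slope in Example~\ref{esempio scemo} is constantly $1/\nu$ and never equals the meridian slope. So you have not excluded curves that positively link the core, and such a curve would contribute to the differential of $ECC^L(M',\hat\alpha_i)$ but not to that of $ECC^L(int(M),\alpha_i)$, breaking your claimed chain-level identification. The paper avoids this by replacing Example~\ref{esempio scemo} with the contact form of Lemma~\ref{claim: extension of contact form wo orbits}, whose slope profile is \emph{not constant} but rotates through the meridian slope at an intermediate torus $V'\subset V$ while keeping all closed orbits of action $>L$. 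That torus is the genuine barrier: since $[u_{M'}\cap\partial V]=k\mu$ is a meridian multiple and the meridian slope is exactly the Reeb slope on $V'$, Blocking Lemma~(2) forces any curve entering $V$ to have an end on $V'$, which low action forbids. Substituting this construction for Example~\ref{esempio scemo}, and verifying (as in Lemmas~\ref{lemma: identity form a cobordism} and~\ref{pronto soccorso}) that the resulting identifications are compatible under the interpolating cobordisms between the $L$-dependent extensions, repairs the gap; the rest of your argument then goes through.
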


The strategy of the proof is to extend $(M, \alpha_i)$, $i=1,2$, to closed contact manifolds and to use the invariance of ECH for closed manifolds.
Lemma~\ref{claim: extension of contact form wo orbits} constructs the contact forms which are used to extend $(M, \alpha_i)$.
 Then Lemma~\ref{lemma: mahler} shows that, up to some action $L$,
the ECH groups of $(M, \alpha_i)$ are isomorphic to the ECH groups of their extension.
Finally Lemmas~\ref{lemma: identity form a cobordism}, \ref{pronto soccorso} and \ref{lemma: ECH of two contact forms in M - commutativity} establish some compatibility properties for
the continuation maps between the extended forms, so that the proposition can finally be proved by a direct limit argument.

 \begin{lemma}\label{crude bound}
Let $\alpha= g(\rho) d \theta + f(\rho) d \phi$ be a contact form on $D^2 \times S^1$
with cylindrical coordinates $(\rho, \phi, \theta)$. Denote $v(\rho) = (f(\rho), g(\rho))$
and let $|v(\rho)|$ be the norm of $v(\rho)$ and $\zeta(\rho)$ the angle between $v(\rho)$ and $v'(\rho)$, both measured with respect to the standard Euclidean structure on $\R^2$. Then, if the torus $T_\rho$ is foliated by closed Reeb orbits, for every Reeb orbit $\gamma$ on $T_\rho$ we have
\begin{equation}\label{eqn: crude bound}
{\mathcal A}(\gamma) \ge |v(\rho)| |\sin \zeta(\rho)|.
\end{equation}
\end{lemma}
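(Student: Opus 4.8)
The statement concerns a contact form $\alpha = g(\rho)\,d\theta + f(\rho)\,d\phi$ on $D^2 \times S^1$, with $v(\rho) = (f(\rho), g(\rho))$, and claims that whenever $T_\rho$ is foliated by closed Reeb orbits, every such orbit $\gamma$ satisfies ${\mathcal A}(\gamma) \ge |v(\rho)|\,|\sin \zeta(\rho)|$, where $\zeta(\rho)$ is the angle between $v(\rho)$ and $v'(\rho)$. The plan is to compute ${\mathcal A}(\gamma) = \int_\gamma \alpha$ directly using the explicit Reeb vector field from Lemma~\ref{lemma: contact forms on V}, and then to interpret the resulting expression geometrically in terms of $v$ and $v'$.

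First I would recall from Lemma~\ref{lemma: contact forms on V} that the Reeb vector field is
$R_\alpha = \frac{1}{f'g - fg'}(f'\,\bdry_\theta - g'\,\bdry_\phi)$
on $T_\rho$ (with $\rho > 0$), so the closed Reeb orbits on $T_\rho$ have direction $(f', -g')$ in the $(\theta,\phi)$ coordinates — equivalently slope $-g'/f'$. Since $T_\rho$ is foliated by \emph{closed} orbits, this slope is rational, say the primitive integral direction vector of the orbit is $(p, q)$ proportional to $(f', -g')$. The action of a simple such orbit $\gamma$ is $\int_\gamma \alpha = \int_\gamma (g\,d\theta + f\,d\phi)$; parametrizing $\gamma$ so that $\theta$ advances by $2\pi p$ and $\phi$ by $2\pi q$ (for the appropriate minimal integers), this integral equals $2\pi(g\cdot p + f \cdot q)$. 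Because $(p,q) \parallel (f', -g')$ with the correct orientation (note $f'g - fg' > 0$ forces the positive sign), we can write $(p,q) = \mu (f', -g')$ for some $\mu > 0$, whence $gp + fq = \mu(gf' - fg') > 0$, and ${\mathcal A}(\gamma) = 2\pi \mu(gf' - fg')$.

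Next I would observe that $gf' - fg'$ is exactly (up to sign conventions) the cross product $v \times v'$ in $\R^2$, whose magnitude is $|v|\,|v'|\,|\sin\zeta|$; more precisely $gf' - fg' = -(v \times v')$ computed as $f g' - g f'$... so I would be careful with signs, but in any case $|gf' - fg'| = |v(\rho)|\,|v'(\rho)|\,|\sin\zeta(\rho)|$. Since $\mu > 0$ and $2\pi \ge 1$... actually I'd want ${\mathcal A}(\gamma) = 2\pi\mu \,|v|\,|v'|\,|\sin\zeta| \ge |v|\,|\sin\zeta|$. To get this I need $2\pi\mu\,|v'| \ge 1$; this follows because $\mu |v'| = |(p,q)|/|(f',-g')| \cdot |v'| = |(p,q)| \ge 1$ as $(p,q)$ is a nonzero integral vector, so $2\pi\mu|v'| \ge 2\pi > 1$. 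This gives the inequality for the simple orbit, and a fortiori for any multiple cover since the action only increases.

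The main obstacle I anticipate is purely bookkeeping: getting the orientation conventions consistent (the slope convention of Convention~\ref{convention on solid torus} measures slopes with respect to $(\theta,\phi)$ rather than $(\phi,\theta)$, and one must track whether the primitive direction vector $(p,q)$ should be proportional to $(f',-g')$ or to $(-f',g')$, and which sign of $gf'-fg'$ is positive). The contact condition $f'g - fg' > 0$ for $\rho > 0$ pins this down, and one just needs to chase it through so that the action comes out positive and equal to $2\pi |(p,q)| \cdot \frac{|gf'-fg'|}{|v'|}$. Once the signs are right, the estimate $|(p,q)| \ge 1$ and $2\pi > 1$ finish it immediately; there is no hard analysis involved.
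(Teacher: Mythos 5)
Your proposal is correct and follows essentially the same route as the paper: you compute the action as $\int_\gamma \alpha$ over a primitive integral direction $(p,q)$ parallel to $(f',-g')$, reduce it to the cross-product quantity $|v\times v'|=|v||v'||\sin\zeta|$, and then use the arithmetic fact that a nonzero integral vector has norm at least one. The paper packages the same computation slightly differently --- it writes $|R| = 1/(|v||\sin\zeta|)$ and then uses ${\mathcal A}(\gamma) = |w|/|R| \ge 1/|R|$ where $w$ is the shortest integral direction vector --- but the two formulations are equivalent, and your attention to the sign/orientation bookkeeping is exactly what makes the argument go through.
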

\begin{proof}
Let $J$ be the standard complex structure, $\cdot$ the standard inner product, and $|\cdot|$ the standard Euclidean norm on $\R^2$. For every $\rho \in (0,1]$ we trivialize the tangent bundle of the torus $T_\rho$ by $(\partial_{\phi}, \partial_{\theta})$ and measure the slope of curves on $T_\rho$ with respect to $(\phi,\theta)$.\footnote{In the proof we are using a different convention from that of Convention~\ref{convention on solid torus}.}

By Lemma~\ref{lemma: contact forms on V}, $R$ is tangent to $T_\rho$ for all $\rho \in (0,1]$ and can be written as:
$$R= {(-g',f')\over (-g',f')\cdot (f,g)},$$
with respect to $(\bdry_\phi,\bdry_\theta)$. If we write $v=(f,g)$, then $Jv'=(-g',f')$ and
$$|R|= \left| { (-g',f')\over (-g',f')\cdot (f,g)}\right| = \left| { Jv'\over Jv'\cdot v}\right|={1\over |v| |\sin\zeta|},$$
where $\zeta(\rho)$ is the angle between $v(\rho)$ and $v'(\rho)$.  Note that $\mbox{slope}(R)=\mbox{slope}(Jv')=-{f'\over g'}$.

Let $\rho \in (0,1)$  be such that $R$ has rational slope on $T_\rho$ and let $w$ be the shortest integer vector with that slope. Then $T_\rho$ is foliated by Reeb orbits and each Reeb orbit $\gamma$ has action ${\mathcal A}(\gamma) = \frac{|w|}{|R|}$. Since $|w| \ge 1$, we have the bound
\begin{equation*}
{\mathcal A}(\gamma) \ge \tfrac{1}{|R|} = |v| |\sin \zeta|.
\end{equation*}
\vskip-.25in
\end{proof}

\begin{lemma} \label{claim: extension of contact form wo orbits}
Given $L>0$  and $r>0$ irrational, there is a contact form $\alpha( r, L)=g(\rho)d\theta+f(\rho)d \phi$ on  $V= D^2 \times S^1$
with cylindrical coordinates $(\rho, \phi, \theta)$ such that:
\begin{itemize}
 \item[(a)] on $\partial V$ the Reeb vector field  $R$ of $\alpha(r, L)$ has slope $- \frac 1r$ and the characteristic foliation has infinite slope; and
\item[(b)] all the closed orbits of  $R$ have $\alpha( r, L)$-action larger than $L$.
\end{itemize}
\end{lemma}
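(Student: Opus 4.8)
The plan is to build $(f,g)$ explicitly as a $C^\infty$ curve $v(\rho)=(f(\rho),g(\rho))$ in $\R^2$ that is smooth at the core, satisfies the contact condition of Lemma~\ref{lemma: contact forms on V}, has the prescribed boundary behaviour, and stays far from the origin for $\rho$ bounded away from $0$. The key quantitative input is Lemma~\ref{crude bound}: on any Morse-Bott torus $T_\rho$ (i.e. where $R$ has rational slope) every closed Reeb orbit has action at least $|v(\rho)|\,|\sin\zeta(\rho)|$, where $\zeta(\rho)$ is the angle between $v(\rho)$ and $v'(\rho)$. So it suffices to arrange that $|v(\rho)|\,|\sin\zeta(\rho)| > L$ at every $\rho\in(0,1]$ where the slope $-f'/g'$ is rational, together with the requirement (a) that at $\rho=1$ the slope of $R$ is $-1/r$ and the characteristic foliation $\ker\alpha_{f,g}|_{T_1}$, spanned by $-f\,\partial_\theta+g\,\partial_\phi$, has infinite slope (i.e. $f(1)=0$ — with the convention $T_1$ is measured with respect to $(\theta,\phi)$, infinite slope means the $\partial_\phi$-direction, so actually one wants $g(1)=0$; I will pin down the exact normalization so that the direction of $(-f(1),g(1))$ is the one called ``infinite'' under Convention~\ref{convention on solid torus}).

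First I would fix the behaviour near $\rho=0$: take $v(\rho)=(\rho^2, C-\rho^2)$ for $\rho$ small, which is smooth on $D^2\times S^1$ (odd derivatives vanish), satisfies the contact condition, and has $R$ parallel to $\partial_\theta$ at the core; here $C>0$ is a large constant to be chosen at the end. Near this piece $\zeta(\rho)$ is close to $\pi/2$ and $|v(\rho)|\approx C$, so the bound $|v|\,|\sin\zeta|$ is comparable to $C$. Next, on the remaining interval I would prescribe the curve $v$ so that it spirals outward (in the counterclockwise sense forced by \eqref{eqn: contact condition on solid torus}): keep $|v(\rho)|$ monotone nondecreasing and bounded below by, say, $C/2$, while letting the argument of $v$ decrease from near $\pi/2$ down to the value $\arg(-f(1),g(1))$ determined by condition (a). Throughout this region I keep $|\sin\zeta(\rho)|$ bounded below by a fixed positive constant $c_0$ — this is easy to do, since $\zeta$ measures how far $v'$ is from radial, and the contact condition already forces $v'$ to have a definite transverse (counterclockwise) component; by reparametrizing $\rho$ we may keep $\zeta$ pinned near $\pi/2$ except possibly near the endpoints. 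Finally I would glue in the prescribed model $v(\rho)=(f(1)-(\rho-1)^2,\;g(1)-(\rho-1))$ near $\rho=1$, matching $C^\infty$, which pins the slope of $R$ and the characteristic foliation at $T_1$ as required; this corner also has $|\sin\zeta|$ bounded below and $|v|$ of size $\gtrsim C/2$.

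With such a $v$, Lemma~\ref{crude bound} gives, on every torus $T_\rho$ foliated by closed orbits, $\mathcal{A}(\gamma)\ge |v(\rho)|\,|\sin\zeta(\rho)| \ge (C/2)\, c_0$. Choosing $C$ so that $(C/2)c_0 > L$ then forces every closed Reeb orbit on such a torus to have action $>L$. Since (outside the core) every closed orbit lies on some torus $T_\rho$ of rational slope, and the core curve $\{\rho=0\}$ has action $f'g-fg'$ evaluated appropriately — which by the model near $0$ equals $C\cdot(\text{something close to }1)$, again $>L$ for $C$ large — conclusion (b) follows. I expect the main obstacle to be purely bookkeeping: choosing the interpolating arc of $v$ on the middle interval so that it is genuinely $C^\infty$ at both gluing points, strictly satisfies \eqref{eqn: contact condition on solid torus} (equivalently, $v$ is transverse to the radial rays and turns counterclockwise) for all $\rho>0$, realizes the correct total turning so that $\arg v(1)$ lands on the prescribed ray, and simultaneously keeps both $|v(\rho)|$ and $|\sin\zeta(\rho)|$ uniformly bounded below. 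All of these are open, convex-in-spirit conditions on the 1-jet of the curve, so a single explicit construction (e.g. prescribing $|v|$ and $\arg v$ as monotone functions of $\rho$ with controlled derivatives, then checking the contact inequality translates to a lower bound on the angular speed that we are free to impose) will do; I would present one such choice and verify the inequalities, leaving the routine smoothness checks to the reader.
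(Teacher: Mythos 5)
Your global strategy — run Lemma~\ref{crude bound} at every radius and arrange $|v(\rho)|\,|\sin\zeta(\rho)|>L$ uniformly — is sound, and is a more direct variant of the paper's argument, which partitions $(0,1]$ and supplements the crude bound with a Diophantine-gap estimate on one subinterval and the outright absence of closed orbits (because the boundary slope is the irrational $-1/r$) on another. The step that genuinely fails in your proposal is the endpoint model at $\rho=1$. The ansatz $v(\rho)=(f(1)-(\rho-1)^2,\,g(1)-(\rho-1))$, lifted from Example~\ref{esempio utile}, is engineered so that $f'(1)=0$; by Lemma~\ref{lemma: contact forms on V} this makes $R|_{\partial V}$ parallel to $\partial_\phi$, i.e.\ of slope $\infty$, producing a Morse-Bott torus of meridians on $\partial V$. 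That is the opposite of what condition (a) prescribes: the Reeb slope at $\partial V$ should be $-1/r$ with $r$ irrational, precisely so that $\partial V$ carries \emph{no} closed orbits, and the characteristic foliation $-f(1)\partial_\theta+g(1)\partial_\phi$ should have infinite slope, which forces $f(1)=0$ (not $g(1)=0$, as you tentatively suggest). You also cannot patch your model by imposing $f(1)=0$, since $f(1)=f'(1)=0$ makes $f'g-fg'$ vanish at $\rho=1$. The correct local behaviour is $v(1)=(0,g(1))$ with $v'(1)$ parallel to $(r,1)$ (up to sign).

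Two smaller points follow. With the correct model one has $|\sin\zeta(1)|=r/\sqrt{1+r^2}$, so the lower bound $c_0$ — and hence the constant $C$, and $|g(1)|$ — must be chosen depending on $r$; the proposal treats $c_0$ as a fixed constant, which cannot hold as $r\to0$, and this $r$-dependence is the quantitative reason the paper does not rely on the crude bound at all near $\partial V$. Also, on $D^2\times S^1$ the contact condition $f'g-fg'>0$ makes $\arg v$ strictly \emph{decreasing}, so the curve rotates clockwise, not counterclockwise; the counterclockwise rotation mentioned in the paper corresponds to the opposite-sign condition $fg'-f'g>0$ of Lemma~\ref{lemma: form alpha f g} on $T^2\times[1,2]$.
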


\begin{figure}[ht]
\begin{overpic}[height=5cm]{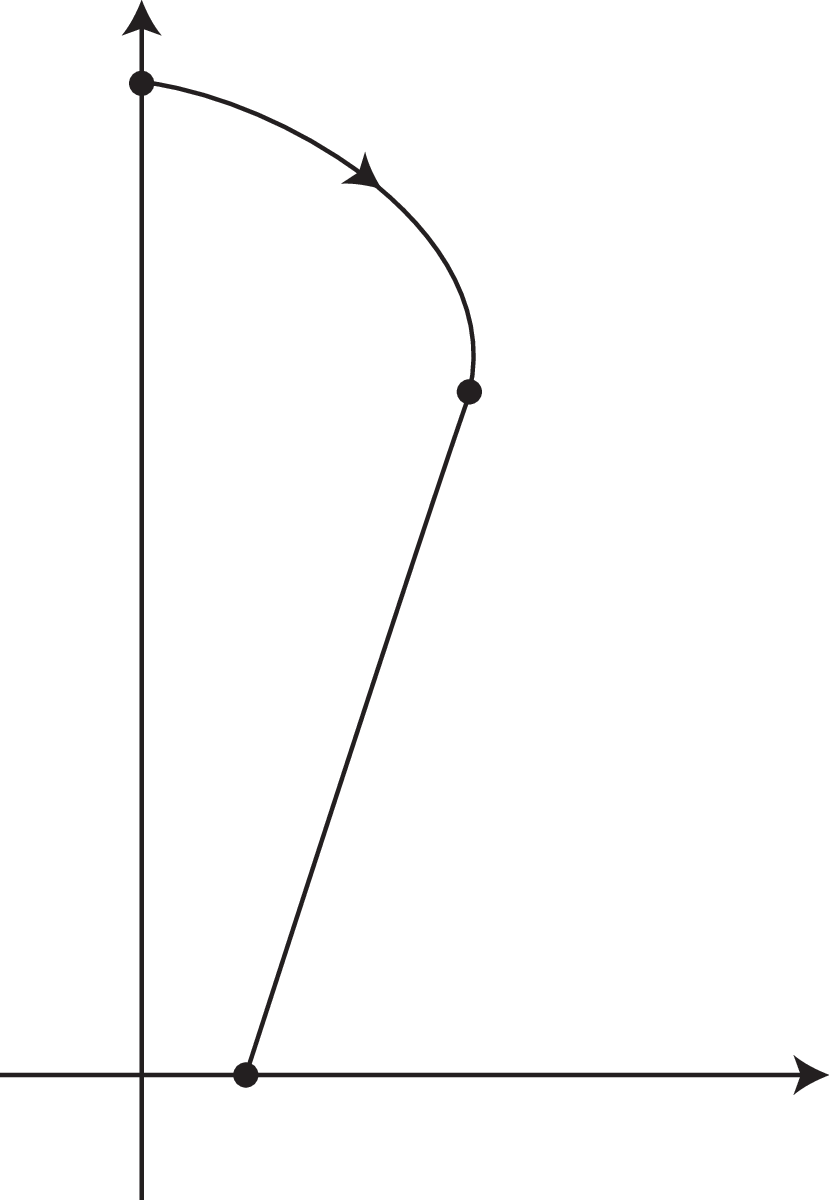}
\put(36,40){\tiny long line segment}
\end{overpic}
\caption{Trajectory of $(f(\rho),g(\rho))$. The arrow is in the direction of increasing $\rho$.} \label{fig: extend to S1 times S2}
\end{figure}

\begin{proof}
 We describe $\alpha(r, L)$ by describing the vector $v(\rho) = (f(\rho), g(\rho))$.
We construct $v(\rho)=(f(\rho), g(\rho))$ ``backwards'', starting with larger $\rho$, subject to the condition ${d|v|\over d\rho}<0$. The profile of $v(\rho)$ is given in Figure \ref{fig: extend to S1 times S2}.

\s\n (1) For $\rho \in [{3\over 4}, 1]$, define $v(\rho)$ so that it parametrizes a ``long''\footnote{The segment is chosen so that Equation~\eqref{eqn: compare sines} from (2) is satisfied.} segment and $R$ is constant, has slope $-{1\over r}$, and satisfies $|R|={1\over K}$.  Since $r$ is irrational, there are no Reeb orbits on $T_{\rho}$ for $\rho \in [{3\over 4}, 1]$.

\s\n (2) Fix an irrational slope $-{1\over r'} > -{1\over r}$ so that all integer vectors with slope between $-{1\over r'}$ and $-{1\over r}$ have norm greater than $\frac{2L}{K}$.  For $\rho \in [{1\over 2},{3\over 4}]$, define $v(\rho)$ so that $|R(\rho)|< \frac 2 K$ and $\mbox{slope}(R)=\mbox{slope}(Jv')$ decreases monotonically from $-{1\over r'}$ to $-{1\over r}$ as $\rho$ increases. One can achieve this by making $v(\rho)$ vary sufficiently slowly for $\rho \in [{1\over 2}, {3\over 4}]$. Hence, if $\gamma$ is a Reeb orbit of $T_\rho$ with $\rho \in [{1\over 2},{3\over 4}]$, then
$$\mathcal{A}(\gamma)\geq \tfrac K 2 \tfrac{2L}{K} = L.$$

Let $\overline{\zeta}$ be the clockwise angle from a line of slope $-{1\over r'}$ to a line of slope $-{1\over r}$. By taking the ``long'' segment to be sufficiently long, we may assume that
\begin{equation} \label{eqn: compare sines}
|\sin\overline{\zeta}| > KL|\sin(\zeta(\tfrac{3}{4}))|.
\end{equation}

\s\n (3) For $\rho \in [{1\over 4}, {1\over 2}]$, define $v(\rho)$ so that $\mbox{slope}(Jv')$ decreases monotonically from ${1\over r''}>0$ to $-{1\over r'}$ as $\rho$ increases and $|\sin \zeta(\rho)| \ge |\sin \overline{\zeta}|$. We can achieve these properties by changing $v(\rho)$ slowly with respect to the slope of $v'(\rho)$. Then, by Equations~\eqref{eqn: crude bound} and \eqref{eqn: compare sines},
\begin{equation} \label{eqn: bound for A}
\mathcal{A}(\gamma)\geq |v(\rho)|\cdot KL |\sin (\zeta(\tfrac{3}{4}))|\geq KL |v(\tfrac{3}{4})|\cdot |\sin (\zeta(\tfrac{3}{4}))|\geq KL \tfrac{1}{K}=L,
\end{equation}
where $\gamma$ is a Reeb orbit of $T_\rho$, $\rho\in [{1\over 4}, {1\over 2}]$.

\s\n (4) Finally, define $v(\rho)$ for $\rho \in [0, {1\over 4}]$ which parametrizes a segment of slope ${1\over r''}$ and satisfies $f(0)=0$.  $\mathcal{A}(\gamma)\geq L$ follows from Equation~\eqref{eqn: bound for A}.
\end{proof}

\begin{rmk}\label{rmk: cobordisms between extensions}
We will always assume that, when $L_0 < L_1$, each radial ray in the $fg$-plane intersects the curve $(f_0(\rho),g_0(\rho))$ defining $\alpha( r, L_0)$ before or at the same time as the curve $(f_1(\rho), g_1(\rho))$ defining $\alpha( r, L_1)$. Then there exist a diffeomorphism $\sigma :  D^2 \times S^1 \to D^2 \times S^1$ such that $\sigma(\rho,\phi,\theta)=(\sigma_0(\rho),\phi,\theta)$ and a function $h : [0,1] \to \R^{\geq 0}$ such that
$\alpha( r, L_1) = e^{h(\rho)}  \sigma^*( r, \alpha(L_0)).$
\end{rmk}

Let $(M, \alpha_i)$, $i=1,2$, be contact manifolds as in Proposition \ref{prop: ECH of two contact forms in M}.  We can choose coordinates
$(\vartheta,t,y) \in (\R^2/\Z^2)\times [-\varepsilon,0]$ on a small collar of $\partial M$  such that $\partial M$ corresponds to $T^2 \times \{ 0 \}$ and the contact forms $\alpha_i$ can be written as
$$\alpha_i = g_i(\vartheta, t, y) d \vartheta + f_i(\vartheta, t, y) dt$$
with $\frac{\partial f_i}{\partial \vartheta} = \frac{\partial f_i}{\partial t} = \frac{\partial g_i}{\partial \vartheta} = \frac{\partial g_i}{\partial t} =0$ at $t=0$ (i.e., along $\partial M$). Note that we have used the assumption that $\alpha_1$ and $\alpha_2$ coincide to first order along $\partial M$ to conclude that they can be put in this form with the same choice of coordinates.
Moreover, we assume that these coordinates have been chosen so that, on $\partial M$, the Reeb vector fields of $\alpha_1$ and $\alpha_2$ have {\em negative irrational} slope $-r$ and that the slopes of the characteristic foliations of $\xi_i=\ker\alpha_i$ are   nonnegative  and sufficiently close to zero.\footnote{Close enough that Claim~\ref{claim: short orbits in M} applies.}  Here the slope is measured with respect to $(\vartheta,t)$.

For $L'>0$ sufficiently large we embed $(M, \alpha_i)$ into a
closed contact manifold $(M', \alpha_i'(L'))$ such that:
\begin{enumerate}
\item $M' = M \cup V$, where $\bdry M$ and $\bdry V$ are glued
by the identifications $\rho = 1-y$, $\phi = 2 \pi t$, $\theta = 2 \pi \vartheta$; and
\item $\alpha_i'(L') |_{M}= \alpha_i$ and  $\alpha_i'(L') |_{V}$ is a $C^1$-small perturbation of  $\alpha(r, L')$ near the boundary.
\end{enumerate}

If the perturbation of the form $\alpha(r, L')$ is small enough in the $C^1$ topology, it does not create any closed Reeb orbit of action less than $L'$.
Since the size of the perturbation which is necessary to glue $\alpha_i$ with $\alpha(r, L')$
essentially depends on the slope of the characteristic foliation of $\alpha_i$ on $\partial M$, we can claim the following.
\begin{claim}\label{claim: short orbits in M}
All closed Reeb orbits of $(M', \alpha_i(L'))$ of action less than $L'$ are contained in $M$.
\end{claim}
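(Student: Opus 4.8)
The plan is to show that the separating torus $\bdry M=\bdry V$ is invariant under the Reeb flow of $\alpha_i'(L')$ --- which reduces the statement to a lower bound on the action of closed Reeb orbits contained in $int(V)$ --- and then to obtain this bound from Lemma~\ref{claim: extension of contact form wo orbits} together with the $C^1$-smallness of the perturbation used to glue $\alpha_i$ to $\alpha(r,L')$. The first point is a pointwise computation along $\bdry M$: there we have written $\alpha_i=g_i\,d\vartheta+f_i\,dt$ with $\partial_\vartheta f_i=\partial_t f_i=\partial_\vartheta g_i=\partial_t g_i=0$, so that $d\alpha_i=dy\wedge\beta_i$ on $\bdry M$, where $\beta_i=\partial_y g_i\,d\vartheta+\partial_y f_i\,dt$. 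This $2$-form is nowhere zero since $\alpha_i$ is contact, so the equations $\alpha_i(R_{\alpha_i})=1$ and $\iota_{R_{\alpha_i}}d\alpha_i=0$ force $dy(R_{\alpha_i})=0$ on $\bdry M$; that is, $R_{\alpha_i}$ is tangent to $\bdry M$. Since $\alpha_i'(L')$ is a smooth contact form on $M'$ restricting to $\alpha_i$ on $M$, its Reeb vector field along $\bdry M=\bdry V$ coincides with $R_{\alpha_i}|_{\bdry M}$, hence is tangent to the separating torus.

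By uniqueness of integral curves of $R_{\alpha_i'(L')}$, each of $int(M)$, $\bdry M$, and $int(V)=\{\rho<1\}$ is then invariant under the Reeb flow, so every closed Reeb orbit of $\alpha_i'(L')$ lies in exactly one of these three sets; the first two are contained in $M$, and it remains to bound below the action of a closed orbit $\gamma\subset\{\rho<1\}$. Now on $V$ the form $\alpha_i'(L')$ agrees with $\alpha(r,L')$ outside a thin collar $\{1-\eta<\rho<1\}$ of $\bdry V$, inside which it is a perturbation of $\alpha(r,L')$ whose $C^1$-size is controlled by how far the slope of the characteristic foliation of $\xi_i$ on $\bdry M$ is from the target slope; since that slope was ours to choose arbitrarily close to the target, the perturbation may be taken arbitrarily $C^1$-small. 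If $\gamma$ is disjoint from the collar, then $\gamma$ is a closed orbit of $\alpha(r,L')$ itself, so $\mathcal A(\gamma)>L'$ by Lemma~\ref{claim: extension of contact form wo orbits}(b).

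It remains to treat the case where $\gamma$ meets the collar; suppose for contradiction that $\tau:=\mathcal A(\gamma)<L'$. Because $R_{\alpha(r,L')}$ is tangent to the tori $T_\rho$, the $C^1$-smallness of the perturbation gives $|\dot\rho|\le\epsilon$ along the Reeb flow, so $\rho$ varies by at most $\epsilon\tau<\epsilon L'$ along $\gamma$, confining $\gamma$ to a thin shell $\{|\rho-\rho_0|\le\epsilon L'\}$; since $\gamma$ meets the collar we have $\rho_0>1-\eta-\epsilon L'$, which we may assume exceeds $\tfrac34$. On this shell $\alpha_i'(L')$ is $C^1$-close to the constant-coefficient model $g(\rho_0)\,d\theta+f(\rho_0)\,d\phi$, whose Reeb field is a nowhere-zero constant vector field tangent to the tori, of norm at most $\tfrac2K$ and of the irrational slope $-\tfrac1r$. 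By the Poincar\'e--Bendixson theorem $\gamma$ cannot project to a null-homologous loop in $\{\rho=\rho_0\}$, so $[\gamma]\in H_1(T^2;\Z)$ is a nonzero integer class of slope arbitrarily close to $-\tfrac1r$, hence of norm $>\tfrac{2L'}{K}$ (as may be arranged in the construction of $\alpha(r,L')$, since $-\tfrac1r$ is irrational and only finitely many integer classes have bounded norm); traversing this homological displacement at speed $\le\tfrac2K$, $\gamma$ then has $\tau\ge L'$ --- a contradiction. (Equivalently, one may invoke Lemma~\ref{crude bound} on the shell.) I expect the main obstacle to be precisely this last case: verifying rigorously that $C^0$-smallness of $\dot\rho$ confines $\gamma$ to a single thin shell, and that a closed orbit of a vector field $C^0$-close to an irrational linear flow on a thickened torus is necessarily homologically essential and therefore long.
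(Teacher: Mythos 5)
The paper gives no proof of this Claim --- it is asserted immediately after the remark that the $C^1$-smallness of the gluing perturbation prevents the creation of short orbits in $V$, with the footnote leaving the verification implicit in the hypothesis on the characteristic-foliation slope. Your write-up is an attempt to supply the missing details, and the overall structure --- Reeb-invariance of $\partial V$, reduction to closed orbits in $int(V)$, separate treatment of orbits disjoint from the gluing collar (via Lemma~\ref{claim: extension of contact form wo orbits}(b)) and orbits meeting it (via confinement to a thin shell followed by a homological length bound) --- is sound and plausibly what the authors had in mind.

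Two of the tools you invoke in the last step are not applicable as stated, however. The Poincar\'e--Bendixson theorem governs two-dimensional flows, but the Reeb flow here is three-dimensional and its projection to a torus $T_{\rho_0}$ is not a flow (projected trajectories may cross), so the theorem does not apply. The intended conclusion --- a closed orbit in the shell cannot be null-homologous --- is correct but should be obtained directly: on the shell $R_{\alpha(r,L')}$ is tangent to the tori with a fixed nonzero component in, say, the $\phi$-direction, so for a sufficiently $C^0$-small perturbation the lift of $\phi$ remains strictly monotone along Reeb trajectories, forcing any closed orbit to have nonzero $\phi$-winding. Likewise, your parenthetical appeal to Lemma~\ref{crude bound} is not legitimate: that lemma has as a hypothesis that $T_\rho$ is foliated by closed Reeb orbits, which fails for the perturbed form $\alpha_i'(L')$ since its Reeb vector field has a nonzero $\partial_\rho$-component and crosses the tori; the lemma applies only to $\alpha(r,L')$ itself. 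Neither issue is fatal: once repaired, the argument you sketch does establish the Claim, with the norm bound on integer classes of slope near $-\tfrac{1}{r}$ coming precisely from the second step of the construction in Lemma~\ref{claim: extension of contact form wo orbits}.
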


The next lemma identifies some ECH groups for $(M, \alpha_i)$ with ECH groups for $(M', \alpha_i'(L'))$.

\begin{lemma} \label{lemma: mahler}
For all $L \le L'$, if we choose the almost complex structure on the symplectization of $(M',\alpha_i'(L'))$ to extend the almost complex structure picked on the symplectization of $(M, \alpha_i)$, then there are isomorphisms
$$ECC^{L}(M, \alpha_i)\simeq ECC^{L}(M', \alpha_i'(L'))$$
of chain complexes.
\end{lemma}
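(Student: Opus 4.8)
The plan is to show that, below action $L\le L'$, the closed manifold $(M',\alpha_i'(L'))$ does not see the filling region $V$, so that the tautological identification of generators is already a chain isomorphism onto $ECC^L(M,\alpha_i)=ECC^L(int(M),\alpha_i)$. First I would identify the generators. By Claim~\ref{claim: short orbits in M} every closed Reeb orbit of $\alpha_i'(L')$ of action less than $L'$ lies in $int(M)$, where $\alpha_i'(L')$ restricts to $\alpha_i$; since $L\le L'$, and since $\partial M$ carries no closed Reeb orbit (its Reeb slope $-r$ is irrational, so there is no Morse--Bott family to adjoin), the orbit sets of $\alpha_i'(L')$-action less than $L$ are exactly the generators of $ECC^L(int(M),\alpha_i)$, with the same action filtration because actions are computed from $\alpha_i'(L')|_{int(M)}=\alpha_i$. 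This gives the identification of filtered $\F$-vector spaces, manifestly compatible with the inclusion-induced maps.

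The content is the agreement of the differentials, i.e.\ that for $\gamma,\gamma'$ of action less than $L$ the moduli space $\mathcal{M}^{I=1}_J(\gamma,\gamma')$ computed in $\R\times M'$ (taking $J$ to extend the regular $J$ already fixed on $\R\times M$, which is then automatically regular for the curves relevant below action $L$, as these will be seen to lie in $\R\times int(M)$) coincides with the one computed in $\R\times int(M)$. Only the inclusion ``$\subseteq$'' needs proof: a $J$-holomorphic $u:F\to\R\times M'$ asymptotic to $\gamma,\gamma'$ has every end at an orbit in $int(M)$ (the core orbit of $V$ has action $>L'$), so no component lies entirely in $\R\times V$, and it is enough to rule out $u_M(F)\cap\partial V\ne\varnothing$. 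Assume $u$ enters $\R\times int(V)$. Delete small punctured-disc neighbourhoods of the ends to get $F'\subset F$ satisfying (C$_1$)--(C$_2$) for every torus $T_\rho\subset\overline V$ parallel to $\partial V$; then Lemma~\ref{lemma: positive slope} gives $\delta\cdot s_\rho\ge 0$, equality only if $u_M(F)$ misses $T_\rho$, where $s_\rho$ is the Reeb slope on $T_\rho$ and $\delta\in H_1(\partial V;\Z)$ is the class $[u_M(F')\cap T_\rho]$, which is independent of $\rho$ since the $T_\rho$ bound regions $T^2\times[\rho_0,\rho]\subset\overline V$ containing no ends of $u$. Because $u$ enters $int(V)$, the ``moreover'' clause of Lemma~\ref{lemma: positive slope} forces $\delta\ne 0$, so $u_M(F')$ meets every $T_\rho$. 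Meanwhile $\Sigma:=u^{-1}(\R\times\overline V)$ is a compact surface (its only non-compactness would come from ends, which lie outside $\overline V$), with $\partial\Sigma$ mapped into $\R\times\partial V$ after perturbing $\partial V$ to be transverse to $u$, so $[u_M(\Sigma)]\in H_2(\overline V,\partial V)\cong\Z$ has boundary a nonzero multiple of the meridian $m$ of $V$; hence $\delta$ is a nonzero multiple of $m$, and in particular has the slope of $m$. But by the construction of $\alpha(r,L')$ in Lemma~\ref{claim: extension of contact form wo orbits}, as $\rho$ runs over $(0,1]$ the slopes $s_\rho$ sweep out a closed arc in $\P^{+}H_1(\partial V;\R)$ whose interior contains the slope of $m$, so $\delta\cdot s_\rho$ changes sign, contradicting $\delta\cdot s_\rho\ge 0$. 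Thus $u$ does not enter $\R\times int(V)$, and the residual case in which $u_M(F)$ meets $\partial V=\partial M$ without crossing it is excluded exactly as in the proof of Lemma~\ref{lemma: bdry squared is zero for mfld with torus bdry}, via the Blocking Lemma. Hence $u$ has image in $\R\times int(M)$, the moduli spaces and their $I=1$ counts coincide, and the identification of generators becomes an isomorphism of chain complexes, compatible with inclusions.

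The main obstacle is that last step --- excluding that a holomorphic curve of the closed manifold, with asymptotics of action below $L$, leaks into the filling $\R\times V$. The mechanism combines positivity of intersections (Lemma~\ref{lemma: positive slope}) over the whole family of Reeb-foliated tori inside $V$ with the fact that $H_2(\overline V,\partial V)$ is generated by a meridian disc, and it relies essentially on $\alpha(r,L')$ having been built so that its Reeb slopes sweep past the meridian direction --- precisely the reason Lemma~\ref{claim: extension of contact form wo orbits} is invoked here. Two minor technical points must be attended to: one should first normalise $\alpha_i$ near $\partial M$ to the model form $g_i(y)\,d\vartheta+f_i(y)\,dt$ so that the extension may be taken of the form $g(\rho)\,d\theta+f(\rho)\,d\phi$ on all of $V$, making every $T_\rho$ linearly foliated as Lemma~\ref{lemma: positive slope} requires; and one must record that the core orbit of $V$, and every orbit created inside $V$, has $\alpha(r,L')$-action exceeding $L'$ --- both of which are built into the constructions of Section~\ref{section: contact forms} and Lemma~\ref{claim: extension of contact form wo orbits}. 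Everything else --- the bijection of generators, agreement of actions, and compatibility with the action-inclusion maps --- is then routine bookkeeping.
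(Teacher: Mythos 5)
Your proposal is correct and follows essentially the same route as the paper: both identify the generators via Claim~\ref{claim: short orbits in M}, observe that a low-action curve $u$ in $\R\times M'$ meets $\partial V$ in a class which is necessarily a multiple of the meridian (because $H_2(\overline V,\partial V)\cong\Z$ is generated by a meridian disk), and then exploit the fact that the Reeb slope on the concentric tori $T_\rho\subset V$ sweeps past the meridian direction --- the torus $V'$ at the vertical tangency of Figure~\ref{fig: extend to S1 times S2} --- to produce a contradiction. The only cosmetic difference is that the paper invokes the Blocking Lemma (parts (1) and (2)) as a black box, whereas you unpack it directly into the underlying positivity-of-intersections argument of Lemma~\ref{lemma: positive slope} applied over the family $\{T_\rho\}$; these are the same mechanism.
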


\begin{proof}
By Lemma \ref{claim: extension of contact form wo orbits}, there is an isomorphism
$$ECC^{L}(M,\alpha_i)\simeq ECC^{L}(M', \alpha_i'(L'))$$
{\em as vector spaces}. To prove that the isomorphism holds as chain complexes, it suffices to show that every holomorphic curve in $\R\times M'$ which is positively asymptotic to an orbit set of $R_{\alpha_i'(L')}$ of $\alpha_i'(L')$-action less than $L$ (which is equal to an orbit set of $R_{\alpha_i}$ of $\alpha_i$-action less than $L$) has image in $\R \times M$. Let $u$ be a holomorphic map in $\R\times M'$ connecting the orbit set $\gamma$ of $R_{\alpha_i}$ in $M$ with $\mathcal{A}_{\alpha_i}(\gamma) < L$ to the orbit set $\gamma'$ of $R_{\alpha_i'(L')}$ in $M'$. Since $\mathcal{A}_{\alpha_i}(\gamma) < L$, $\gamma'$ must be contained in $M$. Hence the homology class of $u_{M'}\cap \bdry V$ in $H_1(\bdry V)$ is a multiple of the class of the meridian of $V$. On the other hand, inside $V$ there is a concentric torus $V'$ on which the Reeb orbits are meridians. (This torus corresponds to the vertical tangency of the curve in Figure \ref{fig: extend to S1 times S2}.) Then Blocking Lemma (2) implies that $u$ must be asymptotic to some orbits in $V'$. This is  not possible since all the ends of $u$ limit to orbits of action less than $L$.  Hence the image of $u$ is contained in $\R \times M$ by Blocking Lemma (1).
\end{proof}

The induced identification
$$ECH^{L}(M, \alpha_i) \simeq ECH^{L}(M', \alpha_i'(L'))$$
is independent of $L'$ in the following sense:  Let $L \le L_0 \le L_1$ be positive numbers such that no Reeb orbit in $(M, \alpha_i)$ (for either $i=1$ or $i=2$) has action $L$. By Remark \ref{rmk: cobordisms between extensions} and Lemma~\ref{tired with all these} there are maps
$$\Psi_i^{L,L_0,L_1} : ECH^L(M', \alpha_i'(L_1)) \to ECH^L(M', \alpha_i'(L_0))$$
induced by interpolating cobordisms $(W, \mu_i)$ from $(M',\alpha_i'(L_1))$ at the positive end to $(M', \alpha_i'(L_0))$ at the negative end.
Then we have the following:

\begin{lemma}\label{lemma: identity form a cobordism}
The maps $\Psi^{L,L_0,L_1}_i$ restrict to the identity on $ECH^{L}(M, \alpha_i)$.
\end{lemma}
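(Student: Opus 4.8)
The plan is to use the characterization of cobordism maps given in Theorem~\ref{thm: Hutchings Taubes cobordism map}(i) together with the fact, established in the course of proving Lemma~\ref{lemma: mahler}, that all holomorphic curves and buildings relevant to the action-truncated complex $ECC^L(M',\alpha_i'(L_j))$ stay inside $\R\times M$. Since the interpolating cobordism $(W,\mu_i)$ from $(M',\alpha_i'(L_1))$ to $(M',\alpha_i'(L_0))$ is the product $[0,1]\times M'$ with a $1$-form $\mu_i$ which by Remark~\ref{rmk: cobordisms between extensions} differs from a trivial symplectization only over the $V$-part (where it interpolates between $e^{h_1}\sigma_1^*\alpha(r,L_1)$ and $e^{h_0}\sigma_0^*\alpha(r,L_0)$, and is in particular a product near $\bdry V$ up to scaling), the key observation is that the restriction of $(W,\mu_i)$ to $[0,1]\times M$ is, after the obvious rescaling, a piece of the symplectization of $(M,\alpha_i)$, hence contains a product region in the sense preceding Theorem~\ref{thm: Hutchings Taubes cobordism map}.

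First I would fix a regular almost complex structure $\widehat J$ on the completion $\hat W$ which is adapted to the forms at the ends and which, over $[0,1]\times M$, agrees with the translation-invariant $J$ chosen on the symplectization of $(M,\alpha_i)$ (extended to $\hat W$ using a product structure on the $M$-part). Next I would argue, exactly as in the proof of Lemma~\ref{lemma: mahler}, that any $\widehat J$-holomorphic building in $\hat W$ from an orbit set $\gamma$ with $\mathcal A_{\alpha_i}(\gamma)<L$ to an orbit set $\gamma'$ must have all of its components contained in $\R\times M$: the negative asymptotics $\gamma'$ force $\gamma'\subset M$ for action reasons, so $u_{M'}\cap\bdry V$ is a multiple of the meridian class, and the Blocking Lemma applied to the concentric torus $V'$ inside $V$ foliated by meridians shows the image cannot escape into $V$. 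Consequently, for $\gamma,\gamma'$ generating $ECC^L(M,\alpha_i)\subset ECC^L(M',\alpha_i'(L_j))$, the count of $I=0$ buildings is computed entirely within the symplectization of $(M,\alpha_i)$, where the only such buildings are unions of covers of trivial cylinders (which lie in a product region). By Theorem~\ref{thm: Hutchings Taubes cobordism map}(i), the induced chain map restricted to $ECC^L(M,\alpha_i)$ is therefore the identity, and hence $\Psi_i^{L,L_0,L_1}$ is the identity on $ECH^L(M,\alpha_i)$ under the identifications of Lemma~\ref{lemma: mahler}. One must also check that the subcomplex $ECC^L(M,\alpha_i)$ is mapped into itself, which again follows from the same confinement argument applied to the target: any $\gamma'$ reached from a $\gamma\subset M$ of action $<L$ lies in $M$.

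I expect the main obstacle to be the bookkeeping around rescaling: the cobordism $(W,\mu_i)$ is literally between $\alpha_i'(L_1)$ and $\alpha_i'(L_0)$, which agree as $\alpha_i$ on $M$ but differ (by the diffeomorphism $\sigma$ and the conformal factor $e^{h}$ of Remark~\ref{rmk: cobordisms between extensions}) on $V$, so one has to be careful that the restriction to $[0,1]\times M$ is genuinely a trivial symplectization of $(M,\alpha_i)$ and that the product-region hypothesis of Theorem~\ref{thm: Hutchings Taubes cobordism map}(i) is met, perhaps after a preliminary rescaling by a constant as in Lemma~\ref{tired with all these}. Once the confinement of curves to $\R\times M$ is in hand, the identification of the chain map with the identity is a direct appeal to the second bullet of Theorem~\ref{thm: Hutchings Taubes cobordism map}(i), and invariance under the choice of $\widehat J$ (Theorem~\ref{thm: Hutchings Taubes cobordism map}(ii)) lets us pass back to the given almost complex structures.
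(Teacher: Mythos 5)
Your proof is correct and follows essentially the same path as the paper's: both observe that the interpolating cobordism restricts to a trivial symplectization over $[0,1]\times M$, use the Blocking Lemma (as in Lemma~\ref{lemma: mahler}) to confine all relevant $J$-holomorphic buildings to $\R\times M$, where $I=0$ buildings are branched covers of trivial cylinders, and conclude via Theorem~\ref{thm: Hutchings Taubes cobordism map}(i). Your extra care about the constant rescaling from Lemma~\ref{tired with all these} is reasonable but is subsumed by the paper's (implicit) arrangement that $(W,\mu_i)|_{[0,1]\times M}$ is a piece of symplectization.
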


\begin{proof}
The cobordism $W$ is topologically trivial, i.e., $W \simeq [0,1] \times M'$, and we can assume that $(W, \mu_i)$ restricts to a piece of symplectization on $[0,1] \times M$.  We choose the almost complex structure $J$ to be $\R$-invariant on $\R\times M$. As before, all orbit sets of $\alpha_i'(L_j)$-action less than $L$ for $j=0,1$ are contained in $M$. Then the Blocking Lemma\footnote{This situation is slightly more general than that for which the Blocking Lemma has been stated and proved  because we are in a cobordism}. However the lemma is still valid and the proof is unchanged. and the argument of Lemma~\ref{lemma: mahler} imply that all $J$-holomorphic maps between orbit sets of action less than $L$ are contained in $\R \times M$. If those $J$-holomorphic maps have  ECH index zero, then they are  branched covers of trivial cylinders because $([0,1] \times M, \mu_i|_{[0,1] \times M})$ is a piece of symplectization. Hence the map induced on $ECH^{L}(M, \alpha)$ is the identity by Theorem \ref{thm: Hutchings Taubes cobordism map}(i).
\end{proof}

We will use the identifications $ECH^{L}(M,\alpha_i)\simeq ECH^{L}(M', \alpha_i'(L'))$ to define a map
$$\Phi: ECH(M,\alpha_1) \to ECH(M,\alpha_2).$$
This involves two steps: the construction of maps
$$\Phi_L : ECH^L(M, \alpha_1) \to  ECH^{\kappa L}(M, \alpha_2)$$
for some $\kappa>1$ and the taking of direct limits.

Let $f: M \to \R$ be a smooth positive function such that $\phi^*(\alpha_2)=f\alpha_1$ for some diffeomorphism $\phi$ of $M$ which is isotopic to the identity and restricts to the identity on $\bdry M$. Then choose $\kappa >1$ such that $\frac{1}{\kappa} \le f \le \kappa$. Given $L'>L$, we consider the contact forms $\alpha_i'(\kappa L')$, $i= 1,2$, on $M'$ constructed in Lemma~\ref{claim: extension of contact form wo orbits}. Then there is an interpolating
cobordism $(X, \lambda_{L'})$ from $(M', \alpha_1'(\kappa L'))$ at the positive
end to $(M', \kappa^{-1}\alpha_2'(\kappa L'))$ at the negative end. Moreover we can
assume that $(X,\lambda_{L'})$ restricts to a piece of symplectization on a small neighborhood of $[0,1] \times V$.

We define $\Phi_L$ by imposing the commutativity of the following diagram:
\begin{equation}
\begin{diagram} \label{diagram: definition of Phi_L}
ECH^{L} (M',\alpha_1'(\kappa L')) & \rTo^{\Phi^L} &
ECH^{\kappa L} (M', \alpha_2'(\kappa L'))  \\
\dTo^{\simeq} & &  \dTo_{\simeq} \\
ECH^{L}(M,\alpha_1) &
\rTo^{\Phi_{L}}  & ECH^{\kappa L} (M,\alpha_2),
\end{diagram}
\end{equation}
where the vertical maps are the isomorphisms coming from Lemma~\ref{lemma: mahler} and the top map is induced by the interpolating cobordisms $(X, \lambda_{L'})$ via Lemma~\ref{tired with all these}.

\begin{rmk}\label{cobordisms dont leak}
Using the Blocking Lemma one can prove that the map $\Phi_L$ is supported, in the
sense on Theorem~\ref{thm: Hutchings Taubes cobordism map}(i), by holomorphic
curves in $\R \times M$. See the proof of Lemma~\ref{lemma: mahler} for the details.
\end{rmk}

\begin{lemma} \label{pronto soccorso}
$\Phi_L$ is independent of the choice of $L'$ in Diagram~\eqref{diagram: definition of Phi_L}.
\end{lemma}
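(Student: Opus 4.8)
The plan is to show that the map $\Phi_L$ built from a parameter $L'=L_1'$ agrees with the one built from any smaller $L_0'$ with $L<L_0'<L_1'$; transitivity then gives independence of $L'$ altogether. The tools will be the functoriality of the Hutchings--Taubes cobordism maps (Theorem~\ref{thm: Hutchings Taubes cobordism map}(ii),(iv),(v)), the identification of ECH groups of $(M,\alpha_i)$ with those of their extensions (Lemma~\ref{lemma: mahler}) together with the fact that the shrinking cobordism maps between extensions restrict to the identity on the part coming from $M$ (Lemma~\ref{lemma: identity form a cobordism}), and the homotopy invariance of interpolating cobordisms under homotopies of the defining isotopy (Lemma~\ref{homotopy of interpolating cobordisms}).

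First I would fix $L<L_0'<L_1'$ and name the relevant maps. For $j=0,1$ let $\Phi^L_{(j)}$ be the top arrow of Diagram~\eqref{diagram: definition of Phi_L} for $L'=L_j'$ --- the map induced via Lemma~\ref{tired with all these} by the interpolating cobordism $(X_j,\lambda_{L_j'})$ from $(M',\alpha_1'(\kappa L_j'))$ to $(M',\kappa^{-1}\alpha_2'(\kappa L_j'))$ --- and let $\Phi_{L,(j)}$ be the induced bottom arrow. Let $\Psi_1:ECH^L(M',\alpha_1'(\kappa L_1'))\to ECH^L(M',\alpha_1'(\kappa L_0'))$ and $\Psi_2:ECH^{\kappa L}(M',\alpha_2'(\kappa L_1'))\to ECH^{\kappa L}(M',\alpha_2'(\kappa L_0'))$ be the shrinking maps coming from Remark~\ref{rmk: cobordisms between extensions} and Lemma~\ref{tired with all these}; up to the canonical rescaling isomorphisms these are the maps $\Psi_i^{L,\kappa L_0',\kappa L_1'}$, so by Lemma~\ref{lemma: identity form a cobordism} they become the identity on $ECH^L(M,\alpha_1)$, resp.\ $ECH^{\kappa L}(M,\alpha_2)$, after applying the isomorphisms of Lemma~\ref{lemma: mahler} (note $L\le\kappa L_0'$, so that lemma applies).

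The heart of the argument is the square identity $\Psi_2\circ\Phi^L_{(1)}=\Phi^L_{(0)}\circ\Psi_1$. Both composites are ECH cobordism maps $ECH^L(M',\alpha_1'(\kappa L_1'))\to ECH^{\kappa L}(M',\alpha_2'(\kappa L_0'))$. By Theorem~\ref{thm: Hutchings Taubes cobordism map}(iv),(v), up to canonical rescaling isomorphisms the left side is induced by stacking $(X_1,\lambda_{L_1'})$ on top of a $\kappa^{-1}$-rescaling of the cobordism defining $\Psi_2$, and the right side by stacking a $\kappa^{-1}$-rescaling of the cobordism defining $\Psi_1$ on top of $(X_0,\lambda_{L_0'})$. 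Both stacked cobordisms are diffeomorphic to $[0,1]\times M'$, restrict to pieces of symplectization near $[0,1]\times V$, and have the same positive and negative ends; since they are interpolating cobordisms built from isotopies that are isotopic to the identity (on $V$ the reparametrizations $\sigma$ of Remark~\ref{rmk: cobordisms between extensions} and on $M$ the diffeomorphism $\phi$ with $\phi^*\alpha_2=f\alpha_1$), these underlying isotopies can be arranged to be homotopic relative to their endpoints. Lemma~\ref{homotopy of interpolating cobordisms} then makes the two defining exact symplectic forms homotopic, so Theorem~\ref{thm: Hutchings Taubes cobordism map}(ii) gives the desired identity. Conjugating this identity by the vertical isomorphisms of Diagram~\eqref{diagram: definition of Phi_L} and using Lemma~\ref{lemma: identity form a cobordism} to replace $\Psi_1$ and $\Psi_2$ by the identity then yields $\Phi_{L,(0)}=\Phi_{L,(1)}$.

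The step I expect to be the main obstacle is the homotopy-of-cobordisms claim inside the square identity: one must choose the isotopies underlying the interpolating cobordisms $(X_j,\lambda_{L_j'})$ and the shrinking cobordisms compatibly --- in particular keeping track of the conformal factor $\kappa$ and of the requirement that all cobordisms restrict to a piece of symplectization near $[0,1]\times V$ --- so that the two stacked cobordisms are genuinely homotopic rel endpoints as exact symplectic cobordisms and Lemma~\ref{homotopy of interpolating cobordisms} applies. Once that is in place, the remaining bookkeeping (rescaling isomorphisms and the inequality $L\le\kappa L_0'$) is routine.
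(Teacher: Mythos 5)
Your proposal is correct and follows essentially the same route as the paper's own proof: you set up the same commutative square with horizontal arrows $\Phi^L$ for the two choices of $L'$ and vertical shrinking maps $\Psi_i$, establish commutativity by showing the two composed cobordisms are homotopic via Lemma~\ref{homotopy of interpolating cobordisms} and Theorem~\ref{thm: Hutchings Taubes cobordism map}, and then invoke Lemma~\ref{lemma: identity form a cobordism} to replace the $\Psi_i$ by identities. The paper's proof is terser on the homotopy step you flag as the main obstacle, but the underlying argument is identical.
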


\begin{proof}
Suppose $L \le L_0 \le L_1$, $\alpha_1$ has no orbit sets of action $L$, and $\alpha_2$ has no orbit sets of action $\kappa L$. Then the diagram
\begin{equation}
\begin{diagram} \label{diagram: composition of ECH maps 2}
ECH^{L}(M',\alpha_1'(\kappa L_1)) & \rTo^{\Phi^{L}}  &
ECH^{\kappa L} (M',\alpha_2'(\kappa L_1)) \\
\dTo^{\Psi_1^{L,\kappa L_0,\kappa L_1}} & &  \dTo_{\Psi_2^{L,\kappa L_0,\kappa L_1}} \\
ECH^{L} (M',\alpha_1'(\kappa L_0)) & \rTo^{\Phi^L} &
ECH^{\kappa L} (M',\alpha_2'(\kappa L_0))
\end{diagram}
\end{equation}
commutes by Theorem \ref{thm: Hutchings Taubes cobordism map} since the compositions of cobordisms $(X, \lambda'_{L_0}) \circ (W, \mu_1)$ and $(W, \mu_2) \circ
(X, \lambda'_{L_1})$ are homotopic by Lemma \ref{homotopy of interpolating cobordisms}. The maps $\Psi_i^{L,\kappa L_0,\kappa L_1}$ induce the identity on $ECH(M, \alpha_i)$ by Lemma
\ref{lemma: identity form a cobordism}, so the maps on the top and bottom of Diagram~\eqref{diagram: composition of ECH maps 2} define the same map
$\Phi_L : ECH^L(M, \alpha_1) \to ECH^{\kappa L}(M, \alpha_2)$.
\end{proof}

\begin{lemma} \label{lemma: ECH of two contact forms in M - commutativity}
Let $\alpha_1$ and $\alpha_2$ be contact forms as in Proposition \ref{prop: ECH of two contact forms in  M}.  If $L_i$ is an increasing sequence of positive real numbers such that $\alpha_1$ has no orbit set of action $L_i$ and $\alpha_2$ has no orbit set of action $\kappa L_i$ for all $i$,\footnote{ This condition can be fulfilled due to the fact that the action spectrum is discrete for a generic contact form.} then the maps
$$\Phi_{L_i} : ECH^{L_i}(M, \alpha_1) \to ECH^{\kappa L_i}(M, \alpha_2)$$
define a morphism of directed systems.
\end{lemma}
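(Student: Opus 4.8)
The assertion is that, taking as structure maps of the two directed systems the maps induced by the inclusions of action-truncated subcomplexes $ECC^{L_i}(M,\alpha_1)\hookrightarrow ECC^{L_{i+1}}(M,\alpha_1)$ and $ECC^{\kappa L_i}(M,\alpha_2)\hookrightarrow ECC^{\kappa L_{i+1}}(M,\alpha_2)$ (the maps with respect to which $ECH(M,\alpha_j)=\lim\limits_{i\to\infty}ECH^{L_i}(M,\alpha_j)$), the square
\begin{diagram}
ECH^{L_i}(M,\alpha_1) & \rTo^{\Phi_{L_i}} & ECH^{\kappa L_i}(M,\alpha_2) \\
\dTo & & \dTo \\
ECH^{L_{i+1}}(M,\alpha_1) & \rTo^{\Phi_{L_{i+1}}} & ECH^{\kappa L_{i+1}}(M,\alpha_2)
\end{diagram}
commutes. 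The plan is to pull the whole picture back to the closed manifold $M'$ and to reduce to the naturality properties of the Hutchings--Taubes cobordism maps. We use throughout the hypothesis that $\alpha_1$ has no orbit set of action $L_i$ and $\alpha_2$ none of action $\kappa L_i$, so that all the action-truncated groups and maps below are defined.

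First I would invoke Lemma~\ref{pronto soccorso} to compute both $\Phi_{L_i}$ and $\Phi_{L_{i+1}}$ using a single parameter $L'\ge L_{i+1}$ --- large enough that Claim~\ref{claim: short orbits in M} holds at action $\kappa L'$, hence that Lemma~\ref{lemma: mahler} applies at all four levels $L_i,\kappa L_i,L_{i+1},\kappa L_{i+1}\le\kappa L'$ --- and a single interpolating cobordism $(X,\lambda_{L'})$ on $M'$ from $(M',\alpha_1'(\kappa L'))$ to $(M',\kappa^{-1}\alpha_2'(\kappa L'))$. I would also fix, once and for all, almost complex structures on the symplectizations of $(M',\alpha_j'(\kappa L'))$ extending those chosen on the symplectizations of $(M,\alpha_j)$, so that the Lemma~\ref{lemma: mahler} identifications $ECH^{L}(M,\alpha_j)\simeq ECH^{L}(M',\alpha_j'(\kappa L'))$ at the four action levels above are all restrictions of one and the same chain isomorphism, which is the identity on generators.

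With these choices the desired square is the bottom face of a commutative cube. Its front and back faces are the two instances of Diagram~\eqref{diagram: definition of Phi_L}, for $L=L_i$ and $L=L_{i+1}$; these commute by the very definition of $\Phi_{L_i}$ and $\Phi_{L_{i+1}}$. Its two side faces relate the Lemma~\ref{lemma: mahler} isomorphisms to the inclusion-induced maps on $M$ and on $M'$; these commute already on the chain level, since the isomorphism of Lemma~\ref{lemma: mahler} is the identity on generators (orbit sets of action below the truncation level whose simple orbits lie in $int(M)$) and therefore intertwines the inclusion $ECC^{L_i}\hookrightarrow ECC^{L_{i+1}}$ on $M$ with the corresponding inclusion on $M'$. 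Its top face states that the two cobordism maps $\Phi^{L_i}$ and $\Phi^{L_{i+1}}$ induced on $M'$ by the \emph{single} cobordism $(X,\lambda_{L'})$ commute with the action-inclusion maps; this is the naturality of action-truncated cobordism maps, i.e.\ Theorem~\ref{thm: Hutchings Taubes cobordism map}(iii) for the raw cobordism maps, which passes to the maps of Lemma~\ref{tired with all these} since those are composites of raw cobordism maps, rescaling isomorphisms (compatible with action truncation by Theorem~\ref{thm: Hutchings Taubes cobordism map}(v)), and inclusions, and which is recorded abstractly in Lemma~\ref{tired with all these}(a),(b). Since the vertical edges of the cube coming from Lemma~\ref{lemma: mahler} are isomorphisms, chasing the cube yields commutativity of the bottom face: given $x\in ECH^{L_i}(M,\alpha_1)$, lift it to $M'$ via the Lemma~\ref{lemma: mahler} isomorphism, move it around the top and side faces, and push it back down via the Lemma~\ref{lemma: mahler} isomorphism at level $\kappa L_{i+1}$, using the front and back faces at the two ends.

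The one point that deserves care is the \emph{naturality} of the Lemma~\ref{lemma: mahler} identifications: one must verify that, after fixing the extended almost complex structures as above, a single chain-level isomorphism (the identity on generators) realizes $ECC^{L}(M,\alpha_j)\simeq ECC^{L}(M',\alpha_j'(\kappa L'))$ simultaneously at all action levels $L\le\kappa L'$, so that it genuinely commutes with the inclusions of action-truncated subcomplexes --- everything else is formal, given Theorem~\ref{thm: Hutchings Taubes cobordism map} and Lemmas~\ref{tired with all these} and~\ref{pronto soccorso}. Passing to the direct limit over $i$ (using that $\{\kappa L_i\}$ is cofinal in $\R_{>0}$) then produces the map $\Phi\colon ECH(M,\alpha_1)\to ECH(M,\alpha_2)$ used to finish the proof of Proposition~\ref{prop: ECH of two contact forms in M}.
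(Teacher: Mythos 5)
Your proof is correct and follows the same route as the paper, whose own proof of this lemma is a single sentence citing Lemmas~\ref{lemma: identity form a cobordism} and~\ref{pronto soccorso} and leaving to the reader the cube chase you spell out. In particular, your three ingredients --- using one interpolating cobordism on $M'$ for both action levels via Lemma~\ref{pronto soccorso}, the chain-level compatibility of the Lemma~\ref{lemma: mahler} identifications with the inclusions of action-truncated subcomplexes, and Theorem~\ref{thm: Hutchings Taubes cobordism map}(iii) for the top face --- are exactly what the paper's two-lemma citation is meant to encapsulate.
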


\begin{proof}
For all  $L<L'$ as above, the diagram
\begin{equation}
\begin{diagram} \label{diagram: composition of ECH maps 1}
ECH^{L} (M,\alpha_1) & \rTo^{\Phi_L} &
ECH^{\kappa L} (M,\alpha_2)  \\
\dTo & &  \dTo \\
ECH^{L'}(M, \alpha_1) &
\rTo^{\Phi_{L'}}  & ECH^{\kappa L'} (M,\alpha_2),
\end{diagram}
\end{equation}
where the vertical arrows are maps induced by the inclusions of chain
complexes, commutes by Lemmas~\ref{lemma: identity form a cobordism} and
\ref{pronto soccorso}.
\end{proof}

By taking the direct limit of the maps $\Phi_{L_i}$ from Lemma~\ref{lemma: ECH of two contact forms in M - commutativity}, we obtain a linear map
$$\Phi : ECH(M, \alpha_1) \to ECH(M, \alpha_2).$$
Since the roles of $\alpha_1$ and $\alpha_2$ are interchangeable, the same arguments can be used to define a map $\Phi' : ECH(M, \alpha_2) \to ECH(M, \alpha_1)$ as a direct limit of maps $\Phi'_{L_j'}$.

\begin{proof}[Proof of Proposition \ref{prop: ECH of two contact forms in M}]
We prove that $\Phi$ and $\Phi'$ are inverses of each other. We identify the composition $\Phi'_{\kappa L} \circ \Phi_L$ (after a proper rescaling) with the map induced by an interpolating cobordism which is homotopic to a piece of symplectization. Then $\Phi'_{\kappa L} \circ \Phi_L = i_{L, \kappa^2 L}$, where $i_{L, \kappa^2 L}$ is
the inclusion map.  By taking the direct limit, we obtain $\Phi' \circ \Phi= id$. The proof of $\Phi \circ \Phi'=id$ is similar.
\end{proof}

\begin{rmk}
We sketch a possible strategy to prove the invariance of the group $ECH(M, \alpha)$ when the Reeb vector field of $\alpha$ defines a foliation on $\partial M$ with closed leaves. This result will not be used in the rest of the article.

When $\partial M$ is foliated by closed orbits of the Reeb vector field of $\alpha$ we would like to view $ECH(M,\alpha)$ as a direct limit of ECH groups of nondegenerate contact forms as in Equation~\eqref{eqn: Morse-Bott direct limit}.  We pick $L>0$ and slightly extend $(M,\alpha)$ to $(M_{\varepsilon},\alpha_\varepsilon)$ so that:
\begin{itemize}
\item $M_{\varepsilon}=M\cup (T^2\times[0,\varepsilon))$ where $\bdry M= T^2\times\{0\}$;
\item $\alpha_{\varepsilon} |_M = \alpha$;
\item $\partial M_{\varepsilon}$ is foliated by Reeb trajectories of $\alpha_{\varepsilon}$ with irrational slope; and
\item there are no Reeb orbits of $\alpha_{\varepsilon}$ on $M_{\varepsilon} - M$ with action $\leq L$.
\end{itemize}

We now consider the chain complexes $ECC^L(M_\varepsilon,f_i\alpha_{\varepsilon})$, where $f_i : M_{\varepsilon} \to \R$ is as in Lemma \ref{bourgeois bis} for $i\gg 0$. Then
\begin{equation}
ECC^L(M, \alpha) \simeq ECC^L(M_{\varepsilon}, f_i \alpha_{\varepsilon})
\end{equation}
by Proposition \ref{prop: from generic to MB}. We then write the ECH group $ECH(M,\alpha)$ as the direct limit of groups $ECH^{L}(M_\varepsilon,f_i\alpha_{\varepsilon})$ as in Corollary~\ref{cor: direct limit of commensurate contact forms}.   We extend $(M_\varepsilon,f_i\alpha_{\varepsilon})$ to a closed manifold by using Lemma~\ref{claim: extension of contact form wo orbits} and apply the (analogs of the) results of this section to define the ECH cobordism maps.
\end{rmk}

\subsection{Variants of ECH relative to the boundary}
\label{subsection: variants of ECH of an open book decomposition}

The goal of this subsection is to define the homology groups
$ECH(M,\bdry M,\alpha)$ and $\widehat{ECH}(M,\bdry M,\alpha)$ which
appear in the statement of Theorem~\ref{thm: equivalence of ECHs}.
They are variants of $ECH(M,\alpha)$ and in many ways can be viewed
as ECH groups relative to the boundary of $M$, hence the notation.

Let $M$ be a manifold with $\partial M \simeq T^2$. Let $\alpha$ be a contact form on $M$ which is nondegenerate on $int(M)$ and such that $\partial M$ is a {\em negative} Morse-Bott torus. Then the ECH groups introduced in Section~\ref{subsection: defns of ECH with t2 bdry} are defined for $(M, \alpha)$ In the rest of  this section we make the further assumption that  there exists a properly embedded oriented surface $(\Sigma, \bdry \Sigma ) \subset (M,\bdry M)$ with connected boundary such that an orbit of the Morse-Bott torus has algebraic intersection number one with $\Sigma$.

As before, we pick two orbits on $\partial M$ and label them $h$ and $e$. There is a perturbation of $\alpha$ near $\bdry M$ which makes $h$ hyperbolic and $e$ elliptic; $h$ corresponds to the maximum and $e$ to the minimum of the perturbing Morse function.

Let $\mathcal{P}$ be the set of simple Reeb orbits of $\alpha$ in the
interior of $M$. Let $ECC^{\flat}_j(M,\alpha)$ be the chain complex
generated by orbit sets $\gamma$ constructed from $\mathcal{P} \cup
\{ e \}$,  whose algebraic intersection number $\langle [\gamma],\Sigma\rangle$ is $j$.
By construction, $ECC_j^{\flat}(M,\alpha)$ is a direct summand of
$ECC^{\flat}(M, \alpha)$ and its differential  is the restriction of the differential for
$ECC^{\flat}(M, \alpha)$.

In the same way we write $ECC_j(M,\alpha)$ for the chain complex generated by orbit
sets $\gamma$ constructed from $\mathcal{P} \cup \{ e,h \}$,  whose algebraic intersection number $\langle [\gamma],\Sigma\rangle$ is $j$.  By construction, $ECC_j(M,\alpha)$
is a direct summand of $ECC(M, \alpha)$ and its differential is the restriction
of the differential for $ECC(M, \alpha)$.

\begin{lemma}\label{lemma: general elections}
There are inclusions of chain complexes:
$$ECC^{\flat}_j(M,\alpha)\to ECC^{\flat}_{j+1}(M,\alpha), $$
$$ECC_j(M,\alpha) \to ECC_{j+1}(M,\alpha)$$
given by the map $\gamma\mapsto e\gamma$,
where we are using multiplicative notation for orbit sets.
\end{lemma}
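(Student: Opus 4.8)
The plan is to check that multiplication by $e$ on the generator level commutes with the ECH differential. Since $\partial M$ is a \emph{negative} Morse-Bott torus, the orbit $e$ is, in the language of the introduction, a ``sink'': by the Trapping Lemma (Lemma~\ref{lemma: trapping}) any one-sided end of a $J$-holomorphic curve asymptotic to a Reeb orbit in $\partial M$ must be a \emph{negative} end, so in particular no nontrivial $J$-holomorphic Morse-Bott building in $\R\times M$ can have $e$ (or a multiple cover of $e$) appearing at a positive end — the only such building is the trivial cylinder. First I would record this fact precisely: if $\tilde u\in\mathcal{M}^{MB}_J(e\gamma,\beta)$ is very nice with $I(\tilde u)=1$, then the positive end of $\tilde u$ at $e$ can only be attached (via a gradient flow line, possibly broken by a connector over $e$) to a trivial cylinder over $e$, because any irreducible non-connector component of $\tilde u$ with a positive end limiting to $\partial M$ is excluded by the Trapping Lemma. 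Hence such a $\tilde u$ splits off a trivial cylinder over $e$ and equals $e\cdot\tilde u'$ for a unique very nice building $\tilde u'\in\mathcal{M}^{MB}_J(\gamma,\beta')$ with $I(\tilde u')=1$ and $\beta=e\beta'$. (Here one uses that $e$ is elliptic, so a connector over $e$ that is a trivial cylinder is allowed by Definition~\ref{defn: Morse-Bott simply covered}, and that $I$ is additive on the disjoint union with a trivial cylinder by \cite[Theorem~5.1]{Hu2}.)

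Conversely, given $\tilde u'\in\mathcal{M}^{MB}_J(\gamma,\beta')$ very nice with $I=1$, taking its disjoint union with the trivial cylinder over $e$ gives a very nice building in $\mathcal{M}^{MB}_J(e\gamma,e\beta')$ with $I=1$. This sets up a bijection
$$\mathcal{M}^{MB,I=1,vn}_J(e\gamma,\beta)\;\simeq\;\coprod_{\beta=e\beta'}\mathcal{M}^{MB,I=1,vn}_J(\gamma,\beta'),$$
from which $\partial(e\gamma)=e\,\partial\gamma$ follows immediately by counting mod $2$. This proves that $\gamma\mapsto e\gamma$ is a chain map $ECC^{\flat}(M,\alpha)\to ECC^{\flat}(M,\alpha)$; it is injective because distinct orbit sets not containing $h$, hence a fortiori distinct orbit sets when multiplied by $e$, remain distinct (multiplicity of $e$ simply increases by one). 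Finally, since $\langle[e\gamma],\Sigma\rangle=\langle[\gamma],\Sigma\rangle+\langle[e],\Sigma\rangle$ and $\langle[e],\Sigma\rangle=1$ by the hypothesis that an orbit of the Morse-Bott torus has algebraic intersection number one with $\Sigma$, the map sends $ECC^{\flat}_j(M,\alpha)$ into $ECC^{\flat}_{j+1}(M,\alpha)$ and, being the restriction of a chain map, is itself a chain map. The identical argument with $\mathcal{P}\cup\{e,h\}$ in place of $\mathcal{P}\cup\{e\}$ handles $ECC_j(M,\alpha)\to ECC_{j+1}(M,\alpha)$: multiplication by $e$ does not interfere with the multiplicity of $h$, and the Trapping Lemma still forbids positive ends at $e$ in any nontrivial building.

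The main obstacle is verifying carefully that no building counted by $\partial$ on $ECC^{\flat}(M,\alpha)$ can ``absorb'' the extra copy of $e$ at the positive end in a nontrivial way — i.e., that the only building with a positive end at $e$ is the trivial cylinder, even allowing connectors over $e$. This is exactly where one must invoke the negativity of the Morse-Bott torus together with the Trapping Lemma, as well as the fact (noted after Definition~\ref{defn: Morse-Bott simply covered}) that connectors are permitted only over $e$ and $h$ and not over other Morse-Bott orbits; a connector over $e$ attached to the positive end would itself need a positive end, and chasing this back through the building forces everything above it to be trivial cylinders. Once this is pinned down, the bijection of moduli spaces and hence the chain-map property are formal.
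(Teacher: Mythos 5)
Your strategy is essentially the one the paper uses: the Trapping Lemma forces any positive end at $\partial M$ to split off a trivial cylinder, and then one needs to verify the ECH index matches. The first half is fine. The gap is in the ECH index step.

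You assert that ``$I$ is additive on the disjoint union with a trivial cylinder by [Theorem~5.1, Hu2],'' but that theorem (as the paper itself uses it, e.g.\ in the proofs of Lemma~\ref{lemma: counting Morse-Bott buildings} and Lemma~\ref{cirget lounge}) gives a \emph{superadditivity inequality} $I(C\cup C')\geq I(C)+I(C')+\text{(intersection term)}$, not an equality. In ECH, attaching a trivial cylinder over an orbit $e$ that already appears in $\gamma'$ raises the multiplicity of $e$ and changes the associated partitions, and the ECH index can strictly increase under this operation unless the partitions on both sides satisfy the admissibility conditions. The paper's proof carries out exactly this check: it invokes [Hu, Proposition~7.1] and verifies the admissibility condition (Equations~(23) and~(24) of [Hu, Definition~4.7]), using the Morse-Bott facts that for $e$ near a negative Morse-Bott torus the outgoing partition of $(e,n)$ is $(n)$, the incoming partition is $(1,\dots,1)$, and every $J$-holomorphic map in $\R\times M$ with a positive end at $e$ is a connector. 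Without this verification, the purported bijection between $\mathcal{M}^{MB,I=1,vn}_J(e\gamma,\beta)$ and the corresponding moduli spaces for $\gamma$ is not established — a building $e\cdot\tilde u'$ with $I(\tilde u')=1$ could a priori have $I(e\cdot\tilde u')>1$ and therefore not contribute to $\partial(e\gamma)$, breaking the identity $\partial(e\gamma)=e\,\partial\gamma$. The partition-admissibility verification, not the trapping/sink argument, is the genuinely delicate point of the lemma.
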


\begin{proof}
Let  $\gamma$ be an orbit set in $M$ and $u$ a holomorphic map with image in
$\R \times M$ which is positively asymptotic to $e\gamma$.
Then $u$ has an irreducible component which is mapped to the trivial cylinder over
$e$. In fact, by the Trapping Lemma, $u$ cannot have nontrivial positive ends that limit to orbits on $\partial M$ because $M$ is a negative Morse-Bott
torus.   Also, one can check that, $Z' \in H_2(M, e \gamma, e \gamma')$ is obtained  by adding a trivial cylinder over $e$ to $Z \in H_2(M, \gamma, \gamma')$, then $I(e\gamma, e\gamma', Z')=1$ whenever $I(\gamma,\gamma', Z)=1$. This is a consequence of \cite[Proposition 7.1]{Hu}, since the associated partitions satisfy the admissibility conditions (Equations~(23) and (24) in \cite[Definition 4.7]{Hu}). It is crucial in the verification of the admissibility condition that, in the Morse-Bott situation, the outgoing partition for $e$ with multiplicity $n$ is $(n)$ and the incoming partition is $(1, \ldots ,1)$ for all $n$, together with the fact that every $J$-holomorphic map in $\R \times M$ with a positive end to $e$ is a connector.  Hence $\partial^\flat(e \gamma)= e \partial^\flat(\gamma)$ and
$\partial (e \gamma)= e \partial (\gamma)$.
\end{proof}

The homology of the chain complex $ECC_j^{\flat}(M,\alpha)$ will be
written as $ECH^{\flat}_j(M,\alpha)$ and that of  the chain complex
$ECC_j(M,\alpha)$ will be written as $ECH_j(M,\alpha)$.

\begin{defn}\label{defn: ECH relative to the boundary}
We define
$$ECH(M,\bdry M,\alpha)=\lim_{j\to\infty} ECH^{\flat}_j(M,\alpha),$$
$$\widehat{ECH}(M,\bdry M,\alpha)=\lim_{j\to\infty} ECH_j(M,\alpha).$$
\end{defn}

\begin{rmk}
The groups $ECH(M,\bdry M,\alpha)$ and $\widehat{ECH}(M,\bdry
M,\alpha)$ can also be interpreted as the homology of the chain complexes obtained
by taking the quotient of the chain complexes $ECC^{\flat}(M,\alpha)$ and
$ECC(M,\alpha)$ respectively by
the subcomplexes generated by all elements of the form $e\gamma-\gamma$, where $\gamma$ is any orbit set constructed
from ${\mathcal P} \cup \{ e \}$ in the case of $ECH(M,\bdry M,\alpha)$ or from
${\mathcal P} \cup \{ e, h \}$ in the case of $\widehat{ECH}(M,\bdry M,\alpha)$.  This alternative definition, unlike Definition~\ref{defn: ECH relative to the boundary}, does not need the assumption that the Reeb orbits on the boundary have intersection one with a properly embedded surface.
\end{rmk}
 \begin{rmk}
The differentials in $ECH(M,\bdry M,\alpha)$ and $\widehat{ECH}(M,\bdry
M,\alpha)$ preserve the total relative homology class of the generators. Then we can define subgroups $ECH(M, \partial M, \alpha, A)$ and $\widehat{ECH}(M,\bdry
M,\alpha, A)$ for every $A \in H_1(M, \partial M)$.
\end{rmk}

\section{ECH of the solid torus} \label{section: ECH of solid torus}

\subsection{Overview of the computation}

In this section we calculate various versions of ECH of the solid torus with certain boundary conditions and specific contact structures. We will write $V = D^2 \times S^1$ and use Convention~\ref{convention on solid torus}  to compute the slope of essential curves in $\partial V$ and in boundary-parallel tori contained in $V$.

The following lemma constructs the contact forms used in the main theorem. Let $V_0 \subset \ldots \subset V_i \subset \ldots \subset V$ be an exhaustion by concentric solid tori, $T_i = \partial V_i$, and ${\mathcal T} = \cup_i T_i$.  Let $(\rho, \phi, \theta)$ be the cylindrical coordinates on $V=D^2\times S^1$ from Section~\ref{subsec: contact form on V}. We assume that $T_i =\{ \rho = \rho_i \}$. We will choose $V_i$ so that the Reeb flow foliates $T_i = \partial V_i$ by orbits of {\em irrational} slope $r_i$.

\begin{lemma} \label{careful perturbation}
There exists a contact form $\alpha_V$ on $V=D^2 \times S^1$ which is an arbitrarily $C^{\infty}$-small perturbation of the contact form $\alpha$ from Example~\ref{esempio utile} and which satisfies the following:
\begin{itemize}
\item[(a)] the Reeb orbits of $\alpha_V$ in $int(V)$ are nondegenerate;
\item[(b)] $\alpha_V$ and $\alpha$ agree to infinite order along $\partial V$ and along $\mathcal{T}$. In particular, the Reeb flow of $\alpha_V$ foliates the tori $T_i$ by orbits of irrational slope $r_i$ and $\partial V$ by orbits of infinite slope; and
\item[(c)] for every $i$, all orbits in $V - V_i$ have slope greater than $r_i$.
\end{itemize}
\end{lemma}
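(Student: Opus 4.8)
The plan is to fix the exhaustion $\{V_i\}$ using the monotonicity of the slope function of $\alpha$, and then to produce $\alpha_V$ by perturbing $\alpha$ only inside pairwise disjoint thin toric neighborhoods of the rational-slope tori, keeping the perturbation away from $\mathcal T\cup\partial V$. Concretely: by Example~\ref{esempio utile} and Lemma~\ref{lemma: contact forms on V}, $R_\alpha$ is tangent to every torus $T_\rho=\{\rho=\mathrm{const}\}$, and along $T_\rho$ it has slope $-g'(\rho)/f'(\rho)$, a monotone function of $\rho$ running from $1$ to $+\infty$; after a preliminary $C^\infty$-small modification of $(f,g)$ supported in $\{\rho<\tfrac12\}$ we may assume this slope is strictly increasing on all of $(0,1)$. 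Since the irrationals are dense I would choose $\rho_0<\rho_1<\cdots\to 1$ so that each $r_i:=-g'(\rho_i)/f'(\rho_i)$ is irrational, and set $V_i=\{\rho\le\rho_i\}$, $T_i=\partial V_i$, $\mathcal T=\bigcup_i T_i$. Because $\rho_i\to 1$ and $r_i$ is irrational, $\mathcal T$ is a properly embedded family of tori in $\mathrm{int}(V)$ carrying only dense, non-closed Reeb trajectories, and $\mathrm{int}(V)\setminus\mathcal T$ is a disjoint union of open shells.

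Next I would carry out the perturbation. Every closed orbit of $R_\alpha$ lies on a torus $T_\rho$ of rational slope, and a direct computation from the Reeb formula of Lemma~\ref{lemma: contact forms on V} shows that each such $T_\rho$ is a Morse--Bott torus; since $\mathcal T$ consists of irrational-slope tori, each of these Morse--Bott tori lies in the interior of a unique shell, hence at positive distance from $\mathcal T\cup\partial V$. Enumerate them as $T^{(1)},T^{(2)},\dots$ and choose pairwise disjoint thin toric neighborhoods $U_n=\{|\rho-\rho^{(n)}|<\eta_n\}\supset T^{(n)}$ with $\overline{U_n}\cap(\mathcal T\cup\partial V)=\varnothing$; this is possible because the $\rho^{(n)}$ are distinct and, for every $L$, only finitely many $T^{(n)}$ carry a simple orbit of $\alpha$-action $<L$. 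Inside each $U_n$ I would apply the Morse--Bott perturbation of Proposition~\ref{prop: perturbation of alpha} with a generic Morse function, arranging moreover that throughout $U_n$ the Reeb direction stays in the cone of slopes strictly between the $\alpha$-slopes of the two boundary tori of $U_n$, and taking the $C^n$-size of the $n$-th local perturbation to be at most $\varepsilon\,2^{-n}\,\mathrm{dist}(U_n,\mathcal T\cup\partial V)^{n}$. Since the $U_n$ are disjoint, the sum of these perturbations is a single contact form $\alpha_V$; it is $C^\infty$-close to $\alpha$ and, by the size bounds, it agrees with $\alpha$ to infinite order along $\mathcal T\cup\partial V$. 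This is part~(b).

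It remains to extract (a) and (c). Outside $\bigcup_n U_n$ one has $\alpha_V=\alpha$, which has no closed orbit on the irrational-slope tori, so every closed orbit of $\alpha_V$ meets some $U_n$; since for each action bound only finitely many closed orbits occur and they are isolated, a further generic $C^\infty$-small perturbation, supported in $\bigcup_n U_n$ and obeying the same size bounds, makes all of them---including iterates and orbits straddling several $U_n$---nondegenerate, just as in the closed case (cf.\ \cite[Lemma~7.1]{CH2}); this gives~(a). For~(c), suppose a closed orbit $\gamma$ of $\alpha_V$ lies in $V-V_i$, i.e.\ in $\{\rho>\rho_i\}$. At every point of $\gamma$ lying outside the $U_n$ the Reeb direction coincides with that of $\alpha$, whose slope exceeds $r_i$ there by strict monotonicity; at a point lying in some $U_n$, that $U_n$ is contained in $\{\rho>\rho_i\}$ (as $U_n$ is disjoint from $T_i$ and meets $\{\rho>\rho_i\}$), so by construction the Reeb direction has slope exceeding $r_i$ there as well. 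Hence the tangent of $\gamma$ always lies in the open cone of slopes greater than $r_i$, and therefore so does $[\gamma]\in H_1(T^2)$; that is, the slope of $\gamma$ exceeds $r_i$.

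The main obstacle is the joint bookkeeping hidden in the second paragraph: the supports $U_n$ accumulate onto $\mathcal T$ and onto $\partial V$, so one must verify that the countably many local perturbations can be chosen small enough in both $C^\infty$-norm and geometric ``reach'' that their sum is a genuine smooth contact form, flat along $\mathcal T\cup\partial V$, while each piece is still large enough to break its Morse--Bott torus and to keep the slopes inside the prescribed cones---together with the claim that no closed orbit of $\alpha_V$ can weave between the perturbed shells in a way that defeats nondegeneracy or violates~(c). Both points are controlled by the Morse--Bott compactness theorem and by the Blocking and Trapping Lemmas of Sections~\ref{section: Morse-Bott theory} and~\ref{section: topological constraints}, which confine the closed orbits of $\alpha_V$ to the perturbed regions.
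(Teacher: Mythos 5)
Your proof is correct in its conclusions, but it is organized quite differently from the paper's argument, so a comparison is worthwhile.

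The paper's proof is an action-level induction: it fixes $L_i\to\infty$ and builds functions $f_i$ with $e^{f_i}\alpha$ $L_i$-nondegenerate, with $f_i-f_{i-1}$ small in a metric inducing the $C^\infty$-topology and supported in $int(V)-(\mathcal{O}_{i-1}\cup\mathcal{T})$, where $\mathcal{O}_{i-1}$ is the union of simple orbits of action $<L_{i-1}$. The existence of such $g_i=f_i-f_{i-1}$, supported in arbitrarily small neighborhoods of the newly relevant orbits, is borrowed directly from the proof of \cite[Lemma~7.1]{CH2}; the limit $f=\lim f_i$ is Cauchy, $\alpha_V=e^f\alpha$ satisfies (a)--(b) by construction, and (c) is dispatched in one line by the monotonicity of the slope plus $C^\infty$-smallness. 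Your proof instead inducts over the rational-slope Morse--Bott tori $T^{(n)}$: you pick disjoint thin shells $U_n$ with closure avoiding $\mathcal{T}\cup\partial V$, apply the explicit Morse--Bott perturbation of Proposition~\ref{prop: perturbation of alpha} inside each, impose a pointwise cone constraint on the Reeb direction inside $U_n$, then add a further generic perturbation to kill the remaining degeneracies (resonant covers of $e_n$, orbits that might traverse several $U_n$), with $C^n$-sizes decaying like $\varepsilon 2^{-n}\operatorname{dist}(U_n,\mathcal T\cup\partial V)^n$ to guarantee $C^\infty$-convergence and flatness along $\mathcal T\cup\partial V$. Two things your version buys that the paper leaves implicit: you notice that Example~\ref{esempio utile} has a flat slope-$1$ plateau near the core where the orbits are doubly degenerate (including the core itself), and you handle it with a preliminary strict-monotonization of the slope; and your cone argument makes (c) a real proof rather than the paper's ``immediate.'' What the paper's route buys is brevity and fewer geometric prerequisites: by tracking only action thresholds, it never has to enumerate Morse--Bott tori, argue that closed orbits are trapped in the $U_n$, or run the two-stage ``break the Morse--Bott family, then generically perturb'' scheme -- all of that is subsumed in the single citation to \cite[Lemma~7.1]{CH2}. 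Both arguments ultimately rest on the same genericity input and the same mechanism (sum a sequence of local perturbations with $C^\infty$-sizes decaying fast enough, supported away from $\mathcal T\cup\partial V$, so the limit is flat along $\mathcal T\cup\partial V$), so they are genuinely parallel proofs of the same lemma.

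One small caveat to keep an eye on in your version: for the $C^\infty$-convergence of the summed perturbation, the bound $\varepsilon\,2^{-n}\operatorname{dist}(U_n,\mathcal T\cup\partial V)^n$ should be replaced by (say) $\varepsilon\,2^{-n}\min\bigl(\operatorname{dist}(U_n,\mathcal T\cup\partial V),1\bigr)^n$, and for (c) you should also arrange (or note) that the ``further generic perturbation'' remains within the slope cone; both are easy to impose but worth stating.
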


\begin{proof}
Let $L_i\to\infty$, $i=1,2,\dots$, be an increasing sequence of real numbers and let $d$ be a metric on $C^{\infty}(V)$ inducing the $C^{\infty}$-topology.\footnote{For example we can take $d(f,g) = \sum \limits_{k=0}^{\infty} 2^{-k} \dfrac{\| f-g \|_{C^k}}{1+\| f-g \|_{C^k}}$.} Fix $\varepsilon >0$ sufficiently small.

We claim that for $i=1,2,\dots$ there exists a function\footnote{The functions $f_i$, $g_i$, and $f$ introduced in this proof are, of course, unrelated to the functions $f$ and $g$ defining $\alpha$ in Example~\ref{esempio  utile}.} $f_i:V\to\R$ which satisfies the following:
\begin{itemize}
\item[(i)] $e^{f_i}\alpha$ is $L_i$-nondegenerate;
\item[(ii)] $d(f_i, f_{i-1}) < 2^{-i} \varepsilon$; and
\item[(iii)]  $\op{supp} (f_i - f_{i-1}) \subset int(V)- ({\mathcal O}_{i-1} \cup {\mathcal T})$,
\end{itemize}
where $\mathcal{O}_i$ is the union of all simple Reeb orbits of $e^{f_i}\alpha$ with action less than $L_i$.
Here we are setting $f_0=0$, $\mathcal{O}_0=\varnothing$, and $L_0=0$. We define $f_i$ inductively:  We choose $g_i$ such that $f_i = f_{i-1}+g_{i-1}$ satisfies (i)--(iii). In fact, as shown for example in the proof of \cite[Lemma~7.1]{CH2}, the functions
$g_i$ can be chosen arbitrarily close to $0$ in the $C^{\infty}$-topology and with
support in arbitrarily small neighborhoods of the Reeb orbits of action in
$[L_{i-1}, L_i]$. The claim then follows.  The sequence $f_i$ is a Cauchy sequence, so we define
$f = \lim \limits_{i \to \infty} f_i$ and $\alpha_V = e^f \alpha$. The contact form $\alpha_v$ satisfies (a) and (b).

It remains to prove (c).  But this is immediate since the slope in Example~\ref{esempio utile} is strictly increasing with the radius on the region $V - V_i$ and we are performing a $C^\infty$-small perturbation so that this property is preserved.
\end{proof}

$\partial V$ is a positive $\alpha_V$-Morse-Bott torus. We can perturb $\alpha_V$ so that the Morse-Bott family for $\partial V$ becomes a pair of nondegenerate Reeb orbits $e'$ and $h'$, where $e'$ is an elliptic orbit corresponding to the maximum of the perturbing function and $h'$ is a hyperbolic orbit corresponding to the minimum. The following is the main result of this section:

\begin{thm} \label{thm: ECH of  solid torus is Z}
Let $\alpha_V$ be a contact form on $V$ constructed in Lemma~\ref{careful perturbation}. Then:
\begin{enumerate}
\item $ECH(int(V),\alpha_V)\simeq \F$, generated by $\varnothing$.
\item $ECH^{\sharp}(V, \alpha_V) \simeq 0$.
\item $ECH(V, \alpha_V) \simeq 0$.
\item $ECH^{\flat}(V, \alpha_V) \simeq \F[e']$, where $\F[e']$ is the polynomial ring generated by $e'$ over $\F$.
\end{enumerate}
\end{thm}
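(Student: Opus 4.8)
The plan is to compute the four groups by a combination of Reeb dynamics analysis, the Morse--Bott machinery of Section~\ref{section: Morse-Bott theory}, and explicit identification of the holomorphic curves. First I would understand the Reeb orbits of $\alpha_V$: by Lemma~\ref{careful perturbation}(b) and Example~\ref{esempio utile}, the interior orbits lie on the tori $T_\rho$ with slope $-g'/f'$ increasing from $1$ (at $\rho=0$) to $+\infty$ (at $\partial V$); the core circle $\{\rho=0\}$ is always a Reeb orbit, and on each rational-slope torus there is a Morse--Bott family, which after the $C^\infty$-small perturbation of Lemma~\ref{careful perturbation}(a) becomes a pair $e,h$ of nondegenerate orbits. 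In particular, for each action bound $L$ only finitely many orbit sets are involved, and the only orbits of small action with slope close to $+\infty$ live near $\partial V$. The boundary torus $\partial V$ is a \emph{positive} Morse--Bott torus (from the monotonicity of the slope and Definition~\ref{defn: positive negative torus}), so its perturbed orbit $e'$ is elliptic with Conley--Zehnder index $1$ and $h'$ hyperbolic with index $0$ by Proposition~\ref{prop: perturbation of alpha}.

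For (1), the key point is that $ECH(int(V),\alpha_V)$ counts holomorphic curves in $\R\times int(V)$ between orbit sets built only from interior orbits. Since $\mathrm{int}(V)$ retracts onto the core circle and all interior orbits are rationally null-homotopic (they bound inside the solid torus), I would compute the differential directly, showing that every generator other than $\varnothing$ is killed or kills another via an explicit holomorphic curve coming from the foliation by tori $T_\rho$; concretely, one expects the Morse--Bott buildings to come from gradient trajectories in the $S^1$-families together with sections of the $\R\times T_\rho$ pieces, and an Euler-characteristic/ECH-index count (using the index formula \eqref{eqn: ECH index formula} with the trivialization from the core) should force the homology down to $\F\langle\varnothing\rangle$. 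An alternative, probably cleaner, route is to realize $int(V)$ as a direct limit of the ``standard'' tight solid torus computations and invoke known ECH computations for $T^3$-like pieces; but the direct differential count is the likeliest path here. For (2)--(3), the chain complexes $ECC^\sharp(V,\alpha_V)$ and $ECC(V,\alpha_V)$ adjoin $h'$ (and $h',e'$ respectively) to the generators of (1). Using that $\partial V$ is a \emph{positive} Morse--Bott torus and the Trapping Lemma (Lemma~\ref{lemma: trapping}), $e'$ and $h'$ can only appear at positive ends of nontrivial curves, and there is a canonical index-$1$ Morse--Bott building from $h'$ to $e'$ (a gradient trajectory, in fact two of them, contributing $0$ mod $2$ to $\partial^2$ but giving a nonzero differential $\partial h' = \varnothing$-type relation after the identification with (1)). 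The upshot should be that adjoining $h'$ introduces an acyclic pair, making $ECH^\sharp=0$, and then $ECH(V)$ fits in the obvious algebraic relation forcing it to vanish as well (this mirrors the mapping-cone / acyclicity arguments already used for $ECH^\flat$ and $ECH^\sharp$ in Section~\ref{subsection: defns of ECH with t2 bdry}).

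For (4), the chain complex $ECC^\flat(V,\alpha_V)$ is generated by orbit sets from $\mathcal P\cup\{e'\}$; since $\partial V$ is positive, $e'$ appears only at positive ends and there is a well-defined differential. I would show that the differential restricted to this complex has homology $\F[e']$: the powers $(e')^k$ survive (no curve can have a positive end entirely at $e'$ except a connector, so $\partial (e')^k$ lands in strictly-lower-$e'$-multiplicity terms that, by the computation in (1) for the interior part, all cancel), and every generator involving an interior orbit is killed exactly as in (1). The cleanest way is to filter $ECC^\flat$ by the multiplicity of $e'$, observe the associated graded is $ECC(int(V),\alpha_V)\otimes \F[e']$ with the differential from (1) tensored with identity, use (1) to get $H_*$ of the associated graded equal to $\F[e']$, and then argue the spectral sequence degenerates (there is no room for higher differentials since the $e'$-multiplicity-preserving part already computes everything). \textbf{The main obstacle} I anticipate is controlling precisely which index-$1$ Morse--Bott buildings contribute to the differential in the interior computation (1) — i.e., ruling out unexpected holomorphic curves between interior orbit sets and pinning down the exact cancellations — since the Reeb dynamics on the nested tori $T_\rho$ produce many orbit sets of the same homology class, and the partition/index bookkeeping (via Fact~\ref{basic fact about partitions} and the index inequality Theorem~\ref{thm: index inequality}) has to be carried out carefully. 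Everything else (items (2)--(4)) should then follow formally from (1) by the filtration and Trapping-Lemma arguments sketched above.
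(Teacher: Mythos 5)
Your proposal for part~(1) has a genuine gap. You propose to ``compute the differential directly, showing that every generator other than $\varnothing$ is killed or kills another via an explicit holomorphic curve,'' and you flag this yourself as the main obstacle. It is worse than an obstacle: the contact form $\alpha_V$ has infinitely many nondegenerate interior Reeb orbits (arising from perturbing the Morse--Bott tori of every rational slope in $(1,\infty)$), and there is no direct handle on the differential among them. The paper's argument does \emph{not} attempt this. Instead it uses three ingredients you do not identify: (i) the invariance result Proposition~\ref{prop: ECH of two contact forms in M}, applied to each $V_i$ (whose boundary is foliated by \emph{irrational} slope $r_i$), which lets one replace $\alpha_V|_{V_i}$ by the model form of Example~\ref{esempio scemo} whose only simple orbit is the core $c$, giving $ECH(V_i)\simeq \F\langle c^n\rangle$ in each homology class $n[S^1]$ (Lemmas~\ref{banalotto}, \ref{lemma: description of ECH of alpha_r}); (ii) an absolute grading compatible with the inclusion maps (Lemmas~\ref{absolute grading exists}, \ref{absolute grading is invariant}); and (iii) the observation that the grading of the generator $c^n$ in $ECH(V_i)$ is $\sum_{k=1}^n(2\lfloor kr_i\rfloor+1)$, which tends to $\infty$ as $i\to\infty$. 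Because the inclusion-induced maps $ECH(V_i)\to ECH(V_j)$ are degree-preserving (Lemma~\ref{lemma: V_i includes into V_i+1}), every generator with $n>0$ is eventually in the wrong degree and dies in the direct limit, leaving only $\varnothing$. This degree-growth mechanism, not cancellation among interior orbit sets, is what forces $ECH(int(V))\simeq\F$. Your parenthetical ``alternative route'' via a direct limit gestures in the right direction but supplies none of (i)--(iii), and without them the computation does not go through.

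There is also a smaller gap in your part~(2). The short-exact-sequence / exact-triangle argument you describe does dispose of all homology classes $n\neq 0$, but for $n=0$ the chain complex $ECC^\sharp(V,\alpha_V,0)$ has exactly two generators, $\varnothing$ and $h'$, and one must show $\partial h'=\varnothing$. Gradient trajectories from $h'$ to $e'$ are irrelevant here: $e'$ is not a generator of $ECC^\sharp$, and those two trajectories cancel mod~$2$ in any case, as you note. What actually provides the differential is a $J$-holomorphic \emph{plane} in $\R\times int(V)$ positively asymptotic to $h'$ and with no negative ends, coming from Wendl's finite energy foliation (Propositions~\ref{prop: Wendl V} and \ref{foliation on V}); Lemma~\ref{fef constrains curves} shows it is the unique such curve (up to $\R$-translation), and automatic transversality (Theorem~\ref{automatic transversality}) ensures it is counted. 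Your filtration-by-$e'$-multiplicity strategy for parts (3)--(4) does match the paper's, including the degree-reason for the collapse of the spectral sequence in (4) (the $\Z/2$ grading), so no further comment is needed there once the inputs (1) and (2) are established.
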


\begin{rmk}
Proposition \ref{prop: ECH of two contact forms in M} does
not cover contact forms whose Reeb flow
has rational slope on $\partial V$,  so we cannot claim that the computation in Theorem
\ref{thm: ECH of  solid torus is Z} is independent of the contact form. However, the
computation for the contact forms $\alpha_V$ constructed in Lemma \ref{careful
perturbation} will be sufficient for the proof of Theorem \ref{thm: equivalence of ECHs}.
\end{rmk}

The proof of (1) proceeds as follows: In Section
\ref{subsec: irrational slope} we compute $ECH(V_i, \alpha_V|_{V_i})$. Since the slope of
the Reeb flow of $\alpha_V$ on $T_i = \partial V_i$ is irrational, we can use Proposition
 \ref{prop: ECH of two contact forms in M} to replace the contact forms
$\alpha_V|_{V_i}$ with different forms for which the computation is easy. We also lift the
relative grading on the ECH groups given by the ECH index to an absolute grading
which is compatible with the maps induced by the interpolating cobordisms. In Section \ref{subsec: infinite slope} we prove that the inclusions $V_i \subset V_{i+1}$ induce inclusions of chain complexes $ECC(V_i, \alpha_V|_{V_i}) \subset ECC(V_{i+1}, \alpha_V|_{V_{i+1}})$ as a consequence of the Blocking Lemma. This implies that
$$ECH(int(V), \alpha_V) = \lim \limits_{i \to \infty} ECH(V_i, \alpha_V|_{V_i}).$$
We then use the absolute grading to conclude the proof: the degrees of the generators of
$ECH(V_i, \alpha_V|_{V_i})$ that are different from $\varnothing$ go to infinity as $i\to \infty$, so only $\varnothing$ survives in the direct limit.

The proofs of (2)--(4) are given in Section~\ref{subsec: completion of proof of thm solid torus} and use (1) and some results on holomorphic curves in $\R \times V$ due to Taubes and Wendl.

\subsection{ECH (V, $\alpha$) when the Reeb flow has irrational slope on the boundary} \label{subsec: irrational slope}

In this subsection we compute $ECH(V, \alpha)$ for contact forms $\alpha$ whose
Reeb flow foliates $\partial V$ by orbits of irrational slope  and whose underlying contact structure gives the standard contact neighborhood of a transverse knot. For this boundary condition we
have proved the invariance of $ECH$, so by Proposition \ref{prop: ECH of two contact
forms in M} we can choose a particularly simple contact form to do the computation.

Let $r>0$ be an irrational number.  Pick a contact form $\alpha_r$ on
$V \simeq D^2\times S^1$ as in Example~\ref{esempio scemo}
so that the following hold:
\begin{itemize}
\item the boundary $\bdry V$ and all the concentric tori $T_\rho$, $\rho\in(0,1]$, are foliated by Reeb orbits of irrational slope $r$;
\item the contact structure $\ker\alpha_r$ is transverse to all the fibers $\{pt\}\times S^1$.
\end{itemize}
There is only one simple closed orbit, namely the core $c=\{0\}\times S^1$. The orbit $c$ is elliptic and all its multiple covers $c^n$ are nondegenerate due to the irrationality of $r$. Note that $[c^n]=n[S^1]\in H_1(V)$, so we immediately have the following lemma:

\begin{lemma}\label{banalotto}
$ECH(int(V), \alpha_r; n[S^1]) \simeq \F$, generated by $c^n$, if $n \ge 0$ (where $c^n=\varnothing$ if $n=0$) and $ECH(int(V), \alpha_r; n[S^1])=0$ if $n<0$.
\end{lemma}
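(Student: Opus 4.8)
The plan is to identify the entire ECH chain complex $ECC(int(V),\alpha_r)$ and observe that it has trivial differential. First I would recall from Example~\ref{esempio scemo} (with $\nu = r$ irrational) that the only simple closed Reeb orbit in $int(V)$ is the core $c = \{0\}\times S^1$; every torus $T_\rho$ with $\rho > 0$ is foliated by Reeb trajectories of irrational slope $r$, hence carries no closed orbit. Therefore the generators of $ECC(int(V),\alpha_r)$ are exactly the orbit sets $c^n$ for $n \geq 0$ (with $c^0 = \varnothing$), since $c$ is elliptic — its iterates $c^n$ are nondegenerate because the rotation number is irrational, so arbitrary multiplicities are allowed and there are no hyperbolic orbits to worry about. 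The homology class is $[c^n] = n[S^1] \in H_1(V)$, so in the summand indexed by $n[S^1]$ with $n \geq 0$ there is exactly one generator $c^n$, and in the summand indexed by $n[S^1]$ with $n < 0$ there are no generators at all, giving $ECC(int(V),\alpha_r; n[S^1]) = 0$ for $n<0$ immediately.

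It remains to see that the differential vanishes on each $c^n$. This is a grading argument: I would compute the ECH index $I(c^n, c^n, Z)$ for a relative homology class $Z \in H_2(V,c^n,c^n)$ using the trivialization $\tau$ of $\xi$ along $c$ coming from the product framing $D^2\times S^1$, and observe that the only possible target of $\partial c^n$ is $c^n$ itself (since $\partial$ preserves the homology class $n[S^1]$ and $c^n$ is the unique generator in that class). An $I=1$ curve from $c^n$ to $c^n$ would live in $\R\times int(V)$, but the action $\mathcal{A}_{\alpha_r}(c^n) = n\,\mathcal{A}_{\alpha_r}(c)$ is the same at both ends, so by Stokes' theorem / positivity of the $d\alpha$-energy, any such curve is a union of trivial cylinders — equivalently a connector — and hence does not contribute to $\partial$ after the usual normalization (connector components of differential-contributing curves must be trivial cylinders, and a union of trivial cylinders from $c^n$ to $c^n$ has $I=0$, not $I=1$). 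Thus $\partial c^n = 0$ for all $n$, and $ECH(int(V),\alpha_r; n[S^1]) \simeq \F$ generated by $c^n$ for $n\geq 0$.

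Since $ECC(int(V),\alpha_r)$ is a genuine chain complex by Lemma~\ref{lemma: bdry squared is zero for mfld with torus bdry} (as $\partial V$ is foliated by Reeb orbits of irrational slope and the Blocking and Trapping Lemmas apply), this computation is legitimate, and the lemma follows. The main obstacle — really the only point needing care — is the justification that no $I=1$ curve from $c^n$ to $c^n$ contributes, i.e., that the action equality forces the curve to be a branched cover of the trivial cylinder over $c$; this is where I would invoke the fact that the $d\alpha$-energy of a $J$-holomorphic curve equals the difference of actions of its positive and negative ends, which is zero here, together with the structure of $I=1$ curves (a connector plus an embedded $I=\op{ind}=1$ piece) from the lemma following Theorem~\ref{thm: index inequality}.
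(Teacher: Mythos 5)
Your proposal is correct and takes essentially the same route as the paper, which actually gives no explicit argument at all (it simply notes that the only simple Reeb orbit is the elliptic core $c$, that its iterates are nondegenerate, and that $[c^n] = n[S^1]$, and then declares the lemma ``immediate''). Your write-up is simply the expansion of that ``immediate'': since each summand $ECC(int(V),\alpha_r; n[S^1])$ has at most one generator and the differential preserves homology class, the only way $\partial c^n$ could be nonzero is if it had a diagonal term $\langle \partial c^n, c^n\rangle$, which is ruled out by the action argument (an $I=1$ curve from $c^n$ to $c^n$ would have zero $d\alpha$-energy, forcing it to be a connector of ECH index $0$). One small remark: the aside about computing $I(c^n,c^n,Z)$ via the product trivialization is not actually needed and can be dropped; the uniqueness of the generator per class plus the action argument is already a complete proof, which is the substance of the paper's claim.
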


In order to plug this computation into the direct limit in the proof of Theorem~\ref{thm: ECH of  solid torus is Z}, we define an absolute grading on the ECH
groups of the solid torus in a way which is compatible with the cobordism maps.
For simplicity we will consider only contact forms
$\alpha$ which satisfy the following assumption:
\begin{itemize}
\item[($\bigstar$)] the core of $V$ is an elliptic Reeb orbit $c$ , all of whose multiple covers are nondegenerate.
\end{itemize}
The contact forms $\alpha_r$ in Lemma~\ref{banalotto}, as well as the contact forms
$\alpha_V|_{V_i}$ of Lemma~\ref{careful perturbation} satisfy this assumption.

Let $\xi = \ker \alpha$. We chose a trivialization $\tau$ of $\xi$ such that its restriction to the core orbit $e$ is homotopic to the pullback of a basis of $T_0D^2$ and the
linearized Reeb flow at $e$ is a rotation by angle $2 \pi \theta$ with $\theta \in \R - \Q$.

\begin{lemma}\label{absolute grading exists}
Let $\alpha$ be a contact form on a solid torus $V$ which satisfies ($\bigstar$). Then there is an absolute grading $I$ on $ECC(int(V), \alpha)$ such that:
\begin{enumerate}
\item $I(c^n) = \sum \limits_{k=1}^n (2 \lfloor k\theta \rfloor +1)$,
\item if $\gamma_1$, $\gamma_2$ are two orbit sets and $Z$ is a surface from $\gamma_1$ to $\gamma_2$, then
$$I(\gamma_1, \gamma_2, Z) = I(\gamma_1) - I(\gamma_2).$$
\end{enumerate}
\end{lemma}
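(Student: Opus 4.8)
The plan is to use the triviality of $H_2(V)$ to turn the ECH relative grading into a genuinely absolute one, ambiguous only by a single shift in each summand indexed by $H_1(V)$, and then to fix those shifts by the formula in $(1)$.

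First I would record the following. Since $V\simeq S^1$ we have $H_2(V;\Z)=0$, so for any orbit sets $\gamma_1,\gamma_2$ the connecting homomorphism
$$\partial\colon H_2(V,\gamma_1,\gamma_2)\longrightarrow H_1\bigl((\textstyle\bigcup_i\gamma_{1,i})\cup(\bigcup_j\gamma_{2,j})\bigr)$$
is injective; by exactness its image is the kernel of $H_1\to H_1(V)$, which contains the prescribed boundary class exactly when $[\gamma_1]=[\gamma_2]\in H_1(V)$. Hence $H_2(V,\gamma_1,\gamma_2)$ is either empty or a single relative class $Z_{\gamma_1,\gamma_2}$, the latter precisely when $[\gamma_1]=[\gamma_2]$. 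Consequently $I(\gamma_1,\gamma_2):=I(\gamma_1,\gamma_2,Z_{\gamma_1,\gamma_2})$ depends only on the orbit sets, and from the additivity $I(\gamma_1,\gamma_2,Z_1)+I(\gamma_2,\gamma_3,Z_2)=I(\gamma_1,\gamma_3,Z_1+Z_2)$ of the ECH index under concatenation of relative classes, together with uniqueness of the classes, one gets $I(\gamma_1,\gamma_2)+I(\gamma_2,\gamma_3)=I(\gamma_1,\gamma_3)$ and $I(\gamma,\gamma)=0$. Thus $ECC(int(V),\alpha)$ carries a relative $\Z$-grading within each summand indexed by $A\in H_1(V)\cong\Z\langle[c]\rangle$.

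Next I would fix base generators. By $(\bigstar)$ the orbit $c$ is elliptic, so $c^n$ (with $c^0=\varnothing$) is a legitimate generator in class $n[c]$ for every $n\ge 0$, and I set $I(c^n):=\sum_{k=1}^n\bigl(2\lfloor k\theta\rfloor+1\bigr)$. With respect to the chosen trivialization $\tau$ this equals $\widetilde\mu_\tau(c^n)=\sum_{k=1}^n\mu_\tau(c^k)$, because the Conley--Zehnder index of the $k$-fold iterate of an elliptic orbit whose linearized return map is a rotation by $2\pi\theta$ with $\theta$ irrational is $2\lfloor k\theta\rfloor+1$. For a general orbit set $\gamma$ with $[\gamma]=n[c]$, $n\ge 0$, I then put
$$I(\gamma):=I(\gamma,c^n)+\widetilde\mu_\tau(c^n)=c_1(\xi|_Z,\tau)+Q_\tau(Z)+\widetilde\mu_\tau(\gamma),\qquad Z=Z_{\gamma,c^n},$$
the second equality being the definition of the ECH index unwound. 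For the contact forms of Lemma~\ref{careful perturbation} every Reeb orbit in $int(V)$ has nonnegative linking number with the core, so these are the only summands that occur; for a general $\alpha$ satisfying $(\bigstar)$ one fixes on each remaining summand an arbitrary base generator, the choice being immaterial for the rest of the paper.

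Finally, $(1)$ holds by construction since $I(c^n,c^n)=0$, and for $(2)$, if $\gamma_1,\gamma_2$ lie in a common class $n[c]$ and $Z$ is any surface from $\gamma_1$ to $\gamma_2$, then $Z=Z_{\gamma_1,\gamma_2}$ by uniqueness and
$$I(\gamma_1)-I(\gamma_2)=I(\gamma_1,c^n)-I(\gamma_2,c^n)=I(\gamma_1,c^n)+I(c^n,\gamma_2)=I(\gamma_1,\gamma_2,Z),$$
using additivity and $I(\gamma_2,c^n)+I(c^n,\gamma_2)=I(\gamma_2,\gamma_2)=0$. The conceptual point is the vanishing of $H_2(V)$, which is what makes the relative classes unique and hence the grading absolute up to a shift in each $H_1$-summand; the only real computational input is the Conley--Zehnder formula for the iterates of $c$, and I expect the write-up to spend most of its effort pinning down the trivialization $\tau$ along $c$ so that that formula holds on the nose.
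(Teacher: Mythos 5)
Your proposal is correct and takes essentially the same route as the paper: you define $I(\gamma)$ by comparing to the base generator $c^n$ (equivalently setting $I(\gamma)=\widetilde\mu_\tau(\gamma)+c_1(\xi|_Z,\tau)+Q_\tau(Z)$), use $H_2(V)=0$ to make the choice of $Z$ immaterial, and appeal to the Conley--Zehnder formula for iterates of an elliptic orbit to get item (1). The only cosmetic difference is that the paper cites \cite[Lemma 2.5(a)]{Hu} for the independence of $Z$, whereas you prove it directly by noting that $H_2(V,\gamma,c^n)$ is an affine space over $H_2(V)=0$ and hence a singleton.
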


\begin{proof}
Given an orbit set $\gamma$ with $[\gamma] = n[S^2]$, we choose a $\tau$-trivial surface $Z$ from $\gamma$ to $e^n$ and define
\begin{equation}
I(\gamma) := \widetilde{\mu}_{\tau}(\gamma) + c_1(\xi|_Z, \tau)+ Q_{\tau}(Z).
\end{equation}
Since $H_2(V)=0$, $I(\gamma, c^n, Z)$ is independent of $Z$ by \cite[Lemma 2.5(a)]{Hu}.  Hence $I(\gamma)$ is well-defined.

(1) follows from the calculation $\widetilde{\mu}_{\tau}(c^n) = \sum \limits_{k=1}^n (2 \lfloor k \theta \rfloor +1)$ using \cite[Formula 2.3]{Hu2} and (2) follows from the additivity of the ECH index.
\end{proof}

\begin{lemma}\label{absolute grading is invariant}
Let $\alpha_1$ and $\alpha_2$ be contact forms on $V$ which coincide on $\partial V$ to first order and  define contact structures which are isotopic relatively to the boundary. If both $\alpha_1$ and $\alpha_2$ satisfy ($\bigstar$)  and their Reeb flows foliate $\partial V$ by
orbits of irrational slope, then the isomorphism $ECH(V, \alpha_1) \simeq ECH(V, \alpha_2)$ from Proposition \ref{prop: ECH of two contact forms in M} preserves the absolute grading $I$.
\end{lemma}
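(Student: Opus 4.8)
The plan is to track the absolute grading $I$ through each step of the construction of the isomorphism $\Phi\colon ECH(M,\alpha_1)\to ECH(M,\alpha_2)$ given in the proof of Proposition~\ref{prop: ECH of two contact forms in M}. Recall that $\Phi$ is assembled from (i) the chain-level identifications $ECC^L(V,\alpha_i)\simeq ECC^L(M',\alpha_i'(L'))$ of Lemma~\ref{lemma: mahler}, and (ii) cobordism maps $\Phi^L$ induced by interpolating cobordisms $(X,\lambda_{L'})$ between the closed extensions, followed by a passage to the direct limit. I would show that each of these ingredients respects $I$, and then conclude that $\Phi$ does as well.

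First I would note that the identification of Lemma~\ref{lemma: mahler} is grading-preserving essentially by construction: it sends an orbit set in $V$ to the same orbit set viewed in $M'$, and since the relevant holomorphic curves (and the surfaces $Z$ representing relative homology classes) have image in $\R\times V$ by the Blocking Lemma argument already used there, the data $\widetilde\mu_\tau(\gamma)$, $c_1(\xi|_Z,\tau)$, $Q_\tau(Z)$ entering the definition of $I$ in Lemma~\ref{absolute grading exists} are literally the same on both sides (one must check that the trivialization $\tau$ along the core extends compatibly, which is immediate since the core lies in $V$ and the extension only modifies $\alpha_i$ near $\partial V$). Second, the cobordism map $\Phi^L$: by Remark~\ref{cobordisms dont leak} and Theorem~\ref{thm: Hutchings Taubes cobordism map}(i), $\Phi^L$ is supported on $J$-holomorphic curves contained in $\R\times V$ inside the completed cobordism, and each such curve carries a relative homology class $Z\in H_2(V,\gamma,\gamma')$ with $I(\gamma,\gamma',Z)=0$ (this is the ECH-index-zero condition that the cobordism map respects). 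By part (2) of Lemma~\ref{absolute grading exists} this forces $I(\gamma)=I(\gamma')$, so $\Phi^L$ preserves the absolute grading on the chain level, hence on homology. Third, the direct-limit maps (inclusions of action-truncated complexes) obviously preserve $I$, so the whole composite $\Phi$ does, and the same applies to $\Phi'$.

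The one point requiring genuine care — and what I expect to be the main obstacle — is the compatibility of the trivialization $\tau$ and of the surface classes $Z$ across the cobordism. Specifically, one must argue that a relative homology class in $H_2$ of the cobordism restricting to $\gamma$ at the positive end and $\gamma'$ at the negative end can be capped off, after projecting to $\R\times V$, into surfaces from $\gamma$ and $\gamma'$ to powers of the core $c$ in a way that the ECH-index additivity of part (2) of Lemma~\ref{absolute grading exists} applies. Since $H_2(V)=0$ and $H_2$ of the trivial cobordism $[0,1]\times V$ is also essentially trivial relative to the core, this gluing of surfaces presents no ambiguity; the index $I$ of the glued surface equals the sum of the pieces, and the cobordism contributes $0$. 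Once this bookkeeping is in place, the proof is a direct consequence of the constructions, with no new analytic input.
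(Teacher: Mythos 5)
Your proposal is correct and follows essentially the same route as the paper's proof. The paper also invokes Remark~\ref{cobordisms dont leak} to reduce to holomorphic buildings contained in a completed interpolating cobordism $([0,1]\times V, \lambda)$, uses \cite[Theorem 5.1]{Cr} to see these buildings have ECH index zero, and then --- exactly as you anticipate in your ``one point requiring genuine care'' paragraph --- exploits $H_2(V)=0$ to decompose the relative class $Z$ as $Z_1\cup Z_0^n\cup Z_2$, where $Z_1$ runs from $\gamma_1$ to $c^n$ inside the positive end, $Z_0^n$ is $n$ copies of the product cylinder over the core inside the cobordism, and $Z_2$ runs from $c^n$ to $\gamma_2$ inside the negative end. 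One minor imprecision worth noting: the cobordism piece $Z_0^n$ does not literally contribute $0$ to the ECH index, but rather $I_1(c^n)-I_2(c^n)$, because the Conley-Zehnder terms at its two ends are computed for different contact forms $\alpha_1$ and $\alpha_2$; what does vanish on $Z_0^n$ are the $c_1$ and $Q_\tau$ terms, and this is precisely what makes the additivity computation $I(\gamma_1,\gamma_2,Z)=I_1(\gamma_1)-I_2(\gamma_2)$ close up.
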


\begin{proof}
 We denote by $I_1$ and $I_2$ the absolute grading on the groups $ECH(V, \alpha_1)$ and $ECH(V, \alpha_2)$ respectively.
We know from Remark~\ref{cobordisms dont leak} that  the isomorphism $ECH(V, \alpha_1) \stackrel{\simeq} \longrightarrow ECH(V, \alpha_2)$ is supported by holomorphic buildings  in a completed interpolating cobordism $([0,1] \times V, \lambda)$ from $(V, \alpha_1)$ to $(V, \alpha_2)$\footnote{To add some confusion, what is called $V$ here corresponds to $M$ in Section~\ref{subsec: well definition of ECH with boundary}.}. Moreover by \cite[Theorem 5.1]{Cr}, those buildings have total $ECH$ index $I=0$ for a version of the ECH index in cobordisms; see \cite{Hu2} for its definition.  Then the lemma holds if
\begin{equation} \label{non ne posso piu}
I(\gamma_1, \gamma_2, Z)= I_ 1 (\gamma_1)- I_ 2(\gamma_2)
\end{equation}
for all surfaces $Z$ in $[0,1] \times V$ connecting an orbit set $\gamma_1$ for $\alpha_1$ to an orbit set $\gamma_2$ for $\alpha_2$.

Since $H_2(V)=0$,  we can assume that $Z$ is the union of a surface $Z_1$ from $\gamma_1$ to $c^n$ (for some $n$), the surfaces $Z_0^n$ consisting of $n$ copies of the
cylinder $Z_0$ over the core orbit $c$, and a surface $Z_2$ from $c^n$ to $\gamma_2$.
Moreover we can assume that $Z_1$ and $Z_2$ project to surfaces in $V$, so that
$I(\gamma_1, c^n, Z_1) = I_1(\gamma_1) - I_1(c^n)$ and $I_2(c^n, \gamma_2, Z_2) =
I_2(c^n) - I_2(\gamma_2)$. Then
$$I(\gamma_1, \gamma_2, Z) = I_1(\gamma_1) -I_1(c^n) + I(c^n, c^n, Z_0^n) + I_2(c_n) - I_2(\gamma_2)$$
and consequently Equation \eqref{non ne posso piu} holds if and only if
$$I(c^n, c^n, Z_0^n) = I_1(c^n) -  I_2(c_n)$$
for every $n \ge 0$. This is however the case because
$$c_1(T([0,1] \times V)|_ {Z_0^n},\tau)= Q_{\tau}( Z_0^n) =0.$$
\end{proof}

By combining Proposition \ref{prop: ECH of two contact forms in M} and Lemmas~\ref{banalotto}--\ref{absolute grading is invariant} we obtain:

\begin{lemma} \label{lemma: description of ECH of alpha_r}
If $\alpha$ is a contact form on $V$ satisfying ($\bigstar$)
and $\partial V$ is foliated by Reeb orbits of irrational slope $r>0$, then
$$ECH(V,\alpha,n[S^1]) \simeq \left \{
\begin{array}{l} \F \quad \text{in degree }
I= \sum_{k=1}^n (2\lfloor kr \rfloor +1), \text{ for } n>0; \\
\F \quad \text{in degree } I=0, \text{ for } n=0; \text{ and } \\
0 \quad \text{for } n<0. \end{array} \right.$$
\end{lemma}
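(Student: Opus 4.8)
The plan is to combine the explicit computation for the model contact form $\alpha_r$ of Example~\ref{esempio scemo} with the invariance result Proposition~\ref{prop: ECH of two contact forms in M} and the grading-compatibility of the resulting isomorphism.

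First I would compute the right-hand side for the model form. Taking $\nu = 1/r$ in Example~\ref{esempio scemo} produces a contact form $\alpha_r$ on $V$ whose Reeb flow foliates $\partial V$ (and every concentric torus $T_\rho$) by orbits of irrational slope $r$, whose unique simple closed orbit is the core $c = \{0\}\times S^1$, which is elliptic with all iterates $c^n$ nondegenerate, and whose kernel is transverse to the fibers $\{pt\}\times S^1$; in particular $\alpha_r$ satisfies ($\bigstar$) and realizes the standard contact neighborhood of a transverse knot. Lemma~\ref{banalotto} gives $ECH(int(V),\alpha_r, n[S^1]) \simeq \F$ generated by $c^n$ for $n \ge 0$ (with $c^0 = \varnothing$) and $0$ for $n < 0$, and Lemma~\ref{absolute grading exists}, together with the fact that in the trivialization $\tau$ chosen there the linearized Reeb flow along $c$ rotates by angle $2\pi r$, places the generator $c^n$ in degree $I(c^n) = \sum_{k=1}^n (2\lfloor kr\rfloor + 1)$ for $n > 0$ and $\varnothing$ in degree $0$.

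Next I would transport this to a general $\alpha$ as in the statement. Since $ECH(V,\alpha) = ECH(int(V),\alpha)$ for irrational boundary slope (Section~\ref{subsection: defns of ECH with t2 bdry}), and since both $\alpha$ and $\alpha_r$ have Reeb slope $r$ on $\partial V$ and underlying contact structure the standard transverse-knot neighborhood, I can --- after rescaling $\alpha$ by a positive constant, which merely rescales the action filtration and hence leaves $ECH(V,\alpha)$ unchanged, and after an isotopy supported in $int(V)$ --- arrange that $\alpha$ and $\alpha_r$ agree to first order along $\partial V$ and define contact structures isotopic relative to $\partial V$. Proposition~\ref{prop: ECH of two contact forms in M} then yields an isomorphism $ECH(V,\alpha)\simeq ECH(V,\alpha_r)$ respecting the splitting along $H_1(V) = \Z\langle[S^1]\rangle$, and Lemma~\ref{absolute grading is invariant} says this isomorphism preserves the absolute grading $I$. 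Combining with the previous paragraph gives the claimed description.

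The step requiring genuine care is the matching in the last paragraph: verifying that, for every $\alpha$ permitted by the hypotheses, one can indeed put $\alpha$ and $\alpha_r$ into the situation required by Proposition~\ref{prop: ECH of two contact forms in M} (first-order agreement on $\partial V$ and relative isotopy of the contact structures), and confirming that the rotation angle of the core orbit of $\alpha_r$ in the trivialization of Lemma~\ref{absolute grading exists} is exactly $2\pi r$, so that $\sum_{k=1}^n(2\lfloor kr\rfloor + 1)$ is the correct degree. Once these are in place, the statement follows by directly quoting Lemmas~\ref{banalotto}, \ref{absolute grading exists}, \ref{absolute grading is invariant} and Proposition~\ref{prop: ECH of two contact forms in M}.
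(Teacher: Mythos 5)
Your proposal is correct and matches the paper's own proof, which is a one-line citation: ``By combining Proposition~\ref{prop: ECH of two contact forms in M} and Lemmas~\ref{banalotto}--\ref{absolute grading is invariant} we obtain.'' You have simply unpacked that citation into its constituent steps — computing $ECH(int(V),\alpha_r)$ and its grading for the model form, then transporting to general $\alpha$ via the grading-preserving invariance isomorphism — and correctly flagged the two implicit points that make this go through: the rotation number of the core of $\alpha_r$ in the trivialization of Lemma~\ref{absolute grading exists} is $r$, and the hypotheses of Proposition~\ref{prop: ECH of two contact forms in M} (first-order agreement along $\partial V$ and relative isotopy of contact structures) can be arranged after rescaling, which the paper leaves implicit under the standing assumption at the start of Section~\ref{subsec: irrational slope} that the contact structure is the standard transverse-knot neighborhood.
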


\subsection{Computation of $ECH(int(V), \alpha_V)$} \label{subsec: infinite slope}

The goal of this subsection is to compute $ECH(int(V), \alpha_V)$, where $\alpha_V$ is a contact form constructed in Lemma \ref{careful perturbation}.

\begin{lemma} \label{lemma: V_i includes into V_i+1}
The inclusions $V_i\subset V_j$ for $i <j$ induce inclusions of chain
complexes
\begin{equation} \label{eqn: map induced by inclusion of chain maps}
ECC(V_i,\alpha_V|_{V_i})\to ECC(V_j,\alpha_V|_{V_j}).
\end{equation}
Moreover, the inclusions $V_i\subset V$ induce inclusions of chain complexes
$$ECC(V_i,\alpha_V|_{V_i})\to ECC(int(V), \alpha_V).$$
\end{lemma}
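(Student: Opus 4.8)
The plan is to realize the map as the tautological inclusion on generators and then to verify that it is a chain map by a Blocking Lemma argument. First I would note that, since the Reeb flow of $\alpha_V$ foliates each $T_i=\bdry V_i$ by orbits of irrational slope $r_i$ (Lemma~\ref{careful perturbation}(b)), by definition $ECC(V_i,\alpha_V|_{V_i})=ECC(int(V_i),\alpha_V|_{int(V_i)})$, and likewise for $V_j$ and for $int(V)$. The generators of $ECC(V_i,\alpha_V|_{V_i})$ are orbit sets whose simple orbits lie in $int(V_i)$; since $int(V_i)\subset int(V_j)\subset int(V)$, every such orbit set is also a generator of $ECC(V_j,\alpha_V|_{V_j})$ and of $ECC(int(V),\alpha_V)$, which gives an injective map on generating sets. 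To show that this map intertwines the differentials it suffices to prove: \emph{if $\gamma,\gamma'$ are orbit sets supported in $int(V_i)$, then every finite-energy $J$-holomorphic map $u\colon F\to\R\times int(V_j)$ (resp.\ $\R\times int(V)$) from $\gamma$ to $\gamma'$ has image in $\R\times int(V_i)$.} Granting this, for compatible choices of Morse-Bott regular almost complex structure the moduli spaces $\mathcal{M}^{I=1}_J(\gamma,\gamma')/\R$ computed in $V_j$ (resp.\ $int(V)$) and in $V_i$ coincide as sets — the reverse inclusion being obvious — and the ECH index agrees because the relative homology class of $u$ lives inside $int(V_i)$; hence the boundary counts agree.

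For the containment statement I would argue exactly as in the proofs of Lemmas~\ref{lemma: bdry squared is zero for mfld with torus bdry} and \ref{lemma: mahler}. Working on one connected component of $F$ at a time (each has at least one puncture, and every end of $u$ limits to a cover of an orbit of $\gamma$ or $\gamma'$, hence lies in $int(V_i)$), I would first truncate $F$ to a compact surface $\bar F$ with boundary such that $\bar u:=u|_{\bar F}$ satisfies $\bar u_M(\bdry\bar F)\subset\{\rho<\rho_i-\varepsilon\}$ for small $\varepsilon>0$ and $\bar u_M$ is transverse to $T_i$. Then $\bar F_+:=\bar u^{-1}(\R\times\{\rho\ge\rho_i\})$ is a compact surface with $\bdry\bar F_+=\bar u^{-1}(T_i)$, and $\bar u_M(\bar F_+)$ is contained in the half-open toric annulus $T^2\times[\rho_i,\rho_j)$ (resp.\ $T^2\times[\rho_i,1)$), since $u_M(F)$ avoids $T_j=\bdry V_j$ (resp.\ $\bdry V$). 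Because $T_i\hookrightarrow T^2\times[\rho_i,\rho_j)$ is a homotopy equivalence, the homology class $\delta=[u_M(F)\cap T_i]\in H_1(T_i;\Z)$, which is carried by $\bdry\bar F_+$ and therefore bounds the $2$-chain $\bar u_M(\bar F_+)$ in the toric annulus, must vanish. Feeding $\delta=0$ (so $\delta\cdot s_i=0$) into Lemma~\ref{lemma: positive slope} forces $u_M(F)\cap T_i=\varnothing$; since the component of $F$ is connected and has an end at an orbit in $int(V_i)$, its image lies in $\R\times int(V_i)$, which proves the claim.

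Finally I would assemble the pieces: the inclusion on generators is injective and commutes with $\bdry$, hence is an inclusion of chain complexes $ECC(V_i,\alpha_V|_{V_i})\hookrightarrow ECC(V_j,\alpha_V|_{V_j})$; the case of $ECC(int(V),\alpha_V)$ is handled in exactly the same way (and is also the direct limit over $j$ of the first family). The step I expect to require the only real care is the homological computation showing $[u_M(F)\cap T_i]=0$, together with the bookkeeping of the truncation and orientation conventions entering Lemma~\ref{lemma: positive slope}; everything else is formal, and this argument is a direct transcription of the ones already carried out for Lemmas~\ref{lemma: bdry squared is zero for mfld with torus bdry} and \ref{lemma: mahler}.
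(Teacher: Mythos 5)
Your argument has a genuine gap in the reduction step. You claim that to show the inclusion on generators is a chain map, ``it suffices to prove'' that every $J$-holomorphic curve from $\gamma$ to $\gamma'$ with \emph{both} orbit sets supported in $int(V_i)$ has image in $\R\times int(V_i)$. That only verifies $\langle\partial_{V_i}\gamma,\gamma'\rangle=\langle\partial_{V_j}\gamma,\gamma'\rangle$ for $\gamma'$ in $V_i$. For the map $\iota$ to be a chain map you must also show
$$\langle\partial_{V_j}\gamma,\gamma''\rangle=0 \quad\text{whenever $\gamma''$ has an orbit outside $V_i$,}$$
since otherwise $\partial_{V_j}(\iota\gamma)$ has a term not in the image of $\iota$. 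Your truncation argument cannot address this case: if $u$ has a negative end limiting to an orbit in $V_j-V_i$, you can no longer choose $\bar F$ with $\bar u_M(\partial\bar F)\subset\{\rho<\rho_i-\varepsilon\}$, and the $2$-chain $\bar u_M(\bar F_+)$ acquires extra boundary components near those orbits, so $\delta=[u_M(F)\cap T_i]$ is \emph{not} nullhomologous.

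The missing step is exactly where positivity of intersection does real work. Write $\gamma'=\gamma'_{in}\gamma'_{out}$ with $\gamma'_{out}$ the part outside $V_i$, and suppose $\gamma'_{out}\neq\varnothing$. Then $\delta$ is represented (up to sign, with the orientation you chose as $\partial(u_M(F)\cap V_i)$) by $-[\gamma'_{out}]\in H_1(T_i;\Z)$, which is nonzero because orbits of $\alpha_V$ in $V-V_i$ have slopes strictly greater than $r_i$ (Lemma~\ref{careful perturbation}(c)); the same slope comparison gives $[\gamma'_{out}]\cdot s_i>0$, hence $\delta\cdot s_i<0$, contradicting Lemma~\ref{lemma: positive slope}. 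Only after this do the positive and negative ends of $u$ all lie in $int(V_i)$, and at that point your homological computation showing $\delta=0$ (or equivalently the Blocking Lemma) closes the argument. Your direct proof that $\delta=0$ is a perfectly good and slightly more elementary alternative to invoking the Blocking Lemma for the containment step, but it does not replace the exclusion of $\gamma'_{out}\neq\varnothing$, and the claim that the reduction ``suffices'' should be retracted.
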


\begin{proof}
Let $\gamma$ be an orbit set whose orbits are contained in $V_i$. We will prove that
every $J$-holomorphic map $u : F \to \R \times V$ which has $\gamma$ at its positive
end has image in $\R \times V_i$. Let $\gamma'$ be the orbit set at the negative end
of $u$. We first prove that all the orbits of $\gamma'$ must be contained in
$V_i$. Arguing by contradiction, suppose
$\gamma'=\gamma'_{in}\gamma'_{out}$, where the orbits of
$\gamma'_{in}$ are in $V_i$ and the orbits of
$\gamma'_{out} \neq \varnothing$ are in $V-V_i$.
The Reeb vector field determines a homology class $s_i \in H_1(T_i; \R)$, up to multiplication by a positive constant, which has slope $r_i$ using Convention~\ref{convention on solid torus}. We can also regard
$[\gamma'_{out}]$ as a homology class in $H_1(T_i; \R)$ and the slope of
$[\gamma'_{out}]$ is larger than $r_i$ because every Reeb orbit in $V -V_i$ has slope larger than $r_i$ by Lemma~\ref{careful perturbation}. This implies that
$[\gamma'_{out}] \cdot s_i >0$.

 Denote by $u_V$ the composition of $u$ with the projection $\R \times V \to V$ and let $\delta \in H_1(T_i; \R)$ be the homology class of the intersection $u_V(F) \cap T_i$, oriented as the boundary of $u_V$ restricted to $V_i$.  (Recall that the tori $T_i$ are foliated by Reeb orbits of irrational slope, so that $u$ has no ends at $T_i$.) Then $\delta = -[\gamma'_{out}]$, so $\delta \cdot s_i<0$. This contradicts the positivity of intersections in dimension three (Lemma~\ref{lemma: positive slope}) and therefore all orbits in $\gamma'$ are contained in $V$. Hence   Lemma~\ref{lemma: blocking lemma}(1) (Blocking Lemma)
implies that $u(F) \subset \R \times V_i$.
\end{proof}

With all these preliminary steps in place, the computation of $ECH(int(V), \alpha_V)$ is straightforward.

\begin{prop}\label{prop: computation of ECH of int(V)}
$ECH(int(V), \alpha_V) \simeq \F$ and is generated by $\varnothing$.
\end{prop}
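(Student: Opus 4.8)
The plan is to realize $ECH(int(V),\alpha_V)$ as a direct limit of the groups $ECH(V_i,\alpha_V|_{V_i})$ over the exhausting sequence of concentric solid tori, and then to show that every generator other than $\varnothing$ is killed in the limit because its absolute grading escapes to infinity. First I would invoke Lemma~\ref{lemma: V_i includes into V_i+1}: the inclusions $V_i\subset V$ induce inclusions of chain complexes, and since these are exhausting we get $ECH(int(V),\alpha_V)=\lim_{i\to\infty}ECH(V_i,\alpha_V|_{V_i})$ with respect to the inclusion-induced maps. Because each inclusion $H_1(V_i)\to H_1(V)$ is an isomorphism onto $\Z\langle[S^1]\rangle$ and the chain maps respect homology classes, this direct limit splits as $ECH(int(V),\alpha_V;n[S^1])=\lim_{i\to\infty}ECH(V_i,\alpha_V|_{V_i};n[S^1])$ for each $n\in\Z$.

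Next I would apply Lemma~\ref{lemma: description of ECH of alpha_r} to each pair $(V_i,\alpha_V|_{V_i})$: it satisfies $(\bigstar)$ (the core $c$ of $V$ is elliptic with nondegenerate iterates, by Lemma~\ref{careful perturbation}), and $\partial V_i=T_i$ is foliated by Reeb orbits of irrational slope $r_i$ (Lemma~\ref{careful perturbation}(b)). Hence $ECH(V_i,\alpha_V|_{V_i};n[S^1])=0$ for $n<0$, equals $\F$ in degree $0$ generated by $\varnothing$ for $n=0$, and for $n>0$ equals $\F$ concentrated in degree $\sum_{k=1}^n(2\lfloor kr_i\rfloor+1)$. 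The point requiring care — and the part I expect to be the only real obstacle — is that the absolute grading $I$ of Lemma~\ref{absolute grading exists} must be set up simultaneously for all the $V_i$, using one and the same trivialization $\tau$ of $\xi$ along the common core orbit $c$; with this choice the inclusion maps $ECC(V_i,\alpha_V|_{V_i})\hookrightarrow ECC(V_{i+1},\alpha_V|_{V_{i+1}})$ are grading-preserving, and Lemma~\ref{absolute grading is invariant} certifies that the degrees computed via Lemma~\ref{lemma: description of ECH of alpha_r} agree with this common grading.

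Granting that, the direct limit is immediate. For $n<0$ the limit is $0$. For $n=0$ each inclusion sends $\varnothing\mapsto\varnothing$, so $ECH(int(V),\alpha_V;0)\simeq\F$ generated by $\varnothing$. For $n>0$, the degree $d_i(n):=\sum_{k=1}^n(2\lfloor kr_i\rfloor+1)$ is non-decreasing in $i$ and satisfies $d_i(n)\ge 2\lfloor r_i\rfloor+1\to\infty$, since the boundary slope $r_i\to+\infty$ as $V_i$ exhausts $V$ (Example~\ref{esempio utile}, where $-g'/f'$ increases from $1$ to $+\infty$). Therefore $d_{i+1}(n)>d_i(n)$ for infinitely many $i$, and for those $i$ the degree-preserving inclusion map connects groups supported in distinct degrees, hence vanishes; so $\lim_{i\to\infty}ECH(V_i,\alpha_V|_{V_i};n[S^1])=0$. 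Assembling over all $n$ yields $ECH(int(V),\alpha_V)=\bigoplus_{n\in\Z}ECH(int(V),\alpha_V;n[S^1])\simeq\F$, generated by $\varnothing$.
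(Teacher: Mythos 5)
Your proposal is correct and follows essentially the same route as the paper: identify $ECH(int(V),\alpha_V)$ with $\lim_i ECH(V_i,\alpha_V|_{V_i})$ via Lemma~\ref{lemma: V_i includes into V_i+1}, use Lemma~\ref{lemma: description of ECH of alpha_r} to locate each nonempty generator of $ECH(V_i,\alpha_V|_{V_i};n[S^1])$ in a single grading that tends to infinity with $r_i$, and conclude that every generator other than $\varnothing$ dies in the direct limit because the inclusion maps are degree-preserving. Your explicit remark that the absolute grading of Lemma~\ref{absolute grading exists} must be pinned down by one trivialization along the common core $c$ — so that the chain-level inclusions really do preserve it — is a point the paper takes for granted, and your phrasing $d_i(n)\ge 2\lfloor r_i\rfloor+1$ is actually cleaner than the paper's $I>\lfloor 2r_i\rfloor+1$ (which misstates the $n=1$ case).
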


\begin{proof}
By Lemma~\ref{lemma: V_i includes into V_i+1} we have
\begin{equation}\label{eqn: direct limit for int(V)}
ECH(int(V),\alpha_V)=\lim_{i\to \infty} ECH(V_i,\alpha_V|_{V_i}).
\end{equation}
Moreover, all the generators of $ECH(V_i,\alpha_V|_{V_i})$ in Lemma \ref{lemma: description of ECH of alpha_r} that are different from $\varnothing$ have degree $I>\lfloor 2 r_i \rfloor +1$. Since $r_i\to \infty$ and the inclusions
$$ECH(V_i, \alpha_V|_{V_i}) \to ECH(V_j, \alpha_V|_{V_j})$$
are degree-preserving, every generator different from $\varnothing$ eventually is mapped to zero in the directed system. Hence $ECH(int(V), \alpha_V) \simeq \F$ and is generated by $\varnothing$.
\end{proof}

\subsection{Finite energy foliations}

In this subsection we study finite energy foliations of $\R \times V$ and $\R \times T^2 \times [1,2]$ which have been constructed by Wendl~\cite{We, We2} and Taubes~\cite{T3}. Finite energy foliations were introduced in \cite{HWZ}; here we recall their definition.

\begin{defn}
A {\em finite energy foliation} of a symplectic cobordism $(W, \omega)$ with an adapted almost complex structure $J$ is a codimension two foliation of $W$ such that every leaf is the image of an embedded $J$-holomorphic map with finite energy.
\end{defn}

Here we are using the notion of energy from \cite[Section~6.1]{BEHWZ}.  The ends of a finite energy $J$-holomorphic map in $W$ are asymptotic to cylinders over Reeb orbits.

The purpose of considering finite energy foliations is twofold: they constrain holomorphic curves by the positivity of intersections and contribute to the ECH differential via the
Morse-Bott construction. The foliation on $\R \times V$ will be used in the proof of Theorem~\ref{thm: ECH of  solid torus is Z} (2)--(4) and the foliation on $\R \times T^2 \times [1,2]$ will be used in the proofs of Lemmas~\ref{lemma: more precise version of bdry zero} and \ref{lemma: computation of U_0}.

\subsubsection{Automatic transversality} \label{subsub: automatic transversality}

For certain moduli spaces of $J$-holomorphic maps in dimension four, transversality holds for topological reasons and there is no need to perturb the almost complex structure. In this subsection we describe such {\em automatic transversality} results of Wendl~\cite{We3}. We need to discuss automatic transversality, since the finite energy foliations that we consider are constructed for very symmetric, and therefore nongeneric, almost complex structures.

Let $F = \overline{F} - \mathbf{z}$, where $\overline{F}$ is a closed oriented surface and $\mathbf{z} = \{z_1, \ldots , z_r \}$ is a finite set of punctures.  Following Wendl~\cite{We3}, we fix a partition  $\mathcal{P}= \{\mathbf{z}_C^+,  \mathbf{z}_C^-, \mathbf{z}_U^+,  \mathbf{z}_U^- \}$ of $\mathbf{z}$. We use the superscript $+$ (resp.\ $-$) to indicate the punctures which correspond to the positive (resp.\ negative) ends and define $\mathbf{z}_C = \mathbf{z}_C^+ \cup  \mathbf{z}_C^-$, $\mathbf{z}_U = \mathbf{z}_U^+ \cup  \mathbf{z}_U^-$.

To any puncture $z \in \mathbf{z}_C$ we associate an orbit $\gamma_z$ (which can either be non-degenerate or belong to a Morse-Bott family) and to any
puncture $z \in \mathbf{z}_U$ we associate a Morse-Bott family ${\mathcal N}_z$.
We write
$${\mathcal M}^{\mathcal{P}} = \mathcal{M}(\{ \gamma_z \}_{z \in \mathbf{z}^+_C}, \{\mathcal{N}_z \}_{z \in \mathbf{z}^+_U}, \{ \gamma_z \}_{z \in \mathbf{z}^-_C}, \{\mathcal{N}_z \}_{z \in \mathbf{z}^-_U})$$
for the moduli space of holomorphic maps $u : (F,j) \to (\R \times M,J)$,  which are positively asymptotic to the orbits $\gamma_z$ for  $z \in \mathbf{z}^+_C$ and to the Morse-Bott families $\mathcal{N}_z$ for $z \in \mathbf{z}^+_U$ and are negatively asymptotic to the orbits $\gamma_z$ for  $z \in \mathbf{z}^-_C$ and to the Morse-Bott families $\mathcal{N}_z$ for $z \in \mathbf{z}^-_U$. Here we range over all complex structures $j$ on $F$ and quotient by automorphisms of the domain.

 Ends which correspond to punctures in $\mathbf{z}_C$ are called {\em constrained ends} and ends which correspond to punctures in $\mathbf{z}_U$ are called {\em unconstrained ends}. The definition of ${\mathcal M}^{\mathcal{P}}$ motivates this terminology: constrained ends are asymptotic to a specify orbit, while unconstrained ends are asymptotic to ends which can move in a Morse-Bott family.

The virtual dimension of  ${\mathcal M}^{\mathcal{P}}$ at $u$ will be denoted by $\op{ind} (u, \mathcal{P})$.
Fix $\delta >0$ arbitrarily small. For every puncture $z\in \mathbf{z}$ we define
$$c_z = \left \{ \begin{array}{ll} \delta & \text{if} \ z \in \mathbf{z}_C, \\ - \delta & \text{if} \ z \in \mathbf{z}_U. \end{array} \right.$$
Choose a symplectic trivialization $\tau$ of $\xi|_{\gamma_z}$ which is complex linear with respect to $J$. Let $A_{\gamma_z}$ be the asymptotic operator of $\gamma_z$.  With respect to the trivialization $\tau$, $A_{\gamma_z}$ can be written in the form $-J{d\over dt} +S(t)$, where $J=\begin{pmatrix} 0& -1 \\ 1& 0\end{pmatrix}$, $t$ is the direction of $\gamma_z$, and $S(t)$ is a  loop of symmetric matrices. Also let $A^{\mathcal{P}}_{\gamma_z} = A_{\gamma_z} \pm c_z  \op{Id}$ be the {\em perturbed} asymptotic operator of $\gamma_z$, where we choose the positive (resp.\ negative) sign if $z\in \mathbf{z}^+$ (resp.\ $z\in \mathbf{z}^-$). This is equivalent to turning on negative (resp.\ positive) exponential weights at positive unconstrained (resp.\ constrained) ends and negative constrained (resp.\ unconstrained) ends.

The perturbed asymptotic operator $A^{\mathcal{P}}_{\gamma_z}$ yields a path of symplectic matrices $\Phi^{\mathcal{P}}_z$, and we define $\mu_{\tau}(\gamma_z,\mathcal{P}) = \mu(\Phi^{\mathcal{P}}_z)$. We say that a puncture $z$ is {\em even} if $\mu_{\tau}(\gamma_z,\mathcal{P})$ is even and we denote by $\# \Gamma_0(u, \mathcal{P})$ the number of even punctures of $(u,\mathcal{P})$. By the properties of the Conley-Zehnder index the set of even punctures, and therefore $\# \Gamma_0(u, \mathcal{P})$, does not depend on the trivialization $\tau$.

\begin{thm} [{\cite[Equation (1.1)]{We3} and \cite[Remark 1.2]{We3}}]
\label{automatic transversality}
Let $u : F \to \R \times M$ be a $J$-holomorphic map and $\mathcal{P}$ a partition of the ends of $u$. Then
\begin{equation}
\op{ind}(u, \mathcal{P}) = - \chi(F) + 2c_1(u^*\xi, \tau) + \sum_{z \in \mathbf{z^+}}
\mu_{\tau}(\gamma_z,\mathcal{P}) - \sum_{z \in \mathbf{z^-}} \mu_{\tau}(\gamma_z,\mathcal{P}).
\end{equation}
Moreover, if $u$ is an immersion, then it is a regular point of ${\mathcal M}^{\mathcal{P}}$ if
\begin{equation}
\op{ind}(u, \mathcal{P}) > 2g(F)-2 + \# \Gamma_0(u, \mathcal{P}).
\end{equation}
\end{thm}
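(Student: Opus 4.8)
The statement has two parts: an index formula and an automatic transversality criterion. Since the theorem is quoted directly from Wendl~\cite{We3}, the natural plan is to explain how it is assembled from the two ingredients indicated in the reference, rather than to reprove Wendl's result from scratch. First I would establish the index formula. The perturbed asymptotic operators $A^{\mathcal{P}}_{\gamma_z} = A_{\gamma_z} \pm c_z\op{Id}$ are nondegenerate for all sufficiently small $\delta>0$ because $c_z$ is not a Floer-theoretically relevant eigenvalue of $A_{\gamma_z}$ (the spectrum of the asymptotic operator is discrete). Hence the moduli space $\mathcal{M}^{\mathcal{P}}$ is the zero set of a Fredholm section of a Banach bundle whose fiber incorporates exponential weights at the punctures, and the linearization $D_u^{\mathcal{P}}$ is a Fredholm operator whose index is computed by the Riemann-Roch theorem for punctured surfaces (see \cite{Dr} and the discussion following Definition~\ref{defn: regular}). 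Adding the contributions of the dimension of Teichm\"uller space and the automorphism group, exactly as in the derivation of Equation~\eqref{Fredholm index}, yields
\[
\op{ind}(u,\mathcal{P}) = -\chi(F) + 2c_1(u^*\xi,\tau) + \sum_{z\in\mathbf{z}^+}\mu_\tau(\gamma_z,\mathcal{P}) - \sum_{z\in\mathbf{z}^-}\mu_\tau(\gamma_z,\mathcal{P}),
\]
where the perturbed Conley-Zehnder indices $\mu_\tau(\gamma_z,\mathcal{P})$ absorb the effect of the weights. The only subtlety is bookkeeping the signs of the weights at constrained versus unconstrained ends and at positive versus negative punctures; this is routine but must be done carefully to match Wendl's conventions.

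For the automatic transversality criterion I would appeal to Wendl's argument~\cite[Theorem~1]{We3}. The key point is that for an immersed $J$-holomorphic curve in a $4$-dimensional symplectization, the normal bundle $N_u$ is a line bundle, and the normal part of the linearized operator $D_u^N$ is a Cauchy-Riemann type operator on sections of $N_u$ with exponential weights prescribed by $\mathcal{P}$. One applies the similarity principle (Carleman's theorem): a nonzero element of the kernel of $D_u^N$ has only isolated zeros, each contributing positively, and its asymptotic winding at each puncture is controlled by the extremal winding numbers of the eigenfunctions of the perturbed asymptotic operator. Counting zeros against the relative Euler number of $N_u$ and invoking the relation between the Conley-Zehnder index parity and the asymptotic winding bounds, one shows that if $\op{ind}(u,\mathcal{P}) > 2g(F) - 2 + \#\Gamma_0(u,\mathcal{P})$ then $\ker D_u^N = 0$, hence $D_u^N$ (and therefore $D_u^{\mathcal{P}}$ after accounting for the always-surjective tangential part) is surjective.

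The main obstacle, and the step I expect to require the most care, is the translation between the Morse-Bott asymptotic data used here (unconstrained ends asymptotic to a Morse-Bott family $\mathcal{N}_z$) and the nondegenerate-with-weights framework in which Wendl's automatic transversality theorem is originally stated. Concretely, one must verify that turning on a small negative exponential weight $-\delta$ at an unconstrained positive end correctly models allowing that end to vary within $\mathcal{N}_z$, so that the perturbed Conley-Zehnder index $\mu_\tau(\gamma_z,\mathcal{P})$ and the even-puncture count $\#\Gamma_0(u,\mathcal{P})$ are the right quantities. This is exactly the content of \cite[Remark~1.2]{We3}, and I would cite it, but in the writeup I would spell out why the spectral gap near $0$ for the Morse-Bott asymptotic operator (computed in Lemma~\ref{lemma: spectral} for our specific tori) guarantees that the $\pm\delta$ perturbation lands in the correct spectral interval and picks out the eigenfunction of winding number zero, so that the count of even punctures is unchanged by the perturbation and the inequality has the stated form.
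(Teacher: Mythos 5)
The paper states this as a quoted result from Wendl, with no proof of its own, so your strategy of citing Wendl and explaining the translation into the Morse-Bott framework matches the paper (that translation is carried out in Lemma~\ref{index demistified}, and your reading of Lemma~\ref{lemma: spectral} and the spectral gap as justifying the $\pm\delta$ weight perturbation is accurate). However, your sketch of Wendl's transversality argument has a logical slip: you claim the index inequality forces $\ker D_u^N = 0$ and then deduce surjectivity. This cannot be what happens when $\op{ind}(u,\mathcal{P}) > 0$: for an immersed curve the tangential part contributes zero to the Fredholm index, so $\op{ind} D_u^N = \op{ind}(u,\mathcal{P})$, and $\ker D_u^N = 0$ would force $\op{ind}(u,\mathcal{P}) \le 0$. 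Wendl's argument instead establishes $\op{coker}\, D_u^N = 0$ directly, by applying the similarity principle and the extremal winding bounds to a hypothetical nonzero section in the kernel of the formal adjoint $(D_u^N)^*$ (a Cauchy-Riemann type operator with the opposite exponential weights) and comparing its zero count against the relative Euler number of $N_u$, deriving a contradiction with the index inequality. This is a minor slip in a sketch of a cited result, but the dualization is precisely where the hypothesis $\op{ind}(u,\mathcal{P}) > 2g(F)-2+\#\Gamma_0(u,\mathcal{P})$ actually enters, so it is worth stating correctly.
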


The following lemma computes $\mu_{\tau}(\gamma_z,\mathcal{P})$ in terms of the Conley-Zehnder index of a nondegenerate perturbation of the Reeb orbit.

\begin{lemma} \label{index demistified}
Suppose $\delta>0$ is sufficiently small.
\begin{enumerate}
\item If $\gamma_z$ is a nondegenerate orbit, then $\mu_{\tau}(\gamma_z,\mathcal{P}) = \mu_{\tau}(\gamma_z)$.
\item If $\gamma_z$ belongs to a Morse-Bott family ${\mathcal N}$ and $\gamma_{min}$ and $\gamma_{max}$ are the nondegenerate Reeb orbits corresponding to a minimum and a maximum of a Morse function on ${\mathcal N}$, then:
\begin{itemize}
\item $\mu_{\tau}(\gamma_z,\mathcal{P}) =\mu_{\tau}(\gamma_{min})$ if $z\in \mathbf{z}_C^+\cup \mathbf{z}_U^-$; and
\item $\mu_{\tau}(\gamma_z,\mathcal{P}) =\mu_{\tau}(\gamma_{max})$ if $z\in \mathbf{z}_U^+\cup \mathbf{z}_C^-$.
\end{itemize}
\item $\# \Gamma_0(u, \mathcal{P})$ is the total number of:
\begin{itemize}
\item ends at even nondegenerate orbits;
\item constrained positive ends and unconstrained negative ends at positive Morse-Bott tori; and
\item unconstrained positive ends and constrained negative ends at negative Morse-Bott tori.
\end{itemize}
\end{enumerate}
\end{lemma}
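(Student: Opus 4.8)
The plan is to read off the perturbed Conley--Zehnder index $\mu_\tau(\gamma_z,\mathcal P)=\mu(\Phi^{\mathcal P}_z)$ from two standard properties of the Conley--Zehnder index of an asymptotic operator: it is locally constant on the set of \emph{nondegenerate} operators (those $A$ with $0\notin\mathrm{spec}(A)$), and when an eigenvalue crosses $0$ it jumps by an amount governed by the winding number of the corresponding eigenfunction (Hofer--Wysocki--Zehnder, as encoded for the operators we need in Lemma~\ref{lemma: spectral}). The three items correspond to $\gamma_z$ nondegenerate, $\gamma_z$ in a Morse--Bott family, and a parity count assembled from the first two.

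For item (1): if $\gamma_z$ is nondegenerate then $0\notin\mathrm{spec}(A_{\gamma_z})$, so once $\delta>0$ is smaller than the least modulus of an eigenvalue of the (finitely many) asymptotic operators attached to the ends of $u$, the linear path $A_{\gamma_z}+s(\pm c_z)\op{Id}$, $s\in[0,1]$, stays in the nondegenerate locus; local constancy gives $\mu_\tau(\gamma_z,\mathcal P)=\mu_\tau(\gamma_z)$.

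For item (2), where $\gamma_z$ lies in a Morse--Bott family: I would first unwind the definition of $c_z$ to see that $A^{\mathcal P}_{\gamma_z}=A_{\gamma_z}+\delta\op{Id}$ for $z\in\mathbf z_C^+\cup\mathbf z_U^-$ and $A^{\mathcal P}_{\gamma_z}=A_{\gamma_z}-\delta\op{Id}$ for $z\in\mathbf z_C^-\cup\mathbf z_U^+$, noting that $0$ is a simple eigenvalue of $A_{\gamma_z}$ with winding-$0$ eigenfunction $f_0$ (Lemma~\ref{lemma: spectral}). Then I would compare with the asymptotic operators $A_{\gamma_{\min}},A_{\gamma_{\max}}$ of the two nondegenerate orbits produced by perturbing the family with the Morse function of Conditions~(P1)--(P3) in Section~\ref{subsection: Morse-Bott contact forms} (cf.\ \cite{Bo2}): for $\epsilon$ small these are $C^0$-small perturbations of $A_{\gamma_z}$ that move the zero eigenvalue to one side, and from the explicit spectrum of Lemma~\ref{lemma: spectral} together with the form of $f_\epsilon$ one checks that $A_{\gamma_{\min}}$ lies in the same component of the nondegenerate locus as $A_{\gamma_z}+\delta\op{Id}$, while $A_{\gamma_{\max}}$ lies in the same component as $A_{\gamma_z}-\delta\op{Id}$. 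Local constancy then gives $\mu_\tau(\gamma_z,\mathcal P)=\mu_\tau(\gamma_{\min})$, resp.\ $\mu_\tau(\gamma_{\max})$. I expect this sign-chasing to be the main obstacle: one has to reconcile the four puncture types $\mathbf z_C^\pm,\mathbf z_U^\pm$, the two signs in $A_{\gamma_z}\pm\delta\op{Id}$, the direction in which $\gamma_{\min}$ versus $\gamma_{\max}$ displaces the zero eigenvalue, and the positive/negative dichotomy of Definition~\ref{defn: positive negative torus}. It is worth noting that the correspondence ``$\gamma_{\min}\leftrightarrow A_{\gamma_z}+\delta\op{Id}$'' is the same for both signs of the Morse--Bott torus, but the parity of $\mu_\tau(\gamma_{\min})$ is not: by Proposition~\ref{prop: perturbation of alpha}(3) it is the orbit of index $0$ for a positive torus and the orbit of index $-1$ for a negative torus, and this is precisely the asymmetry that then surfaces in item (3).

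For item (3): $\#\Gamma_0(u,\mathcal P)$ counts the punctures with $\mu_\tau(\gamma_z,\mathcal P)$ even. By item (1) the nondegenerate ones that count are exactly the ends at even orbits. By item (2) and Proposition~\ref{prop: perturbation of alpha}(3) --- which gives the index pair $(1,0)$ for $(e,h)$ at a positive family and $(-1,0)$ at a negative family --- exactly one of $\mu_\tau(\gamma_{\min}),\mu_\tau(\gamma_{\max})$ is even at each Morse--Bott torus, and tracing the cases shows that for a positive Morse--Bott torus the even punctures are those in $\mathbf z_C^+\cup\mathbf z_U^-$ (constrained positive and unconstrained negative) while for a negative Morse--Bott torus they are those in $\mathbf z_C^-\cup\mathbf z_U^+$ (constrained negative and unconstrained positive). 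This reproduces the stated list.
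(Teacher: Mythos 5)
Your outline matches the structure of the paper's proof: item~(1) is immediate, item~(2) is the heart of the matter and amounts to identifying which side of the origin the zero eigenvalue of $A_{\gamma_z}$ moves to under the Morse--Bott perturbation, and item~(3) is parity bookkeeping via Proposition~\ref{prop: perturbation of alpha}(3). Your reading of $c_z$ is correct (so $A^{\mathcal P}_{\gamma_z}=A_{\gamma_z}+\delta\op{Id}$ on $\mathbf z_C^+\cup\mathbf z_U^-$ and $A_{\gamma_z}-\delta\op{Id}$ on $\mathbf z_U^+\cup\mathbf z_C^-$), your observation that the correspondence $\gamma_{\min}\leftrightarrow +\delta$ is independent of the sign of the torus is exactly right, and your count in item~(3) lands on the same list as the lemma.

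The gap is precisely where you flag it. For item~(2) you write that ``one checks'' that $A_{\gamma_{\min}}$ and $A_{\gamma_z}+\delta\op{Id}$ lie in the same component of the nondegenerate locus, and that $A_{\gamma_{\max}}$ pairs with $A_{\gamma_z}-\delta\op{Id}$ --- but this is the whole content of the lemma, and it is not something one can read off from Lemma~\ref{lemma: spectral} alone, which only gives the spectrum of the unperturbed Morse--Bott operator. What one actually needs is the \emph{direction} in which the Morse--Bott perturbation shifts the kernel eigenvalue, and this requires computing the new asymptotic operator. The paper does this concretely: writing the perturbed contact form as $(1+g)\alpha_0$, one computes the new Reeb vector field $R=(1+g)^{-1}R_0+X$ with $X=-(1+g)^{-2}J(\nabla g-h(\nabla g,R_0)R_0)$, and then differentiates to find
\[
A'_\gamma = A_\gamma + (1+g)^{-1} Hg,
\]
where $Hg$ is the Hessian of $g$ restricted to $\xi$. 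The sign of $Hg$ along $\gamma$ (nonnegative at a minimum, nonpositive at a maximum) is what puts $A'_\gamma$ on the correct side of $A_\gamma$, and hence in the same component as $A_\gamma\pm\delta\op{Id}$. Without this computation, the ``$C^0$-small perturbation moves the zero eigenvalue to one side'' step is not an argument: a priori both $A_{\gamma_{\min}}$ and $A_{\gamma_{\max}}$ are $C^0$-small perturbations, and nothing in your write-up pins down which of them corresponds to $+\delta$ and which to $-\delta$. So the outline is correct and your identification of the obstacle is accurate, but the obstacle needs to be overcome rather than deferred to ``one checks''.
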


\begin{proof}
(1) is immediate.

(2) Let $T=T_{\mathcal{N}}\subset M$ be the torus corresponding to ${\mathcal N}$ and let $g : M \to \R$ and $\overline{g}_{\mathcal{N}}:\mathcal{N}\to \R$ be $C^{\infty}$-small functions satisfying (P1)--(P4) from Section~\ref{subsection: Morse-Bott contact forms}. We denote the Morse-Bott form $\alpha_0$ and its Reeb vector field by $R_0$. Then the Reeb vector field of the perturbed contact form $(1+g) \alpha_0$ is $R= (1+g)^{-1} R_0 + X$, where $X\in \xi=\ker\alpha_0$ is a solution of
$$ i_X d \alpha_0 = (1+g)^{-2} (dg-dg(R_0)\alpha_0).$$
If we choose an almost complex structure $J$ on $\xi$ and a metric $h$ on $M$ which is
compatible with $J$ and $\alpha_0$ in the sense that $R_0$ is a unit vector field which
is orthogonal to $\xi$ and $h|_{\xi \otimes \xi} = d \alpha_0(\cdot, J \cdot)$, then
$$X= - (1+g)^{-2}J (\nabla g- h(\nabla g,R_0) R_0).$$

Let $\gamma$ be an orbit in ${\mathcal N}$ which corresponds to a critical point of $\overline{g}$ so that $\gamma$ is also a Reeb orbit for $R$. We can associate two asymptotic operators to $\gamma$: the operator $A_{\gamma}$, when we regard $\gamma$ as a Reeb orbit of $R_0$, and the operator $A'_{\gamma}$ when we regard $\gamma$ as a Reeb orbit of $R$.

Let $\tau$ be the period of $\gamma$ as an orbit of $R$ and assume for simplicity that the period of $\gamma$ as an orbit of $R_0$ is $1$. Then $\tau$ is equal to the value of $(1+g)$ at any point of $\gamma$. If $\nabla$ is a symmetric connection, the asymptotic operators can be written as
$$A_{\gamma} = -J(\nabla_t - \nabla R_0), \quad A'_{\gamma} = -J(\nabla_t - \tau \nabla R);$$
see \cite[Page 370]{We3}.
Since $dg =0$ and $\nabla g=0$ along $\gamma$, we have
\begin{align*}
\nabla R = (1+g)^{-1} \nabla R_0 & - (1+g)^{-2} \nabla (J (\nabla g-h(\nabla g,R_0) R_0)),\\
\nabla (J(\nabla g-h(\nabla g,R_0) R_0)) & =(\nabla J)(\nabla g-h(\nabla g,R_0) R_0) \\
& \quad +J\nabla(\nabla g-h(\nabla g,R_0) R_0))\\
& =  J\nabla(\nabla g-h(\nabla g,R_0) R_0))=J Hg,
\end{align*}
along $\gamma$, where $Hg$ is the Hessian of $g$ restricted to the $\xi$-directions.  Hence
$$A'_\gamma = -J(\nabla_t - \nabla R_0 + (1+g)^{-1} JHg) = A_\gamma + (1+g)^{-1} Hg.$$
If $\overline{g}$ has a minimum at $\gamma$, then $Hg \ge 0$ along $\gamma$ and $A'_{\gamma}$ has the same Conley-Zehnder index as $A_{\gamma}+ \delta$. On the other hand, if $\overline{g}$ has a maximum at $\gamma$, then $A'_{\gamma}$ has the same Conley-Zehnder index as $A_{\gamma} - \delta$.

(3) is immediate from (2).
\end{proof}

\subsubsection{Foliations on $\R \times V$ and $\R \times T^2 \times [1,2]$} \label{subsection: foliations}

We first describe the finite energy foliation on $\R \times V$.  The following is proven in Wendl~\cite{We} (see pp.~594--600, especially the removal of singularities argument on p.~599; the gist of the proof is to reduce the $J$-holomorphic curve equation to an ODE \cite[Equations~(37a) and (37b)]{We}).

\begin{prop}\label{prop: Wendl V}
Let $\alpha$ be a contact form on $V$ as in Example~\ref{esempio utile} and $J_0$ a ``cylindrically symmetric'' almost complex structure on $\R \times V$ (i.e., $J_0$ depends only on the radial coordinate $\rho$ of $V$) which is adapted to $\alpha$. Then there is a finite energy foliation ${\mathcal Z}_0$ of $\R \times V$ such that:
\begin{enumerate}
\item $\R\times int(V)$ is foliated by $J_0$-holomorphic planes which are positively asymptotic to the Reeb orbits on $\partial V$; and
\item $\R\times \bdry V$ is foliated by trivial cylinders over Reeb orbits of $\bdry V$.
\end{enumerate}
Any orbit of $\bdry V$ is the limit of a unique $1$-dimensional $\R$-invariant family of non-cylindrical leaves and the projections of the leaves to $int(V)$ foliate $int(V)$ by meridian disks.
\end{prop}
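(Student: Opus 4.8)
The plan is to follow Wendl~\cite{We} and produce the leaves explicitly, exploiting the $T^2$-symmetry of $(V,\alpha,J_0)$. First I would observe that, by Lemma~\ref{lemma: contact forms on V} together with the profile of $(f,g)$ in Example~\ref{esempio utile}, the Reeb vector field of $\alpha$ is a positive multiple of $\bdry_\phi$ along $\bdry V=\{\rho=1\}$ (where $f'=0$), so the simple closed Reeb orbits on $\bdry V$ are precisely the meridians $\gamma_{\theta_0}=\{\rho=1,\ \theta=\theta_0\}$ with $\theta_0\in\R/2\pi\Z$, forming a Morse-Bott family. The trivial cylinders $\R\times\gamma_{\theta_0}$ are automatically $J_0$-holomorphic and foliate $\R\times\bdry V$; this gives item (2) and supplies the cylindrical leaves of $\mathcal{Z}_0$.

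For item (1) I would look for $J_0$-holomorphic planes of the form $u(s,\tau)=(a(s),\rho(s),\tau+\psi(s),\theta(s))$, where $z=e^{s+i\tau}$ are polar coordinates on $\C\setminus\{0\}$; this is the general ansatz compatible with the $S^1$-action $\tau\mapsto\tau+\tau_0$ covering the $\bdry_\phi$-rotation on $V$. Since $J_0$ depends only on $\rho$, substituting this ansatz into $\overline{\bdry}_{J_0}u=0$ collapses the Cauchy--Riemann equation to an autonomous first-order ODE system in $s$ for $(a,\rho,\psi,\theta)$ --- the analogue of \cite[Equations~(37a) and (37b)]{We} --- in which $a$, $\psi$ and $\theta$ are recovered from $\rho$ by quadrature after inverting the monotone function $-g'/f'$ of Example~\ref{esempio utile}. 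I would then solve this ODE and study the asymptotics of its solutions: as $s\to+\infty$ the solution converges exponentially to $(\rho,\theta)=(1,\theta_0)$ for some $\theta_0$, so $u$ has a single positive puncture asymptotic to $\gamma_{\theta_0}$ (using the spectral picture of the asymptotic operator at a Morse-Bott orbit, cf.\ Lemma~\ref{lemma: spectral}, to pin down the limiting orbit $\gamma_{\theta_0}$ within the family); as $s\to-\infty$ one has $\rho(s)\to0$ with $a(s)$ and $\theta(s)$ converging, and the removal-of-singularities argument of \cite[p.~599]{We} shows that $u$ extends smoothly across $z=0$ to an embedded $J_0$-holomorphic plane which meets $\R\times c$ exactly once, transversally, and whose projection to $V$ is an embedded meridian disk.

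Finally I would assemble these planes, together with the cylinders from item (2), into the foliation $\mathcal{Z}_0$. Modulo reparametrization the solutions are parametrized by the asymptotic orbit $\theta_0\in\R/2\pi\Z$ and by the $\R$-translation in the symplectization, so that for each fixed $\theta_0$ one obtains a single $\R$-invariant $1$-parameter family limiting onto $\gamma_{\theta_0}$ and, in total, a $2$-parameter family, matching $\dim(\R\times int(V))-2$. Distinct planes are disjoint by positivity of intersections in dimension four applied to the compactified curves: two distinct planes asymptotic to distinct orbits $\gamma_{\theta_0},\gamma_{\theta_0'}$, or to the same orbit but differing by a nonzero $\R$-shift, have intersection number zero (their images project to disjoint meridian disks, and the asymptotic contribution at a shared orbit vanishes for a nonzero $\R$-shift), hence are disjoint; a shooting argument for the ODE shows that every point of $\R\times int(V)$ lies on exactly one such plane. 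Automatic transversality (Theorem~\ref{automatic transversality}) applies since each plane is an immersed genus-zero curve with one puncture, so $\op{ind}(u,\mathcal{P})=2>2g(F)-2+\#\Gamma_0(u,\mathcal{P})$, whence each leaf is a regular point of its moduli space. The step I expect to be the main obstacle is the ODE and asymptotic analysis at the two ends: establishing the exponential convergence at the Morse-Bott orbit and, above all, the removal of singularities at $z=0$ producing a genuinely smooth (not merely continuous) plane --- this is precisely the technical heart of \cite[pp.~594--600]{We}.
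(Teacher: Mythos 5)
Your proposal reconstructs the argument of Wendl~\cite{We} (the ODE reduction for a cylindrically symmetric $J$, asymptotic analysis at the Morse--Bott end, removal of singularities at $z=0$, and assembly into a finite energy foliation), which is exactly the source the paper cites for this proposition without reproducing the proof; so the approach is the same as the one the paper relies on, and the hard points you flag (exponential convergence at the Morse--Bott orbit, the removal of singularities) are indeed the technical heart of Wendl's pp.\ 594--600. One small caveat: the parenthetical justification of disjointness via positivity of intersections (``their images project to disjoint meridian disks'') presupposes the foliation property you are in the middle of establishing, so that remark is circular as stated --- but your shooting/ODE uniqueness argument is the one that actually does the work, and it suffices.
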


We will use a finite energy foliation of $\R \times V$ in the proof of Theorem~\ref{thm: ECH of  solid torus is Z} (2)--(4). However, the contact form used there is a small perturbation $\alpha_V$ of $\alpha$, and for this reason we need to show that ${\mathcal Z}_0$ persists if $\alpha$ and $J_0$ are deformed.

\begin{prop}\label{foliation on V}
 If $\alpha_V$ is the $C^\infty$-small perturbation of $\alpha$ from Lemma~\ref{careful perturbation}, then there is a finite energy foliation ${\mathcal Z}_1$ of $(\R \times V, d(e^s \alpha_V))$
which is isotopic to ${\mathcal Z}_0$ by the lift to $\R \times V$ of an isotopy of $V$
relative to the boundary.
\end{prop}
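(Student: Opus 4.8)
The plan is to use the automatic transversality criterion of Wendl (Theorem~\ref{automatic transversality}) together with Lemmas~\ref{index demistified} and \ref{lemma: spectral} to show that the leaves of $\mathcal{Z}_0$ are \emph{unobstructed}, and then to propagate the foliation through a $1$-parameter family of contact forms $\alpha_t$ interpolating between $\alpha$ and $\alpha_V$. First I would set up the family: since $\alpha_V = e^f\alpha$ with $f$ arbitrarily $C^\infty$-small and supported in $int(V)$ away from $\mathcal{T}$ and from the short Reeb orbits, put $\alpha_t = e^{tf}\alpha$ for $t\in[0,1]$ and choose a corresponding path of adapted almost complex structures $J_t$ with $J_0$ the cylindrically symmetric one of Proposition~\ref{prop: Wendl V}; by shrinking the support of $f$ we may take $J_t \equiv J_0$ near $\partial V$ and near $\mathcal{T}$, so that the trivial cylinders over the boundary Reeb orbits persist unchanged for all $t$.

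Next I would establish automatic transversality for the non-cylindrical leaves. A leaf $u$ is an embedded $J_0$-holomorphic plane (so $F = \C$, $g(F)=0$, $\chi(F)=1$) with a single positive end asymptotic to a Reeb orbit $\gamma$ on the positive Morse-Bott torus $\partial V$; taking the partition in which this end is unconstrained, Lemma~\ref{index demistified}(2) identifies $\mu_\tau(\gamma,\mathcal{P})$ with the Conley-Zehnder index of the maximum orbit, which by Proposition~\ref{prop: perturbation of alpha}(3) and Lemma~\ref{lemma: spectral} equals $1$ for the trivialization $\tau$ coming from the Morse-Bott family. Then $\op{ind}(u,\mathcal{P}) = -\chi(F) + 2c_1(u^*\xi,\tau) + \mu_\tau(\gamma,\mathcal{P})$; since the leaves foliate $\R\times int(V)$ by planes, $c_1(u^*\xi,\tau)=0$, so $\op{ind}(u,\mathcal{P}) = -1 + 0 + 1 = 0$ — but we actually want the $\R$-translation family, so the relevant moduli space (before quotienting by $\R$) has dimension $1$, and the inequality to check is $\op{ind}(u,\mathcal{P}) = 1 > 2g(F) - 2 + \#\Gamma_0(u,\mathcal{P}) = -2 + 0 = -2$ (the unique end at a positive Morse-Bott torus, being unconstrained positive, contributes to $\#\Gamma_0$ by Lemma~\ref{index demistified}(3), so more carefully $\#\Gamma_0 = 1$ and $-2 + 1 = -1 < 1$), which holds. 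Hence each leaf is a regular point of the appropriate moduli space, and the foliation property is stable.

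Then I would run a continuity/openness-closedness argument on $[0,1]$. Let $S\subset[0,1]$ be the set of $t$ for which $(\R\times V, d(e^s\alpha_t), J_t)$ admits a finite energy foliation $\mathcal{Z}_t$ obtained from $\mathcal{Z}_0$ by an isotopy of $V$ rel $\partial V$; $0\in S$. Openness follows from automatic transversality: near a given $t_0\in S$ the moduli space of leaves persists as a smooth family (dimension $1$ mod $\R$, so a $2$-dimensional family of disks sweeping out $int(V)$), the leaves remain embedded and pairwise disjoint by positivity of intersections in dimension four, and the Blocking Lemma (Lemma~\ref{lemma: blocking lemma}) confines them to $\R\times V$ since the boundary orbits still block — note that $\partial V$ is positive Morse-Bott so by the Trapping Lemma the only ends at $\partial V$ are positive, compatible with the planes having a positive end there. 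Closedness follows from SFT compactness together with the index count: a sequence of leaves cannot break, because any nontrivial broken configuration would, by Theorem~\ref{thm: index inequality} and the additivity of the Fredholm index, force a negative-index somewhere-injective piece, which is excluded by regularity; and the limit is again an embedded plane asymptotic to a boundary orbit. Since the leaves foliate $int(V)$ at every stage and the asymptotic orbit varies continuously over $\partial V$, the union over $t$ gives the required isotopy, so $S = [0,1]$ and $\mathcal{Z}_1 := \mathcal{Z}_t|_{t=1}$ is the desired foliation. I expect the main obstacle to be the closedness step — specifically, ruling out the degeneration of a leaf into a multi-level building (e.g.\ a plane plus a connector, or a plane asymptotic to a \emph{multiply}-covered boundary orbit) while only the $C^0$-smallness of $f$ is available; this requires carefully combining the ECH/Fredholm index bookkeeping with positivity of intersections against the persisting trivial cylinders over $\partial V$ and the intermediate tori $T_i$, very much in the spirit of the proof sketch of Claim~\ref{claim: C zero SFT compactness}.
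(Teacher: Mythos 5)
Your overall strategy mirrors the paper's: establish automatic transversality for the leaves of $\mathcal{Z}_0$ with both constrained and unconstrained ends, then propagate the foliation through a small perturbation; your explicit openness/closedness argument over $[0,1]$ is a more elaborate packaging of the same one-shot perturbation the paper uses. However, there is a genuine error in the index computation. You claim $c_1(u^*\xi,\tau)=0$ ``since the leaves foliate $\R\times int(V)$ by planes,'' but this does not follow --- the existence of a foliation says nothing about the relative first Chern class, which is entirely a function of the chosen boundary trivialization $\tau$. For $\tau$ the trivialization along $\partial V$ given by $\xi\cap T\partial V$ (the one induced by the Morse-Bott family and implicit in Proposition~\ref{prop: perturbation of alpha} and Lemma~\ref{index demistified}), one computes $c_1(u^*\xi,\tau)=1$: the framing of $\xi$ pulled back from a meridian disk rotates once relative to the framing from the torus. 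With the correct value, the unconstrained Fredholm index is $\op{ind}=-\chi(F)+2c_1+\mu_\tau(e)=-1+2+1=2$, consistent with $\mathcal{M}_0$ being a $2$-dimensional family (one direction for $\R$-translation, one for moving the asymptotic orbit in $\mathcal{N}$), while the constrained index is $-1+2+\mu_\tau(h)=1$. Your ad hoc correction from $0$ to $1$ patches the contradiction without fixing its source and lands you in the wrong constrained/unconstrained case; indeed, with your $c_1=0$ the unconstrained index would be $0$, which already contradicts the existence of the $\R$-translation direction inside $\mathcal{M}_0$.

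You also misread Lemma~\ref{index demistified}(3): an \emph{un}constrained positive end at a \emph{positive} Morse-Bott torus has $\mu_\tau(\gamma,\mathcal{P})=\mu_\tau(e)=1$, which is odd, so it does not contribute to $\#\Gamma_0$; thus $\#\Gamma_0=0$ in the unconstrained case and $\#\Gamma_0=1$ in the constrained case (where $\mu_\tau(h)=0$ is even). The automatic transversality inequality holds in either case ($2>-2$, $1>-1$), so your conclusion is unaffected, but the bookkeeping should be repaired. The structure you should be aiming for is precisely what the paper uses: unconstrained automatic transversality (index $2$) guarantees that the whole $2$-parameter family $\mathcal{M}_0$ persists as a diffeomorphic family $\mathcal{M}_1$ under a small perturbation, while constrained automatic transversality (index $1$) guarantees that over each Reeb orbit in $\partial V$ there is still exactly one $\R$-invariant family of planes. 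That the perturbed leaves stay embedded follows from openness of embeddedness together with asymptotic decay estimates ruling out self-intersections near infinity, and pairwise disjointness follows from positivity of intersections plus vanishing of the relative intersection number.
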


\begin{proof}
A leaf $u$ of ${\mathcal Z}_0$, considered as a $J_0$-holomorphic map with a {\em constrained} end, has  Fredholm index one and is automatically transverse by Theorem \ref{automatic transversality}.  Indeed, by Lemma \ref{index demistified}, the index of $u$, as a $J_0$-holomorphic map with constrained end, is equal to the index of a $J_\varepsilon$-holomorphic plane $u_\varepsilon$ which limits to a hyperbolic orbit $h$ (i.e., the minimum of the Morse-Bott family) on the boundary for a perturbed contact form. If $\tau$ is the trivialization of $\xi$ along $h$ given by $\xi\cap T\bdry V$, then $\chi(D^2)=1$, $c_1(u_\varepsilon^*\xi,\tau)=1$, $\mu_\tau(h)=0$, and therefore $\op{ind}(u)= \op{ind}(u_\varepsilon)=1$. The same leaf $u$, considered as a $J_0$-holomorphic map with an {\em unconstrained} end, has  Fredholm index two and is also automatically transverse.

Let ${\mathcal M}_0$ be the $2$-dimensional moduli space of $J_0$-holomorphic planes which are leaves of ${\mathcal Z}_0$. By the unconstrained automatic transversality, if we perturb $\alpha$ and the almost complex structure $J_0$ slightly, then each leaf of ${\mathcal Z}_0$ is deformed to a $J$-holomorphic curve for the new almost complex structure $J$ and the space $\mathcal{M}_1$ of deformed $J$-holomorphic curves is diffeomorphic to ${\mathcal M}_0$. On the other hand, the constrained automatically transversality implies that for each Reeb orbit in $\partial V$ there is exactly one $\R$-invariant family of $J$-holomorphic maps in ${\mathcal M}_1$ positively asymptotic to that orbit.

The maps in ${\mathcal M}_1$ are embeddings because embeddedness is an open condition and the exponential decay estimates imply that no self-intersection can be created near infinity.  Moreover,  the relative intersection number of their images is zero and by the positivity of intersections, their images are pairwise disjoint, so they define a finite energy foliation ${\mathcal Z}_1$ of $\R \times V$.
\end{proof}

Now we discuss a finite energy foliation ${\mathcal Z}_2$ on a  completed interpolating cobordism $(\R\times T^2 \times [1,2], \lambda)$ between two contact forms satisfying Equation~\eqref{eqn: contact form on T2 times 1,2}.
In the case of a symplectization this foliation was constructed by Wendl~\cite{We}.

We assume that every slice $\{ s \} \times T^2 \times [1,2]$ is a contact type hypersurface;  Then we can write $\lambda = e^s \alpha_s$, where $\alpha_s$ is a contact form  on $\{ s \} \times T^2 \times [1,2]$ given by Equation~\eqref{eqn: contact form on T2 times 1,2} for pairs of functions $(f_s,g_s)$ which depend on $s$ and $y$.  The forms $\alpha_s$ will define a $2$-plane field $\xi$ and a vector field $R$ on $\R\times T^2 \times [1,2]$ which restrict to the contact structure and the Reeb vector field on each slice $\{ s \} \times T^2 \times [1,2]$.  In particular, $R$ is tangent to the tori $\{ s \} \times T^2 \times \{ y \}$. Moreover we assume that  $\alpha_s$ is constant in $s$ near $\R \times T^2 \times \{1,2\}$ and that $R$ is parallel to $\partial_t$ when $y =1,2$ and not parallel to it otherwise. Finally, we assume that the tori $ \{s\} \times T^2 \times \{ 1 \}$ and $ \{s\} \times T^2 \times \{ 2 \}$ are foliated by Morse-Bott families ${\mathcal N}_1$ and ${\mathcal N}_2$ respectively  for each $s$, where ${\mathcal N}_1$ is negative and ${\mathcal N}_2$ is positive.

We take an almost complex structure $J$ on $\R\times T^2\times[1,2]$ with coordinates $(s,\vartheta,t,y)$ so that the following hold:
\begin{itemize}
\item $J$ is adapted to $\lambda$;
\item $J$ is invariant in the $s$-direction on the cylindrical ends of the cobordism;
\item $J$ is invariant in the $\vartheta,t$-directions;
\item $J(\partial_s) =R$; and
\item $J$ sends $\bdry_y\in \xi$ to the tangent space to $\{ s \} \times T^2\times\{y\}$.
\end{itemize}
 For the existence of such an almost complex structure we need to verify
that the plane distribution generated by $\partial_s$ and $R$ is $d \lambda$-symplectic, and that $\partial_y$ belongs to its $d \lambda$-orthogonal. The first property is guaranteed if $\alpha_s$ varies sufficiently slowly is $s$, while the second property follows from the fact that $\alpha_s(\partial_y)=0$ everywhere. Finally, the symmetries of $J$ reflect the symmetries of the forms $\alpha_s$.
\begin{lemma} \label{lemma: foliation in T^2 x [1,2]}
Let $(\R\times T^2 \times [1,2], \lambda)$ be an exact symplectic cobordism with an
adapted almost complex structure $J$ as above. Then there is a $2$-dimensional
family ${\mathcal Z}_2$ of holomorphic cylinders $Z_{s,\vartheta}$ on
$\R\times T^2\times[1,2]$,  for $(s,\vartheta)\in \R\times
\R/\Z$,  which foliate $\R\times int(T^2\times[1,2])$ and
project to cylinders $\vartheta=const$ in $int(T^2\times[1,2])$. Each cylinder $Z_{s,\vartheta}$ is positively asymptotic to a Reeb orbit in $\mathcal{N}_2$ and negatively asymptotic to a Reeb orbit in $\mathcal{N}_1$.
\end{lemma}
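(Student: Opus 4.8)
The plan is to exploit the explicit product-like structure of $J$ together with Wendl's automatic transversality machinery (Theorem~\ref{automatic transversality}). First I would write down the $J$-holomorphic cylinders explicitly: using the invariance of $J$ in the $\vartheta$- and $t$-directions and the fact that $J$ sends $\partial_s\mapsto R$ and $\partial_y\in\xi$ to the tangent space to the tori $\{s\}\times T^2\times\{y\}$, I expect that the equations for a map $u(\sigma,y)$ into $\R\times T^2\times[1,2]$ with $\vartheta=\mathrm{const}$ reduce to a first-order ODE system in the two remaining functions, exactly as in Wendl~\cite{We} (the ``reduce to an ODE'' reduction already cited for Proposition~\ref{prop: Wendl V}). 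One then checks that solutions to this ODE exist connecting a Reeb orbit in the negative Morse-Bott torus $\mathcal{N}_1$ at $y=1$ to one in the positive Morse-Bott torus $\mathcal{N}_2$ at $y=2$; the asymptotics are forced by the requirement that $R$ is parallel to $\partial_t$ precisely at $y=1,2$, so the map limits onto orbits in $T^2\times\{1\}$ and $T^2\times\{2\}$. The $\R$-translation symmetry and the $\vartheta$-translation symmetry give the two-parameter family $Z_{s,\vartheta}$, $(s,\vartheta)\in\R\times\R/\Z$.

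Second I would verify regularity via automatic transversality, so that the construction survives and the family genuinely foliates. Each $Z_{s,\vartheta}$ has domain a cylinder, so $\chi(F)=0$, $g(F)=0$. I would compute $c_1(u^*\xi,\tau)$ with respect to the trivialization $\tau$ coming from projecting $(\partial_y,\partial_\theta)$ (or rather $(\partial_y,\partial_\vartheta)$) along the Reeb direction, as in Section~\ref{subsection: variants of ECH of an open book decomposition} and the Trapping Lemma discussion; because the cylinder sweeps out a region transverse to the tori, this relative Chern number should be $0$. Using Lemma~\ref{index demistified}, the contributions $\mu_\tau(\gamma_z,\mathcal{P})$ of the two ends at the Morse-Bott families depend on whether the ends are constrained or unconstrained and whether the torus is positive or negative. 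Treating the ends as \emph{unconstrained} one obtains $\op{ind}(u,\mathcal{P})=2$, matching the dimension of the family; treating them as constrained gives $\op{ind}(u,\mathcal{P})=0$, matching the $\R$-quotiented family for a fixed pair of orbits. In either case, since $2g(F)-2+\#\Gamma_0(u,\mathcal{P}) = -2 + \#\Gamma_0$, and $\#\Gamma_0\le 2$ by Lemma~\ref{index demistified}(3), the inequality $\op{ind}(u,\mathcal{P}) > 2g(F)-2+\#\Gamma_0(u,\mathcal{P})$ in Theorem~\ref{automatic transversality} holds, so every such $u$ is a regular point. (Here I would be careful about the parity/evenness bookkeeping for $\#\Gamma_0$, since the positive torus contributes an even puncture for an unconstrained positive end and the negative torus for a constrained negative end; I expect $\#\Gamma_0\le 1$ in the relevant configuration, which is more than enough.)

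Third, from regularity and the explicit nature of the solutions I would conclude the foliation property exactly as in the proof of Proposition~\ref{foliation on V}: the maps $Z_{s,\vartheta}$ are embeddings because embeddedness is open and the exponential decay at the ends (asymptotic to distinct simple orbits in $\mathcal{N}_1$, $\mathcal{N}_2$) prevents self-intersections near infinity; their relative intersection number is $0$, so by positivity of intersections in dimension four their images are pairwise disjoint; and the two-parameter family exhausts $\R\times int(T^2\times[1,2])$ by an open-and-closed argument in the leaf space, using that through each point of $\R\times int(T^2\times[1,2])$ there passes a leaf (the projections to $int(T^2\times[1,2])$ are the cylinders $\vartheta=\mathrm{const}$, which obviously foliate). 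The trivial cylinders over the Morse-Bott orbits at $y=1,2$ fill out the boundary $\R\times T^2\times\{1,2\}$.

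The main obstacle I anticipate is not the index count but controlling the asymptotics and the ODE in the cobordism (rather than symplectization) setting: in a genuine symplectization Wendl's construction is clean, but here $\lambda = e^s\alpha_s$ with $\alpha_s$ genuinely $s$-dependent, so the ``cylinder'' $Z_{s,\vartheta}$ is not simply an $\R$-translate of a fixed curve and the ODE has an explicit $s$-dependence. The hypothesis that $\alpha_s$ varies sufficiently slowly in $s$ (needed already for the existence of the adapted $J$) is what should make the ODE a small perturbation of the symplectization ODE, so that a continuation/implicit-function argument off Wendl's solutions — combined with the automatic transversality that guarantees the linearization is surjective — produces the family. Making that perturbation argument precise, and checking that the asymptotic orbits vary continuously (indeed smoothly) with $(s,\vartheta)$ so that the leaf space is a manifold, is where the real work lies.
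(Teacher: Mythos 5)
Your outline is not wrong, but you have reached for much heavier machinery than the statement needs, and in doing so you have misidentified where the difficulty lies. The paper's proof is entirely elementary and bypasses both automatic transversality and any perturbation/continuation off Wendl's symplectization model. The key observation is that the conditions imposed on $J$ force the two-plane distribution spanned by $\partial_t$ and $J\partial_t$ to be $J$-invariant \emph{and integrable}. Concretely, writing $v = J(\partial_y)$, the hypotheses force $\partial_t = a(s,y)v + b(s,y)R$ with $b\neq 0$ and $a$ vanishing exactly at $y=1,2$, hence $J(\partial_t) = -a\,\partial_y - b\,\partial_s =: -Y$. Since $a,b$ are independent of $t$ and $\vartheta$, one has $[\partial_t,Y]=0$, so the distribution $\langle\partial_t,Y\rangle$ is integrable by Frobenius, and its leaves are automatically $J$-holomorphic surfaces because the distribution is $J$-invariant. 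Each leaf is a product of the $t$-circle with an integral curve of $Y$ in the $(s,y)$-strip; boundedness of $a,b$ (which follows from $s$-invariance near the cylindrical ends) gives completeness of $Y$, and the condition $a\neq 0$ on $(1,2)$ makes each integral curve a graph over $y\in(1,2)$ with vertical asymptotes at $y=1,2$. This produces the foliation by cylinders with the stated asymptotics at one stroke — no Fredholm theory, no implicit function theorem, no comparison with a symplectization model.

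Two specific points about your plan. First, the step you flag as the ``real work'' — controlling the $s$-dependence of $\alpha_s$ and setting up a continuation off Wendl's solutions — is a nonissue once you see the Frobenius argument: the explicit $s$-dependence enters only through the bounded functions $a,b$, and no smallness of the perturbation is required. Second, your automatic transversality check is incorrect in the ``both ends constrained'' case: there one has $\op{ind}=0$, $g=0$, and $\#\Gamma_0 = 2$ (a constrained positive end at the positive torus and a constrained negative end at the negative torus are both even by Lemma~\ref{index demistified}(3)), so the Wendl inequality $\op{ind} > 2g-2+\#\Gamma_0$ reads $0>0$ and fails. This is consistent with the paper: Lemma~\ref{lemma: Z_2 is regular} is stated only under the hypothesis $\#\mathcal{P}_U\geq 1$. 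That automatic transversality statement is genuinely needed in the paper, but as a separate lemma used later (e.g.\ in the $E^0$ differential computation), not as part of the construction of the foliation itself.
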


\begin{proof}
Let us write $v = J(\partial_y)$. Our conditions on $J$ and $R$ imply that $\partial_t =  a(s,y)v + b(s, y)R$ with $b(s,y) \ne 0$ everywhere and $a(s,y)=0$ only when $y=1$ or $y=2$, in which case $\left .\frac{\partial a}{\partial y} \right|_{y=0,1}\ne 0$. Then $J(\partial_t)= -a(s,y) \partial_y -b(s,y) \partial_s$. The vector fields $\partial_t$ and $Y(s,y)=a(s,y)\partial_y+b(s,y)\partial_s$ span a $J$-invariant $2$-plane distribution on $\R \times T^2 \times [1,2]$. Since $a$ and $b$ do not depend on $t$ and $\vartheta$, this distribution is integrable and every integral submanifold in $\R \times T^2 \times [1,2]$ is the product of $\R/\Z$ with coordinate $t$ and an integral curve of $Y$ on the strip $\R \times [1,2]$.

The functions $a$ and $b$ are bounded in $\R \times [0,1]$ because $\left. \frac{\partial a}{\partial s} \right |_{|s| \gg 0}= \left. \frac{\partial b}{\partial s} \right |_{|s| \gg 0}=0$. This implies that $Y$ is complete. Moreover, the maximal integral curves of $Y$ on $\R\times (1,2)$ project diffeomorphically onto $(1,2)$ and have vertical asymptotes for $y \to 1$ and $y \to 2$ because $a(s,y) \ne 0$ when $y \ne 1,2$.
\end{proof}

\begin{lemma} \label{lemma: Z_2 is regular}
Let $u_{s, \vartheta}: \R\times S^1 \to \R\times T^2\times[1,2]$ be a $J$-holomorphic map which parametrizes the holomorphic cylinder $Z_{s, \vartheta}$. Then $(u_{s, \vartheta},\mathcal{P})$ satisfies automatic transversality if $\#\mathcal{P}_U\geq 1$.
\end{lemma}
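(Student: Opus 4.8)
The plan is to verify the hypotheses of the automatic transversality criterion of Theorem~\ref{automatic transversality}. The domain of $u_{s,\vartheta}$ is the cylinder $F=\R\times S^1$, so $g(F)=0$ and $\chi(F)=0$; moreover $Z_{s,\vartheta}$ is a leaf of the finite energy foliation of Lemma~\ref{lemma: foliation in T^2 x [1,2]}, hence an embedded cylinder, so $u_{s,\vartheta}$ is in particular an immersion. Thus Theorem~\ref{automatic transversality} applies, and it remains to check the inequality $\op{ind}(u_{s,\vartheta},\mathcal{P})>-2+\#\Gamma_0(u_{s,\vartheta},\mathcal{P})$.

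First I would fix the trivialization $\tau$ of $\xi$ along the Reeb orbits of $\mathcal{N}_1$ and $\mathcal{N}_2$ to be the one induced by the Morse-Bott tori, namely the frame obtained by projecting $\partial_y$ and $\partial_\vartheta$ to $\xi$ along $R$; by Lemma~\ref{lemma: form alpha f g} applied to $\alpha_s$ this is $(\partial_y,\,-f_s\partial_\vartheta+g_s\partial_t)$. The key point is that $\partial_y$ is a nowhere-vanishing section of $\xi$ over all of $Z_{s,\vartheta}$, and together with $J\partial_y$ (which, by the choice of $J$, lies in the same line $\xi\cap T(\{s\}\times T^2\times\{y\})$) it furnishes a complex trivialization of $u_{s,\vartheta}^*\xi$ over the entire cylinder that matches $\tau$ at both ends; hence $c_1(u_{s,\vartheta}^*\xi,\tau)=0$. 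With $\chi(F)=0$ and $c_1=0$, the index formula of Theorem~\ref{automatic transversality} reduces to $\op{ind}(u_{s,\vartheta},\mathcal{P})=\mu_\tau(\gamma_{z^+},\mathcal{P})-\mu_\tau(\gamma_{z^-},\mathcal{P})$, where $z^+$ is the positive puncture (asymptotic to an orbit of the positive Morse-Bott torus $\mathcal{N}_2$) and $z^-$ the negative puncture (asymptotic to an orbit of the negative Morse-Bott torus $\mathcal{N}_1$).

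Then, by Lemma~\ref{index demistified}(2) and the Conley-Zehnder normalizations of Proposition~\ref{prop: perturbation of alpha}(3) --- on the positive torus $\mu_\tau(e_2)=1$, $\mu_\tau(h_2)=0$, and on the negative torus $\mu_\tau(e_1)=-1$, $\mu_\tau(h_1)=0$ --- one reads off that each unconstrained end contributes $+1$ to $\op{ind}(u_{s,\vartheta},\mathcal{P})$ and each constrained end contributes $0$, so $\op{ind}(u_{s,\vartheta},\mathcal{P})=\#\mathcal{P}_U$. By Lemma~\ref{index demistified}(3), the unconstrained configurations that occur here (a positive unconstrained end at a positive torus, a negative unconstrained end at a negative torus) contribute nothing to $\#\Gamma_0$, whereas each constrained end contributes $1$, so $\#\Gamma_0(u_{s,\vartheta},\mathcal{P})=2-\#\mathcal{P}_U$. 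The inequality to be checked thus becomes $\#\mathcal{P}_U>-\#\mathcal{P}_U$, i.e.\ $\#\mathcal{P}_U\ge 1$, which is exactly the hypothesis. (If there were no unconstrained end one would get $\op{ind}=0=\#\Gamma_0$ and the criterion would fail, so the hypothesis is genuinely needed.)

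The argument is essentially bookkeeping once these inputs are in place; I expect the only points requiring real care to be the vanishing $c_1(u_{s,\vartheta}^*\xi,\tau)=0$, where the precise choice of $\tau$ and the global frame $\partial_y$ matter, and the correct reading of Lemma~\ref{index demistified}(2)--(3) for ends at positive versus negative Morse-Bott tori --- together with the check that the asymptotic orbits of $Z_{s,\vartheta}$ are the \emph{simple} orbits of $\mathcal{N}_1$ and $\mathcal{N}_2$ (which holds because $R$ is parallel to $\partial_t$ at $y=1,2$ and $Z_{s,\vartheta}$ wraps once around the $t$-direction), so that Proposition~\ref{prop: perturbation of alpha}(3) applies verbatim.
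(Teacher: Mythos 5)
Your proof is correct and follows essentially the same route as the paper: apply Theorem~\ref{automatic transversality}, use Lemma~\ref{index demistified} to evaluate the Conley--Zehnder contributions, and check the inequality. You present the bookkeeping as $\op{ind}=\#\mathcal{P}_U$ and $\#\Gamma_0=2-\#\mathcal{P}_U$, while the paper records it as the identity $\op{ind}=2-\#\Gamma_0$ plus the observation that the even ends are exactly the constrained ones; these are the same computation. You also spell out the step $c_1(u_{s,\vartheta}^*\xi,\tau)=0$ via the global frame $\partial_y$, which the paper uses implicitly --- a worthwhile clarification.

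One small slip, though it is only in an aside: in your closing parenthetical you write that with no unconstrained end one would have $\op{ind}=0=\#\Gamma_0$. By your own formula two lines earlier, $\#\mathcal{P}_U=0$ gives $\#\Gamma_0=2$, not $0$; the criterion then reads $0>-2+2=0$ and fails. (If $\#\Gamma_0$ were $0$, the inequality $0>-2$ would hold, so the remark as written is internally inconsistent.) This does not affect the main argument.
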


\begin{proof}
By Theorem~\ref{automatic transversality},
$$\op{ind}(u_{s, \vartheta}, \mathcal{P}) = \mu_{\tau}(\gamma_2,\mathcal{P},+) -\mu_{\tau}(\gamma_1,\mathcal{P},-),$$
where $\gamma_i \in {\mathcal N_i}$, so $\op{ind}(u_{s, \vartheta}, \mathcal{P}) = 2 - \# \Gamma_0 (u_{s, \vartheta}, \mathcal{P})$ by Lemma \ref{index demistified}. Hence the condition for automatic transversality in Theorem~\ref{automatic transversality} holds if $\# \Gamma_0 (u_{s, \vartheta}, \mathcal{P}) <2$. Both the constrained negative end at ${\mathcal N}_1$ and the constrained positive end at ${\mathcal N}_2$ are even and the lemma follows.
\end{proof}

\subsubsection{Constraints on holomorphic curves}

Finite energy foliations constrain $J$-holomorphic maps with the same asymptotics. The following lemma describes an instance of this phenomenon. A similar situation has also been considered in Wendl~\cite{We4}.

\begin{lemma} \label{fef constrains curves}
Let $P$ be a compact oriented surface and $\alpha$ a Morse-Bott contact form on $S^1 \times P$ such that $S^1 \times \partial P$ is  a union of Morse-Bott tori and $\{\vartheta\}\times \bdry P$ is a union of Reeb orbits for each $\vartheta\in S^1$. If $\R \times S^1 \times P$ has a finite energy foliation ${\mathcal Z}$  on which $\R \times S^1$ acts freely and transitively and such that every leaf projects diffeomorphically to $int(P)$, then every somewhere injective finite-energy $J$-holomorphic map $u : F\to \R \times S^1 \times P$ with no ends at a Reeb orbit in $S^1 \times int(P)$ is a leaf of ${\mathcal Z}$.
\end{lemma}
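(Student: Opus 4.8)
The plan is to argue by positivity of intersections, using the leaves of $\mathcal Z$ as test curves. Since $u$ is $J$-holomorphic with no ends at orbits in $S^1\times int(P)$, unique continuation shows that either its image lies in the open region $\R\times S^1\times int(P)$ (being asymptotic to, but disjoint from, the boundary Morse-Bott tori), or it lies in $\R\times S^1\times\bdry P$, in which case $u$ is a cover of a trivial cylinder over a boundary orbit and, being somewhere injective, equals such a cylinder; this last case does not arise in the intended applications and I would set it aside. So assume the image of $u$ meets $\R\times S^1\times int(P)$, which is foliated by $\mathcal Z$.

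First I would note that if the image of $u$ agrees with a single leaf $Z_0$ on a nonempty open subset of $\R\times S^1\times int(P)$, then $u$ is a reparametrization of $Z_0$: by unique continuation the image of $u$ contains the connected leaf $Z_0$, and a somewhere injective curve cannot strictly contain $Z_0$ (itself an embedded $J$-holomorphic curve), so the image equals $Z_0$. It therefore suffices to rule out the possibility that the image of $u$ is nowhere locally a leaf.

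Suppose it were. The leaves being $J$-holomorphic, the leaf space $\mathcal Z$ inherits a complex structure for which the projection $\pi_{\mathcal Z}$, sending each point of the foliated region to the leaf through it, is holomorphic (the free transitive $\R\times S^1$-action identifies $\mathcal Z$ with a cylinder), and $\pi_{\mathcal Z}\circ u\colon F\to\mathcal Z$ is then a nonconstant holomorphic map. Near each puncture of $F$, $u$ is asymptotic to a trivial cylinder over a boundary Reeb orbit, so $\pi_{\mathcal Z}\circ u$ is controlled there: its $S^1$-component converges to the phase of the limiting orbit, while its $\R$-component runs to $+\infty$ or $-\infty$ according as that orbit lies on a positive or a negative Morse-Bott torus, the sign being forced --- the ends of $u$ being one-sided --- by the Trapping Lemma (Lemma~\ref{lemma: trapping}). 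Hence $\pi_{\mathcal Z}\circ u$ extends to a holomorphic branched cover $\hat\pi\colon\overline F\to S^2$, necessarily of degree $\geq 1$; the number of preimages of a generic point of $\mathcal Z$ is then this degree, and it is also the algebraic intersection number of $u$ with the corresponding leaf, which is positive by positivity of intersections (Lemma~\ref{lemma: positive slope} and its four-dimensional analogue). On the other hand, the ends of $u$ are one-sided ends at Morse-Bott tori, and so approach their limiting orbits along eigenfunctions of zero winding (Lemma~\ref{lemma: spectral}; equivalently, by the analysis of unconstrained ends behind Theorem~\ref{automatic transversality} and Lemma~\ref{index demistified}); a winding-number computation then shows that the local degree of $\hat\pi$ at each puncture of $F$ vanishes, whence $\deg\hat\pi=0$. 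This contradiction forces $\pi_{\mathcal Z}\circ u$ to be constant, i.e.\ $u$ to lie in a single leaf, hence to be a leaf.

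The step I expect to be the main obstacle is this last one: making the intersection-theoretic bookkeeping at the Morse-Bott ends rigorous, that is, computing the asymptotic winding of $\pi_{\mathcal Z}\circ u$ at the punctures and relating it to the local degrees of $\hat\pi$ (in Siefring's language, the asymptotic contribution to $u\cdot Z$). This forces one to combine the one-sidedness of the ends, the explicit eigenfunctions of Lemma~\ref{lemma: spectral}, and the behaviour of $\pi_{\mathcal Z}$ near $\bdry P$; the remainder of the argument is just unique continuation and soft properties of the foliation.
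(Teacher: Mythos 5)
Your approach via the projection to the leaf space is genuinely different from the paper's, which never introduces $\pi_{\mathcal Z}$ at all. The paper argues directly with the leaves: by positivity of intersections and the $\R$-translation of leaves, $u$ cannot meet any leaf carrying different asymptotics than $u$ (translating such a leaf far off makes it disjoint from $u(F)$, contradicting the positivity of the fixed intersection number), so $u(F)\subset\bigcup_{s}Z_{s,\vartheta_0}$ for a fixed $\vartheta_0$; a dimension count then forces some $u(F)\cap Z_{s_0,\vartheta_0}$ to be one-dimensional, and unique continuation concludes.

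Your leaf-space argument is attractive, but the decisive step --- the one you correctly identify as the main obstacle --- is internally inconsistent as written. You assert that the $\R$-component of $\pi_{\mathcal Z}\circ u$ runs to $\pm\infty$ at each puncture, and you also assert that the local degree of $\hat\pi$ at each puncture vanishes. These are incompatible: vanishing local degree at a puncture means precisely that $\hat\pi$ extends over the puncture with a \emph{finite} value, not $\pm\infty$. The second assertion is the true one. Since both $u$ and the leaves of $\mathcal Z$ have one-sided ends at the Morse-Bott torus, both converge along the winding-zero eigenfunction of Lemma~\ref{lemma: spectral}, hence at the \emph{same} exponential rate $e^{-|a|\,|s|}$. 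Writing $\pi_{\mathcal Z}\circ u(s,t)=\bigl(s-\sigma_0(p_u(s,t)),\,\vartheta_u(s,t)\bigr)$, where $\sigma_0$ records the $\R$-coordinate of the base leaf over its projection to $int(P)$, the term $\sigma_0(p_u(s,t))$ is asymptotic to $s+\mathrm{const}$ rather than overtaking $s$ or lagging behind it, so the $\R$-component of $\pi_{\mathcal Z}\circ u$ converges. Once this is observed the proof is in fact cleaner than a degree count: $\hat\pi$ extends to a holomorphic map from the \emph{closed} surface $\overline{F}$ into the \emph{open} cylinder $\mathcal Z\cong\C^*$, which must be constant, so $u(F)$ lies in a single leaf and, being somewhere injective, equals it. Your strategy is therefore salvageable and the decay-rate matching you flagged is exactly what makes it go; but the stated conclusion that the projection diverges at the punctures is wrong, and taken at face value it would give the punctures positive local degree and break the count you rely on.
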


\begin{proof}
Let $Z_{s, \vartheta}$ be the leaves of $\mathcal{Z}$ parametrized by $(s, \vartheta) \in \R \times S^1$. Suppose first that there is a leaf $Z_{s_0, \vartheta_0}$ such that
$u(F) \cap Z_{s_0, \vartheta_0} \ne \varnothing$ and which is asymptotic to different Reeb orbits than $u$. The intersection points in $u(F) \cap Z_{s_0, \vartheta_0}$ are isolated and positive. However $u(F) \cap Z_{s_0', \vartheta_0} = \varnothing$ if $s_0'$ is sufficiently large, a contradiction. Hence there exists some $\vartheta_0\in S^1$ such that $u(F) \subset \cup_{s \in \R} Z_{s, \vartheta_0}$ and the leaves $Z_{s, \vartheta_0}$ are asymptotic to the same Reeb orbits as $u$. If $u(F)$ is not contained in a leaf, this forces the intersection $u(F) \cap Z_{s_0, \vartheta_0}$ to be one-dimensional for some $s_0 \in \R$. This is too large an intersection, and the unique continuation for $J$-holomorphic maps
\cite[Theorem 2.3.2]{McDS} implies that $u(F)$ is a leaf of ${\mathcal Z}$.
\end{proof}

\begin{rmk} \label{rmk: in cobordisms too}
  The proof of Lemma \ref{fef constrains curves} goes through unchanged for the foliation ${\mathcal Z}_2$ constructed in Lemma \ref{lemma: foliation in T^2 x [1,2]}, even though the curves $Z_{s, \vartheta}$ and $Z_{s', \vartheta}$ are not translations of one another unless $(\R\times T^2 \times [1,2], \lambda)$ is a symplectization. In fact, they still project to the same annulus in $T^2 \times [1,2]$ and, given any point in that annulus, their preimages $x \in Z_{s, \vartheta}$ and  $x' \in Z_{s', \vartheta}$ become arbitrarily far apart in the $s$-coordinate when $|s'-s| \to + \infty$. These  properties of the foliation ${\mathcal Z}_2$ are sufficient to make the proof  of Lemma~\ref{fef constrains curves} work.
\end{rmk}

\subsection{Completion of proof of Theorem~\ref{thm: ECH of  solid torus is Z}} \label{subsec: completion of proof of thm solid torus}

In this subsection we prove (2)--(4) of Theorem~\ref{thm: ECH of  solid torus is Z}.

(2) The inclusion $ECC(int(V), \alpha_V) \subset ECC^{\sharp}(V,\alpha_V)$ is an inclusion of chain complexes since no $J$-holomorphic curve in $\R \times V$ with all positive ends in $int(V)$ can have a negative end on $\partial V$ by the Trapping Lemma. Moreover, the map
$$ECC^{\sharp}(V,\alpha_V) \to ECC(int(V), \alpha_V),$$
$$\gamma  \mapsto 0, \quad h' \gamma \mapsto \gamma,$$
where $\gamma$ is an orbit set constructed from orbits in $int(V)$, induces an isomorphism of complexes
$$ECC^{\sharp}(V,\alpha_V)/ ECC(int(V), \alpha_V) \simeq ECC(int(V), \alpha_V).$$
This is due to the fact that $h'$ is a hyperbolic orbit and appears with exponent at most one in a generator of $ECC^{\sharp}(V,\alpha_V)$. From this we have an exact triangle
$$\xymatrix{
ECH(int(V), \alpha_V) \ar[rr] & & ECH(int(V), \alpha_V) \ar[dl] \\
& ECH^{\sharp}(V,\alpha_V) \ar[ul] &
}$$
which splits according to homology classes in $H_1(V)$. Then Proposition \ref{prop: computation of ECH of int(V)} implies that $ECH^{\sharp}(V,\alpha_V,n[S^1])=0$ when $n \ne 0$.

It remains to show that $ECH^{\sharp}(V, \alpha_V, n[S^1])\simeq 0$ for $n=0$.  Its chain complex $ECC^{\sharp}(V, \alpha_V,0)$ is generated by $h'$ and $\varnothing$.  We claim that $\bdry h'=\varnothing$.  By Proposition \ref{foliation on V}, there is a finite energy foliation ${\mathcal Z}_1$ on $(\R \times V, d(e^s \alpha_V))$, whose leaves (in $\R\times int(V)$) are $J$-holomorphic planes which are positively asymptotic to the Morse-Bott family on $\bdry V$. This foliation constrains the $J$-holomorphic curves that limit to orbits on $\bdry V$ at the positive ends. Indeed, by Lemma~\ref{fef constrains curves}, every holomorphic curve which is positively asymptotic to a simple Reeb orbit on $\partial V$ and has no negative ends must be a plane in ${\mathcal Z}_1$. The leaves of ${\mathcal Z}_1$ also contribute to the differential of $ECC(V, \alpha_V)$ since they are automatically transverse by Theorem \ref{automatic transversality}. Hence $\bdry h'=\varnothing$, which implies the vanishing of $ECH^{\sharp}(V, \alpha_V,0)$.

(3) We define a filtration $\mathcal{F}$ on $ECC(V,\alpha_V)$ as follows: Given an orbit set $(e')^m \gamma$, where $\gamma$ does not have any $e'$-terms, we set
$$\mathcal{F}((e')^m\gamma)=m.$$
This defines an ascending filtration of chain complexes: since $J$-holomorphic maps to $\R \times V$ can have only positive ends at $e'$ by the Trapping Lemma, the differential of
$ECC(V,\alpha_V)$ cannot increase the exponent of $e'$. The $E^1$-term of the associated
spectral sequence is isomorphic to $ECH^{\sharp}(V,\alpha_V)$ at each filtration level.
By (1), $ECH^{\sharp}(V,\alpha_V)=0$, and the spectral sequence converges to $0$.

(4) The restriction of ${\mathcal F}$ to $ECC^{\flat}(V,\alpha_V)$ induces a filtration on $ECC^{\flat}(V,\alpha_V)$ which we still denote by ${\mathcal F}$. The $E^1$-term of the spectral sequence for $\mathcal{F}$ is isomorphic to
$$\bigoplus_{m=0}^\infty ECH(int(V),\alpha_V)\cdot(e')^m.$$
Since $ECH(int(V),\alpha_V)\simeq \F\{\varnothing\}$ by Theorem~\ref{thm: ECH of  solid torus is Z}, the $E^1$-term of the spectral sequence is $\F[e']$. All higher differentials vanish  for degree reasons: recall that ECH has a $\Z /2$ grading in which generators with no hyperbolic orbits have even grading.  Hence $E^1 = E^{\infty}$ is the graded group of the induced filtration on $ECH^{\flat}(V,\alpha_V)$.  Since the filtration $\mathcal{F}$ on $ECC^{\flat}(V,\alpha_V)$ is bounded below and exhaustive, the spectral sequence converges by \cite[Theorem~5.5.5]{W} and therefore $ECH^{\flat}(V,\alpha_V) \simeq \F[e']$.

\section{Proof of Theorem~\ref{thm: equivalence of ECHs}}
\label{section: proof of theorem equivalence of ECHs}

In this section we prove Theorem~\ref{thm: equivalence of ECHs}. The proof was greatly influenced by Michael Hutchings, who encouraged us to look for an appropriate filtration.

\subsection{Intuitive idea behind Theorem~\ref{thm: equivalence of ECHs}}\label{subsec: euristic argument}

We briefly explain the intuitive idea behind Theorem~\ref{thm: equivalence of ECHs}.  We recall that $M$ denotes a connected, closed, oriented three-manifold and $K$ is a null-homologous knot in $M$. Suppose for the moment that the contact form $\alpha$ on $M$, in a neighborhood $V \simeq D^2 \times S^1$ of $K$, is given by Example~\ref{esempio scemo}.  In other words, the concentric tori $T_\rho \subset V$, $\rho\not=0$, are foliated by Reeb orbits of irrational slope
${1\over \nu}$.  We would like to take the limit as $\nu \to 0$; in the limit $\bdry V$ is foliated by Reeb orbits of slope $\infty$.  Let us write $N=M - int(V)$. There should be a
one-to-one correspondence, modulo $\R$-translations, between holomorphic curves $u$ in $\R\times M$ of ECH index $1$ which intersect the binding $k$ times, and holomorphic curves $u'$ in $\R\times N$ of ECH index $1$ which have negative ends at an elliptic orbit $e$ of slope $\infty$ with total multiplicity $k$.  Also, as we take $\delta\to 0$, the Conley-Zehnder index of the binding,
measured with respect to the longitudinal framing on $V$, i.e., the framing
given by a Seifert surface $\Sigma$ for $K$, goes to $\infty$. This suggests that we
should be able to effectively ignore the binding if we could take
the limit.

The actual proof --- at least the one we could find --- is considerably more complicated, and uses three ingredients: (i) the calculation of ECH on the solid torus from Section~\ref{section: ECH of solid torus}, (ii) some understanding of holomorphic curves that project to a neighborhood of $K$, and (iii) a filtration on $ECC(M)$.

\subsection{Description of the contact forms}
\label{subsec: description}

We start with a description of the contact forms and their Reeb orbits on $M$ that we use in the proof of
Theorem \ref{thm: equivalence of ECHs}. We fix a neighborhood $V \simeq D^2 \times S^1$ of $K$ and decompose $M$ as
$$M= N \cup (T^2 \times [1,2]) \cup V.$$
Since $K$ is an oriented null-homologous knot, there is a properly embedded oriented surface $S \subset N$ whose boundary $\partial S \subset \partial V$ is a longitude for $K$. On $V$ we choose cylindrical coordinates $(\rho, \phi, \theta)$ such that $\bdry V=\{\rho=1\}$ and $\partial S= \{ \rho=1, \phi= \phi_0 \}$. On $T^2 \times [1,2]\simeq (\R^2/\Z^2)\times[1,2]$ we choose coordinates $(\vartheta, t, y)$ such that $(\vartheta, t, 2)$ is identified with $(\rho,\phi,\theta)=(1,2\pi t,2\pi \vartheta) \in \partial V$. We identify a neighborhood of $\partial N$ in $N$ with $T^2 \times [0,1]$ so that $\bdry N= T^2\times\{1\}$ and the coordinates $(\vartheta, t, y)$ on $T^2\times[0,1]$ extend those on $T^2 \times [1,2]$; similarly  we identify a neighborhood of $\bdry V$ in $V$ with $T^2\times[2,\tfrac{5}{2}]$.

We will work with an increasing sequence $L_i \to + \infty$ and a sequence of Morse-Bott contact forms $\alpha_i$ on $M$ such that:
\begin{itemize}
\item $\alpha_i|_N$ is a fixed Morse-Bott contact form $\alpha$ which is nondegenerate on $int(N)$ and its Reeb vector field is positively transverse to $S$;
\item $\alpha_i |_{T^2 \times [1,2]}$ is a contact form $\alpha_{\delta_i}$ as in Example~\ref{alpha delta} which is chosen so that all the Reeb orbits in $T^2 \times (1,2)$ have action larger than $L_i$; and
\item $\alpha_i|_V = c_{\delta_i} \alpha_V$ for a fixed contact form $\alpha_V$ constructed as in Lemma \ref{careful perturbation} and a decreasing sequence $c_{\delta_i}$ which is bounded above by $1$ and bounded below by a positive constant.
\end{itemize}
We also assume the following technical condition:
\begin{itemize}
\item there is a decreasing sequence $\epsilon_i \to 0$ such that $\alpha_i$ agrees with $\alpha_{i+1}$ on $N \cup (T^2 \times [1, 1+ \epsilon_i])$ and with a constant positive multiple of $\alpha_{i+1}$ on $V \cup (T^2 \times [2- \epsilon_i, 2])$.
\end{itemize}
We will refer to $T^2 \times (1,2)$ as the {\em no man's land}.

The contact form $\alpha$ on $N$ can be constructed using the techniques developed in \cite{CH} and \cite{CGHH}. The construction is described in Section~\ref{subsubsection: construction of the contact form on Nh} in the special case where $K$ is the binding of an open book decomposition of $M$ and $N$ is the mapping torus of a diffeomorphism of $S$.

The contact forms $\alpha_i$ are Morse-Bott and all the Morse-Bott tori are of the form $T^2\times\{y\}$ with $y\in[1,2]$. In particular, $\partial N = T^2 \times \{ 1 \}$ is foliated by a {\em negative} Morse-Bott family ${\mathcal N}_1$ and $\partial V = T^2 \times \{ 2 \}$ by a {\em positive} Morse-Bott family ${\mathcal N}_2$. Both families have infinite slope,  i.e., the Reeb orbits on both tori are meridians of $K$.

We construct $L_i$-nondegenerate contact forms $\alpha_i' = f_i \alpha_i$, where the perturbing functions $f_i$ are as in Section~\ref{subsection: direct limit arguments}. We choose $f_i$ so that the Morse-Bott family ${\mathcal N}_1$ corresponding to $\partial N$ is perturbed into an elliptic orbit $e$ and a hyperbolic orbit $h$, the
Morse-Bott family ${\mathcal N}_2$ corresponding to $\partial V$ is perturbed into a
hyperbolic orbit $h'$ and an elliptic orbit $e'$, no new closed orbits with action less than
$L_i$ are created, and $f_i \equiv 1$ in a neighborhood of all nondegenerate
Reeb orbits of $\alpha_i$  with action less than $L_i$.

For all $i$ we choose regular almost complex structures $J_i$ adapted to $\alpha_i$ and $J_i'$ adapted to $\alpha_i'$ such that all the $J_i$ are fixed on the contact structure  outside $T^2\times[1-\epsilon_i,2+\epsilon_i]$ and $J_i'$ is an arbitrarily small perturbation of $J_i$.

We will also consider interpolating cobordisms $(\R \times M, \widehat{\lambda}_i)$ from $(M, \alpha_i)$ to a rescaling of $(M, \alpha_{i+1})$ and $(\R \times M, \widehat{\lambda}_i')$ from $(M, \alpha_i')$ to a rescaling of $(M, \alpha_{i+1}')$.  By construction, $\lambda_i'$ is an arbitrarily small perturbation of $\lambda_i$. We fix
compatible almost complex structures $\widehat{J}_i$ on $(\R \times M, \widehat{\lambda}_i)$ and $\widehat{J}'_i$ on $(\R \times M, \widehat{\lambda}_i')$ such that they are both regular and $\widehat{J}'_i$ is an arbitrarily small perturbation of $\widehat{J}_i$.

We assume that the perturbing functions are close enough to $1$ that the following hold:
\begin{itemize}
\item[({\bf MB}$_1$)] For $ k=1,2$, if $\gamma_+$ and $\gamma_-$ are generators of $ECC^{L_i}(M, \alpha_i')$ and $u\in {\mathcal M}_{J_i'}^{I= k}(\gamma_+, \gamma_-)$, then there is a corresponding $u_\infty \in {\mathcal M}_{J_i}^{MB, I= k}(\gamma_+, \gamma_-)$.
\item[({\bf MB}$_0$)] If $\gamma_+$ and $\gamma_-$ are generators of $ECC^{L_i}(M, \alpha_i')$ and $ECC^{L_{i+1}}(M, \alpha_{i+1}')$, respectively, and $ u\in {\mathcal M}_{\widehat{J}_i'}^{I=0}(\gamma_+, \gamma_-)$, then there is a corresponding $u_\infty\in{\mathcal M}_{\widehat{J}_i}^{MB,I=0}(\gamma_+, \gamma_-)$.
\end{itemize}
 Recall from Definition~\ref{Morse-Bott building} that ${\mathcal M}_{J}^{MB}(\gamma_+, \gamma_-)$ denotes the set of Morse-Bott $J$-holomorphic buildings from $\gamma_+$ to $\gamma_-$.

For reference we enumerate the main properties of the Reeb vector fields of the contact forms $\alpha_i$ and their perturbations $\alpha_i'$:
\begin{enumerate}
\item $\alpha_i$ is Morse-Bott and $\alpha_i'$ is $L_i$-nondegenerate.
\item $R_{\alpha_i}$ is positively transverse to $S \subset N$ and the meridian disks in $int(V)$.
\item $\alpha_i|_{N}= \alpha$ and $\alpha_i|_{V}= c_{\delta_i} \alpha_V$, where the sequence $c_{\delta_i}$ is decreasing, bounded above by $1$ and bounded below by a positive constant and the contact form $\alpha_V$ is constructed as in Lemma~\ref{careful perturbation}.
\item $\alpha_i$ and $\alpha_{i+1}$ coincide on $N \cup (T^2 \times [1, 1+ \epsilon_i])$ and are constant multiples of one another on $V \cup (T^2 \times [2- \epsilon_i, 2])$, where $\epsilon_i \to 0$ is a decreasing sequence.
\item The Reeb orbits of $\alpha_i$ in the no man's land come in Morse-Bott families of large negative slope and their action is bounded below by $L_i$.
\item There are concentric solid tori $V_0\subset V_1\subset \dots \subset V$ such that $\bdry V_j$, $j=0,1,\dots$, is foliated by dense Reeb orbits of irrational slope $r_j>0$ with $\lim \limits_{j \to \infty} r_j = + \infty$ for any contact form $\alpha_i$.
\item $\bdry N$ is foliated by a negative Morse-Bott family $\mathcal{N}_1$ of Reeb orbits of $\alpha_i$ of slope $\infty$. After perturbation, ${\mathcal N}_1$ becomes a pair of orbits $e$ and $h$. Their Conley-Zehnder indices with respect to the framing coming from $\partial N$  (given by $T(\bdry N) \cap \xi$) are $\mu(e)=-1$ and $\mu(h)=0$.
\item $\bdry V$ is foliated by a positive Morse-Bott family $\mathcal{N}_2$ of slope $\infty$. After perturbation, ${\mathcal N}_2$ becomes a pair of orbits $e'$ and $h'$. Their Conley-Zehnder indices with respect to the framing coming from $\partial V$ are $\mu(e')=1$ and $\mu(h')=0$.
\end{enumerate}

\subsection{Construction of the contact forms}

In this subsection we construct the contact forms $\alpha_i$ when $K$ is the binding of an open book decomposition. In this case $N$ is the mapping torus of a diffeomorphism $\hh : S \to S$ such that $\hh|_{\partial S}= id$. This means that
$$N = (S \times[0,1])/(x,1)\sim (\hh(x),0),$$ where $x \in S$
and $t$ is the coordinate for $[0,1]$. Using the coordinates $(\theta,\phi)$ from Section~\ref{subsec: description} we identify the isotopy classes of simple closed curves in $\partial N$ (and in all parallel tori) with rational numbers so that the meridian has slope $\infty$ and $\partial S$ has slope $0$.

\begin{rmk}
The above slope convention is the same as the usual surgery convention for performing surgery along the binding.
\end{rmk}

\subsubsection{Construction of the contact form on $N$} \label{subsubsection: construction of the contact form on Nh}

We take a $1$-form $\beta$ on $S$ such that $\omega = d \beta$ is a positive
area form on $S$ and $\beta=cyd\theta$ in a
neighborhood $N(\bdry S)\subset S$ of $\bdry S$. Here $c>0$ is a
small constant and $N(\bdry S)$ is identified with
$[1-\delta,1]\times \R/\Z$ with coordinates $(y,\theta)$.

We assume that the diffeomorphism $\hh:S\stackrel\sim\to S$ satisfies $\hh|_{N(\bdry S)}=id$. Let $\op{Symp}(S,\partial S, \omega)$ be the group of symplectomorphisms of $(S,\omega)$ which restrict to the identity on a neighborhood of $\bdry S$. By Moser's lemma, there is an isotopy of $\hh$ relative to $\partial S$ so that the resulting diffeomorphism --- also called $\hh$ by abuse of notation --- is in $\op{Symp}(S, \partial S, \omega)$.

\begin{lemma}[Giroux]  \label{lemma: giroux}
Given $\hh\in\op{Symp}(S, \partial S, \omega)$, there exists an
isotopy $\hh_t$, $t\in [0,1]$, in $\op{Symp}(S,\partial S, \omega)$ so
that $\hh_0=\hh$ and $\hh_1 ^* \beta -\beta= d f$ for some positive function $f$ on $S$.
\end{lemma}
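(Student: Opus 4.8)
The plan is to use the flux homomorphism on $\op{Symp}(S,\bdry S,\omega)$ to absorb the obstruction. First note that since $\hh$ is symplectic, the $1$-form $\sigma:=\hh^*\beta-\beta$ is closed, because $d\sigma=\hh^*\omega-\omega=0$; and since $\hh=\mathrm{id}$ on $N(\bdry S)$ we have $\sigma\equiv 0$ near $\bdry S$. So $\sigma$ determines a class $[\sigma]\in H^1(S;\R)$, and the content of the lemma is to isotope $\hh$, within $\op{Symp}(S,\bdry S,\omega)$ and relative to $\bdry S$, so that this class dies. Once that is done, writing $\hh_1^*\beta-\beta=df_0$ for a function $f_0$ on $S$ (unique up to an additive constant since $S$ is connected, and constant near $\bdry S$ since $\hh_1=\mathrm{id}$ there), the function $f:=f_0+C$ with $C>-\min_S f_0$ satisfies $df=\hh_1^*\beta-\beta$ and $f>0$, as required.

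The step I would actually carry out is the composition trick. Pick a symplectic isotopy $\{\psi_t\}_{t\in[0,1]}$ with $\psi_0=\mathrm{id}$, supported in $\op{int}(S)$ away from the collar $N(\bdry S)$, and set $\hh_t:=\psi_t\circ\hh$. Then $\hh_t\in\op{Symp}(S,\bdry S,\omega)$ for all $t$, $\hh_0=\hh$, and
\[
\hh_1^*\beta-\beta=\hh^*(\psi_1^*\beta-\beta)+\sigma.
\]
Since each $\psi_t$ is isotopic to the identity, $\psi_t^*$ acts trivially on $H^1(S;\R)$, and a standard Cartan-formula computation gives $[\psi_1^*\beta-\beta]=\op{Flux}(\{\psi_t\})\in H^1(S;\R)$. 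Hence $[\hh_1^*\beta-\beta]=\hh^*\op{Flux}(\{\psi_t\})+[\sigma]$, which vanishes precisely when $\op{Flux}(\{\psi_t\})=-(\hh^*)^{-1}[\sigma]$, using that $\hh^*$ is an automorphism of $H^1(S;\R)$. It then remains to realize the prescribed flux: given $a\in H^1(S;\R)$, represent it by a closed $1$-form $\mu$ supported in $\op{int}(S)$ and disjoint from $N(\bdry S)$ (possible since a smaller concentric copy of $S$ includes into $S$ as a homotopy equivalence), let $X_\mu$ be the vector field with $\iota_{X_\mu}\omega=\mu$, and let $\psi_t$ be its flow; this is a symplectic isotopy supported away from the collar with $\op{Flux}(\{\psi_t\})=[\mu]=a$. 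Applying this with $a=-(\hh^*)^{-1}[\sigma]$ finishes the argument.

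As for the main obstacle: there is essentially none beyond bookkeeping. The only nontrivial ingredient is the surjectivity of the flux homomorphism for a surface with boundary relative to that boundary, and for surfaces this is elementary — there is no flux-conjecture subtlety, since we work on the identity component and every cohomology class is represented by a closed form supported in the interior. The remaining care is to verify that every map in sight stays in $\op{Symp}(S,\bdry S,\omega)$ (automatic, since $\psi_t$ is supported away from the collar) and that the primitive $f_0$ can be shifted by a constant to be positive (automatic, as $S$ is compact and connected). I would present exactly these three steps in this order, attributing the statement to Giroux.
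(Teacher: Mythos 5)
Your proof is morally the same as the paper's: both absorb $\hh^*\beta-\beta$ by composing $\hh$ with the flow of a vector field $X$ with $\iota_X\omega$ a prescribed closed $1$-form vanishing near $\partial S$. The paper sets $\iota_Y\omega = -(\hh^*\beta-\beta)$ directly, forms $\hh_t=\hh\circ\phi_t$ with $\phi_t$ the flow of $Y$, and obtains $\hh_1^*\beta-\beta=df$ by an explicit Cartan-formula computation (no cohomology classes appear at all). You phrase the same construction in the language of the flux homomorphism, with the composition in the other order $\hh_t=\psi_t\circ\hh$. That is a legitimate packaging of the same idea.

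There is, however, a gap in your justification of the flux-realization step. You assert that any $a\in H^1(S;\R)$ can be represented by a closed $1$-form supported in $\op{int}(S)$ away from the collar, and justify this by the homotopy equivalence with a smaller concentric copy. That justification is wrong: a closed form on a smaller copy of $S$ cannot in general be extended by zero to a closed form on $S$, and indeed the natural map $H^1_c(\op{int}(S);\R)\to H^1(S;\R)$ is not surjective once $\partial S$ is disconnected (for an annulus it is the zero map, and more generally its image has codimension equal to the number of boundary components minus one). What saves your argument is that the \emph{specific} class you need to realize, $-(\hh^*)^{-1}[\sigma]$ with $\sigma=\hh^*\beta-\beta$, \emph{does} lie in the image of $H^1_c\to H^1$: the form $\sigma$ vanishes near $\partial S$ (since $\hh=\mathrm{id}$ there), so $[\sigma]$ lifts to $H^1_c$, and $\hh^*$ preserves the subspace $\op{im}(H^1_c\to H^1)$ because $\hh$ is the identity near the boundary; being an injective endomorphism of a finite-dimensional space it is an automorphism of that subspace, so $(\hh^*)^{-1}$ preserves it too. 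You should state this instead of the incorrect general claim. Note that the paper sidesteps the entire issue by constructing $Y$ directly from the compactly supported form $\sigma$ rather than from an arbitrary cohomology class.
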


\begin{proof}
Let $\mu =\hh^* \beta -\beta$ and let $Y$ be the vector field which
satisfies $i_Y\omega=-\mu$. By the Cartan formula, we compute that
$\mathcal{L}_Y \omega =i_Y d\omega +d(i_Y \omega)=-d\mu =0$ and
$\mathcal{L}_Y \mu =i_Y d\mu +d(i_Y \mu )=0$. Hence the flow
$\phi_t$ of $Y$ preserves $\omega$ and $\mu$. Moreover, $\phi_t$ is
equal to the identity near $\partial S$, where we have $\mu =0$.

Now let $\hh_t =\hh\circ \phi_t$. We then compute that:
\begin{eqnarray*}
\frac{d}{dt} \hh_t^* \beta  &=& \phi_t^* (\mathcal{L}_Y \hh^*\beta)=
d(\phi_t^* (i_Y \hh^* \beta))+\phi_t^* (i_Y d(\hh^*\beta))\\
&=& dg_t+ \phi_t^* (i_Y \omega) = dg_t-\phi_t^*\mu = dg_t-\mu,
\end{eqnarray*}
where $g_t=\phi_t^* (i_Y \hh^* \beta)$. Hence
\begin{equation}\label{equation: in Giroux lemma}
\frac{d}{dt} \hh_t^* \beta =dg_t+\beta -\hh^* \beta.
\end{equation}
By integrating Equation~\eqref{equation: in Giroux lemma}, we obtain
$\hh_1 ^* \beta -\beta= d f$, where $f=\int_0^1 g_t dt +C$ for a sufficiently large
constant $C$.
\end{proof}

By Lemma \ref{lemma: giroux} we assume that $\hh \in
\op{Symp}(S, \partial S, \omega)$ satisfies $\hh^* \beta- \beta = df$.
Next we construct a contact form on $N$ whose corresponding Reeb
vector field is transverse to the fibers and has first return map
$\hh$.

\begin{lemma}\label{lemma:construction}
Let $\hh$ be a diffeomorphism in $\op{Symp}(S,\partial S, \omega)$
which satisfies $\hh^*\beta-\beta=df$ for some function $f$ on $S$.
Then there is a contact form $\alpha =f_tdt+\beta_t$ on $N$, where
$f_t$ is a family of positive functions on $S$ and $\beta_t$ is a family
of $1$-form on $S$, so that the corresponding Reeb vector field
$R_{\alpha}$ is transverse to all the fibers $S\times\{t\}$ and $\hh$
is the first return map of $R_\alpha$.
\end{lemma}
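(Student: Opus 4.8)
The plan is to construct $\alpha$ explicitly as a deformation of the ``suspension'' form built from $\beta$ and $\hh$. First I would work on the trivial product $S\times[0,1]$ and consider the $1$-form $\tilde\alpha = C\,dt + \beta_t$, where $\beta_t = (1-t)\beta + t\,\hh^*\beta$ interpolates linearly between $\beta$ and $\hh^*\beta$, and $C>0$ is a large constant to be chosen. Since $\hh^*\beta - \beta = df$, we have $\beta_t = \beta + t\,df$, so $d\beta_t = d\beta = \omega$ is independent of $t$; hence
\[
d\tilde\alpha = dt\wedge \dot\beta_t + d\beta_t = dt\wedge df + \omega,
\]
and
\[
\tilde\alpha\wedge d\tilde\alpha = (C\,dt + \beta_t)\wedge(dt\wedge df + \omega) = C\,dt\wedge\omega + \beta_t\wedge dt\wedge df.
\]
For $C$ large this is a positive volume form (the term $\beta_t\wedge dt\wedge df$ is $t$-bounded and dominated by $C\,dt\wedge\omega$ since $\omega>0$), so $\tilde\alpha$ is a contact form on $S\times[0,1]$. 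Near $\partial S$, where $\beta = cy\,d\theta$ and $\hh = \mathrm{id}$, we have $df=0$ and $\beta_t = cy\,d\theta$ for all $t$, so $\tilde\alpha = C\,dt + cy\,d\theta$ there; this is exactly the model form near $\partial N$, and in particular it descends and matches the collar coordinates of Section~\ref{subsec: description}.

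Second I would check that $\tilde\alpha$ descends to the mapping torus $N = (S\times[0,1])/(x,1)\sim(\hh(x),0)$. The gluing identifies $(x,1)$ with $(\hh(x),0)$, and under the diffeomorphism $(x,t)\mapsto(\hh(x),t)$ one computes that the pullback of $C\,dt + \beta_0 = C\,dt+\beta$ at $t=0$ is $C\,dt + \hh^*\beta = C\,dt + \beta_1$, which is exactly the value of $\tilde\alpha$ at $t=1$; hence the forms agree under the identification and $\tilde\alpha$ glues to a global contact form $\alpha = f_t\,dt + \beta_t$ on $N$ (here $f_t\equiv C$, but I keep the general notation since a later small perturbation near $\partial N$ will be needed to match the Morse-Bott boundary conditions). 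Third I would identify the Reeb vector field: writing $R_\alpha = a\,\partial_t + X_t$ with $X_t$ tangent to $S$, the conditions $\alpha(R_\alpha)=1$ and $i_{R_\alpha}d\alpha = 0$ give $i_{X_t}\omega = -a\,df + (\dot\beta_t)\,a = a(\dot\beta_t - df) = 0$ (using $\dot\beta_t = df$), hence $X_t = 0$ after we note the $\partial_t$-component of $i_{R_\alpha}d\alpha$ forces $a$ constant along $S$, and $\alpha(R_\alpha)=aC = 1$ gives $a = 1/C$. Thus $R_\alpha = \tfrac1C\,\partial_t$ is a positive multiple of $\partial_t$, manifestly transverse to every fiber $S\times\{t\}$, and its time-$C$ return map from $S\times\{0\}$ to itself, following the gluing, is precisely $\hh$.

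The main obstacle I anticipate is not the contact or transversality computation, which is essentially the classical Thurston--Winkelnkemper/Giroux construction, but rather the bookkeeping needed to make $\alpha$ fit all the auxiliary requirements imposed in Section~\ref{subsec: description}: namely that near $\partial N = T^2\times\{1\}$ the form take the precise model shape $\alpha_{f,g}$ with $\partial N$ a \emph{negative} Morse-Bott torus of infinite slope, that the collar coordinates $(\vartheta,t,y)$ extend consistently, and that the Reeb flow be positively transverse to the Seifert surface $S$. To handle the boundary model I would interpolate, in the collar $N(\partial S)\times[0,1]\subset N$ identified with $T^2\times[-\nu,0]$, between the suspension form $C\,dt + cy\,d\theta$ and a form $g(y)\,d\vartheta + f(y)\,dt$ of the type in Equation~\eqref{eqn: contact form on T2 times 1,2}, choosing $(f,g)$ so that the trajectory $(f(y),g(y))$ rotates in the clockwise (negative Morse-Bott) direction and has vertical tangency with infinite Reeb slope at $y=0$; the contact condition $fg'-f'g>0$ is open and is preserved under a sufficiently slow such interpolation. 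Transversality to $S$ follows automatically once $R_\alpha$ is a positive multiple of $\partial_t$ and $S$ is (isotoped to be) a fiber-like surface meeting each $\{t\}$-slice in an arc or is in fact realized as a union of fibers over $t$; I would make this precise by arranging $\partial S$ to sit on $\partial N$ at a fixed $\phi = \phi_0$ as in Section~\ref{subsec: description} and extending inward. This is all routine but notation-heavy, so in the paper I expect the statement to be proved by the clean formula above followed by a remark that the boundary adjustments are made ``by a small modification near $\partial N$,'' deferring the Morse-Bott collar details to the constructions already invoked from \cite{CH} and \cite{CGHH}.
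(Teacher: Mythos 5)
Your proposal has two genuine gaps, both of which the paper's construction is specifically designed to avoid.

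First, the gluing does not work with the linear interpolation $\beta_t = (1-t)\beta + t\,\hh^*\beta$. The values of $\tilde\alpha$ do match at $t=0$ and $t=1$ after pulling back by $\hh$, but for the form to descend \emph{smoothly} to the mapping torus, the $t$-derivatives must also match, and they do not. Concretely, $\dot\beta_t = df$ for all $t$, so you would need $\hh^*(df) = df$, i.e.\ $f\circ\hh - f$ constant, which is not implied by the hypothesis $\hh^*\beta - \beta = df$ and is false in general. The paper fixes this by replacing the linear interpolation with $\beta_t = \chi(t)\hh^*\beta + (1-\chi(t))\beta$ for a cutoff $\chi$ that is constant near $t=0,1$; then $\dot\beta_t \equiv 0$ in a neighborhood of the endpoints and the glued form is smooth to all orders with no compatibility condition on $f$.

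Second, with $f_t \equiv C$ constant your Reeb-vector-field computation is wrong. You get $d\tilde\alpha = \omega + dt\wedge df$, and writing $R = a\,\partial_t + X$, the condition $i_R\,d\tilde\alpha = 0$ gives $i_X\omega = -a\,df$ (not $i_X\omega = a(\dot\beta_t - df)$; the cancellation you invoke would require the coefficient of $dt$ to be the nonconstant family $f_t$ with $d_Sf_t = \dot\beta_t$, which you have suppressed by taking it to be the constant $C$). Since $i_X\omega = -a\,df \neq 0$, the horizontal component $X$ is a nonzero multiple of the Hamiltonian vector field of $f$, so $R$ is still transverse to the fibers, but the return map to $S\times\{0\}$ is $\hh$ composed with a nontrivial Hamiltonian diffeomorphism, not $\hh$ itself. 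The paper's choice $f_t = \dot\chi(t)f + c$ is precisely what makes $d_Sf_t = \dot\beta_t$, hence $d\alpha = \omega$ exactly and $R_\alpha = \tfrac{1}{f_t}\partial_t$ with return map equal to $\hh$.

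In short: the paper's use of a cutoff $\chi$ and the nonconstant $dt$-coefficient $f_t = \dot\chi(t)f + c$ are not cosmetic — they are both essential, one to make the form descend smoothly and the other to kill the horizontal component of the Reeb field. Your identification of the boundary collar and Morse-Bott adjustments as secondary is correct, but the core computation needs to be the one with $\chi$.
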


For a more complete discussion of the realizability of surface
symplectomorphisms as the first return map of a Reeb vector field,
we refer the reader to \cite{CHL}.

\begin{proof}
Consider the $1$-form $\alpha =f_t dt+\beta_t$ on
$S\times[0,1]$, where $f_t$ is to be determined, $\beta_0=\beta$,
$\beta_1=\hh^*\beta$, and
$$\beta_t =\chi (t)\beta_1 +(1-\chi (t))\beta_0$$
interpolates between $\beta_0$ and $\beta_1$. Here we take $\chi
:[0,1] \rightarrow [0,1]$, so that $\chi(0) =0$, $\chi(1)=1$,
${d\chi\over dt}(t)=\dot{\chi} (t)\geq 0$, and $\chi$ is constant
near $0$ and $1$.

Using the condition $\hh^*\beta-\beta=d_Sf$, we verify that the
$1$-form $\dot\beta_t$ is exact on $S$:
$$\dot\beta_t= \dot{\chi} (t) (\beta_1 -\beta_0)=\dot\chi(t) (d_Sf)= d_S
(\dot{\chi} (t)f).$$ Here $d_{S}$ is the exterior derivative on $S$. We then take $f_t= \dot{\chi} (t)f +c$, where $c$ is an
arbitrary positive constant such that $f_t>0$ (and is different from the $c$ in $\beta=cyd\theta$ from the beginning of Section~\ref{subsubsection: construction of the contact form on Nh}). Then $\dot\beta_t =d_{S} f_t$. Since $\chi$ is constant near $t=0$ and
$t=1$, $f_t$ is also constant, and so is $\beta_t$. In particular,
we have $\hh^*f_1=f_0$.

We now compute that
$$d\alpha=d_{S} f_t\wedge dt+d_S\beta_t+dt\wedge\dot{\beta}_t = d_{S} f_t\wedge dt+\omega +dt\wedge d_S f_t= \omega.$$
Hence $\alpha$ is a contact form, its Reeb vector field is parallel to $\partial_t$ on $S\times[0,1]$, and its first return map is $\hh$.
\end{proof}

Now we make a slight modification to $\alpha$ so that $\bdry N$ becomes a {\em negative} Morse-Bott family --- one
that behaves like a sink for $J$-holomorphic maps in $\R \times N$.

On $T_1=\bdry N$, the germ of $\alpha$ is given by $f(y) dt+
g(y)d\theta$, where $f(y)=C$ and $g(y)=cy$.  Here $c>0$ is a small
constant and $C>0$ is a large constant.   We extend $\alpha$ to
$T^2\times[1,1+\varepsilon]$ by extending $(f(y),g(y))$ to
$y\in[1,1+\varepsilon]$ as follows:
\begin{enumerate}
\item $(f(y),g(y))$ satisfies Equation~\eqref{eqn: contact condition for T2 times 1,2}.
\item $(f(y),g(y))$, $y\in[1,1+\varepsilon]$, is close to
$(f(1),g(1))$.
\item $(f(y),g(y))=(f(1+\varepsilon)+
(y-(1+\varepsilon))^2,g(1+\varepsilon)+(y-(1+\varepsilon)))$ near
$y=1+\varepsilon$.
\end{enumerate}
See Figure~\ref{fig: new f and g}. In particular, Condition (3)
implies that $(f'(1+\varepsilon),g'(1+\varepsilon))$ is parallel to
$(0,1)$.
\begin{figure}[ht]
\begin{overpic}[height=4.5cm]{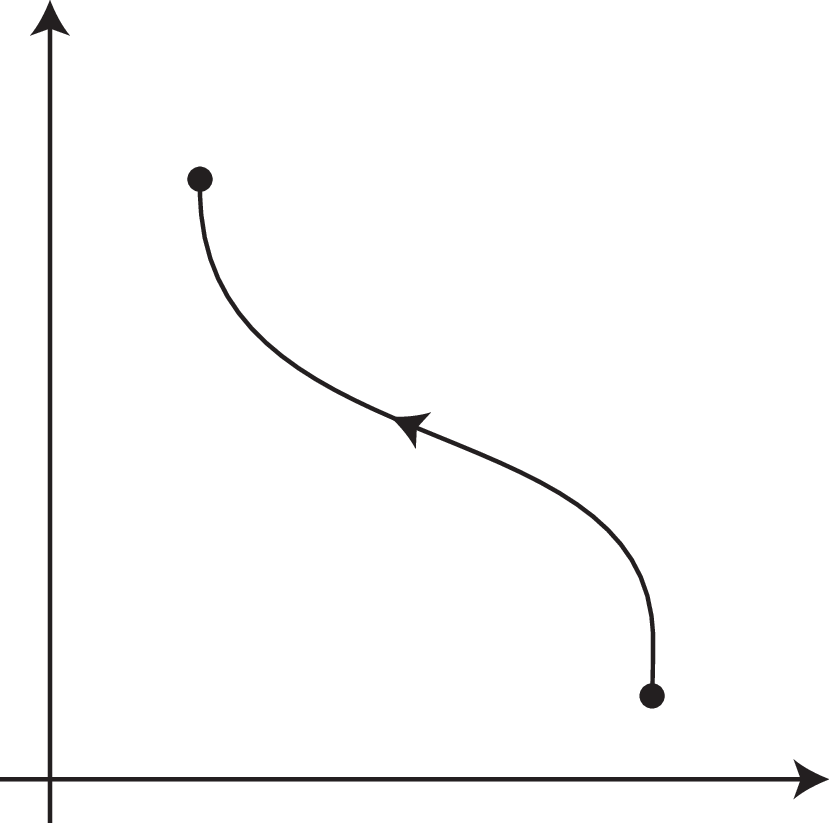}
\put(80,20){\tiny $(f(1),g(1))$} \put(50,-2) {\tiny $f$} \put(0,50)
{\tiny $g$} \put(12,82){\tiny $(f(1+\varepsilon),g(1+\varepsilon))$}
\end{overpic}
\caption{Trajectory of $(f(y),g(y))$. The $f$-axis and $g$-axis do
not necessarily intersect at $(0,0)$ in this figure.} \label{fig:
new f and g}
\end{figure}
Hence $T_{1+\varepsilon}$ is foliated by a Morse-Bott family of Reeb
orbits of slope $\infty$. We write $\alpha$ for the extension of
$\alpha$ to $N\cup (T^2\times[1,1+\varepsilon])$.

We now consider the deformation retract
$$\phi: N\cup (T^2\times[1,1+\varepsilon]) \stackrel\sim \to N,$$
obtained by flowing along the vector field $X=-a(y)\bdry_y$, where
$a(y)=1$ on $T^2\times[1,1+\varepsilon]$ and damps out to zero on
$T^2\times[1-\varepsilon,1]$.  Finally, we perturb $\phi_*\alpha$ on
$N$ so that all Reeb orbits in $int(N)$ become nondegenerate, while
keeping $\partial N$ Morse-Bott. {\em The resulting form will be
called $\alpha$ in the rest of the paper.}

\subsubsection{Extension to $M$.}\label{subsubsection: Extension to M}

The contact form $\alpha$ has the form
$$\alpha = (b+ (y-1)) d\theta + (a + (y-1)^2)dt$$
in some collar $T^2 \times [1- \epsilon, 1]$ of $\partial N$. Here $\epsilon$ is different from the $\varepsilon$ in Section~\ref{subsubsection: construction of the contact form on Nh}.

Choose a decreasing sequence of irrational numbers $\delta_i \to 0$ and a contact form $\alpha_{\delta_i}$ on $T^2 \times [1,2]$ for each $i$ as in Example~\ref{alpha delta}
with $f(1) =a$ and $g(1)=b$. Then $\alpha$ on $N$ and $\alpha_{\delta_i}$ on $T^2 \times [1,2]$ glue to a smooth contact form on $N \cup (T^2 \times [1,2])$. Moreover, there is an increasing sequence $L_i \to + \infty$ such that all Reeb orbits of $\alpha_{\delta_i}$ in $T^2 \times (1,2)$ have action greater than $L_i$.

Fix a contact form $\alpha_{f,g}$ on $V \simeq D^2 \times S^1$ as in Example~\ref{esempio utile}. For each $i$, a multiple of $\alpha_{f,g}$ glues smoothly to the contact form $\alpha_{\delta_i}$ on $T^2 \times [1,2]$. Let $c_{\delta_i}$ be the scaling factor. Then $\alpha_{\delta_i}$ glues smoothly also to $c_{\delta_i}\alpha_V$, where $\alpha_V$ is the contact form obtained by applying the construction of Lemma~\ref{careful perturbation} to $\alpha_{f,g}$. By putting all three pieces together we obtain the contact forms $\alpha_i$ on $M$.

\subsection{The filtrations ${\mathcal F}_i$.}

For each $i$ we define a filtration ${\mathcal F}_i$ on $ECC^{L_i}(M, \alpha_i')$. We first identify $ECC^{L_i}(M, \alpha_i')$, as a vector space, with a subspace of
$$ECC(V, \alpha_V) \otimes ECC(N, \alpha).$$
This is possible because the Reeb orbits of $\alpha_i'$ in the no man's land have actions greater than $L_i$ and those in $V$ coincide with the Reeb orbits of $\alpha_V$, up to reparametrization. The generators of $ECC^{L_i}(V, \alpha_i')$ will be denoted by $\gamma \otimes \Gamma$, where $\gamma \in ECC(V, \alpha_V)$ and $\Gamma \in ECC(N, \alpha)$.
Choose an identification
$$\eta: H_1(V;\Z)\stackrel\sim\to\Z$$
so that the homology class of the null-homologous knot $K$ is mapped to $1$. Then we define the ascending filtration ${\mathcal F}_i: ECC^{L_i}(M,\alpha_i')\to \Z^{\geq 0}$ as follows:
$${\mathcal F}_i \left(\sum_n \gamma_n\otimes \Gamma_n\right)= \max_n \eta([\gamma_n]).$$
 We define ${\mathcal F}_i^p$ as
${\mathcal F}_i^p = \{ x \in ECC^{L_i}(M,\alpha_i') : {\mathcal F}_i(x) \le p \}$.
Note that these filtrations are uniformly bounded below because ${\mathcal F}_i^p=0$
for $p<0$.

\begin{lemma}\label{lemma: filtration constraint on curves}
Let $u:F\to \R\times M$ be a $J_i'$-holomorphic map which is asymptotic to $\gamma\otimes \Gamma$ at the positive end and to $\gamma'\otimes \Gamma'$ at the negative end. Then
$${\mathcal F}_i(\gamma\otimes\Gamma)\geq {\mathcal F}_i(\gamma'\otimes\Gamma').$$
\end{lemma}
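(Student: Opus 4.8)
The plan is to track the homology class of the intersection of the holomorphic curve with a boundary-parallel torus and apply the positivity-of-intersections result in dimension three (Lemma~\ref{lemma: positive slope}). Recall that $\mathcal{F}_i(\gamma\otimes\Gamma)=\eta([\gamma])$, where $\gamma$ records the orbits of $u$ lying in $V$, and $\eta$ sends the class of the core of $V$ (equivalently, the meridian of $K$) to $1$. Since the contact forms are constructed so that the no man's land $T^2\times(1,2)$ contains no Reeb orbits of action $<L_i$, and $u$ connects generators of $ECC^{L_i}$, the curve $u$ has no ends at orbits in $T^2\times(1,2)$; moreover the Morse-Bott tori $\partial N$ and $\partial V$ are, respectively, negative and positive.

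First I would choose a torus $T=T^2\times\{y_0\}$ with $y_0\in(1,2)$ close to $1$, so that (by Example~\ref{alpha delta}) $T$ is foliated by Reeb orbits of some rational slope $s$, very negative, i.e.\ close to $\infty$. Let $F'=u^{-1}(\R\times(M\setminus V_{y_0}))$, where $V_{y_0}$ is the piece of $M$ on the $N$-side of $T$ (so $M\setminus V_{y_0}\supset V$), and let $\delta=[u_M(F)\cap T]\in H_1(T;\Z)$ oriented as the boundary of $u_M(F)\cap(V\cup(T^2\times[y_0,2]))$. If $u$ has no ends inside $T^2\times[y_0,2]\cup V$ other than those counted by $\gamma'\otimes\Gamma'$ on the $V$-side, then $\delta$ is homologous in $H_1(T^2\times[y_0,\tfrac52]/\sim)$ — hence in $H_1(V)$ via the gluing $\rho=$ const — to $[\gamma]-[\gamma']$ as elements of $H_1(V;\Z)$ pushed to $T$. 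Here I use that the inclusion $T\hookrightarrow T^2\times[y_0,2]$, resp.\ $T\hookrightarrow V$, induces the relevant maps on $H_1$, and that $\eta$ is defined via the meridian class. Thus $\langle\delta,\text{meridian}\rangle=\eta([\gamma])-\eta([\gamma'])=\mathcal{F}_i(\gamma\otimes\Gamma)-\mathcal{F}_i(\gamma'\otimes\Gamma')$.

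Now apply Lemma~\ref{lemma: positive slope} with this torus $T$ and slope $s$: since $u_M(\partial F')$ is disjoint from a neighborhood of $T$ (after choosing $y_0$ generically and using that $u$ has no ends at $T$), we get $\delta\cdot s\ge0$. Because $s$ has slope close to $\infty$ (i.e.\ close to the meridian direction) but is a genuine rational slope different from the meridian for generic $y_0$, the inequality $\delta\cdot s\ge0$ combined with the explicit basis $(\vartheta,t)$ gives that the meridian-coefficient of $\delta$ is $\ge0$; more carefully, since every Reeb orbit in $T^2\times[y_0,2]\cup(\text{part of }V)$ relevant to $\gamma,\gamma'$ has slope $\ge s$ in the appropriate sense, pairing against $s$ isolates exactly $\eta([\gamma])-\eta([\gamma'])\ge0$. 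This yields $\mathcal{F}_i(\gamma\otimes\Gamma)\ge\mathcal{F}_i(\gamma'\otimes\Gamma')$, as desired.

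The main obstacle I anticipate is bookkeeping the orientations and the identification of $\delta$ with $[\gamma]-[\gamma']$ precisely — in particular handling the possibility that $u$ has ends at orbits sitting in $V$ at various radii $V_j$, so that $\delta$ is not literally $[\gamma]-[\gamma']$ on the nose but differs by the classes of those ends, all of which have nonnegative meridian-pairing by property~(2) of the contact forms (the Reeb vector field is positively transverse to the meridian disks in $int(V)$) and by the slope conditions of Lemma~\ref{careful perturbation}(c). Once one checks that every end of $u$ inside $V$ contributes nonnegatively to $\delta\cdot s$, the argument of Lemma~\ref{lemma: positive slope} closes. I would also need to confirm, using the Blocking Lemma and the Trapping Lemma as in Lemma~\ref{lemma: bdry squared is zero for mfld with torus bdry}, that no component of $u$ escapes to create uncontrolled intersections with $T$, but this is routine given the setup already established.
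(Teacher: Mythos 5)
Your strategy --- intersecting $u_M(F)$ with a boundary-parallel torus and invoking positivity of intersections in dimension three --- is the same one the paper uses; the difference is in which tori you intersect with, and that difference leaves a gap. The paper intersects with the concentric tori $T_n=\partial V_n$ \emph{inside} $V$ (from Lemma~\ref{careful perturbation}), whose irrational slopes $r_n\to\infty$, observes that $\delta_n$ stabilizes for $n\gg 0$ with $\eta(\delta_n)=\eta(\gamma')-\eta(\gamma)$, and then takes $n\to\infty$ so that $\delta_n\cdot r_n\ge 0$ forces $\eta(\delta_n)\le 0$ as $r_n$ converges to the meridian direction. You instead pair a fixed class $\delta$ against a \emph{single} rational slope $s$ ``close to $\infty$'' coming from one torus $T^2\times\{y_0\}$ in the no man's land and assert that this gives the sign of $\eta(\delta)$. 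That step fails: writing $\delta=p\mu+q\lambda$ in a meridian/longitude basis with $s\approx\mu$, the inequality $\delta\cdot s\ge 0$ says roughly $-q+\varepsilon p\ge 0$ for some small $\varepsilon>0$, which does \emph{not} force $q\le 0$ unless $\varepsilon$ is driven to $0$ while $\delta$ stays fixed --- and the meridian-coefficient $p$ of $\delta$ is not determined by the classes $[\gamma],[\gamma']\in H_1(V)$, so the $\varepsilon p$ term cannot be ignored for a single $y_0$. The sentence ``since every Reeb orbit \ldots has slope $\ge s$ in the appropriate sense, pairing against $s$ isolates exactly $\eta([\gamma])-\eta([\gamma'])\ge 0$'' does not repair this, because $\delta\cdot s$ is an intersection number on one torus and is not a sum over orbits sitting on tori of different slopes. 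A limit is needed: either send $y_0\to 1^+$ (noting that $\delta$ is independent of $y_0$ since $u$ has no ends at orbits in the no man's land), or use the interior tori $T_n$ as the paper does.

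Two further remarks. You write that $\eta$ ``sends the class of the core of $V$ (equivalently, the meridian of $K$) to $1$''; the core of $V$ \emph{is} $K$, whereas the meridian of $K$ bounds a disk in $V$ and has $\eta=0$, so these are not equivalent. Consequently, the quantity whose sign you must control is the longitude- (core-)coefficient of $\delta$, not its meridian-coefficient, and this slip propagates into your second paragraph. Finally, the paper first passes, via condition $(\mathbf{MB}_1)$, to a $J_i$-holomorphic Morse-Bott building before running the intersection argument; since $\alpha_i'$ agrees with $\alpha_i$ near the tori $T_n$, this step is not strictly required for the version of the argument you propose, but it is the route the paper takes and keeps the setup aligned with Lemma~\ref{lemma: constraints on ends}.
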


\begin{proof}
By ({\bf MB}$_1$) there is a $J_i$-holomorphic Morse-Bott building from $\gamma \otimes \Gamma$ to $\gamma' \otimes \Gamma'$.
Let $\overline{u} : \overline{F} \to \R \times M$ be the holomorphic part of this
building --- which may be disconnected because $\alpha_i$ is not necessarily nice ---
and denote the projection to $M$ by $\overline{u}_M$.

We will use the tori $T_n=\bdry V_n$ in $V$ from Lemma \ref{careful perturbation} to constrain the ends of $\overline{u}$. We recall that $T_n$ is foliated by dense Reeb
orbits of irrational slope $r_n$ with $r_n \to + \infty$. Let $\delta_n$ be the homology class of $\overline{u}_M(\overline{F}) \cap T_n$, oriented as the boundary
of $\overline{u}_M(\overline{F})\cap V_n$. If $n$ is sufficiently large, then  all the orbits in $\gamma$ and $\gamma'$  that are not in the Morse-Bott family on $\bdry V$ are contained in $V_n$.  Hence the sequence $\delta_n$ is constant for $n\gg 0$ and $\eta(\delta_n) = \eta(\gamma')- \eta(\gamma)$.

Regarding both $\delta_n$ and $r_n$ as homology classes in $H_1(T_n; \R)$ and orienting $T_n$ as the boundary of $V_n$, for $n\gg 0$ we obtain $\delta_n \cdot r_n \ge 0$ by the positivity of intersections in dimension three (Lemma~\ref{lemma: positive slope}). Taking the limit $n \to \infty$ and using the fact that the sequence $r_n$ converges
to the slope of the Reeb vector field on $\partial V$, we obtain $\eta(\delta_n) \le 0$ for $n\gg 0$. This implies ${\mathcal F}_i(\gamma\otimes\Gamma)\geq {\mathcal F}_i(\gamma'\otimes\Gamma')$.
\end{proof}

\begin{cor}\label{cor: differential respects filtration}
The differential of $ECC^{L_i}(M, \alpha_i')$ respects the filtration ${\mathcal F}_i$.
\end{cor}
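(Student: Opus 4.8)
The plan is to derive Corollary~\ref{cor: differential respects filtration} as an immediate formal consequence of Lemma~\ref{lemma: filtration constraint on curves}. Recall that the differential of $ECC^{L_i}(M,\alpha_i')$ counts (mod $2$) the elements of $\mathcal{M}_{J_i'}^{I=1}(\gamma\otimes\Gamma,\gamma'\otimes\Gamma')/\R$ whose connector components are trivial cylinders. The key point is that for any such $J_i'$-holomorphic map $u$ contributing to $\langle\bdry(\gamma\otimes\Gamma),\gamma'\otimes\Gamma'\rangle$, Lemma~\ref{lemma: filtration constraint on curves} gives $\mathcal{F}_i(\gamma\otimes\Gamma)\ge \mathcal{F}_i(\gamma'\otimes\Gamma')$.

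First I would fix $p\ge 0$ and a generator $\gamma\otimes\Gamma$ with $\mathcal{F}_i(\gamma\otimes\Gamma)=\eta([\gamma])\le p$. Then every generator $\gamma'\otimes\Gamma'$ appearing with nonzero coefficient in $\bdry(\gamma\otimes\Gamma)$ is connected to $\gamma\otimes\Gamma$ by a $J_i'$-holomorphic curve $u$ as above, so $\eta([\gamma'])=\mathcal{F}_i(\gamma'\otimes\Gamma')\le \mathcal{F}_i(\gamma\otimes\Gamma)\le p$. Hence $\bdry(\gamma\otimes\Gamma)\in\mathcal{F}_i^p$, and since the generators of filtration level $\le p$ span $\mathcal{F}_i^p$ by definition, we conclude $\bdry(\mathcal{F}_i^p)\subset\mathcal{F}_i^p$ for all $p$. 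This is precisely the statement that $\bdry$ respects the ascending filtration $\mathcal{F}_i$.

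There is essentially no obstacle here: the corollary is a one-line bookkeeping argument once Lemma~\ref{lemma: filtration constraint on curves} is in hand. The only thing worth remarking is that the lemma was stated for an arbitrary $J_i'$-holomorphic map from $\gamma\otimes\Gamma$ to $\gamma'\otimes\Gamma'$, with no hypothesis on the ECH index or on the connector components, so in particular it applies verbatim to the index-one curves (with trivial-cylinder connectors) counted by the differential. Thus the proof reads simply: ``By the definition of $\bdry$ and Lemma~\ref{lemma: filtration constraint on curves}, if $\langle\bdry(\gamma\otimes\Gamma),\gamma'\otimes\Gamma'\rangle\neq 0$ then $\mathcal{F}_i(\gamma'\otimes\Gamma')\le\mathcal{F}_i(\gamma\otimes\Gamma)$, hence $\bdry(\mathcal{F}_i^p)\subset\mathcal{F}_i^p$ for every $p$.''
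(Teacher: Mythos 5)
Your argument is correct and is precisely the intended one: the corollary is labeled as such in the paper because it is an immediate bookkeeping consequence of Lemma~\ref{lemma: filtration constraint on curves}, which applies to arbitrary $J_i'$-holomorphic maps and in particular to the $I=1$ curves counted by $\bdry$. No discrepancy with the paper.
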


For each $i$ the filtration ${\mathcal F}_i$ induces a spectral sequence
$E^r({\mathcal F}_i)$ which converges to $ECH^{L_i}(M, \alpha_i')$.
The terms $E^0({\mathcal F}_i)$ correspond to
the graded complexes associated to ${\mathcal F}_i$ and can be identified
(as vector spaces) with subspaces of $ECC(V, \alpha_V) \otimes ECC(N, \alpha_N)$.
 The differential $\partial_0$ on
$E^0({\mathcal F}_i)$  is the filtration-preserving component of the differential
on $ECC^{L_i}(M, \alpha_i')$. Every sheet
$E^r({\mathcal F}_i)$ has a grading coming from ${\mathcal F}_i$, and
the component in degree $p$ of  $E^r({\mathcal F}_i)$ will be denoted
by $E^r_p({\mathcal F}_i)$.

\subsection{Description of the differential on $E^0({\mathcal F}_i)$} \label{subsection: step 1}

In this subsection we compute the differential $\partial_0$ on $E^0({\mathcal F}_i)$ using Morse-Bott techniques. This is possible, in spite of the fact that the contact forms $\alpha_i$ are not necessarily nice, because of the following lemma.

\begin{lemma} \label{lemma: constraints on ends}
Let $\tilde{u}$ be a Morse-Bott building from $\gamma \otimes \Gamma$ to $\gamma' \otimes \Gamma'$ in the symplectization of $(M, \alpha_i)$ and let $u$ be its holomorphic part.  If $u$ has a positive end at $\partial N$ or a negative end at $\partial V$, then ${\mathcal F}_i(\gamma' \otimes \Gamma')< {\mathcal F}_i(\gamma \otimes \Gamma)$.
\end{lemma}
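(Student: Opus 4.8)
The plan is to combine the Trapping Lemma (Lemma~\ref{lemma: trapping}) with the positivity of intersections in dimension three (Lemma~\ref{lemma: positive slope}), exactly as in the proof of Lemma~\ref{lemma: filtration constraint on curves}, but now extracting a \emph{strict} inequality from the presence of an end at $\partial N$ or $\partial V$. First I would recall that $\partial N = T^2\times\{1\}$ carries a \emph{negative} Morse-Bott family and $\partial V = T^2\times\{2\}$ carries a \emph{positive} Morse-Bott family, both of slope $\infty$ (meridional). By the Trapping Lemma, any one-sided end of a holomorphic curve at a negative Morse-Bott torus is a negative end, and any one-sided end at a positive Morse-Bott torus is a positive end. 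Consequently, a \emph{positive} end of $u$ at $\partial N$ and a \emph{negative} end of $u$ at $\partial V$ are both necessarily \emph{two-sided}, i.e.\ their projections to $M$ genuinely cross the corresponding Morse-Bott torus; this is the geometric input that will force a jump in the filtration.

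Next I would run the annular-torus argument from Lemma~\ref{lemma: filtration constraint on curves}. Write $\overline u:\overline F\to\R\times M$ for the holomorphic part of $\tilde u$ (possibly disconnected), $\overline u_M$ its projection to $M$, and for $n\gg 0$ let $T_n=\partial V_n\subset V$ be the concentric tori of Lemma~\ref{careful perturbation}, foliated by dense Reeb orbits of irrational slope $r_n\to+\infty$, oriented as $\partial V_n$. As in that lemma, the homology class $\delta_n = [\overline u_M(\overline F)\cap T_n]\in H_1(T_n;\Z)$ is constant for $n$ large, and $\eta(\delta_n)=\eta([\gamma'])-\eta([\gamma]) = {\mathcal F}_i(\gamma'\otimes\Gamma')-{\mathcal F}_i(\gamma\otimes\Gamma)$, using that all orbits of $\gamma,\gamma'$ not on $\partial V$ lie in $V_n$ for $n\gg 0$. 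Positivity of intersections in dimension three gives $\delta_n\cdot r_n\ge 0$, hence in the limit $\eta(\delta_n)\le 0$, i.e.\ ${\mathcal F}_i(\gamma'\otimes\Gamma')\le{\mathcal F}_i(\gamma\otimes\Gamma)$. This reproves Lemma~\ref{lemma: filtration constraint on curves}; the point now is to rule out equality.

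To get strictness, suppose ${\mathcal F}_i(\gamma'\otimes\Gamma')={\mathcal F}_i(\gamma\otimes\Gamma)$. Then $\eta(\delta_n)=0$, so by the ``if and only if'' clause of Lemma~\ref{lemma: positive slope} we must have $\overline u_M(\overline F)\cap T_n=\varnothing$ for all large $n$ (since $\delta_n\cdot r_n=0$ and $r_n$ is irrational forces $\delta_n=0$, and then $\overline u_M(\overline F)\cap T_n=\varnothing$). By Blocking Lemma~(1) applied to the appropriate component of $\overline u$, this means no component of $\overline u$ meets $V_n$, and in particular no component can have a two-sided end at $\partial V$ (such an end would force intersections with nearby $T_n$ by Blocking Lemma~(2), using that the relevant homology class on $T_n$ is a nonzero multiple of the meridian). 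A symmetric argument on the $N$-side --- using tori $T^2\times\{1-\epsilon\}$ in the collar of $\partial N$, also foliated by irrational-slope Reeb orbits after the perturbation, or equivalently applying the same positivity/Blocking argument near $\partial N$ --- shows that $\overline u$ has no two-sided end at $\partial N$. Combined with the first paragraph (a positive end at $\partial N$, or a negative end at $\partial V$, must be two-sided by the Trapping Lemma), this contradicts the hypothesis that $u$ has such an end. Hence the inequality is strict: ${\mathcal F}_i(\gamma'\otimes\Gamma')<{\mathcal F}_i(\gamma\otimes\Gamma)$.

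The main obstacle is the bookkeeping at $\partial N$: unlike $\partial V$, the tori $T_n$ used above live inside $V$, so to detect a two-sided end of $\overline u$ at $\partial N$ and convert it into a genuine drop in $\eta$ one must set up the parallel family of irrational-slope tori on the $N$-side of the no man's land and check that a two-sided (hence, by Trapping, \emph{positive}) end at $\partial N$ really does change the class $\delta_n$ computed on those tori --- equivalently, that such an end contributes a nonzero meridional class to $[\overline u_M(\overline F)\cap(T^2\times\{1-\epsilon\})]$ and therefore, via the $H_1$ isomorphism $T^2\times\{1-\epsilon\}\hookrightarrow T^2\times[1-\epsilon,1]$ and then across the no man's land to $T^2\times\{1+\epsilon_i\}$ and into $V$, alters $\eta(\delta_n)$. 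This is exactly the kind of argument carried out in the sketch of Claim~\ref{claim: C zero SFT compactness} and in Lemma~\ref{lemma: V_i includes into V_i+1}, so I expect it to go through, but it is the step that requires care with orientations and with the identification of slopes across the three pieces $N$, $T^2\times[1,2]$, $V$.
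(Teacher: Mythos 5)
Your approach is genuinely different from the paper's, and the strictness step has a gap.

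The paper does not use the concentric tori $T_n\subset V$ to get strictness. Instead, it works in a collar $\overline{U}\simeq T^2\times[1-\epsilon,2+\epsilon]$ of the no man's land, removes small tubular neighborhoods $U_1,\ldots,U_n$ of the distinct Reeb orbits $\eta_1,\ldots,\eta_n$ on $\partial N\cup\partial V$ that $u$ limits to, and computes the boundary class of $\operatorname{Im}(u_M)\cap U$ in $U=\overline{U}\setminus\bigcup U_k$. On each orbit tube boundary $B_k$ one writes $\delta_k=a_k\mu_k+b_k\nu_k$ with $\nu_k$ represented by the Reeb orbit $\eta_k$; positivity of intersection of $u$ with $\R\times\eta_k$ gives $a_k\geq 0$, and the Trapping Lemma (the end in question is two-sided, so $u_M$ genuinely crosses $B_k$) upgrades this to $a_k>0$. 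The homology relations in $H_1(U)$ then force $a_0=a_1=\cdots=a_{n+1}>0$, with $a_0=\eta([\gamma])-\eta([\gamma'])$, giving strictness. The crucial device is to intersect $u$ against the asymptotic Reeb orbit $\eta_k$ inside a small tube around it, not against tori deep inside $V$.

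In your argument, the parenthetical ``since $\delta_n\cdot r_n=0$'' is not established and cannot be extracted from what precedes it. You know $\eta(\delta_n)=0$; but $\eta$ factors as $H_1(T_n;\Z)\to H_1(V;\Z)\cong\Z$, whose kernel is generated by the meridian $m$ of $K$, so $\eta(\delta_n)=0$ only says $\delta_n$ is a (possibly nonzero) multiple $b_nm$. Positivity of intersections gives $\delta_n\cdot r_n\geq 0$, not $=0$; combined with $\eta(\delta_n)=0$ this pins down the \emph{sign} of $b_n$ but allows $b_n\neq 0$. Concretely, if $\delta_n=b_nm$ then $\delta_n\cdot r_n$ is a nonzero multiple of $b_n/r_n$, which is $\geq 0$ and tends to $0$ as $n\to\infty$ without ever having to vanish. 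So the hypothesis of the ``iff'' clause of Lemma~\ref{lemma: positive slope} is precisely what is missing, and you cannot conclude $\overline u_M(\overline F)\cap T_n=\varnothing$. This is exactly the scenario you are trying to rule out: a two-sided end at $\partial V$ does meet $T_n$ for $n$ large, so $\delta_n\cdot r_n>0$, yet $\eta(\delta_n)$ can still vanish. The tori $T_n$ see only the $H_1(V)$-class of the boundary, which suffices for the $\leq$ of Lemma~\ref{lemma: filtration constraint on curves} but is blind to the meridional winding caused by the end. You correctly flag the parallel difficulty on the $N$-side in your last paragraph, but it is already fatal on the $V$-side; the paper's decomposition with the orbit tubes $U_k$ is the step you would need to import.
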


\begin{proof}
We recall that $K$ denotes the core of $V$ and that $S \subset N$ is a properly embedded surface such that $\partial S$ defines a longitude of $K$. In the case of an open book
decomposition $K$ is the binding and $S$ is a page.

Let $\overline{U}\simeq T^2 \times [1-\epsilon, 2+ \epsilon] \subset M$ be a small neighborhood of the no man's land $T^2 \times [1,2]$ such that $u$ has no ends at Reeb orbits intersecting $\overline{U}$, except at orbits in ${\mathcal N}_1$ or ${\mathcal N}_2$. Assume without loss of generality that the ends of $u$ limit to distinct orbits $\eta_1,\dots,\eta_n$. Then we let $U_k$ be a small tubular neighborhood of $\eta_k$ for $k=1,\dots,n$ and let $U = \overline{U} -(U_1 \cup \ldots \cup U_n)$. Let $B_k = -\partial U_k$, $k=1,\dots,n$, $B_0=(\bdry \overline U)\cap V$, and $B_{n+1}=(\bdry \overline U)\cap N$. We orient each $B_k$, for $k=0, \ldots, n+1$, using the boundary orientation of $U$.

On each $B_k$, $k=0,\dots,n+1$, we choose an oriented basis of curves $(\mu_k,\nu_k)$ as follows: On $B_0$ and $B_{n+1}$ we choose $\mu_0$ and $\mu_{n+1}$ so that they are longitudes of $K$ coming from $S$ and $\nu_0$ and $\nu_{n+1}$ so that they are meridians of $K$. On each $B_k$, $k=1, \ldots, n$, we choose $\nu_k$ so that it is the longitude of the Reeb orbit in $U_k$ induced by the Morse-Bott torus (which is either $\partial N$ or $\partial V$) and $\mu_k$ so that it is a meridian of $U_k$.  The curves $\nu_k$, $k=0,\dots,n+1$, are oriented by the vector field $\bdry_t$ and the curves $\mu_k$, $k=0, \ldots, n+1$, are oriented by $\mu_k \cdot \nu_k =1$.

By abuse of notation we identify the oriented curves $\mu_k$ and $\nu_k$ with their homology classes in $H_1(U; \Z)$. With this convention $\nu_0 = \nu_1=\dots = \nu_{n+1}$ and  $\mu_0+\mu_1 +\dots +\mu_{n+1}=0$.  Moreover these relations generate the kernel of the  map
$$\bigoplus_{k=0}^{n+1} H_1(B_k; \Z) \to H_1(U; \Z)$$
 induced by the inclusion.
Let $C=\op{Im}(u_M) \cap U$. Then $\partial C = \delta_0 + \ldots + \delta_{n+1}$, where $\delta_k \subset B_k$ is given the orientation induced by $C$.
We will view $\delta_k$ either as an element of $H_1(B_k; \Z)$ or as an element of $H_1(U; \Z)$. Then $\delta_0 + \ldots + \delta_{n+1}=0$ in $H_1(U; \Z)$. For each $k$ we write $\delta_k = a_k \mu_k + b_k\nu_k$.

By the Trapping Lemma and the positivity of intersections in dimension three, we have $\delta_k \cdot \nu_k \ge 0$, $k=1,\dots,n$, because the curves $\nu_k$ can be represented by Reeb orbits. (Here we are using a variation of Lemma \ref{lemma: positive slope} which is an immediate consequence of the positivity of intersections in dimension four.) Then, for all $k= 1, \ldots, n$, $a_k \ge 0$; moreover, if $\delta_k$ corresponds either to a positive end at $T_1$ or to a negative end at $T_2$, then $a_k >0$. The relations in $H_1(U; \Z)$ among the
curves $\mu_k$ and $\nu_k$ imply that $a_0= \ldots = a_{n+1}$, so $a_0 >0$ if $u$ has either a positive end at $\partial N$ or a negative end at $\partial V$. Then
$$a_0 = \eta(\gamma) - \eta(\gamma') = {\mathcal F}_i(\gamma \otimes \Gamma) - {\mathcal F}(\gamma' \otimes \Gamma')>0$$
and this proves the lemma.
\end{proof}

\begin{cor} \label{cor: Morse-Bott is possible}
Let $\gamma \otimes \Gamma$ and $\gamma' \otimes \Gamma'$ be generators of
$ECC^{L_i}(M, \alpha_i')$. If ${\mathcal F}_i(\gamma \otimes \Gamma) =
{\mathcal F}_i(\gamma' \otimes \Gamma')$ and $\tilde{u}$ is a Morse-Bott building
with $I(\tilde{u}) =1$ in the symplectization of $(M, \alpha_i)$ from $\gamma \otimes
\Gamma$ to $\gamma' \otimes \Gamma'$, then the holomorphic part of $\tilde{u}$
 has at most one nontrivial irreducible component.
\end{cor}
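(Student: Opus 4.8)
The plan is to deduce Corollary~\ref{cor: Morse-Bott is possible} from the argument already carried out in the proof of Lemma~\ref{fastidio}, the only genuinely new ingredient being Lemma~\ref{lemma: constraints on ends}. The point is that, although the contact forms $\alpha_i$ are not nice, the hypothesis ${\mathcal F}_i(\gamma\otimes\Gamma)={\mathcal F}_i(\gamma'\otimes\Gamma')$ prevents the holomorphic part of an $I=1$ building from having ends of both signs at the two Morse-Bott tori $\partial N$ and $\partial V$, and this is precisely what is needed to run the ``augmentation by gradient trajectories'' trick from Lemma~\ref{fastidio}.

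First I would record that every end of $\tilde u$ is asymptotic to a Reeb orbit of action less than $L_i$: indeed $\gamma\otimes\Gamma$ and $\gamma'\otimes\Gamma'$ have $\alpha_i'$-action less than $L_i$ and action is monotone through the levels of a holomorphic building, so every orbit set appearing (and hence every end) has action less than $L_i$. By property~(5) of Section~\ref{subsec: description} no orbit of action less than $L_i$ lies in the no man's land, so the only Morse-Bott tori that enter the picture are $\partial N$ and $\partial V$. Next I would apply Lemma~\ref{lemma: constraints on ends}: since the two filtration values coincide, the holomorphic part $u$ of $\tilde u$ has no positive end at $\partial N$ and no negative end at $\partial V$; as every holomorphic end is either positive or negative, all ends of $u$ at $\partial N$ are negative and all ends of $u$ at $\partial V$ are positive.

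Then I would argue by contradiction, assuming $u$ has at least two non-connector irreducible components, say $u_1$ and $u_2$ (the general case being identical in all but notation). As in Lemma~\ref{fastidio}, I would augment the ends of $u_1$ and $u_2$ lying on $\partial N$ and $\partial V$ with gradient flow trajectories of the perturbing Morse functions, obtaining Morse-Bott buildings $\tilde u_1,\tilde u_2$, and set $\tilde u'=\tilde u_1\sqcup\tilde u_2$; since $\tilde u$ and $\tilde u'$ have the same ends in the ECH sense and represent the same relative homology class in $H_2(M,\gamma\otimes\Gamma,\gamma'\otimes\Gamma')$, one gets $I(\tilde u')=I(\tilde u)=1$. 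Writing $u_i$ as a $k_i$-fold branched cover of a simply-covered, non-trivial $J_i$-holomorphic curve $v_i$, I would form the very nice, simply-covered building $\tilde v_i$ by augmenting $v_i$ with gradient trajectories, chosen generically so that $\op{ind}(\tilde v_i)>0$ as in Lemma~\ref{lemma: counting Morse-Bott buildings}; by Theorem~\ref{thm: Morse-Bott perturbation of moduli spaces}(2) I would perturb $\tilde v_i$ to a $J_\varepsilon$-holomorphic map $v_{i,\varepsilon}$ with $I(v_{i,\varepsilon})\ge \op{ind}(v_{i,\varepsilon})=\op{ind}(\tilde v_i)\ge 1$ by the ECH index inequality, and take $k_i$ parallel translated copies $v_{i,\varepsilon}^{k_i}$. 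Since $v_{1,\varepsilon}^{k_1}\sqcup v_{2,\varepsilon}^{k_2}$ represents the same relative homology class as $\tilde u'$, \cite[Theorem~5.1 and Proposition~5.6]{Hu2} would yield
\begin{equation*}
1=I(\tilde u)=I(\tilde u')\ge I(v_{1,\varepsilon}^{k_1})+I(v_{2,\varepsilon}^{k_2})\ge k_1 I(v_{1,\varepsilon})+k_2 I(v_{2,\varepsilon})\ge k_1+k_2\ge 2,
\end{equation*}
a contradiction, so the holomorphic part of $\tilde u$ has at most one non-connector irreducible component.

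I expect the main obstacle to be precisely the first step. Because $\alpha_i$ is not nice, without the filtration hypothesis one cannot exclude an $I=1$ building whose non-connector pieces straddle $\partial N$ or $\partial V$ with ends of both signs; in that situation the augmentation step is not available and the $I$-additivity estimate collapses. Lemma~\ref{lemma: constraints on ends}, applied under the assumption ${\mathcal F}_i(\gamma\otimes\Gamma)={\mathcal F}_i(\gamma'\otimes\Gamma')$, is exactly what rules this out, and everything afterward is a verbatim adaptation of the proof of Lemma~\ref{fastidio}.
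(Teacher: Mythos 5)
Your proposal is correct and follows essentially the same route as the paper's own proof: apply Lemma~\ref{lemma: constraints on ends} under the filtration-equality hypothesis to conclude that all ends of the holomorphic part at $\partial N$ are negative and all ends at $\partial V$ are positive, and then run the augmentation-and-ECH-index-additivity argument of Lemma~\ref{fastidio} verbatim to rule out two non-connector components. The paper simply defers to Lemma~\ref{fastidio} at this point rather than repeating it, and omits your (correct but implicit) preliminary remark about action monotonicity excluding ends in the no man's land; otherwise the two arguments are the same.
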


\begin{proof}
 Let $u$ be the holomorphic part of $\tilde{u}$. By Lemma~\ref{lemma: constraints on ends}, all ends of $u$ at $\partial N$
are negative and all ends of $u$ at $\partial V$ are positive. Then the structure of
$\tilde{u}$ is simple enough that the argument of Lemma~\ref{fastidio} implies
that $u$ has a unique irreducible component which is not a connector.
\end{proof}

{\em Corollary~\ref{cor: Morse-Bott is possible} implies that, for the purpose of
computing the differential $\partial_0$ of $E^0({\mathcal F}_i)$, we can use Morse-Bott
theory as if the contact forms $\alpha_i$ were nice.}

In order to describe the differential concisely we introduce the following notation. Given two  orbit sets $\gamma'=\prod\gamma_i^{m_i'}$ and
$\gamma=\prod \gamma_i^{m_i}$  (in multiplicative notation),
we set $\gamma/\gamma'= \prod\gamma_i^{m_i-m_i'}$ if $m_i'\leq m_i$ for all $i$; otherwise we set $\gamma/\gamma'=0$. We also call $T_1= \partial N$ and $T_2 = \partial V$.

We now prove the following lemma, which describes the differential
$\bdry_0$ on $E^0$ in some detail:

\begin{lemma} \label{lemma: more precise version of bdry zero}
After identifying $E_0({\mathcal F}_i)$, as a vector space, with a subspace of
$ECC(V, \alpha_V) \otimes ECC(N, \alpha)$, the differential $\bdry_0$ is given by:
\begin{equation} \label{equation for bdry 0, second version}
\bdry_0(\gamma \otimes\Gamma) =  (\bdry_V \gamma)\otimes\Gamma +
(\gamma/e') \otimes h\Gamma + (\gamma/h')\otimes e\Gamma +\gamma
\otimes (\bdry_{N} \Gamma).
\end{equation}
Here $\gamma$ is an orbit set of $V$; if $h$ divides $\Gamma$, then
$h\Gamma$ is understood to be $0$; and $\bdry_X$ is the differential
on the subset $X\subset M$.
\end{lemma}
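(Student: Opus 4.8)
The plan is to compute $\partial_0$ end-to-end from the Morse-Bott description of the differential on $ECC^{L_i}(M,\alpha_i)$, using Corollary~\ref{cor: Morse-Bott is possible} to reduce to buildings whose holomorphic part has a single nontrivial irreducible component (plus connectors). Fix generators $\gamma\otimes\Gamma$ and $\gamma'\otimes\Gamma'$ with ${\mathcal F}_i(\gamma\otimes\Gamma)={\mathcal F}_i(\gamma'\otimes\Gamma')$, and let $\tilde u$ be an $I=1$ Morse-Bott building contributing to $\partial_0$. First I would invoke Lemma~\ref{lemma: constraints on ends}: since the filtration level is unchanged, $u$ has no positive end at $\partial N=T_1$ and no negative end at $\partial V=T_2$. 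Combined with the Trapping Lemma (negative Morse-Bott torus $T_1$ admits only negative ends of nontrivial curves, positive Morse-Bott torus $T_2$ only positive ends), the only ways $\tilde u$ can touch the no man's land boundary tori are: a negative end at an orbit of ${\mathcal N}_1$ (i.e.\ $e$ or $h$), or a positive end at an orbit of ${\mathcal N}_2$ (i.e.\ $e'$ or $h'$). Since the Reeb orbits in the interior of the no man's land have action $>L_i$ and we work below action $L_i$, no component of $u$ can have an end in $T^2\times(1,2)$.

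Next I would stratify the principal part $u_0$ of $\tilde u$ according to where it lives. By the Blocking Lemma (Lemma~\ref{lemma: blocking lemma}(1)), applied to the tori $T_n=\partial V_n$ exhausting $int(V)$ and to the tori in a collar of $\partial N$, the image of $u_0$ is either contained in $\R\times int(N)$, or contained in $\R\times V$, or it is a curve crossing between the two pieces. But a crossing curve would necessarily have both a positive end at $T_1$ \emph{and} a negative end at $T_2$ after accounting for the Trapping Lemma constraints, or would have a change of filtration level — this is exactly what is excluded. So there are precisely three cases for $u_0$: (a) $u_0\subset \R\times int(N)$, contributing $\gamma\otimes(\partial_N\Gamma)$; (b) $u_0\subset\R\times int(V)$, contributing $(\partial_V\gamma)\otimes\Gamma$; and (c) $u_0$ is a curve in $\R\times(T^2\times[1,2]\cup\text{collars})$ with exactly one positive end at an orbit of ${\mathcal N}_2$ and one negative end at an orbit of ${\mathcal N}_1$, together with the appropriate gradient flow lines. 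For case (c), the finite energy foliation ${\mathcal Z}_2$ of $\R\times T^2\times[1,2]$ from Lemma~\ref{lemma: foliation in T^2 x [1,2]}, together with Lemma~\ref{fef constrains curves} and Remark~\ref{rmk: in cobordisms too}, forces $u_0$ to be one of the holomorphic cylinders $Z_{s,\vartheta}$, which is a \emph{once-covered} cylinder from an orbit of ${\mathcal N}_2$ to an orbit of ${\mathcal N}_1$. The gradient flow line data in the Morse-Bott building then determines which of $e'/h'$ the cylinder leaves from and which of $e/h$ it lands on; by Lemma~\ref{lemma: counting Morse-Bott buildings} and the index/Conley-Zehnder bookkeeping (property (7)--(8) of the contact forms: $\mu(e')=1,\mu(h')=0,\mu(e)=-1,\mu(h)=0$), the $I=1$ buildings are exactly those going $h'\to h$ (lowering the $e'$-exponent by one, raising $h$-exponent by one) and those going $e'\to e$ — matching the middle two terms $(\gamma/e')\otimes h\Gamma$ and $(\gamma/h')\otimes e\Gamma$ of \eqref{equation for bdry 0, second version}. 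The mod-$2$ count of each such building is one by Theorem~\ref{thm: Morse-Bott perturbation of moduli spaces}(3) applied to the cylinder $Z_{s,\vartheta}$, whose automatic transversality is Lemma~\ref{lemma: Z_2 is regular}.

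I would then assemble these contributions. One must check that the $e'\to e$ and $h'\to h$ cylinders occur with multiplicity forcing the exponent shift to be by exactly one, which follows because the cylinders in ${\mathcal Z}_2$ are once-covered and because $h,h'$ are hyperbolic (so appear with exponent $\le 1$), while for $e,e'$ the relevant incoming/outgoing partitions are trivial ($(1,\dots,1)$ on one side) by Fact~\ref{basic fact about partitions} — this is the same mechanism used in Lemma~\ref{lemma: general elections}. It must also be verified that a connector over $e'$ at the top end or over $e$ at the bottom end of the cylinder $u_0$ does not produce extra contributions: a branched cover of the trivial cylinder over $e'$ is allowed by Definition~\ref{defn: Morse-Bott simply covered}, but the index count (the cylinder already saturates $I=1$) rules out attaching a nontrivial branched connector, exactly as in Lemma~\ref{lemma: counting Morse-Bott buildings}. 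Finally, summing over the three cases gives precisely \eqref{equation for bdry 0, second version}, with the conventions $h\Gamma=0$ if $h\mid\Gamma$ and $\gamma/e'=0,\gamma/h'=0$ when the respective exponent is zero built in.

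\textbf{Expected main obstacle.} The delicate point is case (c): ruling out \emph{multiply-covered} or more complicated principal parts in the no man's land and pinning the mod-$2$ count to one. This requires carefully combining (i) the automatic transversality of ${\mathcal Z}_2$ (Lemma~\ref{lemma: Z_2 is regular}, which needs at least one unconstrained end), (ii) the foliation-constraint argument (Lemma~\ref{fef constrains curves}, Remark~\ref{rmk: in cobordisms too}) to show any relevant somewhere-injective curve \emph{is} a leaf $Z_{s,\vartheta}$, and (iii) the Morse-Bott gluing count of Theorem~\ref{thm: Morse-Bott perturbation of moduli spaces}(3), taking care that the gradient-trajectory augmentations at both ends are correctly accounted for so that exactly the $e'\to e$ and $h'\to h$ buildings survive at $I=1$ and each contributes $1$. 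The index bookkeeping for which pairs of critical points are connected by $I=1$ gradient flows, using the Conley-Zehnder data in properties (7)--(8), is routine but must be done consistently with the framings coming from $\partial N$ and $\partial V$.
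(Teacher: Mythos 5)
Your overall strategy matches the paper's: reduce to very nice Morse-Bott buildings via Corollary~\ref{cor: Morse-Bott is possible}, use Lemma~\ref{lemma: constraints on ends} together with the Trapping and Blocking Lemmas to confine the principal part to $\R\times N$, $\R\times V$, or $\R\times T^2\times[1,2]$, and in the latter case identify it with a leaf $Z_{s,\vartheta}$ of the finite energy foliation via Lemmas~\ref{lemma: foliation in T^2 x [1,2]}, \ref{lemma: Z_2 is regular}, and~\ref{fef constrains curves}.

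However, there is a concrete error in the index bookkeeping for the no-man's-land cylinders, and it is not a typo you can wave away. You claim the $I=1$ buildings are ``those going $h'\to h$'' and ``those going $e'\to e$,'' citing $\mu(e')=1$, $\mu(h')=0$, $\mu(e)=-1$, $\mu(h)=0$. But for a cylinder in $T^2\times[1,2]$ with the trivialization coming from the Morse-Bott tori, the relative Chern class and relative self-intersection vanish, so $I$ is just the difference of Conley-Zehnder indices. That gives $I(h'\to h)=0-0=0$, $I(e'\to e)=1-(-1)=2$ --- neither is $1$. The $I=1$ buildings are $e'\to h$ (with $I=1-0=1$) and $h'\to e$ (with $I=0-(-1)=1$), which is what the paper asserts and what actually produces the middle two terms $(\gamma/e')\otimes h\Gamma$ and $(\gamma/h')\otimes e\Gamma$. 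Your parenthetical ``lowering the $e'$-exponent by one, raising $h$-exponent by one'' describes $e'\to h$ correctly, but you have labeled it $h'\to h$, and the label is what your index computation acts on. If you had actually run the computation with the transitions you wrote down, you would have concluded $I=0$ and $I=2$ and the middle terms would have disappeared. You land on the right formula only because you also wrote the correct terms of~\eqref{equation for bdry 0, second version}, not because your index argument justifies them. You also omit the observation (which the paper makes explicit) that the gradient-trajectory cylinders $e'\to h'$ in $\mathcal{N}_2$ and $h\to e$ in $\mathcal{N}_1$ come in canceling pairs, so they contribute zero --- this is needed to know that $\partial_V e'$ and $\partial_N h$ have no extra terms beyond those already accounted for.
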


\begin{proof}
Corollary~\ref{cor: Morse-Bott is possible} and Proposition~\ref{prop: from generic
to MB} imply that $\partial_0$ on $E^0({\mathcal F}_i)$ can be computed by counting
 $I=1$ very nice Morse-Bott buildings in the symplectization of $(M, \alpha_i)$ which do not decrease the filtration level.

The differential $\bdry_0$ does not count holomorphic curves which cross $\R\times T_1 = \R \times \partial N$ or $\R\times T_2 = \R \times \partial V$:  Indeed, if $u$ is a holomorphic curve which contributes to $\bdry_0$ and $u_M$ its projection to $M$, then the homology classes $[\op{Im}(u_M)\cap T_{1\pm \varepsilon}]\in H_1(T_{1\pm \varepsilon})$ and $[\op{Im}(u_M)\cap T_{2\pm \varepsilon}]\in H_1( T_{2\pm \varepsilon})$ (for $\varepsilon>0$ small) have slope $\infty$, and we apply the Blocking Lemma (Lemma~\ref{lemma: blocking lemma}(2)). This still allows for the
possibility of curves that are negatively asymptotic to orbits of $T_1$ or positively
asymptotic to orbits in $T_2$. (Curves which are positively asymptotic to orbits of $T_1$
or negatively asymptotic to orbits of $T_2$ are ruled out by Lemma \ref{lemma:
constraints on ends} because they have been shown to decrease the filtration level.) Such curves are contained in $\R\times V$,  $\R\times T^2\times[1,2]$, or $\R\times N$ by a combination of the Trapping Lemma  (Lemma~\ref{lemma: trapping}) and the Blocking Lemma  (Lemma~\ref{lemma: blocking lemma}).

Curves in $\R\times V$ contribute to the term $(\bdry_V \gamma)\otimes \Gamma$, while curves in $\R \times N$ contribute to $\gamma \otimes \partial_N(\Gamma)$.  Note that there are two cylinders from $e'$ to $h'$ and two cylinders from $h$ to $e$ corresponding to gradient trajectories on $\mathcal{N}_2$ and $\mathcal{N}_1$; these give $\bdry_0 (e'\otimes 1)=0$ and $\bdry_0 (1\otimes h)=0$.

Next we consider curves in $\R\times int(T^2\times[1,2])$. By Lemma~\ref{fef constrains curves}, the only somewhere injective curves in $\R\times int(T^2\times[1,2])$ are the cylinders $Z_{s,\theta}$ defined in Lemma \ref{lemma: foliation in T^2 x [1,2]}. (Remember that we are ignoring the curves which are asymptotic to the orbits in $int(T^2\times[1,2])$ because they have action larger than $L_i$.)  By Lemma~\ref{lemma: Z_2 is regular}, the cylinders $Z_{s,\theta}$ satisfy automatic transversality as long as at least one of the ends is treated as unconstrained. Branched covers of $Z_{s,\theta}$ of degree $>1$ are not counted in the differential since they have $I>1$  (after augmenting them with cylinders corresponding to gradient trajectories). Modulo translations in the $s$-direction, there is a unique $I=1$ Morse-Bott building from $h'$ to $e$, which gives the term $(\gamma/h') \otimes e \Gamma$, and a unique $I=1$ Morse-Bott building from $e'$ to $h$, which gives the term $(\gamma/e') \otimes h \Gamma$ (adding trivial cylinders to these buildings does not change their ECH index because they satisfy the admissibility conditions (Equations~(23) and (24)) from \cite[Proposition 7.1]{Hu}).
\end{proof}

\subsection{Direct limit} \label{subsection: direct limit}

In this subsection we use a direct limit argument to exclude the Reeb orbits in the no
man's land from the complex computing $ECH(M)$. The limit will be compatible with the
filtrations ${\mathcal F}_i$, so the end result will be a spectral sequence $E^r$
converging to $ECH(M)$. The following lemma is immediate from Corollary~\ref{cor:
direct limit of commensurate contact forms} and the construction of the contact forms
$\alpha_i'$.

\begin{lemma} \label{lemma: direct limit 1}
For an appropriate choice of contact forms $\alpha'_i$ and action thresholds $L_i$, we have
$$ECH(M) = \lim_{i\to\infty} ECH^{L_i}(M,\alpha_i').$$
\end{lemma}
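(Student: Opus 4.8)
\textbf{Proof proposal for Lemma~\ref{lemma: direct limit 1}.}

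The plan is to apply Corollary~\ref{cor: direct limit of commensurate contact forms} to the sequence $\{\alpha_i'\}$. For this I need to verify two things: first, that the sequence $\{\alpha_i'\}$ is commensurate to a fixed contact form on $M$ in the sense of Definition~\ref{defn: commensurate}, with some constant $0<c<1$; and second, that the action thresholds $L_i$ can be chosen so that $L_{i+1}>c^{-3}L_i$ for all $i$, after possibly passing to a subsequence (which does not change the direct limit).

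First I would establish commensurability. The contact forms $\alpha_i$ all agree with the fixed form $\alpha$ on $N$, agree with $c_{\delta_i}\alpha_V$ on $V$ (where the $c_{\delta_i}$ lie in a fixed interval $[c_0,1]$ with $c_0>0$ by property~(3) of Section~\ref{subsec: description}), and on the no man's land $T^2\times[1,2]$ interpolate between these via the forms $\alpha_{\delta_i}$ of Example~\ref{alpha delta}, whose endpoints lie on fixed rays through the origin (Conditions~(5) and~(6) of Example~\ref{alpha delta}). Thus, up to rescaling by bounded factors and composing with diffeomorphisms supported near $T^2\times[1,2]$ and isotopic to the identity, the $\alpha_i$ can all be written as $\phi_i^*(g_i\alpha_1)$ with $g_i$ uniformly bounded above and below away from zero; this is essentially the content of the second technical condition in Section~\ref{subsec: description}, that $\alpha_i$ agrees with a constant multiple of $\alpha_{i+1}$ outside a shrinking collar. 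Passing from $\alpha_i$ to the nondegenerate perturbation $\alpha_i'=f_i\alpha_i$, where $f_i$ is $C^\infty$-close to $1$ and in particular bounded between $\tfrac12$ and $2$, only changes these bounds by a controlled amount. Hence there is a constant $0<c<1$, independent of $i$, such that $\{\alpha_i'\}$ is commensurate to $\alpha_1'$ with constant $c$.

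Second, since the action spectrum of each $\alpha_i'$ is discrete (these forms are $L_i$-nondegenerate), I can, after passing to a subsequence of the $L_i$ and correspondingly of the $\alpha_i'$, arrange that $L_{i+1}>c^{-3}L_i$ while still having $L_i\to\infty$; passing to a subsequence does not affect the direct limit. The maps in the directed system are the ones furnished by Lemma~\ref{tired with all these}, which are exactly those appearing in Corollary~\ref{cor: direct limit of commensurate contact forms}. Applying that corollary yields $ECH(M)=\lim_{i\to\infty}ECH^{L_i}(M,\alpha_i')$, as desired.

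I do not expect a serious obstacle here: the lemma is essentially bookkeeping, assembling facts already built into the construction of the $\alpha_i'$ in Section~\ref{subsec: description}. The only point requiring a little care is confirming that the diffeomorphisms needed to see commensurability can be taken isotopic to the identity and that the uniform $C^0$-bounds on the rescaling functions hold simultaneously for all $i$ — but this follows directly from properties~(3) and~(4) of the enumerated list in Section~\ref{subsec: description} together with the closeness of the $f_i$ to $1$.
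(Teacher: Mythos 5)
Your proof is correct and takes essentially the same route as the paper, which simply declares the lemma "immediate from Corollary~\ref{cor: direct limit of commensurate contact forms} and the construction of the contact forms $\alpha_i'$." You have usefully spelled out what "immediate" means: verifying commensurability via properties (3)–(4) of Section~\ref{subsec: description} and the boundary-matching conditions in Example~\ref{alpha delta}, and arranging the growth condition $L_{i+1}>c^{-3}L_i$ by passing to a subsequence (permissible since the lemma only asserts existence of an appropriate choice of $\alpha_i'$ and $L_i$).
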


The direct limit is taken with respect to  maps
$$\Phi_i : ECH^{L_i}(M, \alpha_i') \to ECH^{L_{i+1}}(M, \alpha_{i+1}')$$
induced by interpolating cobordisms via Lemma~\ref{tired with all these}.

\begin{lemma}\label{continuation maps are nice}
The map $\Phi_i$ is induced by a noncanonical chain map
$$\hat{\Phi}_i : ECC^{L_i}(M, \alpha_i') \to ECC^{L_{i+1}}(M, \alpha_{i+1}')$$
$$\gamma \otimes \Gamma \mapsto \gamma \otimes \Gamma + \mathbf{r}(\gamma \otimes \Gamma),$$
where ${\mathcal F}_{i+1}(\mathbf{r}(\gamma \otimes \Gamma)) < {\mathcal F}_{i+1} (\gamma \otimes \Gamma)$.
\end{lemma}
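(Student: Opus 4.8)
The plan is to combine the Hutchings--Taubes structure theorem for cobordism maps (Theorem~\ref{thm: Hutchings Taubes cobordism map}(i)) with the filtration estimate of Lemma~\ref{lemma: filtration constraint on curves}, following the same pattern used to analyze $\widehat\Phi_+$ in the proof of Proposition~\ref{continuation maps are easy} and in Lemma~\ref{lemma: mahler}. First I would fix a regular $\widehat J_i'$-compatible almost complex structure on the completed interpolating cobordism $(\R\times M,\widehat\lambda_i')$ and let $\widehat\Phi_i$ be the noncanonical chain map produced by Theorem~\ref{thm: Hutchings Taubes cobordism map}(i) (via Lemma~\ref{tired with all these}), which is supported on $I=0$ $\widehat J_i'$-holomorphic buildings from $\gamma\otimes\Gamma$ to $\gamma'\otimes\Gamma'$. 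The content is then entirely about which such buildings can occur, i.e.\ about the ``diagonal plus lower-order'' shape of $\widehat\Phi_i$ with respect to the filtrations ${\mathcal F}_i$, ${\mathcal F}_{i+1}$.

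The key steps, in order, are as follows. (1) \emph{No increase of filtration.} Using hypothesis ({\bf MB}$_0$), any $I=0$ $\widehat J_i'$-holomorphic building from $\gamma\otimes\Gamma$ to $\gamma'\otimes\Gamma'$ has a corresponding Morse-Bott building for $\widehat{\lambda}_i$; the argument of Lemma~\ref{lemma: filtration constraint on curves} (positivity of intersections in dimension three, Lemma~\ref{lemma: positive slope}, applied to the tori $T_n=\partial V_n\subset V$ of irrational slope $r_n\to\infty$ from Lemma~\ref{careful perturbation}) gives ${\mathcal F}_{i+1}(\gamma'\otimes\Gamma')\le{\mathcal F}_i(\gamma\otimes\Gamma)$. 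Hence $\widehat\Phi_i$ respects the filtrations, and $\widehat\Phi_i=D+\mathbf r$, where $D$ is the filtration-preserving (degree-zero) part and $\mathbf r$ strictly decreases ${\mathcal F}$. (2) \emph{Identification of the diagonal part.} On the associated graded, the generators of $ECC^{L_i}(M,\alpha_i')$ and $ECC^{L_{i+1}}(M,\alpha_{i+1}')$ are literally the same orbit sets (because $\alpha_i$ and $\alpha_{i+1}$ agree on $N\cup(T^2\times[1,1+\epsilon_i])$ and are constant multiples of one another on $V\cup(T^2\times[2-\epsilon_i,2])$, and the no man's land carries no orbits of action $\le L_i$). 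A filtration-preserving $I=0$ building is, by the argument of Lemma~\ref{lemma: constraints on ends}/Corollary~\ref{cor: Morse-Bott is possible}, confined to $\R\times V$, $\R\times(T^2\times[1,2])$, or $\R\times N$; inside $\R\times N$ the cobordism restricts to a piece of symplectization and inside $\R\times V$ it can be taken to restrict to a piece of symplectization (as in the construction of the $\alpha_i$), while inside the no man's land Lemmas~\ref{fef constrains curves} and \ref{lemma: Z_2 is regular} describe the holomorphic curves. Exactly as in Lemma~\ref{lemma: mahler} and in the proof of Proposition~\ref{continuation maps are easy}, an $I=0$ building that actually crosses one of the regions must be a union of (branched covers of) trivial cylinders, i.e.\ a connector over the shared orbit set, so the only $I=0$ buildings contributing to $D$ are the trivial ones and hence, by Theorem~\ref{thm: Hutchings Taubes cobordism map}(i), $\langle D(\gamma\otimes\Gamma),\gamma\otimes\Gamma\rangle=1$ while $\langle D(\gamma\otimes\Gamma),\gamma'\otimes\Gamma'\rangle=0$ for $\gamma'\otimes\Gamma'\ne\gamma\otimes\Gamma$. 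That is, $D=\mathrm{id}$ on the associated graded, giving the stated formula $\gamma\otimes\Gamma\mapsto\gamma\otimes\Gamma+\mathbf r(\gamma\otimes\Gamma)$.

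The main obstacle I anticipate is Step~(2): the subtlety, already flagged in the proof of Proposition~\ref{continuation maps are easy}, that some of the $I=0$ holomorphic cylinders in the interpolating cobordism are \emph{not, strictly speaking, contained in product regions}, so Theorem~\ref{thm: Hutchings Taubes cobordism map}(i) does not directly force the diagonal coefficient to be $1$. Over $\F$ one finesses this exactly as in Proposition~\ref{continuation maps are easy}: $D$ is a filtration-preserving chain map which is upper-triangular with $0/1$ diagonal entries and which, because $\Phi_i$ is an isomorphism by Lemma~\ref{lemma: direct limit 1} (or by the explicit commensurability estimates of Corollary~\ref{cor: direct limit of commensurate contact forms}), must induce an isomorphism on the $E^0$-pages; an idempotent-type algebraic argument (as in Proposition~\ref{continuation maps are easy}) then shows every diagonal entry is $1$. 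Alternatively one can invoke Remark~\ref{rmk: integer coefficients} and the $L$-flat perturbation machinery (Lemma~\ref{lemma: canonical bijection}, Claim~\ref{claim: regularity}) to get the canonical-bijection identification of the diagonal part over $\Z$. I would write the $\F$-version in the body and remark that the integer version follows by the method of Proposition~\ref{continuation maps are easy}. The remaining verification that $\widehat\Phi_i$ is a chain map and that $\mathbf r$ is genuinely filtration-decreasing is then immediate from Steps~(1) and~(2) together with Corollary~\ref{cor: differential respects filtration}.
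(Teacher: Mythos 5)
Your Step~(1) matches the paper, but Step~(2) contains a genuine gap. You write that ``inside $\R\times N$ the cobordism restricts to a piece of symplectization.'' For the single interpolating cobordism from $\alpha_i'=f_i\alpha_i$ to (a rescaling of) $\alpha_{i+1}'=f_{i+1}\alpha_{i+1}$ this is false: over $N$ the contact form changes from $f_i\alpha$ to $f_{i+1}\alpha$, so the restriction of the cobordism to $[0,1]\times N$ is a nontrivial interpolating cobordism, not a piece of symplectization. Consequently you cannot conclude that filtration-preserving $I=0$ buildings sitting over $N\cup V$ are unions of (branched covers of) trivial cylinders and read off the diagonal coefficient from Theorem~\ref{thm: Hutchings Taubes cobordism map}(i).

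The missing idea is a preliminary degeneration of the cobordism into two levels. The paper factors $\Phi_i=\Phi_i'\circ\Phi_i''$, where $\Phi_i'':ECH^{L_i}(M,\alpha_i')\to ECH^{L_i}(M,f_{i+1}\alpha_i)$ changes only the Morse-Bott perturbation over all of $M$ and is handled by Proposition~\ref{continuation maps are easy} (so $\hat{\Phi}_i''$ may be taken to be $\mathrm{id}$ at the chain level), while $\Phi_i':ECH^{L_i}(M,f_{i+1}\alpha_i)\to ECH^{L_{i+1}}(M,\alpha_{i+1}')$ changes only the no man's land with the perturbing function $f_{i+1}$ held fixed. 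For this second piece $d\lambda_i$ genuinely agrees with a symplectization over a neighborhood of $\R\times(N\cup V)$; then filtration-preservation together with Lemma~\ref{lemma: foliation in T^2 x [1,2]} and Remark~\ref{rmk: in cobordisms too} forces the $I=0$ buildings out of the no man's land and into the symplectization region, where the trivial-cylinder conclusion applies. Your fallback idempotent argument also does not close the gap: in Proposition~\ref{continuation maps are easy} that trick needed the source and target complexes to be identified via the canonical bijection $e$ (same generators, a posteriori same differential) and the map to be an isomorphism (Lemma~\ref{chesialavoltabuona}); here $ECC^{L_i}(M,\alpha_i')$ is a proper subcomplex of $ECC^{L_{i+1}}(M,\alpha_{i+1}')$, and Lemma~\ref{lemma: direct limit 1} does not say that the individual $\Phi_i$ are isomorphisms, only that they form a directed system whose limit is $ECH(M)$.
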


\begin{proof}
The map $\Phi_i$ is induced by an interpolating cobordism from $\alpha_i'$ to (a rescaling of) $\alpha_{i+1}'$.  We degenerate this cobordism into a two-level cobordism so that the top level interpolates from  $\alpha_i' = f_i \alpha_i$ to $f_{i+1} \alpha_i$ and the bottom level interpolates from $f_{i+1} \alpha_i$ to $\alpha_{i+1}'= f_{i+1} \alpha_{i+1}$. Then $\Phi_i = \Phi_i' \circ \Phi_i''$ by Theorem~\ref{thm: Hutchings Taubes cobordism map}, where
\begin{align*}
& \Phi_i'': ECH^{L_i}(M, \alpha_i') \to ECH^{L_i}(M, f_{i+1}\alpha_i), \\
& \Phi_i'  : ECH^{L_i}(M, f_{i+1}\alpha_i) \to ECH^{L_{i+1}}(M, \alpha_{i+1}').
\end{align*}
The maps $\Phi_i'$ and $\Phi_i''$ are induced by noncanonical chain maps $\hat{\Phi}_i'$ and $\hat{\Phi}_i''$.  By Proposition~\ref{continuation maps are easy} we can assume that $\hat{\Phi}_i''$ is the identity map.

Next we claim that the filtration-nondecreasing part of $\hat{\Phi}_i'$ only counts trivial cylinders. Let $([0,1] \times M, \lambda_i')$ be an interpolating cobordism from $f_{i+1}\alpha_i$ to $\alpha_{i+1}'$ and $(\R \times M, \widehat{\lambda}_i')$ its completion.  By Theorem~\ref{thm: Hutchings Taubes cobordism map}, $\hat{\Phi}'_i$ is ``supported'' on the $I=0$ holomorphic buildings of $(\R \times M, \widehat{\lambda}_i')$. We are assuming that $\widehat{\lambda}_i'$ is sufficiently close to $\widehat{\lambda}_i$, where $([0,1] \times M, \lambda_i)$ is an interpolating cobordism from $\alpha_i$ to $\alpha_{i+1}$ and $(\R \times M, \widehat{\lambda}_i)$ is its completion. Hence, by ({\bf MB}$_0$), if $\langle \hat{\Phi}'_i(\gamma \otimes \Gamma), \gamma' \otimes \Gamma' \rangle \ne 0$, then there is a Morse-Bott building in $(\R \times M, \widehat{\lambda}_i)$ connecting $\gamma \otimes \Gamma$ to $\gamma' \otimes \Gamma'$. Since the $2$-form $d \lambda_i$ agrees with a symplectization on a neighborhood of $\R \times (N \cup V)$, we can repeat
the argument of Lemma~\ref{lemma: filtration constraint on curves} to show that ${\mathcal F}_i(\gamma \otimes \Gamma) \ge {\mathcal F}_{i+1}(\gamma' \otimes \Gamma')$. Moreover, if ${\mathcal F}_i(\gamma \otimes \Gamma) = {\mathcal F}_{i+1}(\gamma' \otimes \Gamma')$, then the holomorphic buildings in $(\R \times M, \widehat{\lambda}_i)$ cannot cross the no man's land by Lemma \ref{lemma: foliation in T^2 x [1,2]} and Remark \ref{rmk: in cobordisms too}. Therefore they are contained in the part of the cobordism $(\R\times M,\widehat\lambda_i)$ which is diffeomorphic to a symplectization. This implies the claim.
\end{proof}

\begin{lemma}\label{lemma: juliette et chocolat}
The chain maps $\hat{\Phi}_i: ECC^{L_i}(M, \alpha_i') \to ECC^{L_{i+1}}(M, \alpha_{i+1}')$ induce chain maps $E^r({\mathcal F}_i) \to E^r({\mathcal F}_{i+1})$. The direct limits
$$E^r({\mathcal F}) = \lim \limits_{i \to \infty} E^r({\mathcal F}_i)$$
form a spectral sequence converging to $ECH(M)$. The page $E^0({\mathcal F})$ can
be identified, as a vector space, with $ECC(V, \alpha) \otimes ECC(N, \alpha)$ and the
differential $\partial_0$ on $E^0({\mathcal F})$ is described by Equation~\eqref{equation for bdry 0, second version}.
\end{lemma}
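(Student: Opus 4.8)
The plan is to treat this statement as an exercise in the homological algebra of filtered complexes, feeding in the structural results already proved. First I would record that $\hat\Phi_i$ is a morphism of filtered complexes. The filtration $\mathcal F_{i+1}$ restricts to $\mathcal F_i$ on the subspace $ECC^{L_i}(M,\alpha_i')\subset ECC^{L_{i+1}}(M,\alpha_{i+1}')$ spanned by the orbit sets of $\alpha_i'$-action less than $L_i$, and by Lemma~\ref{continuation maps are nice} the map $\hat\Phi_i=\op{id}+\mathbf r$ satisfies $\hat\Phi_i(\mathcal F_i^p)\subset \mathcal F_{i+1}^p$ and equals this inclusion modulo strictly lower filtration. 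Hence $\hat\Phi_i$ induces maps $E^r(\mathcal F_i)\to E^r(\mathcal F_{i+1})$ on every page, and on $E^0$ this map is precisely the inclusion $E^0(\mathcal F_i)\hookrightarrow E^0(\mathcal F_{i+1})$ of associated graded vector spaces coming from the inclusion of generators. These inclusions are injective, compatible with the differentials $\partial_0$ and with the passage from each page to the next, so they form directed systems and we may set $E^r(\mathcal F)=\lim\limits_{i\to\infty}E^r(\mathcal F_i)$.

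Next I would check that $(E^\bullet(\mathcal F),\partial^\bullet)$ is again a spectral sequence. A filtered direct limit of vector spaces is an exact functor, hence commutes with homology: from $E^1(\mathcal F_i)=H_{\partial_0}(E^0(\mathcal F_i))$ one gets $\lim\limits_{i\to\infty}E^1(\mathcal F_i)=H_{\partial_0}\bigl(\lim\limits_{i\to\infty}E^0(\mathcal F_i)\bigr)$, and iterating yields $E^{r+1}(\mathcal F)=H_{\partial_r}(E^r(\mathcal F))$ for all $r$, with $\partial_r$ the direct limit of the $\partial_r(\mathcal F_i)$. The identification of the $E^0$-page is then immediate: as a graded vector space each $E^0(\mathcal F_i)$ is spanned by the orbit sets of $\alpha_i'$-action less than $L_i$, which never involve an orbit in the no man's land (these have action larger than $L_i$) and do involve every orbit of $V$ and of $N$ once $L_i$ is large enough; thus $\lim\limits_{i\to\infty}E^0(\mathcal F_i)=ECC(V,\alpha_V)\otimes ECC(N,\alpha)$. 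Since the formula \eqref{equation for bdry 0, second version} for $\partial_0$ on $E^0(\mathcal F_i)$ (Lemma~\ref{lemma: more precise version of bdry zero}) does not involve $L_i$ and is compatible with the inclusions (the differential decreases action), $\partial_0$ on $E^0(\mathcal F)$ is given by the same formula.

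Finally, for convergence: each filtration $\mathcal F_i$ is uniformly bounded below ($\mathcal F_i^p=0$ for $p<0$) and exhaustive, so $E^\bullet(\mathcal F_i)$ converges to $ECH^{L_i}(M,\alpha_i')$ with its induced filtration, as in \cite[Theorem~5.5.5]{W}. Passing to the direct limit and again using exactness of $\lim\limits_{i\to\infty}$ gives
$$E^\infty(\mathcal F)=\lim\limits_{i\to\infty}E^\infty(\mathcal F_i)=\lim\limits_{i\to\infty}\op{gr}_{\mathcal F_i}ECH^{L_i}(M,\alpha_i')=\op{gr}_{\mathcal F}\Bigl(\lim\limits_{i\to\infty}ECH^{L_i}(M,\alpha_i')\Bigr),$$
and the right-hand side is $\op{gr}_{\mathcal F}ECH(M)$ by Lemma~\ref{lemma: direct limit 1}. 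Hence $E^\bullet(\mathcal F)$ converges to $ECH(M)$. The one genuinely delicate point is this interchange of the spectral sequence construction with the direct limit: one must be sure the filtered colimit is exact and that the bounded-below and exhaustiveness hypotheses survive the limit, so that convergence is preserved. Everything else is bookkeeping resting on Lemmas~\ref{lemma: more precise version of bdry zero}, \ref{continuation maps are nice}, and \ref{lemma: direct limit 1}.
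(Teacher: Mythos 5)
Your overall architecture tracks the paper's, and the first three paragraphs are fine: functoriality of the spectral-sequence construction together with Lemma~\ref{continuation maps are nice} gives the maps $E^r(\mathcal F_i)\to E^r(\mathcal F_{i+1})$; exactness of filtered colimits handles each finite page; and the identification of $E^0(\mathcal F)$ with $ECC(V,\alpha_V)\otimes ECC(N,\alpha)$ and of $\partial_0$ via Lemma~\ref{lemma: more precise version of bdry zero} is exactly what the paper does. The gap is in the convergence step, precisely at the point you flag as ``the one genuinely delicate point'' --- but flagging it is not the same as closing it, and the reasoning you supply is not valid.

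The equality $E^\infty(\mathcal F)=\lim_{i\to\infty}E^\infty(\mathcal F_i)$ does \emph{not} follow from exactness of filtered colimits. Exactness gives $E^r(\mathcal F)=\lim_i E^r(\mathcal F_i)$ for each finite $r$, and likewise for $Z^r$, $B^r$, and for $\op{gr}$ of the homology. But $E^\infty=Z^\infty/B^\infty$ with $Z^\infty=\bigcap_{r\ge 1}Z^r$ an \emph{infinite} intersection, and filtered colimits do not in general commute with infinite intersections (they do commute with the countable union defining $B^\infty$, so that part is harmless). Without this commutation your chain $E^\infty(\mathcal F)=\lim E^\infty(\mathcal F_i)=\lim\op{gr}_{\mathcal F_i}ECH^{L_i}=\op{gr}_{\mathcal F}ECH(M)$ breaks at its first equality. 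Your closing sentence misidentifies the subtle point as preservation of the bounded-below and exhaustiveness hypotheses under the limit; those survive trivially, but they do not by themselves force the interchange of $\lim_i$ with $\bigcap_r$.

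The missing ingredient is a degree-wise stabilization, which uses the bound on the filtration in a more specific way than you invoke. Decompose each page and each cycle group by filtration degree, $Z^r(\mathcal F_i)=\bigoplus_p Z^r_p(\mathcal F_i)$. Because $\mathcal F_i^p=0$ for $p<0$, once $r\ge p$ the condition defining $Z^r_p$ (``$\partial$ drops filtration degree by at least $r$'') degenerates to ``$\partial=0$,'' so $Z^\infty_p(\mathcal F_i)=Z^r_p(\mathcal F_i)$ for all $r\ge p$. In each fixed degree $p$ the apparently infinite intersection is thus a finite one, and finite intersections do commute with filtered colimits, giving $\lim_i E^\infty_p(\mathcal F_i)=E^\infty_p(\mathcal F)$ for every $p$. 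Only then does your last displayed chain of equalities go through. (One could alternatively observe that $E^r(\mathcal F)$ is the spectral sequence of the direct-limit filtered complex, which is bounded below and exhaustive, and apply the classical convergence theorem to it directly; either route requires an actual argument in place of ``again using exactness.'')
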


\begin{proof}
By Lemma \ref{continuation maps are nice} the continuation maps $\hat{\Phi}_i$ are morphisms of chain complexes. Since the construction of the spectral sequence associated to a filtered complex is functorial (see \cite[Proposition~5.9.2]{W}), the
maps  $\hat{\Phi}_i$ induce a morphism of spectral sequences
$$E^r({\mathcal F}_i) \to E^r({\mathcal F}_{i+1}).$$
We define $E^r({\mathcal F}) = \lim \limits_{i \to \infty} E^r({\mathcal F}_i)$. Since
direct limit is an exact functor from the category of directed systems of abelian groups
to the category of  abelian groups (see for example \cite[Theorem~2.6.15]{W}), the limits $E^r({\mathcal F})$ still form a spectral sequence.

We claim now that $E^\infty({\mathcal F}) = \lim \limits_{i \to \infty} E^\infty({\mathcal F}_i)$. First we recall the definition of the $E^\infty$ term of a spectral sequence:
on $E^1$ there is a sequence of subgroups
$$ \{ 0 \} =B^1 \subset B^2 \ldots \subset B^r \subset \ldots \subset Z^r \subset \ldots
\subset Z^2 \subset Z^1 =E^1$$
such that $E^r \simeq Z^r / B^r$; then we define
$$Z^\infty = \bigcap_{r \ge 1} Z^r,  \quad B^\infty = \bigcup_{r \ge 1} B^r \quad \text{and}
\quad E^\infty = Z^\infty / B^\infty.$$
By going through the construction of the spectral sequence, one can see that
$$B^r({\mathcal F}) = \lim_{i \to \infty} B^r({\mathcal F}_i) \quad \text{and} \quad
Z^r({\mathcal F}) = \lim_{i \to \infty} Z^r({\mathcal F}_i)$$ because the direct limit is
an exact functor. (The description of $B^r$ and $Z^r$ given in \cite[Exercise~5.9.1]{W}
can be useful to prove this.)

Then, in order to prove the claim, it is enough to prove that
\begin{align} \label{mittag}
&\lim_{i \to \infty} \left ( \bigcup_{r \ge 1} B^r({\mathcal F}_i) \right ) = \bigcup_{r \ge 1} \left ( \lim_{i \to \infty} B^r({\mathcal F}_i) \right) \\
\label{leffler}& \lim_{i \to \infty} \left ( \bigcap_{r \ge 1} Z^r({\mathcal F}_i) \right ) = \bigcap_{r \ge 1} \left ( \lim_{i \to \infty} Z^r({\mathcal F}_i) \right ).
\end{align}
Equation \eqref{mittag} is not problematic because direct limits commute with countable unions. In fact countable unions can themselves be seen as direct limits,
and direct limits commute as a consequence of their universal property (\cite[Exercise~20]{La}).  On the other hand, in general, direct limits do not commute with infinite intersections, so we need more work to prove Equation~\eqref{leffler}.

The spectral sequence of a filtered complex has a grading coming from the filtration:
we can decompose $E^r({\mathcal F}_i)= \bigoplus E^r_p({\mathcal F}_i)$,
$B^r({\mathcal F}_i)= \bigoplus B^r_p({\mathcal F}_i)$ and $Z^r({\mathcal F}_i)= \bigoplus Z^r_p({\mathcal F}_i)$. Since ${\mathcal F}_i^p=0$ if $p<0$, it follows from the construction of the spectral sequence that $Z_p^\infty({\mathcal F}_i) = Z_p^r ({\mathcal F}_i)$ provided that $r \ge p$. (Again \cite[Exercise~5.9.1]{W}
can be useful here). Taking the direct limit, we obtain that
$\lim \limits_{i \to \infty} E^\infty_p({\mathcal F}_i) = E^\infty_p({\mathcal F})$ and this proves the claim.

The filtrations ${\mathcal F}_i$ induce filtrations on $ECH^{L_i}(M, \alpha_i')$; taking direct limits we obtain a filtration on $ECH(M)$ whose the graded group is the limit of the graded groups of the filtrations on $ECH^{L_i}(M, \alpha_i')$ (again because direct limit is an exact functor). Since the filtrations ${\mathcal F}_i$ are bounded below and exhaustive, the classical convergence theorem \cite[Theorem 5.5.5]{W} implies that
$E^r({\mathcal F}_i)$ converges to $ECH^{L_i}(M, \alpha_i')$ (i.e. $E^\infty({\mathcal F}_i)$ is isomorphic to the graded group of $ECH^{L_i}(M, \alpha_i')$). Taking a direct limit, we then conclude by that $E^r({\mathcal F})$ converges to $ECH(M)$.
\end{proof}

Here the notation $E^r({\mathcal F})$ does not mean that the spectral sequence comes
from some filtration ${\mathcal F}$, but only remembers the fact that it is the direct limit of the spectral sequences induced by the filtrations ${\mathcal F}_i$  --- in fact $E^r({\mathcal F})$ is a spectral sequence of a filtration because a direct limit of filtered complexes is a filtered complex; however the limit complex defining $E^r({\mathcal F})$ is too abstract to be useful. This notation will be useful in the next section, when we will introduce another spectral sequence.

We now rewrite the differential $\partial_0$ in a way which highlights the roles played by the orbits $h$ and $h'$; this will be used extensively in the following subsections. By factoring out the terms $h'$ and $h$, we can write the
differentials $\bdry_V$ and $\bdry_N$ as:
\begin{equation} \label{eqn: decomposition of boundary}
\left \{
\begin{array}{l} \partial_V \gamma = \partial^{\flat}_V \gamma \\
\partial_V (h' \gamma) = h' \partial_V^{\flat} \gamma +
\partial'_V(h' \gamma) \end{array} \right. \qquad \left \{
\begin{array}{l} \partial_N \Gamma = \partial^{\flat}_N
\Gamma + h \partial_N' \Gamma \\ \partial_N (h \Gamma)= h
\partial^{\flat}_N \Gamma
\end{array} \right.
\end{equation}
where $\gamma \in ECC^{\flat}(V, \alpha_V)$, $\Gamma \in ECC^{\flat}(N, \alpha)$, $\bdry^\flat_V$ and $\bdry^\flat_N$ are the differentials for the chain complexes $ECC^{\flat}(V, \alpha_V)$ and $ECC^{\flat}(N, \alpha)$, and the terms $\partial'_V(h' \gamma)$ and $\partial'_N \Gamma$ do not contain $h'$.

\subsection{The map $\sigma_*$}\label{subsection: explicit map}

In this subsection we define an explicit map
$$\sigma_* : ECH(N, \partial N, \alpha) \to ECH(M)$$
and in the next one we will prove that it is an isomorphism.  It will be easy to see that $\sigma_*$ preserves the decomposition by (relative) homology classes; namely, if $\varpi : H_1(N, \partial N) \to H_1(M)$ is the isomorphism described in the introduction, $\sigma_*$ maps $ECH(N, \partial N, A)$ to $ECH(M, \varpi(A))$ for
every $A \in H_1(N, \partial N)$.

We introduce the following notation, which will be used in this and in the following sections. Given a set of Reeb orbits $e_1, \ldots, e_n, h_1, \ldots, h_m$, where $e_1, \ldots, e_n$ are elliptic and $h_1, \ldots, h_m$ are hyperbolic, we denote
$${\mathcal R}[e_1, \ldots, e_n, h_1, \ldots, h_m] := \F[e_1, \ldots, e_n, h_1, \ldots, h_m]/(h_1^2, \ldots, h_m^2);$$
i.e., in ${\mathcal R}[e_1, \ldots, e_n, h_1, \ldots, h_m]$ the elliptic orbits are free variables and the hyperbolic orbits are nilpotent variables of order two.
Whenever we use the notation  ${\mathcal R}[e_1, \ldots, e_n, h_1, \ldots, h_m]$ in this paper, we will assume $\{e_1, \ldots, e_n \} \subseteq \{e, e'\}$ and $\{h_1, \ldots, h_m \} \subseteq \{h,h'\}$.

Define $ECC^\natural(N, \alpha)$ as $ {\mathcal R}[h'] \otimes ECC^{\flat}(N, \alpha)$ with differential
$$\partial^\natural (\gamma \otimes \Gamma) = \gamma \otimes \partial^\flat \Gamma + \gamma/h' \otimes (1+e) \Gamma.$$

\begin{lemma} \label{lemma: uqam}
$ECH^\natural(N, \alpha) \simeq ECH(N, \partial N, \alpha)$.
\end{lemma}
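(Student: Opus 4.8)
The plan is to compare the chain complex $ECC^\natural(N,\alpha) = {\mathcal R}[h']\otimes ECC^\flat(N,\alpha)$ with the complex $ECC(N,\partial N,\alpha)$, or rather with one of the complexes $ECC_j(N,\alpha)$ whose direct limit defines $\widehat{ECH}(N,\partial N,\alpha)$; but since we want $ECH(N,\partial N,\alpha)$ (the hat-less version, built from $ECC^\flat_j(N,\alpha)$), the cleaner route is a filtration/spectral sequence argument directly on $ECC^\natural(N,\alpha)$. First I would set up the filtration ${\mathcal G}$ on $ECC^\natural(N,\alpha)$ by the $h'$-degree, i.e. ${\mathcal G}^p$ spanned by monomials $(h')^\epsilon\otimes\Gamma$ with $\epsilon\le p$ (so $p\in\{0,1\}$ since $(h')^2=0$). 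The differential $\partial^\natural(\gamma\otimes\Gamma)=\gamma\otimes\partial^\flat\Gamma + (\gamma/h')\otimes(1+e)\Gamma$ visibly splits into a filtration-preserving part $\gamma\otimes\partial^\flat\Gamma$ and a part $(\gamma/h')\otimes(1+e)\Gamma$ that drops the $h'$-degree by one. So $\partial_0$ on the associated graded is just $\mathrm{id}\otimes\partial^\flat$, and $E^1$ of this spectral sequence is ${\mathcal R}[h']\otimes ECH^\flat(N,\alpha)$, i.e. two copies of $ECH^\flat(N,\alpha)$, one in $h'$-degree $0$ and one in degree $1$.

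Next I would identify the $d_1$ differential on $E^1$: it is induced by the degree-dropping piece $(\gamma/h')\otimes(1+e)\Gamma$, so on the degree-$1$ copy it sends $h'\otimes[\Gamma]\mapsto 1\otimes[(1+e)\Gamma]$, i.e. it is the map $ECH^\flat(N,\alpha)\to ECH^\flat(N,\alpha)$ induced by multiplication by $1+e$ (note $1+e$ makes sense on the quotient/homology level since $e$ acts on $ECC^\flat(N,\alpha)$ as an invertible chain endomorphism — $e\cdot$ has the two-sided inverse coming from the fact, proved via the Trapping Lemma as in Lemma~\ref{lemma: general elections}, that no holomorphic curve has a positive end at $e$ when $\partial N$ is a negative Morse-Bott torus). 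The key algebraic point is that $\mathrm{cone}(1+e)$ on $ECC^\flat(N,\alpha)$ has homology equal to $ECH(N,\partial N,\alpha)=\varinjlim_j ECH^\flat_j(N,\alpha)$: indeed multiplication by $e$ is exactly the stabilization map $\gamma\mapsto e\gamma$ from Lemma~\ref{lemma: general elections}, whose direct limit is by definition $ECH(N,\partial N,\alpha)$, and the mapping cone of $1+e$ computes the same coinvariants (equivalently, the cokernel/kernel of $1-e$, but over $\F$ and with $e$ invertible the cone of $1+e$ is quasi-isomorphic to the complex $ECC^\flat(N,\alpha)/(e\gamma-\gamma)$, which is the alternative description of $ECH(N,\partial N,\alpha)$ noted in the remark after Definition~\ref{defn: ECH relative to the boundary}). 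Since $E^1$ with its $d_1$ is precisely this mapping cone of $1+e$, we get $E^2 = ECH(N,\partial N,\alpha)$, and the spectral sequence degenerates at $E^2$ because there are only two filtration levels ($p=0,1$), so higher differentials vanish for length reasons. Hence $ECH^\natural(N,\alpha)\simeq ECH(N,\partial N,\alpha)$.

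The main obstacle I anticipate is making the identification "$\mathrm{cone}(1+e)\simeq ECH(N,\partial N,\alpha)$" fully rigorous: one must check that multiplication by $e$ really is a chain map on $ECC^\flat(N,\alpha)$ (this is Lemma~\ref{lemma: general elections}, so it is available), that it is a quasi-isomorphism onto its image (needs the direct limit to stabilize, using that each $ECC^\flat_j(N,\alpha)$ is a direct summand and $e$ shifts the intersection number $j$ up by one), and then that over $\F=\Z/2$ the homology of the cone of $1+e$ agrees with $\varinjlim_j ECH^\flat_j$. The last step is a standard piece of homological algebra — the telescope/mapping-cone description of a direct limit along a single endomorphism — but one has to be careful about the boundedness hypotheses to invoke convergence (here the filtration by $j$ is bounded below and exhaustive, exactly as in the convergence arguments using \cite[Theorem~5.5.5]{W} elsewhere in the paper). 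Everything else — the filtration by $h'$-degree, the computation of $\partial_0$ and $d_1$, and the degeneration at $E^2$ — is routine and parallels the arguments already carried out for $E^r({\mathcal F}_i)$ in Section~\ref{subsection: direct limit} and for $ECH^\flat(V,\alpha_V)$ in Section~\ref{subsec: completion of proof of thm solid torus}.
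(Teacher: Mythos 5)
Your filtration by $h'$-degree is nothing but the mapping-cone filtration, so the overall route is the same one the paper takes: identify $ECC^\natural(N,\alpha)$ with $\mathrm{cone}(\,\cdot(1+e)\,)$ on $ECC^\flat(N,\alpha)$ and show the cone computes $ECH(N,\partial N,\alpha)$. That part is sound.

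There is, however, a concrete error in your justification. You assert twice that $e\cdot$ is an \emph{invertible} chain endomorphism of $ECC^\flat(N,\alpha)$, and you use this to conclude that $\mathrm{cone}(1+e)$ is quasi-isomorphic to the quotient $ECC^\flat/(e\gamma-\gamma)$ and hence that $E^2_1=0$. But $e\cdot$ is not invertible: the Trapping Lemma argument of Lemma~\ref{lemma: general elections} shows that $\gamma\mapsto e\gamma$ is an \emph{injective} chain map, and it is visibly not surjective (its image consists only of orbit sets divisible by $e$; there is no two-sided inverse ``division by $e$''). The fact you actually need is the injectivity of $1+e$, and this follows immediately from the grading by the intersection number $j$ with the Seifert surface (multiplication by $e$ raises $j$ by one, so $(1+e)\sum_j\Gamma_j=0$ forces all $\Gamma_j=0$ by induction on $j$); this is precisely what the paper records when it says $\Gamma$ and $e\Gamma$ lie in different singular homology classes. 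Once you replace ``$e$ invertible'' by ``$1+e$ is injective, by the $j$-grading,'' both the chain-level quasi-isomorphism $\mathrm{cone}(1+e)\simeq ECC^\flat/(e\gamma-\gamma)$ and the vanishing of $\ker((1+e)_*)$ (equivalently $E^2_1=0$) are correctly justified, and the rest of your argument --- $E^1=\mathcal{R}[h']\otimes ECH^\flat(N,\alpha)$, $d_1$ induced by $1+e$, degeneration at $E^2$ for length reasons --- goes through and recovers the paper's conclusion.
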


\begin{proof}
$ECC^\natural(N, \alpha)$ can be identified with the cone of the multiplication map $\cdot(1+e)$ on $ECC^{\flat}(N, \alpha)$. Hence there is an exact triangle
\begin{equation} \label{eqn: exact triangle e+1}
\xymatrix{ECC^{\flat}(N, \alpha) \ar[rr]^{\cdot (1+e)} & & ECC^{\flat}(N, \alpha). \ar[dl] \\ & ECC^\natural(N, \alpha) \ar[ul] & }
\end{equation}
The map $\cdot (1+e)$ is injective on homology since $\Gamma$ and $e \Gamma$ belong to different singular homology classes for all $\Gamma \in ECC^\flat(N, \alpha)$.
Then the exact triangle implies that
$$ECH^\natural (N, \alpha) \simeq \frac{ECH^\flat(N, \alpha)}{(1+e) ECH^\flat(N, \alpha)} \simeq ECH(N, \partial N, \alpha).$$
\end{proof}

We denote by $ECC^{\natural, L}_{\le k}(N, \alpha)$ the subcomplex of $ECC^\natural(N, \alpha)$ generated by orbit sets $\gamma \otimes \Gamma$ which have linking number  less than or equal to $k$ with $K$ and action less than $L$. We fix an increasing sequence $L_k' \to + \infty$ and let $c=\sup_k \mathcal{A}_{\alpha'_k}(e')$.  Then for every $k$ we choose $i_k$ so that $L_{i_k} \ge k L_k' + ck^2$.

In the following, we will rename $L_{i_k}=L_k$, $\alpha_{i_k}'= \alpha_k'$ and ${\mathcal F}_{i_k}= {\mathcal F}_k$. Also, the composition $\hat{\Phi}_{i_{k+1}-1} \circ \ldots \circ \hat{\Phi}_{i_k}$ will be renamed as
$$\hat\Phi_k: ECC^{L_k}(M,\alpha'_k)\to ECC^{L_{k+1}}(M,\alpha'_{k+1}).$$

For any integer $k$ we define
$$\sigma_k : ECC^{\natural, L_k'}_{\le k}(N, \alpha) \to ECC^{L_k}(M, \alpha_k')$$
$$\gamma \otimes \Gamma\mapsto \sum \limits_{i=0}^{\infty} (e')^i \gamma \otimes (\partial_N')^i \Gamma,$$
where $\bdry_N'$ is defined by Equation~\eqref{eqn: decomposition of boundary} and $\gamma=1$ or $h'$.

These maps are well-defined because the map $\partial_N'$ is nilpotent. In fact, $\partial_N'$  decreases the linking number with the binding, so $(\partial_N')^{k+1} =0$ on $ECC^{\flat}_{\le k}(N, \alpha)$.

\begin{rmk} This, and the analogous construction in Section~\ref{subsec: U map}, are the only places where we use the hypothesis that the Reeb flow be transverse to a fixed Seifert surface for $K$. In fact, while we could deduce the nilpotency of $\partial_N'$ from an action argument, by choosing to work with the action we would lose the estimate on the nilpotency order of $\partial_N'$ and, consequently, on the action of $\sigma_k (\gamma \otimes \Gamma)$. However, in view of the heuristic argument described in Section~\ref{subsec: euristic argument}, we suspect that this hypothesis is actually not necessary.
\end{rmk}

\begin{lemma}
The maps $\sigma_k$ are chain maps and form a directed system, i.e., the following diagram commutes:
\begin{equation} \label{diagramma insignificante}
\xymatrix{ ECC^{\natural, L_k'}_{\le k}(N, \alpha) \ar[r]^-{\sigma_k} \ar[d]^{\iota_k} &
ECC^{L_k}(M, \alpha_k') \ar[d]^{\hat \Phi_k} \\
ECC^{\natural, L_{k+1}'}_{\le k+1}(N, \alpha) \ar[r]^-{\sigma_{k+1}} &
ECC^{L_{k+1}}(M, \alpha_{k+1}').}
\end{equation}
Here $\iota_k$ is the inclusion.
\end{lemma}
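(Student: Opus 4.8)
The statement has two parts: that each $\sigma_k$ is a chain map, and that the square \eqref{diagramma insignificante} commutes. I would treat them in this order, since the second relies on the structural understanding of $\hat\Phi_k$ built up for the first.

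\emph{Step 1: $\sigma_k$ is a chain map.} The key is that $\partial^\natural$ on $ECC^\natural(N,\alpha)$ is the ``straightened out'' version of the part of $\partial_0$ in Lemma~\ref{lemma: more precise version of bdry zero} that lives over $N$, together with the crossing term $\gamma/h'\otimes e\Gamma$; the image of $\sigma_k$ is, by construction, exactly the locus where all the iterated $\partial_N'$-tails have been attached. I would compute $\partial_0\bigl(\sigma_k(\gamma\otimes\Gamma)\bigr)$ directly from Equation~\eqref{equation for bdry 0, second version}, using the decomposition \eqref{eqn: decomposition of boundary} of $\partial_N$ into $\partial_N^\flat + h\,\partial_N'$ (and of $\partial_V$, noting that on the generators $\gamma\in\{1,h'\}$ the only surviving $V$-term is $\partial_V' (h'\cdot) $ and the term $(\gamma/h')\otimes e\Gamma$). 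One gets a telescoping identity: the term $(e')^i\gamma\otimes h\,(\partial_N')^{i+1}\Gamma$ produced by $\partial_N$ acting on the $i$-th summand cancels against the term $(e')^{i+1}\gamma\otimes h\,(\partial_N')^{i+1}\Gamma$ — wait, more precisely, against what the crossing operator $\cdot/e'$ followed by multiplication by $h$ does to the $(i+1)$-st summand. Carrying the bookkeeping through, $\partial_0\circ\sigma_k = \sigma_k\circ\partial^\natural$ up to terms that strictly drop the filtration ${\mathcal F}_k$; but by Lemma~\ref{lemma: constraints on ends}/Corollary~\ref{cor: Morse-Bott is possible} the differential on the image relevant here is exactly $\partial_0$ on the associated graded, and one checks that $\sigma_k$ lands in (and respects) the appropriate filtered piece so that the full differential, not just $\partial_0$, agrees. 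I also need $\sigma_k$ to be well-defined into $ECC^{L_k}(M,\alpha_k')$: this is the action estimate, using $L_{i_k}\ge kL_k' + ck^2$ together with the fact that each application of $\partial_N'$ or of $\cdot e'$ adds at most $\mathcal A(e')\le c$ per unit of linking number, and linking number is $\le k$ on $ECC^{\natural,L_k'}_{\le k}$.

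\emph{Step 2: commutativity of \eqref{diagramma insignificante}.} Here I would use the structure of $\hat\Phi_k$ from Lemma~\ref{continuation maps are nice}: $\hat\Phi_k(\gamma\otimes\Gamma) = \gamma\otimes\Gamma + \mathbf r(\gamma\otimes\Gamma)$ with ${\mathcal F}_{k+1}(\mathbf r)<{\mathcal F}_{k+1}$, and moreover (from the proof of that lemma, via $({\bf MB}_0)$ and Lemma~\ref{lemma: foliation in T^2 x [1,2]} with Remark~\ref{rmk: in cobordisms too}) the filtration-nondecreasing part of $\hat\Phi_k$ counts only trivial cylinders, i.e., acts as the identity on the associated graded. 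Now both $\hat\Phi_k\circ\sigma_k$ and $\sigma_{k+1}\circ\iota_k$ send $\gamma\otimes\Gamma$ to $\sum_i (e')^i\gamma\otimes(\partial_N')^i\Gamma$ plus lower-filtration corrections. To pin down that the corrections agree I would argue as in the proof of Proposition~\ref{continuation maps are easy}: one may choose the chain map $\hat\Phi_k$ so that on the subcomplex which is the image of $\sigma_k$ (after identifying generators) it restricts to the canonical inclusion, because on $\R\times(N\cup V)$ the interpolating cobordism $\widehat\lambda_k$ is literally a symplectization and the only $I=0$ curves there contained in the no-man's-land–avoiding region are branched covers of trivial cylinders (using the foliation ${\mathcal Z}_2$ exactly as in Lemma~\ref{continuation maps are nice} to prevent crossing). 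Thus on generators of the form lying in $\operatorname{im}\sigma_k$ the map $\hat\Phi_k$ can be taken to be the obvious one, and the two composites coincide on the nose, not merely up to filtration. The commutativity then descends to the chosen chain-level representatives.

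\emph{Main obstacle.} The delicate point is Step 2: making ``$\hat\Phi_k$ may be chosen to restrict to the canonical map on $\operatorname{im}\sigma_k$'' precise. The chain map $\hat\Phi_k$ is only defined up to chain homotopy and its construction is not canonical, so one cannot simply assert it acts as the identity on a subcomplex; instead one must invoke the supported-on-$J$-holomorphic-curves property (Theorem~\ref{thm: Hutchings Taubes cobordism map}(i)) together with the explicit geometry — the symplectization region over $N\cup V$ and the blocking provided by the foliations — to show that the only contributions to $\langle\hat\Phi_k(x),y\rangle$ with ${\mathcal F}_{k+1}(y)={\mathcal F}_{k+1}(x)$ come from trivial cylinders, and then run the algebraic trick from Proposition~\ref{continuation maps are easy} (over $\F$, with the integer-coefficient refinement via Lemma~\ref{lemma: canonical bijection}) to upgrade ``diagonal + identity on associated graded'' to ``equals the canonical map.'' Everything else is a finite, if fiddly, telescoping computation with the explicit formulas \eqref{equation for bdry 0, second version} and \eqref{eqn: decomposition of boundary}.
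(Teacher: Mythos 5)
Your Step 1 is essentially the paper's argument: you apply $\partial_0$ from Equation~\eqref{equation for bdry 0, second version}, use the decomposition~\eqref{eqn: decomposition of boundary}, observe that $\partial_V((e')^i\gamma)=(e')^i\gamma/h'$ for $\gamma\in\{1,h'\}$, and carry out the telescoping cancellation (your self-corrected description of which term cancels which is the right one); the well-definedness observation via nilpotency of $\partial_N'$ and the action bound is also what the paper uses.

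For Step 2, however, you identify the right tool (Lemma~\ref{continuation maps are nice}) and the right starting observation (the image of $\sigma_k$ lies in the lowest filtration level) but then stop short of the punchline and substitute a needlessly elaborate detour. Once you know that $\sigma_k(\gamma\otimes\Gamma)=\sum_i (e')^i\gamma\otimes(\partial_N')^i\Gamma$ with $\gamma\in\{1,h'\}$, every summand has $\eta$-value $0$, hence ${\mathcal F}_{k+1}=0$. Lemma~\ref{continuation maps are nice} says $\hat\Phi_k(x)=x+\mathbf r(x)$ with ${\mathcal F}_{k+1}(\mathbf r(x))<{\mathcal F}_{k+1}(x)$, and the filtration satisfies ${\mathcal F}^{-1}=0$; so for $x$ with ${\mathcal F}_{k+1}(x)=0$ the error term $\mathbf r(x)$ is forced to vanish, and $\hat\Phi_k$ is literally the identity on $\operatorname{im}\sigma_k$. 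There is no ``lower-filtration correction'' to worry about at all, so the appeals to the supported-on-holomorphic-curves property, to the foliation ${\mathcal Z}_2$, to a choice of $\hat\Phi_k$ adapted to $\operatorname{im}\sigma_k$, and to the algebraic trick of Proposition~\ref{continuation maps are easy} are unnecessary here. Your claim that ``both composites send $\gamma\otimes\Gamma$ to $\sum_i(e')^i\gamma\otimes(\partial_N')^i\Gamma$ plus lower-filtration corrections'' is thus misleading: $\sigma_{k+1}\circ\iota_k$ has no correction by definition, and $\hat\Phi_k\circ\sigma_k$ has none because the filtration is bounded below by zero. This is precisely why the paper's one-sentence conclusion — commutativity holds because the continuation maps are the identity at the chain level on the lowest filtration level — is a complete proof, not a shortcut.
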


\begin{proof}
(1) We first show that $\sigma_k$ is a chain map.  Since $\sigma_k$ takes values in the lowest level for the filtration ${\mathcal F}_k$  (recall $\gamma =1$ or $h'$), we have $\partial (\sigma_k(\Gamma)) = \partial_0 (\sigma_k(\Gamma))$, where $\partial_0$ is given by Equation~\eqref{equation for bdry 0, second version}. Using the decomposition of $\partial_N$ in Equation~\eqref{eqn: decomposition of boundary} and $\partial_V((e')^i \gamma)= (e')^i \gamma/h'$ for $\gamma=1$, $h'$, we obtain:
\begin{align*}
&\partial_0 (\sigma_k (\gamma \otimes \Gamma)) = \partial_0 \left ( \sum
\limits_{i=0}^{\infty} (e')^i \gamma \otimes
(\partial_N')^i \Gamma \right ) \\
& = \sum \limits_{i=0}^{\infty} (e')^i \gamma /h' \otimes  (\partial_N')^i \Gamma + \sum
\limits_{i=0}^{\infty} (e')^i \gamma \otimes \left ( \partial^\flat_N  (\partial_N')^i \Gamma
+h (\partial_N')^{i+1} \Gamma \right )  \\
&\qquad\quad + \sum \limits_{i=1}^{\infty} (e')^{i-1} \gamma
\otimes h (\partial_N')^i \Gamma + \sum \limits_{i=0}^{\infty} (e')^i \gamma /h' \otimes e  (\partial_N')^i \Gamma.
\end{align*}
Rearranging the sum and using the fact that $\partial_N'$ commutes with $\partial_N^{\flat}$ and with the multiplication by $(1+e)$ gives:
\begin{align*}
\partial_0 (\sigma_k(\gamma \otimes \Gamma)) &= \sum \limits_{i=0}^{\infty} \left (
(e')^i \gamma \otimes (\partial_N')^i \partial^\flat_N \Gamma + (e')^i \gamma/h' \otimes
(\partial_N')^i ( (1+e) \Gamma) \right ).
\end{align*}
Hence $\partial(\sigma_k(\gamma\otimes \Gamma))=\partial_0 (\sigma_k(\gamma \otimes \Gamma)) =\sigma_k(\partial^{\natural}(\gamma \otimes \Gamma))$.
\s\n
(2) Diagram~\eqref{diagramma insignificante} commutes because we have shown in Lemma
\ref{continuation maps are nice} that
the continuation maps are induced by the  identity at the chain level on the lowest
filtration level.
\end{proof}

Taking homology first and then direct limits in Diagram~\eqref{diagramma insignificante}, we obtain a map
$$\sigma_* : ECH(N, \partial N, \alpha)\simeq ECH^\natural(N,\alpha) \to ECH(M).$$
The maps $\sigma_k$ also induce maps
$$\sigma^0 : ECC^{\natural}(N, \alpha) \to E^0({\mathcal F}),$$
$$\gamma \otimes \Gamma\mapsto \sum \limits_{i=0}^{\infty} (e')^i \gamma \otimes (\partial_N')^i \Gamma$$
and
$$\sigma^r : ECH(N, \partial N) \simeq ECH^\natural(N,\alpha)\to E^r({\mathcal F}),\quad  r>0.$$

\subsection{Computation of $E^1({\mathcal F})$.} \label{subsection: step 2}

In this subsection we compute the term $E^1({\mathcal F})$ of the spectral sequence that converges to $ECH(M)$ and prove the first half of Theorem~\ref{thm: equivalence of ECHs}.

Recall from Lemma~\ref{lemma: juliette et chocolat} that $E^0({\mathcal F})\simeq ECC(V, \alpha) \otimes ECC(N, \alpha)$ as a vector space and the
differential $\partial_0$ is given by Equations~\eqref{equation for bdry 0, second version} and ~\eqref{eqn: decomposition of boundary}. If we write
$$C_{k,k'}= (h')^{k'} ECC^{\flat}(V, \alpha) \otimes h^k ECC^{\flat}(N, \alpha),$$
then
$$E^0({\mathcal F})\simeq ECC(V, \alpha) \otimes ECC(N, \alpha) = C_{0,0} \oplus C_{0,1} \oplus C_{1,0} \oplus C_{1,1}.$$

We can organize all components of the differential $\partial_0$
besides $\bdry^\flat_V\otimes 1$ and $1\otimes \bdry^\flat_N$ in the
following diagram:
\begin{equation} \label{diagramma 1} \xymatrix{
C_{0,1} \ar[rrr]^{1 \otimes h\partial'_N + \cdot / e' \otimes h}
\ar[d]_{\partial'_V \otimes 1 + \cdot / h' \otimes e} & & &
C_{1,1} \ar[d]^{\partial'_V \otimes 1 + \cdot / h' \otimes e} \\
C_{0,0} \ar[rrr]^{1 \otimes h\partial_N' + \cdot / e' \otimes h} & &
& C_{1,0} } \end{equation}

\subsubsection{The filtration $\mathcal{G}$} \label{subsub: filtration G}

We introduce a filtration ${\mathcal G}$ of length $3$ on
$$(E^0({\mathcal F}), \partial_0) = (ECC(V, \alpha) \otimes ECC(N, \alpha), \partial_0),$$
which is defined as follows:
$${\mathcal G}^0 = C_{1,0}, \quad {\mathcal G}^1 = C_{0,0} \oplus C_{1,1},
\quad {\mathcal G}^2 = C_{0,1}.$$
This filtration induces a spectral sequence $E^r({\mathcal G})$ which converges to
$E^1({\mathcal F})$. The groups $E^r({\mathcal G})$ have two gradings: one inherited
from the grading on $E^0({\mathcal F})$ (which, in turn, is induced by the filtrations
${\mathcal F}_i)$ and one induced by the filtration ${\mathcal G}$. We will denote the
homogeneous components of $E^r({\mathcal G})$ by $E^r_{pq}({\mathcal G})$, where
$p$ is the degree inherited from $E^0({\mathcal F})$ and $q$ is the degree induced by
${\mathcal G}$.  We also write $E^r_{p}({\mathcal G})$, in which case $p$ is the degree inherited from $E^0({\mathcal F})$.

\subsubsection{Determination of $(E^1({\mathcal G}), \partial_{01})$}

The graded complex associated to ${\mathcal G}$ is
$$(E^0({\mathcal G}), \partial_{00}) \simeq ( {\mathcal R}[h',h] \otimes ECC^{\flat}(V, \alpha)
\otimes ECC^{\flat}(N, \alpha), 1 \otimes \partial^{\flat}_V \otimes 1 + 1 \otimes 1
\otimes \partial^{\flat}_N).$$
Then $(E^0({\mathcal G}), \partial_{00})$ is a product complex and its homology can be
computed by the K\"unneth formula:
$$E^1({\mathcal G}) =  {\mathcal R}[h',h] \otimes ECH^{\flat}(V, \alpha) \otimes ECH^{\flat}(N, \alpha).$$
Taking into account the grading inherited from $E^0({\mathcal F})$ and the
computation of $ECH^{\flat}(V, \alpha)$ from Theorem~\ref{thm: ECH of  solid torus is Z} (4), we obtain
$$E^1_{p}({\mathcal G})\simeq \left \{ \begin{array}{ll}
 {\mathcal R}[e', h',h] \otimes ECH^{\flat}(N, \alpha) & \text{when } p=0, \\
0 & \text{when } p>0. \end{array} \right.$$
Then $E^1_p({\mathcal F}) =0$ for $p>0$ and standard properties of spectral
sequences immediately imply the following lemma.

\begin{lemma}\label{lemma: homology is E^1}
There is an isomorphism $E^1_0({\mathcal F}) \simeq ECH(M)$ which is induced by
the direct limit of the inclusion maps $E^0_0({\mathcal F}_i) \hookrightarrow ECC^{L_i}(M,
\alpha_i')$.
\end{lemma}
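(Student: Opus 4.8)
The plan is to read off the conclusion from the computation of $E^1({\mathcal G})$ together with the general collapse-and-convergence machinery for spectral sequences of filtered complexes. First I would recall the setup: we have a filtered complex $(E^0({\mathcal F}),\partial_0)$ with the three-step filtration ${\mathcal G}$, and we have just shown that its associated graded complex $(E^0({\mathcal G}),\partial_{00})$ is a tensor product complex whose K\"unneth homology is ${\mathcal R}[h',h]\otimes ECH^{\flat}(V,\alpha)\otimes ECH^{\flat}(N,\alpha)$. Plugging in Theorem~\ref{thm: ECH of  solid torus is Z}(4), namely $ECH^{\flat}(V,\alpha_V)\simeq\F[e']$ concentrated in the grading-$p$ pieces with $p$ the power of $e'$ (hence $p=0$ only for the generator $1$, but $E^1({\mathcal G})$ keeps all powers of $e'$), I would be careful here: what we actually want is that $E^1_p({\mathcal G})$, the part inherited from the ${\mathcal F}$-grading, vanishes for $p>0$. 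The ${\mathcal F}$-grading is the linking number $\eta([\gamma])$ of the $V$-part, and $ECH^{\flat}(V,\alpha_V)=\F[e']$ with $e'$ lying in linking number class $0$ (it is a boundary-parallel meridian), so the whole of $ECH^{\flat}(V,\alpha_V)$ sits in ${\mathcal F}$-degree $0$. Therefore $E^1_p({\mathcal G})=0$ for $p>0$ and $E^1_0({\mathcal G})\simeq{\mathcal R}[e',h',h]\otimes ECH^{\flat}(N,\alpha)$.

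Next I would invoke the fact that the spectral sequence $E^r({\mathcal G})$ converges to $E^1({\mathcal F})$ (this was established when the filtration ${\mathcal G}$ was introduced; it is bounded, of length three, hence automatically convergent). Since $E^1_p({\mathcal G})=0$ for all $p>0$ and the higher ${\mathcal G}$-differentials preserve the ${\mathcal F}$-grading $p$, every subsequent page also vanishes in ${\mathcal F}$-degree $p>0$; passing to $E^\infty({\mathcal G})$ and assembling the associated graded of $E^1({\mathcal F})$, we get $E^1_p({\mathcal F})=0$ for $p>0$. This is the key vanishing statement. It then remains to identify $E^1_0({\mathcal F})$ with $ECH(M)$ and to pin down that the identification is the one induced by the inclusion of the bottom filtration level. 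For this I would use Lemma~\ref{lemma: juliette et chocolat}: the spectral sequence $E^r({\mathcal F})$ converges to $ECH(M)$, so $ECH(M)$ has a filtration whose associated graded is $\bigoplus_p E^\infty_p({\mathcal F})$. Since $E^1_p({\mathcal F})=0$ for $p>0$ forces $E^\infty_p({\mathcal F})=0$ for $p>0$ as well, the filtration on $ECH(M)$ has only one nonzero graded piece, in degree $0$; hence the filtration is trivial and $ECH(M)\simeq E^\infty_0({\mathcal F})$. Moreover, because the differentials $d^r$ on $E^r({\mathcal F})$ for $r\ge 1$ land in strictly lower ${\mathcal F}$-degree and all such target groups vanish, the degree-$0$ part is unchanged from $E^1$ onward: $E^1_0({\mathcal F})=E^\infty_0({\mathcal F})\simeq ECH(M)$.

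Finally, I would trace through the construction to check that this isomorphism is indeed realized by the direct limit of the inclusions $E^0_0({\mathcal F}_i)\hookrightarrow ECC^{L_i}(M,\alpha_i')$. At the finite level, the inclusion of the lowest filtration piece of $ECC^{L_i}(M,\alpha_i')$ represents, on homology, the edge map of the spectral sequence $E^r({\mathcal F}_i)$ coming from the bottom filtration level; when the spectral sequence degenerates in positive ${\mathcal F}$-degree this edge map is an isomorphism onto $ECH^{L_i}(M,\alpha_i')$ in the relevant range. By Lemma~\ref{lemma: juliette et chocolat} (functoriality of the filtered-complex spectral sequence and exactness of direct limits) these finite-level isomorphisms are compatible with the continuation maps $\hat\Phi_i$, and Lemma~\ref{lemma: direct limit 1} gives $ECH(M)=\varinjlim ECH^{L_i}(M,\alpha_i')$, so the direct limit of the edge maps is the asserted isomorphism. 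The main obstacle I anticipate is the bookkeeping in the previous two paragraphs rather than anything in this one: concretely, making sure the ${\mathcal F}$-grading really is preserved by all the ${\mathcal G}$-differentials and by all the $d^r({\mathcal F})$ for $r\ge 1$ in the direction needed (they decrease $p$, never increase it), so that a single vanishing input $E^1_p({\mathcal G})=0$ for $p>0$ propagates all the way to $E^\infty_p({\mathcal F})=0$ and forces the filtration on $ECH(M)$ to collapse. Everything else is a formal consequence of the convergence theorem \cite[Theorem~5.5.5]{W} applied twice.
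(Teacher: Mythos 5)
Your proposal is correct and follows the same approach as the paper, which is entirely implicit here: the paper's justification is the single sentence ``$E^1_p({\mathcal F})=0$ for $p>0$ and standard properties of spectral sequences immediately imply the following lemma,'' and you have spelled out precisely what those standard properties are. The key observations you supply — that $e'$ has trivial linking number with $K$ so $ECH^{\flat}(V,\alpha_V)\simeq\F[e']$ lives entirely in ${\mathcal F}$-degree $0$; that the ${\mathcal G}$-spectral sequence respects the $p$-grading; that the differentials $d^r({\mathcal F})$ decrease $p$ and the filtration is bounded below at $0$, so $E^1_p({\mathcal F})=0$ for $p>0$ propagates to $E^\infty$ and forces the filtration on $ECH(M)$ to be trivial; and that the isomorphism is the edge map arising from $E^0_0({\mathcal F}_i)={\mathcal F}_i^0\hookrightarrow ECC^{L_i}(M,\alpha_i')$, carried through the direct limit via Lemmas~\ref{lemma: juliette et chocolat} and \ref{lemma: direct limit 1} — are exactly the content being invoked. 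One small clarification worth making: the finite-level edge maps $E^1_0({\mathcal F}_i)\to ECH^{L_i}(M,\alpha_i')$ need not individually be isomorphisms (the vanishing $E^1_p=0$ for $p>0$ is only established after the direct limit), so the phrase ``when the spectral sequence degenerates\ldots\ this edge map is an isomorphism\ldots\ in the relevant range'' is best read as describing the limit object; what the argument actually uses is that the direct limit of the compatible finite-level edge maps is the edge map of $E^r({\mathcal F})$, and that the latter is an isomorphism.
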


The differential $\partial_{01}$ on $E^1({\mathcal G})$ is induced by the components of $\partial_0$
between consecutive filtration levels. By Proposition \ref{foliation on V} and Lemma \ref{fef constrains curves}, the only
$J$-holomorphic map in $\R \times V$ with an end at $h'$ is a disk in the foliation
${\mathcal Z}_1$, which has ECH index $I=1$. Therefore $\partial'_V(h'(e')^i)= (e')^i$.
Then the differential $\partial_{01}$ on $E^1_{0, \bullet}({\mathcal G})$ is described by
the following commutative diagram:
\begin{equation} \label{boundary partial_01} \xymatrix{
h'  {\mathcal R}[e'] \otimes ECH^{\flat}(N, \alpha) \ar[rrr]^{1 \otimes h\partial'_N + \cdot / e' \otimes h}
\ar[d]_{\cdot / h' \otimes (1+e)} & & &
h'  {\mathcal R}[e'] \otimes hECH^{\flat}(N, \alpha)\ar[d]^{\cdot / h' \otimes (1+e)} \\
 {\mathcal R}[e'] \otimes ECH^{\flat}(N, \alpha) \ar[rrr]^{1 \otimes h\partial_N' + \cdot / e' \otimes h} & &
&   {\mathcal R}[e'] \otimes hECH^{\flat}(N, \alpha).} \end{equation}

\subsubsection{Homological algebra lemma}

The following elementary lemma in homological algebra will be used in the proof of Theorem~\ref{cafepi}.

\begin{lemma} \label{montreal}
Let $A$ be an abelian group and $f,g:A\to A$ commuting morphisms. Consider the chain complex
$$ {\mathcal C}_\bullet = \left( 0 \longrightarrow C_2 \stackrel{\partial_2}
\longrightarrow C_1 \stackrel{\partial_1} \longrightarrow C_0 \longrightarrow 0 \right )=
\left (0 \longrightarrow A \stackrel{\binom{f}{g}} \longrightarrow A^2 \xrightarrow{(g,-f)} A
\longrightarrow 0 \right ).$$
If $f$ has a right inverse $s:A\to A$ (i.e., $f \circ s=id$) such that $g \circ s=s\circ g$, then
$$H_2({\mathcal C}_\bullet) \simeq \ker f \cap \ker g, \quad H_1({\mathcal C}_\bullet)
\simeq {\ker f}/{g(\ker f)}, \quad H_0 ({\mathcal C}_\bullet)=0.$$
\end{lemma}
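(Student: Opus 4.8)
The statement is a purely homological-algebra computation for the short complex
$$0 \longrightarrow A \xrightarrow{\binom{f}{g}} A^2 \xrightarrow{(g,-f)} A \longrightarrow 0,$$
so the plan is to chase the three homology groups directly, using the right inverse $s$ as the source of a contracting homotopy at the bottom and a splitting in the middle.

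First I would check that $\mathcal{C}_\bullet$ really is a chain complex, i.e.\ $\partial_1 \circ \partial_2 = 0$: this is $(g,-f)\binom{f}{g} = gf - fg = 0$, which holds because $f$ and $g$ commute. Next, $H_2$: an element $a \in C_2 = A$ is a cycle iff $\binom{f}{g}(a) = 0$, i.e.\ $f(a)=0$ and $g(a)=0$; since $C_3 = 0$ there are no boundaries, so $H_2(\mathcal{C}_\bullet) \simeq \ker f \cap \ker g$ immediately.

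For $H_0$: I must show $\partial_1 = (g,-f)\colon A^2 \to A$ is surjective. Given $b \in A = C_0$, take $(a_1,a_2) = (0, -s(b)) \in A^2$; then $\partial_1(0,-s(b)) = g(0) - f(-s(b)) = f(s(b)) = b$. Hence $H_0(\mathcal{C}_\bullet) = 0$. For $H_1$: the cycles are $\ker\partial_1 = \{(a_1,a_2) : g(a_1) = f(a_2)\}$ and the boundaries are $\{(f(a), g(a)) : a \in A\}$. I would define a map $\ker\partial_1 \to \ker f$ by $(a_1,a_2) \mapsto a_1 - s(g(a_1))$, and check (i) that it lands in $\ker f$: applying $f$ gives $f(a_1) - f(s(g(a_1))) = f(a_1) - g(a_1)$; this is not obviously zero, so in fact the correct target-witness uses the relation $g(a_1) = f(a_2)$, giving $f(a_1 - s(g(a_1))) = f(a_1) - g(a_1)$ — hmm, this needs the roles of $f,g$ swapped. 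Let me instead send $(a_1,a_2)\mapsto a_2 - s(g(a_2))$? The cleaner route: use the splitting $\sigma\colon A \to A^2$, $\sigma(c) = (s(c), 0)$ is not a section of $\partial_1$ either. The genuinely clean approach is: the section $(a_1,a_2)\mapsto (a_1,a_2) - \partial_2(s(a_1))$ followed by projection; concretely, map a cycle $(a_1,a_2)$ to $a_2 - s(g(a_1)) \in \ker f$ — checking $f(a_2 - s(g(a_1))) = f(a_2) - g(a_1) = 0$ by the cycle condition, using $f\circ s = \mathrm{id}$. One verifies this descends to a map $H_1(\mathcal{C}_\bullet) \to \ker f / g(\ker f)$ (a boundary $(f(a),g(a))$ maps to $g(a) - s(g(f(a))) = g(a) - s(f(g(a))) = g(a) - g(a) = 0$, using $g\circ s = s \circ g$ and $f\circ s = \mathrm{id}$), and constructs an inverse $\ker f \to H_1(\mathcal{C}_\bullet)$ by $x \mapsto [(s(g(x)), x)]$ (noting $(s(g(x)),x) \in \ker\partial_1$ since $g(s(g(x))) = s(g(g(x)))$ — wait, that is $s g^2(x)$, not $f(x)=0$; so the pairing must be $(a_1,a_2)$ with $g(a_1)=f(a_2)$, and for $a_2 = x \in \ker f$ we need $g(a_1) = 0$, so take $a_1 = 0$: the map is $x\mapsto [(0,x)]$). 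Then I check the two composites are the identity: $H_1 \to \ker f/g(\ker f) \to H_1$ sends $[(a_1,a_2)]$ to $[(0, a_2 - s(g(a_1)))]$, and the difference $(a_1,a_2) - (0,a_2 - s(g(a_1))) = (a_1, s(g(a_1))) = \partial_2(\text{?})$ — we need this to be a boundary; $\partial_2(c) = (f(c), g(c))$, so we need $f(c) = a_1$ and $g(c) = s(g(a_1))$; taking $c$ with $f(c)=a_1$ forces $g(c) = g(\cdot)$ which need not equal $s(g(a_1))$ unless $a_1 = f(s(g(a_1)))$... indeed $f(s(g(a_1))) = g(a_1)$, not $a_1$. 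This indicates the correct first map is $(a_1,a_2) \mapsto a_1 - s(f(a_2)) \in \ker(\text{something})$; applying $f$: $f(a_1) - f(s(f(a_2))) = f(a_1) - f(a_2)$.

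\textbf{Main obstacle.} The real content is getting the bookkeeping of the quasi-isomorphism exactly right, and the cleanest way to avoid the sign/role confusion above is structural: observe that $s$ exhibits $\binom{f}{g}$ as a split injection with complement, so $A^2 \simeq \mathrm{im}(\partial_2) \oplus A'$ for a suitable $A'$, and $\mathcal{C}_\bullet$ is isomorphic (as a complex) to $\big(0 \to A \xrightarrow{\binom{f}{g}} \mathrm{im}(\partial_2) \to 0\big) \oplus \big(0 \to A' \xrightarrow{\sim} A \to 0\big)$ in degrees $2,1,0$ — more precisely, one uses the homotopy equivalence sending $\mathcal{C}_\bullet$ to the complex $0 \to \ker f \cap \ker g \to 0 \to 0$ in degree $2$ plus the "interesting" piece in degree $1$. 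So the plan is: (1) dispatch $H_2$ and $H_0$ as above (trivial); (2) for $H_1$, write down the explicit map $\phi\colon \ker\partial_1 \to \ker f$, $\phi(a_1,a_2) = a_1 - s(f(a_2))$, verify $f(\phi(a_1,a_2)) = f(a_1) - f(a_2) = 0$ by the cycle relation $g(a_1) = f(a_2)$ — no wait, the cycle relation is $g(a_1) - f(a_2) = 0$, which gives $f(\phi) = f(a_1) - f(a_2)$, still not zero. The resolution: since the complex is symmetric in $(f,-g)\leftrightarrow$(swap factors), and since we want $\ker f / g(\ker f)$, I will set up the isomorphism with target $\ker f$ by the map $(a_1, a_2)\mapsto a_2$ restricted appropriately — for a cycle, $f(a_2) = g(a_1)$, so $a_2 \in f^{-1}(g(a_1))$; modding by boundaries $(f(a),g(a))$ identifies $a_2 \sim a_2 + g(a)$ where simultaneously $a_1 \sim a_1 + f(a)$; choosing the representative of the $\mathcal{F}$-class with $a_1 = 0$ (possible since $f\circ s = \mathrm{id}$ lets us subtract $\partial_2(s(a_1))$, changing $a_1 \mapsto a_1 - f(s(a_1)) = 0$ and $a_2 \mapsto a_2 - g(s(a_1))$), the cycle condition becomes $0 = g(0) = f(a_2')$, so $a_2' \in \ker f$, and the residual boundary freedom is $(f(a), g(a))$ with $f(a) = 0$, i.e.\ $a_2' \sim a_2' + g(a)$ with $a \in \ker f$ — this is exactly $\ker f / g(\ker f)$. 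I expect this normalization argument (reduce every $H_1$-class to a representative supported in the second factor with first coordinate zero, then read off $\ker f$ and the boundary relation $g(\ker f)$) to be the decisive step, and I would present it carefully since it is where all the hypotheses ($f\circ s = \mathrm{id}$, $g\circ s = s\circ g$, and $fg = gf$) get used.
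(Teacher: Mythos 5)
Your final normalization argument is correct and is essentially the paper's proof: the paper defines $\phi(x)=(x,g(s(x)))=\partial_2(s(x))$ and decomposes $\ker\partial_1=\operatorname{Im}\phi\oplus(\{0\}\times\ker f)$ and $\operatorname{Im}\partial_2=\operatorname{Im}\phi\oplus(\{0\}\times g(\ker f))$, and your ``subtract $\partial_2(s(a_1))$ to force $a_1=0$, read off $a_2-g(s(a_1))\in\ker f$, quotient by the residual boundary freedom $g(\ker f)$'' is exactly the projection onto the second summand, so the two presentations agree. The $H_0$ and $H_2$ computations also match the paper's.

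The exploratory material preceding your final step should be cut from any write-up, and it contains errors beyond the ones you flag: $\binom{f}{g}$ is not a split injection (its kernel is $\ker f\cap\ker g$, which is $H_2$ and need not vanish, so there is no complementary summand of $A^2$ in that sense); the line $g(a)-s(f(g(a)))=g(a)-g(a)$ implicitly uses $s\circ f=\operatorname{id}$, which is not a hypothesis, only $f\circ s=\operatorname{id}$ is; and a boundary $(f(a),g(a))$ need not map to $0$ in $\ker f$, only into $g(\ker f)$. With the correct map $\psi(a_1,a_2)=a_2-g(s(a_1))$ the two verifications that you left implicit are: $f(\psi(a_1,a_2))=f(a_2)-g(f(s(a_1)))=f(a_2)-g(a_1)=0$ (using $fg=gf$, $f\circ s=\operatorname{id}$, and the cycle relation), and $\psi(f(a),g(a))=g(a)-g(s(f(a)))=g(a-s(f(a)))\in g(\ker f)$ since $f(a-s(f(a)))=0$. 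Once these are written out, together with the observation that the normalization changes a cycle only by $\partial_2(s(a_1))$ and hence induces the identity on $H_1$, the argument is complete.
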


\begin{proof}
$H_2({\mathcal C}_\bullet) \simeq \ker f \cap \ker g$ is immediate and $H_0 ({\mathcal C}_\bullet)=0$ follows from the surjectivity of $f$.

Next consider $H_1(\mathcal{C}_\bullet)$. By definition, $\ker \bdry_1=\{(x,y)\in A^2~|~ g(x)=f(y)\}$ and $\op{Im}(\bdry_2)= \{(f(z),g(z))\in A^2~|~ z\in A\}$. If we define the map
$$\phi: A\to A^2, \quad x\mapsto (x,g\circ s(x))=(f\circ s(x),g\circ s(x)),$$
then we can write $\op{Im}(\bdry_2)=\op{Im}(\phi)\oplus g(\ker f)$ and $\ker(\bdry_1)= \op{Im}(\phi)\oplus \ker f$.  The details are left to the reader. Hence $H_1({\mathcal C}_\bullet)
\simeq {\ker f}/{g(\ker f)}$.
\end{proof}

\subsubsection{Completion of proof of Theorem~\ref{thm: equivalence of  ECHs}(1)}

We use a comparison theorem for spectral sequences (e.g., \cite[Exercise~A3.41]{eisenbud}) to prove Theorem~\ref{cafepi}, establishing Theorem~\ref{thm: equivalence of ECHs}(1).

\begin{thm} \label{cafepi}
The map $\sigma_*: ECH(N, \partial N, \alpha) \to ECH(M)$ is an isomorphism.
\end{thm}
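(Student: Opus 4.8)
The plan is to compare the spectral sequence $E^r(\mathcal F)$ converging to $ECH(M)$ with the (two-term) spectral sequence induced on $ECC^\natural(N,\alpha)$ by the filtration coming from the $h'$-power, and to show that $\sigma^0$ (hence $\sigma^r$ for all $r$, and finally $\sigma_*$) induces an isomorphism on every page. By Lemma~\ref{lemma: homology is E^1} it suffices to prove that $\sigma^1$ induces an isomorphism onto $E^1_0(\mathcal F)$; and by the comparison theorem for spectral sequences, this in turn follows once we know that the map induced by $\sigma$ on the $E^1(\mathcal G)$-page, compatibly with $\partial_{01}$, is an isomorphism after passing to $\partial_{01}$-homology. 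Concretely, I will compute the homology of the complex in Diagram~\eqref{boundary partial_01} and match it with $ECH^\natural(N,\alpha)\simeq ECH(N,\partial N,\alpha)$ (Lemma~\ref{lemma: uqam}).

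\textbf{Key steps.} First I would observe that Diagram~\eqref{boundary partial_01} is exactly the complex $\mathcal C_\bullet$ of Lemma~\ref{montreal}, with $A = \mathcal R[e']\otimes ECH^\flat(N,\alpha)$ (with the $h$-variable suppressed, or rather carried along as a formal tensor factor that the relevant maps treat multiplicatively), $f = \cdot/h'\otimes(1+e)$ playing the role of the surjection with a right inverse $s = \cdot h'\otimes\mathrm{id}$ (here we use $\partial'_V(h'(e')^i)=(e')^i$, established via the finite energy foliation $\mathcal Z_1$ in Proposition~\ref{foliation on V} and Lemma~\ref{fef constrains curves}, so that the vertical maps really are multiplication by $1+e$ up to the bookkeeping of $h'$), and $g = 1\otimes h\partial'_N + \cdot/e'\otimes h$ the horizontal map; one checks $f\circ s = \mathrm{id}$ and $g\circ s = s\circ g$ since all these operators commute (they act on distinct tensor factors or are the nilpotent $\partial'_N$). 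Lemma~\ref{montreal} then gives $H_0=0$, so $E^2_p(\mathcal G)=0$ for $p>0$ and the surviving term sits in degree where we can read off $E^1(\mathcal F)$; and $H_1(\mathcal C_\bullet)\simeq \ker f/g(\ker f)$. Next I would identify $\ker f$: since $f$ is essentially multiplication by $1+e$ on $\mathcal R[e']\otimes ECH^\flat(N,\alpha)$, which is injective by the homology-class argument already used in Lemma~\ref{lemma: uqam} ($\Gamma$ and $e\Gamma$ lie in different $H_1$-classes), one gets that $\ker f$ is precisely the part supported on $h'$, i.e.\ isomorphic to $\mathcal R[e']\otimes ECH^\flat(N,\alpha)$ again via $s$; and modding out by $g(\ker f)$ reproduces exactly the cone defining $ECC^\natural(N,\alpha)$, whose homology is $ECH(N,\partial N,\alpha)$ by Lemma~\ref{lemma: uqam}. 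Finally I would trace through the explicit formula for $\sigma^0$ (the geometric series $\gamma\otimes\Gamma\mapsto\sum_i (e')^i\gamma\otimes(\partial'_N)^i\Gamma$, well-defined by nilpotency of $\partial'_N$) to verify that the induced map on these identified homology groups is the identity, so that $\sigma^1$ is an isomorphism onto $E^1_0(\mathcal F)$; the comparison theorem then upgrades this to $\sigma^r$ for all $r$ and, passing to the limit, to $\sigma_*$. Compatibility with the $H_1$-decomposition is immediate from the formula for $\sigma^0$, since $e'$ is null-homologous in $H_1(M)$ and $\partial'_N$ preserves homology classes, and $\varpi$ matches the relative classes by construction.

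\textbf{Main obstacle.} The delicate point is the bookkeeping that turns Diagram~\eqref{boundary partial_01} into the abstract complex $\mathcal C_\bullet$ of Lemma~\ref{montreal}: one must be careful that the vertical maps are genuinely multiplication by $1+e$ (not just its "$\flat$" part), which relies on the precise shape of $\partial'_V$ on powers of $h'$ coming from the foliation $\mathcal Z_1$, and that the horizontal map $g$ commutes with the chosen right inverse $s$ of $f$ in the presence of both $h$ and $h'$ factors. A secondary subtlety is checking that the identification $H_1(\mathcal C_\bullet)\simeq \ker f/g(\ker f)$ is natural enough that $\sigma^0$ is seen to induce the identity, rather than merely some isomorphism; this requires choosing the isomorphisms of Lemmas~\ref{lemma: uqam} and \ref{montreal} consistently with the geometric-series definition of $\sigma^0$. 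Once the algebra is set up correctly, the application of the comparison theorem and the passage to direct limits are formal.
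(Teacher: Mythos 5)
Your overall architecture — factor $\sigma_*$ through $\sigma^1$ via Lemma~\ref{lemma: homology is E^1}, introduce the short filtration ${\mathcal G}^\natural$ on $ECC^\natural(N,\alpha)$, apply Lemma~\ref{montreal} to compute the $\partial_{01}$-homology of $E^1({\mathcal G})$, and then invoke the comparison theorem — is the same as the paper's. However, you apply Lemma~\ref{montreal} with $f$ and $g$ swapped, and this is a genuine error, not a cosmetic relabeling.

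Lemma~\ref{montreal} requires $f$ to admit a right inverse $s$ (so $f$ must be \emph{surjective}) with $g\circ s = s \circ g$. You take $f$ to be the vertical map, which after stripping $h'$ is $1\otimes(1+e)$, with claimed right inverse $s = \cdot\, h'\otimes\mathrm{id}$. But then $f\circ s = (\cdot/h'\otimes(1+e))\circ(\cdot\, h'\otimes\mathrm{id}) = 1\otimes(1+e)$, which is emphatically \emph{not} the identity; indeed $1\otimes(1+e)$ is injective but generally not surjective — failure of surjectivity of $(1+e)$ is precisely what makes $ECH(N,\partial N,\alpha)=ECH^\flat(N,\alpha)/(1+e)ECH^\flat(N,\alpha)$ nontrivial. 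With your assignment, Lemma~\ref{montreal} simply does not apply. (And if one carelessly plugged in anyway, one would get $\ker f=0$ since $(1+e)$ is injective, hence $H_1 = \ker f/g(\ker f)=0$, contradicting what the theorem asserts.) The paper's assignment is the opposite: $f = 1\otimes\partial'_N + \cdot/e'\otimes 1$ (the horizontal map, with $h$ stripped) and $g = 1\otimes(1+e)$ (the vertical map). The right inverse $s$ for this $f$ is not multiplication by $h'$ but the geometric-series operator $(e')^k\otimes\Gamma\mapsto (e')^k\sum_{i\ge1}(e')^i\otimes(\partial'_N)^{i-1}\Gamma$, which is well defined precisely because $\partial'_N$ is nilpotent; one can check by a telescoping cancellation over $\F=\Z/2\Z$ that $f\circ s=\mathrm{id}$. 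This $s$ is essentially the same geometric series that defines $\sigma^0$, which is what makes the final identification of $\overline{\sigma}$ with an isomorphism $\ker f \simeq ECH^\flat(N,\alpha)$ intertwining the $(1+e)$ actions go through. Your naive $s=\cdot\, h'$ misses this, and as a consequence the rest of your computation of $\ker f$ (``the part supported on $h'$'') and the final identification are not correct as stated.

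To repair the argument: set $f$ to be the horizontal component of $\partial_{01}$ (after identifying each corner of Diagram~\eqref{boundary partial_01} with $A = {\mathcal R}[e']\otimes ECH^\flat(N,\alpha)$) and $g$ the vertical component, verify $f\circ s=\mathrm{id}$ for the geometric-series $s$, note $\ker f = \{\sum_i (e')^i\otimes\Gamma_i : \Gamma_{i+1}=\partial'_N\Gamma_i\}$ which is $\overline{\sigma}(ECH^\flat(N,\alpha))$, and check $\overline{\sigma}$ intertwines $(1+e)$ with $g|_{\ker f}$ so that $E^2({\mathcal G}^\natural)\simeq E^2({\mathcal G})$; the comparison theorem then finishes as you envision.
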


\begin{proof}
Since $\sigma_k$ takes values in the lowest level of the filtration ${\mathcal F}_k$, $\sigma_*$ factors through the map
$$\sigma^1 : ECH(N, \partial N, \alpha) \simeq ECH^\natural(N,\alpha) \to E^1_0({\mathcal F}).$$
By Lemma~\ref{lemma: homology is E^1} it suffices to show that $\sigma^1$ is an isomorphism.

Recall the filtration $\mathcal{G}$ on $E^0(\mathcal{F})$ from Section~\ref{subsub: filtration G}. On $ECC^\natural(N, \alpha)$ we define an analogous filtration ${\mathcal G}^\natural$ such that
$${\mathcal G}^\natural(\gamma \otimes \Gamma)= \left \{ \begin{array}{l} 2 \quad
\text{if } \gamma=h',  \text{ and } \\
1 \quad \text{if } \gamma= \varnothing.
\end{array} \right.$$
This filtration induces a spectral sequence $E^r({\mathcal G}^\natural)$ such that $E^1_q ({\mathcal G}^\natural) \simeq ECH^\flat(N, \alpha)$ for $q= 1,2$ and $d_1$ is the multiplication by $(1+e)$. This is simply a reformulation of Exact Triangle~\eqref{eqn: exact triangle e+1} in the language of spectral sequences. The map $\sigma^0$ is compatible with the filtrations ${\mathcal G}^\natural$ and ${\mathcal G}$ and induces a map
$$\overline{\sigma} : E^1({\mathcal G}^\natural) \to E^1({\mathcal G}).$$

We now compute the homology of $(E^1({\mathcal G}), \partial_{01})$ using Lemma~\ref{montreal}. We set
$$A=  {\mathcal R}[e']\otimes ECH^{\flat}(N, \alpha),\quad f = 1 \otimes \partial_N' + \cdot / e' \otimes 1,\quad \mbox{and} \quad g=1 \otimes (1+e),$$
where $fg=gf$ by Diagram~\eqref{boundary partial_01}. Define the map
$$s :  {\mathcal R}[e']\otimes ECH^\flat(N, \alpha) \to  {\mathcal R}[e'] \otimes ECH^\flat(N, \alpha),$$
$$(e')^k\otimes \Gamma\mapsto (e')^k \sum_{i=1}^{\infty} (e')^i \otimes (\partial_N')^{i-1} \Gamma,$$
where $\Gamma$ denotes an element of $ECH^{\flat}(N, \alpha)$ and not an orbit set as usual. Then $s$ is well-defined since $\partial'_N$ is nilpotent. Moreover $fs=id$ and $gs=sg$. Then $E^2_{00}({\mathcal G})= E^2_{02}({\mathcal G})=0$ because the map $g$ is injective. Next consider $E^2_{01}({\mathcal G}) = {\ker f}/g(\ker f)$. An element of $\ker f$ has the form
$$(e')^n \otimes \Gamma_n + (e')^{n-1} \otimes \Gamma_{n-1} + \dots + 1 \otimes \Gamma_0,$$
where $\Gamma_i\in ECH^\flat(N, \alpha)$ and $\Gamma_{i+1} = \partial_N' \Gamma_i$, $i=0,1,\dots$. Hence the map
$$\overline{\sigma}: ECH^{\flat}(N, \alpha) \to  {\mathcal R}[e'] \otimes ECH^{\flat}(N, \alpha),$$
$$\Gamma\mapsto \sum_{i=0}^\infty (e')^i\otimes (\bdry_N')^i\Gamma,$$
is an isomorphism with $\ker f$.  The diagram
$$\xymatrix{
ECH^\flat(N, \alpha) \ar[r]^-{\overline{\sigma}} \ar[d]_{\cdot (1+e)} & \ker f
\ar[d]^{\cdot (1+e)=g} \\ ECH^\flat(N, \alpha) \ar[r]^-{\overline{\sigma}} &
\ker f}$$
commutes because $\partial_N'(e \Gamma) = e \partial'_N(\Gamma)$ for all $\Gamma
\in ECH^{\flat}(N, \alpha)$ by the Trapping Lemma.
Hence $\overline{\sigma}$ induces an isomorphism
$$E^2({\mathcal G}^\natural) \simeq ECH^\flat(N, \alpha)/ (\Gamma+ e \Gamma)\stackrel\sim\to E^2({\mathcal G}) \simeq {\ker f}/{g(\ker f)}.$$
By the comparison theorem for spectral sequences, $\sigma^1$ is an isomorphism. This completes the proof of Theorem~\ref{cafepi}.
\end{proof}

\subsection{The $U$-map}\label{subsec: U map}

In this subsection we prove that $\sigma_*$ intertwines the map $U$ on $ECH(M)$ with the map induced by $\partial'_N$ on $ECH(N, \partial N, \alpha)$. This will allow us to deduce Theorem \ref{thm: equivalence of ECHs}(2) from algebraic considerations. Let $L_k$ and $L_k'$ be as in Section~\ref{subsection: explicit map}.

We define the map
$$U^\natural : ECC^\natural(N, \alpha) \to ECC^\natural(N, \alpha),$$
$$\gamma \otimes \Gamma\mapsto \gamma \otimes \partial_N' \Gamma.$$
Since $U^\natural(ECC_{\le k}^{\natural, L_k'}(N, \alpha)) \subseteq ECC_{\le k}^{\natural, L_k'}(N, \alpha)$, we can define
$$U^{\natural}_k : ECC_{\le k}^{\natural, L_k'}(N, \alpha) \to ECC_{\le k}^{\natural, L_k'}(N, \alpha)$$
as the restriction of $U^\natural$ to $ECC_{\le k}^{\natural, L_k'}(N, \alpha)$.

We also define the chain complex $$\widehat{ECC}^\natural (N, \alpha) =  {\mathcal R}[h'] \otimes ECC(N, \alpha)$$ with differential
$$\widehat{\partial}^\natural (\gamma \otimes \Gamma) =\gamma \otimes \partial_N \Gamma + \gamma /h' \otimes (1+e) \Gamma.$$

The following lemma is similar to Lemma \ref{lemma: uqam} and its proof will be omitted.

\begin{lemma}\label{aeroport}
$\widehat{ECH}^\natural (N, \alpha) \simeq \widehat{ECH}(N, \partial N, \alpha)$.
\end{lemma}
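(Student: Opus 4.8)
The plan is to mimic the proof of Lemma~\ref{lemma: uqam} essentially verbatim, replacing $ECC^\flat(N,\alpha)$ and $ECH^\flat(N,\alpha)$ by $ECC(N,\alpha)$ and $ECH(N,\alpha)$ throughout. First I would observe that $\widehat{ECC}^\natural(N,\alpha) = {\mathcal R}[h'] \otimes ECC(N,\alpha)$ with the stated differential is, by inspection, the mapping cone of the chain map $\cdot(1+e) : ECC(N,\alpha) \to ECC(N,\alpha)$: writing a generic element of ${\mathcal R}[h']\otimes ECC(N,\alpha)$ as $\Gamma_0 + h'\Gamma_1$ with $\Gamma_0,\Gamma_1 \in ECC(N,\alpha)$, the differential $\widehat\partial^\natural$ sends it to $\partial_N\Gamma_0 + h'\partial_N\Gamma_1 + (1+e)\Gamma_1$, which is exactly the cone differential $\binom{\partial_N}{(1+e)}$ combined with $\partial_N$ on the $h'$-summand. (One must check that $\cdot(1+e)$ is genuinely a chain map for the differential $\partial_N$ of $ECC(N,\alpha)$; this is the analog of the statement in Lemma~\ref{lemma: general elections} that $\partial(e\gamma) = e\partial(\gamma)$, which holds because every $J$-holomorphic curve in $\R\times N$ with a positive end at $e$ contains a trivial cylinder over $e$ by the Trapping Lemma, $e$ being the elliptic orbit of the \emph{negative} Morse-Bott torus $\partial N$, together with the admissibility/ECH-index computation cited there.)

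Next I would invoke the standard exact triangle of a mapping cone:
\begin{equation*}
\xymatrix{ ECC(N,\alpha) \ar[rr]^{\cdot(1+e)} & & ECC(N,\alpha) \ar[dl] \\ & \widehat{ECC}^\natural(N,\alpha) \ar[ul] & }
\end{equation*}
which on homology gives the long exact sequence relating $ECH(N,\alpha)$, itself, and $\widehat{ECH}^\natural(N,\alpha)$. Then I would argue, exactly as in Lemma~\ref{lemma: uqam}, that the map $\cdot(1+e)$ is injective on homology: for any $\Gamma \in ECC(N,\alpha)$, the orbit sets $\Gamma$ and $e\Gamma$ represent different classes in $H_1(N)$ (they differ by $[e] = [\partial N\text{-meridian}] = \varpi^{-1}$ applied to the image of the meridian, which is a nonzero primitive class since $e$ has linking number one with $K$), so $(1+e)$ is injective already at the level of the homology-class decomposition of the chain groups, hence injective on homology. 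Consequently the connecting maps in the long exact sequence vanish, and the triangle yields
\[
\widehat{ECH}^\natural(N,\alpha) \simeq \frac{ECH(N,\alpha)}{(1+e)\,ECH(N,\alpha)} = \lim_{j\to\infty} ECH_j(M,\alpha) = \widehat{ECH}(N,\partial N,\alpha),
\]
where the middle identification is the interpretation of $\widehat{ECH}(N,\partial N,\alpha)$ given in the Remark after Definition~\ref{defn: ECH relative to the boundary} (quotient of $ECC(N,\alpha)$ by the subcomplex generated by the $e\gamma - \gamma$), which is precisely the direct limit $\lim_j ECH_j(M,\alpha)$ of Definition~\ref{defn: ECH relative to the boundary}.

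The only genuine subtlety — the "main obstacle," though it is a mild one — is bookkeeping with the hyperbolic orbit $h$ now present in the picture: in $ECC^\natural$ one had ${\mathcal R}[h']\otimes ECC^\flat$, with $e$ but not $h$ allowed inside $ECC^\flat$, whereas in $\widehat{ECC}^\natural$ one has ${\mathcal R}[h']\otimes ECC$, so $h$ appears with exponent at most one as a nilpotent variable. I must make sure that (i) $\cdot(1+e)$ still commutes with $\partial_N$ when $h$-terms are present — this is the content of the second displayed equation block in Equation~\eqref{eqn: decomposition of boundary}, namely $\partial_N(h\Gamma) = h\partial_N^\flat\Gamma$ and $\partial_N\Gamma = \partial_N^\flat\Gamma + h\partial_N'\Gamma$, from which $\partial_N((1+e)\Gamma) = (1+e)\partial_N\Gamma$ follows since $e$ commutes with everything by the Trapping-Lemma argument above — and (ii) the homology-class argument for injectivity of $\cdot(1+e)$ is unaffected by $h$, which is clear since $h$ and $e$ lie in the same Morse-Bott family and multiplying by $e$ shifts the class by the same $[e]$ regardless of whether an $h$-factor is present. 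With these checks in place the proof of Lemma~\ref{lemma: uqam} carries over word for word, which is why the authors are justified in omitting it.
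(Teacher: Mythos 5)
Your proof is correct and is precisely the adaptation of the proof of Lemma~\ref{lemma: uqam} that the paper intends by the phrase ``similar to Lemma~\ref{lemma: uqam}'': identify $\widehat{ECC}^\natural(N,\alpha)$ with the cone of $\cdot(1+e)$ on $ECC(N,\alpha)$ (using Lemma~\ref{lemma: general elections} to see that $\cdot(1+e)$ is a chain map on the full complex $ECC(N,\alpha)$ including $h$), invoke injectivity of $\cdot(1+e)$ on homology via the homology-class decomposition, and conclude via the exact triangle and the Remark following Definition~\ref{defn: ECH relative to the boundary}. The only cosmetic slip is the parenthetical invocation of $\varpi^{-1}$ when identifying $[e]$---what is actually used is simply that $[e]\in H_1(N)$ is the meridian class, which has intersection number one with the Seifert surface and is therefore of infinite order---but this does not affect the argument.
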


The decomposition of the differential $\partial_N$ described in Equation~\eqref{eqn: decomposition of boundary} implies the following lemma.

\begin{lemma}\label{trudeau}
$\widehat{ECC}^{\natural}(N, \alpha)$ is isomorphic to the cone of $U^\natural$. If $L'_k \to \infty$ is an increasing sequence and $\widehat{ECC}^{\natural, L'_k}_{\le k}(N, \alpha)$ is the cone of $U^{\natural}_k$, then
$$\lim \limits_{k \to \infty} \widehat{ECC}^{\natural, L'_k}_{\le k}(N, \alpha) \simeq \widehat{ECC}^{\natural}(N, \alpha).$$
\end{lemma}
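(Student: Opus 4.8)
\textbf{Proof proposal for Lemma~\ref{trudeau}.}

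The plan is to treat the two claims separately, the first being a purely algebraic unwinding of definitions and the second a standard compatibility-with-direct-limits argument. For the first claim, recall that the cone of a chain map $f:(C_\bullet,\partial)\to(C_\bullet,\partial)$ is the complex $C_\bullet\oplus C_\bullet[1]$ with differential $\binom{\partial\ \ 0}{f\ \ \partial}$ (up to sign conventions, which are irrelevant over $\F=\Z/2\Z$). Here $U^\natural$ acts on $ECC^\natural(N,\alpha)={\mathcal R}[h']\otimes ECC^{\flat}(N,\alpha)$ and is a chain map with respect to $\partial^\natural$ by the defining formula $\partial^\natural(\gamma\otimes\Gamma)=\gamma\otimes\partial^\flat\Gamma+\gamma/h'\otimes(1+e)\Gamma$ together with the fact that $\partial_N'$ commutes with $\partial_N^\flat$ and with multiplication by $(1+e)$, both of which follow from the decomposition in Equation~\eqref{eqn: decomposition of boundary} and the Trapping Lemma (this commutation was already used in the proof of the chain-map property of $\sigma_k$). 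First I would identify $\widehat{ECC}^\natural(N,\alpha)={\mathcal R}[h']\otimes ECC(N,\alpha)$ as a graded vector space with ${\mathcal R}[h']\otimes ECC^\flat(N,\alpha)\oplus {\mathcal R}[h']\otimes h\,ECC^\flat(N,\alpha)$, using that $h$ is hyperbolic and hence appears to at most the first power, so that $ECC(N,\alpha)\simeq ECC^\flat(N,\alpha)\oplus h\,ECC^\flat(N,\alpha)$ as vector spaces. Under this identification, I would expand $\widehat{\partial}^\natural(\gamma\otimes\Gamma)=\gamma\otimes\partial_N\Gamma+\gamma/h'\otimes(1+e)\Gamma$ using $\partial_N\Gamma=\partial_N^\flat\Gamma+h\,\partial_N'\Gamma$ and $\partial_N(h\Gamma)=h\,\partial_N^\flat\Gamma$ (again Equation~\eqref{eqn: decomposition of boundary}); matching terms shows the $h$-free part of the output is $\partial^\natural$ applied to the $h$-free part, the part landing in the $h$-summand picks up both $\partial^\natural$ of the $h$-summand and a copy of $\partial_N'=U^\natural$ applied to the $h$-free part, which is exactly the cone differential $\binom{\partial^\natural\ \ \ \ 0}{U^\natural\ \ \partial^\natural}$. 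That establishes $\widehat{ECC}^\natural(N,\alpha)\cong\operatorname{Cone}(U^\natural)$; the same computation restricted to generators of linking number $\le k$ and action $<L'_k$ gives $\widehat{ECC}^{\natural,L'_k}_{\le k}(N,\alpha)\cong\operatorname{Cone}(U^\natural_k)$, since $U^\natural$ preserves this subcomplex (it decreases linking number and does not increase action).

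For the second claim, I would observe that the inclusions $ECC^{\natural,L'_k}_{\le k}(N,\alpha)\hookrightarrow ECC^{\natural,L'_{k+1}}_{\le k+1}(N,\alpha)$ commute with the maps $U^\natural_k$ (as restrictions of the single map $U^\natural$), hence induce chain maps on cones $\operatorname{Cone}(U^\natural_k)\to\operatorname{Cone}(U^\natural_{k+1})$; since the mapping-cone construction is functorial and the direct limit is an exact functor on the category of directed systems of abelian groups (cf.\ \cite[Theorem~2.6.15]{W}, used already in Lemma~\ref{lemma: juliette et chocolat}), we get $\lim_k\operatorname{Cone}(U^\natural_k)\simeq\operatorname{Cone}(\lim_k U^\natural_k)$. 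Finally, because every orbit set has some finite linking number with $K$ and some finite action, the direct limit $\lim_k ECC^{\natural,L'_k}_{\le k}(N,\alpha)$ is all of $ECC^\natural(N,\alpha)$ with $\lim_k U^\natural_k=U^\natural$, so $\lim_k\widehat{ECC}^{\natural,L'_k}_{\le k}(N,\alpha)\simeq\operatorname{Cone}(U^\natural)\simeq\widehat{ECC}^\natural(N,\alpha)$.

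I do not anticipate a genuine obstacle here: both parts are bookkeeping. The one point requiring slight care is the compatibility of the decomposition $ECC(N,\alpha)=ECC^\flat(N,\alpha)\oplus h\,ECC^\flat(N,\alpha)$ with the differentials, i.e.\ making sure that the identification is not merely a vector-space splitting but interacts correctly with $\partial_N$; this is exactly what Equation~\eqref{eqn: decomposition of boundary} encodes, and the Trapping Lemma guarantees $h'$ appears only at positive ends so that the ${\mathcal R}[h']$-module structure behaves as claimed. A secondary subtlety is confirming that $U^\natural_k$ is well-defined on $ECC^{\natural,L'_k}_{\le k}(N,\alpha)$ with the stated action and linking bounds, but this is precisely the content of the remark following the definition of $\sigma_k$ (nilpotency of $\partial_N'$ from the transversality of the Reeb flow to the Seifert surface $S$), which I would cite rather than reprove.
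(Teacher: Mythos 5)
Your proof is correct and fills in precisely the bookkeeping the paper omits: the paper simply states that the decomposition of $\partial_N$ in Equation~\eqref{eqn: decomposition of boundary} implies the lemma, and your expansion of $\widehat{\partial}^\natural$ into the block-triangular form $\binom{\partial^\natural\ \ 0}{U^\natural\ \ \partial^\natural}$ on $ECC^\natural\oplus h\,ECC^\natural$ is exactly what that remark is gesturing at. The direct-limit half is also handled correctly; the only quibble is that exactness of direct limits is not actually needed for the chain-level isomorphism (the cones of $U^\natural_k$ are nested subcomplexes of $\operatorname{Cone}(U^\natural)$ whose union is everything), though citing it does no harm.
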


Let $z$ be a generic point in the interior of $\R\times V$. We denote by $U_k$ the
$U$-map on $ ECC^{L_k} (M, \alpha_k')$ defined with respect to $z$.

\begin{lemma}\label{lemma: computation of U_0}
The map $U_k$ preserves the filtration ${\mathcal F}_k$ for each $k$. On the lowest filtration level, generated by orbit sets $\gamma \otimes \Gamma$ such that $\gamma \in  {\mathcal R}[e',h']$, $U_k$ is given by:
\begin{equation} \label{U in lowest level}
U_k(\gamma\otimes\Gamma)=\gamma/e'\otimes \Gamma.
\end{equation}
\end{lemma}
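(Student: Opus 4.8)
The statement has two parts: first, that $U_k$ preserves the filtration $\mathcal{F}_k$; second, the explicit formula \eqref{U in lowest level} on the lowest filtration level. For the first part, the plan is to repeat \emph{verbatim} the argument of Lemma~\ref{lemma: filtration constraint on curves}: a pointed $J_k'$-holomorphic map $u$ counted by $U_k$ passes through a fixed point $z$ and runs from $\gamma\otimes\Gamma$ to $\gamma'\otimes\Gamma'$; by $(\mathbf{MB}_1)$ there is a corresponding $J_k$-holomorphic Morse-Bott building, and applying positivity of intersections in dimension three (Lemma~\ref{lemma: positive slope}) against the tori $T_n=\partial V_n$ with irrational slope $r_n\to+\infty$ gives $\eta(\delta_n)\le 0$ for $n\gg 0$, hence $\mathcal{F}_k(\gamma\otimes\Gamma)\ge \mathcal{F}_k(\gamma'\otimes\Gamma')$. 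The point $z$ plays no role in this homological bound, so the argument goes through without change.

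\textbf{The formula on the lowest level.} On the lowest filtration level the generators are $\gamma\otimes\Gamma$ with $\gamma\in\mathcal{R}[e',h']$, and since $U_k$ preserves the filtration, the component of $U_k$ that stays in the lowest level is computed by counting $I=2$ very nice Morse-Bott buildings in the symplectization of $(M,\alpha_k)$ passing through $z$ that do not decrease the filtration, exactly as $\partial_0$ was computed in Lemma~\ref{lemma: more precise version of bdry zero} via Corollary~\ref{cor: Morse-Bott is possible} and Proposition~\ref{prop: from generic to MB} (Theorem~\ref{thm: Morse-Bott perturbation of moduli spaces}(4) handles the $\mathrm{ind}=2$ case through a generic point). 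Place $z$ in the interior of $\R\times V$. Then the Blocking Lemma and the Trapping Lemma, applied as in the proof of Lemma~\ref{lemma: more precise version of bdry zero}, force the non-connector component passing through $z$ to be contained in $\R\times V$: curves crossing $\R\times T_1$ or $\R\times T_2$ are ruled out because the relevant intersection classes with $T_{1\pm\varepsilon}$, $T_{2\pm\varepsilon}$ have slope $\infty$, and curves with ends on $\partial N$ or $\partial V$ either decrease the filtration (Lemma~\ref{lemma: constraints on ends}) or stay in the lowest level via the gradient-trajectory cylinders. Hence the filtration-preserving, lowest-level part of $U_k$ is governed entirely by the $U$-map on $ECC(V,\alpha_V)$ at the point $z$, tensored with the identity on $ECC(N,\alpha)$.

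\textbf{Identifying the $U$-map on the solid torus.} It then remains to show that the $U$-map on $ECC^{\flat}(V,\alpha_V)$ (equivalently on $\mathcal{R}[e',h']$, the relevant summand by Theorem~\ref{thm: ECH of solid torus is Z}) sends $(e')^m\mapsto (e')^{m-1}$, i.e.\ $\gamma\mapsto\gamma/e'$. This is where the finite energy foliation $\mathcal{Z}_1$ of Proposition~\ref{foliation on V} enters: its leaves are $J$-holomorphic planes in $\R\times\mathrm{int}(V)$, positively asymptotic to Morse-Bott orbits on $\partial V$, automatically transverse by Theorem~\ref{automatic transversality}, and through the generic point $z$ there passes exactly one such plane (one for each choice of leaf, and the planes foliate, so $z$ lies on a unique leaf, asymptotic to a unique orbit). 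Augmenting this plane with a gradient trajectory to the elliptic orbit $e'$ (and using Fact~\ref{basic fact about partitions}/the admissibility computation from \cite[Proposition~7.1]{Hu} as in Lemma~\ref{lemma: general elections} to see that $I=2$ for the building from $(e')^m$ to $(e')^{m-1}$), we get precisely one very nice $I=2$ building through $z$ from $(e')^m$ to $(e')^{m-1}$, and Lemma~\ref{fef constrains curves} shows these foliation planes are the only somewhere-injective curves with the relevant asymptotics, so no other buildings contribute. This gives $U_k((e')^m\otimes\Gamma)=(e')^{m-1}\otimes\Gamma$ and, combining with the $\gamma=h'$ and $\gamma=1$ cases (the plane through $z$ does not carry an $h'$-end generically, and for $\gamma=1$, i.e.\ $m=0$, there is no plane asymptotic to a negative-multiplicity orbit, so $U_k$ kills it), the formula \eqref{U in lowest level}.

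\textbf{Main obstacle.} The delicate point is the count ``exactly one'' building through $z$: one must be sure that the foliation $\mathcal{Z}_1$ is not disturbed by the small perturbation from $\alpha$ to $\alpha_V$ and from the symmetric $J_0$ to the regular $J_k$ used in the differential, and that no further Morse-Bott buildings through $z$ with the same asymptotics appear once the almost complex structure is made generic outside the symmetric region. Proposition~\ref{foliation on V} together with the unconstrained automatic transversality (Theorem~\ref{automatic transversality}, via Lemma~\ref{lemma: Z_2 is regular}-type reasoning) is designed precisely to control this, and Lemma~\ref{fef constrains curves} closes off the possibility of stray somewhere-injective curves; multiply-covered contributions are excluded by the ECH index inequality exactly as in Lemma~\ref{lemma: more precise version of bdry zero}. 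So the proof is essentially an assembly of already-established lemmas, with the foliation argument for the solid torus being the substantive ingredient.
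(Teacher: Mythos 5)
Your overall strategy matches the paper's: first the filtration preservation via Lemma~\ref{lemma: filtration constraint on curves}, then the factoring $U_k(\gamma\otimes\Gamma)=U_k(\gamma)\otimes\Gamma$ on the lowest level via Lemma~\ref{lemma: constraints on ends} and Corollary~\ref{cor: Morse-Bott is possible}, then an appeal to the foliation $\mathcal{Z}_1$ of $\R\times V$ for the count. The first two parts are fine and essentially verbatim the paper's.

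The gap is in the solid-torus count, where you go directly from the foliation to the conclusion without first pinning down the topology of the contributing curves. The paper does an explicit ECH-index computation: writing the ends as $\gamma_\pm=(e')^{a_\pm}(h')^{b_\pm}$ and using that in the solid torus the relative class is forced, one gets $I=2(a_+-a_-)+(b_+-b_-)$, so $I=2$ forces $a_+-a_-=1$ and $b_+=b_-$. You never derive this, and instead say things like ``the plane through $z$ does not carry an $h'$-end generically,'' which is not a rigorous substitute --- the constraint comes from index arithmetic, not genericity. After that the paper applies the Fredholm-index formula to force $\chi(F)=1$, which leaves exactly two configurations: (i) an $\op{ind}=2$ plane at $e'$ plus a trivial cylinder over $h'$, and (ii) an $\op{ind}=1$ cylinder from $e'$ to $h'$ \emph{disjoint union} an $\op{ind}=1$ plane at $h'$. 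Configuration (ii) is a legitimate $J_k'$-holomorphic curve (not a two-story building), has total $I=2$, and its non-connector part is somewhere injective, so Lemma~\ref{fef constrains curves} alone does not exclude it; the paper kills it by observing that a disjoint union of two $\op{ind}=1$ components cannot pass through a generic point. Your proposal does not mention this configuration at all, and the appeal to ``Lemma~\ref{fef constrains curves} shows these foliation planes are the only somewhere-injective curves'' cannot do this job because both components of (ii) are somewhere injective and each is (or limits to) a leaf or a gradient cylinder. Only after these two index arguments does the paper invoke the foliation $\mathcal{Z}_1$ and Theorem~\ref{thm: Morse-Bott perturbation of moduli spaces}(4) to do the final count of one plane through $z$, which is the part you do include. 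So your argument has the right ingredients but is missing the $I$- and $\op{ind}$-arithmetic that actually reduces the problem to a count of planes.
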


\begin{proof}
Fix $k$. By Lemma~\ref{lemma: filtration constraint on curves}, the map $U_k$ preserves the filtration ${\mathcal F}_k$. Moreover, by Lemma
\ref{lemma: constraints on ends} (see also Corollary \ref{cor: Morse-Bott is possible}),
curves which contribute to $U_k$ and do not decrease the filtration level do not cross
$\R \times T_i$ (for $i=1,2$).  This implies  that $U_k(\gamma \otimes \Gamma)= U_k(\gamma) \otimes \Gamma$ when $\gamma \in  {\mathcal R}[e',h']$, and $U_k(\gamma)$ counts index $I=2$ curves in $V$ passing through $z$. We will use the ECH index and the Fredholm index to constrain such curves.

Let $u$ be an $I=2$ $J_k'$-holomorphic map in $\R \times V$ with $\gamma_+ = (e')^{a_+}(h')^{b_+}$ at the positive end and $\gamma_- = (e')^{a_-}(h')^{b_-}$ at the negative end; of course $b_\pm \in \{ 0,1 \}$. If we denote by $D_{e'}$ and $D_{h'}$ the meridian disks of $V$ with boundary on $e'$ and $h'$ respectively, and by $Z \in H_2(V, \gamma_+, \gamma_-)$ the relative homology class determined by $u$, we have $Z= (\alpha_+- \alpha_-)[D_{e'}] + (\beta_+ - \beta_-) [D_{h'}]$.

We compute $I(\gamma_+, \gamma_-, Z)$ using Equation \eqref{eqn: ECH index formula}. On $e'$ and $h'$ we consider the trivialization $\tau$ induced by $\partial V$.  The Conley-Zehnder indices are $\mu_\tau((e')^i) = 1$ for $i =1, \ldots, k$ and $\mu_\tau(h')=0$ by Definition \ref{defn: ECH index in Morse-Bott setting}, because they are on a slight perturbation of a positive Morse-Bott torus. The relative Chern class is $c_1(\xi|_{[D_{e'}]}, \tau) = c_1(\xi|_{[D_{h'}]}, \tau)=1$. Putting everything together,
$$I(\gamma_+, \gamma_-, Z) = 2(a_+ - a_-) + (b_+ - b_-).$$
$I(\gamma_+, \gamma_-, Z)=2$ then implies $e_+-a_- =1$ and $b_+ - b_- =0$, because
$b_+ - b_- \in \{ -1,0,1 \}$. We call $b=b_+=b_-$.

Negative ends at $e'$ cannot be contained in $\R \times V$ by the Trapping Lemma \ref{lemma: trapping}. (While the Trapping Lemma was proved for orbits on a Morse-Bott torus, it still holds for $e'$ which is a slight elliptic perturbation.) Therefore $u$ consists of a cover of a trivial cylinder over $e'$ of degree $a_-$, together with a $J_k'$-holomorphic map $u : F \to \R \times V$ with positive asymptotics to $e' (h')^b$, negative asymptotics to $ (h')^b$ and representing the relative homology class $[D_{e'}]$. Since $\op{ind}(u)=2$, the
index formula \eqref{Fredholm index} implies that $\chi(F)=1$. This leaves only two possibilities: either $u$ consists of a Fredholm index $2$ plane which is positively asymptotic to $e'$ together with a trivial cylinder over $h'$, or it consists of a Fredholm index one cylinder from $e'$ to $h'$ together with a  Fredholm index one plane which is positively asymptotic to $h'$. The second configuration cannot pass through a generic point $z$ and therefore has to be discarded. The problem of computing $U_k$ in the lowest filtration level is thus reduced to the count of $J_k'$-holomorphic planes in $\R \times V$ asymptotic to $e'$ and passing through a generic point.

If we degenerate the contact forms $\alpha_k'$ toward the Morse-Bott contact forms $\alpha_k$ and the almost complex structures $J_k'$ toward the almost complex structures $J_k$, the $J_k'$-holomorphic curves described above converge to very nice $J_k$-holomorphic Morse-Bott buildings because the topology of the domain does not allow the creation of branched covers of trivial cylinders (with nonempty branch locus) connected to Morse trajectories. Then by Theorem \ref{thm: Morse-Bott perturbation of moduli spaces}(4) the count of $I=2$ $J_k'$-holomorphic planes on $\R \times V$ which are positively asymptotic to $e'$ and pass through a generic point $z$ is the same as the count of Morse-Bott buildings consisting of a $J_k$-holomorphic plane on $\R \times V$ which passes through a generic point $z$ and is positively asymptotic to an orbit of $\partial V$, augmented by a Reeb trajectory from $e'$ to that orbit.

By Lemma \ref{fef constrains curves}, the principal part of such a Morse-Bott building must be a leaf of the finite energy foliation ${\mathcal Z}_1$. Since there is a unique leaf through any point, this proves that $U_k(\gamma \otimes \Gamma)= \gamma/e' \otimes \Gamma$.
\end{proof}

\begin{cor}\label{sigma commutes with U}
The following diagram commutes for each $k$:
\begin{equation}\label{eqn: sigma commutes with U}
\xymatrix{
ECC^{\natural, L_k'}_{\le k}(N, \alpha) \ar[r]^{\sigma_k} \ar[d]_{U_k^{\natural}} &
ECC^{L_k}(M, \alpha_k') \ar[d]^{U_k} \\
ECC^{\natural, L_k'}_{\le k}(N, \alpha) \ar[r]^{\sigma_k} & ECC^{L_k}(M,\alpha_k').}
\end{equation}
\end{cor}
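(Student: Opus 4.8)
The plan is to compare the two maps on generators in the lowest filtration level, where both $U_k$ and $\sigma_k$ have been given explicit formulas. Recall that $\sigma_k(\gamma \otimes \Gamma) = \sum_{i \ge 0} (e')^i \gamma \otimes (\partial_N')^i \Gamma$ for $\gamma = 1$ or $h'$, so by construction $\sigma_k$ takes values in the lowest filtration level ${\mathcal F}_k$ (orbit sets whose $V$-part lies in ${\mathcal R}[e',h']$). Therefore Lemma~\ref{lemma: computation of U_0} applies term-by-term to $U_k \circ \sigma_k$: since each summand $(e')^i \gamma \otimes (\partial_N')^i \Gamma$ has $V$-part $(e')^i \gamma \in {\mathcal R}[e',h']$, Equation~\eqref{U in lowest level} gives
$$U_k\big((e')^i \gamma \otimes (\partial_N')^i \Gamma\big) = (e')^i \gamma / e' \otimes (\partial_N')^i \Gamma.$$

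The computation then proceeds by reindexing the sum. First I would treat the case $\gamma = 1$: the $i = 0$ term is killed by $\cdot/e'$ (since $1/e' = 0$), so
$$U_k(\sigma_k(1 \otimes \Gamma)) = \sum_{i \ge 1} (e')^{i-1} \otimes (\partial_N')^i \Gamma = \sum_{j \ge 0} (e')^{j} \otimes (\partial_N')^{j} \big(\partial_N' \Gamma\big) = \sigma_k(1 \otimes \partial_N' \Gamma) = \sigma_k(U_k^{\natural}(1 \otimes \Gamma)),$$
which is exactly the commutativity of Diagram~\eqref{eqn: sigma commutes with U} for these generators. The case $\gamma = h'$ is identical: $(e')^0 h'/e' = 0$ again, and the same shift $i \mapsto j = i-1$ yields $U_k(\sigma_k(h' \otimes \Gamma)) = \sum_{j \ge 0}(e')^j h' \otimes (\partial_N')^j(\partial_N' \Gamma) = \sigma_k(h' \otimes \partial_N' \Gamma) = \sigma_k(U_k^{\natural}(h' \otimes \Gamma))$. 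This handles all generators of $ECC^{\natural, L_k'}_{\le k}(N, \alpha)$ since they are of the form $\gamma \otimes \Gamma$ with $\gamma \in \{1, h'\}$, so by linearity the diagram commutes.

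There is one point requiring a little care, and I expect it to be the only real obstacle: one must check that the identities $(e')^i \gamma / e' = (e')^{i-1}\gamma$ (for $i \ge 1$, $\gamma \in \{1, h'\}$) and $(e')^0 \gamma / e' = 0$ used above are the correct interpretations of the symbol $\cdot / e'$ and of Equation~\eqref{U in lowest level} in the chain complex $ECC^{L_k}(M,\alpha_k')$ — in particular that multiplicities of $e'$ behave as in the polynomial/nilpotent ring ${\mathcal R}[e',h']$ and that $h'$ (being hyperbolic) never acquires multiplicity greater than one along the way, which is automatic here since $\partial_N'$ and $\sigma_k$ do not touch the $V$-factor beyond multiplying by $e'$. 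One should also note that everything stays within action threshold $L_k$ and linking number $\le k$: the nilpotency of $\partial_N'$ on $ECC^{\flat}_{\le k}(N,\alpha)$ (used already to define $\sigma_k$ and $U_k^{\natural}$) guarantees the sums are finite and that the choice $L_{i_k} \ge k L_k' + ck^2$ keeps $U_k \circ \sigma_k$ landing in $ECC^{L_k}(M,\alpha_k')$, so the diagram is well-posed. With these bookkeeping checks in place the corollary follows immediately from Lemma~\ref{lemma: computation of U_0}.
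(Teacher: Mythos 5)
Your proof is correct and follows exactly the paper's argument: both rely on the observation that $\sigma_k$ lands in the lowest filtration level so that Equation~\eqref{U in lowest level} from Lemma~\ref{lemma: computation of U_0} applies, and then both conclude by reindexing the resulting sum. The only difference is cosmetic: you split into cases $\gamma=1$ and $\gamma=h'$ while the paper treats $\gamma$ uniformly.
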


\begin{proof}
Since $\sigma_k$ takes values in the lowest level of the filtration ${\mathcal F}_k$, we
can use Equation~\eqref{U in lowest level} to compute $U_k \circ \sigma_k$. Then, for
$\gamma \otimes \Gamma \in ECC^{\natural, L_k'}_{\le k}(N, \alpha)$, we have
\begin{align*}
U_k(\sigma_k(\gamma \otimes \Gamma)) & = U_k \left ( \sum \limits_{i=0}^{\infty} (e')^i
\gamma \otimes (\partial_N')^i \Gamma \right ) = \sum \limits_{i=1}^{\infty} (e')^{i-1}
\gamma \otimes (\partial_N')^i \Gamma,\\
\sigma_k(U^\natural_k(\gamma \otimes \Gamma)) & = \sigma_k(\gamma \otimes
\partial_N' \Gamma) =\sum \limits_{i=0}^{\infty} (e')^i
\gamma \otimes (\partial_N')^{i+1} \Gamma.
\end{align*}
Hence $U_k\circ\sigma_k= \sigma_k\circ U^\natural_k$.
\end{proof}

\begin{proof}[Proof of Theorem \ref{thm: equivalence of ECHs}(2)]
By Lemma \ref{trudeau}, Diagram~\eqref{eqn: sigma commutes with U}, and the naturality property of mapping cones, there is a chain map
$$\widehat{\sigma}_k : \widehat{ECH}^{\natural, L_k'}_{\le k} (N, \alpha) \to \widehat{ECC}^{L_k}(M, \alpha_k').$$
for each $k$. Taking homology (with the help of Lemma \ref{aeroport}) and direct limits over $k$, we obtain a map
$$\widehat{\sigma}_* : \widehat{ECH}(N, \partial N, \alpha)\simeq \widehat{ECH}^\natural(N,\alpha) \to \widehat{ECH}(M).$$ This map
fits into the $U$-map exact sequences by properties of mapping cones:
$$\xymatrix{
\ldots \ar[r]^-{U^\natural} \ar[d]_{\sigma_*}& ECH(N, \partial N) \ar[r] \ar[d]_{\sigma_*} &
\widehat{ECH}(N, \partial N) \ar[r] \ar[d]_{\widehat{\sigma}_*} &  ECH(N, \partial N)
\ar[d]_{\sigma_*} \ar[r]^-{U^\natural} & \ldots \ar[d]_{\sigma_*} \\
\ldots \ar[r]^-U & ECH(M) \ar[r] & \widehat{ECH}(M) \ar[r] & ECH(M) \ar[r]^-U & \ldots
}$$
The five lemma then implies that $\widehat{\sigma}_*$ is an isomorphism.  Moreover $\widehat{\sigma}_*$ preserves the decompositions
of $\widehat{ECH}(N, \partial N, \alpha)$ and $\widehat{ECH}(M)$ according to (relative) homology classes.
\end{proof}

 \begin{rmk}\label{rmk: integer coefficients}
Embedded contact homology can be defined over the integers by choosing a coherent orientation system for the moduli spaces. For its definition or construction we refer to
\cite{BM} and \cite[Section~9]{HT2}. Different choices of coherent orientation systems yield isomorphic chain complexes.

All results of this article carry over with integer coefficients, and with the same proofs, if there is a coherent orientation system such that:
\begin{itemize}
\item the holomorphic plane with positive asymptotics at $h'$ and the holomorphic plane with positive asymptotics at $e'$ and passing through a generic point count positively;
\item the holomorphic cylinders from $e'$ to $h$ and from $h'$ to $e$ count positively; and
\item the holomorphic cylinders from $e'$ to $h'$ and from $h$ to $e$ have opposite signs, so that they cancel each other in the differentials.
\end{itemize}
The first two items can be easily obtained by automorphisms of the complexes adjusting the signs of the generators $e', h', e, h$, and the third item follows from
the identification of orientations of moduli spaces of Morse trajectories with orientations of the corresponding moduli spaces of holomorphic maps, as sketched in the first paragraph of the proof of \cite[Lemma7.6]{Bo2}.
\end{rmk}

\section{Applications to sutured ECH}\label{section: sutured applications}

In this section we apply Theorem~\ref{thm: equivalence of ECHs} to sutured ECH.

\subsection{Sutured ECH}

In this subsection we briefly review sutured ECH, referring the reader to the paper~\cite{CGHH} for more details.

A {\em sutured manifold} is a pair $(M, \Gamma)$, where $M$ is a $3$-manifold with boundary and corners, $\Gamma\subset \bdry M$ is a possibly disconnected $1$-manifold,\footnote{In this section $\Gamma$ will denote a suture, not an orbit set.} $N(\Gamma)$ is an annular neighborhood of $\Gamma$, and $\partial M$ admits the following decomposition into two-dimensional strata
$$\partial M = R_+(\Gamma) \cup R_-(\Gamma) \cup N(\Gamma)$$
as in \cite[Definition~2.7]{CGHH}.  Note that our definition does not allow for ``torus sutures" as in Gabai's  original definition~\cite[Definition 2.6]{Ga}.

A {\em sutured contact form $\overline{\alpha}$ on $(M, \Gamma)$}\footnote{We use $\overline{\alpha}$ to denote an unspecified sutured contact form because $\alpha$ is reserved, in Section~\ref{section: proof of theorem equivalence of ECHs}, to the contact form on $N$. Such contact form will appear again later in this section.}  (cf.\ \cite[Definition~2.8]{CGHH}) is, roughly speaking, a contact form $\overline{\alpha}$ on $M$ whose Reeb vector field $R_{\overline{\alpha}}$ is positively transverse to $R_+(\Gamma)$, negatively transverse to $R_-(\Gamma)$, and tangent to $N(\Gamma)$, and such that the trajectories of $R_{\overline{\alpha}}|_{N(\Gamma)}$ are arcs from $\partial R_-(\Gamma)$ to $\partial R_+(\Gamma)$. One can easily verify that $(M, \Gamma)$ admits a sutured contact form if and only if it is {\em balanced}, i.e., $\chi(R_+(\Gamma)) = \chi(R_-(\Gamma))$.  A sutured contact manifold $(M,\Gamma, \overline{\alpha})$ admits a {\em completion} $(M^*, \overline{\alpha}^*)$; see \cite[Section~2.4]{CGHH}.

Let  $(M,\Gamma,\overline{\alpha})$  be a sutured contact manifold.  We now describe the {\em sutured ECH group} $ECH(M, \Gamma, \overline{\alpha}, J)$. Its chain group $ECC(M, \Gamma, \overline{\alpha}, J)$\footnote{We will often write $ECC(M, \Gamma, \overline{\alpha})$ and $ECH(M, \Gamma, \overline{\alpha})$ for simplicity.} is generated by orbit sets constructed
from simple Reeb orbits in $int(M)$ and the differential counts ECH index  one $J$-holomorphic maps in the symplectization of $(M^*, \overline{\alpha}^*)$ for an almost complex structure $J$ which is adapted to the symplectization and satisfies Properties (A$_0$)--(A$_2$) from \cite[Section~3.1]{CGHH}. Almost complex structures of this type are said to be {\em
tailored to $(M, \Gamma, \overline{\alpha})$}.

Completions are not necessary in dimension three by the following lemma:

\begin{lemma}\label{lemma: curves in int}
Let $J$ be tailored to $(M,\Gamma,\overline{\alpha})$. Then all $J$-holomorphic curves in $(M^*, \overline{\alpha}^*)$ which are asymptotic to closed Reeb orbits in $int(M)$ are contained in $\R \times int(M)$.
\end{lemma}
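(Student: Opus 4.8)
The plan is to adapt the proof strategy of Lemma~\ref{lemma: bdry squared is zero for mfld with torus bdry} to the sutured setting, using the defining properties of a sutured contact manifold together with the $J$-holomorphic curve tools developed in Section~\ref{section: topological constraints}. First I would recall that the completion $(M^*, \overline{\alpha}^*)$ is built by attaching cylindrical pieces modeled on $R_\pm(\Gamma) \times [0,\infty)$ (with the Reeb flow eventually escaping to infinity) and an $N(\Gamma)$-piece; by construction of $\overline{\alpha}^*$, the only closed Reeb orbits of $\overline{\alpha}^*$ are those of $\overline{\alpha}$ lying in $int(M)$, since the Reeb flow on the completion ends are transverse to the $R_\pm$-slices (so no orbit can close up there) and tangent to a foliation by non-closed arcs on the $N(\Gamma)$-piece. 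Thus any $J$-holomorphic curve $u$ asymptotic only to closed Reeb orbits must have all its ends in $\R \times int(M)$.

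Next I would argue that the image of $u$ cannot escape $\R \times int(M)$. The boundary $\partial M$ carries a characteristic foliation adapted to the sutured structure, and the key point is that near $R_+(\Gamma)$ (resp.\ $R_-(\Gamma)$) the Reeb vector field is positively (resp.\ negatively) transverse, so $R_\pm(\Gamma)$ behave like ``barriers'' for holomorphic curves in a manner analogous to positive/negative Morse-Bott tori --- indeed the Trapping Lemma (Lemma~\ref{lemma: trapping}) and Blocking Lemma (Lemma~\ref{lemma: blocking lemma}) phenomena have direct analogues here, which are precisely the content of Properties~(A$_0$)--(A$_2$) imposed on a tailored $J$. Concretely: if $u$ touched the cylindrical end over $R_+(\Gamma)$, then since $R_+(\Gamma)$ is transverse to $R_{\overline{\alpha}^*}$ and the $\R$-coordinate on the symplectization end forces $u$ to have asymptotics escaping to infinity along that end, one gets a contradiction with $u$ having only closed-orbit asymptotics (a maximum-principle/energy argument). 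A similar argument rules out the $N(\Gamma)$-piece, using that the Reeb trajectories there run from $\partial R_-(\Gamma)$ to $\partial R_+(\Gamma)$ and the induced contact-type hypersurface structure forces any incursion of $u$ to produce an unwanted boundary component or an extra end.

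I expect the main obstacle to be the precise handling of the $N(\Gamma)$-corner region and the cylindrical completion ends simultaneously: one must verify that the conditions (A$_0$)--(A$_2$) on the tailored almost complex structure (from \cite[Section~3.1]{CGHH}) are exactly strong enough to make the Blocking/Trapping-type arguments go through at the corners, where the naive dimension-three positivity-of-intersections statement (Lemma~\ref{lemma: positive slope}) does not apply verbatim because the relevant separating surfaces are not tori. The cleanest route is probably to invoke the corresponding statements already established in \cite{CGHH} (this is stated there, so by the conventions of this section it may be cited), and to note that the only new input needed is that the asymptotics of $u$ are closed orbits lying in $int(M)$, which immediately feeds into those barrier arguments. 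So in practice the proof should be a short citation-plus-remark: observe that the hypotheses place all ends of $u$ over $int(M)$, then apply the tailored-$J$ confinement result of \cite{CGHH} to conclude $u(\text{dom}\,u) \subset \R \times int(M)$.
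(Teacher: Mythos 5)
Your fallback conclusion — cite \cite{CGHH} — is exactly the paper's proof, which reads in full: ``This follows from the proofs of [Lemma~5.6]{CGHH} and [Corollary~5.7]{CGHH}, and relies on the fact that $R_+(\Gamma)$ and $R_-(\Gamma)$ automatically admit Stein structures.'' So your final instinct is correct.

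However, your preliminary sketch mislocates the mechanism. You attribute the confinement to a Blocking/Trapping-type phenomenon, analogizing $R_\pm(\Gamma)$ to positive/negative Morse-Bott tori. That analogy does not hold: the Blocking Lemma (Lemma~\ref{lemma: blocking lemma}) and Trapping Lemma (Lemma~\ref{lemma: trapping}) are statements about a \emph{torus} linearly foliated by Reeb trajectories, where positivity of intersections in the three-manifold against the Reeb direction is what kills crossings. The pages $R_\pm(\Gamma)$ are not tori, are not foliated by Reeb trajectories, and sit transverse to the Reeb flow — so that machinery does not apply, even approximately. What actually prevents a $J$-holomorphic curve from escaping into the $R_\pm(\Gamma)\times[0,\infty)$ completion ends is a maximum-principle argument: because $\dim M = 3$, the surfaces $R_\pm(\Gamma)$ carry Stein structures (with the exact symplectic form $d\overline{\alpha}|_{R_\pm}$), and the resulting plurisubharmonic function on the completion end, composed with $u$, is subharmonic; the conditions (A$_0$)--(A$_2$) on a tailored $J$ are set up exactly so that this works. (The $N(\Gamma)$-piece is handled similarly via the cylindrical coordinates there.) So the correct one-line justification you should give is the Stein/maximum-principle argument from \cite{CGHH}, not a Blocking/Trapping analogue.
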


\begin{proof}
This follows from the proofs of \cite[Lemma~5.6]{CGHH} and \cite[Corollary~5.7]{CGHH}, and relies on the fact that $R_+(\Gamma)$ and $R_-(\Gamma)$ automatically admit Stein structures.
\end{proof}

We finish this review of sutured ECH by recalling a useful result from \cite{CGHH} and sketching a simpler proof in dimension three.

\begin{defn}[{\cite[Section~9]{CGHH}}]
Let $(M, \Gamma, \overline{\alpha})$ be a sutured contact manifold. An {\em interval-fibered extension} is a contact embedding
$$(M, \Gamma, \overline{\alpha}) \hookrightarrow (M', \Gamma', \overline{\alpha}')$$
such that $M' - int(M) = W \times [0,1]$, where:
\begin{itemize}
\item $W$ is a cobordism from $\Gamma'$ to $\Gamma$, and
\item $\overline{\alpha}'|_{W \times [0,1]}=c dt + \beta$ for a Liouville form $\beta$ on $W$ and $c>0$.
\end{itemize}
\end{defn}

\begin{lemma}[{\cite[Theorem~9.1]{CGHH}}] \label{lemma: i.-f. extensions}
Let $(M, \Gamma, \overline{\alpha}) \hookrightarrow (M', \Gamma', \overline{\alpha}')$ be an
interval-fibered extension. Then there is a canonical isomorphism of chain complexes
between $ECC(M, \Gamma, \overline{\alpha})$ and $ECC(M', \Gamma', \overline{\alpha}')$.
\end{lemma}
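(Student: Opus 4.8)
\textbf{Proof strategy for Lemma~\ref{lemma: i.-f. extensions}.} The plan is to show that both chain complexes are generated by orbit sets constructed from the same collection of simple Reeb orbits, namely those in $int(M)$, and that the ECH-index-one moduli spaces computing the two differentials are in bijection. The starting point is that $\overline{\alpha}'|_{W\times[0,1]}=c\,dt+\beta$ with $\beta$ a Liouville form on $W$, so the Reeb vector field on $W\times[0,1]$ is $\tfrac{1}{c}\bdry_t$ up to the Liouville flow term; in particular it has no closed orbits in the interior of the extension piece and is transverse to the slices $W\times\{pt\}$, flowing from $W\times\{0\}$ toward $W\times\{1\}$ (or vice versa). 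Hence $R_{\overline{\alpha}'}$ has exactly the same closed orbits as $R_{\overline{\alpha}}$, all lying in $int(M)$, which gives the identification of the two chain groups as $\F$-vector spaces.

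For the differential, I would argue that every $J'$-holomorphic curve in $\R\times M'^*$ (for $J'$ tailored to $(M',\Gamma',\overline{\alpha}')$) asymptotic to closed Reeb orbits is in fact contained in $\R\times int(M)$. First, by Lemma~\ref{lemma: curves in int} applied to $(M',\Gamma',\overline{\alpha}')$, such a curve lies in $\R\times int(M')$. Then one confines it further to $\R\times int(M)$ using a blocking/trapping argument on the hypersurfaces $W\times\{s\}$: these are foliated by Reeb trajectories transverse to the $[0,1]$-direction, and since $c\,dt+\beta$ is of ``product'' type, the hypersurfaces $\R\times W\times\{s\}$ behave like the walls to which the Blocking Lemma (Lemma~\ref{lemma: blocking lemma}) and Trapping Lemma (Lemma~\ref{lemma: trapping}) apply — strictly speaking one needs the version of these lemmas adapted to a Liouville-product collar rather than a Morse-Bott torus, so I would first record that the proofs of \cite[Lemma~5.6, Corollary~5.7]{CGHH} already give this containment (this is essentially what Lemma~\ref{lemma: curves in int} is extracted from). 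Concretely, I would choose $J'$ so that it restricts to a ``translation-invariant in $t$'' complex structure on $\R\times W\times[0,1]$ and use the maximum principle / positivity of intersections to show no holomorphic curve with all ends in $int(M)$ can penetrate $\R\times (W\times[0,1])$. Once every relevant curve is contained in $\R\times int(M)$, the moduli spaces, their ECH indices, and the transversality conditions all agree with those for $(M,\Gamma,\overline{\alpha})$, so the differentials coincide and the canonical bijection of generators is a chain isomorphism.

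The main obstacle I expect is verifying that the almost complex structures tailored to $(M,\Gamma,\overline{\alpha})$ and to $(M',\Gamma',\overline{\alpha}')$ can be chosen \emph{compatibly} — i.e.\ that one can pick $J'$ tailored to $(M',\Gamma',\overline{\alpha}')$ which restricts to a given tailored $J$ on $\R\times M^*$ and for which the containment of curves in $\R\times int(M)$ holds, so that the identification of differentials is genuinely canonical and not merely an abstract isomorphism. This requires checking that Properties (A$_0$)--(A$_2$) from \cite[Section~3.1]{CGHH} are preserved under the extension and that regularity can be achieved within this constrained class. I would handle this by noting that the extension region $W\times[0,1]$ contributes no closed orbits and no nonconstant holomorphic curves with interior asymptotics (by the containment argument), so genericity only needs to be arranged on $\R\times int(M)$, where the standard Dragnev-type argument applies. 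The rest of the proof — additivity of the ECH index under the identification, the fact that connector components are unaffected — is routine and I would not belabor it.
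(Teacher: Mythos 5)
Your overall strategy matches the paper's: identify the generators (all closed orbits lie in $int(M)$ since Reeb trajectories in $W\times[0,1]$ run from $R_-(\Gamma')$ to $R_+(\Gamma')$), then show every relevant $J'$-holomorphic curve is confined to $\R\times int(M)$ so the differentials agree. The gap is in how you establish the confinement. You correctly sense that the Blocking and Trapping Lemmas as stated do not apply --- they are formulated for tori linearly foliated by Reeb trajectories, and $W\times[0,1]$ is a Liouville product collar, not a Morse-Bott torus --- and you then fall back on re-citing \cite[Lemma~5.6, Corollary~5.7]{CGHH} and an unspecified ``maximum principle.'' But the paper's actual argument is a short, self-contained positivity-of-intersections step that you have not articulated: on $W\times[0,1]$ the Reeb vector field is a constant multiple of $\partial_t$, so its trajectories are precisely the fibers of the projection to $W$. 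If a $J'$-holomorphic curve $u$ (with all asymptotics in $int(M)$) meets $\R\times W\times[0,1]$ at all, then the set of $w\in W$ such that $u$ meets $\R\times\{w\}\times[0,1]$ is open (intersections with Reeb trajectories are positive and persist under perturbation) and closed, hence all of $W$. In particular $u$ meets $\R\times\Gamma'\times[0,1]\subset\R\times\partial M'$, which contradicts Lemma~\ref{lemma: curves in int} applied to $(M',\Gamma',\overline{\alpha}')$. Your version stops at ``use positivity of intersections to show no curve can penetrate,'' which is the desired conclusion, not a mechanism; and a maximum principle argument (which typically controls a scalar coordinate like $s$) is not the right tool here, since the wall $W\times\{0\}$ is a hypersurface transverse to the Reeb flow, not one on which $\alpha$ or $s$ is constant.

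On the secondary concern about choosing $J'$ compatibly: this is a fair thing to flag, but once the confinement is in place it is almost automatic. Since all curves with asymptotics in $int(M)$ stay in $\R\times M$, only $J'|_{\R\times M^*}$ matters; one simply takes $J'$ to extend a given $J$ tailored to $(M,\Gamma,\overline{\alpha})$, and Properties~(A$_0$)--(A$_2$) on the extension piece are easy to satisfy because $W\times[0,1]$ contributes no orbits and (by the confinement argument) no nontrivial curves. So the ``canonical'' isomorphism is just the identity on generators and on the moduli spaces computing the differential.
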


\begin{proof}
All closed Reeb orbits in $M'$ are contained in $M$ because all Reeb trajectories in
$M' - int(M)$ go from $R_-(\Gamma')$ to $R_+(\Gamma')$. Moreover,
$J$-holomorphic curves in $\R \times M'$ between orbit sets in $int(M)$ are contained in
$\R \times M$. In fact, if a $J$-holomorphic curve nontrivially intersects $\R \times (M'- M)
= \R \times W \times [0,1]$, then its projection to $W$ is surjective by the positivity of
intersections with the Reeb vector field. This implies that the curve touches $\R \times
\partial M'$, which is impossible by Lemma~\ref{lemma: curves in int}.
\end{proof}

\subsection{Topological invariance of sutured ECH}

In this subsection we pay off a debt from \cite{CGHH},  namely we sketch a proof that sutured ECH depends
only on the sutured manifold and the contact structure. A more detailed proof can be found in~\cite{KS}.
In view of \cite[Conjecture~1.5]{CGHH}, we expect sutured ECH to be independent also of the contact structure.

\begin{lemma}\label{lemma: folding of sutured manifolds}
Let $(M, \Gamma, \overline{\alpha})$ be a sutured contact manifold such that $\Gamma$ is
connected. Then for every $L \gg 0$ we can
embed $(M, \Gamma, \overline{\alpha})$ into a closed contact manifold $(\widetilde{M},
\widetilde{\alpha}^L)$ such that
$$ECH^{L'}(M, \Gamma, \overline{\alpha}) \simeq ECH^{L'}(\widetilde{M}, \widetilde{\alpha}^L)$$
for every $L' \le L$. Moreover $\widetilde{M}$, up to diffeomorphism, depends only on
$(M, \Gamma)$ and if $\overline{\alpha}_0$ and $\overline{\alpha}_1$ define isotopic contact structures on
$(M, \Gamma)$, then $\widetilde{\alpha}^L_0$ and $\widetilde{\alpha}^L_1$ define
isotopic contact structures on $\widetilde{M}$.
\end{lemma}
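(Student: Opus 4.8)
The plan is to promote the sutured contact manifold $(M,\Gamma,\overline\alpha)$ to a closed contact manifold by a ``doubling/folding'' construction along $\Gamma$, and then to match its $ECH$ with that of an interval-fibered extension of $(M,\Gamma,\overline\alpha)$ (which by Lemma~\ref{lemma: i.-f. extensions} has the same chain complex as $(M,\Gamma,\overline\alpha)$ itself), at least for orbits and curves of action below a prescribed threshold $L$. First I would use the hypothesis that $\Gamma$ is connected: then $R_+(\Gamma)$ and $R_-(\Gamma)$ are surfaces with a single boundary circle and, since $(M,\Gamma)$ is balanced, $\chi(R_+(\Gamma))=\chi(R_-(\Gamma))$. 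Pick an interval-fibered extension $(M',\Gamma',\overline\alpha')$ with $M'-\op{int}(M)=W\times[0,1]$ in which the cobordism $W$ from $\Gamma'$ to $\Gamma$ is chosen so that $\Gamma'$ becomes a single circle bounding a disk on each of $R_\pm(\Gamma')$; equivalently, $W$ ``caps off'' the boundary so that $R_\pm(\Gamma')$ are obtained from $R_\pm(\Gamma)$ by killing the boundary. The point of extending first is that this is the only step which changes the topology of $R_\pm$, and one can make the Liouville form $\beta$ on $W$ $C^0$-close to a product so that no new Reeb orbit of action $\le L$ is created (the Reeb trajectories in $W\times[0,1]$ run from $R_-(\Gamma')$ to $R_+(\Gamma')$ and have controlled return time). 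This is exactly the situation handled by Lemma~\ref{lemma: i.-f. extensions}, so $ECC^{L'}(M,\Gamma,\overline\alpha)\simeq ECC^{L'}(M',\Gamma',\overline\alpha')$ canonically for all $L'\le L$, and likewise the isotopy class of the contact structure is unchanged.

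Next I would close up $(M',\Gamma',\overline\alpha')$. Because $\Gamma'$ is a single circle and the sutured contact boundary is modeled on $N(\Gamma')\simeq S^1\times[-1,1]\times[-1,1]$ with $R_{\overline\alpha'}$ tangent to $N(\Gamma')$, one can glue two copies of $(M',\overline\alpha')$ along their boundaries --- or, more economically, glue a single ``rounding'' piece $(P,\overline\alpha_P)$ which is a neighborhood of a torus and which turns the corner from $R_+(\Gamma')$ to $R_-(\Gamma')$ --- to obtain a closed manifold $\widetilde M$ with a contact form $\widetilde\alpha^L$. The model for $P$ is the same local construction used in \cite{CGHH} to define the completion $(M^*,\overline\alpha^*)$, but carried out with finite length: we attach $R_+(\Gamma')\times[0,\infty)$ and $R_-(\Gamma')\times(-\infty,0]$ pieces truncated and glued to each other through a region where the Reeb flow ``wraps around''. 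The crucial design requirement is that all closed Reeb orbits of $\widetilde\alpha^L$ lying outside $\op{int}(M')$ have action $>L$; this can be arranged by stretching the added piece, since along the new region Reeb trajectories are forced to traverse a long interval before closing up, exactly as in the proof of Lemma~\ref{claim: extension of contact form wo orbits} and Claim~\ref{claim: short orbits in M}. Then every generator of $ECC^{L'}(\widetilde M,\widetilde\alpha^L)$ with $L'\le L$ lies in $\op{int}(M')$, giving an isomorphism of vector spaces $ECC^{L'}(M',\Gamma',\overline\alpha')\simeq ECC^{L'}(\widetilde M,\widetilde\alpha^L)$.

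To upgrade this to an isomorphism of chain complexes I would run the by-now standard argument (as in Lemma~\ref{lemma: mahler}): a $J$-holomorphic curve in $\R\times\widetilde M$ between orbit sets of action $<L$ must have image in $\R\times M'$. Choose $J$ tailored to $(M',\Gamma',\overline\alpha')$ in the sense of \cite{CGHH} and extended over $\widetilde M$; by Lemma~\ref{lemma: curves in int} no such curve can touch $\R\times\partial M'$, and the Blocking and Trapping Lemmas (Lemmas~\ref{lemma: blocking lemma}, \ref{lemma: trapping}) --- or rather their sutured analogues already invoked in \cite{CGHH} --- forbid a curve with all ends in $\op{int}(M')$ from escaping, since the boundary region is foliated by Reeb arcs running the ``wrong way'' to be limits of ends. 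Hence the differentials agree and $ECH^{L'}(M,\Gamma,\overline\alpha)\simeq ECH^{L'}(\widetilde M,\widetilde\alpha^L)$ for all $L'\le L$. Finally, the diffeomorphism type of $\widetilde M$ depends only on $(M,\Gamma)$ because $W$ and the rounding piece $P$ are built from $(M,\Gamma)$ alone (the contact data only enters through choices of forms, not smooth structures), and if $\overline\alpha_0,\overline\alpha_1$ are isotopic as contact structures rel the sutured structure then the interpolating family of contact structures extends over $W\times[0,1]$ and over $P$ (Gray stability on the added regions, where the relevant data is a Liouville form on $W$, unique up to homotopy), producing an isotopy between $\widetilde\alpha_0^L$ and $\widetilde\alpha_1^L$ as contact structures on $\widetilde M$.

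\textbf{Main obstacle.} The delicate point is the construction of the closed-up form $\widetilde\alpha^L$ together with the control on its ``extra'' Reeb orbits: one must fold $R_+(\Gamma')$ onto $R_-(\Gamma')$ through $N(\Gamma')$ producing a genuine smooth closed contact form, while simultaneously guaranteeing that no short closed orbit is born in the fold and that the tailored almost complex structure extends so that the Blocking/Trapping confinement still applies in the closed manifold. This is a local-model computation near the suture, analogous in spirit to the completion construction of \cite{CGHH} and to Lemma~\ref{claim: extension of contact form wo orbits}, but it has to be done carefully enough to be both $L$-action-controlled and natural in the contact structure; everything else is a direct limit over $L$ together with already-established confinement lemmas.
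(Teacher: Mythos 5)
Your approach diverges substantially from the paper's, and as written it contains a genuine gap in the "closing up" step.

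The paper's construction exploits the hypothesis that $\Gamma$ is connected in a very specific way: since $(M,\Gamma)$ is balanced and $\Gamma$ is a single circle, $R_+(\Gamma)$ and $R_-(\Gamma)$ are diffeomorphic surfaces with one boundary component. It then uses Moser's theorem together with Lemma~\ref{lemma: giroux} to produce a diffeomorphism $\hh\colon R_+(\Gamma)\to R_-(\Gamma)$ with $\hh^*\beta_- - \beta_+ = df$, and repeats the mapping-torus construction of Lemma~\ref{lemma:construction} to glue a product $R\times[1,2]$ (with contact form $(f_t + C_L\varphi(t))dt + \beta_t$) onto $M$, identifying $R\times\{1\}$ with $R_+(\Gamma)$ by the identity and $R\times\{2\}$ with $R_-(\Gamma)$ by $\hh$. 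The resulting manifold $M'$ has torus boundary, and $\widetilde M$ is obtained by gluing a solid torus $V$ with a form as in Example~\ref{esempio utile}. The constant $C_L$ controls the action of chords crossing $R\times[1,2]$, and the orbit control in $V$ is the argument of Lemma~\ref{claim: extension of contact form wo orbits}.

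Your proposal replaces this with (a) an interval-fibered extension that is supposed to make $R_\pm(\Gamma')$ disks or closed, and (b) a vague "rounding/folding/doubling" step. Step (a) cannot do what you want: in an interval-fibered extension, $R_\pm(\Gamma')$ is obtained from $R_\pm(\Gamma)$ by attaching a Liouville cobordism $W$ along $\Gamma$, and this can only increase genus and reshuffle boundary components; it cannot cap off or kill genus, so $R_\pm(\Gamma')$ is never a disk unless $R_\pm(\Gamma)$ already was. (It is also inconsistent to ask that $R_\pm(\Gamma')$ be closed, since an interval-fibered extension requires $\Gamma'\ne\varnothing$ for the Liouville structure on $W$ to exist.) Step (b) is where the actual construction must happen, and you never make it concrete: you say "glue a rounding piece $P$ which is a neighborhood of a torus and turns the corner from $R_+$ to $R_-$," but for $R_\pm$ of positive genus no toroidal neighborhood will close up $\partial M'$, and you never explain how $R_+$ gets identified with $R_-$ (which requires choosing the diffeomorphism $\hh$ and reconciling the primitives $\beta_\pm$ via Lemma~\ref{lemma: giroux}), nor that the result still has a leftover torus boundary that must be capped off by a solid torus. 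You correctly isolate the "delicate point" at the end, but that delicate point is exactly what the proof is; without the explicit $R\times[1,2]$ handle and the solid-torus cap, there is no construction. The final paragraph's appeals to Lemma~\ref{lemma: mahler}, the Blocking/Trapping Lemmas, and Lemma~\ref{lemma: curves in int} are all in the spirit of the paper's Step 7, but they presuppose that $\widetilde\alpha^L$ has been built with the stated properties, which is the missing content.
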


\begin{proof}
Since $(M, \Gamma)$ is balanced and $\Gamma$ is connected, $R_+(\Gamma)$ and $R_-(\Gamma)$ have the same genus and are diffeomorphic.  We identify $\bdry R_+(\Gamma)$ and $\bdry R_-(\Gamma)$ by  a diffeomorphism $\bdry \hh_0:\bdry R_+(\Gamma)\stackrel\sim \to \bdry R_-(\Gamma)$, which is defined by the Reeb flow on $N(\Gamma)$, and fix a diffeomorphism $\hh_0 : R_+(\Gamma) \stackrel\sim\to R_-(\Gamma)$ which extends $\bdry \hh_0$. Let us write $\beta_+= \overline{\alpha}|_{R_+(\Gamma)}$ and $\beta_-= \overline{\alpha}|_{R_-(\Gamma)}$. Then the contact form $\overline{\alpha}$, on a neighborhood $R_+(\Gamma)\times[1-\epsilon,1]$ or $R_-(\Gamma)\times[-1,-1+\epsilon]$ of $R_\pm(\Gamma)=R_\pm(\Gamma)\times\{\pm 1\}$ with coordinates $(x,t)$, has the form $cdt + \beta_\pm$ for some $c>0$ (see \cite[Definition~2.8]{CGHH}). Here $\epsilon>0$ is small.

By Moser's theorem and Lemma \ref{lemma: giroux}, there is a diffeomorphism $\hh : R_+(\Gamma)\stackrel\sim \to R_-(\Gamma)$ isotopic to $\hh_0$ relative to $\bdry \hh_0$, such that $\hh^* \beta_- - \beta_+ = df$ for some function $f: R_+(\Gamma) \to \R$ which is constant near $\bdry R_+(\Gamma)$.

Let us write $R = R_+(\Gamma)$. By repeating the proof of Lemma~\ref{lemma:construction}, we construct a contact form $f_tdt + \beta_t$ on $R \times [1,2]$ such that $f_t>0$,  $f_tdt + \beta_t = c dt + \beta_+$ on $R \times [1,1+ \epsilon]$, and $f_tdt + \beta_t = c dt + \hh^* \beta_-$ on $R \times [2 - \epsilon, 2]$. Pick a bump function $\varphi : [1,2] \to [1,2]$ and consider the contact forms $(f_t + C_L \varphi(t)) dt + \beta_t$ on $R \times [1,2]$ for some large positive constant $C_L$ to be determined later.

We obtain the manifold $M'$ by gluing $R \times \{ 1 \}$ to $R_+(\Gamma)$ by the identity and $R \times \{ 2 \}$ to $R_-(\Gamma)$ by $\hh$. The contact forms $\overline{\alpha}$ on $M$ and $(f_t + C_L \varphi(t)) dt + \beta_t$ on $R \times [1,2]$ match near the gluing region, so they define a contact form on $M'$. Finally we obtain $\widetilde{M}$ by gluing a solid torus $V$ to $M'$ along the boundary, so that a meridian of the solid torus is identified with a Reeb orbit on $\partial M'$. The contact form on $M'$ can be extended to a contact form $\widetilde{\alpha}^L$ on $\widetilde{M}$ by taking the contact form on $V$ as in Example~\ref{esempio utile}.

By taking $C_L$ sufficiently large, we ensure that Reeb trajectories from $R_+(\Gamma)$ to $R_-(\Gamma)$ and closed Reeb orbits in $V$ have action larger than $L$; for Reeb orbits in $V$ this is a simpler application of the arguments in the proof of Lemma~\ref{claim: extension of contact form wo orbits}. Hence $ECC^{L'}(M, \Gamma, \overline{\alpha}) = ECC^{L'}(\widetilde{M}, \widetilde{\alpha}^L)$ as abelian groups if $L' \le L$. Any tailored almost complex structure $J$ on $\R \times M$ can be extended to an almost complex structure $J$ on $\R \times \widetilde{M}$ which is adapted to the symplectization of $\widetilde{\alpha}^L$.

Next we claim that a $J$-holomorphic map $u: F\to \R \times \widetilde{M}$ which is asymptotic to orbit sets in $M$ has image in $\R \times M$. These orbit sets have trivial linking number with the core of $V$, so $\op{Im}(u)\subset \R \times M'$ by the Blocking Lemma. On the other hand, $\op{Im}(u)\cap (\R \times R \times [1,2])=\varnothing$:  Observe that $R_\pm(\Gamma)$ can be lifted to an family $v_s$, $s\in \R$, of $J$-holomorphic maps in $\R\times M'$ which foliate $\R\times R_\pm(\Gamma)$.  By the positivity of intersections, if $u$ intersects some $v_s$, then it intersects all $v_s$.  However $\op{Im}(u_{M'})\cap R_\pm(\Gamma)$ is compact and $u$ cannot intersect $v_s$ for $s\gg 0$, a contradiction.  Hence $\op{Im}(u)\subset \R\times M$.

The remaining claims in the statement are straightforward.
\end{proof}

\begin{thm}\label{invariance of sutured ECH}
Let $\overline{\alpha}_1$ and $\overline{\alpha}_2$ be sutured contact forms on a sutured three-manifold
$(M, \Gamma)$ and let $J_1$ and $J_2$ be almost complex structures on $\R \times M$
such that $J_i$ is tailored to $(M, \Gamma, \overline{\alpha}_i)$ for $i=1,2$. If $\xi_1 = \ker \overline{\alpha}_1$ and
$\xi_2 = \ker \overline{\alpha}_2$ are isotopic through contact structures adapted to the
sutures, then
$$ECH(M, \Gamma, \overline{\alpha}_1, J_1) \simeq ECH(M, \Gamma, \overline{\alpha}_2, J_2).$$
 Moreover this isomorphism preserves the decomposition of the sutured ECH groups as direct sums of subgroups indexed by homology classes in $H_1(M)$.
\end{thm}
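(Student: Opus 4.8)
The strategy is to reduce the invariance of sutured ECH to the invariance of ECH for closed manifolds, using the folding construction of Lemma~\ref{lemma: folding of sutured manifolds} together with the results of Section~\ref{section: proof of theorem equivalence of ECHs} in the special case where the relevant knot is the core of the glued solid torus $V$. The first step is to handle the case where the suture $\Gamma$ is connected. Given a generic sutured contact form $\overline{\alpha}$ on $(M,\Gamma)$ with a tailored $J$, Lemma~\ref{lemma: folding of sutured manifolds} produces, for each $L\gg 0$, an embedding $(M,\Gamma,\overline{\alpha})\hookrightarrow(\widetilde M,\widetilde\alpha^L)$ with $ECH^{L'}(M,\Gamma,\overline{\alpha})\simeq ECH^{L'}(\widetilde M,\widetilde\alpha^L)$ for all $L'\le L$, where $\widetilde M$ and the isotopy class of $\ker\widetilde\alpha^L$ depend only on $(M,\Gamma)$ and the isotopy class of $\ker\overline{\alpha}$. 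Taking $L\to\infty$ and passing to the direct limit along the action-truncated cobordism maps of Lemma~\ref{tired with all these}, one obtains
$$ECH(M,\Gamma,\overline{\alpha},J)\simeq ECH(\widetilde M,\widetilde\alpha^L).$$
Here one must check that the action-truncation maps on the sutured side match the action-truncation maps on the closed side under the folding isomorphism; this is immediate because the folding isomorphism is induced at the chain level by the canonical bijection of generators (all short orbits live in $M$), and the cobordism used to increase $L$ can be taken to restrict to a product on $\R\times M$, so Theorem~\ref{thm: Hutchings Taubes cobordism map}(i) together with the Blocking Lemma shows the relevant holomorphic curves stay inside $\R\times M$, exactly as in the proof of Lemma~\ref{lemma: identity form a cobordism}. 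Since $ECH(\widetilde M)$ is a topological invariant by Taubes~\cite{T2}, and $\widetilde M$ with $\ker\widetilde\alpha^L$ depends only on $(M,\Gamma,\xi)$, this gives $ECH(M,\Gamma,\overline{\alpha}_1,J_1)\simeq ECH(M,\Gamma,\overline{\alpha}_2,J_2)$ whenever $\Gamma$ is connected and $\xi_1,\xi_2$ are isotopic through sutured-adapted contact structures.

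\textbf{Reduction to connected sutures.} If $\Gamma$ has several components, I would first reduce to the connected case by an interval-fibered extension. Attaching suitable interval-fibered pieces (one-handles in the $R_\pm$ direction) one can connect the components of $\Gamma$ into a single curve; by Lemma~\ref{lemma: i.-f. extensions} this does not change the sutured ECH chain complex, and it can be done so that the contact structure on the enlarged sutured manifold is determined up to isotopy by the original data. Concretely, choose an interval-fibered extension $(M,\Gamma,\overline{\alpha}_i)\hookrightarrow (M',\Gamma',\overline{\alpha}_i')$ with $\Gamma'$ connected, with $M'$ independent of $i$ and with $\ker\overline{\alpha}_1'$ isotopic to $\ker\overline{\alpha}_2'$ (rel the sutured structure) whenever $\ker\overline{\alpha}_1$ is isotopic to $\ker\overline{\alpha}_2$; this is possible because interval-fibered extensions only depend on a choice of Liouville cobordism $W$, which can be fixed once and for all. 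Then $ECC(M,\Gamma,\overline{\alpha}_i)\simeq ECC(M',\Gamma',\overline{\alpha}_i')$ canonically by Lemma~\ref{lemma: i.-f. extensions}, and one applies the connected-suture case to $(M',\Gamma')$.

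\textbf{Homology grading.} Finally, one checks that all the isomorphisms involved respect the splitting by $H_1$. The folding isomorphism is induced by the identity on orbit sets, so it is tautologically grading-preserving; the excision/Mayer–Vietoris computation identifying $H_1(M)$, $H_1(M')$ and the relevant summand of $H_1(\widetilde M)$ is standard, and the cobordism maps in the direct limit preserve homology classes since the cobordisms are products outside a controlled region. Likewise the interval-fibered extension does not change $H_1$ of the piece carrying the orbits. Assembling these identifications gives the grading-preserving isomorphism in the statement.

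\textbf{Main obstacle.} The delicate point is not any single holomorphic-curve argument --- those all follow the template of Lemmas~\ref{lemma: mahler} and~\ref{lemma: identity form a cobordism}, using the Blocking and Trapping Lemmas to confine curves to $\R\times M$ --- but rather the bookkeeping needed to make the folded manifold $\widetilde M$ and the isotopy class of $\widetilde\alpha^L$ genuinely \emph{canonical}, i.e.\ independent of the many auxiliary choices (the gluing diffeomorphism $\hh_0$, the page $R=R_+(\Gamma)$, the interpolating contact form on $R\times[1,2]$, the constant $C_L$, the interval-fibered cobordism $W$, the embedding of $V$). Two folded manifolds obtained from isotopic sutured contact structures must be shown diffeomorphic by a diffeomorphism carrying one contact structure to the other up to isotopy; this requires tracking a contact isotopy of $M$ through the folding construction and extending it over the added pieces, which is where most of the real work (omitted here, and done more carefully in~\cite{KS}) lies. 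Once that canonicity is in place, the invariance statement follows formally from the invariance of $ECH$ for closed three-manifolds.
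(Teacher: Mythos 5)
Your proposal follows the same route as the paper: reduce to connected $\Gamma$ via Lemma~\ref{lemma: i.-f. extensions}, embed into a closed manifold via Lemma~\ref{lemma: folding of sutured manifolds}, and run the direct-limit cobordism-map argument of Proposition~\ref{prop: ECH of two contact forms in M}. The only difference is presentational --- the paper constructs mutually inverse cobordism maps between the two extensions directly rather than factoring through the topological invariance of $ECH(\widetilde M)$ --- and your closing remarks correctly identify the canonicity of the folded pair $(\widetilde M, \widetilde\alpha^L)$ as the genuinely nontrivial bookkeeping point, which the paper outsources to~\cite{KS}.
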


\begin{proof}
We may assume that $\Gamma$ is connected, since otherwise we can make $\Gamma$ connected by gluing an
interval-fibered extension, which does not change the sutured ECH groups by Lemma~\ref{lemma: i.-f. extensions}. We extend
$(M, \Gamma, \overline{\alpha}_i)$ to $(\widetilde{M}, \widetilde{\alpha}_i^L)$ as in
Lemma~\ref{lemma: folding of sutured manifolds} and follow the proof of Theorem~\ref{prop: ECH of two contact forms in M} step-by-step.  The statement about the decomposition according homology classes follows from the fact that the isomorphism is supported on holomorphic buildings contained in $\R \times M$ in the sense of Theorem \ref{thm: Hutchings Taubes cobordism map}(i).
\end{proof}

\subsection{Applications}

If $M$ is a closed $3$-manifold and $B \subset M$ is an embedded open $3$-ball, we define the sutured manifold
$$M(1)= (M - B, \Gamma_0),$$
where $\Gamma_0$ is a connected simple closed curve in $\partial (M - B)$. If $K \subset M$ is a knot and $N(K)$ is an open tubular neighborhood of $K$, we define the sutured manifold
$$M(K) = (M -  N(K), \Gamma_K),$$
where $\Gamma_K$ consists of two disjoint copies of a meridian of $K$.  When considering $M(1)$, we will assume that $K \setminus B$ is connected and goes from $R_-(\Gamma)$ to $R_+(\Gamma)$. If $\overline{\alpha}$ is a contact form on $M - B$ or $M - N(K)$ satisfying the conditions in \cite[Definition~2.8]{CGHH}, then the sutured ECH groups $ECH(M(1), \overline{\alpha})$ and $ECH(M(K), \overline{\alpha})$ are defined.

\begin{thm} \label{hat and sutured}
$\widehat{ECH}(M) \simeq ECH(M(1), \overline{\alpha})$.
\end{thm}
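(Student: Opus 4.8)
The plan is to realize $\widehat{ECH}(M)$ and $ECH(M(1),\overline\alpha)$ as two instances of the relative ECH groups from Section~\ref{section: ECH for manifolds with torus boundary}, applied to a suitably chosen knot, and to deduce the isomorphism from Theorem~\ref{thm: equivalence of ECHs}(2). Concretely, I would pick an unknot $K\subset M$ bounding an embedded disk $D$, with a small tubular neighborhood $V\simeq K\times D^2$, and set $N=M-int(V)$; the disk $D$ plays the role of the Seifert surface $S$. Choose a contact form $\alpha$ on $M$ as in Section~\ref{subsec: description}: transverse to the foliation $K\times\{*\}$ on $V$, equal to a Morse-Bott model near $\partial N$, with $\partial N$ a negative Morse-Bott torus foliated by meridians, and with all closed Reeb orbits in $N$ having nonnegative linking number with $K$ (this last condition is automatic for the unknot after the construction in Section~\ref{subsec: description}, since the Reeb flow on $N$ can be made positively transverse to $D\cap N$). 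Then Theorem~\ref{thm: equivalence of ECHs}(2) gives $\widehat{ECH}(N,\partial N,\alpha,A)\simeq \widehat{ECH}(M,\xi,\varpi(A))$, and summing over $A$ yields $\widehat{ECH}(N,\partial N,\alpha)\simeq\widehat{ECH}(M)$.

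The main work is then to identify $\widehat{ECH}(N,\partial N,\alpha)$ with the sutured group $ECH(M(1),\overline\alpha)$. Here $N=M-int(V)$ with $V$ a neighborhood of an unknot is diffeomorphic to $M$ minus an open ball: compressing $V$ along the meridian disk exhibits $N$ as $M\setminus B$. Under this identification $\partial N\simeq T^2$ becomes the boundary sphere with one suture $\Gamma_0$ \emph{after attaching an interval-fibered extension}: indeed the Morse-Bott model near $\partial N$, where $\partial N$ is foliated by meridians and acts as a sink, is precisely the local model one obtains from the sutured boundary $R_+(\Gamma_0)\cup R_-(\Gamma_0)\cup N(\Gamma_0)$ of $M(1)$ after the ``rounding/folding'' that turns the sutured boundary into a Morse-Bott torus. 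I would make this precise by running the construction of Lemma~\ref{lemma: folding of sutured manifolds} in reverse: starting from $(M(1),\overline\alpha)$, fold the two halves $R_\pm(\Gamma_0)$ of the sutured boundary together along the Reeb flow on $N(\Gamma_0)$ and extend by the toric-annulus/solid-torus models of Section~\ref{section: contact forms} to produce exactly the pair $(N,\alpha)$ above, together with a dictionary between generators: Reeb orbits in $int(M(1))$ correspond to Reeb orbits in $int(N)$, the elliptic orbit $e'$ on $\partial V$ corresponds to the orbit created on the glued-in solid torus, and the polynomial variable $e'$ appearing in $ECC^{\flat}(V,\alpha_V)\simeq\F[e']$ (Theorem~\ref{thm: ECH of solid torus is Z}(4)) corresponds, via the $\lim_{j\to\infty}$ in Definition~\ref{defn: ECH relative to the boundary}, to the fact that the sutured chain complex only sees orbits in $int(M(1))$.

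More efficiently, I would avoid re-deriving this and instead argue as follows. By Lemma~\ref{lemma: folding of sutured manifolds}, for $L\gg0$ there is a closed contact manifold $(\widetilde M,\widetilde\alpha^L)$ with $ECH^{L'}(M(1),\overline\alpha)\simeq ECH^{L'}(\widetilde M,\widetilde\alpha^L)$ for $L'\le L$, and $\widetilde M$ is built by folding $R_+(\Gamma_0)$ to $R_-(\Gamma_0)$ via a diffeomorphism $\hh$ and gluing in a solid torus $V$ whose meridian is a Reeb orbit. Since $K\setminus B$ is connected from $R_-$ to $R_+$ (our hypothesis on $M(1)$), $\widetilde M$ is an open book with connected binding $K$, page $R_+(\Gamma_0)$, monodromy $\hh$, and in particular $\widetilde M$ is diffeomorphic to $M$ and $\widetilde\alpha^L$ is a Giroux form for $\xi$. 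Now $N=\widetilde M-int(V)$ is the mapping torus of $\hh$, exactly the ``no man's land plus $N$'' picture of Section~\ref{subsec: description}, and the sutured chain complex $ECC^{L'}(M(1),\overline\alpha)$ is literally the complex generated by orbit sets in $int(N)$, i.e.\ $ECC^{L'}(int(N),\alpha)$ with the interior differential. On the other hand, unwinding Definition~\ref{defn: ECH relative to the boundary} and Lemma~\ref{lemma: uqam}, $ECH(N,\partial N,\alpha)\simeq ECH^{\natural}(N,\alpha)$, which is the cone of $\cdot(1+e)$ on $ECC^{\flat}(N,\alpha)$; comparing with $ECC^{L'}(int(N),\alpha)$ and taking $L\to\infty$ (using Lemma~\ref{lemma: curves in int} and the Trapping Lemma to see that no holomorphic curve relevant to the sutured differential touches $\partial N$) gives $ECH(M(1),\overline\alpha)\simeq ECH(N,\partial N,\alpha)$. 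Combining with Theorem~\ref{thm: equivalence of ECHs}(2) applied to the unknot $K\subset M$ yields $\widehat{ECH}(M)\simeq ECH(M(1),\overline\alpha)$; and by Theorem~\ref{invariance of sutured ECH} the right-hand side is independent of $\overline\alpha$, so the statement makes sense for any admissible $\overline\alpha$.

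\textbf{Main obstacle.} The delicate point is the precise matching of boundary models: showing that the sutured boundary $(\partial(M\setminus B),\Gamma_0)$ with its contact germ, after the folding of Lemma~\ref{lemma: folding of sutured manifolds}, coincides (up to the allowed isotopies and interval-fibered extensions) with the negative Morse-Bott torus boundary of $(N,\alpha)$ used in Theorem~\ref{thm: equivalence of ECHs}, together with the corresponding identification $H_1(N,\partial N)\xrightarrow{\varpi}H_1(M)$ under which the decompositions match. This is a somewhat lengthy but routine gluing/normal-form argument; the rest is bookkeeping with direct limits and the algebraic description of $ECH^{\natural}$ already set up in Sections~\ref{subsection: variants of ECH of an open book decomposition} and \ref{subsection: explicit map}.
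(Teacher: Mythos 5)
Your overall strategy matches the paper's: choose a null-homologous $\xi$-transverse knot $K$, apply Theorem~\ref{thm: equivalence of ECHs}(2) to get $\widehat{ECH}(N,\partial N,\alpha)\simeq\widehat{ECH}(M)$, and then identify the sutured group $ECH(M(1),\overline\alpha)$ with $\widehat{ECH}(N,\partial N,\alpha)$. The gaps are all in the identification step, which is the real content of the theorem.

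The most consequential gap is that you drop the hat. Your ``more efficiently'' paragraph concludes $ECH(M(1),\overline\alpha)\simeq ECH(N,\partial N,\alpha)\simeq ECH^{\natural}(N,\alpha)$ via Lemma~\ref{lemma: uqam}, but the group that feeds into Theorem~\ref{thm: equivalence of ECHs}(2) is $\widehat{ECH}(N,\partial N,\alpha)\simeq\widehat{ECH}^{\natural}(N,\alpha)$ (Lemma~\ref{aeroport}). These are genuinely different: $ECC^{\natural}(N,\alpha)=\mathcal{R}[h']\otimes ECC^{\flat}(N,\alpha)$ excludes the hyperbolic orbit $h$ on $\partial N$, while $\widehat{ECC}^{\natural}(N,\alpha)=\mathcal{R}[h']\otimes ECC(N,\alpha)$ includes it. Related to this, the assertion that the sutured complex is ``literally the complex generated by orbit sets in $int(N)$'' is wrong; the sutured complex must see the orbits $e$, $h$ (on $\partial N$) and $h'$ (on $\partial V$) in order to produce $\mathcal{R}[h']\otimes ECC(N,\alpha)$.

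What is missing is precisely the paper's geometric construction of the ball $B$. The paper takes $B=N(D)\cup N(e')$ inside $V$, where $D$ is the meridian disk bounding $e'$ arising as the projection of a leaf of the finite energy foliation. With this choice $e'$ is removed from $M(1)$, every closed orbit in $int(V)$ becomes a Reeb chord, while $h'$ (on $\partial V$), $e,h$ (on $\partial N$), and all orbits of $int(N)$ survive as closed orbits in $int(M(1))$. After discarding the no man's land in a direct limit, the generators of $ECC^{L_i}(M(1),\Gamma_0,\alpha_i')$ are exactly those of $\widehat{ECC}^{\natural}(N,\alpha)$, and the two $I=1$ curves from $h'$ (the plane to $\varnothing$ and the cylinder to $e$) supply the $\gamma/h'\otimes(1+e)\Gamma$ term of $\widehat\partial^{\natural}$. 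Your proposal does not construct $B$ at all, and so cannot see this matching.

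Finally, two of your supporting claims are false. The complement $N=M-int(V)$ of a neighborhood of an unknot is not diffeomorphic to $M$ minus a ball (for $M=S^3$ and $K$ an unknot, $N$ is a solid torus while $M-B$ is a $3$-ball); they differ by a $2$-handle along a meridian. And the closed manifold $\widetilde{M}$ of Lemma~\ref{lemma: folding of sutured manifolds} is not an open book with page the disk $R_+(\Gamma_0)$ --- that would force $\widetilde{M}\cong S^3$. The restriction to an unknot is also unnecessary; the paper works with any null-homologous $\xi$-transverse knot $K$ passing through $B$.
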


This theorem concludes the proof of \cite[Theorem~1.6]{CGHH}.

\begin{proof}
Let $\xi$ be a contact structure on $M$  extending $\overline{\xi}= \ker \overline{\alpha}$ such that $K\subset M$ is a $\xi$-transverse knot.
Recall the decomposition
$$M=N\cup (T^2 \times [1,2]) \cup V$$
from previous sections, where we take $N_0(K)=(T^2 \times [1,2]) \cup V$ to be a neighborhood of $K$.

There exists a sequence of contact forms $\alpha'_i$, $i=0,1,\dots$, for $\xi$  (up to isotopy) and associated Reeb vector fields $R'_i$, satisfying Properties (1)--(8) of Section~\ref{subsec: description}.  Figure~\ref{fig: suture} depicts $R'_i$ on $N_0(K)\simeq D^2(2)\times S^1$ with cylindrical coordinates $(\rho,\phi,\theta)$, where $D^2(\rho_0)=\{\rho\leq \rho_0\}$ and $V\simeq D^2(1)\times S^1$. The Reeb vector field $R'_i$ is $\partial_\phi$-invariant and of the form $R'_i =Y+h_i(\rho)\partial_\phi$, where $Y$ is tangent to the slices $\{ \phi=\mbox{const}\}$ as given in Figure~\ref{fig: suture} and $h_i(\rho)>0$ for $\rho>0$.

Choose almost complex structures $J_i'$ adapted to $\alpha_i'$ as in Section~\ref{subsec: description} so that $J_i'$ is $\bdry_\phi$-invariant on $N_0(K)$ and is close to the almost complex structure $J_0$ from Proposition~\ref{prop: Wendl V} on $V$.

\begin{figure}[ht]
\begin{overpic}[height=4.5cm]{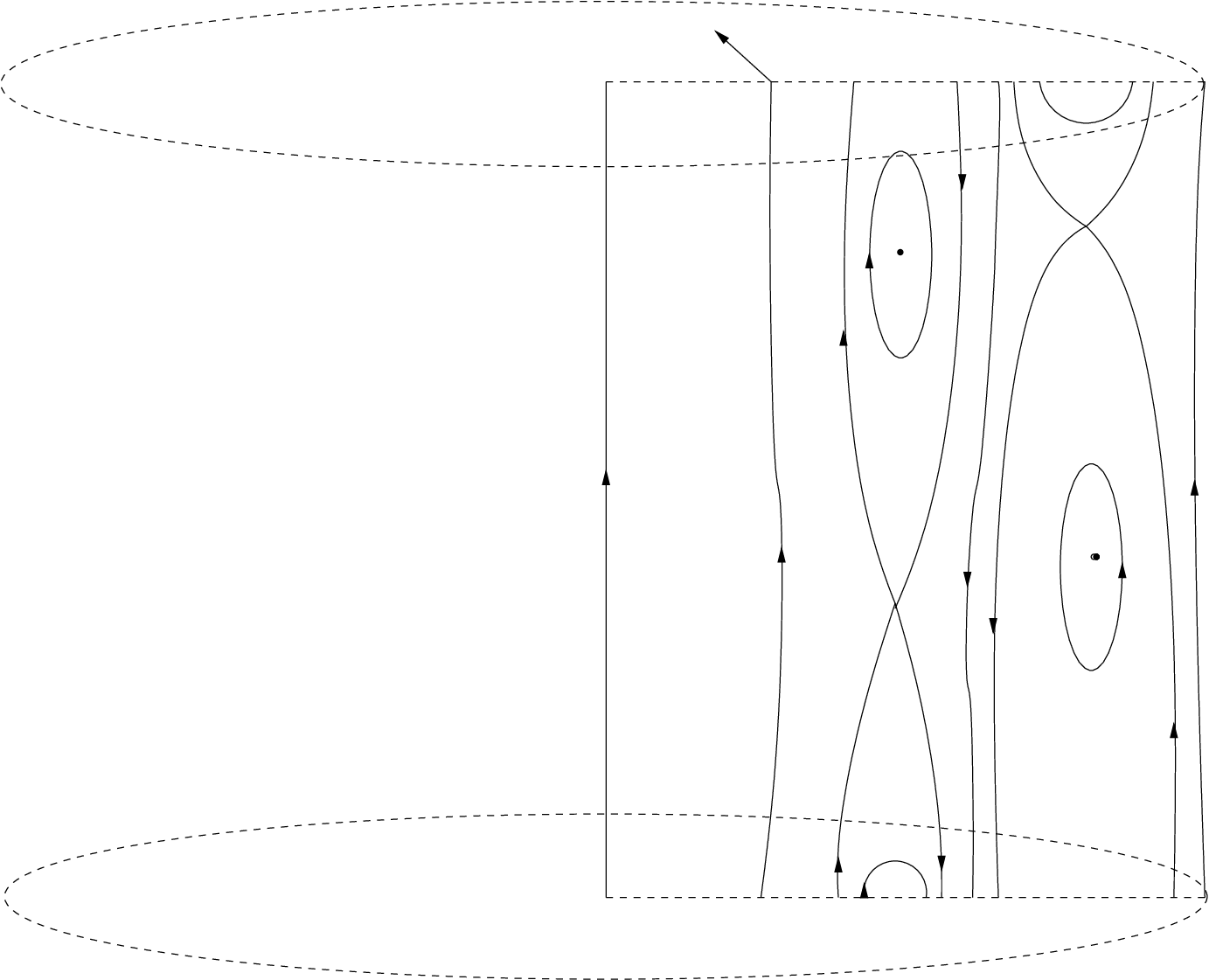}
\put(65.3,59){\tiny $e'$} \put(67.5,30.5) {\tiny $h'$}
\put(92,61){\tiny $h$} \put(84.2,33.6){\tiny $e$}
\put(44,45){\tiny $K$} \put(50,80){\tiny $h_i(\rho)\partial_\phi$}
\end{overpic}
\caption{The Reeb vector field $R'_i$ on $N_0(K)=(T^2 \times [1,2])\cup V$.  The top and the bottom are identified.} \label{fig: suture}
\end{figure}

We describe a concave ball $B$ in $M$ whose complement is $M(1)$; see Figure~\ref{fig: suture1}.
\begin{figure}[ht]
\begin{overpic}[height=4.5cm]{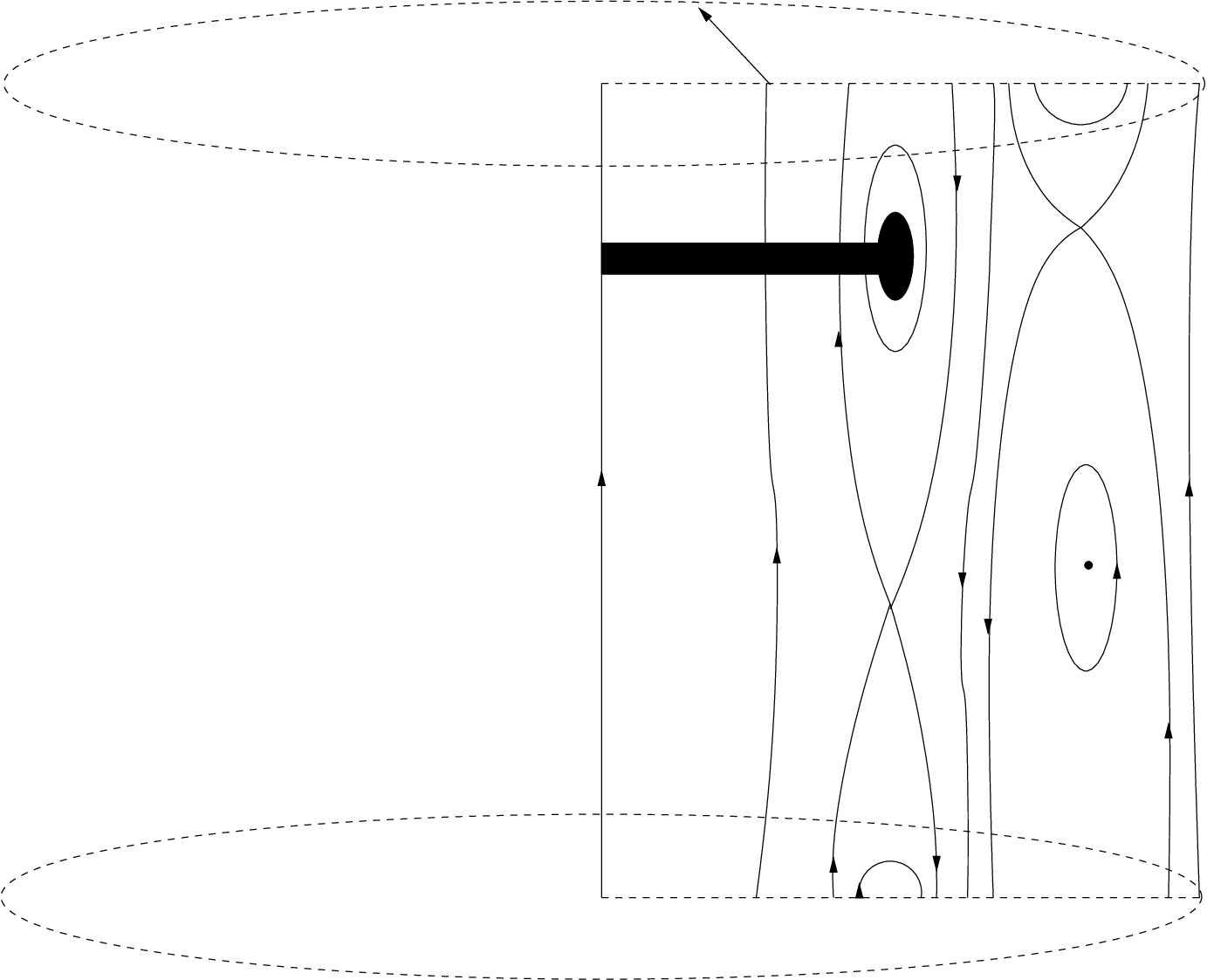}
\put(55,53){\tiny $B$}
\put(67.5,30.5) {\tiny $h'$}
\put(92,61){\tiny $h$} \put(84.2,33.6){\tiny $e$}
\end{overpic}
\caption{The concave ball $B$, obtained by rotating the shaded region about the vertical axis.} \label{fig: suture1}
\end{figure}
Let $D$ be a meridian disk in $V$ which bounds $e'$ and is the projection to $V$ of an $I=2$ $J_i'$-holomorphic plane $u$ asymptotic to $e'$ at the positive end. The plane $u$ corresponds to a leaf of the finite energy foliation $\mathcal{Z}_0$ of $\R\times V$ from Proposition~\ref{prop: Wendl V}. Let $N(e')$ be a neighborhood of $e'$ whose boundary is tangent to $R'_i$. We then set $B= N(D) \cup N(e')$, where $N(D)$ is a small neighborhood of $D$, chosen such that $\bdry B$ decomposes into three parts:
\begin{itemize}
\item two disks $R_\pm(\Gamma_0)$ transverse to $R'_i$ that are parallel copies of a small retract of $D$; and
\item an annulus $N(\Gamma_0 )\subset \partial N(e')$ tangent to $R_i$.
\end{itemize}
We assume that the $I=1$ $J_i'$-holomorphic plane asymptotic to $h'$ has image in $\R\times (V- B)$ and that $R_\pm(\Gamma_0)$ are also chosen to be restrictions of projections to $M$ of $I=2$ $J_i'$-holomorphic planes asymptotic to $e'$. The trajectories of $R_i$ flow from one boundary component of $N(\Gamma_0 )$ to the other.

The manifold $(M(1) , \Gamma_0 ,\alpha'_i )$ is a sutured contact manifold  and, by Theorem~\ref{invariance of sutured ECH}, $ECH(M(1) ,
\Gamma_0 ,\alpha'_i )$ is isomorphic to $ECH(M(1) , \Gamma_0 , \overline{\alpha})$. By construction, the orbit $e'$ does not belong to $M(1)$ and all the orbits in $V$ are now chords from $\partial M(1)$ to $\partial M(1)$. The Reeb orbits of $R'_i$ that are contained in $M(1)$ are:
\begin{enumerate}
\item all Reeb orbits in $N$;
\item $e$, $h$ and $h'$; and
\item orbits longer than $L_i$ in the no man's land.
\end{enumerate}

By taking direct limits as in Section~\ref{subsection: direct limit}, we can discard orbits in the no man's land. The use of direct limits in this context is justified by Theorem~\ref{invariance of sutured ECH}.

By our choice of $J_i'$, if $u$ is a holomorphic curve in $\R\times M$ between orbit sets constructed from orbits of type (1) and (2) in $M(1)$, then $\op{Im}(u)\subset \R\times M(1)$.  (The orbits of type (1) and (2) have the lowest $\mathcal{F}_i$-filtration level and we can use the Blocking and Trapping Lemmas.) In particular, there are exactly two $I=1$ curves that limit to $h'$ at the positive end, as it is in $\R \times N_0(K)$: one plane from $h'$ to $\varnothing$ and one cylinder from $h'$ to $e$. Therefore we obtain an identification
$$\lim \limits_{i \to \infty} (ECC^{L_i} (M(1) ,\Gamma_0 ,\alpha'_i ),\partial )\simeq (\widehat{ECC}^{\natural} (N,\alpha ), \widehat{\partial}^\natural ),$$
which in view of Lemma~\ref{aeroport} and Theorem~\ref{thm: equivalence of ECHs}(2) implies the theorem.
\end{proof}

If the contact form $\overline{\alpha}$ is chosen carefully, a null-homologous knot $K \subset M$ induces a filtration on the chain complex $ECC(M(1), \overline{\alpha})$ and the associated graded group is $ECC(M(K), \overline{\alpha})$. This construction was described in \cite[Section~7.2]{CGHH}. If $N = M - N_0(K)$ as above, there is a filtration ${\mathcal E}$ on $\widehat{ECC}(N, \partial N, \alpha)$ defined as follows: Let ${\mathcal P}$ be the set of simple Reeb orbits in $int(N)$. The generators of $\widehat{ECC}(N, \partial N, \alpha)$ are equivalence classes of orbit sets $\Gamma$ constructed from ${\mathcal P} \cup \{ h, e \}$, up to the equivalence relation $\Gamma \sim e \Gamma$. To the equivalence class of $\Gamma$ we can uniquely associate an orbit set $\Gamma'$ constructed from ${\mathcal P} \cup \{ h \}$. Then we define ${\mathcal E}(\Gamma)$ as  the algebraic intersection of $\Gamma'$ with a Seifert surface of $K$. The differential of $\widehat{ECC}(N, \partial N, \alpha)$ preserves ${\mathcal E}$ by the Trapping Lemma and it is easy to identify the graded group of this filtration with $ECC^{\sharp}(N, \alpha)$.

\begin{thm}
If $K \subset M$ is a null-homologous knot, then there is a contact form $\overline{\alpha}$ on $M$ for which the isomorphism in Theorem \ref{hat and sutured} preserves the filtrations and induces an isomorphism
$$ECH(M(K), \overline{\alpha}) \simeq ECH^{\sharp}(N, \alpha).$$
\end{thm}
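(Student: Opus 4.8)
The plan is to combine Theorem~\ref{hat and sutured} with the filtration construction from \cite[Section~7.2]{CGHH}, tracking everything through the isomorphism $\widehat{\sigma}_*$ established in the proof of Theorem~\ref{thm: equivalence of ECHs}(2). First I would fix the contact form $\overline{\alpha}$ on $M$ so that, in the neighborhood $N_0(K) = (T^2\times[1,2])\cup V$ of $K$, it has the shape described in the proof of Theorem~\ref{hat and sutured} (Figures~\ref{fig: suture} and \ref{fig: suture1}), with the extra property that the Reeb vector field $R'_i$ on $N = M - N_0(K)$ is positively transverse to a fixed Seifert surface $S$ of $K$. This is exactly the hypothesis invoked in Section~\ref{subsection: explicit map} (see Remark after the definition of $\sigma_k$), and it is what makes $\partial'_N$ nilpotent with a bound on the nilpotency order, which is needed for the filtration ${\mathcal E}$ on $\widehat{ECC}(N,\partial N,\alpha)$ to be finite on each generator.

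Next I would recall the identification from the end of the proof of Theorem~\ref{hat and sutured}:
$$\lim_{i\to\infty}\bigl(ECC^{L_i}(M(1),\Gamma_0,\alpha'_i),\partial\bigr)\simeq \bigl(\widehat{ECC}^\natural(N,\alpha),\widehat{\partial}^\natural\bigr),$$
which combined with Lemma~\ref{aeroport} gives $ECH(M(1),\overline{\alpha})\simeq \widehat{ECH}^\natural(N,\alpha)\simeq \widehat{ECH}(N,\partial N,\alpha)$. The point is that the chain-level identification above is what we must show is filtered. On the target side, ${\mathcal E}$ assigns to (the equivalence class of) an orbit set $\Gamma$ the algebraic intersection with $S$ of the associated orbit set $\Gamma'$ built from ${\mathcal P}\cup\{h\}$; on the source side, $ECC^{L_i}(M(1),\Gamma_0,\alpha'_i)$ inherits the filtration by linking number with $K$ à la \cite[Section~7.2]{CGHH}. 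I would verify that the chain isomorphism respects these two filtrations: an orbit set of $R'_i$ in $M(1)$ of the type appearing in the direct limit (orbits of type (1), (2) above, after discarding the no man's land) corresponds to a generator $\gamma\otimes\Gamma$ of $\widehat{ECC}^\natural(N,\alpha)$ with $\gamma\in{\mathcal R}[h']$, and its linking number with $K$ equals the intersection number of $\Gamma'$ with $S$ up to a shift depending only on whether $h'$ divides $\gamma$ — a shift which is constant and hence irrelevant to the associated graded. This uses the fact (from the proof of Theorem~\ref{hat and sutured}) that all holomorphic curves between such orbit sets stay in $\R\times M(1)$, so the differential is computed by the same buildings on both sides.

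Then I would pass to associated graded groups. On the source, the graded group of the linking-number filtration on $ECC(M(1),\overline{\alpha})$ is $ECC(M(K),\overline{\alpha})$ by \cite[Section~7.2]{CGHH}. On the target, I claim the graded group of ${\mathcal E}$ on $\widehat{ECC}^\natural(N,\alpha)$ — equivalently on $\widehat{ECC}(N,\partial N,\alpha)$ — is $ECC^\sharp(N,\alpha)$: passing to ${\mathcal E}$-graded kills the filtration-decreasing terms $\gamma/h'\otimes (1+e)\Gamma$ and the $h\partial'_N$ piece of $\partial_N$ in $\widehat{\partial}^\natural$, leaving the differential $\partial^\flat_N + (\text{terms not lowering }{\mathcal E})$ on orbit sets built from ${\mathcal P}\cup\{h\}$, which is precisely $\partial^\sharp$; the elliptic orbit $e$ is eliminated by the equivalence relation $\Gamma\sim e\Gamma$ defining $\widehat{ECC}^\natural$, and the orbit $h'$ contributes only through the cone structure, which drops out of the associated graded because $\widehat{\partial}^\natural(\gamma\otimes\Gamma) = \gamma\otimes\partial_N\Gamma + \gamma/h'\otimes(1+e)\Gamma$ has its $h'$-lowering part decreasing the filtration. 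So the chain isomorphism induces $ECC(M(K),\overline{\alpha})\simeq ECC^\sharp(N,\alpha)$ on associated graded, hence an isomorphism of homologies $ECH(M(K),\overline{\alpha})\simeq ECH^\sharp(N,\alpha)$.

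The main obstacle I anticipate is the bookkeeping needed to show that the chain isomorphism from Theorem~\ref{hat and sutured} genuinely matches the two filtrations up to a global shift, rather than just abstractly relating the two homologies; in particular one must be careful that the correspondence between the role of $h'$ in the cone $\widehat{ECC}^\natural(N,\alpha)$ and the orbit $e'$ that was excised to form $B$ does not secretly shift linking numbers in a non-uniform way, and that the ${\mathcal E}$-preserving part of $\widehat{\partial}^\natural$ really is $\partial^\sharp$ and not some twisted version of it. This requires revisiting the identification of orbit sets in Section~\ref{subsection: explicit map} together with the geometry of $B$ in the proof of Theorem~\ref{hat and sutured}, and checking the Trapping Lemma argument that shows ${\mathcal E}$ is preserved. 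Once that compatibility is pinned down, the rest is formal: a filtered chain isomorphism induces an isomorphism of associated graded complexes, hence of their homologies.
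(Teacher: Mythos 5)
Your overall strategy (establish the filtered chain-level identification, pass to associated graded complexes, and conclude by a filtered chain isomorphism) follows the same outline the paper gestures at, and you correctly identify the key ingredients: the contact form transverse to the Seifert surface, the CGHH linking-number filtration on $ECC(M(1))$, and the filtration $\mathcal{E}$ on the $N$-side. The paper's proof puts more emphasis on the geometric sutured-manifold construction of $(M(K),\Gamma_K)$ (including the collar fix to restore convexity of $\partial N(K)$) and expresses the match of associated graded pieces via the observation that drilling out $N(K)$ kills the meridian disk from $h'$; your version replaces this by invoking \cite[Section~7.2]{CGHH} directly, which is a legitimate alternative route for the source side.

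However, there is a concrete gap in the bookkeeping on the $N$-side, and you point in its direction but then dismiss it incorrectly. The chain-level identification from Theorem~\ref{hat and sutured} lands in $\widehat{ECC}^\natural(N,\alpha) = \mathcal{R}[h']\otimes ECC(N,\alpha)$, which is only quasi-isomorphic to $\widehat{ECC}(N,\partial N,\alpha)$ where $\mathcal{E}$ is defined. If you compute what the linking-number filtration on $ECC(M(1))$ transports to on $\widehat{ECC}^\natural(N,\alpha)$, a generator $\gamma\otimes\Gamma$ with $\gamma=(h')^{b'}$ and $\Gamma=e^a h^b\Gamma_0$ goes to the orbit set $(h')^{b'}e^ah^b\Gamma_0$ in $M(1)$, whose linking number with $K$ is $b'+a+b+\operatorname{lk}(\Gamma_0,K)$, whereas $\mathcal{E}$ evaluated on the $e$-free representative $\Gamma'=h^b\Gamma_0$ is $b+\operatorname{lk}(\Gamma_0,K)$. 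The discrepancy $b'+a$ depends on the multiplicity $a$ of $e$, which is unbounded, so the two filtrations are \emph{not} related by a constant shift — contrary to your claim that the shift "depends only on whether $h'$ divides $\gamma$ ... and hence is irrelevant to the associated graded."

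Relatedly, your description of which parts of $\widehat{\partial}^\natural$ are filtration-decreasing is off. For the linking filtration on $\widehat{ECC}^\natural$, the term $\gamma/h'\otimes e\Gamma$ preserves the filtration (the loss from removing $h'$ is exactly compensated by the gain from $e$), and similarly the $h\partial'_N$ piece of $\partial_N$ preserves it ($\partial'_N$ lowers linking by one, multiplying by $h$ raises it by one). Neither is killed in the associated graded, and in fact the $h\partial'_N$ piece is precisely part of $\partial^\sharp$; what $\partial^\sharp$ omits are those terms of $\partial_N$ that land on orbit sets with positive $e$-multiplicity. As a result the associated graded complex of the linking filtration on $\widehat{ECC}^\natural(N,\alpha)$ is the mapping cone of multiplication by $e$ on $ECC(N,\alpha)$, which has more generators than $ECC^\sharp(N,\alpha)$ and a different differential, so it is not $ECC^\sharp(N,\alpha)$ on the nose. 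To close the gap you need to work with $\widehat{ECC}(N,\partial N,\alpha)$ — where the quotient by $\Gamma\sim e\Gamma$ has already been taken, so that $\mathcal{E}$ on the canonical $e$-free representatives has associated graded differential counting exactly the buildings asymptotic at both ends to orbit sets from $\mathcal{P}\cup\{h\}$, i.e.\ $\partial^\sharp$ — and then explain how the chain-level filtered isomorphism with $ECC(M(1))$ transfers to this quotient, or else compute the homology of the extra cone by a separate argument and show it still equals $ECH^\sharp(N,\alpha)$.
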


\begin{proof}
Let $K \subset M$ be a null-homologous knot and $\Sigma$ a genus-minimizing Seifert surface for $K$. Following \cite{CH}, we construct a family of contact forms $\alpha'_i$ on $M$ as in the proof of Theorem~\ref{hat and sutured} on $N_0 (K)$, with the additional property that the Reeb vector fields $R_i'$ are positively transverse to $int(\Sigma)$. The construction is done in two steps: first on $N$ by a direct application of \cite{CH}, where we use $\Sigma$ as the first decomposing surface of a taut sutured hierarchy of $N$, and then on $N_0(K)$, where we extend the form by the explicit model already described in Section~\ref{subsubsection: Extension to M}.

We obtain a concave neighborhood $(N(K),\Gamma_K )$ of $K$ by taking
$N(K)=B \cup N_\epsilon (K)$, where $N_\epsilon (K)$ is a very small neighborhood of $K$
whose boundary is tangent to $R'_i$, as in Figure~\ref{fig: suture2}, and $B$ is the ball
constructed in the proof of Theorem~\ref{hat and sutured}.

\begin{figure}[ht]
\begin{overpic}[height=4.5cm]{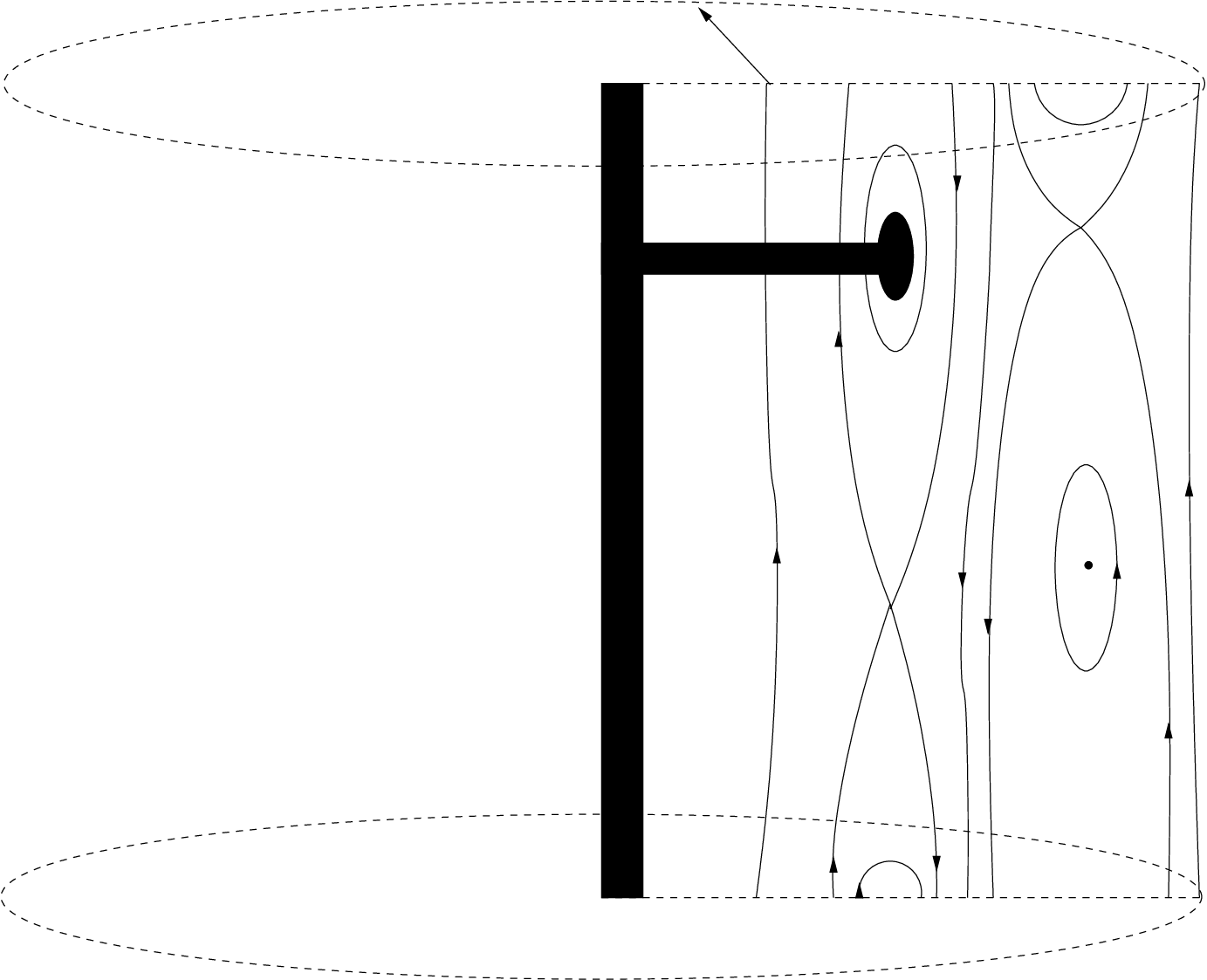}
\put(33,45){\tiny $N(K)$}
\put(67.5,30.5) {\tiny $h'$}
\put(92,61){\tiny $h$} \put(84.2,33.6){\tiny $e$}
\end{overpic}
\caption{Construction of the concave neighborhood  $(N(K),\Gamma_K )$, obtained by rotating the shaded region about the vertical axis} \label{fig: suture2}
\end{figure}

The suture $\Gamma_K$ corresponds to the core curves of the two annuli in $\partial N(K)$ tangent to $R'_i$. At this point, $(M- N(K), \Gamma_K )$ is not yet a convex sutured manifold, because $\partial N(K)$ is not convex for the dividing set given by the two curves of $\Gamma_K$. In fact, on the component $A$ of $\partial N(K)$ coming from $N_\epsilon (K)$, $\ker \alpha'_i|_A$ is negatively transverse to the core of $A$ (oriented as the boundary of $R_+(\Gamma_K)$). To correct this, we glue a collar of the form $(A\times [a,b], dt+ f(y)dx)$, ${\bdry f\over \bdry y}<0$, to $(M- N(K), \alpha'_i )$ along $A=A\times \{ a\}$, where $A\times [a,b]=[0,1]\times S^1\times[a,b]$ has coordinates $(t,x,y)$. Then the Reeb vector field remains $\bdry_t$ while the contact plane rotates until $\ker\alpha_i'|_{A\times\{b\}}$ is positively transverse to the core of $A$.

The positive transversality of the Reeb vector fields with the Seifert surface $\Sigma$ ensures that the isomorphism of Theorem~\ref{hat and sutured} preserves the filtrations
given by the linking number with $K$.

Passing from $M(1)$ to $M(K)$ has the effect of killing the ``meridian'' holomorphic disk from $h'$ which passes through $\R\times K$. After passing to direct limits, we obtain the desired isomorphism.
\end{proof}

\newpage
\appendix

\section{Morse-Bott gluing} \label{section: MB gluing}

\centerline{Vincent Colin, Paolo Ghiggini, Ko Honda, and Yuan Yao\footnote{YY address: University of California, Berkeley, Berkeley, CA 94720-3840. Email: \texttt{yuan\_yao@berkeley.edu}}
\footnote{YY acknowledges the support of the Natural Sciences and Engineering Research Council of Canada (NSERC), PGSD3-532405-2019.
Cette recherche a \'et\'e financ\'ee par le Conseil de recherches en sciences naturelles et en g\'enie du Canada (CRSNG), PGSD3-532405-2019.}
 \footnote{YY would like to thank his advisor Michael Hutchings for constant support. He would also like to thank Alexandru Oancea and Katrin Wehrheim for helpful discussions.}}

\s
The goal of this appendix is to prove Parts (2) and (3) of Theorem~\ref{thm: Morse-Bott perturbation of moduli spaces}. The proof of Part (4) is similar and will be omitted.  The proof involves working out Morse-Bott gluing in a special case, which easily generalizes to one-level cascades in ECH.  In \cite{Yao1, Yao2} the fourth author will prove the general ECH Morse-Bott gluing theorem in the presence of Morse-Bott tori and multiple-level cascades. There are slight differences in packaging, but our strategy and the one from \cite{Yao1, Yao2} for $1$-level cascades are essentially equivalent.

For simplicity we assume there is only one Morse-Bott torus $T_{\mathcal{N}}$ and that it is a negative Morse-Bott torus. It is generally acknowledged that the proof of Morse-Bott gluing in \cite{Bo2} is incomplete, but instead of fixing \cite{Bo2}, we carry out a different pregluing with a smaller error term. At first we will use a stable Hamiltonian structure whose hyperplane distribution is integrable near the Morse-Bott torus to simplify the gluing estimates in various ways. {\em In Section~\ref{subsection: reduction} we will explain how to derive a similar statement for contact structures from Theorem~\ref{thm: main theorem of appendix}.}

\subsection{Stable Hamiltonian structures, almost complex structures, and moduli spaces} \label{subsection: notation}

Let $[-1,1]\times T^2 =[-1,1]\times (\R^2/\Z^2)$ be a neighborhood of the negative Morse-Bott torus $T_{\mathcal{N}}$ with coordinates $(y,(\theta,t))$ such that $T_{\mathcal{N}}=\{0\}\times T^2$, and let $\mathcal{N}$ be the Morse-Bott family of simple orbits of the form $\{y=0,\theta=\mbox{const}\}$. Also let $A_{y_0}=[-y_0,y_0]\times \R/\Z$ be an annulus with coordinates $(y,\theta)$.

\s\n
{\em Morse-Bott perturbation of the stable Hamiltonian vector field.}
The construction will depend on parameters $c, a, b_0, b, \epsilon$  which will be made more specific during the course of this appendix and when we make specific choices they will be indicated by ($\boldsymbol{\dagger}_0$)--($\boldsymbol{\dagger}_3$). The parameters $c$, $a$ and $b_0$ (chosen in this order) will describe the data of the problem and will be chosen once and for all at the beginning so that  they satisfy 
$$0 < 4b_0< a < c <1.$$ 
The constant $c$ depends on the action level, $a$ depends on the Morse-Bott moduli spaces we want to glue { --- morally speaking it determines the region where the first nonconstant term in the Fourier expansion of the negative end is not dominated by the higher order terms; see ($\boldsymbol{\dagger}_1$) --- , and $b_0$ is arbitrary, as long as it is sufficiently smaller than $a$.  The perturbation of the Morse-Bott Reeb vector field  and pregluing will  depend on the parameters $b\in (0, b_0/2)$ and $\epsilon>0$ (chosen in this order). The parameter $b$ will determine the support of the perturbation and the parameter $\epsilon$ the size. Then, by the usual contraction mapping argument, 
 we will prove that for every $b$ sufficiently small and every $\epsilon$ sufficiently small compared to $b$, the preglued curve can be deformed to a holomorphic curve.
\s

On $[-c,c]\times T^2$ consider the stable Hamiltonian structure consisting of the $1$-form $dt$ and the $2$-form $\omega_H=dH\wedge dt+dy\wedge d\theta$, where $H: A_c\to \R$ is a function of $(y,\theta)$ (and is independent of $t$). The stable Hamiltonian vector field $R_H$ is then 
\begin{equation}\label{Hamilton equation}
R_H=\tfrac{\bdry}{\bdry t}+ X_H,\quad \mbox{where} \quad i_{X_H} dy\wedge d\theta=dH.
\end{equation}

Let $J_H$ be the adapted almost complex structure on $\R\times [-c,c]\times T^2$ which sends ${\bdry\over \bdry s}\mapsto R_H$, $R_H\mapsto -{\bdry\over \bdry s}$, ${\bdry\over \bdry y}\mapsto {\bdry\over \bdry \theta}$, and ${\bdry\over \bdry \theta}\mapsto -{\bdry\over \bdry y}$, where $s$ is the $\R$-coordinate.

We specialize the smooth function $H$ to:
\begin{equation}\label{eqn: f and f epsilon}
f(y,\theta)=\tfrac{1}{2} y^2 \quad \mbox{ or }\quad f_\epsilon(y,\theta)=\tfrac{1}{2} y^2+\epsilon\phi(y)\overline{g}_{\mathcal{N}}(\theta),
\end{equation}
where $\epsilon>0$ is small, the domain of $\overline{g}_{\mathcal{N}}(\theta)$ is $S^1$ viewed as the interval $[-\tfrac{1}{2}, \tfrac{1}{2}]$ with the endpoints identified, and the following hold:
\begin{itemize} 
\item[(P2)] $\overline{g}_{\mathcal{N}}:\R/\Z\to \R$ is a perfect Morse function with maximum at ${1\over 4}$ and minimum at $-{1\over 4}$. More specifically, we assume that $\overline{g}_{\mathcal{N}}'(\theta)=0$ on $\theta=\pm{1\over 4}$, is linear with positive slope on $[-{1\over 4},-{1\over 5}]$, is nondecreasing on $[-{1\over 5},-{1\over 6}]$, and is equal to $1$ on $[-\tfrac{1}{6}, \tfrac{1}{6}]$; and $\overline{g}_{\mathcal{N}}(\theta)$ is an odd function about $\theta=0$.
\item[(P3')] $\phi: [-c,c]\to [0,1]$ is an even function which has support on $[-2b_0,2b_0]$ and is equal to $1$ on $[-b_0 ,b_0]$. 
\end{itemize}
Here (P2) is exactly the same as (P2) from Section~\ref{subsection: Morse-Bott contact forms} and (P3') is a tweaking of (P3). We observe that $f_\epsilon\to f$ in $C^\infty$ as $\epsilon\to 0$.

The torus $T_{\mathcal{N}}$ is a negative Morse-Bott torus with respect to $R_f$. After perturbing to $R_{f_\epsilon}$, the Morse-Bott family of stable Hamiltonian orbits becomes a pair $e$ and $h$ of stable Hamiltonian orbits over $(0,-{1\over 4})$ and $(0,{1\over 4})$ in $A_c$. See Figure~\ref{figure: morse-function}.

\begin{figure}[ht]
\centering
\begin{overpic}[scale=.6]{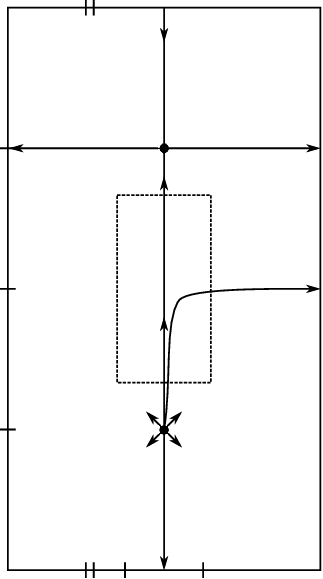}
	\put(-3,49){\tiny $0$}
	\put(-8,73){\tiny $1/4$}
	\put(-8,97){\tiny $1/2$}
	\put(-11.5,.5){\tiny $-1/2$}
	\put(-11.5,24.5){\tiny $-1/4$}
	\put(30.8,24.9){\tiny $e$}
	\put(29.5,70.2){\tiny $h$}
	\put(42,46){\tiny $\mathcal{T}_1^\epsilon$}
	\put(21.7,41){\tiny $\mathcal{T}_0^\epsilon$}
	\put(-1.3,-3.2){\tiny $-c$}
	\put(27.3, -3.2){\tiny $0$}
	\put(54.3,-3.2){\tiny $c$}
	\put(34.2,-3.2){\tiny $b_0$}
	\put(18,-3.2){\tiny $-b_0$}
	\put(15,55){\tiny $B$}
\end{overpic}
\caption{The annulus $A_c=[-c,c]\times \R/\Z$ with some gradient trajectories of $f_\epsilon$. The top and the bottom are identified. The dotted rectangle is the boundary of $B=[-b_0 ,b_0] \times[-\tfrac{1}{6},\tfrac{1}{6}]$, on which $\phi(y)=1$ and $\overline{g}'_{\mathcal{N}}(\theta)=1$.} % In actuality $b$ is much smaller than $a$.}
\label{figure: morse-function}
\end{figure}

The Morse-Bott perturbation is performed below a fixed action $L$, which is later sent to infinity by a direct limit process. The action of a stable Hamiltonian orbit in $[-c,c] \times T^2$ depends on how many times it intersects an annulus $A_c \times \{ t \}$ and therefore, instead of working below an action level $L$, we work below an intersection number $N$. 
\be 
\item[($\boldsymbol{\dagger}_0$)] The constant $c$ is chosen so that $0<c<1$ and all closed orbits of $R_f$ in $A_c \times S^1$ that intersect $A_c \times \{ t \}$ at most $N$ times are covers of orbits in $T_{\mathcal N}$. The constant $\epsilon>0$ will always be small enough that all closed orbits of $R_{f_\epsilon}$ in $A_c \times S^1$ that intersect $A_c \times \{ t \}$ at most $N$ times are covers of $e$ and $h$.
\ee
  
The next lemma follows from the explicit constructions in Section~\ref{subsection: reduction} and Claim \ref{claim: stable Hamiltonian properties}.

\begin{lemma} \label{lemma: extension to rest of M} 
There exist stable Hamiltonian structures $(\alpha,\omega),(\alpha,\omega_{\epsilon})$ on $M$ and almost complex structures $J_f, J_{f_\epsilon}$ on  $\R \times M$ such that 
	\be
	\item On $[-a,a] \times T^2$, $(\alpha, \omega)=(dt, \omega_f)$ and $(\alpha, \omega_\epsilon)=(dt, \omega_{f_\epsilon})$.
	\item On $M-([-a,a]\times T^2)$, $\omega=\omega_\epsilon$ is a multiple of $d \alpha$ by a positive function (and therefore $\alpha$ is a contact form).
	\item On $M-( [-c,c] \times T^2)$, $\omega = \omega_\epsilon= d \alpha$. 
	\item $J_f$ and $J_{f_\epsilon}$ are adapted to $(\alpha,\omega_f)$ and $(\alpha,\omega_{f_\epsilon})$ respectively, and $J_f=J_{f_\epsilon}$ outside of $\R \times [-a,a] \times T^2$.
	\ee
\end{lemma}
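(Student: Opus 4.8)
\textbf{Proof proposal for Lemma~\ref{lemma: extension to rest of M}.}

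The plan is to construct the stable Hamiltonian structures and almost complex structures by hand, piecing together the explicit Morse-Bott model on a neighborhood of $T_{\mathcal{N}}$ with the original contact data on the rest of $M$, and checking that the stable Hamiltonian conditions are satisfied in the transition region. First I would recall that on $[-c,c]\times T^2$ we already have the stable Hamiltonian structure $(dt, \omega_f)$ with $\omega_f = dH\wedge dt + dy\wedge d\theta$ for $H=f$ (and similarly $\omega_{f_\epsilon}$ for $H=f_\epsilon$), and on the complement $M - ([-c,c]\times T^2)$ we have the contact form $\alpha$ with $\omega = \omega_\epsilon = d\alpha$. The issue is to interpolate: on the collar $([-c,-a]\cup[a,c])\times T^2$ we must pass from $d\alpha$ to $dt\wedge dH + dy\wedge d\theta$. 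I would write $\alpha = \alpha_{f,g}$ in the coordinates $(y,\theta,t)$ as in Section~\ref{subsec: contact forms on toric annuli} (here the roles of coordinates are permuted, with $t$ playing the part of one circle direction), so that near $y=\pm c$ the contact form looks like $g(y)d\theta + f(y)dt$ for suitable $(f,g)$, and note that $dt$ is already the stable Hamiltonian $1$-form $\alpha$ there. Then I would choose $\omega$ on the collar to be a convex combination (interpolated by a cutoff function $\beta(y)$ supported appropriately) of $d\alpha$ near $y=c$ and $\omega_f$ near $y=a$, and verify the two defining conditions for a stable Hamiltonian structure: $\ker\omega\subset\ker d\alpha$ (equivalently $R_\omega\in\ker d\alpha$, here just that the interpolating Reeb vector field is tangent to the contact structure along $dt$), and $\omega|_{\ker\alpha}$ nondegenerate. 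Because both $d\alpha$ and $\omega_f$ restrict to nondegenerate $2$-forms on $\ker dt$ that induce the same orientation (this uses the contact condition $fg'-f'g>0$ from Lemma~\ref{lemma: form alpha f g} and the explicit form of $\omega_f$), any convex combination of them is again nondegenerate on $\ker dt$; and since $dt = \alpha$ throughout the collar, the condition $i_{R_\omega}\omega = 0$ together with $R_\omega$ normalized by $\alpha(R_\omega)=1$ is automatic from $\omega$ being of the form (closed $2$-form restricting nondegenerately to $\ker\alpha$, with $\alpha\wedge\omega>0$). This yields Items (1)--(3), with $\omega$ and $\omega_\epsilon$ agreeing on the collar and outside since $f$ and $f_\epsilon$ coincide there (recall $\phi$ is supported in $[-2b_0,2b_0]$ and $2b_0 < a/2 < a$, so $f_\epsilon = f$ on $\{|y|\ge a\}$).

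Next, for Item (4), I would build $J_f$ by specifying it to be $J_H$ (as defined in Section~\ref{subsection: notation}) on $\R\times[-a,a]\times T^2$ — where it sends $\partial_s\mapsto R_f$, $R_f\mapsto -\partial_s$, $\partial_y\mapsto\partial_\theta$, $\partial_\theta\mapsto -\partial_y$ — and to be any $\omega$-adapted almost complex structure on the rest of $\R\times M$, chosen to agree with $J_H$ on the overlap collar. On the collar this requires $J$ to send $\ker\alpha$ to itself, send $\partial_s$ to the $\omega$-Reeb vector field (which on the collar equals $R_\omega$ for the interpolated $\omega$), and be $\omega|_{\ker\alpha}$-tamed; since the space of such $J$ is nonempty and contractible (as $\omega|_{\ker\alpha}$ is a nondegenerate $2$-form on a rank-$2$ bundle), one can interpolate between $J_H$ near $y=a$ and a chosen generic adapted $J$ near $y=c$. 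The same construction with $f$ replaced by $f_\epsilon$ gives $J_{f_\epsilon}$, and because $f_\epsilon = f$, $\omega_\epsilon = \omega$, and $R_{f_\epsilon} = R_f$ outside $\R\times[-a,a]\times T^2$, one may (and I would) choose $J_{f_\epsilon} = J_f$ there, giving Item (4).

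The only genuinely delicate point is verifying that on the transition collar $([-c,-a]\cup[a,c])\times T^2$ the interpolated $\omega$ really does satisfy $i_{R}\,\omega=0$ where $R$ is normalized by $\alpha(R)=1$, i.e.\ that the $1$-form $\alpha=dt$ is in fact ``stabilizing'' for the family of $2$-forms we write down; this forces a compatible choice of the cutoff $\beta(y)$ and of the pair $(f,g)$ defining $\alpha$ near $y=c$, so that $R_\omega$ stays tangent to the tori $\{y=\mathrm{const}\}$ and has no $\partial_y$ component. I expect this to be the main obstacle, and it is handled by noting that $\omega_f$ itself already has this property (its Reeb vector field is $R_f = \partial_t + X_f$ with $X_f\in T T^2$), while $d\alpha$ for $\alpha_{f,g}$ has Reeb vector field also tangent to the $T^2$-factors by Lemma~\ref{lemma: form alpha f g}; hence any convex combination, being closed, has a Reeb vector field which is again a combination of $\partial_t$ and a $TT^2$-vector, so tangent to $\{y=\mathrm{const}\}$, which is exactly what is needed. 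Everything else — smoothness of the gluing, the positivity $\alpha\wedge\omega>0$ along the collar, and the compatibility of the $J$'s — follows from standard transversality and contractibility arguments and I would state it without belaboring the computations.
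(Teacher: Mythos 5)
There is a genuine gap, and it stems from a misreading of what the lemma requires. You keep the stable Hamiltonian $1$-form $\alpha$ equal to $dt$ throughout $[-c,c]\times T^2$ and interpolate only the $2$-form $\omega$ on the collar $([-c,-a]\cup[a,c])\times T^2$. But item (2) of the lemma asserts that on all of $M-([-a,a]\times T^2)$ — which includes the collar — $\omega$ is a \emph{positive multiple of $d\alpha$}, where $\alpha$ is the stable Hamiltonian $1$-form. If $\alpha=dt$ there, then $d\alpha=0$, so item (2) is unsatisfiable, and consequently so is item (3). The $1$-form itself must interpolate from $dt$ at $|y|=a$ to a genuine contact form, and this is the part you never construct. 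Your ``delicate point'' paragraph, which again takes $\alpha=dt$ on the collar, cannot rescue this; even granting that the convex combination $\beta(y)\,d\alpha_{\mathrm{contact}}+(1-\beta(y))\,\omega_f$ is closed (which it is, because $\beta$ depends only on $y$ and $dy\wedge d\alpha_{\mathrm{contact}}=0$), such an $\omega$ has no reason to be a positive function times $d\alpha_{\mathrm{shs}}$ for any smooth family of $1$-forms $\alpha_{\mathrm{shs}}$ interpolating between $dt$ and $\alpha_{\mathrm{contact}}$.

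The paper's construction (Section~\ref{subsection: reduction}) is essentially the \emph{dual} of yours: it fixes $\omega=df\wedge dt+dy\wedge d\theta$ (and $\omega_\epsilon=df_\epsilon\wedge dt+dy\wedge d\theta$) on the entire neighborhood $[-1,1]\times T^2$, and instead varies the $1$-form, setting $\alpha=(1+h(y))\,dt+g(y)\,d\theta$ with $g\equiv 0$ on $[-a,a]$, $g'(y)>0$ on $(a,1]$, $g(y)=y$ for $|y|\geq c$, and the key coupling $h'(y)=g'(y)\,\partial_y f=g'(y)\,y$. That coupling is precisely what forces $d\alpha=g'(y)\,\omega$ outside $[-a,a]\times T^2$ — giving item (2) with positive factor $g'(y)$ and item (3) once $g'\equiv 1$ — while $g\equiv 0$ on $[-a,a]$ gives item (1). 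This is the ingredient your proposal is missing, and it is not a cosmetic choice: once you accept that $\omega$ must be proportional to $d\alpha$ on the collar, the $2$-form is determined up to a positive function by the $1$-form, so one \emph{cannot} independently interpolate $\omega$ by a convex combination and hope to land on the right answer. Item (4) then proceeds essentially as you sketch, since once $(\alpha,\omega)$ and $(\alpha,\omega_\epsilon)$ agree outside $[-a,a]\times T^2$ (which they do because $f=f_\epsilon$ for $|y|\geq 2b_0$), one can take $J_f=J_{f_\epsilon}$ there and use the standard contractibility of compatible almost complex structures.
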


\begin{simplification}
From now on we will consider only the case $N=1$ because it contains already all the relevant ideas.
\end{simplification}

\s\n
{\em Moduli spaces.} 
Let 
$$\mb:=\mb_{J_f}:=\mathcal{M}_{J_f}^{I=\op{ind}=1}(\boldsymbol{\gamma};\mathcal{N})$$ 
be the moduli space of (finite energy) $J_f$-holomorphic maps $u_+:(\dot F,j)\to \R\times M$ modulo domain automorphisms, where:
\be
\item[(C0)] $(\dot F,j)$ is a closed Riemann surface with a finite number of punctures removed and we are ranging over all complex structures $j$ with a fixed topological type $\dot F$;
\item[(C1)] $u_+$ limits to the orbit set $\boldsymbol{\gamma}$ at the positive end, where $\boldsymbol{\gamma}$ does not involve any orbits of the Morse-Bott family $\mathcal{N}$;
\item[(C2)] $u_+$ limits to some orbit in the Morse-Bott family $\mathcal{N}$ at the negative end; and
\item[(C3)] $u_+$ has ``unconstrained" Fredholm and ECH index $1$ (the negative end is unconstrained); cf.\ Section~\ref{subsub: automatic transversality} for more details.
\ee  
By (C3) we mean that if we concatenate $u_+$ with a cylinder corresponding to an upward gradient trajectory that starts at $(0,-\tfrac{1}{4})$ so that we have a map $C$ from $\gamma$ to $e$, then the Fredholm and ECH indices of $C$ are $1$. (C3) implies that curves of $\mb$ are isolated modulo $\R$-translation and are embedded.   

Next let 
$$\me:=\mathcal{M}_{J_{f_\epsilon}}:=\mathcal{M}^{I=\op{ind}=1}_{J_{f_\epsilon}}(\boldsymbol{\gamma},e)$$ 
be the moduli space of $J_{f_\epsilon}$-holomorphic maps $u:(\dot F,j)\to \R\times M$ modulo domain automorphisms, where (C0), (C1) (with $u$ instead of $u_+$) and the following hold:
\be
\item[(C2')] $u$ limits to the negative elliptic orbit $e$ obtained by perturbing the Morse-Bott family; and
\item[(C3')] $u$ has Fredholm and ECH index $1$. 
\ee

We also remark that the moduli spaces $\mb$ and $\me$ can be made Morse-Bott regular or regular by perturbing $J_f$ and $J_{f_\epsilon}$ outside of $[-c,c] \times T^2$. 

\s\n
{\em Holomorphic curves near the Morse-Bott torus.}

\begin{claim}\label{leftovers}
The equation $\overline\bdry_{J_{f_\epsilon}}u=0$ for a map\footnote{We abuse notation and use coordinates $(s,t)$ for both the cylindrical part of the domain and $\R\times S^1$. We also change the order of the coordinates from $(y, \theta, t)$ to $(t, y, \theta)$. This has no effect on the orientations of $M$ and $A_c$.} 
$$u \colon [\tilde s_0,\tilde s_1]\times S^1\to \R\times S^1_t\times A_c,\quad u(s,t) = (s,t, \eta(s,t)),$$
is equivalent to the equation
\begin{equation} \label{eqn: rewriting d bar equation}
\mathcal{D}_\epsilon\eta:= {\bdry\eta\over \bdry s} + j_0{\bdry\eta\over\bdry t}-\nabla f_\epsilon(\eta)=0,
\end{equation}
where $j_0=\begin{pmatrix} 0 & -1 \\ 1 & 0 \end{pmatrix}$ is the standard almost complex structure on $A_c$. 
\end{claim}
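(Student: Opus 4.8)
The statement is a direct unwinding of the definition of $\overline{\partial}_{J_{f_\epsilon}}$ for a graphical map over the cylinder, using the very explicit form of the adapted almost complex structure $J_H$ with $H=f_\epsilon$. First I would fix the conventions: the domain is $[\tilde s_0,\tilde s_1]\times S^1$ with coordinates $(s,t)$ and complex structure $j_{\mathrm{dom}}$ sending $\partial_s\mapsto\partial_t$, $\partial_t\mapsto-\partial_s$; the target is $\R\times S^1_t\times A_c$ with coordinates $(s,t,y,\theta)$, and the map is $u(s,t)=(s,t,\eta(s,t))$ with $\eta=(y\circ u,\theta\circ u)$ a map into the annulus $A_c$. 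The first return coordinate $s$ on the symplectization is the identity in the first slot and the $S^1$-coordinate $t$ is the identity in the second slot, so the only ``interesting'' part of $u$ is $\eta$.

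The key computation is to expand $\overline{\partial}_{J}u=\tfrac12\big(du+J\circ du\circ j_{\mathrm{dom}}\big)=0$ in the given frame. From Equation~\eqref{Hamilton equation}, $R_{f_\epsilon}=\partial_t+X_{f_\epsilon}$ with $i_{X_{f_\epsilon}}(dy\wedge d\theta)=df_\epsilon$, which gives $X_{f_\epsilon}=-\partial_\theta f_\epsilon\,\partial_y+\partial_y f_\epsilon\,\partial_\theta$, i.e. $X_{f_\epsilon}=j_0\nabla f_\epsilon$ under the identification of $A_c$ with its tangent spaces via the standard metric and $j_0=\left(\begin{smallmatrix}0&-1\\1&0\end{smallmatrix}\right)$. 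Now apply $du$ to $\partial_s$ and $\partial_t$: $du(\partial_s)=\partial_s+\partial_s\eta$ (the second summand landing in $TA_c$), and $du(\partial_t)=\partial_t+\partial_t\eta$. Using the defining properties of $J_H$ ($\partial_s\mapsto R_{f_\epsilon}$, $\partial_y\mapsto\partial_\theta$, $\partial_\theta\mapsto-\partial_y$, which is exactly $j_0$ on the $A_c$-factor), one computes $J\,du(\partial_t)=J\partial_t+J\partial_t\eta$. Here $J\partial_t=J(R_{f_\epsilon}-X_{f_\epsilon})=-\partial_s-j_0 X_{f_\epsilon}$ since $J R_{f_\epsilon}=-\partial_s$ and $J$ restricted to the $A_c$-directions is $j_0$; and $J\partial_t\eta=j_0\partial_t\eta$. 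The holomorphic curve equation $du(\partial_s)+J\,du(\partial_t)=0$ (which is $\overline{\partial}_Ju=0$ paired against $\partial_s$, using $j_{\mathrm{dom}}\partial_s=\partial_t$) then reads, in the $\R$-slot, $1-1=0$ automatically; in the $S^1_t$-slot it reads $0=0$ automatically; and in the $A_c$-slot it reads
\[
\partial_s\eta + j_0\partial_t\eta - j_0 X_{f_\epsilon}(\eta)=0.
\]
Finally substitute $j_0 X_{f_\epsilon}=j_0(j_0\nabla f_\epsilon)=-\nabla f_\epsilon$ (since $j_0^2=-\mathrm{Id}$), yielding precisely $\mathcal{D}_\epsilon\eta=\partial_s\eta+j_0\partial_t\eta-\nabla f_\epsilon(\eta)=0$. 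I would also note the converse direction is immediate: any $\eta$ solving \eqref{eqn: rewriting d bar equation} produces a map $u(s,t)=(s,t,\eta(s,t))$ satisfying $\overline{\partial}_{J_{f_\epsilon}}u=0$ by reading the above identities backwards, and conversely any $J_{f_\epsilon}$-holomorphic map whose projection to $\R\times S^1_t$ is the ``identity'' (i.e. of the stated graphical form over the cylinder with $s$- and $t$-coordinates matching) must have its $A_c$-component solve $\mathcal{D}_\epsilon\eta=0$.

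\textbf{Main obstacle.} There is no genuine analytic difficulty here; the only thing requiring care is bookkeeping: the reordering of coordinates from $(y,\theta,t)$ to $(t,y,\theta)$ flagged in the footnote, the sign of $X_{f_\epsilon}$, and the convention for $j_0$ versus the complex structure on the domain, all of which must be tracked so that the two minus signs ($j_0 X_{f_\epsilon}=-\nabla f_\epsilon$ and $j_0^2=-\mathrm{Id}$) combine correctly. I would double-check the orientation remark in the footnote (that swapping $t$ to the front does not change the orientation of $M$ or $A_c$, because it is a cyclic-type relabeling among an even number of transpositions relative to the fixed orientations) so that $J_H$ really is the adapted structure in the new coordinates, and then the claim follows by the elementary linear-algebra expansion above.
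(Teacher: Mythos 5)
Your approach is the paper's: expand $\overline\bdry_{J_{f_\epsilon}}u = \bdry_s u + J_{f_\epsilon}\bdry_t u$ for the graphical map $(s,t)\mapsto (s,t,\eta(s,t))$, observe that the $\R$- and $S^1_t$-slots vanish identically, and read off the $A_c$-component. However, you have a sign error in your computation of $X_{f_\epsilon}$ that is then cancelled by a second, compensating sign error at the final substitution, so you land on the correct equation by accident rather than by a clean chain of identities.

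Concretely, from $i_{X_H}(dy\wedge d\theta)=dH$ and $i_{\partial_y}(dy\wedge d\theta)=d\theta$, $i_{\partial_\theta}(dy\wedge d\theta)=-dy$, one finds $X_H = H_\theta\,\partial_y - H_y\,\partial_\theta = -j_0\nabla H$, not $j_0\nabla H$. (Your proposed $X_{f_\epsilon}=-\partial_\theta f_\epsilon\,\partial_y+\partial_y f_\epsilon\,\partial_\theta$ satisfies $i_X(dy\wedge d\theta)=-df_\epsilon$.) It follows that $j_0 X_{f_\epsilon}=\nabla f_\epsilon$, which is the identity the paper uses, and not $-\nabla f_\epsilon$. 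Your intermediate $A_c$-slot equation $\bdry_s\eta + j_0\bdry_t\eta - j_0 X_{f_\epsilon}(\eta)=0$ is correct, but substituting your stated $j_0X_{f_\epsilon}=-\nabla f_\epsilon$ into it would give $\bdry_s\eta + j_0\bdry_t\eta + \nabla f_\epsilon(\eta)=0$ with a plus sign; you wrote a minus, which is the second error masking the first. Correct the sign of $X_{f_\epsilon}$, substitute $j_0X_{f_\epsilon}=\nabla f_\epsilon$, and the derivation is clean and matches the paper's.
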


\begin{proof} 
We apply $\overline\bdry_{J_{f_\epsilon}} =\bdry_s + J_{f_\epsilon} \bdry_t$ to $(s,t,\eta(s,t))$ to obtain
\begin{equation}\label{eqn: calculation of d-bar equation for eta}
\begin{pmatrix}
 1 \\ 0 \\ \tfrac{\bdry\eta}{\bdry s}
 \end{pmatrix}
 + J_{f_\epsilon}  
 \begin{pmatrix}
 0 \\ 1 \\ \tfrac{\bdry\eta}{\bdry t}
 \end{pmatrix}
=
\begin{pmatrix}
 1 \\ 0 \\ \tfrac{\bdry\eta}{\bdry s}
 \end{pmatrix}
+ 
\begin{pmatrix}
- 1 \\ 0 \\ j_0 \tfrac{\bdry\eta}{\bdry t} - \nabla f_\epsilon(\eta)
\end{pmatrix}
.
\end{equation}
This is because $J_{f_\epsilon}(\partial_t)= - \partial_s- j_0 X_{f_\epsilon}$ and $j_0 X_{f_\epsilon}= \nabla f_\epsilon$ (recall the sign in Equation \eqref{Hamilton equation}).
%%% DO NOT ERASE intermediate step
%% $$\bdry_s\mapsto \bdry_t + X_{f_\epsilon},\quad \bdry_t+X_{f_\epsilon}\mapsto -\bdry_s,\quad X_{f_\epsilon}\mapsto j_0X_{f_\epsilon}.$$  
Hence 
$$J_{f_\epsilon} \begin{pmatrix}
0 \\ 1 \\ 0
\end{pmatrix}
=
\begin{pmatrix}
-1\\ 0 \\ - \nabla f_\epsilon
\end{pmatrix}.$$
\vskip-.25in
\end{proof}

The claim holds also for $\epsilon=0$: the equation $\overline\bdry_{J_{f_\epsilon}} u=0$
for a map $u(s,t)=(s,t,\eta(s,t))$ as above is equivalent to ${\mathcal D}_0 \eta =0$, where
\[ {\mathcal D}_0 \eta = {\bdry\eta\over \bdry s} + j_0{\bdry\eta\over\bdry t} - \nabla f(\eta). \]

\begin{rmk}
To treat the case $N>1$ we need to consider maps $u \colon [\tilde s_0,\tilde s_1]\times S^1\to \R\times S^1_t\times A_c$ which wind $k$ times around $S^1_t$ for $k \le N$. In that case we should write $u(s,t)= (ks, kt, \eta(s,t))$, but all estimates on $\eta$ remain unchanged.
\end{rmk}

The following easy consequence of Claim \ref{leftovers} provides the link between gradient trajectories and holomorphic curves.  

\begin{lemma}\label{from wendl}
	Every gradient trajectory $\mathcal{T}$ of $f_\epsilon$ (here we are allowing $\epsilon=0$ and $f_0=f$) admits a unique lift to a simply-covered $J_{f_\epsilon}$-holomorphic cylinder $u_{\mathcal{T}}$ whose projection to $A_c$ is $\mathcal{T}$ modulo repara\-me\-trization of the domain and $\R$-translations of $u_{\mathcal{T}}$.
\end{lemma}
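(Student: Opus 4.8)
The statement to prove is Lemma~\ref{from wendl}: every gradient trajectory $\mathcal{T}$ of $f_\epsilon$ (allowing $\epsilon=0$) lifts uniquely to a simply-covered $J_{f_\epsilon}$-holomorphic cylinder $u_{\mathcal T}$ projecting to $\mathcal{T}$, modulo reparametrization and $\R$-translation. The point of departure is Claim~\ref{leftovers}: a map $u(s,t)=(s,t,\eta(s,t))$ into $\R\times S^1_t\times A_c$ is $J_{f_\epsilon}$-holomorphic if and only if $\mathcal{D}_\epsilon\eta = \partial_s\eta + j_0\partial_t\eta - \nabla f_\epsilon(\eta)=0$. So the plan is: first produce the lift from a gradient trajectory and check it is holomorphic; then prove that any holomorphic cylinder projecting to $\mathcal{T}$ must be of this form.

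\emph{Existence.} Given a gradient trajectory $\mathcal{T}$, i.e. a solution $\gamma\colon I\to A_c$ of $\dot\gamma(s) = \nabla f_\epsilon(\gamma(s))$ (reparametrized so that $I$ is all of $\R$, or a half-line/bounded interval according to the asymptotics, ending at critical points of $f_\epsilon|_{\{y=0\}}$, i.e. at $e$ and $h$), I would define $\eta(s,t) := \gamma(s)$, independent of $t$. Then $\partial_t\eta = 0$, so $\mathcal{D}_\epsilon\eta = \dot\gamma(s) - \nabla f_\epsilon(\gamma(s)) = 0$, and by Claim~\ref{leftovers} the map $u_{\mathcal T}(s,t)=(s,t,\gamma(s))$ is $J_{f_\epsilon}$-holomorphic on $\R\times S^1$. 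Its projection to $A_c$ is exactly $\gamma(\R)=\mathcal{T}$. It is simply covered because the $s$-coordinate of the source equals the $s$-coordinate of the $\R$-factor of the target, so the map is injective on the interior and cannot factor through a nontrivial cover. One should note that $u_{\mathcal T}$ has finite energy: the ends are cylindrical over the orbits $e$ or $h$ (where $\gamma$ converges to a critical point of $f_\epsilon$ on $\{y=0\}$), and for $\epsilon=0$ over covers of orbits in $T_{\mathcal N}$; in all cases the asymptotic action is finite. This handles the existence half, and is essentially the observation already used by Wendl (the reference ``from wendl'' in the lemma label), so it should be routine.

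\emph{Uniqueness.} Suppose $u\colon \R\times S^1\to \R\times S^1_t\times A_c$ is a simply-covered $J_{f_\epsilon}$-holomorphic cylinder whose projection to $A_c$ is $\mathcal{T}$ (modulo reparametrization). The $\R\times S^1$-symmetry of $J_{f_\epsilon}$ (it is invariant in the $s$- and $t$-directions on $\R\times S^1_t\times A_c$, by the construction of $J_H$) means the $\R\times S^1$-factor of $u$ is a holomorphic map $\R\times S^1\to\R\times S^1$, i.e. (after a biholomorphic reparametrization of the cylindrical source, absorbed into the ``modulo reparametrization'' clause) the identity up to translation; so we may normalize $u(s,t)=(s,t,\eta(s,t))$ as in Claim~\ref{leftovers}, with $\mathcal{D}_\epsilon\eta=0$ and $\eta(\R\times S^1)$ projecting onto $\mathcal{T}$. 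It remains to show $\eta$ is $t$-independent, hence equal to a reparametrization of $\gamma$. For this I would use the $S^1$-invariance of the equation: if $\eta(s,t)$ solves $\mathcal{D}_\epsilon\eta=0$, so does $\eta(s,t+t_0)$ for every $t_0\in S^1$; the difference $v_{t_0} := \eta(s,t+t_0)-\eta(s,t)$ satisfies a linearized Cauchy–Riemann type equation $\partial_s v + j_0\partial_t v - A(s,t)v = 0$ with $A$ a bounded zeroth-order term (from the mean-value form of $\nabla f_\epsilon(\eta(\cdot+t_0))-\nabla f_\epsilon(\eta(\cdot))$). Since $\eta$ and all its $S^1$-translates have the same image $\mathcal{T}$ (a one-dimensional set), and a nonzero solution of such a CR-type equation has only isolated zeros (Carleman similarity principle / unique continuation, e.g. \cite[Theorem 2.3.2]{McDS} as cited elsewhere in the paper), $v_{t_0}$ cannot vanish only on a discrete set while the images coincide on a $1$-dimensional set — forcing $v_{t_0}\equiv 0$ for all $t_0$, i.e. $\eta$ is independent of $t$. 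Then $\eta(s,\cdot)=\gamma(\sigma(s))$ for a reparametrization $\sigma$, and $\mathcal{D}_\epsilon$ forces $\sigma$ to be (up to translation) the identity, giving $u=u_{\mathcal T}$ up to $\R$-translation and reparametrization.

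\emph{Main obstacle.} The existence half and the normalization step are straightforward given Claim~\ref{leftovers} and the symmetries of $J_{f_\epsilon}$. The genuinely delicate point is the uniqueness argument: one must rule out holomorphic cylinders that project onto $\mathcal{T}$ but ``oscillate'' in the $t$-direction, and the cleanest route is the similarity-principle/positivity-of-intersection dichotomy applied to the difference of $S^1$-translates. One has to be a little careful that $v_{t_0}$ is genuinely a solution of a (possibly degenerate) linear CR equation with continuous coefficients so the similarity principle applies, and that ``image $=\mathcal{T}$'' really forbids isolated zeros — equivalently one can phrase it via the intersection of $u$ with the $S^1$-family of translates $u(\cdot,\cdot+t_0)$, all of which have the same image, so positivity of intersections in dimension four forces them to coincide. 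That is where I would concentrate the technical care.
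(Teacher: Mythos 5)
Your existence construction matches the paper: with a parametrization $\gamma$ of $\mathcal{T}$ satisfying $\dot\gamma = \nabla f_\epsilon(\gamma)$, the map $u_{\mathcal T}(s,t)=(s,t,\gamma(s))$ is $J_{f_\epsilon}$-holomorphic by Claim~\ref{leftovers}, simply covered, and projects to $\mathcal{T}$.

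The uniqueness argument has a genuine gap at the central step. After normalizing $u(s,t)=(s,t,\eta(s,t))$ you want $\partial_t\eta\equiv 0$. Your route is to apply the Carleman similarity principle to $v_{t_0}=\eta(\cdot,\cdot+t_0)-\eta(\cdot,\cdot)$ and argue that $v_{t_0}$ ``cannot vanish only on a discrete set while the images coincide on a $1$-dimensional set.'' But the fact that $\eta$ and $\eta(\cdot,\cdot+t_0)$ have the same set-theoretic image $\mathcal{T}$ does not produce a positive-dimensional locus on which the two maps agree pointwise --- a priori $v_{t_0}$ could be nowhere zero --- so the similarity principle is not engaged. The intersection-theoretic fallback also does not close the gap: $u(\cdot,\cdot+t_0)$ is a domain reparametrization of $u$ and so trivially has the same image as $u$, leaving nothing to conclude; while if you instead take the target translates $(s,t,z)\mapsto(s,t+t_0,z)$ of $u$, the assertion that ``all of them have the same image'' is precisely the $t$-independence of $\eta$ you are trying to prove, so the argument becomes circular.

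The paper's ``one can immediately check'' is a direct tangency/orthogonality computation that bypasses all of this. Since $\eta$ maps into the $1$-manifold $\mathcal{T}\subset A_c$, both $\partial_s\eta$ and $\partial_t\eta$ lie in the real line $T_\eta\mathcal{T}$ at each point, and so does $\nabla f_\epsilon(\eta)$ because $\mathcal{T}$ is a gradient trajectory. Then $\mathcal{D}_\epsilon\eta=0$ gives $j_0\partial_t\eta=\nabla f_\epsilon(\eta)-\partial_s\eta\in T_\eta\mathcal{T}$, yet $j_0\partial_t\eta$ is also orthogonal to $T_\eta\mathcal{T}$, since $j_0$ is a $90^\circ$ rotation for the flat metric on $A_c$. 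Hence $j_0\partial_t\eta=0$, i.e.\ $\partial_t\eta\equiv 0$, and $\partial_s\eta=\nabla f_\epsilon(\eta)$ parametrizes $\mathcal{T}$. This also tidies the normalization you left slightly hand-wavy: if you only normalize $\pi_{st}\circ u(s,t)=(ks,kt)$ for some integer $k\geq 1$, the same tangency argument (with $k\nabla f_\epsilon$ in place of $\nabla f_\epsilon$) still yields $\partial_t\eta=0$, after which $u(s,t)=(ks,kt,\eta(s))$ is manifestly a $k$-fold cover, so simple-coveredness forces $k=1$. You should replace the similarity-principle step by this elementary computation.
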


\begin{proof}
If $\eta \colon [\tilde s_0, \tilde s_1] \to A_c$ is a parametrization of ${\mathcal T}$ satisfying $\tfrac{d \eta}{ds}=\nabla f_\epsilon (\eta)$, then $u_{\mathcal T}(s,t):=(s,t, \eta(s))$ satisfies $\overline\bdry_{J_{f_\epsilon}}u_{\mathcal T}=0$ by Claim \ref{leftovers}. On the other hand, one can immediately check that a simply-covered map to $\R \times S^1 \times A_c$ that projects to ${\mathcal T}$ must be of the form $(s, t) \mapsto (s,t, \eta(s))$ for some $\eta$ up to reparametrizations and translations. 
\end{proof}

\subsection{Main result}\label{subsection: main result}

The main result of the appendix is the following:

\begin{thm}\label{thm: main theorem of appendix}
If $\mb_{J_f}$ is Morse-Bott regular, then for $a,b_0>0$ sufficiently small there exist:
\begin{itemize}
\item $J_f'$ that agrees with $J_f$ on $[-c,c] \times T^2$ and is arbitrarily close to $J_f$ on $M- ([-c,c]\times T^2)$, 
\item $\epsilon>0$ that is sufficiently small,  and
\item $J'_{f_\epsilon}$ that agrees with $J_{f_\epsilon}$ on $[-c,c]\times T^2$ and with $J_f'$ on $M-([-c,c]\times T^2)$,
\end{itemize}
such that $\mb_{J_f'}$ is Morse-Bott regular, $\mathcal{M}_{J'_{f_\epsilon}}$ is regular, and there is a bijection between $\mb_{J_f'}$ and $\mathcal{M}_{J'_{f_\epsilon}}$.
%\begin{equation}\label{eqn: mod two}
%\# (\mb_{J_f'}/\R)\equiv \# (\mathcal{M}_{J'_{f_\epsilon}}/\R) \op{ mod } 2.
%\end{equation} 
\end{thm}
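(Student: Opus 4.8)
The strategy is a standard but delicate gluing argument via pregluing and the contraction mapping (Newton–Picard) method, specialized so that the error term of the preglued curve is small enough to run the implicit function theorem with the nonstandard weights dictated by the Morse-Bott setting. First I would set up the analytic framework: for each $u_+ \in \mb_{J_f}$, which by Morse-Bott regularity is isolated modulo $\R$-translation and embedded, I would choose the perturbation data $c$, $a$, $b_0$ (in that order, subject to $0 < 4b_0 < a < c < 1$ and to $(\boldsymbol{\dagger}_0)$), then analyze the asymptotics of $u_+$ near its negative end at $\mathcal{N}$ using the eigenfunction decomposition of the asymptotic operator from Lemma~\ref{lemma: spectral}. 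The condition $(\boldsymbol{\dagger}_1)$ selects $a$ so that on $A_a$ the first nonconstant Fourier mode of the negative end dominates the higher modes; this is what lets us control the pregluing error on $[-a,a]\times T^2$. On that region the equation $\overline\bdry_{J_{f_\epsilon}} u = 0$ reduces, by Claim~\ref{leftovers}, to the gradient-flow-type equation $\mathcal{D}_\epsilon\eta = \partial_s\eta + j_0\partial_t\eta - \nabla f_\epsilon(\eta)=0$, and the relevant model solutions are the lifts $u_{\mathcal{T}}$ of gradient trajectories of $f_\epsilon$ from Lemma~\ref{from wendl}.

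Second, I would carry out the pregluing itself. Given $u_+ \in \mb_{J_f}$, its negative end limits to some orbit $\gamma_0$ in $\mathcal{N}$; augmenting by the upward gradient trajectory $\mathcal{T}_0^\epsilon$ of $f_\epsilon$ from $\gamma_0$ (or rather its perturbation) toward $e$ gives the $\op{ind}=1$ cascade $\tilde u$ that Theorem~\ref{thm: Morse-Bott perturbation of moduli spaces}(2)--(3) concerns. The preglued map $u_\#$ is obtained by splicing $u_+$, a long neck modeled on $u_{\mathcal{T}_0^\epsilon}$, and the cylinder $u_{\mathcal{T}_0^\epsilon}$ over the trajectory hitting $e$, using cutoff functions supported where $\phi(y)=1$ (i.e.\ inside $[-b_0,b_0]$, using (P3')). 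I would estimate $\|\overline\bdry_{J_{f_\epsilon}} u_\#\|$ in a weighted Sobolev norm: the error comes from the cutoff region and from the discrepancy between $f$ and $f_\epsilon$, and by (P4)/the $C^\infty$ convergence $f_\epsilon \to f$ together with the exponential decay of $u_+$'s asymptotics (with rate governed by the eigenvalue $\lambda_0=0$ and $\lambda_a=-a$ from Lemma~\ref{lemma: spectral}), this error is $O(\epsilon)$ times a controlled constant and can be made as small as desired by choosing $b$ small and then $\epsilon$ small compared to $b$, as in the parameter hierarchy $(\boldsymbol{\dagger}_2)$--$(\boldsymbol{\dagger}_3)$.

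Third, I need a uniformly bounded right inverse for the linearized operator $D_{u_\#}$ on the weighted spaces. This is where unconstrained automatic transversality (Theorem~\ref{automatic transversality}, via Lemma~\ref{index demistified}) and Morse-Bott regularity of $\mb_{J_f}$ combine: $D_{u_+}$ with the unconstrained weight at the negative end is surjective with one-dimensional cokernel-free kernel modulo $\R$, the linearized gradient flow $\mathcal{D}_\epsilon$ on the neck is invertible with the chosen exponential weights (direct Fourier computation, the constant mode being killed by passing to the quotient where we glue in the trajectory), and a standard gluing-of-right-inverses argument produces $Q_\#$ with $\|Q_\#\| \le C$ uniformly in $\epsilon$ (and $b$). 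Then the contraction mapping theorem applied to $\Psi(\xi) = -Q_\#\big(\overline\bdry(u_\#) + N_{u_\#}(\xi)\big)$, where $N_{u_\#}$ is the quadratic remainder, produces for each small $\epsilon$ a unique small solution $\xi$, hence a $J'_{f_\epsilon}$-holomorphic map $u_\epsilon$ near $u_\#$; after perturbing $J_f$ to $J_f'$ (and correspondingly $J_{f_\epsilon}$ to $J'_{f_\epsilon}$) outside $[-c,c]\times T^2$ to achieve regularity of both $\mb_{J_f'}$ and $\mathcal{M}_{J'_{f_\epsilon}}$, this $u_\epsilon$ has $I = \op{ind} = 1$, is isolated, and lies in $\mathcal{M}_{J'_{f_\epsilon}}$. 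Finally, to show the map $u_+ \mapsto u_\epsilon$ is a bijection I would prove (i) injectivity, since distinct isolated curves in $\mb_{J_f'}$ preglue to $u_\#$'s in disjoint neighborhoods and the contraction solution is unique in each, and (ii) surjectivity, via a compactness/``no other sources'' argument: any $u_\epsilon \in \mathcal{M}_{J'_{f_\epsilon}}$ converges as $\epsilon \to 0$ to a Morse-Bott building in $\mb_{J_f'}$ by Theorem~\ref{thm: Morse-Bott perturbation of moduli spaces}(1); niceness (automatic here since $\partial M$ plays no role and we are in the symplectization with one negative Morse-Bott torus, cf.\ Lemma~\ref{lemma: counting Morse-Bott buildings}) forces this building to be exactly the cascade $\tilde u$ built from a unique $u_+$, and then $u_\epsilon$ must coincide with the glued curve by the uniqueness in the contraction mapping theorem applied at that $\epsilon$.

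\textbf{Main obstacle.} The crux is obtaining the \emph{uniform} bound on the right inverse $Q_\#$ as $\epsilon \to 0$ together with a pregluing error that beats it. The subtlety, which is exactly why \cite{Bo2} is acknowledged to be incomplete, is that the gluing neck is not a standard long cylinder over a nondegenerate orbit but a region where the linearized operator degenerates toward $\mathcal{D}_0$ (the kernel of the asymptotic operator is nontrivial, eigenvalue $\lambda_0=0$), so the usual exponential-weight estimates do not immediately give a bound independent of the neck length / of $\epsilon$. The resolution is the specific choice of perturbation profile $f_\epsilon$ with the carefully engineered Morse function $\overline{g}_{\mathcal{N}}$ satisfying (P2) — in particular the region where $\overline g_{\mathcal N}' \equiv 1$ — and the parameter hierarchy $a \gg b_0 \gg b \gg \epsilon$: on $A_a$ the linearization is genuinely close to that of the $f$-problem (where automatic transversality applies), while on the smaller region $[-b,b]$ where the actual perturbation lives the operator is an $O(\epsilon)$ perturbation of an invertible model, and the ``first Fourier mode dominates'' condition $(\boldsymbol{\dagger}_1)$ glues these two regimes. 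Verifying these estimates with the correct weights — and checking that the preglued curve I construct indeed has a strictly smaller error term than the naive one in \cite{Bo2} — is the technical heart of the appendix.
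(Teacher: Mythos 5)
Your overall strategy---preglue the Morse-Bott cascade and deform to a genuine holomorphic curve by contraction, in weighted Morrey/Sobolev spaces with a parameter hierarchy---is the right general shape, but there are two places where the plan as written would not go through, and both correspond to the points where the paper's proof makes its essential choices.

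\textbf{The missing gluing parameter and obstruction.}
You propose a direct contraction $\xi \mapsto -Q_\#(\overline\bdry u_\# + N(\xi))$ after producing a right inverse $Q_\#$ ``by a standard gluing-of-right-inverses argument.'' This is essentially the scheme of \cite{Bo2} that the appendix explicitly declines to repair. The difficulty is the index-one deformation along the Morse-Bott family: the section $\tilde\partial_\theta$ (with $D_+(\tilde\partial_\theta) = \nu$, a bump supported where the cutoff varies) must be part of the domain of $D_+^\delta$ for that operator to be surjective, but the linear error term $\beta'(s)\tilde\partial_\theta$ produced by the pregluing \emph{cannot} be made small in the weighted norm by shrinking $b$ and $\epsilon$ (this is exactly the point of the remark after Lemma~\ref{lemma: equivalent 2}). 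So the contraction constant fails to be $<1$ if you keep $\tilde\partial_\theta$ in the space where you iterate. The paper instead (i) projects the equation via $\Pi$ onto the complement of $\R\nu$, (ii) introduces a one-parameter family of pregluings $u_*^{\epsilon,\aaa}$ indexed by a shift $\aaa$ along $\mathcal{N}$ (built into Equation~\eqref{equation for eta star}), (iii) runs the contraction for each fixed $(\epsilon,\aaa)$ to get a well-defined obstruction function $\mathfrak{p}_\epsilon(\aaa)$ with $\overline\bdry u^{\epsilon,\aaa} = \mathfrak{p}_\epsilon(\aaa)\nu$, and (iv) finally solves $\mathfrak{p}_\epsilon(\aaa)=0$ by observing $\mathfrak{p}_\epsilon(\pm\aaa_0)$ have opposite signs. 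Without the extra parameter and the projection, the iteration has no mechanism to kill the $\nu$-component. Relatedly, your naive splicing (cut off $u_+$, insert a neck modeled on $u_{\mathcal{T}_0^\epsilon}$) has a pregluing error that is too large; the paper's pregluing instead \emph{superposes} $\eta_+$ and $\eta_-^\epsilon$ additively on the neck (legitimate because ${\mathcal D}_\epsilon$ is affine there, cf.\ Lemma~\ref{lemma: where d bar is zero}), so that the error is supported only at the two transition regions and is estimated by $C(e^{(\delta-\lambda)T_2(\epsilon)} + \epsilon T_1 e^{\delta T_1})$ rather than the coarser $O(\epsilon)$ you describe.

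\textbf{Uniqueness/surjectivity.}
Your surjectivity argument invokes Morse-Bott compactness (legitimate---that is Theorem~\ref{thm: Morse-Bott perturbation of moduli spaces}(1), independent of the appendix) and then asserts that ``$u_\epsilon$ must coincide with the glued curve by the uniqueness in the contraction mapping theorem.'' This step does not follow. Contraction-mapping uniqueness holds only among candidates of the form $\exp_{u_*^{\epsilon,\aaa}}(\beta_+\psi_+ + \beta_-\psi_-)$ with $(\psi_+,\psi_-)$ in the small ball ${\mathcal B}_+\times{\mathcal B}_-$; an arbitrary $u_\epsilon\in\me$ that is $\kappa$-close to breaking is not a priori presented in this form, and producing such a presentation with quantitatively small $\psi_\pm$ is precisely the hard part. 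The paper's Theorem~\ref{thm: 2} is a separate argument: assuming two non-equivalent such curves exist along $\epsilon_i\to 0$, one Fourier-expands their difference on the neck, damps it appropriately (two cases depending on which eigenmodes dominate at $-T_1$), inverts the resulting error, and extracts a nonzero non-translation element of $\ker D_+^\delta$ or $\ker D_-^{\epsilon_i,0,-\delta}$ --- contradicting Morse-Bott regularity of $\mb$ or Lemma~\ref{bounded inverses}. A complete proof needs something of this nature; the contraction principle alone is not enough.
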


\begin{rmk} 
In the case where $\mb_{J_f}$ satisfies (C0), (C1), and the unconstrained end is replaced by a constrained end in (C2) and (C3), i.e., the negative end limits to a hyperbolic orbit after perturbation, we can simply glue in a trivial cylinder at the said end, since having constrained index means not including $\tilde\partial_\theta$ in Equation~\eqref{eqn for D plus} and Morse-Bott gluing then reduces to standard gluing.
\end{rmk}

\s\n
{\em Brief discussion on regularity.} We will not prove that for all $\epsilon>0$ sufficiently small $\me$ is regular if $\mb$ is Morse-Bott regular, although that is true.  It suffices for our purposes to know that ``for some $\epsilon>0$ small and some $J_f'$ and $J_{f_\epsilon}'$, there is a bijection between $\mb_{J_f'}$ which is Morse-Bott regular and $\mathcal{M}_{J_{f_\epsilon}'}$ which is regular."

We will explain the existence of $J_f'$ and $J'_{f_\epsilon}$ such that $\mb_{J_f'}$ is Morse-Bott regular and $\mathcal{M}_{J'_{f_\epsilon}}$ is regular: Since $J_f$ is Morse-Bott regular for $\mb_{J_f}$, the same holds for all $J_f'$ that are sufficiently close to $J_f$ on $M- ([-c,c]\times T^2)$ and agree with $J_f$ on $[-c,c] \times T^2$. Next, we perturb $J_{f_\epsilon}$ to $J'_{f_\epsilon}$ on $M- ([-c,c] \times T^2)$ so that $\mathcal{M}_{J'_{f_\epsilon}}$ is regular. This is possible because the only Reeb orbits of $(\alpha, \omega_{J_\epsilon})$ inside $[-c,c] \times T^2$ come from the perturbation of the Morse-Bott torus, and therefore every holomorphic curve in $\mathcal{M}_{J'_{f_\epsilon}}$ intersects $M- ([-c,c] \times T^2)$, except for the two curves corresponding to the two flow lines on the Morse-Bott family, whose regularity can be easily checked by hand.
%The corresponding $J'_f$ is a perturbation of $J_f$ on $M- ([-c,c] \times T^2)$ and hence $\mb_{J_f'}$ is Morse-Bott regular if the perturbation is small enough.

Let us fix an $\R$-invariant Riemannian metric on $\R\times M$ which agrees with the flat metric $ds^2+dt^2+ dy^2+d\theta^2$ on $\R\times [-1,1]\times T^2$. All distances will be measured with respect to this metric. 

\begin{defn}
	Let $\kappa>0$. A curve $u:\dot F\to \R\times M$ in $\me$ is {\em $\kappa$-close to breaking into $u_+: \dot F\to \R\times M$ in $\mb$ and $u_{\mathcal{T}^\epsilon}: (-\infty,0]\times S^1\to \R\times M$}, where $\mathcal{T}^\epsilon$ is an upward gradient trajectory of $f_\epsilon$, if:
	\begin{itemize}
		\item[(i)] on the complement of a negative cylindrical end $(-\infty, 0]\times S^1$ of $\dot F$, the maps $u$ and $u_+^*$ (obtained from $u_+$ by a suitable translation in the domain if $\dot F$ is a cylinder and a suitable $\R$-translation in the target) are a distance $\leq \kappa$ apart;
		\item[(ii)] on $(-\infty, 0]\times S^1$, the maps $u$ and $u_{\mathcal{T}^\epsilon}^*$ (obtained from $u_{\mathcal{T}^\epsilon}$ by a suitable $\R$-translation in the target) are a distance $\leq \kappa$ apart.
	\end{itemize}
\end{defn}

Let $u_+:(\dot F,j)\to \R\times M$ be an element of $\mb$.  In what follows we may assume without loss of generality that
\be
\item[(C4)] $u_+$ limits to the Morse-Bott orbit ${\frak o}$ over the point $(0,0)$ from the positive $y$-direction at the negative end.
\ee
This is justified as follows: The quotient $\mb/\R$ by $\R$-translations in the target is a finite set by (C3). Let $\mathcal{E}: \mb/\R\to \mathcal{N}$ be the map that sends $[u]$ to the orbit of $\mathcal{N}$ that $u$ limits to at the negative end. Since the image of ${\mathcal E}$ is a finite set, we can parametrize ${\mathcal N} \cong \R/\Z$ such that ${\mathcal E}([u]) \in [-\frac 16, \frac 16]$ for all 
$u \in \mb$.
Since our proof works in the same way as long as $\mathcal{E}([u])$ is in the interior of the interval $\{\theta\in \R/\Z~|~ \overline{g}'_{\mathcal{N}}(\theta)=1\}$ (refer to (P2) for the definition of $\overline{g}_{\mathcal{N}}$), we normalize $\mathcal{E}([u])=0$. Moreover, approaching $\theta=0$ from the positive $y$-direction and the negative $y$-direction can be treated in the same way.

\begin{noti}
Let $\mathcal{T}^\epsilon_{0}$ denote the (upward) gradient trajectory of $f_\epsilon$
that goes from $(0,-{1\over 4})$ to $(0,0)$.
\end{noti}

Theorem~\ref{thm: main theorem of appendix} is an immediate consequence of the following theorems, which are proved in Sections~\ref{subsection: gluing 1} and \ref{subsection: gluing 2}, together with the above discussion on regularity:

\begin{thm} \label{thm: 1}
Suppose $a,b_0 >0$ are small. If $\mb$ is Morse-Bott regular, then for all $\epsilon>0$ sufficiently small there exists $u\in \me$ that is $\kappa$-close to breaking into $u_+$ and $u_{\mathcal{T}^\epsilon_0}$. 
\end{thm}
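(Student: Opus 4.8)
The strategy is the standard gluing paradigm --- pregluing, then a Newton--Picard iteration --- but adapted to the Morse-Bott situation where the negative end of $u_+$ limits to a whole family $\mathcal{N}$ of orbits rather than an isolated one. Fix $u_+ \in \mb$ satisfying (C0)--(C4), so that $u_+$ limits to the Morse-Bott orbit $\mathfrak{o}$ over $(0,0)$ from the positive $y$-direction. First I would write $u_+$ explicitly near its negative end using Claim~\ref{leftovers}: on a cylindrical end $(-\infty,0]\times S^1$ the map is $u_+(s,t)=(s,t,\eta_+(s,t))$ with $\mathcal{D}_0\eta_+=0$, and the asymptotic analysis of \cite{HWZ2} together with Lemma~\ref{lemma: spectral} gives the exponential decay $\eta_+(s,t)=(c_+ e^{s}+O(e^{(1+\delta')s}),O(e^{2\pi s}))$ in the $(y,\theta)$ coordinates --- here the slow $y$-decay comes from the eigenvalue $\lambda_0=0$ being replaced, after perturbation, by something governed by the Morse function, and the fast $\theta$-decay comes from the nonzero eigenvalues $\lambda_{\pm n}$. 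The constant $a$ is chosen (this is condition $(\boldsymbol{\dagger}_1)$, referenced in the excerpt) so that on the region $\{|y|\ge a\}$ the leading term $c_+ e^s$ genuinely dominates the higher-order corrections.

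Next comes the pregluing. On the part of the domain where $u_+$ is far from $T_{\mathcal N}$ I keep $u_+$ essentially unchanged; on the cylindrical negative end I interpolate, using a cutoff depending on the parameter $b$, between $\eta_+$ and a solution $\eta_{\mathcal{T}^\epsilon_0}(s)$ of the gradient flow equation $\tfrac{d\eta}{ds}=\nabla f_\epsilon(\eta)$ for the trajectory $\mathcal{T}^\epsilon_0$ running from $(0,-\tfrac14)$ up to $(0,0)$, lifted to a holomorphic cylinder via Lemma~\ref{from wendl}. The key point, and the reason the authors say they ``carry out a different pregluing with a smaller error term'' than \cite{Bo2}, is to do the interpolation in the region where $f_\epsilon=\tfrac12 y^2 + \epsilon\phi(y)\overline{g}_{\mathcal N}(\theta)$ with $\phi\equiv 1$ and $\overline{g}'_{\mathcal N}\equiv 1$ (the box $B$ of Figure~\ref{figure: morse-function}), so that the perturbation term is linear in $\theta$ there and the $\overline\partial$-error of the preglued map $\tilde u$ is controlled: I would aim for $\|\overline\partial_{J_{f_\epsilon}}\tilde u\|$ bounded by a small multiple of $\epsilon$ (and a suitable power of $e^{-1/b}$-type gluing-neck factor), in a weighted Sobolev norm with exponential weights chosen compatibly with Section~\ref{subsub: automatic transversality} --- negative weight at the (now constrained, post-perturbation) elliptic end $e$.

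The linear analysis is then the crux. I need uniform invertibility (a right inverse with $\epsilon$-independent bound) of the linearized operator $D_{\tilde u}$ on the preglued curve, in the weighted norms. The model operator near the neck is the $\theta$-Fourier-decomposed version of $\mathcal{D}_\epsilon$ from Equation~\eqref{eqn: rewriting d bar equation}: the nonzero Fourier modes are invertible with uniform bounds because of the spectral gap in Lemma~\ref{lemma: spectral} (eigenvalues $\lambda_{\pm n}$ with $\lambda(\lambda+a)=n^2$), while the zero mode --- the $\tilde\partial_\theta$ direction singled out in the unconstrained index condition (C3) --- is exactly the direction along which the Morse-Bott family $\mathcal N$ moves, and its invertibility is purchased by the Morse-Bott regularity hypothesis on $\mb$ glued to the nondegeneracy (Hessian $\ge 0$, strictly in the relevant direction, cf.\ the computation in Lemma~\ref{index demistified}) of $\overline{g}_{\mathcal N}$ at its minimum. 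Assembling a right inverse for $D_{\tilde u}$ from the right inverse on the $\mb$ side and the explicit inverse on the trivial-cylinder/gradient-trajectory side, via the usual cutoff-and-correct argument, gives the uniform bound provided the cutoffs are placed in the region $|y|\sim a$ where both pieces are valid. Then the Picard iteration (with quadratic-estimate constant uniform because the metric is flat on $[-1,1]\times T^2$) converges for $\epsilon$ small enough compared to $b$, producing a genuine $J_{f_\epsilon}$-holomorphic $u\in \me$ which, by construction of the pregluing and the smallness of the correction, is $\kappa$-close to breaking into $u_+$ and $u_{\mathcal{T}^\epsilon_0}$. The main obstacle I anticipate is precisely establishing the $\epsilon$-uniform lower bound on $D_{\tilde u}$ on the zero Fourier mode: one must see clearly that the ``unconstrained'' Fredholm setup makes the cokernel of the gradient-flow linearization match up with the cokernel contribution from $\mb$ being surjective, so that no new cokernel is created in the neck --- and this requires the careful choice of exponential weights and the fact (from $(\boldsymbol{\dagger}_1)$, via the normalization $\mathcal{E}([u_+])=0$ and $\overline g_{\mathcal N}'\equiv 1$ near $\theta=0$) that the trajectory lands in the flat part of the Morse function.
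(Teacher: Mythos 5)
Your pregluing strategy (interpolate in the flat box $B$, weighted Morrey-type spaces, Picard iteration) matches the paper's, and you correctly flag the near-zero Fourier mode along $\mathcal{N}$ as the crux, but the fix you hint at does not close the argument. You want an $\epsilon$-uniformly bounded right inverse for the constrained linearized operator $D_{\tilde u}$ on the preglued curve. No such bound exists: the restriction $\overline{D}_+^\delta$ of the linearized operator at $u_+$ (with the unconstrained Morse-Bott direction $\tilde\partial_\theta$ removed) misses exactly the one-dimensional space spanned by $\nu = D_+(\tilde\partial_\theta)=-\beta'(-s-T_0)\,\partial_\theta$, and when you try to assemble a right inverse on the glued curve the only way to produce a preimage of $\nu$ is to propagate the Morse-Bott generator $\tilde\partial_\theta$ through the neck and correct it on the gradient-trajectory side --- and the cost of that correction diverges as $\epsilon\to 0$, because on the gradient trajectory the $\theta$-direction moves only at speed $\epsilon$. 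The pregluing error is $O(\epsilon\, T_1\, e^{\delta T_1})$ (Lemma~\ref{lemma: estimate for D eta star}), so its product with the blowing-up right inverse fails to be small, and the quadratic estimate needed for Picard does not close. No choice of exponential weight or normalization of $\overline{g}_{\mathcal N}$ repairs this; it is a Fredholm obstruction.

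The paper's resolution is structurally different: it does not invert $\nu$. It introduces a one-dimensional family of pregluings $u_*^{\epsilon,\aaa}$, $\aaa\in[-\aaa_0,\aaa_0]$, which shifts the piece of $u_+$ near the neck by $\aaa$ in $\theta$ and shifts the gradient trajectory by $\aaa/\epsilon$ in $s$ (equivalently by $\aaa$ along $\mathcal{N}$ in the flat box). For each fixed $\aaa$, the coupled fixed-point problem (Proposition~\ref{wildfire}) is solved using only the genuinely uniformly bounded inverses $(\overline{D}_+^\delta)^{-1}\Pi$ (where $\Pi$ projects out $\nu$) and $(D_-^{\epsilon,\aaa,\delta})^{-1}$ (Lemma~\ref{bounded inverses}), producing a curve with $\overline\partial_{J_{f_\epsilon}} u^{\epsilon,\aaa} = \mathfrak{p}_\epsilon(\aaa)\,\nu$ for a continuous scalar $\mathfrak{p}_\epsilon$. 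The estimates show $|\mathfrak{p}_\epsilon(\aaa)-\aaa|$ is uniformly small, so $\mathfrak{p}_\epsilon$ changes sign over $[-\aaa_0,\aaa_0]$ and has a zero $\aaa^*$ by the intermediate value theorem; $u^{\epsilon,\aaa^*}\in\me$ is the desired curve. The missing idea in your proposal is precisely this extra gluing parameter and the reduction of the problem to a root-finding step in the direction of the obstruction $\nu$, rather than a uniform inversion of the full operator.
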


\begin{thm} \label{thm: 2}
Suppose $a,b_0 >0$ are small. If $\mb$ is Morse-Bott regular and $\me$ is regular, then there exists $\kappa>0$ such that for all $\epsilon>0$ sufficiently small and $u, v\in \me$ that are $\kappa$-close to breaking into $u_+$ and $u_{\mathcal{T}^\epsilon_0}$, $u=v$ modulo $\R$-translation in the target and domain translation if the domain is $\R\times S^1$.
\end{thm}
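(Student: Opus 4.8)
\textbf{Proof proposal for Theorem~\ref{thm: 2} (uniqueness in Morse-Bott gluing).}

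The plan is to run the standard contraction-mapping/Newton-iteration argument ``in reverse'': having already produced (in Theorem~\ref{thm: 1}) a glued curve near each pair $(u_+, u_{\mathcal{T}^\epsilon_0})$, I want to show that for $\kappa$ small there is no second solution of $\overline\bdry_{J_{f_\epsilon}} = 0$ within the $\kappa$-ball around the preglued configuration. First I would fix the Morse-Bott regular $u_+ \in \mb$ together with its preglued approximate solution $u_\epsilon^{\mathrm{pre}}$ (depending on the neck parameters $b,\epsilon$), and set up the weighted Sobolev spaces on the glued domain $\dot F \# ((-\infty,0]\times S^1)$ exactly as in the existence proof: exponential weights adapted to the asymptotic operator of the end at $\mathfrak{o}$ on the $u_+$-side, and weights adapted to the eigenvalue $0$ mode (the flow-line direction $\tilde\partial_\theta$) being the only non-decaying one near the Morse-Bott orbit. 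The key analytic inputs are (a) the uniform invertibility, with $\epsilon$-independent bound, of the linearized operator $D_{u_\epsilon^{\mathrm{pre}}}$ on these weighted spaces — this is precisely what Morse-Bott regularity of $\mb$ plus regularity of the flow-line cylinder $u_{\mathcal{T}^\epsilon_0}$ buys us, after the pregluing estimate controls how the two linearized operators are glued; and (b) a quadratic estimate $\|Q(\xi)-Q(\xi')\| \le C(\|\xi\|+\|\xi'\|)\|\xi-\xi'\|$ for the nonlinear part $Q$ of $\overline\bdry_{J_{f_\epsilon}}$ expanded around $u_\epsilon^{\mathrm{pre}}$, with $C$ uniform in $\epsilon$, which follows from the explicit flat form of $J_{f_\epsilon}$ near the Morse-Bott torus (Claim~\ref{leftovers}, Equation~\eqref{eqn: rewriting d bar equation}) together with standard curvature bounds elsewhere.

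The main steps, in order: (1) Given $u,v \in \me$ both $\kappa$-close to breaking into $u_+$ and $u_{\mathcal{T}^\epsilon_0}$, fix $\R$-translations and (if the domain is a cylinder) a domain translation so that each is genuinely $\kappa$-close to the same preglued map $u_\epsilon^{\mathrm{pre}}$; this uses the fact that $\mb/\R$ is a finite set (C3) and the normalization $\mathcal{E}([u_+])=0$ from (C4), so there is a unique preglued model to compare against, and translations are pinned down up to an explicitly small ambiguity that is absorbed into $\kappa$. (2) Write $u = \exp_{u_\epsilon^{\mathrm{pre}}}(\xi)$, $v = \exp_{u_\epsilon^{\mathrm{pre}}}(\xi')$ with $\xi,\xi'$ in the weighted space; the $\kappa$-closeness plus elliptic bootstrapping (using $C^0$-control to upgrade to $W^{1,p}$ or $C^1$ control via interior estimates for $J$-holomorphic curves, as in the existence argument) gives $\|\xi\|, \|\xi'\| \le c_1\kappa$. (3) Subtract the two equations $\overline\bdry u = \overline\bdry v = 0$: this yields $D_{u_\epsilon^{\mathrm{pre}}}(\xi-\xi') = Q(\xi') - Q(\xi)$, hence by invertibility (a) and the quadratic estimate (b), $\|\xi-\xi'\| \le C' \cdot C (\|\xi\|+\|\xi'\|)\|\xi-\xi'\| \le 2C'Cc_1\kappa\,\|\xi-\xi'\|$. (4) Choose $\kappa$ small enough (uniformly in $\epsilon$, which is legitimate because all constants are $\epsilon$-uniform) that $2C'Cc_1\kappa < 1$; this forces $\xi = \xi'$, i.e.\ $u=v$ modulo the translations fixed in step (1). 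Note that the order of quantifiers in the statement — ``there exists $\kappa>0$ such that for all $\epsilon$ sufficiently small'' — matches exactly the structure produced here.

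The step I expect to be the main obstacle is step (1), together with the $\epsilon$-uniformity claimed in (a): one must verify that \emph{any} element of $\me$ which is merely $\kappa$-close (in the $C^0$ sense of the definition) to the broken configuration actually lies in the image of the exponential chart centered at $u_\epsilon^{\mathrm{pre}}$, with the correct weighted decay, rather than being some genuinely different curve with, say, extra bubbling or a different breaking pattern in the neck. Ruling this out requires an a priori compactness/annulus-lemma argument: a $J_{f_\epsilon}$-holomorphic map this $C^0$-close to $u_+ \# u_{\mathcal{T}^\epsilon_0}$ must, in the neck region $[-2b_0,2b_0]\times T^2$, be graphical over a gradient trajectory of $f_\epsilon$ (via Claim~\ref{leftovers} and the exponential convergence of Morse-Bott ends, cf.\ Lemma~\ref{lemma: trapping} and the spectral analysis of Lemma~\ref{lemma: spectral}), and hence is captured by the gluing chart; and outside the neck, Morse-Bott regularity of $\mb$ gives a local homeomorphism with $\mb$ near $u_+$, so no stray curves appear. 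Once this ``every nearby curve is a chart-image'' statement is in hand, the contraction estimate in steps (2)--(4) is routine and the $\epsilon$-uniformity of $C', C, c_1$ follows from the flatness of the setup near $T_{\mathcal N}$ and the $C^\infty$-convergence $f_\epsilon \to f$, $J_{f_\epsilon}\to J_f$.
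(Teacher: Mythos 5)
You take a genuinely different route from the paper. You propose a direct Newton-uniqueness argument: express both $u$ and $v$ as exponential sections $\exp_{u^{\mathrm{pre}}_\epsilon}(\xi)$, $\exp_{u^{\mathrm{pre}}_\epsilon}(\xi')$ over one preglued curve, subtract the $\overline\bdry$-equations, and conclude $\xi=\xi'$ from invertibility of the glued linearization $D_{u^{\mathrm{pre}}_\epsilon}$ plus a quadratic estimate. The paper argues by contradiction instead: from sequences $\kappa_i,\epsilon_i\to 0$ and $u^{\epsilon_i}\neq v^{\epsilon_i}$ it forms the difference $\zeta^{\epsilon_i}=\eta^{v^{\epsilon_i}}-\eta^{u^{\epsilon_i}}$ on the long neck, where the equation is \emph{exactly} linear by Claim~\ref{leftovers}, splits it into eigenmodes of $\mathbf A$, damps out the undesired modes, and (after normalizing away the $g_1$-coefficient) inverts the error using only the \emph{separate} operators $\overline D_+^\delta$ and $D_-^{\epsilon,0,-\delta}$ to manufacture a non-translation element of $\ker D_+^\delta$ (Case 1, contradicting its $1$-dimensionality via Lemma~\ref{non-translation element}) or a nonzero element of $\ker D_-^{\epsilon,0,-\delta}$ (Case 2, contradicting Lemma~\ref{bounded inverses}). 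The paper therefore never has to invert a linearized operator on the glued domain, and in particular bypasses the analytic inputs you flag as delicate.

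There is a genuine gap in your input (a). First, $D_{u^{\mathrm{pre}}_\epsilon}$ is not invertible: since $\me$ has $I=\op{ind}=1$ and is regular, the linearization at a nearby honest solution is surjective of index $1$, hence has a $1$-dimensional kernel spanned by the $\R$-translation direction (which, in the negative-end coordinates of Equation~\eqref{eqn: expantion of eta_+}, is a nonzero multiple of $\sum_i c_i\lambda_i e^{\lambda_i s}g_i$ and lies in the weighted space because $\lambda_1>\delta$); the same holds for $D_{u^{\mathrm{pre}}_\epsilon}$ for small $\epsilon$. Your step (3) inequality $\|\xi-\xi'\|\le C'C(\|\xi\|+\|\xi'\|)\|\xi-\xi'\|$ presupposes an honest inverse; you instead need a slice transverse to the kernel and must use the translation freedom to place $\xi-\xi'$ in that slice, which does not commute cleanly with your step (1) normalization. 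Second, and more seriously, the $\epsilon$-uniform boundedness of a right inverse on such a slice is the content of a linear gluing theorem (a Floer/Donaldson-style splitting of the glued operator into the two pieces plus a small neck remainder), and this estimate is not proved anywhere in the paper: Theorem~\ref{thm: 1} deliberately sidesteps it by running a \emph{coupled} fixed point in $\mathcal{B}_+\times\mathcal{B}_-$ using only $(\overline D^\delta_+)^{-1}$ and $(D^{\epsilon,\aaa,\delta}_-)^{-1}$ on separate domains, and Theorem~\ref{thm: 2} avoids it entirely via the kernel-element construction. Attributing the estimate to ``the pregluing estimate controls how the two linearized operators are glued'' names the hard step without supplying it; as it stands this is the unproved core of your approach, and the paper's contradiction route exists precisely to avoid having to prove it.
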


\begin{rmk}	The assumptions 
\be
\item[(i)] there is only one Morse-Bott torus $T_{\mathcal N}$ and it is negative, and
\item[(ii)] $u_+$ limits to $\gamma$ at the positive end and $\mathcal{N}$ at the negative end,
\ee
are only to make the notation simpler, since gluing each pair of ends can be done more or less independently. This is due to the fact that the magnitude of the error that comes from a pair $\frak P$ of glued ends and needs to be inverted in the Newton iteration decays exponentially with respect to the distance to the gluing region of $\frak P$.
\end{rmk}

\subsection{Asymptotic operator}\label{subsection: asymptotic operator}

On $B:=[-b_0, b_0] \times [-\tfrac{1}{6}, \tfrac{1}{6}]\subset A_c$, we have $\overline{g}_{\mathcal{N}}'(\theta)=1$ by (P2) and $\phi(y)=1$ by (P3').  Then $\nabla f_\epsilon= y\bdry_y + \epsilon \bdry_\theta$ and the equation $\mathcal{D}_\epsilon \eta=0$ becomes the linear equation
$${\bdry\eta\over \bdry s} + j_0{\bdry\eta\over\bdry t}-\begin{pmatrix}  \eta_1 \\ \epsilon  \end{pmatrix}=0,$$
or
\begin{equation} \tag{$J_{f_\epsilon}$}
{\bdry\eta\over \bdry s} -{\bf A}\eta= \begin{pmatrix} 0 \\ \epsilon  \end{pmatrix}, \quad {\bf A}\eta= -j_0{\bdry \eta \over \bdry t} + \begin{pmatrix} 1 & 0\\ 0 & 0\end{pmatrix} \eta,
\end{equation}
where $j_0=\begin{pmatrix} 0 & -1 \\ 1 & 0 \end{pmatrix}$,  $\eta= (\eta_1, \eta_2)$, and ${\bf A}$ is the asymptotic operator for the negative end of $u_+$ that goes to the Morse-Bott family $\mathcal{N}$. Here we are regarding $S^1$ as $[-\tfrac{1}{2},\tfrac{1}{2}]\subset \R$ so that $\eta\subset A_c$ is regarded in $\R^2$ and the matrix multiplication by $\begin{pmatrix} 1 & 0\\ 0 & 0\end{pmatrix}$ makes sense.

Similarly, a $J_{f}$-holomorphic map $(s,t)\mapsto (s,t,\eta(s,t))$ with $\eta(s,t)\in A_c$ is equivalent to
\begin{equation}
  \tag{$J_{f}$}
  {\mathcal D}_0 \eta = {\bdry\eta\over \bdry s} -{\bf A}\eta= 0.
\end{equation}

\begin{rmk} \label{rmk: ansatz}
In the region where ${\mathcal D}_\epsilon \eta=0$ is equivalent to Equation ($J_{f_\epsilon}$), a solution of ($J_{f}$) can be converted to a solution of ($J_{f_\epsilon}$) by adding 
$\begin{pmatrix} 0 \\ \epsilon s+C \end{pmatrix}$. 
\end{rmk}

From now on we will write the components of $\eta$ as row vectors if there is no confusion. 
\begin{claim}\label{claim: eigenfunctions and eigenvalues}
The eigenfunctions of ${\bf A}$ can be arranged as:
$$\dots, g_{-2},g_{-1},g_0=(0,1), g_1=(1,0),g_2,\dots,$$
normalized to have unit $L^2$-norm, with corresponding eigenvalues
$$\dots\leq\lambda_{-2}\leq \lambda_{-1}< \lambda_0=0< \lambda_1=1< \lambda_2\leq \dots,$$
where if $\lambda$ is any of $\lambda_{2n}=\lambda_{2n+1}$, $\lambda_{-2n}=\lambda_{-2n+1}$, then $\lambda(\lambda-1)=(2\pi n)^2$ and
\begin{align*}
g_{2n} \quad \mbox{is a multiple of} \quad & (\tfrac{2\pi n}{\lambda_{2n}-1}\cos (2\pi n t), \sin (2\pi n t)), \\
g_{2n+1}\quad \mbox{is a multiple of} \quad & (\tfrac{2\pi n}{\lambda_{2n}-1}\sin (2\pi n t), - \cos (2\pi n t)).
\end{align*}
\end{claim}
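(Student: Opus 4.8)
The statement is a purely computational claim about the spectrum and eigenfunctions of the constant-coefficient operator
$${\bf A}\eta = -j_0\frac{\partial\eta}{\partial t}+\begin{pmatrix}1&0\\0&0\end{pmatrix}\eta$$
on $L^2(S^1;\R^2)$, where $S^1=\R/\Z$. The plan is to diagonalize ${\bf A}$ by Fourier analysis in $t$ exactly as was done for the closely related operator in Lemma~\ref{lemma: spectral}, and then to match the indexing convention. First I would observe that ${\bf A}$ is a first-order elliptic self-adjoint operator on the circle, so it has discrete real spectrum with finite multiplicities and an $L^2$-orthonormal basis of eigenfunctions; this is standard and can be cited. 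The eigenvalue equation ${\bf A}\eta=\lambda\eta$ reads, writing $\eta=(\eta_1,\eta_2)$,
$$\frac{\partial\eta_1}{\partial t}=-\lambda\,\eta_2,\qquad \frac{\partial\eta_2}{\partial t}=(\lambda-1)\,\eta_1,$$
after applying $j_0$ to both sides of ${\bf A}\eta=\lambda\eta$ (note $j_0^{-1}=-j_0$). Differentiating gives $\eta_1''=-\lambda(\lambda-1)\eta_1$ and likewise for $\eta_2$, so for a nonzero $1$-periodic solution we need $\lambda(\lambda-1)=(2\pi n)^2$ for some integer $n\ge 0$.

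Next I would handle the degenerate cases $n=0$ separately: $\lambda(\lambda-1)=0$ forces $\lambda\in\{0,1\}$, the system decouples into $\eta_1'=0,\ \eta_2'=(\lambda-1)\eta_1$ (resp.\ with $\lambda=0$: $\eta_1$ constant forces $\eta_2$ constant only if $\eta_1=0$; with $\lambda=1$: $\eta_2$ constant forces $\eta_1=0$), yielding the two eigenfunctions $g_0=(0,1)$ with $\lambda_0=0$ and $g_1=(1,0)$ with $\lambda_1=1$, each of multiplicity one. For $n\ge 1$, the quadratic $\lambda^2-\lambda-(2\pi n)^2=0$ has one root $\lambda_{2n}>1$ and one root $\lambda_{-2n}<0$ (by the sign of the constant term and Vieta), and for each such $\lambda$ the general periodic solution of the first-order system is two-dimensional, spanned by the $\cos/\sin$ pair indicated in the statement; plugging $\eta_1=A\cos(2\pi n t),\ \eta_2=B\sin(2\pi n t)$ into $\eta_1'=-\lambda\eta_2$ gives $-2\pi n A=-\lambda B$, i.e.\ $B/A=2\pi n/\lambda$, which one rewrites as $2\pi n/(\lambda-1)$ using $\lambda(\lambda-1)=(2\pi n)^2$ (so $\lambda-1=(2\pi n)^2/\lambda$, hence $2\pi n/\lambda=(2\pi n)(\lambda-1)/(2\pi n)^2\cdot\ldots$ — a short check); the orthogonal $\sin/\cos$ pair gives the second basis vector of the same eigenspace. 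Hence each $\lambda_{\pm 2n}$ has multiplicity two and I set $\lambda_{2n+1}=\lambda_{2n}$, $\lambda_{-2n+1}=\lambda_{-2n}$ purely as a labelling device so that the eigenfunctions can be listed singly. Finally, normalizing each eigenfunction to unit $L^2$-norm and verifying the claimed ordering $\cdots\le\lambda_{-2}\le\lambda_{-1}<\lambda_0=0<\lambda_1=1<\lambda_2\le\cdots$ (the negative roots decrease and the positive roots $\lambda_{2n}$ increase as $n\to\infty$, since $\lambda_{2n}\sim 2\pi n$) completes the identification.

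The only genuine subtlety — and the step I would be most careful about — is the \emph{indexing/ordering} bookkeeping: making sure the enumeration $\{g_k\}_{k\in\Z}$ is consistent with the winding-number statements that will be needed later (the eigenfunction $g_{\pm 2n}$ and $g_{\pm 2n+1}$ have winding number $\pm n$ around the origin in $A_c$, which follows from the explicit $\cos/\sin$ formulas), and that the pairing $\lambda_{2n}=\lambda_{2n+1}$ does not clash with the strict inequalities $\lambda_0<\lambda_1$ (it does not, because $0$ and $1$ come from $n=0$ and are each simple). Everything else is a routine ODE computation on the circle and a Vieta-type sign analysis, so the proof is short: set up the Fourier decomposition, solve the scalar harmonic equation in each Fourier mode, read off the quadratic for $\lambda$, treat $n=0$ by hand, and normalize.
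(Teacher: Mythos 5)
Your proof is correct and follows essentially the same route as the paper's: both rewrite ${\bf A}\eta=\lambda\eta$ as the first-order linear ODE system $\eta_1'=-\lambda\eta_2$, $\eta_2'=(\lambda-1)\eta_1$, reduce to the scalar harmonic equation to get $\lambda(\lambda-1)=(2\pi n)^2$, treat $\lambda\in\{0,1\}$ by hand, and read off the $\cos/\sin$ eigenfunctions. The extra observations you add (self-adjointness/ellipticity for discreteness of the spectrum, a Vieta argument for the signs of the roots) are harmless but not in the paper's shorter version, and your garbled parenthetical for converting $2\pi n/\lambda$ into $\frac{\lambda-1}{2\pi n}$ via $\lambda(\lambda-1)=(2\pi n)^2$ is a one-line identity that works out correctly.
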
  

\begin{proof} 
If $\lambda$ is an eigenvalue of ${\bf A}$, then 
\begin{align*}
\begin{pmatrix}  0 & 1 \\ -1 & 0 \end{pmatrix} \begin{pmatrix} y'(t)\\ \theta'(t)\end{pmatrix} + \begin{pmatrix} y(t)\\ 0\end{pmatrix} = \lambda \begin{pmatrix} y(t)\\ \theta(t)\end{pmatrix},
\end{align*}
which is equivalent to $\theta'(t)= (\lambda-1)y(t)$, $-y'(t)=\lambda \theta(t)$. Hence $\theta''(t)= (1-\lambda)\lambda \theta(t)$. If $\theta(t)$ is to be $1$-periodic, $\lambda=0,1$, $\lambda>1$, or $\lambda<0$.  In the latter two cases, $\theta(t)$ is a translate of $\sin (2\pi nt)$ and $y(t)= \tfrac{2\pi n}{\lambda-1}$ times a translate of $\cos (2\pi nt)$ and $(2\pi n)^2= \lambda(\lambda-1)$. 
\end{proof}

We can then write a solution $\eta(s,t)$ of ($J_f$) as a Fourier series
\begin{equation}\label{Fourier series for eta}
\eta(s,t)=\sum_{i=-\infty}^\infty c_i e^{\lambda_i s} g_i(t).
\end{equation}

A clarification of the meaning of Equation~\eqref{Fourier series for eta} is in order: the eigenfunctions $g_i$ take values in $T_{(0,0)}A_c \cong\R^2$, while $\eta$ takes values in $A_c \cong [-c,c] \times S^1$. Thus in the equality we have tacitly identified a neighborhood of $(0,0)$ in $T_{(0,0)}A_c$ with a neighborhood of $(0,0)$ in $A_c$ using the identification of $S^1$ with a quotient of $[- \frac 12, \frac 12]$ fixed at the beginning of the appendix.

\subsection{Pregluing} \label{subsection: pregluing}

Let $u_+: \dot F \to \R \times M$ be a $J_f$-holomorphic map representing an element of $\mb$. 
We fix a cylindrical end $(-\infty,s_0]\times S^1$ of $\dot F$ corresponding to the orbit $\mathfrak{o}$ on which $u_+$ takes the form $u_+(s,t)=(s,t,\eta_+(s,t))$, $\eta_+(s,t)\in A_c$. In view of (C4) we can write
\begin{equation}\label{eqn: expantion of eta_+}
\eta_+(s,t)=\sum_{i=1}^\infty c_i e^{\lambda_i s} g_i(t),
\end{equation}
where $c_1>0$.  The condition $c_1\not=0$ holds for a generic $J_f$ because the moduli space $\mb$ is one-dimensional. This is proved in the same way as \cite[Theorem 4.1]{HT2}, which treats the contact case. We further assume that $c_1>0$ since the $c_1<0$ case can be treated in the same way. Finally, we can assume that \eqref{eqn: expantion of eta_+} has no $i=0$ term because we assumed that $\mathfrak{o}$ is the orbit over $(0,0)$.

\begin{defn}\label{defn: T_0 and T_1}
  Let $T_0=T_0(a)$ and $T_1=T_1(b)$ be real numbers such that
  \begin{equation}\label{eqn: T_0 and T_1}
     c_1e^{-\lambda_1 T_0}g_1=(a/2,0), \quad c_1e^{-\lambda_1 T_1}g_1=(b,0).
  \end{equation}
\end{defn}

Note that $T_1(b) \to \infty$ as $b \to 0$.
\be
\item[($\boldsymbol{\dagger}_1$)] We choose $a, b_0>0$, with $b_0< a/4$, to be sufficiently small such that $T_0>0$ and, for all $b<b_0$,
$$\eta_+|_{s\leq -T_0}\subset A_a, \quad \eta_+|_{-T_0\leq s\leq s_0}\subset A_c\cap \{y>2b_0\}, \quad \mbox{and} \quad \eta_+|_{s\leq -T_1}\subset B.$$
\ee
The choice of $a$ is made possible by the fact that $\sum_{i=2}^\infty c_i e^{\lambda_i s} g_i(t)$ decays exponentially at a rate which is faster than $c_1e^{\lambda_1 s}g_1(t)$.
 {\em From now on $a$ and $T_0$ are fixed constants, while $b$ and $T_1(b)$ are, for the moment, still  allowed to vary and will be fixed at a later time.} 
\begin{rmk}
 Since the perturbation  of $f$, and therefore of $R_f$, given in Equation \eqref{eqn: f and f epsilon}  depends on $a$ and $b_0$  by Conditions (P2) and (P3'),  it is important that $\mb$ is finite, so that we can find $a$ and $b_0$ which satisfy ($\boldsymbol{\dagger}_1$) for every $u_+ \in \mb$.
\end{rmk} 

Let $\eta_-^\epsilon \colon \R \to A_a$ be a parametrization of the gradient flow trajectory of $f_\epsilon$ from $(0, -\frac 14)$ to $(0, \frac 14 )$ solving the Cauchy problem
$$\begin{cases}
\frac{d \eta_-^\epsilon}{ds}= \nabla f_\epsilon(\eta_-^\epsilon), \\
\eta_-^\epsilon(-T_1)=(0,0)
\end{cases}$$
and let $u_-^\epsilon(s,t)=(s,t,\eta_-^\epsilon(s))$. We trivially extend $\eta_-^\epsilon$ to a function $\eta_-^\epsilon \colon \R \times S^1\to A_a$ by $\eta_-^\epsilon(s,t)=\eta_-^\epsilon(s)$.

%\marginpar{ \small I put the definitions of $T_0$, $T_1$ and $T_2$ in a definition environment as askes by the referee.}
\begin{defn}\label{definition of T_2}
 Let  $T_2=T_2(\epsilon)$ be a real number such that $T_1<T_2$ and $\eta_-^\epsilon|_{-T_2\leq s\leq -T_1}\subset B$.
\end{defn}

Note that $T_2(\epsilon) \to +\infty$  as $\epsilon \to 0$.

Let $\beta: \R\to [0,1]$ be a nondecreasing function such that $\beta(s)=0$ if $s\leq 0$ and $\beta(s)=1$ if $s\geq 1$.
The pregluing $u_*^{\epsilon, \aaa}$ (note the Fraktur symbol $\aaa$ is different from the parameter $a$) will depend on $\epsilon$ and an extra real parameter $\aaa \in [- \aaa_0, \aaa_0]$, where $\aaa_0$ is independent of $b$ and $\epsilon$, and small enough that $\eta_-^\epsilon(-T_1+ \aaa/\epsilon)$ is contained in $B$, where $\nabla f_\epsilon$ is constant. Then we define
\begin{equation}\label{eqn: definition of u_*}
u_*^{\epsilon, \aaa}(s,t):= \left\{
\begin{array}{cl}
u_+(s,t) & \mbox{ on $\dot F-(-\infty,-T_0]\times S^1$},\\
(s,t,\eta_*^{\epsilon, \aaa}(s,t)) & \mbox{ on $(-\infty,-T_0]\times S^1$},
\end{array}
\right.
\end{equation}
where
\begin{equation}\label{equation for eta star}
\eta_*^{\epsilon, \aaa}(s,t)= \left\{ \begin{array}{cl}
\eta_+(s,t) + \beta({s+T_0\over -T_1+T_0})(0, \epsilon (s+T_1)) + \beta(-s-T_0)(0, \aaa) & \mbox{ on $[-T_1,-T_0]\times S^1$},\\
\eta_-^\epsilon(s+ \aaa / \epsilon, t)+ \beta(s+T_2)\cdot \eta_+(s,t) & \mbox{ on $(-\infty,-T_1]\times S^1$}.
\end{array}
\right.
\end{equation}
Observe that $\eta_-^\epsilon(s+\aaa/\epsilon ,t)=(0,\epsilon(s+T_1) + \aaa)$ on $[-T_2,-T_1]\times S^1$ since
$\nabla f_\epsilon(y, \theta) = (y, \epsilon)$ and $\eta_-^\epsilon(-T_1, t)=(0,0)$. Hence the two definitions agree along $s=-T_1$.  Therefore $u_*^{\epsilon, \aaa}$ coincides with $u_+$ for $s \ge -T_0$, with the lift of a gradient trajectory of $f_\epsilon$ for $s \le -T_2$, and interpolates between the two for $s \in [-T_2, -T_0]$. The interpolation is performed in three steps: for $s \in [-T_0-1, -T_0]$ the holomorphic curve is pushed in the $\theta$-direction (i.e., along the Morse-Bott family) by a small amount $\aaa$; for $s \in [-T_1, -T_0]$ a perturbation corresponding to the gradient trajectory is slowly turned on and added to $u_+$; for $s \in [-T_2+1, -T_1]$ the preglued curve $u_*^{\epsilon, \aaa}$ is the sum of $u_+$ and the lift of a gradient trajectory of $f_\epsilon$; and for $s \in [-T_2, -T_2+1]$ the contribution of $u_+$ is turned off.   

\be
\item[($\boldsymbol{\dagger}_2$)] We choose $\epsilon= \epsilon(b)>0$ such that
$$\lim \limits_{b \to 0} \epsilon(b) e^{T_1(b)}T_1(b)=0.$$
\ee
Note that $T_0$ has become a constant after we fixed $a$, while $T_1$ depends on $b$ and $T_2$ depends on $\epsilon$.

\begin{lemma} \label{lemma: where d bar is zero}
  If $u_*^{\epsilon, \aaa}$ is defined by Equations \eqref{eqn: definition of u_*} and \eqref{equation for eta star}, then $\overline\bdry_{J_{f_\epsilon}}u_*^{\epsilon, \aaa}$ is supported on:
\be
\item $([- T_2, -T_2+1] \times S^1) \cup ([-T_1, -T_0] \times S^1)$ and 
\item the ``thick'' parts of the domain of $u_+$, i.e., $\dot{F}- (-\infty,T_0] \times S^1$, where the curve may still enter the region $y\in [-2b_0,2b_0]$.\footnote{In this case the error is extremely small, of total size $C\epsilon$, and we will not mention it further.}
\ee
\end{lemma}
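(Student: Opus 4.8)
\textbf{Plan for the proof of Lemma~\ref{lemma: where d bar is zero}.}
The strategy is simply to examine $\overline\bdry_{J_{f_\epsilon}}u_*^{\epsilon,\aaa}$ piece by piece according to the four regions used to define $u_*^{\epsilon,\aaa}$ in Equations~\eqref{eqn: definition of u_*} and~\eqref{equation for eta star}, and to identify precisely where the pregluing fails to be holomorphic. First I would treat the ``thick'' part $\dot F-(-\infty,-T_0]\times S^1$, where $u_*^{\epsilon,\aaa}=u_+$: here $\overline\bdry_{J_f}u_+=0$ by hypothesis, but since we are applying $\overline\bdry_{J_{f_\epsilon}}$ and not $\overline\bdry_{J_f}$, the error is $\overline\bdry_{J_{f_\epsilon}}u_+ - \overline\bdry_{J_f}u_+$, which is controlled by $\|J_{f_\epsilon}-J_f\|$ and hence, by Equation~\eqref{eqn: f and f epsilon} and the support condition (P3') on $\phi$, is of size $O(\epsilon)$ and supported where the curve meets $y\in[-2b_0,2b_0]$. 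This is item (2), and as the footnote indicates we record it as negligible. Second, on $(-\infty,-T_1]\times S^1$, on the sub-region $s\le -T_2$ we have $u_*^{\epsilon,\aaa}=u_{-}^{\epsilon}(\cdot+\aaa/\epsilon)$, the lift of a genuine gradient trajectory of $f_\epsilon$, so $\overline\bdry_{J_{f_\epsilon}}u_*^{\epsilon,\aaa}=0$ there by Lemma~\ref{from wendl} and Claim~\ref{leftovers}; the only error on $(-\infty,-T_1]\times S^1$ therefore comes from the cutoff $\beta(s+T_2)$ turning on the $u_+$ contribution, which is supported on $[-T_2,-T_2+1]\times S^1$.

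Third, and this is where the main bookkeeping lies, on $[-T_1,-T_0]\times S^1$ the formula for $\eta_*^{\epsilon,\aaa}$ is $\eta_+ + \beta\!\left(\tfrac{s+T_0}{-T_1+T_0}\right)(0,\epsilon(s+T_1)) + \beta(-s-T_0)(0,\aaa)$. I would plug this into the linear equation ($J_{f_\epsilon}$) — valid here because by ($\boldsymbol{\dagger}_1$) this part of the curve lies in $B=[-b_0,b_0]\times[-\tfrac16,\tfrac16]$ where $\nabla f_\epsilon=(y,\epsilon)$ — using Remark~\ref{rmk: ansatz}: a solution of ($J_f$) plus $(0,\epsilon s+C)$ solves ($J_{f_\epsilon}$). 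Since $\eta_+$ solves ($J_f$) and $(0,\epsilon(s+T_1))$ is exactly such a correction term once the cutoffs are $\equiv 1$, the residual $\overline\bdry_{J_{f_\epsilon}}u_*^{\epsilon,\aaa}$ on $[-T_1,-T_0]\times S^1$ is concentrated on the transition intervals where $\beta'\ne 0$, i.e.\ on $[-T_1,-T_1+1]\times S^1$ (where $\beta(\tfrac{s+T_0}{-T_1+T_0})$ ramps) and on $[-T_0-1,-T_0]\times S^1$ (where $\beta(-s-T_0)$ ramps). Both of these are contained in $[-T_1,-T_0]\times S^1$, which is item (1) together with the $[-T_2,-T_2+1]\times S^1$ piece from the previous paragraph. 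The matching along $s=-T_1$ (already checked in the text, since $\eta_-^\epsilon(s+\aaa/\epsilon,t)=(0,\epsilon(s+T_1)+\aaa)$ there) and along $s=-T_0$ (where both formulas reduce to $u_+$) guarantees $C^\infty$ agreement, so no distributional error appears at the seams.

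The expected main obstacle is purely organizational rather than conceptual: one must keep careful track of which cutoff is active on which subinterval, verify that $u_*^{\epsilon,\aaa}$ is genuinely smooth (not merely continuous) across $s=-T_1$ and $s=-T_2$ and $s=-T_0$ despite the piecewise definition, and confirm that the region $[-T_0,-T_1]$ is indeed entirely inside $B$ for every $u_+\in\mb$, which is exactly what ($\boldsymbol{\dagger}_1$) was arranged to provide (together with the finiteness of $\mb$ noted in the preceding remark). No analytic estimate is needed for the statement of the lemma itself — the size and decay of these error terms is the content of the subsequent sections on the Newton iteration — so I would keep this proof short: a case analysis over the four pieces of Equation~\eqref{equation for eta star}, invoking $\overline\bdry_{J_f}u_+=0$, Lemma~\ref{from wendl}, and Remark~\ref{rmk: ansatz} on the interior of each piece, and localizing the support of $\overline\bdry_{J_{f_\epsilon}}u_*^{\epsilon,\aaa}$ to the neighborhoods of the four cutoff transitions, of which two ($s=-T_0$ and $s=-T_1$) produce no error by the smooth-matching computations and two ($s\in[-T_1,-T_0]$ near the ramp, and $s\in[-T_2,-T_2+1]$) produce the advertised support.
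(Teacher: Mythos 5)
Your overall strategy---case analysis over the four pieces of Equation~\eqref{equation for eta star}, using linearity of $\mathcal D_\epsilon$ on $B$ and the fact that $u_+$ and the gradient lift are holomorphic---is essentially the paper's argument, and your treatment of the regions $\dot F-(-\infty,-T_0]\times S^1$, $(-\infty,-T_2]\times S^1$, and $[-T_2,-T_2+1]\times S^1$ is correct. However, your analysis of the region $[-T_1,-T_0]\times S^1$ contains a concrete error: you misread the argument of the first cutoff. The function $\beta\bigl(\tfrac{s+T_0}{-T_1+T_0}\bigr)$ takes the value $0$ at $s=-T_0$ and $1$ at $s=-T_1$, and since $-T_1+T_0$ is large in magnitude, this cutoff ramps \emph{slowly over the entire interval} $[-T_1,-T_0]$---its derivative is nonzero throughout, not just on $[-T_1,-T_1+1]\times S^1$. (Your reading of $\beta(-s-T_0)$, supported on $[-T_0-1,-T_0]$, is correct.) Consequently your claim that the residual on $[-T_1,-T_0]\times S^1$ is concentrated near the two endpoints is false; it is genuinely spread over the whole interval, which is exactly why the lemma states the support as all of $[-T_1,-T_0]\times S^1$.

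This does not invalidate the lemma as stated, since your (erroneously) smaller claimed support is contained in the correct one, but the mistake matters downstream: the slow ramp is precisely what makes the estimate in Lemma~\ref{lemma: estimate for D eta star} come out as $C\epsilon\, T_1 e^{\delta T_1}$ rather than something of size $O(\epsilon e^{\delta T_1})$. A second, more minor omission: for $[-T_2+1,-T_1]\times S^1$, where $\beta(s+T_2)\equiv 1$, you state that the error vanishes but do not explicitly justify it; as in the paper, this requires observing that on $B$ one has $\mathcal D_\epsilon = \mathcal D_0 + (0,\epsilon)$ with $\mathcal D_0$ linear, so $\mathcal D_\epsilon(\eta_-^{\epsilon,\aaa}+\eta_+) = \mathcal D_\epsilon(\eta_-^{\epsilon,\aaa}) + \mathcal D_0(\eta_+) = 0$. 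You invoke linearity for the next region, so you clearly have the idea; just make it explicit here as well.
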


\begin{proof}
  Note that $\overline\bdry_{J_{f_\epsilon}}u_*^{\epsilon, \aaa}=0$ 
\be
\item[(a)] on $\dot F-(-\infty,-T_0]\times S^1$, away from the region described in (2),  where $u_*^{\epsilon, \aaa}=u_+$ and $f_\epsilon=f$, and 
\item[(b)] on $(-\infty,-T_2]\times S^1$,  where $u_*^{\epsilon, \aaa}$ coincides with the $\overline\bdry_{J_{f_\epsilon}}$-holomorphic lift of a gradient trajectory of $f_\epsilon$.
\ee  
(For (a), note that $f_\epsilon$ and $f$ differ only when $y\in[-2b_0, 2b_0]$ by Equation~\eqref{eqn: f and f epsilon} and (P3'), but we are assuming ($\boldsymbol{\dagger}_1$), which ensures that $\eta_+(T_0,t)$ has $y$-coordinate $>2b_0$.)
  Therefore $\overline\bdry_{J_{f_\epsilon}}u_*^{\epsilon, \aaa}$ is supported in $[-T_2, -T_0] \times S^1$, where $\overline\bdry_{J_{f_\epsilon}}u_*^{\epsilon, \aaa}=0$ is equivalent to ${\mathcal D}_\epsilon \eta_*^{\epsilon, \aaa}=0$. We claim that ${\mathcal D}_\epsilon \eta_*^{\epsilon, \aaa}=0$ on $[-T_2+1, -T_1] \times S^1$. In fact, in that region,
  $$\eta_*^{\epsilon, \aaa}= \eta_-^{\epsilon, \aaa} + \eta_+$$
  by Equation \eqref{equation for eta star} and the definition of $\beta$. Moreover, $\eta_*^{\epsilon, \aaa}$ takes values in $B$ by Condition $(\dagger_1)$ and Definition \ref{definition of T_2}, and in $B$ we have
  $${\mathcal D}_\epsilon = {\mathcal D}_0 + (0, \varepsilon),$$
  where ${\mathcal D}_0$ is linear by Equation $(J_{f_\epsilon})$. Thus we have
  $${\mathcal D}_\epsilon(\eta_*^{\epsilon, \aaa})= {\mathcal D}_0(\eta_-^{\epsilon, \aaa})+ {\mathcal D}_0(\eta_+)+(0,\epsilon)= {\mathcal D}_\epsilon(u_-^{\epsilon, \aaa})+ {\mathcal D}_0(\eta_+)=0$$
  because ${\mathcal D}_\epsilon(u_-^{\epsilon, \aaa})= {\mathcal D}_0(\eta_+)=0$.
\end{proof}

\subsection{Function spaces} \label{subsection: function spaces}
Let us introduce the notation
\begin{equation}\label{I'm 49 :(}
  \eta_-^{\epsilon, \aaa}(s,t)= \eta_-^\epsilon(s+\aaa/\epsilon, t), \quad u_-^{\epsilon, \aaa}(s,t)=(s,t, \eta_-^{\epsilon, \aaa}(s,t)).
\end{equation}
In this subsection we describe the linearized $\overline\bdry$-operators $D_+$ and $D_-^{\epsilon, \aaa}$ for $u_+$ and $u_-^{\epsilon, \aaa}$.

Since we are assuming that the ECH and Fredholm indices of $u_+$ and $u_-^{\epsilon,\aaa}$ are both $1$, they are embedded and admit normal bundles. 
Let $N_+$ be a $J_f$-invariant normal bundle to $u_+$ in $\R\times M$ such that $N_+=TA_a$ on $(-\infty, -T_0]\times S^1$, let $N_-^{\epsilon, \aaa}=T A_a$ be the normal bundle to $u_-^{\epsilon, \aaa}$ in $\R\times[-a,a]\times T^2$, and let $N_*^{\epsilon, \aaa}$ be the normal bundle to $u_*^{\epsilon, \aaa}$ that agrees with $N_+$ on $\dot F- (-\infty, -T_0]\times S^1$ and with $T A_a$ on $(-\infty, -T_0]\times S^1$.

\subsubsection{Exponential maps} 

Let $\mathbb{D}_\kappa N_+$ denote the disk bundle of $N_+$ of radius $\kappa>0$, measured with respect to $g$.  
Writing an element of $N_+$ as $(x,\xi(x))$, where $x\in \dot F$ and $\xi(x)\in N_+(u_+(x))$, for $\kappa>0$ small we choose an exponential map
$$\op{exp}_{u_+}: \mathbb{D}_\kappa N_+\to \R\times M,$$
such that  $\op{exp}_{u_+}(x,0) = u_+(x)$, $d_{(x,0)}\op{exp}_{u_+}(0,\zeta)=\zeta(u_+(x))$ for a section $\zeta$ of $N^+$,  and 
$$\op{exp}_{u_+}(x,\xi(x))= (s(x),t(x),\eta_+(x)+ \xi(x))$$
when $u_+(x)=(s(x),t(x),\eta_+(x))$ and $x\in (-\infty, -T_0]\times S^1$. We also define
\begin{gather*}
\op{exp}_{u_-^{\epsilon, \aaa}}:\mathbb{D}_\kappa N_-^{\epsilon, \aaa} \to \R\times [-a,a]\times T^2,\\
(x,\xi(x))\mapsto (s(x),t(x),\eta_-^{\epsilon, \aaa}(x)+\xi(x)).
\end{gather*}
Finally we define  $\op{exp}_{u_*^{\epsilon, \aaa}}$  on $\mathbb{D}_\kappa N_*^{\epsilon, \aaa}$ such that it agrees with $\op{exp}_{u_+}$ on $\dot F-(-\infty,-T_0]\times S^1$ and satisfies
$$(x,\xi(x))\mapsto (s(x),t(x),\eta_*^{\epsilon, \aaa}(x)+\xi(x))$$
on $(-\infty,-T_0]\times S^1$. In particular $\op{exp}_{u_*^{\epsilon, \aaa}}$ coincides with 
$\op{exp}_{u_-^{\epsilon, \aaa}}$ on $(- \infty, -T_2) \times S^1$.  

\subsubsection{Normal $\overline \bdry$-equations}

Instead of using the full $\overline\bdry$-operator on sections of $u_+^*T(\R\times M)$ and $(u_-^{\epsilon, \aaa})^*T(\R \times M)$, following \cite{HT2} we will use the normal $\overline\bdry$-operators which act on sections of $N_+$ and $N_-^{\epsilon, \aaa}$. {\em The primary purpose of using the normal $\overline\bdry$-operators, assuming the curves are embedded, is to simplify the notation,} since the Teichm\"uller space parameters are automatically taken care of. More precisely, let $L$ be the total linearized $\overline\bdry$-operator --- this includes the Teichm\"uller space parameters --- and let $L_N$ be the normal linearized $\overline\bdry$-operator $L_N$. Then $\op{coker}{L}\simeq \op{coker}{L_N}$ and $\ker L_N\simeq (\ker L)/V$, where $V$ is subspace generated by the infinitesimal generators of the reparametrizations of the domain.

By standard local existence results of holomorphic disks, for $\kappa>0$ small there exists a foliation of $\op{exp}_{u_+}(\mathbb{D}_\kappa N_+)$ by $J_f$-holomorphic disks such that the holomorphic disk passing through $u_+(x)$ is tangent to $N_+(u_+(x))$. We can therefore adjust the map $\op{exp}_{u_+}$ such that the fibers of $\mathbb{D}_\kappa N_+$ are mapped to holomorphic disks, use local coordinates $(\sigma,\tau,\xi)$ on $\op{exp}_{u_+}(\mathbb{D}_\kappa N_+)$, where $\sigma+i\tau$ are holomorphic coordinates on $\dot F$ and $\xi$ is the fiber coordinate, and write
$$J_f(\sigma,\tau,\xi)= \begin{pmatrix} \tilde \jmath (\sigma,\tau,\xi) & 0 \\ X(\sigma,\tau,\xi) & j_0 \end{pmatrix},$$
where $\tilde \jmath (\sigma,\tau,0)=j_0$ and $X(\sigma, \tau, 0)=0$. Since $\tilde \jmath^2=-I$, we have
$$\tilde \jmath (\sigma,\tau,\xi)= \begin{pmatrix} a(\sigma,\tau,\xi)& c(\sigma,\tau,\xi) \\ b(\sigma,\tau,\xi) & -a(\sigma,\tau,\xi) \end{pmatrix}$$ 
and $\op{det}\tilde \jmath=1$. Also $X\tilde\jmath+j_0X=0$.

We derive the normal $\overline\bdry$-equation for a section $\xi$ of $N_+$ such that
\begin{equation} \label{pancakes} 
\overline \bdry_{J_f}\op{exp}_{u_+}\xi=0.
\end{equation} 
We recall that $\overline \bdry_{J_f}u= du+ J_f \circ du \circ j$, where $j$ is a complex structure on the domain of $u$, and therefore Equation \eqref{pancakes} is an equation for a pair $(j, \xi)$, where $j$ is a complex structure on $\dot F$. Then 
solving Equation~\eqref{pancakes} is equivalent to solving for $A(\sigma,\tau,\xi)$, $B(\sigma,\tau,\xi)$, and $\xi(\sigma,\tau)$ in:
\begin{equation} \label{eqn: towards normal d bar operator}
\left({\bdry \over \bdry \sigma} + J_f(\sigma,\tau,\xi)\left(A(\sigma,\tau,\xi){\bdry \over \bdry \sigma} + B(\sigma,\tau,\xi){\bdry \over \bdry \tau}\right)\right) \begin{pmatrix} \sigma\\ \tau \\ \xi\end{pmatrix}=0.
\end{equation}
Here the adjustment of the domain complex structure is equivalent to solving for $A(\sigma,\tau,\xi)$ and $B(\sigma,\tau,\xi)$.
One easily verifies that 
$$A(\sigma,\tau,\xi)=a(\sigma,\tau,\xi)\quad \mbox{and} \quad B(\sigma,\tau,\xi)=b(\sigma,\tau,\xi)$$ 
are the unique functions such that the $(\sigma,\tau)$-component of Equation~\eqref{eqn: towards normal d bar operator} holds. Then the $\xi$-component of Equation~\eqref{eqn: towards normal d bar operator} is the normal $\overline \bdry$-equation for the section $\xi$ of $N_+$:
\begin{equation}\label{eqn: normal d bar equation for u plus}
\overline\bdry_{N_+,f}\xi:={\bdry \xi\over \bdry \sigma} + j_0 \left( a(\sigma,\tau,\xi){\bdry \xi \over \bdry \sigma} + b(\sigma,\tau,\xi){\bdry \xi\over \bdry \tau}\right) + X(\sigma,\tau,\xi)\begin{pmatrix} a(\sigma,\tau,\xi)\\ b(\sigma,\tau,\xi)\end{pmatrix}=0,
\end{equation}
such that 
\begin{equation}\label{eqn: initial conditions}
a(\sigma,\tau,0)=0, \quad b(\sigma,\tau,0)=1,\quad \mbox{and} \quad X(\sigma,\tau,0)=0.
\end{equation}

Next we derive the normal $\overline\bdry$-equation for the section $\xi$ of $N_-^{\epsilon, \aaa}$ such that 
$$\overline \bdry_{J_{f_\epsilon}}\op{exp}_{u_-^{\epsilon, \aaa}}\xi=0.$$  
Recall that we write $\op{exp}_{u_-^{\epsilon, \aaa}}\xi= (s,t,\eta_-^{\epsilon, \aaa}+\xi)$ on $\R\times[-a,a]\times T^2$. Since $\overline\bdry_{J_{f_\epsilon}} u_-^{\epsilon, \aaa}=0$, the normal $\overline\bdry$-equation for $\xi$ has the following explicit expression:
\begin{align} \label{eqn: normal d bar equation for u minus}
\overline\bdry_{N_-^{\epsilon, \aaa},f_\epsilon}\xi :& =\mathcal{D}_\epsilon(\eta_-^{\epsilon, \aaa} + \xi)= \mathcal{D}_\epsilon(\eta_-^{\epsilon, \aaa}+ \xi)- \mathcal{D}_\epsilon(\eta_-^{\epsilon, \aaa})\\
\nonumber & = {\bdry\xi\over \bdry s} + j_0{\bdry\xi\over\bdry t}-\nabla f_\epsilon(\eta_-^{\epsilon, \aaa} +\xi)+\nabla f_\epsilon(\eta_-^{\epsilon, \aaa})=0,
\end{align}
by Claim~\ref{leftovers}.

Finally, $\overline\bdry_{N_*^{\epsilon, \aaa}, f_\epsilon}\xi$ for the section $\xi$ of $N_*^{\epsilon, \aaa}$ agrees with $\overline\bdry_{N_+,f}\xi$ on $\dot F-(-\infty,-T_0]\times S^1$ and with $\mathcal{D}_\epsilon(\eta_*+ \xi)$ on $(-\infty,-T_0]\times S^1$ by Claim~\ref{leftovers}.

The linearized operators for $\overline\bdry_{N_+, f}$, $\overline\bdry_{N_-^{\epsilon, \aaa}, f_\epsilon}$, and $\overline\bdry_{N_*^{\epsilon, \aaa}, f_\epsilon}$ will be denoted by $D_+$,  $D_-^{\epsilon, \aaa}$, and $D_*^{\epsilon, \aaa}$. Next we will describe the proper function-theoretic setup for these operators.

\subsubsection{Morrey spaces}

The function spaces that we use are {\em Morrey spaces}, following \cite[Section 5.5]{HT2}. Let $u:\dot F\to \R\times M$ be a finite energy holomorphic curve. On $\dot F$ we choose a Riemannian metric such that the ends are isometric to $\R/\Z\times[0,\infty)$ with the product metric. On $\R\times M$ we continue use the $\R$-invariant Riemannian metric from before.

The {\em Morrey space} $\mathcal{H}_0(\dot F, \wedge^{0,1} N_+)$ is the Banach space which is the completion of the compactly supported sections of $\wedge^{0,1} N_+$ with respect to the norm
\begin{equation}\label{morrey norm 0}
  \| \xi\|= \left( \int_{\dot F} |\xi|^2  \right)^{1/2} + \left( \sup_{x\in \dot F} \sup_{\rho\in(0,1]} \rho^{-1/2} \int_{B_\rho(x)} |\xi|^2  \right)^{1/2},
\end{equation}
where $B_\rho(x)\subset \dot F$ is the ball of radius $\rho$ about $x$.  Similarly, $\mathcal{H}_1(\dot F, N_+)$ is the completion of the compactly supported sections of $N_+$ with respect to 
\begin{equation}\label{morrey norm one}
  \| \xi \|_*= \|\nabla \xi\| + \|\xi\|.
\end{equation}

Although Morrey spaces are not used as frequently as Sobolev spaces, they satisfy the analog of the usual Sobolev embedding theorem (Lemma~\ref{lemma: sobolev embedding}) and have the advantage that {\em we only need to do elementary $L^2$-type estimates instead of more complicated $L^p$-type estimates.}

The analog of the usual Sobolev embedding theorem is the following:\footnote{The lemma is stated slightly differently from \cite[Lemma 5.3]{HT2}.} 

\begin{lemma} \label{lemma: sobolev embedding}
	There is a bounded linear map
	$$\mathcal{H}_1(\dot F, N_+)\to C^{0}(\dot F, N_+) \cap L^\infty(\dot F, N_+), \quad \xi\mapsto \xi.$$
\end{lemma}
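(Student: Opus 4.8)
The statement is the Morrey-space analogue of the Sobolev embedding $W^{1,2}$-into-$C^0$ in two real dimensions, specialized to sections of the bundle $N_+$ over the punctured surface $\dot F$ with cylindrical ends. The plan is to reduce everything to a local model statement on a two-dimensional ball, plus a uniform estimate on the cylindrical ends. First, I would cover $\dot F$ by finitely many coordinate charts: finitely many charts isometric to balls $B_1(0)\subset\R^2$ covering the compact ``thick'' part, together with charts on the half-cylinders $\R/\Z\times[0,\infty)$, which by periodicity can themselves be handled using a single uniform local estimate on unit squares $[0,1]\times\R/\Z$ translated in the $s$-direction. In each chart the bundle $N_+$ is trivialized (it has rank two over $\R$), so a section becomes an $\R^2$-valued function, and the metrics are uniformly equivalent to the flat ones by construction. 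Thus it suffices to prove: there is a constant $C$ so that for every $\xi\in C^\infty_c(B_1(0);\R^2)$ one has $\|\xi\|_{L^\infty(B_{1/2})}\le C\|\xi\|_*$, where $\|\cdot\|_*$ is the local Morrey norm from Equation~\eqref{morrey norm one}, and similarly with $B_1$ replaced by a unit cylinder piece; the global bound then follows by summing over the finite cover and taking a supremum, using that the Morrey norm is, by its very definition via a supremum over all balls in $\dot F$, larger than the sum of the local Morrey norms over any finite collection of charts.

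For the local model estimate I would use the standard representation formula for the value of a function at a point in terms of its gradient, namely
$$\xi(x) = -\frac{1}{2\pi}\int_{B_1(0)} \nabla\bigl(\chi(y)\log|x-y|\bigr)\cdot\nabla\xi(y)\,dy + (\text{lower order}),$$
after multiplying $\xi$ by a fixed cutoff $\chi\equiv 1$ on $B_{1/2}$; this reduces the $C^0$ bound to controlling $\int_{B_1}|x-y|^{-1}|\nabla\xi(y)|\,dy$ (the kernel of the logarithmic potential's first derivative is $O(|x-y|^{-1})$ in dimension two). The key point is that this Riesz-type integral with a $|x-y|^{-1}$ kernel in two dimensions is exactly bounded by the Morrey quantity
$$\sup_{\rho\in(0,1]}\rho^{-1/2}\Bigl(\int_{B_\rho(x)}|\nabla\xi|^2\Bigr)^{1/2} + \|\nabla\xi\|_{L^2},$$
by the standard dyadic decomposition of the ball $B_1(x)$ into annuli $A_k=\{2^{-k-1}\le|x-y|<2^{-k}\}$: on each annulus Cauchy--Schwarz gives $\int_{A_k}|x-y|^{-1}|\nabla\xi|\le 2^{k+1}(|A_k|)^{1/2}\|\nabla\xi\|_{L^2(A_k)}\le C\,2^{k}2^{-k}\cdot 2^{-k/2}\cdot[\text{Morrey term}]=C\,2^{-k/2}[\cdots]$, and summing the geometric series in $k$ yields the bound. (This is precisely where the exponent $-1/2$ in \eqref{morrey norm 0}--\eqref{morrey norm one} is tuned to the dimension.) Continuity of the resulting section, i.e.\ membership in $C^0$ and not merely $L^\infty$, follows from the same integral formula by a standard dominated-convergence/equicontinuity argument on the potential, since the convolution of an $L^2$-Morrey function with the continuous-away-from-diagonal kernel is continuous.

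The cylindrical-end case is handled by the identical estimate applied on unit squares $[s_0,s_0+1]\times\R/\Z$, with constant uniform in $s_0$ because the metric there is the flat product metric independent of $s_0$; this is the only place the hypothesis ``the ends are isometric to $\R/\Z\times[0,\infty)$ with the product metric'' is used, and it guarantees the global constant $C$ does not degenerate at infinity. The main obstacle — really the only nonroutine point — is getting the dyadic estimate on the singular integral to land exactly on the Morrey norm rather than on an $L^p$ norm; but this is the content of \cite[Lemma~5.3]{HT2}, whose proof I would cite and merely indicate the modifications (the bundle $N_+$ replacing the trivial bundle, and the fixed cutoffs handling the transition across charts). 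The only genuinely new feature over \cite{HT2} is that we are applying it to $N_+$ over $\dot F$ with the specified metric, which changes nothing in the argument. I therefore expect the write-up to be short: set up the finite cover, quote \cite[Lemma~5.3]{HT2} in the local model, sum up.
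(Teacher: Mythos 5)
Your proof is correct and takes essentially the same route as the paper: decompose $\dot F$ into local pieces that are uniform up to translation on the cylindrical ends, use the Morrey control of $\nabla\xi$ to bound the oscillation on each piece (the paper invokes \cite[Theorem~3.5.2]{Mo} for the resulting $C^{0,1/4}$ bound, which is exactly the dyadic-annulus estimate you unwind via the logarithmic potential), and anchor with the $L^2$ part of the norm. The only packaging difference is in the anchor step: the paper finds a point $x_i$ in each piece where $|\xi(x_i)|$ is controlled by the local $L^2$ average and then uses the triangle inequality, whereas you absorb the anchor into the $(\nabla\chi)\xi$ lower-order term of the potential representation; both are fine.
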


\begin{proof}
If $\xi\in \mathcal{H}_1(\dot F, N_+)$ and $K\subset \dot F$ is a subdomain, then let us define 
$$|\xi|_{C^{0,1/4},K}= \op{sup}_{x\not=y\in K} {|\xi(x)-\xi(y)|\over |x-y|^{1/4}}.$$

The lemma is a consequence of \cite[Theorem 3.5.2]{Mo}, which implies\footnote{Morrey's theorem is stated for a Euclidean ball of radius $R$, but applies equally well to our setting. We take $p=2,\nu=2, \mu=\tfrac{1}{4}$ in the theorem.} that for any compact subdomain $K\subset \dot F$ there exists $C_K$ such that
\begin{equation}
|\xi|_{C^{0,1/4},K}\leq  C_K \left( \sup_{x\in \dot F} \sup_{\rho\in(0,1]} \rho^{-1/2} \int_{B_\rho(x)} |\nabla\xi|^2  \right)^{1/2} \leq C_K \|\xi\|_*.
\end{equation}
This implies that any $\xi\in \mathcal{H}_1(\dot F, N_+)$ is continuous. 

Since $\dot F$ has cylindrical ends, we can write $\dot F= K_0\cup K_1\cup K_2\cup \dots,$ where all the $K_i$ are compact connected subdomains and $K_1,K_2,\dots$ are annuli of the form $\R/\Z$ times a unit interval. For each $K_i$ and $x\not=y\in K_i$, we have
\begin{gather} \label{coffee 1}
|\xi(x)-\xi(y)|\leq  C_{K_i} \|\xi\|_*|x-y|^{1/4} \leq C (C')^{1/4} \|\xi\|_*,
\end{gather}
where $C=\max\{C_{K_0}, C_{K_1}\}$ since $C_{K_1}=C_{K_2}=\dots$ and $C'$ is the supremum of the diameters of $K_i$, $i=0,1,\dots$. Since $\xi$ is continuous, on each $K_i$ there exists $x_i$ such that 
\begin{gather} \label{coffee 2}
|\xi(x_i)|=\| \xi|_{K_i}\|_{L^2} / \op{vol}(K_i) \leq \|\xi\|_*/C'',
\end{gather} 
where $C''=\op{inf}_i \op{vol}(K_i)>0$. Inequalities~\eqref{coffee 1} and \eqref{coffee 2} together imply that there exists a constant $c>0$ which is independent of $\xi$ and such that 
$|\xi(x)|\leq c \|\xi\|_*$ for all $x\in \dot F$.
\end{proof}

Given $\delta>0$ sufficiently small, we define a smooth weight function 
\begin{gather}
g_\delta:\dot F\to \R^+,\\
\nonumber \left\{ \begin{array}{l} g_\delta(x)= 1 \quad \mbox{on} \quad \dot F-(-\infty,-T_0+1]\times S^1,\\   
	g_\delta(s,t)=e^{\delta |s+T_0|} \quad \mbox{for} \quad s\leq -T_0.\end{array} \right.
\end{gather} 
Also define the smooth weight function 
\begin{gather}
h_\delta:\R\times S^1\to \R^+,\\
\nonumber  (s,t)\mapsto e^{-\delta(s+T_0)}.
\end{gather} 
Note that $h_\delta$ agrees with $g_\delta$ for $s\leq -T_0$.  We recall that $T_0$ has been fixed once and for all in Definition \ref{defn: T_0 and T_1} and  ($\dagger_0$).  For our purposes we define $\lambda:=\min (\lambda_1,|\lambda_{-1}|)$ and take $\delta$ such that $5 \delta < \lambda$.

We also define the weighted Morrey spaces $\mathcal{H}_{1,g_\delta}(\dot F, N_+)$ and $\mathcal{H}_{0,g_\delta}(\dot F, \Lambda^{0,1} N_+)$ as the spaces of sections $\xi$ (of the respective bundles) such that the weighted Morrey norms
\begin{equation}
 \|\xi\|_{*,g_\delta}:= \|\xi \cdot g_\delta\|_*, \qquad  \|\xi\|_{g_\delta}:= \|\xi \cdot g_\delta\|
\end{equation}
are finite.  
Observe that since we are using normal bundles, it is not necessary to use weights except at the end which limits to the Morse-Bott orbit.  The Morrey spaces for $N_-^{\epsilon, \aaa}$ and $N_*^{\epsilon, \aaa}$ (with and without weights) are defined analogously.

\subsubsection{Linearized operators}\label{ss: linearized operators}

Let $\tilde\partial_\theta$ be a smooth section of $N_+$ which is equal to $\beta(-s-T_0)\partial_\theta$ on $s\leq -T_0$ and is zero elsewhere. 
We view $D_+$ as a bounded linear operator 
\begin{gather}
\label{eqn for D plus} D_+^\delta:\mathcal{H}_{1,g_\delta}(\dot F, N_+)\oplus \R\langle \tilde\partial_\theta\rangle \to \mathcal{H}_{0,g_\delta}(\dot F, \Lambda^{0,1} N_+),\\
\nonumber (\xi,c \tilde \partial_\theta) \mapsto D_+(\xi+c\tilde\partial_\theta).
\end{gather}
The term $\R\langle \tilde\partial_\theta\rangle$ is included since $D_+^\delta$ is the linearized operator for the Morse-Bott family $\mb=\mathcal{M}_{J_f}^{I=\op{ind}=1}(\boldsymbol{\gamma};\mathcal{N})$ with an unconstrained negative end but the infinitesimal deformations parallel to the Morse-Bott family do not belong to the Morrey space with weights.

We denote 
\begin{equation}
{\mathcal H}_{+,\delta}= \mathcal{H}_{1,g_\delta}(\dot F, N_+)\quad \mbox{and} \quad {\mathcal H}_{+,\delta}'= D_+^\delta({\mathcal H}_{+,\delta})\subset \mathcal{H}_{0,g_\delta}(\dot F, \Lambda^{0,1} N_+),
\end{equation} 
and let $\overline{D}_+^\delta:  {\mathcal H}_{+,\delta} \to {\mathcal H}_{+,\delta}'$ be the map induced by $D_+^\delta$ by restriction.

Let us denote 
\begin{equation}\label{definition of nu}
  % \nu:=D_+^\delta \tilde\partial_\theta
  \nu :=-\beta'(-s-T_0)\partial_\theta.
\end{equation}
Then $\nu$ has compact support in $\{-T_0-1 \le s \le -T_0 \}$ and satisfies
$$D_+^\delta(0,1) = D_+(\tilde \partial_\theta) = \nu.$$
Also observe} that $\nu \not \in {\mathcal H}_{+,\delta}'$ because $D_+^\delta$ is surjective and $\op{ind}(D_+^\delta)=1$ with $\ker D_+^\delta\subset \mathcal H_{+,\delta}$, and $\nu \neq 0$. Then we can define the projection 
\begin{equation}
\Pi \colon \mathcal{H}_{0,g_\delta}(\dot F, \Lambda^{0,1} N_+) \to {\mathcal H}_{+,\delta}'
\end{equation} 
with $\nu \in \ker \Pi$.
Note that $\nu$ has compact support in $\{-T_0-1 \le s \le -T_0 \}$, where it can be written as $\nu =-\beta'(-s-T_0)\partial_\theta$.

\begin{rmk}
The domain of $D_+^\delta$ is the tangent space to the Banach manifold
$$\mathcal{H}_{1,g_\delta}(\dot F, \R\times M):= \{ \exp_u(\xi)~|~ u\in \mathcal{C}, \xi\in \mathcal{H}_{1,g_\delta}(\dot F, N_+^u)\},$$
where $\mathcal{C}$ is the space of smooth embeddings $u: \dot F\to \R\times M$ that agree with holomorphic maps parametrizing trivial holomorphic half-cylinders near each of the punctures; the positive ends of $u$ and $u_+$ agree and the negative end of $u$ limits to $\mathcal{N}$; and $N_+^u$ is the $J_f$-invariant normal bundle to $u$.
\end{rmk}

Linearizing Equation~\eqref{eqn: normal d bar equation for u minus} we obtain: 
\begin{equation}
D_-^{\epsilon, \aaa} \xi= {\bdry \xi\over \bdry s} +j_0{\bdry \xi\over \bdry t} -({\bf H} {f_\epsilon})(\eta_-^{\epsilon, \aaa})\xi,
\end{equation}
where ${\bf H}  f_\epsilon$ is the Hessian of $f_\epsilon$.

We view $D_-^{\epsilon, \aaa}$ as a bounded linear operator
\begin{gather} \label{eqn for D minus}
D_-^{\epsilon, \aaa}: \mathcal{H}_1(\R\times S^1, N_-^{\epsilon, \aaa})\to \mathcal{H}_0(\R\times S^1, \Lambda^{0,1}N_-^{\epsilon, \aaa}).
\end{gather}
Since the normal bundles $N_-^{\epsilon, \aaa}$ are trivialized, we can identify the domains and codomains of $D^{\epsilon, \aaa}$ for different values of $\epsilon$ and $\aaa$. We abbreviate ${\mathcal H}_-= \mathcal{H}_1(\R\times S^1, N_-^{\epsilon, \aaa})$ and ${\mathcal H}_-'= \mathcal{H}_0(\R\times S^1, \Lambda^{0,1}N_-^{\epsilon, \aaa})$. Both $D_+^\delta$ and $D_-^{\epsilon, \aaa}$ are Fredholm of index $1$. 

We consider also operators 
$$D_-^{\epsilon, \aaa, \delta} \colon \mathcal{H}_{1, h_\delta}(\R\times S^1, N_-^{\epsilon, \aaa})\to \mathcal{H}_{0, h_\delta}(\R\times S^1, \Lambda^{0,1}N_-^{\epsilon, \aaa})$$ 
which have the same expression as $D_-^{\epsilon, \aaa}$ but act on the Morrey spaces with weights. 
We  abbreviate $\mathcal{H}_{-, \delta}= \mathcal{H}_{1, h_\delta}(\R\times S^1, N_-^{\epsilon, \aaa})$ and $\mathcal{H}_{-, \delta}'= \mathcal{H}_{0, h_\delta}(\R\times S^1, \Lambda^{0,1}N_-^{\epsilon, \aaa})$.
\begin{rmk}
Sections $\xi \in \mathcal{H}_{-, \delta}$ can diverge as $s \to + \infty$ and therefore $\exp_{u_-^{\epsilon, \aaa}} (\xi)$ may not be well defined. This makes the spaces $\mathcal{H}_{-, \delta}$ unsuitable for the nonlinear analysis of the  moduli space containing $u_-^{\epsilon, \aaa}$. However, they can still be used in the proof of Theorem \ref{thm: main theorem of appendix} because, for the purposes of gluing, what happens near the positive end of $u_-^{\epsilon, \aaa}$ is irrelevant. The reason we are using the operators $D_-^{\epsilon, \aaa, \delta}$ is so that we can take the limit of $D_-^{\epsilon, \aaa, \delta}$ as $\epsilon\to 0$ and obtain a Fredholm operator $D_-^{0, \aaa, \delta}$ of the same index in the limit. This would not be true if we worked without weights, as the operators $D^{\epsilon, \aaa}_-$ converge, for $\epsilon \to 0$, to an operator which is not Fredholm. 
\end{rmk}

\begin{lemma} \label{bounded inverses}
If $\delta$, $\aaa_0=\aaa_0(\delta)$, and $\epsilon_0=\epsilon_0(\delta,\aaa_0)$ are sufficiently small subject to $0 \le \epsilon_0 < \delta$, and $(\aaa ,\epsilon)\in [-\aaa_0, \aaa_0]\times [0,\epsilon_0]$, then the operators $D_-^{\epsilon, \aaa, \delta}$ are invertible. Moreover, for a fixed $\delta$ the norms of the inverse operators $(D_-^{\epsilon, \aaa, \delta})^{-1}$ are uniformly bounded on $[-\aaa_0,\aaa_0]\times[0,\epsilon_0]$.
\end{lemma}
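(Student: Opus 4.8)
The plan is to analyze the operators $D_-^{\epsilon,\aaa,\delta}$ explicitly by comparing them to the model operator $D_-^{0,0,\delta}$ associated with the gradient trajectory $\eta_-^0$ (the $\epsilon=0$, $\aaa=0$ limit) and invoking standard Fredholm perturbation theory together with a direct computation on the model. First I would observe that, because the normal bundle $N_-^{\epsilon,\aaa}$ is trivialized as $TA_a$ and the coefficients of $D_-^{\epsilon,\aaa}$ depend only on $\eta_-^{\epsilon,\aaa}$, the operators $D_-^{\epsilon,\aaa,\delta}$ all act on the \emph{same} pair of Banach spaces $\mathcal{H}_{-,\delta} \to \mathcal{H}_{-,\delta}'$; only the zeroth-order term $-({\bf H}f_\epsilon)(\eta_-^{\epsilon,\aaa})$ varies with $(\epsilon,\aaa)$. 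As $\epsilon \to 0$, we have $f_\epsilon \to f$ in $C^\infty$ (Condition (P4) / the remark after Equation~\eqref{eqn: f and f epsilon}) and $\eta_-^{\epsilon,\aaa}\to \eta_-^{0,\aaa}$, which is a reparametrized translate of the straight gradient line $\{y=0\}$; so $({\bf H}f_\epsilon)(\eta_-^{\epsilon,\aaa}) \to ({\bf H}f)(\eta_-^{0,\aaa}) = \begin{pmatrix}1&0\\0&0\end{pmatrix}$ uniformly in $\aaa$, and this convergence is in the operator norm on $\mathcal{H}_{-,\delta}\to\mathcal{H}_{-,\delta}'$ because the difference is a bounded multiplication operator whose pointwise norm tends to zero uniformly.

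The key step is to establish invertibility of the model operator $D_-^{0,\aaa,\delta}$, equivalently $D_-^{0,0,\delta}$ (these are conjugate by translation in $s$). On the weighted space $\mathcal{H}_{-,\delta}$, conjugating by the weight $h_\delta(s,t)=e^{-\delta(s+T_0)}$ converts $D_-^{0,0,\delta}$ into the constant-coefficient operator $\bdry_s + \delta \cdot \mathrm{Id} - {\bf A}$ acting on the unweighted space, where ${\bf A}$ is the asymptotic operator of Claim~\ref{claim: eigenfunctions and eigenvalues} with spectrum $\dots \le \lambda_{-2}\le\lambda_{-1}<\lambda_0=0<\lambda_1=1<\lambda_2\le\dots$. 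Since $5\delta<\lambda=\min(\lambda_1,|\lambda_{-1}|)$, in particular $0<\delta<\lambda_1$ and $-\delta>\lambda_{-1}$, so $-\delta$ is not in the spectrum of ${\bf A}$; hence the shifted operator $\bdry_s + \delta - {\bf A}$ on $L^2(\R\times S^1)$-type Morrey spaces is invertible by the standard spectral-flow/Fourier-decomposition argument (decompose a section in the eigenbasis $\{g_i\}$, solve the resulting scalar ODEs $c_i'(s)+(\delta-\lambda_i)c_i(s)=d_i(s)$, each of which has a unique bounded solution since $\delta-\lambda_i\ne 0$, and check the Morrey norms; this is exactly the kind of elementary $L^2$-estimate the authors advertise). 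This yields a bounded inverse of $D_-^{0,\aaa,\delta}$ with norm independent of $\aaa$.

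Then I would conclude by a perturbation argument: choose $\aaa_0=\aaa_0(\delta)$ small enough that $\eta_-^{\epsilon,\aaa}$ stays in the region $B$ where $\nabla f_\epsilon$ is affine (so that the Hessian comparison above is valid) and small enough that translation in $\aaa$ does not affect the uniform bound; then, given the uniform bound $\|(D_-^{0,\aaa,\delta})^{-1}\| \le C_\delta$, choose $\epsilon_0=\epsilon_0(\delta,\aaa_0)$ so small (and $<\delta$, as required) that $\|D_-^{\epsilon,\aaa,\delta}-D_-^{0,\aaa,\delta}\| \le \frac{1}{2C_\delta}$ uniformly for $(\aaa,\epsilon)\in[-\aaa_0,\aaa_0]\times[0,\epsilon_0]$. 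A Neumann series argument then shows $D_-^{\epsilon,\aaa,\delta}$ is invertible with $\|(D_-^{\epsilon,\aaa,\delta})^{-1}\| \le 2C_\delta$, giving the desired uniform bound. The main obstacle I anticipate is not the abstract perturbation step but the bookkeeping in the model computation: verifying that the Morrey norms (rather than $L^p$ Sobolev norms) behave well under the eigenfunction decomposition of ${\bf A}$, in particular that the scaling exponent $\rho^{-1/2}$ in \eqref{morrey norm 0}–\eqref{morrey norm one} interacts correctly with the exponential weights and the spectral gap $5\delta<\lambda$; one must confirm that the gap is used with enough room to absorb both the weight conjugation and the cross-terms, which is presumably the reason the authors build in the factor $5$ rather than just requiring $\delta<\lambda$.
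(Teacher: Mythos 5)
Your strategy matches the paper's: conjugate away the weight to reduce $D_-^{0,0,\delta}$ to $\partial_s + \delta - {\bf A}$, establish its invertibility using the spectral gap $0 = \lambda_0 < \delta < \lambda_1 = 1$, and then perturb. The one genuine difference is in the model step: the paper invokes Fredholm theory abstractly (invertible asymptotic operator $\Rightarrow$ Fredholm; no spectral flow $\Rightarrow$ index $0$; Fourier expansion $\Rightarrow$ injectivity; hence invertibility by the open mapping theorem), whereas you propose to construct the inverse by hand from the decoupled ODEs $c_i'+(\delta-\lambda_i)c_i=d_i$. The paper's route is leaner precisely because it avoids the Morrey-norm bookkeeping you flag as the main obstacle to your direct construction; if you go the explicit route, those bounds must actually be verified and are not entirely free.

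There is one genuine misstep. You propose to choose $\aaa_0$ so small that $\eta_-^{\epsilon,\aaa}$ stays in $B$, where $\nabla f_\epsilon$ is affine. This is impossible: for every $\epsilon>0$ the trajectory runs from $(0,-\tfrac14)$ to $(0,\tfrac14)$, while $B=[-b_0,b_0]\times[-\tfrac16,\tfrac16]$ only covers $\theta\in[-\tfrac16,\tfrac16]$, so the trajectory always escapes $B$ near its endpoints no matter how small $\aaa$ is. What actually makes the uniform Hessian comparison work is that the trajectory lies on $\{y=0\}$, where ${\bf H}f\equiv\begin{pmatrix}1&0\\0&0\end{pmatrix}$ is \emph{constant} (using $\phi(0)=1$, $\phi'(0)=0$, $\phi''(0)=0$) and ${\bf H}f_\epsilon=\begin{pmatrix}1&0\\0&\epsilon\,\overline{g}_{\mathcal{N}}''(\theta)\end{pmatrix}$, so that $\|{\bf H}f_\epsilon(\eta_-^{\epsilon,\aaa}(s))-{\bf H}f\|\le\epsilon\|\overline{g}_{\mathcal{N}}''\|_{C^0}$ uniformly in $s$ and $\aaa$; the paper notes exactly this, remarking that the $\epsilon=\aaa=0$ operator is well-defined ``because ${\bf H}f$ is constant on $\{y=0\}$.'' Relatedly, your invocation of $\eta_-^{\epsilon,\aaa}\to\eta_-^{0,\aaa}$ as a reason for uniform Hessian convergence is off: that convergence is only pointwise in $s$ (the trajectories never shrink to a point), so it does not by itself produce the uniform $O(\epsilon)$ estimate. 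Lastly, the factor $5$ in $5\delta<\lambda$ is not needed for this lemma --- $0<\delta<\lambda_1$ suffices --- but for the decay estimates later in Sections~\ref{subsection: gluing 1}--\ref{subsection: gluing 2}, so your speculation about its role here is misdirected.
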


\begin{proof}
The operators $D_-^{\epsilon, \aaa, \delta} \colon {\mathcal H}_{-, \delta} \to {\mathcal H}_{-, \delta}'$ (including for $\epsilon=\aaa=0$, which is well-defined because $\mathbf{H}_f$ is constant on $\{y=0\}$) are conjugated to the operators 
$$\widetilde{D}_-^{\epsilon, \aaa, \delta} = D_-^{\epsilon, \aaa} + \delta \op{Id} \colon {\mathcal H}_- \to {\mathcal H}_-'.$$
The operator
$$\widetilde{D}_-^{0,0, \delta}= \frac{\partial \eta}{\partial s} + j_0 \frac{\partial \eta}{\partial t} + \begin{pmatrix} -1+\delta & 0 \\ 0 & \delta \end{pmatrix}$$
is Fredholm because for $\delta$ small its asymptotic operators are invertible. Moreover 
$\widetilde{D}_-^{0,0, \delta}$ has no spectral flow, and therefore $\op{ind}(\widetilde{D}_-^{0,0, \delta})=0$. Hence $D_-^{0,0, \delta}$ is also a Fredholm operator of index zero.

By elliptic regularity all elements of $\ker D_-^{0, 0,\delta}$ are smooth solutions of Equation
($J_{f}$), and from the Fourier series expansion \eqref{Fourier series for eta} we see that no such solution has the correct growth for $s \to \pm \infty$ to belong to ${\mathcal H}_{-, \delta}$. Then  $D_-^{0, 0,\delta}$ is injective and therefore, having index zero, is invertible. Since 
$$\|D_-^{\epsilon, \aaa, \delta} \xi -  D^{0,0, \delta}_- \xi \|_{h_\delta} \le \epsilon C \| \xi \|_{*, h_\delta}$$ 
for a constant $C$ which is independent of $\epsilon$ and $\aaa$ and invertibility is an open condition, for a fixed $\delta$, all operators $D_-^{\epsilon, \aaa, \delta}$ are invertible when the conditions of the lemma are met. The uniform bound on the norms of $(D_-^{\epsilon, \aaa, \delta})^{-1}$ then follows by the continuity of taking the inverse.
\end{proof}

\subsection{Setting up the gluing}  \label{subsection: setting up gluing}

The gluing setup will follow \cite{BH}, which in turn is based on \cite{HT2}. 

Define smooth cutoff functions
\begin{equation} 
\beta_+,\beta_-: \R\to [0,1]
\end{equation} 
such that $\beta_+ + \beta_-=1$ and
$\beta_+(s)=0$ for $s\leq -T_1$ and $\beta_+(s)=1$ for $s\geq -T_0-1$. The cutoff functions $\beta_\pm$ will depend on the parameter $b$ and will be denoted by $\beta_\pm^b$ when we want to make the dependence explicit. Let us write $-T_1(b)$ for $-T_1$ viewed as a function of $b$. Then:
\be
\item[($\boldsymbol{\dagger}_3$)] If $a>0$ is fixed but we take $b\to 0$, then $-T_1(b)\to -\infty$ and we take $\beta_\pm^b$ such that $|(\beta^b)'_\pm|_{C^0}\to 0$ as $b\to 0$. 
\ee

Let $\psi_+$ and $\psi_-^{\epsilon,\aaa}$ be sections in ${\mathcal H}_{+, \delta}$ and ${\mathcal H}_{-, \delta}$ of sufficiently small norm. The goal is to deform the pregluing $u_*^{\epsilon, \aaa}$ to 
\begin{equation}
u^{\epsilon, \aaa}=\op{exp}_{u_*^{\epsilon, \aaa}}(\beta_+ \psi_+ + \beta_- \psi_-^{\epsilon, \aaa}),
\end{equation}
and solve for $\psi_+$ and $\psi_-^{\epsilon, \aaa}$ in the equation $\Pi\bdry_{N_*^{\epsilon, \aaa}, f_\epsilon}(\beta_+ \psi_+ + \beta_- \psi_-^{\epsilon, \aaa})=0$ when $\epsilon$ is sufficiently small. (Recall the identifications of the normal bundles made at the beginning of Section~\ref{subsection: function spaces} that justify writing $\beta_+ \psi_+ + \beta_- \psi_-^{\epsilon,\aaa}$.) The solutions will determine  functions $\mathfrak{p}_\epsilon \colon [-\aaa_0, \aaa_0] \to \R$ such that
\begin{equation}\label{eq: almost obstruction bundle}
  \bdry_{N_*, f_\epsilon}(\beta_+ \psi_+ + \beta_- \psi_-^{\epsilon, \aaa})=\mathfrak{p}_\epsilon(\aaa) \nu.
\end{equation}
Finally, we will solve the equation $\mathfrak{p}_\epsilon(\aaa)=0$.

In the following lemmas we will repeatedly use Taylor expansions of the form
\begin{align*}
\tag{$I$} \phi(\mathbf{x})= & \phi(\mathbf{0}) + \sum \limits_i \ell_i(\mathbf{x})x_i \\
\tag{$II$} \phi(\mathbf{x})= & \phi(\mathbf{0}) + \sum  \limits_i \partial_i \phi(\mathbf{0})x_i + \sum \limits_{j,k} q_{j,k}(\mathbf{x})x_jx_k
\end{align*}  
for a smooth function $\phi : \R^n \to \R$. 

\begin{lemma} \label{lemma: equivalent 1}
  Over the domain $(-\infty,-T_0]\times S^1$, we can expand
  \begin{align} \label{expansion}
    \overline \bdry_{N_*^{\epsilon, \aaa}, f_\epsilon} (\beta_+ \psi_+ + & \beta_- \psi_-^{\epsilon, \aaa}) = \\
    \nonumber & \mathcal{D}_\epsilon \eta_*^{\epsilon, \aaa} + \beta_+( D_+\psi_+ +  \mathcal{L}_+(\psi_+,\psi_-^{\epsilon, \aaa})+\mathcal{Q}_+ (\psi_+,\psi_-^{\epsilon, \aaa})) \\
    \nonumber & + \beta_- (D_-^{\epsilon, \aaa}\psi_-^{\epsilon, \aaa} +\mathcal{L}_-(\psi_+,\psi_-^{\epsilon, \aaa})+ \mathcal{Q}_- (\psi_+,\psi_-^{\epsilon, \aaa})),
  \end{align}
where:
\be
\item $D_+ \psi_+= {\bdry \psi_+ \over \bdry s} + j_0{\bdry \psi_+ \over \bdry t} - {\bf H}f(\eta_+) \psi_+$.
\item $D_-^{\epsilon, \aaa}\psi_-^{\epsilon, \aaa} = {\bdry \psi_-^{\epsilon, \aaa} \over \bdry s} + j_0{\bdry \psi_-^{\epsilon, \aaa}\over \bdry t}- {\bf H}f_\epsilon(\eta_-^{\epsilon, \aaa}) \psi_-^{\epsilon, \aaa}$.
\item $\mathcal{L}_\pm (\psi_+,\psi_-^{\epsilon, \aaa})$ are linear in $\psi_+$ and 
$\psi_-^{\epsilon, \aaa}$ with coefficients which are smooth coefficients of $\psi_+$ and $\psi_-^{\epsilon, \aaa}$,  are supported in $[-T_1, -T_0] \times S^1$ and $(- \infty, -T_0-1] \times S^1$ respectively,  and satisfy  
\begin{align}\label{eqn: linear term}
|\mathcal{L}_\pm (\psi_+(x),\psi_-^{\epsilon, \aaa}(x))| &< (c_1(a)\epsilon +c_2(b)) \cdot (|\psi_+(x)|+ |\psi_-^{\epsilon, \aaa}(x)|),
\end{align} 
at every point $x$ of the domain, $c_1(a)$ is a constant which depends only on $a$, $c_2(b)$ depends only on $b$, and $\lim_{b\to 0} c_2(b)=0$. 
\item ${\mathcal Q}_\pm$ are quadratic functions of $\psi_+$ and $\psi_-^{\epsilon, \aaa}$ with coefficients which are smooth functions of $\psi_+$ and $\psi_-^{\epsilon, \aaa}$, and there exists $C>0$ such that
\begin{equation} \label{eqn: quadratic term}
|\mathcal{Q}_\pm(\psi_+(x),\psi_-^{\epsilon, \aaa}(x))| < C (|\psi_+(x)|^2 +|\psi_-^{\epsilon, \aaa}(x)|^2)
\end{equation}
at every point $x$ of the domain.
\item $\mathcal{L}_+=0$ and $\mathcal{Q}_+=0$ for $s\leq -T_1$ and $\mathcal{L}_-, \mathcal{Q}_-$ can be extended smoothly to $\mathcal{L}_-=0$ and $\mathcal{Q}_-=0$ for $s\geq -T_0$. 
\ee
\end{lemma}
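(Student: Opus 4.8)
\textbf{Proof strategy for Lemma~\ref{lemma: equivalent 1}.}
The statement is a bookkeeping exercise: one takes the normal $\overline\bdry$-equation written at the end of Section~\ref{ss: linearized operators} and expands it around the preglued configuration $\eta_*^{\epsilon,\aaa}$, separating the contribution into (a) the error term $\mathcal{D}_\epsilon\eta_*^{\epsilon,\aaa}$ already identified in Lemma~\ref{lemma: where d bar is zero}, (b) the two linearized operators $D_+$ and $D_-^{\epsilon,\aaa}$ applied to the two perturbations, (c) a linear correction $\mathcal{L}_\pm$ coming from the fact that the pregluing is not exactly holomorphic and that the cutoff functions $\beta_\pm$ do not commute with $\overline\bdry$, and (d) a genuinely quadratic remainder $\mathcal{Q}_\pm$. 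The plan is to carry this out by a direct Taylor expansion, as in \cite[Section~5]{HT2} and \cite{BH}, using the two forms ($I$) and ($II$) recorded just before the lemma.

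First I would work on the region $[-T_1,-T_0]\times S^1$, where $\eta_*^{\epsilon,\aaa}=\eta_+ + (\text{small explicit terms in }\epsilon,\aaa)$ by Equation~\eqref{equation for eta star}. Here $\overline\bdry_{N_*^{\epsilon,\aaa},f_\epsilon}\xi = \mathcal{D}_\epsilon(\eta_*^{\epsilon,\aaa}+\xi)$ by Claim~\ref{leftovers}, so I expand $\nabla f_\epsilon$ at $\eta_*^{\epsilon,\aaa}$ to second order using ($II$): the zeroth-order piece is $\mathcal{D}_\epsilon\eta_*^{\epsilon,\aaa}$, the first-order piece gives $-{\bf H}f_\epsilon(\eta_*^{\epsilon,\aaa})\xi$, and the second-order piece is $\mathcal{Q}$. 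Then I rewrite $-{\bf H}f_\epsilon(\eta_*^{\epsilon,\aaa})\xi = -{\bf H}f(\eta_+)\xi + (\text{difference})\cdot\xi$; the difference term is linear in $\xi$ and its coefficient is controlled by $|\eta_*^{\epsilon,\aaa}-\eta_+| + |f_\epsilon - f|$, which by ($\boldsymbol{\dagger}_1$) and Equation~\eqref{equation for eta star} is bounded by $c_1(a)\epsilon$ on the part of the domain where $f_\epsilon\ne f$; the contribution of $\beta'_\pm$ from differentiating $\beta_+\psi_+ + \beta_-\psi_-^{\epsilon,\aaa}$ adds a further linear term with coefficient $|\beta'_\pm|_{C^0}$, which is $c_2(b)$ with $\lim_{b\to0}c_2(b)=0$ by ($\boldsymbol{\dagger}_3$). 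This yields item (3) and the estimate~\eqref{eqn: linear term}. For the region $(-\infty,-T_0-1]\times S^1$ the argument is the same with the roles of $\eta_+$ and $\eta_-^{\epsilon,\aaa}$ interchanged, and on $[-T_0-1,-T_0]\times S^1$ (where $\beta_-$ is already $0$ but $\beta_+\psi_+$ plus the $\tilde\partial_\theta$-shift live) one checks directly that only $\mathcal{L}_+$ and $\mathcal{Q}_+$ are nonzero. The quadratic estimate~\eqref{eqn: quadratic term} is the standard pointwise bound on the Hessian of $\nabla f_\epsilon$, uniform because $f_\epsilon\to f$ in $C^\infty$ and everything happens in the fixed compact region $A_c$; the normal-bundle version over the ``thick'' part of $\dot F$ is the usual expansion of $\overline\bdry_{N_+,f}$ from Equations~\eqref{eqn: normal d bar equation for u plus}--\eqref{eqn: initial conditions}, exactly as in \cite{HT2}. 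Finally item (5) is immediate from the support properties of $\beta_\pm$ and the already-established fact that $\mathcal{L}_\pm,\mathcal{Q}_\pm$ vanish where the pregluing is honestly holomorphic.

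I do not expect any real obstacle here; the content is entirely in keeping track of which terms are linear versus quadratic and in extracting the two independent smallness parameters, $\epsilon$ (controlling how far $f_\epsilon$ is from $f$ and hence $\mathcal{D}_\epsilon\eta_*^{\epsilon,\aaa}-\mathcal{D}_0\eta_+$) and $b$ (controlling $|\beta'_\pm|_{C^0}$ via ($\boldsymbol{\dagger}_3$)). The only point requiring a modicum of care is making sure the coefficient of the linear term really splits as $c_1(a)\epsilon + c_2(b)$ rather than, say, something depending on $T_1(b)$ in a way that could blow up: this is precisely why the perturbation $\eta_*^{\epsilon,\aaa}-\eta_+$ on $[-T_1,-T_0]$ was arranged in Equation~\eqref{equation for eta star} to have size $O(\epsilon(s+T_1))=O(\epsilon T_1)$, which is killed by ($\boldsymbol{\dagger}_2$), together with the $\epsilon$-multiple of $\mathbf{H}$-differences; and the $\beta'$-contribution is handled separately and is pure $c_2(b)$. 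Once this expansion is in hand, the subsequent contraction-mapping argument solving $\Pi\overline\bdry_{N_*^{\epsilon,\aaa},f_\epsilon}(\beta_+\psi_+ + \beta_-\psi_-^{\epsilon,\aaa})=0$ and Equation~\eqref{eq: almost obstruction bundle} proceeds along standard lines using Lemma~\ref{bounded inverses}.
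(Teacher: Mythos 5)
The overall Taylor-expansion strategy you describe is indeed the paper's own approach, and the bookkeeping structure you sketch (zeroth-order error $\mathcal{D}_\epsilon\eta_*^{\epsilon,\aaa}$, two linearizations, linear correction, quadratic remainder) is correct. But there is a genuine gap in the way you obtain the coefficient $c_1(a)\epsilon$ in~\eqref{eqn: linear term}, and it is the one nontrivial point in the lemma.

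You estimate the difference $\mathbf{H}f(\eta_+)-\mathbf{H}f_\epsilon(\eta_*^{\epsilon,\aaa})$ via Lipschitz continuity of the Hessian, arguing that it is controlled by $|\eta_*^{\epsilon,\aaa}-\eta_+|+|f_\epsilon-f|$ and hence by $c_1(a)\epsilon$. But from Equation~\eqref{equation for eta star}, $|\eta_*^{\epsilon,\aaa}-\eta_+|$ on $[-T_1,-T_0]\times S^1$ is of size $O(\epsilon T_1(b)+\aaa_0)$, not $O(\epsilon)$; since $T_1(b)\to\infty$ as $b\to 0$, this contribution does \emph{not} split as $c_1(a)\epsilon+c_2(b)$ with $c_1$ independent of $b$. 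You even flag this worry at the end, but then dismiss it on the grounds that $\epsilon T_1(b)$ is ``killed by ($\boldsymbol{\dagger}_2$)'' --- that is a misattribution: condition ($\boldsymbol{\dagger}_2$) controls a global quantity entering Lemma~\ref{lemma: estimate for D eta star} and Proposition~\ref{wildfire}, not the pointwise coefficient in~\eqref{eqn: linear term}. The way the paper actually gets a $b$-independent $c_1$ is much simpler and you do not mention it: by construction $f(y,\theta)=\tfrac12 y^2$ on $A_a$, so $\mathbf{H}f$ is a \emph{constant} matrix there, hence $\mathbf{H}f(\eta_+)=\mathbf{H}f(\eta_*^{\epsilon,\aaa})$ identically and
\[
\mathbf{H}f(\eta_+)-\mathbf{H}f_\epsilon(\eta_*^{\epsilon,\aaa})=-\epsilon\,\mathbf{H}g(\eta_*^{\epsilon,\aaa}),\qquad g(y,\theta)=\phi(y)\overline{g}_{\mathcal N}(\theta),
\]
so the $\eta$-dependence cancels exactly and $c_1=|\mathbf{H}g|_{C^0}$. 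The same constancy is also needed for the $\mathcal{L}_-$ term (comparing $\mathbf{H}f_\epsilon(\eta_-^{\epsilon,\aaa})$ with $\mathbf{H}f_\epsilon(\eta_*^{\epsilon,\aaa})$). Without this observation the estimate in item (3) of the lemma, as stated, does not follow from your argument.

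Two smaller remarks. First, the naive decomposition you get from the Taylor expansion does not directly have the support properties stated in item (3); the paper rearranges the intermediate terms $\mathcal{L}_\pm^{(1)}$ and $\mathcal{Q}$ using $(\beta_++\beta_-)^2=1$ to force each $\mathcal{L}_\pm$, $\mathcal{Q}_\pm$ into its region, and you should say something to this effect. Second, the discussion of the ``thick part'' of $\dot F$ and Equations~\eqref{eqn: normal d bar equation for u plus}--\eqref{eqn: initial conditions} belongs to Lemma~\ref{lemma: equivalent 2}, not to this lemma, whose conclusion is restricted to $(-\infty,-T_0]\times S^1$.
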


\begin{proof}
Over the domain $(-\infty,-T_0]\times S^1$, we have 
$$\overline \bdry_{N_*^{\epsilon, \aaa}, f_\epsilon} (\beta_+ \psi_+ + \beta_- \psi_-^{\epsilon, \aaa})=\mathcal{D}_\epsilon  (\eta_*^{\epsilon, \aaa}+ \beta_+ \psi_+ + \beta_- \psi_-^{\epsilon, \aaa})$$ 
by Claim \ref{leftovers}. 
Writing $\eta^{\epsilon, \aaa}=\eta_*^{\epsilon, \aaa}+ \beta_+ \psi_+ + \beta_- \psi_-^{\epsilon, \aaa}$ we expand
$$\mathcal{D}_\epsilon \eta^{\epsilon, \aaa}={\bdry\over \bdry s}(\eta_*^{\epsilon, \aaa} + \beta_+\psi_+ + \beta_-\psi_-^{\epsilon, \aaa}) + j_0 {\bdry \over \bdry t}(\eta_*^{\epsilon, \aaa} + \beta_+\psi_+ + \beta_-\psi_-^{\epsilon, \aaa})  - \nabla f_\epsilon(\eta_*^{\epsilon, \aaa} + \beta_+\psi_+ + \beta_-\psi_-^{\epsilon, \aaa} ).$$
Using the Taylor expansion of type ($II$) we write
\begin{align*}
\nabla f_\epsilon(\eta_*^{\epsilon, \aaa} + \beta_+\psi_+ + \beta_-\psi_-^{\epsilon, \aaa} )&= \nabla f_\epsilon(\eta_*^{\epsilon, \aaa}) + \beta_+\mathbf{H}f_\epsilon(\eta_*^{\epsilon, \aaa}) \psi_+ + \beta_- \mathbf{H} f_\epsilon(\eta_*^{\epsilon, \aaa}) \psi_-^{\epsilon, \aaa} \\
& \qquad \qquad  -\mathcal{Q}(\beta_+\psi_+,\beta_-\psi_-^{\epsilon, \aaa}),
\end{align*}
where $\mathcal{Q}$ is a quadratic function of $\beta_+\psi_+,\beta_-\psi_-^{\epsilon, \aaa}$ with coefficients which are smooth functions of $\beta_+\psi_+,\beta_-\psi_-^{\epsilon, \aaa}$. 
Then
\begin{align}  \label{bunch of terms}
\mathcal{D}_\epsilon \eta^{\epsilon, \aaa} = &  \left({\bdry \eta_*^{\epsilon, \aaa} \over \bdry s}+j_0{\bdry \eta_*^{\epsilon, \aaa} \over \bdry t}-\nabla f_\epsilon(\eta_*^{\epsilon, \aaa}) \right)\\
\nonumber &  + \beta_+\left( {\bdry \psi_+\over \bdry s}+j_0 {\bdry \psi_+\over \bdry t}- \mathbf{H} f(\eta_+) \psi_+   \right)\\
\nonumber &  + \beta_-\left( {\bdry \psi_-^{\epsilon, \aaa}\over \bdry s}+j_0 {\bdry \psi_-^{\epsilon, \aaa}\over \bdry t}- \mathbf{H} f_\epsilon (\eta_-^{\epsilon, \aaa}) \psi_-^{\epsilon, \aaa}  \right)\\
\nonumber &  +\beta_+(\mathbf{H} f(\eta_+) -\mathbf{H} f_\epsilon (\eta_*^{\epsilon, \aaa}))\psi_+  
+ \beta_-(\mathbf{H} f_\epsilon (\eta_-^{\epsilon, \aaa}) - \mathbf{H} f_\epsilon (\eta_*^{\epsilon, \aaa})) \psi_-^{\epsilon, \aaa} \\
\nonumber & \qquad \qquad \qquad  + \beta_+'(s) \psi_+ + \beta_-'(s) \psi_-^{\epsilon, \aaa}\\
\nonumber &  +\mathcal{Q}(\beta_+\psi_+,\beta_-\psi_-^{\epsilon, \aaa}).
\end{align}
The right-hand side of the first line is $\mathcal{D}_\epsilon \eta_*$ and the second line is $ \beta_+ D_+\psi_+ + \beta_- D_-^{\epsilon, \aaa}\psi_-^{\epsilon, \aaa}$. Let us define $g(y, \theta)= \phi(y)\overline{g}_{\mathcal N}(\theta)$. We denote 
\begin{equation}
c_1= | \mathbf{H}g |_{C^0} \quad \mbox{and} \quad  c_2(b)= \max \{|\beta_+'|_{C^0}, |\beta_-' |_{C^0} \}.
\end{equation} 
Using the fact that $\eta_*^{\epsilon, \aaa}$, $\eta_+$ and $\eta_-^{\epsilon, \aaa}$ take values in $A_a$, where $\mathbf{H}f$ is constant, for $s \le -T_0$, the terms of the third line can be bounded as follows:
\begin{align} \label{clarabella}
  |\beta_+'(s) \psi_+(x) + \beta_-'(s) \psi_-^{\epsilon, \aaa}(x)|  \leq &  c_2(b)(|\psi_+(x)| +|\psi_-^{\epsilon, \aaa}(x)|)  \\ \nonumber
  |\beta_+(\mathbf{H} f(\eta_+) -\mathbf{H} f_\epsilon (\eta_*^{\epsilon, \aaa}))\psi_+(x)|  \le & \epsilon c_1 \beta_+|\psi_+(x)|, \\ \nonumber
  |\beta_-(\mathbf{H} f_\epsilon(\eta_-^{\epsilon, \aaa}) -\mathbf{H} f_\epsilon (\eta_*^{\epsilon, \aaa}))\psi_-^{\epsilon, \aaa}(x)|  \le & 2 \epsilon c_1 \beta_- |\psi_-^{\epsilon, \aaa}(x)|.
\end{align}

We then set
\begin{align} 
\label{def of L} \mathcal{L}_+^{(1)}(\psi_+,\psi_-^{\epsilon, \aaa})&= (\mathbf{H} f(\eta_+) -\mathbf{H} f_\epsilon(\eta_*^{\epsilon, \aaa}))\psi_+  + (\beta_+'(s) \psi_+ + \beta_-'(s) \psi_-^{\epsilon, \aaa}),\\
\label{def of L minus} \mathcal{L}_-^{(1)}(\psi_+,\psi_-^{\epsilon, \aaa})&=(\mathbf{H} f_\epsilon(\eta_-^{\epsilon, \aaa}) -\mathbf{H} f_\epsilon (\eta_*^{\epsilon, \aaa}))\psi_-^{\epsilon, \aaa}+ (\beta_+'(s) \psi_+ + \beta_-'(s) \psi_-^{\epsilon, \aaa}),
\end{align}
and $ \beta_\pm \mathcal{L}_\pm^{(1)}(\psi_+,\psi_-^{\epsilon, \aaa})$ satisfies Inequality \eqref{eqn: linear term}. The terms $\mathcal{L}_+^{(1)}$ and $\mathcal{L}_-^{(1)}$ are not necessarily supported in $[-T_1, -T_0] \times S^1$ and $(- \infty, -T_0-1] \times S^1$ respectively, and therefore we rearrange
\begin{align*}
\beta_+\mathcal{L}_+^{(1)} +\beta_-\mathcal{L}_-^{(1)}&= \beta_+(\beta_++\beta_-)^2\mathcal{L}_+^{(1)}+  \beta_-(\beta_++\beta_-)^2\mathcal{L}_-^{(1)}\\
&= \beta_+\mathcal{L}_+(\psi_+,\psi_-^{\epsilon, \aaa})  + \beta_-\mathcal{L}_-(\psi_+,\psi_-^{\epsilon, \aaa}),
\end{align*}  
where
\begin{align}\label{eqn: defnitions of L plus and L minus}
\mathcal{L}_+(\psi_+,\psi_-^{\epsilon, \aaa})&=\beta_+^2 \mathcal{L}_+^{(1)} + 2\beta_+\beta_- \mathcal{L}_+^{(1)} + \beta_+\beta_- \mathcal{L}_-^{(1)},\\
\nonumber \mathcal{L}_-(\psi_+,\psi_-^{\epsilon, \aaa})&=\beta_+\beta_-\mathcal{L}_+^{(1)} + 2\beta_+\beta_-\mathcal{L}_-^{(1)} + \beta_-^2 \mathcal{L}_-^{(1)},
\end{align}
the same inequalities hold for $\mathcal{L}_\pm^{(1)}$ and $\mathcal{L}_\pm$ and (5) holds for $\mathcal{L}_\pm$. However, the constants $c_1$ and $c_2$ in the statement are closely related to the constants $c_1$ and $c_2$ in Equations \eqref{clarabella} but not exactly the same.
Finally we can decompose and rearrange so that 
$$\mathcal{Q}(\beta_+\psi_+,\beta_-\psi_-^{\epsilon, \aaa})= \beta_+ \mathcal{Q}_+(\psi_+,\psi_-^{\epsilon, \aaa}) + \beta_- \mathcal{Q}_-(\psi_+, \psi_-^{\epsilon, \aaa}),$$
Inequality~\eqref{eqn: quadratic term} holds, and (5) holds for $\mathcal{Q}_\pm$. This completes the proof of the lemma.
\end{proof}

In particular, Equation \eqref{eq: almost obstruction bundle} is satisfied on $(- \infty, - T_0] \times S^1$ if the pair of equations hold: 
\begin{gather} 
\label{eqn: plus} \mathcal{D}_\epsilon \eta_*^{\epsilon, \aaa} +  D_+ \psi_+   +\mathcal{L}_+(\psi_+,\psi_-^{\epsilon, \aaa}) + \mathcal{Q}_+ (\psi_+,\psi_-^{\epsilon, \aaa}) = \mathfrak{p}_\epsilon(\aaa)\nu,\\
\label{eqn: minus} \mathcal{D}_\epsilon \eta_*^{\epsilon, \aaa} +  D_-^{\epsilon, \aaa} \psi_-^{\epsilon, \aaa}   +\mathcal{L}_-(\psi_+,\psi_-^{\epsilon, \aaa}) + \mathcal{Q}_- (\psi_+,\psi_-^{\epsilon, \aaa}) =\aaa\nu.
\end{gather}
The term $\aaa\nu$ in the second equation is legitimate because $\nu$ is supported on $[-T_0-1,-T_0]\times S^1$ where $\beta_-=0$.  It was chosen to make $\mathcal{D}_\epsilon \eta_*^{\epsilon, \aaa}-\aaa\nu$ small (independent of $\aaa$) in the sense of Estimate~\eqref{bound for preglued curve}.

\begin{rmk} 
More in line with the obstruction gluing of \cite{HT1}, Equation \eqref{eq: almost obstruction bundle} can be split into:
\begin{gather*}
 \overline{D}^\delta_+ \psi_+   +\Pi(\mathcal{D}_\epsilon\eta_*^{\epsilon, \aaa} + \mathcal{L}_+(\psi_+,\psi_-^{\epsilon, \aaa}) + \mathcal{Q}_+ (\psi_+,\psi_-^{\epsilon, \aaa}))=0,\\
(1-\Pi)(\mathcal{D}_\epsilon\eta_*^{\epsilon, \aaa} + \mathcal{L}_+(\psi_+,\psi_-^{\epsilon, \aaa}) + \mathcal{Q}_+ (\psi_+,\psi_-^{\epsilon, \aaa}))= \mathfrak{p}_\epsilon(\aaa)\nu,\\
 D_-^{\epsilon, \aaa, \delta} \psi_-^{\epsilon, \aaa} +(\mathcal{D}_\epsilon \eta_*^{\epsilon, \aaa} -\aaa\nu)  +\mathcal{L}_-(\psi_+,\psi_-^{\epsilon, \aaa}) + \mathcal{Q}_- (\psi_+,\psi_-^{\epsilon, \aaa}) =0,
\end{gather*}
where $\Pi \colon \mathcal{H}_{0,g_\delta}(\dot F, \Lambda^{0,1} N_+) \to {\mathcal H}_{+,\delta}'$ is the projection from Section~\ref{ss: linearized operators} and the second equation is always satisfied.
\end{rmk}

We say that $\mathcal{Q}(\psi_+)$ is {\em type 1 quadratic} if it can be written as
$$\mathcal{Q}(\psi_+)= P(\psi_+) + Q(\psi_+)\cdot \nabla \psi_+,$$
where there exists a constant $C>0$ such that $|P(\psi_+(x))|< C|\psi_+(x)|^2$ and $|Q(\psi_+)(x)| \leq C|\psi_+(x)|$ at every point $x$ of the domain.

\begin{rmk}
 The reason for the different treatment of the term $\widetilde{\partial}_\theta$ compared to
  the other infinitesimal deformations of the map $u_+$ is that the term $\beta'(s)\widetilde{\partial}_\theta$ which would appear in Equation~\eqref{clarabella} cannot be made small in ${\mathcal H}_{0, g_\delta}(\dot F, \Lambda^{0,1}N_+)$ by choosing $b$ and $\epsilon$ small.
\end{rmk}

\begin{lemma}\label{lemma: equivalent 2}
Over the domain $\dot F- (-\infty,-T_0)\times S^1$ we can expand:
\begin{equation}\label{eqn: poire}
\overline \bdry_{N_*^{\epsilon, \aaa}, f_\epsilon} (\beta_- \psi_-^{\epsilon,\aaa} + \beta_+ \psi_+)= D_+ \psi_+ + \mathcal{Q}(\psi_+),
\end{equation}
where $\mathcal{Q}(\psi_+)$ is {\em type 1 quadratic}, and Equations~\eqref{expansion} and ~\eqref{eqn: poire} agree along $s=-T_0$.
\end{lemma}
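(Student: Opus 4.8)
\textbf{Proof proposal for Lemma~\ref{lemma: equivalent 2}.}

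The plan is to run the same expansion as in Lemma~\ref{lemma: equivalent 1}, but now on the ``thick'' part of the domain $\dot F - (-\infty, -T_0)\times S^1$, where the preglued curve $u_*^{\epsilon,\aaa}$ coincides with $u_+$ and the perturbed function $f_\epsilon$ coincides with $f$ (since by $(\boldsymbol{\dagger}_1)$ the $y$-coordinate of $\eta_+$ exceeds $2b_0$ for $s\geq -T_0$, and $f_\epsilon = f$ away from $\{|y|\leq 2b_0\}$ by Equation~\eqref{eqn: f and f epsilon} and (P3')). First I would recall that on this region $\beta_- = 0$ and $\beta_+ \equiv 1$ near $s = -T_0$, so that $\beta_-\psi_-^{\epsilon,\aaa} + \beta_+\psi_+ = \psi_+$ there; hence the section being plugged in is genuinely a section of $N_+$ and we may use the normal $\overline\bdry$-equation \eqref{eqn: normal d bar equation for u plus} for $u_+$ directly, rather than the coordinate form $\mathcal{D}_\epsilon$ valid only in the flat region.

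The key computation is then to linearize $\overline\bdry_{N_+,f}$ at the zero section. Writing $\overline\bdry_{N_+,f}\psi_+$ using Equation~\eqref{eqn: normal d bar equation for u plus} and applying the Taylor expansions of type $(I)$ and $(II)$ to the coefficient functions $a(\sigma,\tau,\xi)$, $b(\sigma,\tau,\xi)$, and $X(\sigma,\tau,\xi)$ around $\xi = 0$, together with the initial conditions \eqref{eqn: initial conditions} ($a(\sigma,\tau,0)=0$, $b(\sigma,\tau,0)=1$, $X(\sigma,\tau,0)=0$), one gets
$$\overline\bdry_{N_+,f}\psi_+ = D_+\psi_+ + \mathcal{Q}(\psi_+),$$
where the linear part $D_+\psi_+$ is precisely $\tfrac{\partial\psi_+}{\partial\sigma} + j_0\tfrac{\partial\psi_+}{\partial\tau} - {\bf H}f(\eta_+)\psi_+$ in the appropriate frame (matching item~(1) of Lemma~\ref{lemma: equivalent 1} since the two expressions agree along $s=-T_0$ where the flat coordinates are valid), and the remainder $\mathcal{Q}(\psi_+)$ collects all terms of the form $(\text{smooth in }\psi_+)\cdot\psi_+^2$ coming from the Taylor remainder of $a,b,X$, as well as terms of the form $(\text{smooth in }\psi_+,\,O(\psi_+))\cdot\nabla\psi_+$ coming from $a(\sigma,\tau,\xi)\partial_\sigma\xi$ and $b(\sigma,\tau,\xi)\partial_\tau\xi$ once the constant part $b(\sigma,\tau,0)\partial_\tau\xi = \partial_\tau\xi$ has been absorbed into $D_+$. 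This is exactly the structure of a type~1 quadratic term: $\mathcal{Q}(\psi_+) = P(\psi_+) + Q(\psi_+)\cdot\nabla\psi_+$ with $|P(\psi_+(x))| < C|\psi_+(x)|^2$ and $|Q(\psi_+)(x)| \leq C|\psi_+(x)|$ pointwise, the bound $C$ depending only on the $C^2$-norms of $a,b,X$, which are uniformly bounded on the compact-modulo-ends curve $u_+$.

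Finally I would check the matching assertion along $s = -T_0$: on the overlap of the two coordinate descriptions the flat model of Lemma~\ref{lemma: equivalent 1} and the normal-bundle model used here both compute $\overline\bdry_{J_f}\exp_{u_+}\psi_+$, so their linear parts and quadratic remainders agree there; in particular on $\{s=-T_0\}$ we have $\beta_+ = 1$, $\beta_- = 0$, and $\mathcal{D}_\epsilon\eta_*^{\epsilon,\aaa} = 0$ (as $\overline\bdry_{J_{f_\epsilon}}u_*^{\epsilon,\aaa}$ vanishes there by Lemma~\ref{lemma: where d bar is zero}), so the right-hand side of \eqref{expansion} reduces to $D_+\psi_+ + \mathcal{L}_+(\psi_+,0) + \mathcal{Q}_+(\psi_+,0)$, which by item~(5) of Lemma~\ref{lemma: equivalent 1} ($\mathcal{L}_+ = 0$, $\mathcal{Q}_+$ reinterpretable) matches $D_+\psi_+ + \mathcal{Q}(\psi_+)$ up to reabsorbing the (type~1 quadratic) remainder. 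I do not expect any serious obstacle here: the only minor care needed is bookkeeping the two coordinate systems so that the ``$D_+$'' appearing in this lemma is literally the same operator as in Lemma~\ref{lemma: equivalent 1}, and verifying that no first-order-in-$\psi_+$ term survives in $\mathcal{Q}(\psi_+)$ — which is guaranteed by the initial conditions \eqref{eqn: initial conditions}.
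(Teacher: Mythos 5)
Your proof is correct and follows essentially the same route as the paper's: restrict to the thick part where $\beta_+\equiv1$, $\beta_-\equiv0$, and $u_*^{\epsilon,\aaa}=u_+$, then Taylor-expand $a,b,X$ in Equation~\eqref{eqn: normal d bar equation for u plus} using the initial conditions \eqref{eqn: initial conditions} to split off $D_+\psi_+$ from a type~1 quadratic remainder, and check agreement along $s=-T_0$. Two small remarks. First, the paper invokes only Taylor expansion of type $(I)$ (applied to $a$, $b$, $X$), whereas you invoke both types $(I)$ and $(II)$; these are equivalent here (type $(II)$ is type $(I)$ applied twice), so this is cosmetic, but it is worth noticing that type $(I)$ alone suffices once one expands $\ell_X(\xi)$ itself about $\xi=0$ to isolate $\ell_X(0)\xi$ from the genuinely quadratic piece. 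Second, in your matching discussion you cite item (5) of Lemma~\ref{lemma: equivalent 1} for $\mathcal{L}_+=0$ at $s=-T_0$; item (5) only asserts this for $s\le -T_1$. The vanishing at $s=-T_0$ is nonetheless true (there $\beta_\pm'=0$ and $\mathbf{H}f_\epsilon(\eta_*^{\epsilon,\aaa})=\mathbf{H}f(\eta_+)$ since $\eta_+$ has $y$-coordinate $>2b_0$), but the citation is not quite right; the paper instead derives agreement directly from the explicit form of $\exp_{u_+}$ on $s\le -T_0$, which is cleaner since it shows both expansions compute the identical expression in the identical trivialization.
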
 

\begin{proof}
Over the domain $\dot F- (-\infty,-T_0)\times S^1$, $\beta_+=1$, $\beta_-=0$, $u_*^{\epsilon, \aaa}=u_+$ and $u^{\epsilon, \aaa}= \op{exp}_{u_*^{\epsilon, \aaa}}\psi_+=\op{exp}_{u_+} \psi_+$.  Hence $\psi_+$ satisfies Equation~\eqref{eqn: normal d bar equation for u plus} with $\psi_+$ instead of $\xi$ and $(\sigma,\tau)=(s,t)$. Equation~\eqref{eqn: poire} then follows from Equation~\eqref{eqn: normal d bar equation for u plus} together with \eqref{eqn: initial conditions} by applying the Taylor expansion of type ($I$) to $a$, $b$ and $X$. The agreement of Equations~\eqref{expansion} and ~\eqref{eqn: poire} along $s=-T_0$ is a consequence of the definition of $\op{exp}_{u_+}$ for $s\leq -T_0$.
\end{proof}

\subsection{Proof of Theorem~\ref{thm: 1}} \label{subsection: gluing 1}

In this subsection and the next, we use the convention that constants such as $C$, $c_1$, $c_2(b)$ may change from line to line when making estimates. Recall that 
\begin{equation}
\lambda:=\min (\lambda_1,|\lambda_{-1}|) > 5\delta.
\end{equation}

\begin{lemma}\label{lemma: estimate for D eta star}
There exists a constant $C>0$ such that
\begin{equation}\label{bound for preglued curve}
\|\mathcal{D}_\epsilon \eta_*^{\epsilon,\aaa}  - \aaa \nu \|_{g_\delta}\leq C (e^{(\delta-\lambda)T_2(\epsilon)}+\epsilon T_1e^{\delta T_1}).
\end{equation}
%This in turn implies: 
%\begin{equation}\label{bound for preglued curves Part II}
%\|\mathcal{D}_\epsilon \eta_*^{\epsilon,\aaa} \|_{g_\delta}\leq C (e^{(\delta-\lambda)T_2(\epsilon)}+\epsilon T_1e^{\delta T_1} + \aaa_0).
%\end{equation}
\end{lemma}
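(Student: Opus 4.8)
The quantity $\mathcal{D}_\epsilon \eta_*^{\epsilon,\aaa} - \aaa \nu$ is the pregluing error, and the plan is to estimate it region by region using the description of where $\overline\bdry_{J_{f_\epsilon}} u_*^{\epsilon,\aaa}$ is supported from Lemma~\ref{lemma: where d bar is zero}. By that lemma (applied in the normal-bundle coordinates, via Claim~\ref{leftovers}), $\mathcal{D}_\epsilon \eta_*^{\epsilon,\aaa}$ vanishes on $\dot F - (-\infty,-T_0]\times S^1$ (up to the harmless $C\epsilon$ term from the thick part), on $(-\infty,-T_2]\times S^1$ (where $u_*^{\epsilon,\aaa}$ is the holomorphic lift of a gradient trajectory), and on $[-T_2+1,-T_1]\times S^1$ (the computation at the end of the proof of Lemma~\ref{lemma: where d bar is zero} showing $\mathcal{D}_\epsilon(\eta_-^{\epsilon,\aaa}+\eta_+)=0$ there). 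So the error is concentrated on two pieces: the ``gluing collar'' $[-T_1,-T_0]\times S^1$ and the ``turn-off region'' $[-T_2,-T_2+1]\times S^1$. Since $\nu$ is supported on $[-T_0-1,-T_0]\times S^1$ and was chosen (see the remark after Equations~\eqref{eqn: plus}--\eqref{eqn: minus}) precisely to cancel the dominant non-decaying part of $\mathcal{D}_\epsilon\eta_*^{\epsilon,\aaa}$ coming from the $\beta(-s-T_0)(0,\aaa)$ bump, the subtraction $-\aaa\nu$ kills the worst term; what remains on the collar is the error produced by the cutoff $\beta(\tfrac{s+T_0}{-T_1+T_0})$ multiplying $(0,\epsilon(s+T_1))$, plus the interaction of $\nabla f_\epsilon - \nabla f$ with $\eta_+$ in the region $\{|y|\le 2b_0\}$.

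\textbf{Key steps.} First I would write out $\mathcal{D}_\epsilon\eta_*^{\epsilon,\aaa}$ explicitly on $[-T_1,-T_0]\times S^1$ using Equation~\eqref{equation for eta star}: there $\eta_*^{\epsilon,\aaa} = \eta_+ + \beta(\tfrac{s+T_0}{-T_1+T_0})(0,\epsilon(s+T_1)) + \beta(-s-T_0)(0,\aaa)$. Applying $\mathcal{D}_\epsilon = \partial_s + j_0\partial_t - \nabla f_\epsilon$ and using $\mathcal{D}_0\eta_+=0$ together with the Taylor expansion of $\nabla f_\epsilon$ around $\eta_+$, the leading contributions are: (a) the $s$-derivative hitting the cutoff $\beta(\tfrac{s+T_0}{-T_1+T_0})$, which produces a term of size $O(\epsilon \cdot T_1 \cdot |\beta'| \cdot \tfrac{1}{T_1-T_0})$ but supported on a set of length $\sim T_1$ at $s$-values where the weight $g_\delta = e^{\delta|s+T_0|}$ is at most $e^{\delta T_1}$; (b) the term $-\aaa\nu = \aaa\beta'(-s-T_0)\partial_\theta$ from $s$-differentiating the $\aaa$-bump, which is exactly cancelled by subtracting $\aaa\nu$ (and the $\theta$-derivative of that bump vanishes since it is $\theta$-independent); (c) the discrepancy $\nabla f - \nabla f_\epsilon$ evaluated at $\eta_+$ where $\eta_+$ dips into $\{|y|\le 2b_0\}$, which by ($\boldsymbol\dagger_1$) happens only for $s\le -T_1$, contributing the $C\epsilon$-type term already absorbed. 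Assembling these and computing the weighted Morrey norm \eqref{morrey norm 0} of each piece — using that the relevant $s$-range in the collar has $|s+T_0|\le T_1$ and the integrand is $O(\epsilon)$ pointwise there — yields a bound of the form $C\epsilon\, T_1\, e^{\delta T_1}$ for the collar part. Second, on $[-T_2,-T_2+1]\times S^1$ the error comes from $\beta(s+T_2)$ turning off the $\eta_+$ contribution in $\eta_*^{\epsilon,\aaa} = \eta_-^{\epsilon,\aaa} + \beta(s+T_2)\eta_+$: since $\mathcal{D}_\epsilon$ is still effectively linear there (values lie in $B$) and $\mathcal{D}_\epsilon(\eta_-^{\epsilon,\aaa})=0$, the error is $\beta'(s+T_2)\eta_+ + \beta(s+T_2)\mathcal{D}_0\eta_+ = \beta'(s+T_2)\eta_+$, of pointwise size $O(|\eta_+(s,t)|)$ with $s\approx -T_2$. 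By the Fourier expansion \eqref{eqn: expantion of eta_+}, $|\eta_+(s,t)| = O(e^{\lambda_1 s}) = O(e^{-\lambda T_2})$ (recalling $c_1>0$, and $\lambda = \min(\lambda_1,|\lambda_{-1}|)$), while the weight there is $g_\delta = e^{\delta(T_2-T_0)} = O(e^{\delta T_2})$; multiplying and integrating over a region of bounded length gives $C\,e^{(\delta-\lambda)T_2}$. Adding the two bounds gives \eqref{bound for preglued curve}.

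\textbf{Main obstacle.} The routine part is the pointwise Taylor estimates; the step requiring genuine care is bookkeeping the weights on the collar $[-T_1,-T_0]$. One has to confirm that after subtracting $\aaa\nu$, no term of size $O(1)$ (or $O(\aaa)$) in the $g_\delta$-weighted norm survives, i.e.\ that every remaining term carries either an explicit $\epsilon$ or an exponentially decaying factor that beats $e^{\delta T_1}$; this is exactly what ($\boldsymbol\dagger_1$) (control of the support of $\eta_+$ relative to $T_0,T_1,B$) and the specific choice of the turn-on cutoff rate $\tfrac{1}{-T_1+T_0}$ are arranged to guarantee. A secondary subtlety is that $T_1 = T_1(b)$ and $T_2 = T_2(\epsilon)$ are not yet tied to each other at this point in the argument — the estimate \eqref{bound for preglued curve} is stated in terms of both — so the lemma must be proved uniformly in the allowed ranges of $b$ and $\epsilon$; only later, via ($\boldsymbol\dagger_2$) ($\lim_{b\to 0}\epsilon(b) e^{T_1(b)} T_1(b) = 0$) and the fact that $T_2(\epsilon)\to\infty$, does the right-hand side become small. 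I would therefore state the estimate with the constant $C$ made manifestly independent of $b,\epsilon,\aaa$ (it depends only on $u_+$, hence on $\mb$, and on the fixed data $a,b_0,\delta$), and flag that the smallness is extracted from the parameter choices, not from the lemma itself.
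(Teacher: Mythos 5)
Your proposal follows the paper's proof closely: decompose the support of the pregluing error into $[-T_1,-T_0]\times S^1$ and $[-T_2,-T_2+1]\times S^1$ via Lemma~\ref{lemma: where d bar is zero}, exploit the cancellation of the $\aaa\nu$ bump term, and estimate the two pieces by the exponential decay of $\eta_+$ near $s=-T_2$ and the $O(\epsilon)$ pointwise size of the collar error, arriving at the same bound $C(e^{(\delta-\lambda)T_2}+\epsilon T_1 e^{\delta T_1})$. One small bookkeeping correction in your item (c): on the collar $[-T_1,-T_0]$ the preglued map $\eta_*^{\epsilon,\aaa}$ does enter the strip $\{|y|\le 2b_0\}$ as $s$ approaches $-T_1$ (since $\eta_+(-T_1,\cdot)\approx(b,0)$ with $b<b_0$, by ($\boldsymbol\dagger_1$) and the definition of $T_1$), so the term $\epsilon\nabla g(\eta_*^{\epsilon,\aaa})$ is a genuine contribution on the collar itself --- the paper records it explicitly as $\epsilon\|\nabla g(\eta_*^{\epsilon,\aaa})\|_{g_\delta}$ in Estimate~\eqref{farfalle 2}; it is not the ``$C\epsilon$ from the thick part'' of Lemma~\ref{lemma: where d bar is zero}, which refers to $s\geq -T_0$. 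Since this term is $O(\epsilon e^{\delta T_1})$ in the weighted Morrey norm, it is dominated by your item (a) and your final bound is unchanged.
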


\begin{proof}
%Recall that we often write $\eta_*$ instead of $\eta_*^{\epsilon,\aaa}$.
By Lemma~\ref{lemma: where d bar is zero}, it suffices to estimate $\|\mathcal{D}_\epsilon \eta_*^{\epsilon, \aaa} \|_{g_\delta}$ on $[-T_2,-T_2+1]\times S^1$ and $[-T_1,-T_0]\times S^1$.  We will use the simple fact that the Morrey norm of a continuous function on a compact domain is dominated by the $C^0$ norm.

 First we estimate $\mathcal{D}_\epsilon \eta_*^{\epsilon,\aaa}  - \aaa \nu$ on $[-T_2,-T_2+1]\times S^1$, where $\nu=0$. By the definition of $\eta_*^{\epsilon, \aaa}$ (Equation~\eqref{equation for eta star}) and Equation ($J_{f_\epsilon}$), and with the understanding that all maps and norms are  restricted to $[-T_2, -T_2+1] \times S^1$ (on which $f_\epsilon(y,\theta)=\tfrac{1}{2}y^2+\epsilon \overline{g}_{\mathcal{N}}(\theta)$, $\nabla f_\epsilon=y\bdry_y + \epsilon\bdry_\theta$, and $\mathcal{D}_0$ is linear), we have
\begin{equation*}
{\mathcal D}_\epsilon (\eta_*^{\epsilon, \aaa})= {\mathcal D}_\epsilon (\eta_-^{\epsilon,\aaa})+ \beta(s+T_2) {\mathcal D}_0 (\eta_+) + \beta'(s+T_2) \eta_+ = \beta'(s+T_2) \eta_+
\end{equation*}
and therefore 
\begin{equation}\label{farfalle}
\|\mathcal{D}_\epsilon \eta_*^{\epsilon, \aaa} \|_{g_\delta} \leq C\left \|\textstyle\sum_{i=1}^\infty c_i e^{\lambda_i s} g_i(t)\right \|_{g_\delta} \leq C e^{(\delta-\lambda)(T_2(\epsilon)-T_0)} \leq Ce^{(\delta-\lambda)T_2(\epsilon)}.
\end{equation}

Next we estimate $\mathcal{D}_\epsilon \eta_*^{\epsilon,\aaa}  - \aaa \nu$ on $[-T_1,-T_0] \times S^1$. 
The restriction of $\eta_*^{\epsilon, \aaa}$ to $[-T_1, -T_0] \times S^1$ takes values in the region where $\nabla f_\epsilon$ no longer has the simple expression which leads to Equation ($J_{f_\epsilon})$, but from Equation \eqref{eqn: f and f epsilon} we obtain 
\begin{equation} \label{bruchi2}
{\mathcal D}_\epsilon(\eta_*^{\epsilon, \aaa})=\mathcal{D}_0(\eta_*^{\epsilon, \aaa})- \epsilon \nabla g(\eta_*^{\epsilon, \aaa}),
\end{equation}
where $g(y, \theta)= \phi(y) \overline{g}_{\mathcal N}(\theta)$. By the definition of $\eta_*^{\epsilon, \aaa}$ and $\nu$, Equation \eqref{bruchi2}, and with the understanding that all maps and norms are restricted to $[-T_1, -T_0] \times S^1$, we have
\begin{align} \label{farfalle 2}
\|\mathcal{D}_\epsilon \eta_*^{\epsilon, \aaa}+ \beta'(-s-T_0)\cdot(0,\aaa) \|_{g_\delta} & \le \| \tfrac{\bdry}{\bdry s}(\beta(\tfrac{s+T_0}{-T_1+T_0})(0, \epsilon (s+T_1)))\|_{g_\delta} + \epsilon \| \nabla g (\eta_*^{\epsilon, \aaa}) \|_{g_\delta} \\ \nonumber & \le C \epsilon T_1 e^{\delta T_1}.
\end{align}

Estimates~\eqref{farfalle} and \eqref{farfalle 2} imply Estimate~\eqref{bound for preglued curve}.
\end{proof}

\begin{rmk}
  Since $\mathcal{D}_\epsilon \eta_*^{\epsilon, \aaa} + \beta'(-s-T_0)\cdot(0,\aaa)$ is supported in $(- \infty, -T_0] \times S^1$, where $g_\delta$ and $h_\delta$ coincide, we can also regard it as an element of ${\mathcal H}_{-, \delta}'$ with norm
 $$\|\mathcal{D}_\epsilon \eta_*^{\epsilon, \aaa} + \beta'(-s-T_0)\cdot(0,\aaa) \|_{h_\delta}= \|\mathcal{D}_\epsilon \eta_*^{\epsilon, \aaa} + \beta'(-s-T_0)\cdot(0,\aaa) \|_{g_\delta}.$$
\end{rmk}

Let $(\overline{D}_+^\delta)^{-1}$ be the inverse of $\overline{D}_+^\delta$, viewed as a map to the orthogonal complement $\mathcal{H}_{+,\delta}^\perp$ of $\ker \overline D_+^\delta$, and let $(D_-^{\epsilon, \aaa, \delta})^{-1}$ be the inverse of $D_-^{\epsilon, \aaa, \delta}$. Recall that the norm of $(D_-^{\epsilon, \aaa, \delta})^{-1}$ is uniformly bounded in $\epsilon$ and $\aaa$ by Lemma \ref{bounded inverses}. Let
\begin{align}
\mathcal{B}_+ &= \mbox{closed ball of radius $\widetilde\epsilon$ in $\mathcal{H}_{+,\delta}^\perp$}, \\
\mathcal{B}_- & = \mbox{closed ball of radius $\widetilde \epsilon$ in ${\mathcal H}_{-, \delta}$},
\end{align} 
where the small constant $\widetilde\epsilon>0$ is to be determined more precisely later.

Let $\mathcal{I}_+: \mathcal{B}_+\times \mathcal{B}_-\to \mathcal{H}_{+,\delta}^\perp$ and $\mathcal{I}_-: \mathcal{B}_+\times \mathcal{B}_-\to \mathcal{H}_{-,\delta}$ be maps given by
\begin{align}
\mathcal{I}_+(\psi_+,\psi_-^{\epsilon, \aaa}) &= - (\overline D_+^\delta)^{-1} \Pi (\mathcal{F}_+(\psi_+,\psi_-^{\epsilon, \aaa})),\\
\mathcal{I}_-(\psi_+,\psi_-^{\epsilon, \aaa}) &= - (D_-^{{\epsilon, \aaa}, \delta})^{-1}(\mathcal{F}_-(\psi_+,\psi_-^{\epsilon, \aaa})),
\end{align}
where 
$$\mathcal{F}_+(\psi_+,\psi_-^{\epsilon, \aaa}):= \left\{  
\begin{array}{cl} 
\mathcal{D}_\epsilon \eta_*^{\epsilon, \aaa} + \mathcal{L}_+(\psi_+,\psi_-^{\epsilon, \aaa}) +\mathcal{Q}_+(\psi_+,\psi_-^{\epsilon, \aaa} ) & \mbox{ on $(-\infty,-T_0]\times S^1$}\\
\mathcal{Q}(\psi_+) & \mbox{ on $\dot F- (-\infty,-T_0]\times S^1$},
\end{array} \right.
$$
$$\mathcal{F}_-(\psi_+,\psi_-^{\epsilon, \aaa}):= \left\{
\begin{array}{cl}
\mathcal{D}_\epsilon \eta_*^{\epsilon, \aaa} -\aaa\nu + \mathcal{L}_-(\psi_+,\psi_-^{\epsilon, \aaa}) +\mathcal{Q}_-(\psi_+,\psi_-^{\epsilon, \aaa} ) & \mbox{ on $(-\infty,-T_0]\times S^1$}\\
0 & \mbox{ on $[-T_0,\infty)\times S^1$}.
\end{array} \right.
$$ 
Here the definitions of $\mathcal{F}_+(\psi_+,\psi_-^{\epsilon, \aaa})$ agree on $\{-T_0\}\times S^1$ by Lemma~\ref{lemma: equivalent 2} and the definitions of $\mathcal{F}_-(\psi_+,\psi_-^{\epsilon, \aaa})$ agree on $\{-T_0\}\times S^1$ by Lemma~\ref{lemma: equivalent 1}(5) and Lemma~\ref{lemma: where d bar is zero}. 

Solving Equations \eqref{eqn: plus} and \eqref{eqn: minus} is then equivalent to solving the equations
\begin{gather} \label{fixed point plus}
\psi_+ =  {\mathcal I}_+(\psi_+, \psi_-^{\epsilon, \aaa}), \\
\label{fixed point minus} 
\psi_-^{\epsilon, \aaa} =  {\mathcal I}_-(\psi_+, \psi_-^{\epsilon, \aaa}).
\end{gather}

The following two lemmas provide the necessary estimates to apply the contraction mapping theorem to Equations \eqref{fixed point plus} and \eqref{fixed point minus}.

\begin{noti}
  We will sometimes write:
\begin{equation}
\| \cdot \|_{*, \delta}:= \| \cdot \|_{*,g_\delta} \quad \mbox{or} \quad \| \cdot \|_{*, h_\delta}, 
\end{equation}
depending on the context.
\end{noti}

\begin{lemma} \label{qui}
If $(\psi_+, \psi_-^{\epsilon, \aaa}) \in {\mathcal B}_+ \times {\mathcal B}_-$, then 
\begin{align} \label{eqn: estimate for I plus}
&\| \mathcal{I}_\pm(\psi_+,\psi_-^{\epsilon, \aaa})\|_{*,\delta}  \leq   C (e^{(\delta-\lambda)T_2(\epsilon)}+\epsilon T_1(b)e^{\delta T_1(b)}) \\
\nonumber & + (c_1\epsilon + c_2(b))(\|\psi_+\|_{*,g_\delta} + \|\psi_-^{\epsilon, \aaa}\|_{*,h_\delta}) + C( \|\psi_+\|_{*,g_\delta}^2+ \|\psi_-^{\epsilon, \aaa}\|_{*,h_\delta}^2),
\end{align}
where $c_1$ is constant and $\lim_{b\to 0} c_2(b)=0$. 
\end{lemma}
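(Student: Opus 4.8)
\textbf{Proof plan for Lemma~\ref{qui}.} The estimate is a direct consequence of Lemma~\ref{lemma: estimate for D eta star} (which bounds $\|\mathcal{D}_\epsilon \eta_*^{\epsilon,\aaa} - \aaa\nu\|_{g_\delta}$), the bounds on the linear and quadratic terms from Lemma~\ref{lemma: equivalent 1}(3)--(4) and Lemma~\ref{lemma: equivalent 2}, and the uniform boundedness of the inverse operators $(\overline D_+^\delta)^{-1}$ and $(D_-^{\epsilon,\aaa,\delta})^{-1}$ (the latter from Lemma~\ref{bounded inverses}). The strategy is simply to expand $\mathcal{I}_\pm(\psi_+,\psi_-^{\epsilon,\aaa}) = -(\overline D_+^\delta)^{-1}\Pi(\mathcal{F}_+)$ or $-(D_-^{\epsilon,\aaa,\delta})^{-1}(\mathcal{F}_-)$, take Morrey norms, and estimate each of the three types of terms appearing in $\mathcal{F}_\pm$ separately.

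First I would fix the constants. Since $\overline D_+^\delta$ is an isomorphism onto its image ${\mathcal H}_{+,\delta}'$ (it is surjective with $\ker\subset\mathcal H_{+,\delta}$), its inverse from $\mathcal{H}_{+,\delta}^\perp$ has a bounded norm, say $\le C_+$; by Lemma~\ref{bounded inverses} the norms of $(D_-^{\epsilon,\aaa,\delta})^{-1}$ are uniformly bounded by some $C_-$ over $[-\aaa_0,\aaa_0]\times[0,\epsilon_0]$, once $\delta,\aaa_0,\epsilon_0$ are chosen small. The projection $\Pi$ has norm $1$. So it suffices to bound $\|\mathcal{F}_\pm(\psi_+,\psi_-^{\epsilon,\aaa})\|_{*,\delta}$ and then absorb $C_\pm$ into the constants $C$, $c_1$, $c_2(b)$ on the right-hand side. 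For $\mathcal{F}_+$: on $(-\infty,-T_0]\times S^1$ the term $\mathcal{D}_\epsilon\eta_*^{\epsilon,\aaa}$ contributes $\le C(e^{(\delta-\lambda)T_2(\epsilon)} + \epsilon T_1(b) e^{\delta T_1(b)})$ by Lemma~\ref{lemma: estimate for D eta star} (the $\aaa\nu$-correction is harmless here since $\nu\in\ker\Pi$, or one simply notes $\nu$ is supported where $\mathcal{F}_+$ is defined by the $\mathcal Q(\psi_+)$-branch and $\Pi\nu=0$); the term $\mathcal{L}_+(\psi_+,\psi_-^{\epsilon,\aaa})$ is, by the pointwise bound \eqref{eqn: linear term}, at most $(c_1(a)\epsilon+c_2(b))(|\psi_+|+|\psi_-^{\epsilon,\aaa}|)$ pointwise, so taking Morrey norms and using $\lim_{b\to 0}c_2(b)=0$ gives $\le (c_1\epsilon + c_2(b))(\|\psi_+\|_{*,g_\delta}+\|\psi_-^{\epsilon,\aaa}\|_{*,h_\delta})$; and the quadratic term $\mathcal{Q}_+$, by the pointwise bound \eqref{eqn: quadratic term} together with the Sobolev embedding Lemma~\ref{lemma: sobolev embedding} (which gives $\|\xi\|_{L^\infty}\le c\|\xi\|_*$, so $\||\xi|^2\| \le \|\xi\|_{L^\infty}\|\xi\| \le C\|\xi\|_*^2$), contributes $\le C(\|\psi_+\|_{*,g_\delta}^2+\|\psi_-^{\epsilon,\aaa}\|_{*,h_\delta}^2)$; on the complementary ``thick'' part $\dot F-(-\infty,-T_0]\times S^1$ the term $\mathcal{Q}(\psi_+)$ is type 1 quadratic, so the pointwise bounds $|P(\psi_+)|< C|\psi_+|^2$ and $|Q(\psi_+)\cdot\nabla\psi_+|\le C|\psi_+||\nabla\psi_+|$ combined with Lemma~\ref{lemma: sobolev embedding} again give $\le C\|\psi_+\|_{*,g_\delta}^2$ (here there is no weight and one uses that $g_\delta\equiv 1$ on this region). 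The estimate for $\mathcal{F}_-$ is identical, with the weight $h_\delta$ in place of $g_\delta$ on $(-\infty,-T_0]\times S^1$ — note that $\mathcal{L}_-,\mathcal{Q}_-$ are supported in $(-\infty,-T_0-1]\times S^1$ where $h_\delta=g_\delta$ — and with $0$ on $[-T_0,\infty)\times S^1$.

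Summing the three contributions for each of $\mathcal{I}_+$ and $\mathcal{I}_-$ and absorbing the operator-norm constants $C_\pm$ yields exactly \eqref{eqn: estimate for I plus}. I expect no genuine obstacle here: the only points requiring a little care are (i) confirming that the weighted Morrey norm is indeed multiplicative enough to convert pointwise bounds of the form $|\phi(x)|\le w(x)|\psi(x)|$ into $\|\phi\|\le |w|_{C^0}\|\psi\|$ when $w$ is a $C^0$ function comparable to $1$ on the relevant region — this follows from the definition \eqref{morrey norm 0}, \eqref{morrey norm one} — and (ii) keeping track of which weight ($g_\delta$ or $h_\delta$) is attached to which map, which is dictated by the support conditions in Lemma~\ref{lemma: equivalent 1}(3) and ~\ref{lemma: equivalent 1}(5). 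Everything else is bookkeeping, and the constants $C$, $c_1$ are allowed to change from line to line per the stated convention.
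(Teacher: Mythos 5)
Your proposal is correct and follows essentially the same route as the paper: bound the inverse operators $(\overline D_+^\delta)^{-1}$ and $(D_-^{\epsilon,\aaa,\delta})^{-1}$ (the latter uniformly via Lemma~\ref{bounded inverses}), then estimate the preglued-error, linear, and quadratic terms separately using Lemma~\ref{lemma: estimate for D eta star}, the pointwise bounds \eqref{eqn: linear term}--\eqref{eqn: quadratic term} from Lemma~\ref{lemma: equivalent 1}, and the Morrey--Sobolev embedding of Lemma~\ref{lemma: sobolev embedding} to convert $C^0$ factors into $\|\cdot\|_*$ factors. One aside is slightly off: $\nu$ is supported in $\{-T_0-1\le s\le -T_0\}\subset(-\infty,-T_0]\times S^1$, i.e.\ in the neck branch of $\mathcal F_+$ and not in the thick-part branch where $\mathcal F_+=\mathcal Q(\psi_+)$, but your primary justification $\Pi\nu=0$ is the correct one and suffices.
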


\begin{proof}
We will carry out estimates on the $(-\infty,-T_0]\times S^1$ portion, with the understanding that the norms are restricted to $(-\infty,-T_0]\times S^1$, where $g_\delta=h_\delta$. (This justifies the use of the weight $g_\delta$ throughout the proof, even where one should expect $h_\delta$.) The estimates on the $\dot F- (-\infty,-T_0]\times S^1$ portion, which involve only $\mathcal{I}_+$ and $\psi_+$, are straightforward and are left to the reader.

By the definitions of $\mathcal{I}_\pm(\psi_+, \psi_-^{\epsilon, \aaa}) $,
\begin{equation}\label{noia} 
\| {\mathcal I}_\pm(\psi_+, \psi_-^{\epsilon, \aaa}) \|_{*, g_\delta} \le C(\| \mathcal{D}_\epsilon \eta_*-\aaa \nu \|_{g_\delta} + \| \mathcal{L}_\pm(\psi_+,\psi_-^{\epsilon, \aaa}) \|_{g_\delta} + \|  \mathcal{Q}_\pm (\psi_+,\psi_-^{\epsilon, \aaa})\|_{g_\delta}),
\end{equation}
since $(\overline{D}_+^\delta)^{-1}$ is bounded, $(D_-^{{\epsilon, \aaa}, \delta})^{-1}$ are uniformly bounded, and $\Pi(\mathcal{D}_\epsilon \eta_*)= \Pi (\mathcal{D}_\epsilon \eta_*-\aaa\nu)$.

We will make frequent use of the estimate
\begin{equation} \label{estimate}
|\zeta|_{C^0}\leq C\|\zeta\|_{*, g_\delta},
\end{equation} 
which follows from Lemma~\ref{lemma: sobolev embedding} and $\| \zeta \|_* \le \| \zeta \|_{*, g_\delta}$ since $g_\delta >1$.

By Equation~\eqref{bound for preglued curve}, 
\begin{equation}\label{kyoho 1}
\|\mathcal{D}_\epsilon \eta_*^{\epsilon, \aaa}-\aaa \nu \|_{g_\delta}\leq C (e^{(\delta-\lambda)T_2(\epsilon)}+\epsilon T_1e^{\delta T_1}).
\end{equation}
Next, since $\mathcal{L}_\pm$ satisfies Estimate~\eqref{eqn: linear term},
\begin{align} \label{kyoho 2}
\|\mathcal{L}_\pm(\psi_+,\psi_-^{\epsilon, \aaa})\|_{g_\delta} & \leq (c_1\epsilon + c_2(b)) (\|\psi_+\|_{g_\delta} + \|\psi_-^{\epsilon, \aaa}\|_{g_\delta}).
\end{align}
Finally, since $\mathcal{Q}_\pm$ satisfies Estimate~\eqref{eqn: quadratic term},
\begin{align} \label{kyoho 3}
\|\mathcal{Q}_\pm(\psi_+,\psi_-^{\epsilon, \aaa})\|_{g_\delta} & \leq C(| \psi_+|_{C^0}\|\psi_+\|_{g_\delta} + | \psi_-^{\epsilon, \aaa}|_{C^0} \|\psi_-^{\epsilon, \aaa}\|_{g_\delta})\\
\nonumber & \leq C(\|\psi_+\|_{*,g_\delta}\|\psi_+\|_{g_\delta} + \|\psi_-^{\epsilon, \aaa}\|_{*,g_\delta} \|\psi_-^{\epsilon, \aaa}\|_{g_\delta}),
\end{align}
using Estimate~\eqref{estimate}. We explain the first line of Estimate~\eqref{kyoho 3}. The first term of the Morrey norm is the weighted $L^2$-norm, and we can bound:
\begin{align*}
\int_{\dot F}g_\delta^2 |\mathcal{Q}_+(\psi_+,\psi_-^{\epsilon, \aaa})|^2 & \leq \int_{\dot F} g_\delta^2 C^2 (|\psi_+|^4 +|\psi_-^{\epsilon, \aaa}|^4)\\
& \leq C^2 |\psi_+|_{C_0}^2 \int_{\dot F}g_\delta^2 |\psi_+|^2  + C^2 |\psi_-^{\epsilon, \aaa}|_{C_0}^2 \int_{\dot F} g_\delta^2 |\psi_-^{\epsilon, \aaa}|^2\\
\left(\int_{\dot F}g_\delta^2 |\mathcal{Q}_+(\psi_+,\psi_-^{\epsilon, \aaa})|^2\right)^{1/2}& \leq C|\psi_+|_{C_0}\left( \int_{\dot F}g_\delta^2 |\psi_+|^2   \right)^{1/2}
+C|\psi_-^{\epsilon, \aaa}|_{C_0} \left( \int_{\dot F} g_\delta^2 |\psi_-^{\epsilon, \aaa}|^2  \right)^{1/2}.
\end{align*}
The bound for the second term of the Morrey norm is similar, since it is of $L^2$ type. 

Estimates~\eqref{noia},\eqref{kyoho 1}--\eqref{kyoho 3}, together with Estimate~\eqref{noia}, give Estimate~\eqref{eqn: estimate for I plus}.  Here we are using the trivial observation  $\|\cdot \|_{g_\delta}\leq \|\cdot \|_{*,g_\delta}$.
\end{proof}

\begin{lemma} \label{quo} 
If $(\psi_+, \psi_-^{\epsilon, \aaa}), (\overline{\psi}_+, \overline{\psi}_-^{\epsilon, \aaa})\in {\mathcal B}_+ \times {\mathcal B}_-$, then
\begin{align} \label{second estimate} \|\mathcal{I}_\pm(\psi_+,\psi_-^{\epsilon, \aaa})-\mathcal{I}_\pm&(\overline{\psi}_+,\overline{\psi}_-^{\epsilon, \aaa})\|_{*,\delta} \leq  \\ \nonumber
  & (c_1\epsilon  + c_2(b)+C\widetilde{\epsilon}) (\|\psi_+-\overline{\psi}_+\|_{*,g_\delta} +
 \|\psi_-^{\epsilon, \aaa} -\overline{\psi}_-^{\epsilon, \aaa}\|_{*,h_\delta} ).
%\nonumber & \qquad \qquad  + C\widetilde{\epsilon}(\| \psi_+-\psi_+'\|_{*,g_\delta} (\|\psi_+\|_{*,g_\delta} +\|\psi_-^\epsilon\|_{*,g_\delta}).
\end{align}
%where we are assuming that $\|\psi_+\|_{*,g_\delta}$ and $\|\psi_-^\epsilon\|_{*,g_\delta}$ are less than $1$.
\end{lemma}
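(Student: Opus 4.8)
\textbf{Proof plan for Lemma~\ref{quo}.}
The statement is the standard ``contraction estimate'' counterpart to Lemma~\ref{qui}, so the plan is to mimic the proof of Lemma~\ref{qui} but now tracking differences rather than values. First I would reduce to the $(-\infty,-T_0]\times S^1$ portion of the domain, with the understanding that all norms are restricted there (so $g_\delta=h_\delta$), exactly as in Lemma~\ref{qui}; the estimate on $\dot F-(-\infty,-T_0]\times S^1$ involves only $\mathcal{I}_+$ and $\psi_+$, $\overline\psi_+$ through the type~1 quadratic term $\mathcal{Q}(\psi_+)$ of Lemma~\ref{lemma: equivalent 2}, and is handled by the usual ``quadratic terms are Lipschitz on a small ball'' argument. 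On the remaining portion, I would use that $(\overline D_+^\delta)^{-1}$ is bounded and $(D_-^{\epsilon,\aaa,\delta})^{-1}$ is uniformly bounded in $(\epsilon,\aaa)$ by Lemma~\ref{bounded inverses}, so that
\[
\|\mathcal{I}_\pm(\psi_+,\psi_-^{\epsilon,\aaa})-\mathcal{I}_\pm(\overline\psi_+,\overline\psi_-^{\epsilon,\aaa})\|_{*,g_\delta}
\le C\bigl(\|\mathcal{L}_\pm(\psi_+,\psi_-^{\epsilon,\aaa})-\mathcal{L}_\pm(\overline\psi_+,\overline\psi_-^{\epsilon,\aaa})\|_{g_\delta}
+\|\mathcal{Q}_\pm(\psi_+,\psi_-^{\epsilon,\aaa})-\mathcal{Q}_\pm(\overline\psi_+,\overline\psi_-^{\epsilon,\aaa})\|_{g_\delta}\bigr),
\]
since the term $\mathcal{D}_\epsilon\eta_*^{\epsilon,\aaa}-\aaa\nu$ in $\mathcal{F}_\pm$ is independent of $\psi_\pm$ and cancels in the difference.

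Next I would estimate the two pieces on the right. Since $\mathcal{L}_\pm$ is linear in $(\psi_+,\psi_-^{\epsilon,\aaa})$ with coefficients satisfying the pointwise bound~\eqref{eqn: linear term}, we get directly
\[
\|\mathcal{L}_\pm(\psi_+,\psi_-^{\epsilon,\aaa})-\mathcal{L}_\pm(\overline\psi_+,\overline\psi_-^{\epsilon,\aaa})\|_{g_\delta}
\le (c_1\epsilon+c_2(b))\bigl(\|\psi_+-\overline\psi_+\|_{g_\delta}+\|\psi_-^{\epsilon,\aaa}-\overline\psi_-^{\epsilon,\aaa}\|_{g_\delta}\bigr),
\]
with $\lim_{b\to0}c_2(b)=0$. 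For the quadratic terms I would write $\mathcal{Q}_\pm(\psi)-\mathcal{Q}_\pm(\overline\psi)$ as a sum of terms each containing a factor of the form $\psi-\overline\psi$ (either $\psi_+-\overline\psi_+$ or $\psi_-^{\epsilon,\aaa}-\overline\psi_-^{\epsilon,\aaa}$) times a factor which, by the pointwise bound~\eqref{eqn: quadratic term} and smoothness of the coefficients, is $C^0$-bounded by $C(|\psi_+|_{C^0}+|\overline\psi_+|_{C^0}+|\psi_-^{\epsilon,\aaa}|_{C^0}+|\overline\psi_-^{\epsilon,\aaa}|_{C^0})$; then the $L^2$-type computation at the end of the proof of Lemma~\ref{qui} (pulling the $C^0$-factor out of the Morrey norm and using the Sobolev embedding Lemma~\ref{lemma: sobolev embedding}, i.e.\ $|\zeta|_{C^0}\le C\|\zeta\|_{*,g_\delta}$) gives
\[
\|\mathcal{Q}_\pm(\psi_+,\psi_-^{\epsilon,\aaa})-\mathcal{Q}_\pm(\overline\psi_+,\overline\psi_-^{\epsilon,\aaa})\|_{g_\delta}
\le C\widetilde\epsilon\bigl(\|\psi_+-\overline\psi_+\|_{*,g_\delta}+\|\psi_-^{\epsilon,\aaa}-\overline\psi_-^{\epsilon,\aaa}\|_{*,h_\delta}\bigr),
\]
where $\widetilde\epsilon$ is the radius of $\mathcal{B}_\pm$. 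For the type~1 quadratic term $\mathcal{Q}(\psi_+)=P(\psi_+)+Q(\psi_+)\cdot\nabla\psi_+$ on the thick part, the same bookkeeping applies, using that $|P(\psi_+)-P(\overline\psi_+)|\le C(|\psi_+|+|\overline\psi_+|)|\psi_+-\overline\psi_+|$ and $|Q(\psi_+)\cdot\nabla\psi_+-Q(\overline\psi_+)\cdot\nabla\overline\psi_+|\le C(|\psi_+-\overline\psi_+|\,|\nabla\psi_+|+|\overline\psi_+|\,|\nabla(\psi_+-\overline\psi_+)|)$, and again Lemma~\ref{lemma: sobolev embedding}. Summing these contributions (and absorbing the constant $C$ into $c_1$, $c_2(b)$, and the coefficient of $\widetilde\epsilon$, with the convention that constants may change from line to line) yields Estimate~\eqref{second estimate}.

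I do not expect a serious obstacle here: the lemma is a routine Lipschitz-on-a-small-ball estimate and every ingredient (the pointwise bounds on $\mathcal{L}_\pm$ and $\mathcal{Q}_\pm$, the uniform boundedness of the inverses, the Morrey Sobolev embedding) is already in place. The only mild subtlety is keeping the weight functions straight — on $(-\infty,-T_0]\times S^1$ one has $g_\delta=h_\delta$, which is what lets us freely pass between the $g_\delta$-norms natural for $\psi_+$ and the $h_\delta$-norms natural for $\psi_-^{\epsilon,\aaa}$ — and being careful that the difference of the preglued error terms vanishes so that no $e^{(\delta-\lambda)T_2}+\epsilon T_1 e^{\delta T_1}$ term appears in~\eqref{second estimate}. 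Together with Lemma~\ref{qui}, this estimate is exactly what is needed to run the contraction mapping argument producing the fixed point $(\psi_+,\psi_-^{\epsilon,\aaa})$ and hence the function $\mathfrak{p}_\epsilon$ of~\eqref{eq: almost obstruction bundle}, completing the proof of Theorem~\ref{thm: 1}.
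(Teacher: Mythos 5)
Your plan matches the paper's proof essentially line for line: restrict to the $(-\infty,-T_0]\times S^1$ portion (leaving the thick part to the reader), observe that the preglued error term $\mathcal{D}_\epsilon\eta_*^{\epsilon,\aaa}-\aaa\nu$ cancels in the difference, apply the (uniformly) bounded inverses, bound $\mathcal{L}_\pm(\psi)-\mathcal{L}_\pm(\overline\psi)$ by $(c_1\epsilon+c_2(b))(\cdots)$ via the pointwise linear estimate, bound $\mathcal{Q}_\pm(\psi)-\mathcal{Q}_\pm(\overline\psi)$ by pulling out a $C^0$ factor controlled by $\widetilde\epsilon$ via the Morrey--Sobolev embedding, and combine. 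No gaps; this is the same argument as the paper's.
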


\begin{proof}
Again we carry out the estimate on the $(-\infty,-T_0]\times S^1$ portion, with the understanding that the norms are restricted to $(-\infty,-T_0]\times S^1$, and leave the estimate on the $\dot F- (-\infty,-T_0]\times S^1$ portion to the reader. 

In the following equation $\mathfrak{D}_\pm$ stands for either $\overline{D}_+^\delta$ or $D_-^{{\epsilon, \aaa}, \delta}$. We have:
\begin{align*}
\mathcal{I}_\pm(\psi_+,\psi_-^{\epsilon, \aaa})-\mathcal{I}_\pm(\overline{\psi}_+, \overline{\psi}_-^{\epsilon, \aaa}) &=   -\mathfrak{D}_\pm^{-1}((\mathcal{L}_\pm(\psi_+,\psi_-^{\epsilon, \aaa})- \mathcal{L}_\pm(\overline{\psi}_+, \overline{\psi}_-^{\epsilon, \aaa}))\\
& \qquad\qquad  + (\mathcal{Q}_\pm(\psi_+,\psi_-^{\epsilon, \aaa}) - \mathcal{Q}_\pm(\overline{\psi}_+, \overline{\psi}_-^{\epsilon, \aaa}))).
\end{align*}

By Equations~\eqref{def of L}, \eqref{def of L minus}, \eqref{eqn: defnitions of L plus and L minus}, as well as an analog of Estimate~\eqref{eqn: linear term}, we have
\begin{align*}
\|\mathcal{L}_\pm(\psi_+,\psi_-^{\epsilon, \aaa})- \mathcal{L}_\pm(\overline{\psi}_+, \overline{\psi}_-^{\epsilon, \aaa})\|_{g_\delta}&  \leq (c_1\epsilon  + c_2(b)) (\|\psi_+- \overline{\psi}_+\|_{g_\delta} + \|\psi_-^{\epsilon, \aaa} - \overline{\psi}_-^{\epsilon, \aaa} \|_{g_\delta}).
\end{align*}

By Lemma \ref{lemma: equivalent 1}(4)  
$\mathcal{Q}_\pm(\psi_+,\psi_-^{\epsilon, \aaa})$ is a quadratic function of $\psi_+,\psi_-^{\epsilon, \aaa}$ with uniformly bounded coefficients which are smooth functions of $\psi_+,\psi_-^{\epsilon, \aaa}$. Therefore
\begin{align*}
& \|\mathcal{Q}_\pm (\psi_+,\psi_-^{\epsilon, \aaa}) - \mathcal{Q}_\pm(\overline{\psi}_+, \overline{\psi}_-^{\epsilon, \aaa})\|_{g_\delta} \leq \\ & C(| \psi_+-\overline{\psi}_+|_{C^0}+ | \psi_-^{\epsilon, \aaa} -  \overline{\psi}_-^{\epsilon, \aaa}|_{C^0}) (\|\psi_+\|_{g_\delta}
 +\|\psi_-^{\epsilon, \aaa}\|_{g_\delta} + \|\overline{\psi}_+\|_{g_\delta}
 +\|\overline{\psi}_-^{\epsilon, \aaa}\|_{g_\delta}).
\end{align*}
% where we are using $\|\psi_+\|_{*,g_\delta},\|\psi_-^\epsilon\|_{*,g_\delta}<1$.
Combining the two estimates and using \eqref{estimate} and $\|\cdot \|_{g_\delta}\leq \|\cdot \|_{*,g_\delta}$, we obtain Estimate~\eqref{second estimate}.
\end{proof}

\begin{prop} \label{wildfire}
There exists $\widetilde{\epsilon}>0$ sufficiently small such that, for all $b$, $\aaa_0$, and  $\epsilon_0=\epsilon_0(\aaa_0, b)$ sufficiently small  (in particular, satisfying ($\dagger_2$)) and for all $(\aaa,\epsilon) \in [-\aaa_0, \aaa_0]\times (0,\epsilon_0]$, there exists a unique $(\psi_+, \psi_-^{\epsilon, \aaa}) \in {\mathcal B}_+ \times {\mathcal B}_-$ satisfying 
\begin{equation}\label{this is the end}
    \mathcal{I}_+(\psi_+,\psi_-^{\epsilon, \aaa})= \psi_+, \quad \mathcal{I}_-(\psi_+,\psi_-^{\epsilon, \aaa})= \psi_-^{\epsilon, \aaa}.
\end{equation}
Moreover, the solutions of Equation~\eqref{this is the end} satisfy the estimate
\begin{equation} \label{trebisonda}
   \|\psi_+ \|_{*, g_\delta} + \| \psi_-^{\epsilon, \aaa} \|_{*, h_\delta} \le C(e^{(\delta- \lambda)T_2(\epsilon)} + \epsilon T_1(b)e^{\delta T_1(b)}).
\end{equation}
\end{prop}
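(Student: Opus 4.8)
\textbf{Proof proposal for Proposition~\ref{wildfire}.}
The plan is to apply the Banach fixed point theorem to the map $\mathcal{I} = (\mathcal{I}_+, \mathcal{I}_-) \colon \mathcal{B}_+ \times \mathcal{B}_- \to \mathcal{H}_{+,\delta}^\perp \times \mathcal{H}_{-,\delta}$ defined just before Lemmas~\ref{qui} and \ref{quo}, so that its unique fixed point gives the desired pair $(\psi_+, \psi_-^{\epsilon,\aaa})$. Everything has already been arranged: by Lemma~\ref{lemma: equivalent 1} and Lemma~\ref{lemma: equivalent 2}, solving Equations~\eqref{eqn: plus} and \eqref{eqn: minus} (hence the projected $\overline\bdry$-equation) is equivalent to the fixed point equations \eqref{fixed point plus}--\eqref{fixed point minus}, which are exactly \eqref{this is the end}. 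The two ingredients are the self-map property ($\mathcal{I}$ maps $\mathcal{B}_+ \times \mathcal{B}_-$ into itself) and the contraction property, both of which follow from the estimates of Lemmas~\ref{qui} and \ref{quo} once the parameters are chosen in the right order.

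First I would fix $\widetilde\epsilon>0$ so small that $C\widetilde\epsilon < \tfrac14$, where $C$ is the constant appearing in Lemma~\ref{quo}; this also controls the quadratic terms in Lemma~\ref{qui} via $\|\psi_\pm\|_{*,\delta}^2 \le \widetilde\epsilon \|\psi_\pm\|_{*,\delta}$. Next I choose $b$ small enough that $c_2(b) < \tfrac{1}{16}$ (possible since $\lim_{b\to 0} c_2(b)=0$) and that, having fixed $a$, $b_0$, $T_0$ as in $(\boldsymbol{\dagger}_0)$--$(\boldsymbol{\dagger}_1)$, the cutoffs $\beta_\pm^b$ satisfy $(\boldsymbol{\dagger}_3)$. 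Then I choose $\aaa_0$ small (as in Lemma~\ref{bounded inverses}, so that the inverses $(D_-^{\epsilon,\aaa,\delta})^{-1}$ are uniformly bounded for $\aaa \in [-\aaa_0,\aaa_0]$, and so that $\eta_-^\epsilon(-T_1 + \aaa/\epsilon)$ stays in $B$). Finally I choose $\epsilon_0 = \epsilon_0(\aaa_0, b)$ small enough that: (i) $c_1 \epsilon_0 < \tfrac{1}{16}$; (ii) $(\boldsymbol{\dagger}_2)$ holds, i.e.\ $\epsilon_0 e^{T_1(b)} T_1(b)$ is as small as we like and in particular $C(e^{(\delta-\lambda)T_2(\epsilon)} + \epsilon T_1(b) e^{\delta T_1(b)}) < \tfrac{\widetilde\epsilon}{2}$ for all $\epsilon \le \epsilon_0$ (using $\delta < \lambda$ so $e^{(\delta-\lambda)T_2(\epsilon)} \to 0$); and (iii) the regularity conditions $0 \le \epsilon_0 < \delta$ of Lemma~\ref{bounded inverses} hold. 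With these choices, for $(\psi_+,\psi_-^{\epsilon,\aaa}) \in \mathcal{B}_+\times\mathcal{B}_-$ Lemma~\ref{qui} gives
\[
\|\mathcal{I}_\pm(\psi_+,\psi_-^{\epsilon,\aaa})\|_{*,\delta} \le \tfrac{\widetilde\epsilon}{2} + \tfrac18\bigl(\|\psi_+\|_{*,g_\delta} + \|\psi_-^{\epsilon,\aaa}\|_{*,h_\delta}\bigr) + C\widetilde\epsilon\bigl(\|\psi_+\|_{*,g_\delta} + \|\psi_-^{\epsilon,\aaa}\|_{*,h_\delta}\bigr) \le \tfrac{\widetilde\epsilon}{2} + \tfrac{\widetilde\epsilon}{4} + \tfrac{\widetilde\epsilon}{4} \le \widetilde\epsilon,
\]
wait — I should be slightly more careful and bound each of $\|\mathcal{I}_+\|$ and $\|\mathcal{I}_-\|$ by $\widetilde\epsilon$ separately; since $\|\psi_+\|_{*,g_\delta}, \|\psi_-^{\epsilon,\aaa}\|_{*,h_\delta} \le \widetilde\epsilon$, each summand on the right is at most $\tfrac{\widetilde\epsilon}{2} + \tfrac18 \cdot 2\widetilde\epsilon + C\widetilde\epsilon \cdot 2\widetilde\epsilon$, which is $\le \widetilde\epsilon$ after possibly shrinking $\widetilde\epsilon$ once more so that $2C\widetilde\epsilon^2 \le \tfrac{\widetilde\epsilon}{4}$. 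Hence $\mathcal{I}$ maps $\mathcal{B}_+\times\mathcal{B}_-$ into itself. Likewise Lemma~\ref{quo} gives, for the same parameter choices, a Lipschitz constant $c_1\epsilon + c_2(b) + C\widetilde\epsilon \le \tfrac{1}{16} + \tfrac{1}{16} + \tfrac14 < \tfrac12$, so $\mathcal{I}$ is a contraction on the complete metric space $\mathcal{B}_+\times\mathcal{B}_-$ (with the product of the $\|\cdot\|_{*,g_\delta}$ and $\|\cdot\|_{*,h_\delta}$ norms).

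By the contraction mapping theorem there is a unique fixed point $(\psi_+,\psi_-^{\epsilon,\aaa})$, proving the first assertion. For the estimate~\eqref{trebisonda}, I would feed the fixed point back into Lemma~\ref{qui}: writing $R = \|\psi_+\|_{*,g_\delta} + \|\psi_-^{\epsilon,\aaa}\|_{*,h_\delta}$, the lemma gives $R \le 2C(e^{(\delta-\lambda)T_2(\epsilon)} + \epsilon T_1(b)e^{\delta T_1(b)}) + \tfrac18 R + 2C\widetilde\epsilon R$, and absorbing the $R$-terms on the right (which is legitimate since their coefficient is $<1$ by the choices above) yields $R \le C'(e^{(\delta-\lambda)T_2(\epsilon)} + \epsilon T_1(b)e^{\delta T_1(b)})$, which is \eqref{trebisonda} after relabeling the constant. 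I do not expect any genuine obstacle here: all the analytic work has been done in Lemmas~\ref{lemma: estimate for D eta star}, \ref{bounded inverses}, \ref{qui} and \ref{quo}, and the only subtlety is bookkeeping — making sure the parameters $\widetilde\epsilon, b, \aaa_0, \epsilon_0$ are chosen in an order in which each choice only constrains parameters chosen later, and checking that the combined small constants ($c_1\epsilon$, $c_2(b)$, $C\widetilde\epsilon$) sum to something safely below $1$ both for the self-map bound and for the contraction bound. The one point worth stating explicitly is that $\epsilon$ must be taken strictly positive (as in the statement $(\aaa,\epsilon)\in[-\aaa_0,\aaa_0]\times(0,\epsilon_0]$) because the constructed curve is meant to lie in $\mathcal{M}_\epsilon$, but the estimates themselves are uniform down to $\epsilon = 0$ thanks to the weighted spaces $\mathcal{H}_{-,\delta}$ and Lemma~\ref{bounded inverses}.
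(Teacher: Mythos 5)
Your proposal is correct and follows essentially the same route as the paper's own proof: apply the Banach fixed point theorem to $\mathcal{I}=(\mathcal{I}_+,\mathcal{I}_-)$ on $\mathcal{B}_+\times\mathcal{B}_-$ with the sum norm, using Lemma~\ref{qui} for the self-map bound and Lemma~\ref{quo} for the contraction, then derive~\eqref{trebisonda} by plugging the fixed point back into Lemma~\ref{qui} and absorbing the small linear and quadratic terms. You simply make the parameter bookkeeping and the order of choices ($\widetilde\epsilon$, then $b$, then $\aaa_0$, then $\epsilon_0$) more explicit than the paper does, and your one small arithmetic slip (writing $\tfrac18 R$ where the correct coefficient $2(c_1\epsilon+c_2(b))<\tfrac14$ gives $\tfrac14 R$) is harmless since the coefficient is still comfortably below $1$.
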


\begin{proof}
Let ${\mathcal I}=({\mathcal I}_+, {\mathcal I}_-) : {\mathcal B}_+ \times {\mathcal B}_- \to {\mathcal H}_{+,\delta}^\perp \times {\mathcal H}_{-,\delta}$. Lemmas \ref{qui} and \ref{quo} imply that, for sufficiently small $\widetilde{\epsilon}$, there are sufficiently small constants $b$, $\aaa_0$, $\epsilon_0=\epsilon_0(\aaa_0)$ such that for all $(\aaa,\epsilon) \in [-\aaa_0, \aaa_0]\times (0,\epsilon_0]$ we have estimates
\begin{gather} \label{prima}
\|\mathcal I_{\pm}(\psi_+, \psi_-^{\epsilon, \aaa})\|_{*, \delta} \le  C(e^{(\delta- \lambda)T_2(\epsilon)} + \epsilon T_1(b)e^{\delta T_1(b)}) + \tfrac 14 (\| \psi_+ \|_{*, g_\delta} + \| \psi_-^{\epsilon, \aaa} \|_{*, h_\delta}) \\ 
\label{seconda}
\|\mathcal{I}_\pm(\psi_+,\psi_-^{\epsilon, \aaa})-\mathcal{I}_\pm(\overline{\psi}_+,\overline{\psi}_-^{\epsilon, \aaa})\|_{*,\delta} \leq
\tfrac 14  (\|\psi_+-\overline{\psi}_+\|_{*,g_\delta} + \|\psi_-^{\epsilon, \aaa} -\overline{\psi}_-^{\epsilon, \aaa}\|_{*,h_\delta} )
\end{gather}
and ${\mathcal I}$ is a contraction of ${\mathcal B}_+ \times {\mathcal B}_-$ (for the metric induced by the sum of the norms). Then the contraction mapping theorem implies that there is a unique pair $(\psi_+, \psi_-^{\epsilon,\aaa}) \in {\mathcal B}_+ \times {\mathcal B}_-$ such that ${\mathcal I}(\psi_+, \psi_-^{\epsilon, \aaa})=(\psi_+, \psi_-^{\epsilon, \aaa})$. Finally, Estimate~\eqref{trebisonda} is obtained by plugging Equation~\eqref{this is the end} in Equation~\eqref{prima}, rearranging the terms, and renaming the constant $C$.
\end{proof}

Proposition~\ref{wildfire} produces, provided $b$, $\aaa_0$, and $\epsilon_0=\epsilon_0(\aaa_0)$ are sufficiently small, a map
$$u^{\epsilon, \aaa}= \op{exp}_{u_*^{\epsilon, \aaa}}(\beta_+\psi_+ + \beta_-\psi_-^{\epsilon, \aaa})$$
for all $(\aaa,\epsilon) \in [-\aaa_0, \aaa_0]\times (0,\epsilon_0]$ and a continuous function $\mathfrak{p}_\epsilon : [-\aaa_0, \aaa_0] \to \R$ such that $\overline\bdry_{J_{f_\epsilon}}u^{\epsilon, \aaa}= \mathfrak{p}_\epsilon(\aaa) \nu$ for all $\epsilon\in(0,\epsilon_0]$. Moreover, by Equations~\eqref{bound for preglued curve} and~\eqref{trebisonda}, we have 
\begin{align*}
|\mathfrak{p}_\epsilon(\aaa)- \aaa| & \leq C(\| \mathfrak{p}_\epsilon(\aaa) \nu -\aaa \nu \|_{g_\delta}) \leq C( \|  \overline\bdry_{J_{f_\epsilon}}u^{\epsilon, \aaa} - \overline\bdry_{J_{f_\epsilon}}u_*^{\epsilon, \aaa}\|_{g_\delta} + \|\overline\bdry_{J_{f_\epsilon}}u_*^{\epsilon, \aaa} - \aaa \nu  \|_{g_\delta}) \\
& \leq C (\|\psi_+ \|_{*, g_\delta} + \| \psi_-^{\epsilon, \aaa} \|_{*, h_\delta}) + C( \|\mathcal{D}_\epsilon \eta_*^{\epsilon,\aaa} - \aaa \nu  \|_{g_\delta}) \\
& \leq  C(e^{(\delta- \lambda)T_2(\epsilon)} + \epsilon T_1(b)e^{\delta T_1(b)}).
\end{align*}
 In order to bound  $\|  \overline\bdry_{J_{f_\epsilon}}u^{\epsilon, \aaa} - \overline\bdry_{J_{f_\epsilon}}u_*^{\epsilon, \aaa}\|_{g_\delta}$ we used Lemma \ref{lemma: equivalent 1}, Lemma \ref{lemma: equivalent 2} and Equation \eqref{trebisonda}. 

If $\epsilon_0$ is sufficiently small, then $\mathfrak{p}_\epsilon(- \aaa_0)<0$ and $\mathfrak{p}_\epsilon(\aaa_0)>0$, and therefore $\mathfrak{p}_\epsilon$ has an odd number of zeros in the interval $[-\aaa_0, \aaa_0]$.

\subsection{Proof of Theorem~\ref{thm: 2}}\label{subsection: gluing 2}
 From now on, until the end of the appendix, we fix a $b$ such that Proposition \ref{wildfire} holds. {\em Therefore, from now on, $b$ and $T_1(b)$ are to be considered constants}. 

\begin{rmk} 
In \cite{Yao1}, the strategy for the proof of Theorem~\ref{thm: 2} is slightly different: One can actually differentiate the $1$-parameter family of functions $\psi_-^{\epsilon,\aaa}$ with respect to $\aaa$ to show that $\mathfrak{p}_\epsilon(\aaa)$ is $C^1$-close to $\aaa$ and hence that the zero is unique.  
\end{rmk}

Arguing by contradiction, suppose there are sequences $\{\kappa_i\}_{i=1}^\infty$, $\{\epsilon_i\}_{i=1}^\infty$, $\{u^{\epsilon_i}\}_{i=1}^\infty$, and $\{v^{\epsilon_i}\}_{i=1}^\infty$ (with $a,b,c,\epsilon'$ small but fixed), such that:
\be
\item $\kappa_i\to 0$ and $\epsilon_i\to 0$, 
\item $u^{\epsilon_i}, v^{\epsilon_i}: (\dot F,j_i)\to \R\times M$ are $\overline{\bdry}_{J_{f_{\epsilon_i}}}$-holomorphic and are not related by $\R$-translations in the target (and possibly the domain),  and
\item $u^{\epsilon_i}$ and $v^{\epsilon_i}$ are $\kappa_i$-close to breaking into $u_+$ and a cylinder over $\mathcal{T}^{\epsilon_i}_0$.
\ee

After translating the $u^{\epsilon_i}$ and $v^{\epsilon_i}$ in the target and possibly in the domain, we can find $T_1>0$ such that $u^{\epsilon_i}|_{(-\infty, -T_1]}$ and $v^{\epsilon_i}|_{(-\infty, -T_1]}$ have image in $\R\times [-b-\epsilon', b+\epsilon']\times T^2$ and $u^{\epsilon_i}|_{s=-T_1}$ and $v^{\epsilon_i}|_{s=-T_1}$ have image in $\R\times [b,b+\epsilon']\times T^2$. On $(-\infty, -T_1]\times S^1$ we write
$$u^{\epsilon_i}(s,t)=(s,t,\eta^{u^{\epsilon_i}}(s,t)) \quad \mbox{and} \quad v^{\epsilon_i}(s,t)=(s,t,\eta^{v^{\epsilon_i}}(s,t)).$$  

Recall that $\eta^{u^{\epsilon_i}}$ and $\eta^{v^{\epsilon_i}}$ satisfy Equation~\eqref{eqn: rewriting d bar equation}, which we repeat here:
$${\bdry \eta\over \bdry s} + j_0{\bdry \eta\over \bdry t}-\nabla f_\epsilon(\eta)=0.$$  
If we restrict $\eta^{u^{\epsilon_i}}$ and $\eta^{v^{\epsilon_i}}$ to any cylinder $[-T_1'(\epsilon_i),-T_1]\times S^1$ such that their images are in $B$, then Equation~\eqref{eqn: rewriting d bar equation} specializes to $(J_{f_{\epsilon_i}})$ and their difference $\zeta^{\epsilon_i}(s,t)=\eta^{v^{\epsilon_i}}(s,t)-\eta^{u^{\epsilon_i}}(s,t)$ satisfies the linear equation ($J_f$):
\begin{equation} \label{eqn: restated}
{\bdry \zeta^{\epsilon_i}\over \bdry s} -{\bf A}\zeta^{\epsilon_i}=0.
\end{equation}

Next, by the definition of $\kappa_i$-close to breaking and estimates on derivatives in the proof of Gromov-Hofer compactness, after applying the relevant translations in the domain or in the target and passing to a subsequence,  we can choose a sequence $T_2'(\epsilon_i)\to \infty$ such that $-T_2'(\epsilon_i)< -T_1$ and there are rough initial estimates
\begin{equation}\label{rough initiual estimates} \|\zeta^{\epsilon_i}_+(-T_{1}-1)\|_{L^2} \leq c \kappa_i, \quad \|\zeta^{\epsilon_i}_-(-T_2'(\epsilon_i))\|_{L^2}\leq c\kappa_i,
\end{equation}
where $c>0$ is independent of $\epsilon_i$ or $\kappa_i$. 

% there exists a sequence 
% $u^{\epsilon_i}|_{\dot F- (-\infty, -T_2'(\epsilon_i)]\times S^1}$ and $v^{\epsilon_i}|_{\dot F- (-\infty, -T_2'(\epsilon_i)]\times S^1}$ converge to $u_+$ in $C^\infty_{loc}$.

% (Note that it is difficult to guarantee that $v^{\epsilon_i}$ is $2\kappa_i$-close to $u^{\epsilon_i}$ on all of its domain, since ``$\kappa_i$-close to breaking into $u_+$ and a cylinder over $\mathcal{T}^\epsilon_0$'' does not quite imply ``$2\kappa_i$-close to $u^{\epsilon_i}$''.)

\s\n {\em Normalization.} We normalize $u^{\epsilon_i}$ so that, at $s=-T_1$, the $g_1$ term of the Fourier series of $\eta^{u^{\epsilon_i}}$ is equal to $(b,0)$ and the $g_0$ term is equal to $(0,h_{\epsilon_i})$, where $h_{\epsilon_i} \to 0$ as $i \to \infty$.  This is possible because $\eta^{u^{\epsilon_i}}(-T_1)$, before normalization, is close to $(b,0)$ and the $g_0$ term in the Fourier series for the negative end of $u_+$ vanishes.
Similarly we normalize $v^{\epsilon_i}$ by translating slightly in the target $\R$-direction, so that the $g_1$ term of $\zeta^{\epsilon_i}$ is zero.

%Recall the linearized Cauchy-Riemann operator $D_-^{\epsilon_i,\aaa,\delta}$ of $u_-^{\epsilon_i,\aaa}$ (with $\delta\geq 0$) from Section~\ref{ss: linearized operators}, acting on the space ${\mathcal H}_{1,h_\delta}(\R \times S^1, N_-^{\epsilon_i,\aaa})$ with weight $h_\delta$; the definition for $\delta=0$ makes sense and can be viewed as the linearized Cauchy-Riemann operator {\em without weights}.

\begin{defn}
An element of $\ker D_+^\delta$ or $\ker D_-^{\epsilon_i,\aaa, \delta}$ (from Section~\ref{ss: linearized operators}) is a {\em non-translation} element if it is nonzero and does not correspond to an $\R$-translation of the domain or target.  
\end{defn}

A sufficient condition for detecting a non-translation element of $\ker D_+^\delta$ is given in Lemma~\ref{non-translation element} in terms of the coefficient of $g_1$ in the Fourier expansion.

\s\n
{\em Idea of proof.}
The idea of the proof is to start with $v^{\epsilon_i}-u^{\epsilon_i}$ for $\epsilon_i>0$ small and construct a non-translation element of $\ker D_+^\delta$ or $\ker D_-^{\epsilon_i,0,\delta}$ (taking $\aaa=0$ suffices) by damping out and inverting the error.   The damping out occurs on a long neck region $[-T_3(\epsilon_i),-T_1]\times S^1$ (with $-T_3(\epsilon_i)$ defined later) that is mapped to $B$ by $\eta^{u^{\epsilon_i}}$ and $\eta^{v^{\epsilon_i}}$. The $D_+^\delta$ and $D_-^{\epsilon_i, 0, \delta}$ cases respectively correspond to Cases 1 and 2 below. (There is a slight complication in the $D_-^{\epsilon_i,0,\delta}$ case, which will be explained in Case 2.) The existence of a non-translation element is a contradiction.

\s
By Equation~\eqref{eqn: restated}, $\zeta^{\epsilon_i}|_{[-T_3(\epsilon_i),-T_1]\times S^1}$ can be written as a Fourier series
\begin{equation} \label{Fourier expansion} \zeta^{\epsilon_i}(s,t)=\sum_{j=-\infty}^\infty d^{\epsilon_i}_j e^{\lambda_j s} g_j(t).
\end{equation}
We write $(\cdot)_-$, $(\cdot)_0$, and $(\cdot)_+$ for the $L^2$-projections of $(\cdot)= \zeta^{\epsilon_i}$ etc.\ to the negative, null, and positive eigenspaces of ${\bf A}$, and write $(\cdot)(s_0)$ for $(\cdot)|_{s=s_0}$. By our normalization we may assume that $d_1^{\epsilon_i}=0$. 

% We have rough initial estimates $\|\zeta^{\epsilon_i}_+(-T_{1}-1)\|_{L^2},\|\zeta^{\epsilon_i}_-(-T_2'(\epsilon_i))\|_{L^2}\leq c\kappa_i$, where $c>0$ is independent of $\epsilon_i$ or $\kappa_i$.

\begin{lemma}
  Fix $T_1' \in [T_1+1, T_2'(\epsilon_i)]$. For
all $s\in [-T_1',-T_1]$,
\begin{align}
\|\zeta^{\epsilon_i}_+(s)\|_{L^2_1} & \leq \|\zeta^{\epsilon_i}_+(-T_1)\|_{L^2_1}\cdot e^{\lambda (s+T_1)}, \label{eqn:  basic estimate 1} \\
\|\zeta^{\epsilon_i}_-(s)\|_{L^2_1} &  \leq \|\zeta^{\epsilon_i}_-(-T_1')\|_{L^2_1}\cdot e^{-\lambda (s+T_1')}, \label{eqn:  basic estimate 2}
\end{align}
where $\lambda=\min (\lambda_{1},|\lambda_{-1}|)$ and $L^2_1$ refers to the $L^2$-Sobolev space with one derivative.  
\end{lemma}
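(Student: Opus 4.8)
The statement to be proved is a pair of exponential decay estimates for the positive and negative spectral projections of $\zeta^{\epsilon_i}$ along the neck $[-T_1',-T_1]\times S^1$, where $\zeta^{\epsilon_i}$ solves the constant-coefficient linear equation $\partial_s\zeta - \mathbf{A}\zeta = 0$ (Equation~\eqref{eqn: restated}). This is a standard energy-estimate argument, and the plan is to run it projection by projection using the spectral decomposition of $\mathbf{A}$ established in Claim~\ref{claim: eigenfunctions and eigenvalues}.

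First I would set up the two scalar quantities $\Psi_+(s) = \|\zeta^{\epsilon_i}_+(s)\|_{L^2}^2$ and $\Psi_-(s) = \|\zeta^{\epsilon_i}_-(s)\|_{L^2}^2$ on the interval $[-T_1',-T_1]$. Writing $\zeta^{\epsilon_i}(s,\cdot) = \sum_j d_j^{\epsilon_i} e^{\lambda_j s} g_j$ in the $L^2$-orthonormal eigenbasis $\{g_j\}$ of $\mathbf{A}$ (which is possible precisely because $\zeta^{\epsilon_i}$ satisfies the \emph{linear} equation \eqref{eqn: restated} on this neck, having restricted $\eta^{u^{\epsilon_i}}$ and $\eta^{v^{\epsilon_i}}$ to a sub-cylinder mapping into $B$), we get $\Psi_\pm(s) = \sum_{\pm\lambda_j>0} (d_j^{\epsilon_i})^2 e^{2\lambda_j s}$. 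Each term $e^{2\lambda_j s}$ with $\lambda_j \geq \lambda_1 = \lambda$ satisfies $e^{2\lambda_j s} \leq e^{2\lambda_j(-T_1)} e^{2\lambda(s+T_1)}$ for $s \leq -T_1$, since $s+T_1 \leq 0$ and $\lambda_j \geq \lambda$; summing over the positive eigenspace gives $\Psi_+(s) \leq \Psi_+(-T_1) e^{2\lambda(s+T_1)}$, which is the $L^2$ version of \eqref{eqn: basic estimate 1}. Symmetrically, for $\lambda_j \leq \lambda_{-1} = -|\lambda_{-1}| \leq -\lambda$ and $s \geq -T_1'$ one has $e^{2\lambda_j s} \leq e^{2\lambda_j(-T_1')} e^{-2\lambda(s+T_1')}$, yielding $\Psi_-(s) \leq \Psi_-(-T_1') e^{-2\lambda(s+T_1')}$, the $L^2$ version of \eqref{eqn: basic estimate 2}. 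Recall here that $d_1^{\epsilon_i}=0$ by the normalization, so the null/low eigenvalue $\lambda_0=0$ contributes nothing to $\Psi_+$ and $\lambda_1$ is genuinely the smallest positive eigenvalue appearing; similarly $\lambda_0$ and $\lambda_1$ do not enter $\Psi_-$.

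To upgrade from $L^2$ to $L^2_1$ as stated, I would use elliptic regularity for the operator $\partial_s - \mathbf{A}$ on cylinders: the $L^2_1$-norm of $\zeta^{\epsilon_i}_\pm$ on a unit-length sub-cylinder centered at $s$ is controlled by the $L^2$-norm of $\zeta^{\epsilon_i}_\pm$ on a slightly larger sub-cylinder (since $\zeta^{\epsilon_i}_\pm$ again solves the same equation, $\mathbf{A}$ preserving the decomposition), and the monotonicity of the exponential weights lets one absorb the enlargement into the constant. Alternatively, and more cleanly for a constant-coefficient operator, one differentiates term by term: $\|\zeta^{\epsilon_i}_+(s)\|_{L^2_1}^2 = \sum_{\lambda_j>0}(d_j^{\epsilon_i})^2 e^{2\lambda_j s}(1 + \lambda_j^2 + \|g_j'\|_{L^2}^2)$, and the same comparison $e^{2\lambda_j s} \leq e^{2\lambda_j(-T_1)}e^{2\lambda(s+T_1)}$ applies coefficient-wise, giving $\|\zeta^{\epsilon_i}_+(s)\|_{L^2_1} \leq \|\zeta^{\epsilon_i}_+(-T_1)\|_{L^2_1} e^{\lambda(s+T_1)}$ directly; the negative projection is handled identically.

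\textbf{Main obstacle.} The only subtlety — and it is a mild one at the level of this lemma — is ensuring that the restriction of $\zeta^{\epsilon_i}$ to the relevant neck genuinely satisfies the \emph{linear} equation \eqref{eqn: restated} rather than the nonlinear $\mathcal{D}_\epsilon$-equation; this requires that both $\eta^{u^{\epsilon_i}}$ and $\eta^{v^{\epsilon_i}}$ take values in the box $B = [-b_0,b_0]\times[-\frac16,\frac16]$ where $\nabla f_\epsilon$ is affine (by (P2) and (P3')), which is exactly what the choice of $T_1' \in [T_1+1, T_2'(\epsilon_i)]$ and the $\kappa_i$-closeness hypothesis guarantee. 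Granting that, the proof is the elementary exponential-comparison argument sketched above, carried out separately on the positive and negative eigenspaces.
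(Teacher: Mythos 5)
Your proposal is essentially correct and follows the same route as the paper: Parseval in the eigenbasis of $\mathbf{A}$, then term-by-term comparison of the exponential weights $e^{2\lambda_j s}$, using that $\lambda_j \geq \lambda$ on the positive spectrum and $\lambda_j \leq -\lambda$ on the negative spectrum while $s+T_1 \leq 0$ (resp.\ $s+T_1' \geq 0$). Your first suggestion via elliptic regularity on subcylinders is a detour and would in fact introduce an extraneous multiplicative constant that the stated inequalities do not allow, but your second, "cleaner" version is the one that matches the paper. One small imprecision in that version: the formula $\|\zeta^{\epsilon_i}_+(s)\|_{L^2_1}^2 = \sum_{\lambda_j>0}(d_j^{\epsilon_i})^2 e^{2\lambda_j s}(1 + \lambda_j^2 + \|g_j'\|_{L^2}^2)$ silently assumes the $g_j'$ are mutually orthogonal, which is not automatic; the paper sidesteps this by (implicitly) using the $\mathbf{A}$-adapted Sobolev norm $\|\zeta\|_{L^2}^2 + \|\mathbf{A}\zeta\|_{L^2}^2$, which is diagonal in the eigenbasis and yields coefficients exactly $1+\lambda_j^2$. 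Since that norm is uniformly equivalent to the standard $L^2_1$-norm, this changes nothing substantive, but it is worth stating that choice explicitly so the coefficient-wise comparison is literally valid. Finally, the remark that $d_1^{\epsilon_i}=0$ makes "$\lambda_1$ the smallest positive eigenvalue appearing" is slightly garbled (if $d_1=0$ then $\lambda_1$ does not appear at all), but this normalization is irrelevant to the lemma since only $\lambda_j \geq \lambda$ is used.
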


\begin{proof}
  We prove the first inequality. By the Fourier expansion \eqref{Fourier expansion} and Parseval's identity we have
  $$\|\zeta^{\epsilon_i}_+(s)\|_{L^2_1}^2 = \sum_{j \ge 1} d_j^2(1+\lambda_j^2)e^{2 \lambda_j s},$$
  $$\|\zeta^{\epsilon_i}_+(-T_1)\|_{L^2_1}^2 \cdot e^{2\lambda (s+T_1)}= \sum_{j \ge 1} d_j^2(1+\lambda_j^2) e^{2(\lambda-\lambda_j)T_1+2 \lambda s}.$$
  Then Equation \eqref{eqn: basic estimate 1} follows from the inequality
  $$e^{\lambda_j s} \leq e^{(\lambda-\lambda_j)T_1+ \lambda s},$$
  which holds for $j>0$. To prove this inequality we divide the second term by the first and observe that $e^{(\lambda - \lambda_j)(T_1+s)} \ge 1$ because $\lambda - \lambda_j \le 0$ and $T_1 + s \le 0$.

  Now we prove the second inequality. We have:
  $$\|\zeta^{\epsilon_i}_-(s)\|_{L^2_1}^2 = \sum_{j<0} d_j^2 (1+\lambda_j^2) e^{2 \lambda_j s},$$
  $$\|\zeta^{\epsilon_i}_-(-T_1')\|_{L^2_1}^2\cdot e^{-2\lambda (s+T_1')} = \sum_{j<0} d_j^2 (1+\lambda_j^2) e^{-2(\lambda_j+ \lambda)T_1'- 2 \lambda s}.$$
  Then Equation  \eqref{eqn: basic estimate 1} follows from the inequality
  $$e^{\lambda_j s} \le e^{-(\lambda_j+ \lambda)T_1'- \lambda s},$$
  which holds for $j<0$ because $\lambda+ \lambda_j \le 0$ and $s+T_1' \ge 0$.
  \end{proof} 

There are two cases to consider:
\begin{itemize}
\item[(1)] $\|\zeta_0^{\epsilon_i}(-T_1-1)+ \zeta_+^{\epsilon_i}(-T_1-1)\|_{L^2_1}\geq \|\zeta_-^{\epsilon_i}(-T_2'(\epsilon_i))\|_{L^2_1}$ holds for infinitely many indices $i$, or 
\item[(2)] $\|\zeta_0^{\epsilon_i}(-T_1-1)+ \zeta_+^{\epsilon_i}(-T_1-1)\|_{L^2_1}\leq \|\zeta_-^{\epsilon_i}(-T_2'(\epsilon_i))\|_{L^2_1}$ holds for infinitely many indices $i$.
\end{itemize}
Note that the two cases are not mutually exclusive.

\s\n
{\bf Case 1.}  Up to extracting a subsequence, we assume that the following inequality holds for every 
$i$: 
\begin{equation}
\|\zeta_0^{\epsilon_i}(-T_1-1)+ \zeta_+^{\epsilon_i}(-T_1-1)\|_{L^2_1}\geq \|\zeta_-^{\epsilon_i}(-T_2'(\epsilon_i))\|_{L^2_1}.
\end{equation}
By Equation~\eqref{eqn: basic estimate 2} with $T_1'=T_2'(\epsilon_i)$ and $s=-T_1-1$ we have:
\begin{align} \label{poppycake}
\|\zeta_-^{\epsilon_i}(-T_1-1)\|_{L^2_1} & \leq \|\zeta_-^{\epsilon_i}(-T_2'(\epsilon_i))\|_{L^2_1}\cdot e^{\lambda(-T_2'(\epsilon_i)+T_1+1)}\\
\nonumber &\leq  \|\zeta_0^{\epsilon_i}(-T_1-1)+ \zeta_+^{\epsilon_i}(-T_1-1)\|_{L^2_1}\cdot e^{\lambda(-T_2'(\epsilon_i)+T_1+1)}.
\end{align}

%We write $F':=\dot F- (-\infty, -T_1]\times S^1$ and $F'':= [-T_1,-T_0]\times S^1$. 
Let $\xi^{\epsilon_i}=(\xi^{\epsilon_i}_1,\xi^{\epsilon_i}_2)\in \mathcal{H}_{1,g_\delta}(\dot F,N_+)\oplus \R\langle\tilde\bdry_\theta\rangle$ such that: \begin{itemize}
	\item on $\dot F- (-\infty, -T_1]\times S^1$, $\xi^{\epsilon_i}= \eta^{v^{\epsilon_i}}-\eta^{u^{\epsilon_i}}$, where $u^{\epsilon_i}=\op{exp}_{u_+} \eta^{u^{\epsilon_i}}$, $v^{\epsilon_i}=\op{exp}_{u_+} \eta^{v^{\epsilon_i}}$, and $\eta^{u^{\epsilon_i}}$ and $\eta^{v^{\epsilon_i}}$ are viewed as sections of the normal bundle $N_+$ to $u_+$;
	\item on $(-\infty,-T_1]\times S^1$, $\xi^{\epsilon_i}=\beta(s+T_1+1)\zeta_-^{\epsilon_i}(s,t)+\zeta_0^{\epsilon_i}(s,t)+\zeta_+^{\epsilon_i}(s,t)$ and $\xi^{\epsilon_i}_2= \zeta_0^{\epsilon_i}$.
\end{itemize}
Recall on the negative end of $u_+$ we are identifying $N_+\simeq TA_a\simeq \R^2$ with coordinates $y,\theta$.
As before $\beta:\R\to[0,1]$ is a nondecreasing function such that $\beta(s)=0$ if $s\leq 0$ and $\beta(s)=1$ if $s\geq 1$.  Informally, we are damping out the $\zeta_-^{\epsilon_i}$ term to zero for $s<-T_1$, under the condition that it is much smaller than $\zeta_0^{\epsilon_i}+ \zeta_+^{\epsilon_i}$ at $s=-T_1$.

\begin{noti}
  We denote the norm on $\R\langle\tilde\bdry_\theta\rangle$ by $\|\cdot\|_\circ$ and the norm on $\mathcal{H}_{1,g_\delta}(\dot F,N_+)\oplus \R\langle\tilde\bdry_\theta\rangle$ by $\|\xi^{\epsilon_i}\|_{\bullet} = \|\xi^{\epsilon_i}_1\|_{*,g_\delta} + \|\xi^{\epsilon_i}_2\|_\circ$. 
\end{noti}

\begin{lemma}\label{bagel}
There exist constants $C_i>0$ with $\lim \limits_{i \to + \infty} C_i=0$ such that
$$\| D_+^\delta \xi^{\epsilon_i}\|_{g_\delta} \le C_i ( \|\xi^{\epsilon_i}_1\|_{*,g_\delta}+ \|\xi^{\epsilon_i}_2\|_\circ)= C_i \|\xi^{\epsilon_i}\|_\bullet.$$
\end{lemma}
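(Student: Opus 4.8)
\textbf{Proof plan for Lemma~\ref{bagel}.}

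The goal is to bound $\|D_+^\delta\xi^{\epsilon_i}\|_{g_\delta}$ in terms of $\|\xi^{\epsilon_i}\|_\bullet$ with a constant $C_i$ that tends to $0$. The section $\xi^{\epsilon_i}$ has been built to satisfy the linearized equation exactly on most of the domain, so the plan is to localize the computation of $D_+^\delta\xi^{\epsilon_i}$ to the one region where $\xi^{\epsilon_i}$ fails to solve the equation, namely the damping region $[-T_1-1,-T_1]\times S^1$ where the cutoff $\beta(s+T_1+1)$ is not locally constant. First I would check that $D_+^\delta\xi^{\epsilon_i}$ vanishes on $\dot F-(-\infty,-T_1]\times S^1$: there, by construction $\op{exp}_{u_+}(\xi^{\epsilon_i})=v^{\epsilon_i}$ is $\overline\bdry_{J_{f_{\epsilon_i}}}$-holomorphic and $\op{exp}_{u_+}(0)=u_+$ is $\overline\bdry_{J_f}$-holomorphic, but $f_{\epsilon_i}=f$ on the thick part (by $(\boldsymbol{\dagger}_1)$ and~(P3')), so $\overline\bdry_{N_+,f}\xi^{\epsilon_i}=0$ on the nose; since $D_+$ is the linearization of $\overline\bdry_{N_+,f}$, this contributes only quadratic-in-$\xi^{\epsilon_i}$ error, which is $O(\|\xi^{\epsilon_i}\|_\bullet^2)$, hence absorbed into $C_i\|\xi^{\epsilon_i}\|_\bullet$ once $\|\xi^{\epsilon_i}\|_\bullet\to 0$. (That $\|\xi^{\epsilon_i}\|_\bullet\to 0$ follows from the initial estimates~\eqref{rough initiual estimates} together with~\eqref{eqn: basic estimate 1},~\eqref{eqn: basic estimate 2} and $\kappa_i\to 0$; I would record this at the start.) On $(-\infty,-T_1-1]\times S^1$ the section equals $\zeta_-^{\epsilon_i}+\zeta_0^{\epsilon_i}+\zeta_+^{\epsilon_i}=\zeta^{\epsilon_i}$, which solves the \emph{linear} equation~\eqref{eqn: restated}, i.e.\ $D_+^\delta\xi^{\epsilon_i}=\mathcal D_0\zeta^{\epsilon_i}=0$ there (using that ${\bf H}f$ is constant on $\{y=0\}$ and the images lie in $B$). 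So $D_+^\delta\xi^{\epsilon_i}$ is supported in $[-T_1-1,-T_1]\times S^1$.

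On that compact strip, $D_+^\delta\xi^{\epsilon_i}={\bdry\over\bdry s}\big(\beta(s+T_1+1)\zeta_-^{\epsilon_i}\big)+j_0{\bdry\over\bdry t}\big(\beta(s+T_1+1)\zeta_-^{\epsilon_i}\big)-{\bf H}f(0)\big(\beta\zeta_-^{\epsilon_i}\big)$, and since $\zeta_-^{\epsilon_i}$ itself satisfies the linear equation this reduces to the cutoff derivative term $\beta'(s+T_1+1)\zeta_-^{\epsilon_i}(s,t)$. Thus $\|D_+^\delta\xi^{\epsilon_i}\|_{g_\delta}\le C\,|\beta'|_{C^0}\,\sup_{s\in[-T_1-1,-T_1]}\|\zeta_-^{\epsilon_i}(s)\|_{L^2}$ (the weight $g_\delta$ is bounded on this fixed compact strip; pass from the Morrey norm to $L^2$ on a compact set as in the proof of Lemma~\ref{lemma: estimate for D eta star}). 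Now I invoke Estimate~\eqref{poppycake}: $\|\zeta_-^{\epsilon_i}(-T_1-1)\|_{L^2_1}\le\|\zeta_0^{\epsilon_i}(-T_1-1)+\zeta_+^{\epsilon_i}(-T_1-1)\|_{L^2_1}\,e^{\lambda(-T_2'(\epsilon_i)+T_1+1)}$, and the $L^2_1$-norm of $\zeta_-^{\epsilon_i}$ on the rest of $[-T_1-1,-T_1]$ is controlled the same way by~\eqref{eqn: basic estimate 2}. Therefore I may take
$$C_i=C\,|\beta'|_{C^0}\,e^{\lambda(-T_2'(\epsilon_i)+T_1+1)}+C\|\xi^{\epsilon_i}\|_\bullet,$$
using that $\|\zeta_0^{\epsilon_i}(-T_1-1)+\zeta_+^{\epsilon_i}(-T_1-1)\|_{L^2_1}\le\|\xi^{\epsilon_i}_1\|_{*,g_\delta}+\|\xi^{\epsilon_i}_2\|_\circ=\|\xi^{\epsilon_i}\|_\bullet$ (the $\zeta_0$-part is exactly $\xi^{\epsilon_i}_2$ up to the trivial bound, and the $\zeta_+$-part sits inside $\xi^{\epsilon_i}_1$), and the quadratic-error constant coming from the first paragraph. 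Since $T_2'(\epsilon_i)\to\infty$ and $\|\xi^{\epsilon_i}\|_\bullet\to 0$, we get $C_i\to 0$, which is the claim.

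The main obstacle I anticipate is bookkeeping rather than conceptual: one must be careful that the $\zeta_0$-component of $\xi^{\epsilon_i}$ on the neck is genuinely tracked by the $\R\langle\tilde\bdry_\theta\rangle$ factor (so that it contributes to $\|\xi^{\epsilon_i}\|_\circ$ and not to a term one cannot control), and that the weight function $g_\delta$, although $\ge 1$, is bounded above on the \emph{fixed} compact strip $[-T_1-1,-T_1]\times S^1$ — this uses crucially that $b$ and $T_1=T_1(b)$ are now constants, as emphasized just before the lemma. A secondary point to handle cleanly is that $\xi^{\epsilon_i}$ is defined using $\op{exp}_{u_+}$ while the estimates~\eqref{eqn: basic estimate 1}--\eqref{eqn: basic estimate 2} are for the honest differences $\zeta^{\epsilon_i}=\eta^{v^{\epsilon_i}}-\eta^{u^{\epsilon_i}}$ in the $A_a$-coordinates; on $(-\infty,-T_0]\times S^1$ these coincide by the definition of $\op{exp}_{u_+}$ there, so no genuine issue arises, but it should be stated. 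Everything else is a routine $L^2$/Morrey comparison of the type already carried out in Sections~\ref{subsection: function spaces} and~\ref{subsection: gluing 1}.
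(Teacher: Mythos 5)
Your neck-region computation matches the paper, up to one small slip about the damping direction: for $s\leq -T_1-1$ the cutoff $\beta(s+T_1+1)$ vanishes, so there $\xi^{\epsilon_i}=\zeta_0^{\epsilon_i}+\zeta_+^{\epsilon_i}$, not $\zeta^{\epsilon_i}$. The conclusion $D_+^\delta\xi^{\epsilon_i}=0$ for $s\le -T_1-1$ survives because each eigenspace projection of $\zeta^{\epsilon_i}$ solves the fixed-coefficient equation on its own, and the reduction of the support to $[-T_1-1,-T_1]\times S^1$, the resulting term $\beta'(s+T_1+1)\zeta_-^{\epsilon_i}$, and the use of~\eqref{poppycake} to produce the factor $e^{\lambda(-T_2'(\epsilon_i)+T_1+1)}$ all coincide with the paper's inequality~\eqref{bagel inequality}.

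The genuine gap is in the thick part. You assert ``by construction $\op{exp}_{u_+}(\xi^{\epsilon_i})=v^{\epsilon_i}$,'' but this is false: by the definition of $\xi^{\epsilon_i}$ we have $\xi^{\epsilon_i}=\eta^{v^{\epsilon_i}}-\eta^{u^{\epsilon_i}}$, whereas $v^{\epsilon_i}=\op{exp}_{u_+}(\eta^{v^{\epsilon_i}})$, so $\op{exp}_{u_+}(\xi^{\epsilon_i})$ is some other map which has no reason to be holomorphic, and the deduction $\overline\bdry_{N_+,f}\xi^{\epsilon_i}=0$ on the thick part does not hold. What does hold is $\overline\bdry_{N_+,f}(\eta^{u^{\epsilon_i}})=\overline\bdry_{N_+,f}(\eta^{v^{\epsilon_i}})=0$ there; subtracting the two expansions from Lemma~\ref{lemma: equivalent 2} yields $D_+\xi^{\epsilon_i}=\mathcal{Q}(\eta^{u^{\epsilon_i}})-\mathcal{Q}(\eta^{v^{\epsilon_i}})$, and this difference of quadratic remainders is not ``$O(\|\xi^{\epsilon_i}\|_\bullet^2)$'' but \emph{bilinear}: pointwise it is bounded by $C(|\eta^{u^{\epsilon_i}}|+|\nabla\eta^{u^{\epsilon_i}}|+|\eta^{v^{\epsilon_i}}|+|\nabla\eta^{v^{\epsilon_i}}|)\cdot(|\xi^{\epsilon_i}|+|\nabla\xi^{\epsilon_i}|)$. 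The factor that tends to zero is therefore the $C^1$-smallness of $\eta^{u^{\epsilon_i}},\eta^{v^{\epsilon_i}}$ (forced by $\kappa_i$-closeness and elliptic bootstrapping), not $\|\xi^{\epsilon_i}\|_\bullet$. This distinction matters: the lemma is a \emph{ratio} bound, and the paper neither needs nor asserts $\|\xi^{\epsilon_i}\|_\bullet\to 0$ --- relying on that is fragile, since $\xi^{\epsilon_i}$ is immediately re-normalized afterwards. The paper's thick-part estimate also parallel-transports between the normal bundles of $u^{\epsilon_i}$ and $u_+$ (the maps $P,P^{-1}$) and splits the error into three differences (change of base point for the linearization, linearized vs.\ nonlinear operator, and $\xi^{\epsilon_i}$ vs.\ $\widetilde\xi^{\epsilon_i}$), each contributing a $C_i\to 0$; your argument collapses these into one step that, as written, is wrong.
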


\begin{proof}
On $(-\infty, -T_1]\times S^1$, we use the fact that $D_+^\delta \zeta^{\epsilon_i}_*=0$ for $*=-,0,+$, to bound the contribution to $\|D_+^\delta \xi^{\epsilon_i}\|_{g_\delta} $ from above as follows: 
\begin{align}
\label{bagel inequality}\| D_+^\delta (\beta(s+&T_1+1)\zeta_-^{\epsilon_i}(s,t))\|_{g_\delta} = \| \beta'(s+T_1+1) \zeta_-^{\epsilon_i}(s,t)\| _{g_\delta} \\
 \nonumber &\leq C\left(\sup_{s\in [-T_1-1,-T_1]}g_\delta(s)\|\zeta_-^{\epsilon_i}(s)\|_{L^2} +\sup_{s\in [-T_1-1,-T_1]}g_\delta(s) |\zeta_-^{\epsilon_i}(s)|_{C^0}\right)\\
\nonumber &\leq C\sup_{s\in [-T_1-1,-T_1]}g_\delta(s)\|\zeta_-^{\epsilon_i}(s)\|_{L^2_1}\\
\nonumber &\leq C  g_\delta(-T_1-1)\|\zeta_-^{\epsilon_i}(-T_1-1)\|_{L^2_1}\\
\nonumber  & \leq C g_\delta(-T_1) e^{\lambda(-T_2'(\epsilon_i)+T_1+1)} \|\zeta_0^{\epsilon_i}(-T_1-1)+ \zeta_+^{\epsilon_i}(-T_1-1)\|_{L^2_1} \\
\nonumber & \leq Ce^{\lambda(-T_2'(\epsilon_i)+T_1)} ( \|\xi^{\epsilon_i}_1\|_{*,g_\delta}+ \|\xi^{\epsilon_i}_2\|_\circ).
\end{align}
The first line to the second follows from the definition of $\|\cdot \|$ (Equation \eqref{morrey norm 0}) and an easy $C^0$-bound of the right-hand term of the definition of $\|\cdot\|$; the second line to the third uses a standard Sobolev inequality (i.e., there is a bounded inclusion map $L^2_1(S^1)\to C^0(S^1)$); the third line to the fourth follows from Equation \eqref{eqn: basic estimate 2} applied to $T_1'=T_1+1$; and the fourth line to the fifth uses \eqref{poppycake}. The fifth line to the sixth follows from:
\begin{align*}
\|\xi^{\epsilon_i}_1\|_{*,g_\delta} &\geq \|\xi^{\epsilon_i}_1|_{[-T_1-1,-T_1]\times S^1}\|_{*,g_\delta} \geq  \| \zeta_+^{\epsilon_i}|_{[-T_1-1,-T_1]\times S^1}\|_{*,g_\delta} \\
& \geq C \left( \int_{[-T_1-1,-T_1]}\int_{S^1} g_\delta^2| \zeta_+^{\epsilon_i}|^2 \right)^{1/2}+C
\left( \int_{[-T_1-1,-T_1]}\int_{S^1} g_\delta^2 |\nabla \zeta_+^{\epsilon_i}|^2 \right)^{1/2}\\
&\geq C g_\delta(-T_1) (\| \ \zeta_+^{\epsilon_i}(-T_1-1)\|_{L^2} + \|\nabla \zeta_+^{\epsilon_i}(-T_1-1)\|_{L^2}).
\end{align*}

On the other hand, writing $\|\cdot \|_{g_\delta}'$ and $\|\cdot \|_{g_\delta}''$ for the restrictions of $\|\cdot \|_{g_\delta}$ to $\dot F - (- \infty, -T_1] \times S^1$ and $[-T_1, -T_0] \times S^1$, writing $v^{\epsilon_i}=\op{exp}_{u^{\epsilon_i}}(P^{-1}{\widetilde\xi}^{\epsilon_i})$ on $\dot F - (- \infty, -T_1] \times S^1$, where $P$ is the parallel transport of the appropriate bundles from $u^{\epsilon_i}$ to $u_+$, and using the fact that 
$$\widetilde \xi^{\epsilon_i}= \xi^{\epsilon_i} +\mathcal{B}(\eta^{u^{\epsilon_i}}, \xi^{\epsilon_i})+ \mathcal{Q}( \xi^{\epsilon_i}),$$ 
where $\mathcal{B}(\eta^{u^\epsilon_i}, \xi^{\epsilon_i})$ is bilinear in $\eta^{u^{\epsilon_i}}$ and $\xi^{\epsilon_i}$ and $ \mathcal{Q}( \xi^{\epsilon_i})$ is quadratic in $\xi^{\epsilon_i}$, both with coefficients which are smooth coefficients of $\eta^{u^{\epsilon_i}}$ and $\xi^{\epsilon_i}$, 
%%%% DO NOT REMOVE: Note to self
%$P^{-1} \widetilde\xi^{\epsilon_i}= \xi^{\epsilon_i} + \mathcal{Q}(\xi^{\epsilon_i})$ due to the definition of the exponential map at $\eta^{u^{\epsilon_i}}$. 
%$\widetilde \xi^{\epsilon_i}= P^{-1}\widetilde \xi^{\epsilon_i} + \mathcal{B}(\eta^{u^{\epsilon_i}},P^{-1}\widetilde \xi^{\epsilon_i})$ by the holonomy map. 
the contribution to $\|D_+^\delta\xi^{\epsilon_i}\|_{g_\delta}$ on $F'$ is bounded above as follows: 
\begin{align*}
\|D_+^\delta \xi^{\epsilon_i}\|_{g_\delta}'& = \|D_+^\delta \xi^{\epsilon_i}- P\overline\bdry_{J_{f_{\epsilon_i}}}\op{exp}_{u^{\epsilon_i}}(P^{-1}{\widetilde\xi}^{\epsilon_i})\|_{g_\delta}'\\
&\leq  \|D_+^\delta \xi^{\epsilon_i}- P D_{u^{\epsilon_i}}^\delta P^{-1}\xi^{\epsilon_i} \|_{g_\delta}' + \|P( D_{u^{\epsilon_i}}^\delta P^{-1}\xi^{\epsilon_i}- \overline\bdry_{J_{f_{\epsilon_i}}}\op{exp}_{u^{\epsilon_i}} P^{-1}\xi^{\epsilon_i})\|_{g_\delta}' \\
&\qquad \qquad \qquad +  \|P(\overline\bdry_{J_{f_{\epsilon_i}}}\op{exp}_{u^{\epsilon_i}}P^{-1} \xi^{\epsilon_i}-\overline\bdry_{J_{f_{\epsilon_i}}}\op{exp}_{u^{\epsilon_i}}P^{-1}{\widetilde \xi}^{\epsilon_i})\|_{g_\delta}' \\
& \leq C_i \|\xi^{\epsilon_i}\|_{*,g_\delta}' \leq C_i (\|\xi^{\epsilon_i}_1\|_{*,g_\delta} + \|\xi^{\epsilon_i}_2\|_{*,g_\delta}'') \leq C_i (\|\xi^{\epsilon_i}_1\|_{*,g_\delta} + e^{\delta(T_1-T_0)}\|\xi^{\epsilon_i}_2\|_\circ ),
\end{align*}
where $\lim_{i\to \infty} C_i=0$. (Recall that $b$ and $T_1(b)$ are constants that were fixed at the beginning of Section~\ref{subsection: gluing 2}.)

The two estimates together imply the lemma.
\end{proof}

In view of Lemma~\ref{bagel}, inverting the error using $(D_+^\delta)^{-1}$ (as before the image of $(D_+^\delta)^{-1}$ is $L^2$-orthogonal to $\ker D_+^\delta$) yields
$$(\xi')^{\epsilon_i}=\xi^{\epsilon_i}-(D_+^\delta)^{-1}(D_+^\delta\xi^{\epsilon_i}) \in\ker D_+^\delta,$$
such that $\| \xi^{\epsilon_i}\|_\bullet \gg \| (D_+^\delta)^{-1}(D_+^\delta\xi^{\epsilon_i}) \|_\bullet$, which implies that $(\xi')^{\epsilon_i}\not=0$. We define
$$\overline{\xi}^{\epsilon_i}= \frac{\xi^{\epsilon_i}}{\| (\xi')^{\epsilon_i}\|_\bullet}, \quad (\overline{\xi}')^{\epsilon_i}= \frac{(\xi')^{\epsilon_i}}{\| (\xi')^{\epsilon_i}\|_\bullet}.$$

\begin{lemma}\label{non-translation element}
There exists a non-translation element of $\op{ker}D_+^\delta$.
\end{lemma}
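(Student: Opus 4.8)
\textbf{Proof proposal for Lemma~\ref{non-translation element}.}

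The plan is to extract a limit of the normalized approximate kernel elements $(\overline\xi')^{\epsilon_i}$ and show that this limit is a genuine, nonzero element of $\ker D_+^\delta$ that does not arise from a reparametrization of the domain or an $\R$-translation of the target. First I would record the two facts that make this work: by construction $\|(\overline\xi')^{\epsilon_i}\|_\bullet = 1$ for all $i$, and by Lemma~\ref{bagel} together with the bound $\|\xi^{\epsilon_i}\|_\bullet \gg \|(D_+^\delta)^{-1}(D_+^\delta\xi^{\epsilon_i})\|_\bullet$, the difference $\overline\xi^{\epsilon_i} - (\overline\xi')^{\epsilon_i}$ tends to $0$ in $\|\cdot\|_\bullet$ as $i\to\infty$. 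Thus $\overline\xi^{\epsilon_i}$ and $(\overline\xi')^{\epsilon_i}$ have the same limiting behavior, and it suffices to understand $\overline\xi^{\epsilon_i}$. Since $D_+^\delta$ is Fredholm and $(\overline\xi')^{\epsilon_i}\in\ker D_+^\delta$ lies in the \emph{finite-dimensional} space $\ker D_+^\delta$, after passing to a subsequence $(\overline\xi')^{\epsilon_i}$ converges in $\|\cdot\|_\bullet$ to some $\xi_\infty\in\ker D_+^\delta$ with $\|\xi_\infty\|_\bullet = 1$; in particular $\xi_\infty\neq 0$.

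It remains to verify that $\xi_\infty$ is a non-translation element, i.e.\ that it does not correspond to an infinitesimal $\R$-translation of the domain or target, nor (if $\dot F$ is a cylinder) to a domain translation. This is where the normalization in Case~1 enters. I would use the fact that we normalized $u^{\epsilon_i}$ so that, at $s=-T_1$, the $g_1$-component of the Fourier series of $\eta^{u^{\epsilon_i}}$ equals $(b,0)$, and normalized $v^{\epsilon_i}$ so that the $g_1$-component of $\zeta^{\epsilon_i} = \eta^{v^{\epsilon_i}}-\eta^{u^{\epsilon_i}}$ vanishes. Hence the $g_1$-coefficient $d_1^{\epsilon_i}$ of $\zeta^{\epsilon_i}$ is $0$ for every $i$, and since $\xi^{\epsilon_i}$ agrees with $\zeta^{\epsilon_i}$ (damped in the $\zeta_-$ part only) on a neck where the Fourier expansion is valid, the $g_1$-component of $\overline\xi^{\epsilon_i}$ at $s=-T_1$ is also $0$. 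Passing to the limit, the $g_1$-component of $\xi_\infty$ vanishes on the negative end. On the other hand, the key observation is: an element of $\ker D_+^\delta$ coming from an $\R$-translation of the target has asymptotic behavior governed precisely by the eigenfunction $g_1$ with a nonzero leading coefficient (this is the standard fact, parallel to \cite[Theorem~4.1]{HT2}, that the $c_1>0$ term in the Fourier expansion of the negative end of $u_+$ is the infinitesimal translation direction; see also the discussion preceding Definition~\ref{defn: T_0 and T_1}). Since $g_0$ is projected onto the $\tilde\partial_\theta$ factor and is handled separately (a translation along the Morse-Bott family is not a translation in the sense of the definition of non-translation element but is also excluded by a separate normalization of the $g_0$-coefficient), a nonzero kernel element with vanishing $g_1$-coefficient cannot be a target translation; a domain translation would likewise contribute a definite nonzero $g_1$-term (for a cylinder, domain and target translations are proportional at the ends). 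Therefore $\xi_\infty$ is a non-translation element of $\ker D_+^\delta$.

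The main obstacle I expect is the last step — precisely pinning down the asymptotic normal form of the translation elements and confirming that the Case~1 normalizations (the $g_1$-coefficient of $\zeta^{\epsilon_i}$ is zero, and the $g_0$-coefficient of $\eta^{u^{\epsilon_i}}$ converges to zero) genuinely rule out \emph{all} translation-type kernel elements, including subtleties when $\dot F$ is a cylinder (where domain translation is a nontrivial extra symmetry) and the interaction with the $\R\langle\tilde\partial_\theta\rangle$ summand of the domain of $D_+^\delta$. Everything else — the compactness of the normalized sequence in the finite-dimensional kernel, and transferring the estimate from $\overline\xi^{\epsilon_i}$ to $(\overline\xi')^{\epsilon_i}$ via Lemma~\ref{bagel} — is routine. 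Once Lemma~\ref{non-translation element} is established, it contradicts the Morse-Bott regularity of $\mb$ (which forces $\ker D_+^\delta$ to consist only of translation elements), and the analogous argument in Case~2 using $D_-^{\epsilon_i,0,\delta}$ and Lemma~\ref{bounded inverses} completes the proof of Theorem~\ref{thm: 2}.
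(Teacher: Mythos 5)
Your argument matches the paper's proof essentially step for step: compactness of the unit sphere in the finite-dimensional $\ker D_+^\delta$, convergence of the $g_1$-coefficient to $0$ via the Morrey embedding (Lemma~\ref{lemma: sobolev embedding}), and the computation that the projection of the target $\R$-translation to $N_+$ has $g_1$-coefficient $-c_1\lambda_1\neq 0$ --- exactly the paper's computation of $\left.\partial_\sigma u_+^\sigma\right|_{\sigma=0}$. One clarification on your worry about domain translations: because $D_+^\delta$ is the \emph{normal} linearized operator, $\partial u_+/\partial s$ projects to zero in $N_+$ (this is precisely why the normal bundle is used, cf.\ $\ker L_N\simeq(\ker L)/V$ in Section~\ref{subsection: function spaces}), so a domain translation gives no nonzero kernel element to rule out --- it does not contribute a ``definite nonzero $g_1$-term'' as you conjectured, and the paper correctly computes only the target-translation direction.
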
  

\begin{proof}
So far we have constructed sequences $\{\overline{\xi}^{\epsilon_i}\}$ and $\{(\overline{\xi}')^{\epsilon_i}\}$ such that:
\be
\item the Fourier coefficient relative to $g_1$ is $\overline{d}_1^{\epsilon_i}=0$ for all $\overline{\xi}^{\epsilon_i}$; 
\item $\| (\overline{\xi}')^{\epsilon_i}\|_\bullet=1$;
\item $(\overline{\xi}')^{\epsilon_i}\in \op{ker} D_+^\delta$; and
\item $\| (\overline{\xi}')^{\epsilon_i}-\overline{\xi}^{\epsilon_i} \|_\bullet\to 0$ as $i \to \infty$. 
\ee
Since $\op{ker} D_+^\delta$ is finite-dimensional, the unit ball of $\op{ker} D_+^\delta$ is compact, and after possibly passing to a subsequence $(\overline\xi')^{\epsilon_i}$ converges to a nonzero $\xi'\in \op{ker} D_+^\delta$. Then (4) implies that $\|\overline{\xi}^{\epsilon_i} - \xi'  \|_\bullet\to 0$ and therefore, from Lemma~\ref{lemma: sobolev embedding}, we obtain $\overline{\xi}^{\epsilon_i}(-T_1) \to \xi'(-T_1)$ in $C^0$. This in turn implies that $\overline{d}_1^{\epsilon_i}\to d_1'$, where $\overline{d}_1^{\epsilon_i}$ and $d_1'$ are the Fourier coefficients of $g_1$ in $\overline{\xi}^{\epsilon_i}$ and $\xi'$; this is because the Fourier coefficients can be extracted by integration. Hence $d_1'=0$.

Finally we explain why $d_1'=0$ implies that $\xi'$ is a non-translation element: Recall that $u_+(s,t)=(s,t,\sum_{i=1}^\infty c_i e^{\lambda_i s} g_i(t))$ with $c_1>0$ at the negative end (cf.\ the beginning of Section~\ref{subsection: pregluing}). Let $u_+^\sigma$ be the translate of $u_+$ by $\sigma \in \R$ in the symplectization direction.  Then, at the negative end,
$$u_+^\sigma(s,t)= (s+\sigma, t,\sum_{i=1}^\infty c_i e^{\lambda_i s} g_i(t)),$$
or, after the change of coordinates $(s+\sigma, t) \mapsto (s,t)$ at the negative end of $\dot F$,
$$u_+^\sigma(s,t)= (s, t,\sum_{i=1}^\infty c_i e^{\lambda_i (s- \sigma)} g_i(t)).$$
Then a translation element is a nontrivial multiple of the projection of
$\left.\tfrac{\partial u_+^\sigma}{\partial \sigma}\right|_{\sigma=0}$
to the normal bundle $N_+$, i.e., $- \sum_{i=1}^\infty c_i \lambda_i e^{\lambda_i s} g_i(t)$,
and has nontrivial $g_1(t)$-coef\-ficient.
 
\end{proof}

The existence of a non-translation element of $\op{ker} D_+^\delta$ is a contradiction.

\s\n
{\bf Case 2.}  Up to extracting a subsequence, we assume that the following inequality holds for every $i$: 
\begin{equation}
\|\zeta_0^{\epsilon_i}(-T_1-1)+ \zeta_+^{\epsilon_i}(-T_1-1)\|_{L^2_1}\leq \|\zeta_-^{\epsilon_i}(-T_2'(\epsilon_i))\|_{L^2_1}.
\end{equation}
Let $-T_4(\epsilon_i)<- T_3(\epsilon_i)< -T'_2(\epsilon_i)$ such that $ T_3(\epsilon_i) -T'_2(\epsilon_i), T_4(\epsilon_i) -T_3(\epsilon_i)\to \infty$ as $i\to \infty$ and
\begin{gather*}
\op{Im}u^{\epsilon_i}|_{s=-T_3(\epsilon_i)}\subset \{ -\tfrac{1}{6}\leq \theta \leq -\tfrac{1}{6}+\kappa_i\},\\
\op{Im}u^{\epsilon_i}|_{s=-T_4(\epsilon_i)}\subset \{ -\tfrac{1}{5}-\kappa_i\leq \theta \leq -\tfrac{1}{5}\},
\end{gather*}
where $\op{Im}$ denotes the image. 
Using Equation~\eqref{eqn: basic estimate 2} with $T_1'=T_3(\epsilon_i)$ and $s=-T_2(\epsilon_i)$ we have:
\begin{equation} \label{eqn: dominating}
\|\zeta_0^{\epsilon_i}(-T_1-1)+ \zeta_+^{\epsilon_i}(-T_1-1)\|_{L^2_1}\leq C e^{\lambda(-T_3(\epsilon_i)+T_2'(\epsilon_i))}\cdot\|\zeta_-^{\epsilon_i}(-T_3(\epsilon_i))\|_{L^2_1}.
\end{equation}

\s\n
{\em Complication.}  There is one complication. By Equation~\eqref{eqn: rewriting d bar equation} the $\overline\bdry$-operator $\mathcal{D}_{\epsilon_i}$ is linear on $-\tfrac{1}{4}\leq \theta\leq -\tfrac{1}{5}$ (with respect to $(y,\theta+\tfrac 14))$  because $\overline{g}'_{\mathcal N}(\theta)= C(\theta+\tfrac{1}{4})$, and each of $\eta=\eta^{v^{\epsilon_i}}$, $\eta^{u^{\epsilon_i}}$ satisfies the equation
$$\mathcal{D}_{\epsilon_i}\eta=\frac{\bdry \eta}{\bdry s} + j_0\frac{\bdry \eta}{\bdry t} -  \begin{pmatrix} \eta_1 \\ \epsilon_i C (\eta_2+ \tfrac{1}{4})\end{pmatrix} =0,$$
 where $\eta= (\eta_1, \eta_2)$, i.e., $\eta_1$ is the $y$-coordinate and $\eta_2$ the $\theta$-coordinate of $\eta$. Hence $\zeta^{\epsilon_i}(s,t)$ admits a Fourier expansion at the negative end whose leading term has the form $(k_{i1} e^{s}, k_{i2} e^{\epsilon Cs} )$ for constants $k_{i1},k_{i2}$.  However, a section with growth rate $e^{\epsilon Cs}$ as $s\to\infty$ is not in ${\mathcal H}_{1,h_\delta}(\R \times S^1, N_-^{\epsilon_i,0})$ since we have been assuming that $0<\epsilon_i C<\delta$; in fact $\epsilon_i \to 0$ while $\delta$ is constant.  To circumvent this difficulty we switch to 
$$D^{\epsilon_i,0,-\delta}_-:{\mathcal H}_{1,h_{-\delta}}(\R \times S^1, N_-^{\epsilon_i,0})\to {\mathcal H}_{1,h_{-\delta}}(\R\times S^1, \Lambda^{0,1}N^{\epsilon_i,0}_-),$$ 
where $\delta>0$ is sufficiently small.  The analog of Lemma~\ref{bounded inverses} also holds for $h_{-\delta}$, i.e., the operators $D_-^{\epsilon_i,0,-\delta}$ are invertible with bounded inverses that are uniform with respect to $\epsilon_i$. 
\s

Let $\xi^{\epsilon_i}$ be the section of the normal bundle $N_-^{\epsilon_i,0}=TA_a$ to $u_-^{\epsilon_i,0}$ such that:
\begin{itemize}
	\item $\xi^{\epsilon_i}= \zeta^{\epsilon_i}$ on $(-\infty, -T_3(\epsilon_i)]\times S^1$;
	\item $\xi^{\epsilon_i}=(1-\beta(s+T_1+1))\zeta_-^{\epsilon_i}(s,t)+(1-\beta(s+T_3(\epsilon_i))(\zeta_0^{\epsilon_i}(s,t)+\zeta_+^{\epsilon_i}(s,t))$ on $[-T_3(\epsilon_i),-T_1]\times S^1$ (here we write $\zeta_+,\zeta_0,\zeta_-$ for the $L^2$-projections of $\zeta$ to the positive, null, and negative eigenspaces of ${\bf A}$); and
	\item $\xi^{\epsilon_i}=0$ on $[-T_1,\infty)\times S^1$.
\end{itemize}
Informally, we damp out $\zeta_-^{\epsilon_i}$ to zero for $s\geq -T_1$ and $\zeta_0^{\epsilon_i}+\zeta_+^{\epsilon_i}$ to zero for $s\geq -T_3(\epsilon_i)+1$ so that the damped out $\zeta_-^{\epsilon_i}$ dominates.  By the previous paragraph, $\xi^{\epsilon_i}\in \mathcal{H}_{1,h_{-\delta}}(\R\times S^1,N_-^{\epsilon_i,0})$.  Also $D_-^{\epsilon_i,0,-\delta}\xi^{\epsilon_i}$ has support on
\begin{equation}\label{support}
\{ -T_4(\epsilon_i)\leq s\leq -T_3(\epsilon_i)+1\}\cup \{-T_1-1\leq s\leq -T_1\}.
\end{equation}

One can compute using \eqref{support}, Estimate~\eqref{eqn: dominating}, the method of estimating \eqref{bagel inequality}, and the error estimate between $D_-^{\epsilon_i,0,{-\delta}}$ and the actual normal $\overline\bdry_{J_{f_{\epsilon_i}}}$-operator that:
$$\| D_-^{\epsilon_i,0,{-\delta}} \xi^{\epsilon_i}\|_{h_{-\delta}} \ll \|\xi^{\epsilon_i}\|_{*,h_{-\delta}}.$$
Hence inverting the error using $(D_-^{\epsilon_i,0, {-\delta}})^{-1}$ yields
$$(\xi')^{\epsilon_i}= \xi^{\epsilon_i} -(D_-^{\epsilon_i,0, {-\delta}})^{-1}(D_-^{\epsilon_i,0,{-\delta}} \xi^{\epsilon_i})\in \ker D_-^{\epsilon_i,0,{-\delta}},$$ 
such that $\| \xi^{\epsilon_i} \|_{*,h_{-\delta}} \gg \| (D_-^{\epsilon_i,0, {-\delta}})^{-1} (D_-^{\epsilon_i,0,{-\delta}} \xi^{\epsilon_i}) \|_{*,h_{-\delta}}$, which implies that $(\xi')^{\epsilon_i}\not=0$.

The existence of a nontrivial element of $\ker D_-^{\epsilon_i,0,{-\delta}}$ contradicts Lemma~\ref{bounded inverses}. This completes the proof of Theorem~\ref{thm: 2}.

\subsection{How to recover the contact case} \label{subsection: reduction}

In this subsection we explain how to recover the contact case from the stable Hamiltonian case.  The brief idea is to start with the stable Hamiltonian case for which Morse-Bott gluing holds, perturb it to the contact case, and use the bifurcation method to establish Morse-Bott gluing in the contact case.

Let $a$ and $c$ be the positive numbers satisfying $0 < b < a < c < 1$ introduced in Section \ref{subsection: notation} and subject to the conditions ($\boldsymbol{\dagger}_0$) and ($\boldsymbol{\dagger}_1$). Recall the smooth functions 
$$f,f_\epsilon: [-1,1]\times T^2 \to \R$$ 
given by Equation~\eqref{eqn: f and f epsilon}.

\begin{warn}
The following have different meanings in this subsection from the previous subsections of the appendix: the real parameter $\delta$ in this subsection is unrelated to the weight appearing in the Morrey norms, and the functions $g,h,g_\delta, h_\delta$ appearing in this subsection are unrelated to the functions with the same names appearing in the previous subsections of the appendix.
\end{warn}

We then define smooth functions  
$$g,h: [-1,1]\to \R,$$ 
such that (i) $g$ is odd,  (ii) $g(y)=0$ on $[-a,a]$, (iii) $g'(y)>0$ on $(a,1]$ and $[-1,-a)$, and (iv) $g(y)=y$ on $y\geq c$ and $y\leq -c$;  and (v) $h(0)=0$ and (vi) $h'(y)= g'(y) {\bdry f\over \bdry y}=g'(y) y$.  In particular, $h(y)=0$ on $[-a, a]$. 

We define  differential forms % stable Hamiltonian structures $(\alpha,\omega)$ and $(\alpha,\omega_\epsilon)$ of the form 
\begin{gather}
\label{widetilde alpha} \alpha = dt + h(y)dt + g(y) d\theta,\quad \omega= df  \wedge dt+dy \wedge d\theta,\\
\nonumber  \omega_\epsilon= df_\epsilon \wedge dt+dy \wedge d\theta
\end{gather}
on $[-1,1]\times T^2$. (Here without loss of generality we are suppressing some constants that appeared in Equation~\eqref{eqn: alpha and alpha epsilon}.)

\begin{claim} \label{claim: verification of stable Hamiltonian}
The pairs $(\alpha,\omega)$ and $(\alpha,\omega_\epsilon)$ on $[-1,1]\times T^2$ are  stable Hamiltonian structures.
\end{claim}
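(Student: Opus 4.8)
\textbf{Proof proposal for Claim~\ref{claim: verification of stable Hamiltonian}.}

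The plan is to recall the definition of a stable Hamiltonian structure on a $3$-manifold and then verify the two required conditions by direct computation, exploiting the product form of $\alpha$, $\omega$, $\omega_\epsilon$ on $[-1,1]\times T^2$. Recall that a pair $(\alpha,\omega)$, with $\alpha$ a $1$-form and $\omega$ a closed $2$-form, is a stable Hamiltonian structure if (a) $\alpha\wedge\omega>0$ (a volume form), and (b) $\ker\omega\subset\ker d\alpha$, equivalently the Reeb-type vector field $R$ characterized by $\omega(R,\cdot)=0$, $\alpha(R)=1$ satisfies $i_R d\alpha=0$. Since $\omega$ and $\omega_\epsilon$ are manifestly exact (hence closed), closedness is automatic; the content is (a) and (b).

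First I would check condition (a). With $\alpha = (1+h(y))\,dt + g(y)\,d\theta$ and $\omega = df\wedge dt + dy\wedge d\theta = y\,dy\wedge dt + dy\wedge d\theta$ (using $f=\tfrac12 y^2$, and similarly $\omega_\epsilon$ picks up the extra term $\epsilon\,d(\phi(y)\overline g_{\mathcal N}(\theta))\wedge dt$), a short wedge computation gives $\alpha\wedge\omega = \big((1+h(y)) + (\text{terms}) \big)\,dt\wedge dy\wedge d\theta$ up to sign; the key point is that the coefficient is $1+h(y)$ plus a term controlled by $y g'(y)$ and $\epsilon$, which is strictly positive once we note $h(y)\ge 0$-type bounds following from $h(0)=0$ and $h'(y)=g'(y)y$ (so $h\ge 0$ everywhere since $g'\ge 0$ and $h'$ has the sign of $y$), and since $\epsilon$ is taken small. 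So $\alpha\wedge\omega>0$ and $\alpha\wedge\omega_\epsilon>0$.

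Next I would check condition (b). Because $\omega$ has no $d\theta\wedge dt$ or $dy\wedge d\theta$ coupling beyond what is written, one computes the kernel vector field $R_H$ of $\omega$ directly: from $i_{X}(dy\wedge d\theta) = dH$ one gets the Hamiltonian vector field $X_H$ on the $(y,\theta)$-annulus, and the Reeb-type field for $(\alpha,\omega)$ is $R = \tfrac{1}{\alpha(R_0)}R_0$ where $R_0 = \partial_t + X_f$ (resp.\ $\partial_t + X_{f_\epsilon}$) — this is exactly the stable Hamiltonian vector field of Equation~\eqref{Hamilton equation}, up to the normalization by $\alpha(\partial_t + X_H)$. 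One then computes $d\alpha = h'(y)\,dy\wedge dt + g'(y)\,dy\wedge d\theta$ and checks $i_R d\alpha = 0$: since $R$ is a positive multiple of $\partial_t + X_H$ and $X_H$ is tangent to the $(y,\theta)$-slices with $X_H \in \ker(h'(y)dy\wedge dt + g'(y)dy\wedge d\theta)$ precisely because of the relation $h'(y) = g'(y)\,(\partial_y f)(y) = g'(y)\,y$ encoded in (vi) (this is the design of $h$), the contraction vanishes identically. The same computation with $f_\epsilon$ in place of $f$ works verbatim; there the extra $\theta$-dependence of $f_\epsilon$ only enters $X_{f_\epsilon}$, and one still has $i_{R}d\alpha = 0$ because $d\alpha$ still only involves $dy\wedge dt$ and $dy\wedge d\theta$ with the proportionality $h'=g'y$, while $X_{f_\epsilon}$ has no $\partial_y$-component on the relevant region.

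The main obstacle — though it is more bookkeeping than difficulty — is checking condition (b) for $\omega_\epsilon$ cleanly on the transition region $a\le |y|\le c$, where $g$ is not equal to $y$ and $h$ is genuinely nonlinear, so that one must verify that the single relation $h'(y) = g'(y)\,y$ is exactly what is needed for $i_R d\alpha$ to vanish regardless of the perturbation; this amounts to observing that $d\alpha$ annihilates the $(y,\theta)$-distribution's ``vertical'' direction in the right way. I would organize the computation so that (b) is reduced to the pointwise identity $h' = g' \cdot (\partial_y f)$, which holds by construction, and then remark that the $\epsilon$-perturbation changes only $X_H$, not $d\alpha$, so the verification is identical. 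On the region $|y|\ge c$ one has $g(y)=y$, $h(y)=0$ near there need not hold, but $\alpha$ reduces to (a perturbation of) $dt + y\,d\theta$ and $\omega = \omega_\epsilon = d\alpha$, so both conditions are the standard contact case and there is nothing to check.
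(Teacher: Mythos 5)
Your overall strategy is the same as the paper's: verify the two defining conditions of a stable Hamiltonian structure by direct computation, noting that closedness is free because $\omega$, $\omega_\epsilon$ are exact. Your treatment of the kernel condition $\ker\omega\subset\ker d\alpha$ (your condition (b)) is essentially correct and matches the paper: the key identity is $h'(y)=g'(y)\,\partial_y f$, the $\theta$-dependent perturbation of $H$ is supported in $\{|y|\le 2b_0\}\subset[-a,a]$ where $d\alpha=0$, and outside $[-a,a]$ one has $f_\epsilon=f$. (One minor quibble: the ``main obstacle'' you flag on $a\le|y|\le c$ is not really where the work is — there $f_\epsilon=f$ and $\omega_\epsilon=\omega$, so nothing new appears; the only place the $\epsilon$-term can interfere is inside $[-a,a]$, where $d\alpha\equiv 0$ kills it.)

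There is, however, a genuine gap in your verification of the volume-form condition $\alpha\wedge\omega>0$. The computation gives
\[
\alpha\wedge\omega \;=\; \alpha\wedge\omega_\epsilon \;=\; \bigl(1+h(y)-g(y)\,y\bigr)\,dt\wedge dy\wedge d\theta,
\]
with no $\epsilon$-dependence at all (the $\epsilon$-correction to $\partial_y f_\epsilon$ lives where $g\equiv 0$). The extra term is $-g(y)\,y$, not something ``controlled by $yg'(y)$ and $\epsilon$.'' Since $g$ is odd with $g'\ge 0$, $g(y)$ has the sign of $y$, so $-g(y)\,y\le 0$, and near $|y|=1$ it approaches $-1$. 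Your argument that the coefficient is positive ``once we note $h(y)\ge 0$'' therefore does not close: $h\ge0$ is true, but it must compensate for a negative term that is of order $1$, and that requires an actual estimate. The correct observation (as in the paper) is that $h(y)-g(y)\,y$ has derivative
\[
\frac{d}{dy}\bigl(h(y)-g(y)\,y\bigr)=h'(y)-g'(y)\,y-g(y)=-g(y),
\]
so $h(y)-g(y)\,y=-\int_0^y g(s)\,ds$, and since $|g|<1$ on $(-1,1)$ this integral has absolute value strictly less than $1$; hence $1+h(y)-g(y)\,y>0$ on all of $[-1,1]$. Without this cancellation identity (which is exactly why $h$ was designed via (vi)) the positivity is not established, so you need to add it to make condition (a) rigorous.
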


\begin{proof}
It is immediate that $d\omega=d\omega_\epsilon=0$. 

Next we show that $\ker d \alpha\supset \ker \omega$ and $\ker d \alpha\supset \ker \omega_\epsilon$.
$$d \alpha= h'(y)dy \wedge dt + g'(y) dy \wedge d\theta= g'(y)\tfrac{\bdry f}{\bdry y}dy \wedge dt + g'(y)dy \wedge d\theta.$$
On $-a\leq y\leq a$, $d \alpha=0$ and hence $\ker d \alpha\supset \ker \omega$.  Outside of $-a\leq y\leq a$, $g'(y)\not=0$ and 
$$\ker d\alpha=\R\langle \tfrac{\bdry}{\bdry t}-\tfrac{\bdry f}{\bdry y}\tfrac{\bdry} {\bdry\theta}    \rangle=\ker \omega.$$ 
Moreover, outside of $-a\leq y\leq a$, $f_\epsilon=f$ and $\omega=\omega_\epsilon$. 

Finally, 
$$ \alpha\wedge \omega=  \alpha\wedge \omega_\epsilon=dt\wedge dy \wedge d\theta>0$$
on $-a\leq y\leq a$ and 
\begin{align*}
 \alpha\wedge \omega =  \alpha\wedge \omega_\epsilon & =(1+h(y)-g(y)\tfrac{\bdry f}{\bdry y})dt\wedge dy \wedge d\theta\\
&= (1+h(y)-g(y)y)dt\wedge dy \wedge d\theta
\end{align*}
outside of $-a\leq y\leq a$.  By (vi), $(h(y)-g(y)y)'= -g(y)$.  Since $|g(y)|<1$ except when $y=1$, $1+h(y)-g(y)y>0$.  

Hence $(\alpha,\omega)$ and $(\alpha,\omega_\epsilon)$ are both stable Hamiltonian structures.
\end{proof}

\begin{claim} \label{claim: stable Hamiltonian properties} $\mbox{}$
\be
\item[(A1)] On $[-a,a]\times T^2$, 
\begin{gather*}
(\alpha,\omega)=(dt, df \wedge dt+dy \wedge d\theta),\\
(\alpha,\omega_\epsilon)=(dt, df_\epsilon \wedge dt+dy \wedge d\theta).
\end{gather*}
\item[(A2)] On $([-1,-a)\cup (a,1])\times T^2$, the stable Hamiltonian structures $(\alpha,\omega)$ and $(\alpha,\omega_\epsilon)$ agree and $d \alpha$ is a positive function $g'(y)$ times $\omega=\omega_\epsilon$.
\item[(A3)] On $([-1,-c)\cup (c,1])\times T^2$, $g'(y)=1$, and $(\alpha,\omega)$ and $(\alpha,\omega_\epsilon)$ are contact.
\ee
\end{claim}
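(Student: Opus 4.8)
\textbf{Proof proposal for Claim~\ref{claim: stable Hamiltonian properties}.}

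The plan is to verify each of the three items (A1)--(A3) by a direct computation, using only the explicit formulae for $\alpha$, $\omega$, $\omega_\epsilon$ in Equation~\eqref{widetilde alpha} and the defining properties (i)--(vi) of the auxiliary functions $g$ and $h$ together with the conditions (P3') on $\phi$. Since Claim~\ref{claim: verification of stable Hamiltonian} has already established that $(\alpha,\omega)$ and $(\alpha,\omega_\epsilon)$ are genuine stable Hamiltonian structures, what remains here is purely a matter of reading off where each structure simplifies.

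First I would prove (A1). By property (ii), $g(y)=0$ on $[-a,a]$, and by the definition of $h$ (property (vi), $h'(y)=g'(y)y$) together with property (iii) that $g$ is constant ($=0$) on $[-a,a]$, we get $h'(y)=0$ there, hence $h(y)=h(0)=0$ on $[-a,a]$ by property (v). Substituting $g\equiv 0$ and $h\equiv 0$ into Equation~\eqref{widetilde alpha} gives $\alpha=dt$ on $[-a,a]\times T^2$, while $\omega=df\wedge dt+dy\wedge d\theta$ and $\omega_\epsilon=df_\epsilon\wedge dt+dy\wedge d\theta$ are unchanged; this is exactly the stated form, which matches the stable Hamiltonian structure used in Lemma~\ref{lemma: extension to rest of M}(1). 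Next, for (A2), on $([-1,-a)\cup(a,1])\times T^2$ we have $y\notin[-2b_0,2b_0]$ since $2b_0<a$, so $\phi(y)=0$ by (P3') and hence $f_\epsilon=f$ there (recall $f_\epsilon=\tfrac12 y^2+\epsilon\phi(y)\overline{g}_{\mathcal N}(\theta)$); consequently $\omega=\omega_\epsilon$ on this region. The computation in the proof of Claim~\ref{claim: verification of stable Hamiltonian} already shows $d\alpha=g'(y)\,dy\wedge dt\cdot\tfrac{\partial f}{\partial y}+g'(y)\,dy\wedge d\theta = g'(y)\bigl(\tfrac{\partial f}{\partial y}\,dy\wedge dt+dy\wedge d\theta\bigr)=g'(y)\,\omega$ (using $\tfrac{\partial f}{\partial y}=y$ and $h'(y)=g'(y)y$), and by property (iii) $g'(y)>0$ on $(a,1]$ and on $[-1,-a)$; this is the assertion of (A2). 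Finally, (A3) follows because by property (iv) $g(y)=y$ on $|y|\ge c$, so $g'(y)=1$ there, and then $d\alpha=\omega=\omega_\epsilon$ on $([-1,-c)\cup(c,1])\times T^2$, so in particular $\omega=\omega_\epsilon=d\alpha$ is exact and, since $\alpha\wedge d\alpha=\alpha\wedge\omega>0$ by the last computation in the proof of Claim~\ref{claim: verification of stable Hamiltonian}, $\alpha$ is a contact form on that region.

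I do not anticipate any genuine obstacle here: every step is an elementary substitution or an invocation of a property already listed, and the only mildly delicate point is bookkeeping the chain of implications $g\equiv 0\ \Rightarrow\ h'\equiv 0\ \Rightarrow\ h\equiv 0$ on $[-a,a]$, together with noting that $2b_0<a$ so that the support condition (P3') on $\phi$ puts us outside the perturbation region precisely when $|y|\ge a$. The main thing to be careful about is keeping straight that all three regions $[-a,a]$, $a<|y|<c$, and $|y|\ge c$ are nested as claimed, which is guaranteed by the standing inequality $0<4b_0<a<c<1$ fixed in Section~\ref{subsection: notation}. With these observations the claim is immediate.
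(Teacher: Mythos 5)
Your argument is correct and is essentially the same as the paper's, which simply notes the claim is immediate from the definitions and the earlier computation of $d\alpha$ in Claim~\ref{claim: verification of stable Hamiltonian}; you have merely filled in the details explicitly. The only point worth flagging is small: in (A2) you should note that $g'$ vanishes at $y=\pm a$ by smoothness of $g$, so the positivity of $g'$ is asserted only on the \emph{open} complement $([-1,-a)\cup(a,1])\times T^2$, which matches property (iii) and the statement of the claim.
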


\begin{proof}
Immediate from the definitions and the proof of Claim~\ref{claim: verification of stable Hamiltonian}.
\end{proof}

In view of Claim~\ref{claim: stable Hamiltonian properties}, there exists an extension of $(\alpha,\omega)=(\alpha,\omega_\epsilon)$ to $( \alpha, d\alpha)$ on $M-([-1,1]\times T^2)$. (In practice, we start with a contact form $ \alpha$ on all of $M$ and modify it to the stable Hamiltonian structures $(\alpha,\omega)$ and $(\alpha,\omega_\epsilon)$ on $[-1,1]\times T^2$.)

Let $J_f$ and $J_{f_\epsilon}$ be almost complex structures on $\R\times M$ such that:
\be
\item[(A4)] On the complement of $\R\times[-a,a]\times T^2$, $J_f$ and $J_{f_\epsilon}$ agree and are adapted to the same contact structure.
\item[(A5)] On $\R\times [-1,1]\times T^2$, $J_f$ and $J_{f_\epsilon}$ are adapted to the stable Hamiltonian structures $(\alpha,\omega)$ and $(\alpha,\omega_\epsilon)$. 

\item[(A6)] $J_f$ is Morse-Bott regular, $J_{f_\epsilon}$ is regular (at least for the moduli spaces that are involved in the Morse-Bott gluing), and the pair satisfies Morse-Bott gluing (i.e., Theorem~\ref{thm: Morse-Bott perturbation of moduli spaces} (2) and (3)).
\ee
The existence of such $J_f$ and $J_{f_\epsilon}$ follows from Theorem~\ref{thm: main theorem of appendix}.

The key point is the following lemma which allows us to perturb to the contact case:

\begin{lemma} \label{lemma: cx str}
	There exist almost complex structures $J'_{f,\delta}$ and $J_{f_\epsilon,\delta}'$ that are $C^1$-close to $J_f$ and $J_{f_\epsilon}$  and contact forms  $\alpha'_\delta$ and $\alpha'_{\epsilon,\delta}$ that are $C^1$-close to $ \alpha$ (here the size of the perturbations depend on $\delta>0$) such that:
	\be
	\item on the complement of $\R\times[-c,c]\times T^2$, $J_f$, $J_{f_\epsilon}$, $J'_{f,\delta}$, and $J_{f_\epsilon,\delta}'$ all agree;
	\item $J'_{f,\delta}$ and $J_{f_\epsilon,\delta}'$ are adapted to $\alpha'_\delta$ and $\alpha'_{\epsilon,\delta}$, respectively;
	\item the stable Hamiltonian vector fields corresponding to $J_f$ and $J'_{f,\delta}$ (and those corresponding to $J_{f_\epsilon}$ and $J_{f_\epsilon,\delta}'$) are parallel; and
	\item on $\R\times[-a,a]\times T^2$, $\alpha'_\delta$ and $\alpha'_{\epsilon,\delta}$ are as given in Equation~\eqref{eqn: alpha and alpha epsilon} and (P1)--(P4) in Section~\ref{subsection: Morse-Bott contact forms} with $C=1$ and  $J'_{f,\delta}$ and $J_{f_\epsilon,\delta}'$ satisfy (*) in Section~\ref{subsection: Morse-Bott regularity} and (**) in Section~\ref{subsection: Morse-Bott chain complex}.
	\ee
\end{lemma}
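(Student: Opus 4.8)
\textbf{Proof proposal for Lemma~\ref{lemma: cx str}.}
The plan is to build $\alpha'_\delta$ and $\alpha'_{\epsilon,\delta}$ by interpolating between the contact form $\alpha$ of Equation~\eqref{eqn: alpha and alpha epsilon} on $[-a,a]\times T^2$ and the already-constructed stable Hamiltonian data $(\alpha,\omega)$, $(\alpha,\omega_\epsilon)$ on the region $[-c,c]\times T^2$, and then to port the almost complex structures along with them. First I would work entirely on the region $[-c,c]\times T^2$ with coordinates $(y,(\theta,t))$, since outside $\R\times[-c,c]\times T^2$ nothing is changed and (1) holds by fiat. On $[-c,c]$ recall that $(\alpha,\omega)=(dt+h(y)dt+g(y)d\theta,\,df\wedge dt+dy\wedge d\theta)$ from Equation~\eqref{widetilde alpha}, where $g$ vanishes on $[-a,a]$ and satisfies $g'>0$ on $(a,c]$, $g(y)=y$ near $y=\pm c$, and $h$ is its primitive companion with $h'(y)=g'(y)y$. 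The idea is to replace the pair $(g,h)$ by a $\delta$-family $(g_\delta,h_\delta)$ of functions with $g_\delta'>0$ everywhere on $(-c,c)$ (so that $d\alpha'_\delta=g_\delta'(y)\,\omega'_\delta$ becomes genuinely nondegenerate, i.e.\ contact) while $(g_\delta,h_\delta)\to(g,h)$ in $C^1$ as $\delta\to 0$, and while $g_\delta\equiv g$, $h_\delta\equiv h$ on $[-a,a]$ so that (4) is untouched there. Concretely one takes $g_\delta(y)=g(y)+\delta\,\chi(y)$ for a fixed bump function $\chi$ supported in $(a/2,c)\cup(-c,-a/2)$ with $\chi'>0$ suitably arranged, and $h_\delta$ its primitive normalized by $h_\delta(0)=0$; then $\alpha'_\delta=dt+h_\delta(y)dt+g_\delta(y)d\theta$ and $\omega'_\delta=df\wedge dt+dy\wedge d\theta$, and the computation of Claim~\ref{claim: verification of stable Hamiltonian}, now with $g_\delta'>0$ throughout, shows $\alpha'_\delta\wedge d\alpha'_\delta>0$, i.e.\ it is contact. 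The same recipe with $f_\epsilon$ in place of $f$ (and recalling that $f_\epsilon=f$ outside $[-2b_0,2b_0]$, so the primitive relation is only modified deep inside $[-a,a]$ where $g_\delta=g=0$) produces $\alpha'_{\epsilon,\delta}$.

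Next I would transport the almost complex structures. Both $J_f$ on $\R\times[-1,1]\times T^2$ and the perturbed contact form $\alpha'_\delta$ share the property that their Reeb/stable Hamiltonian vector field is proportional to $\partial_t+X$ with $X\in\ker\alpha$ tangent to the $(y,\theta)$-planes, and their hyperplane fields $\ker\alpha$ and $\ker\alpha'_\delta$ agree up to first order (they are spanned by $\partial_y$ and $-(1+h)\partial_\theta+g\,\partial_t$ versus $-(1+h_\delta)\partial_\theta+g_\delta\partial_t$, which converge in $C^1$ as $\delta\to0$). I would therefore define $J'_{f,\delta}$ to be the $\alpha'_\delta$-adapted almost complex structure that agrees with $J_f$ on the $(y,\theta)$-directions modulo the identification of the two contact planes by orthogonal projection, and that sends $\partial_s\mapsto R_{\alpha'_\delta}$; since $\ker\alpha'_\delta\to\ker\alpha$ and $R_{\alpha'_\delta}\to R_{\text{stab}}$ (parallel by construction, giving (3)) in $C^1$, the resulting $J'_{f,\delta}$ is $C^1$-close to $J_f$, giving (1)--(2). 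On $[-a,a]\times T^2$ we arrange $J'_{f,\delta}$ to be exactly the model ${\partial_y}\mapsto{\partial_\theta}$ structure, which is legitimate because there $\alpha'_\delta$ is precisely the Equation~\eqref{eqn: alpha and alpha epsilon} form with $C=1$ and nothing needs perturbing; that yields (4) (conditions (*) and (**) are manifestly satisfied by the model). The construction of $J'_{f_\epsilon,\delta}$ is identical with $\epsilon$-decorations.

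The main obstacle is (3) together with preserving Morse-Bott regularity implicitly needed downstream: I must ensure that the perturbation $(g,h)\rightsquigarrow(g_\delta,h_\delta)$ does not create any \emph{new} closed Reeb orbits of $\alpha'_\delta$ below the relevant action/intersection threshold, and does not destroy the Morse-Bott tori at $y=0$ and (after the $f_\epsilon$-perturbation) the pair $e,h$. This is where the condition $g_\delta\equiv g\equiv 0$ and $h_\delta\equiv h\equiv 0$ on $[-a,a]$ is essential: the slope of the Reeb vector field on $T_y=\{y=\text{const}\}$ is $-f'(y)/g_\delta'(y)$ for $y\in(a,c]$, and by choosing $\chi$ so that $g_\delta'$ is large relative to $\delta$ on the support of $\chi$ we force these slopes to be irrational in an open-dense set and to stay close to the original slopes of $(\alpha,\omega)$, so by Claim~\ref{claim: stable Hamiltonian properties}(A2) and ($\boldsymbol{\dagger}_0$) no short orbits are created in $(a,c]\times T^2$; on $[-a,a]\times T^2$ the Reeb dynamics is literally that of Equation~\eqref{eqn: alpha and alpha epsilon}, for which Proposition~\ref{prop: perturbation of alpha} already controls everything. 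Thus for $\delta>0$ small the pair $(\alpha'_\delta,J'_{f,\delta})$ has the same low-action orbit set and the same low-ECH-index moduli spaces as $(\alpha,\omega,J_f)$, so that Morse-Bott regularity (A6) passes to the contact case by the openness of transversality; I would cite Dragnev~\cite{Dr} and the discussion following Definition~\ref{defn: Morse-Bott building} for this last step. The remaining verifications — that $\alpha'_\delta$ is smooth across $y=\pm a$ and $y=\pm c$, that $d\alpha'_\delta>0$ on $\ker\alpha'_\delta$, and that (*), (**) hold — are routine and I would not grind through them here.
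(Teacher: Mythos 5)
Your construction contains a contradiction that makes the resulting $1$-form fail to be contact on a neighborhood of $y=0$, which is exactly where it matters. You state the goal correctly --- choose $g_\delta$ with $g_\delta'>0$ \emph{everywhere} on $(-c,c)$ so that $d\alpha'_\delta$ becomes nondegenerate on $\ker\alpha'_\delta$ --- but then you insist on keeping $g_\delta\equiv g\equiv 0$ and $h_\delta\equiv h\equiv 0$ on $[-a,a]$ "so that (4) is untouched." These two requirements are incompatible: if $g_\delta\equiv 0$ on $[-a,a]$ then $g_\delta'\equiv 0$ there, and your concrete choice $g_\delta=g+\delta\chi$ with $\chi$ supported in $(a/2,c)\cup(-c,-a/2)$ gives $\alpha'_\delta=dt$ on $[-a/2,a/2]\times T^2$, so $d\alpha'_\delta=0$ and $\alpha'_\delta\wedge d\alpha'_\delta=0$ there. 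That is not a contact form. You later assert that "on $[-a,a]\times T^2$ \ldots $\alpha'_\delta$ is precisely the Equation~\eqref{eqn: alpha and alpha epsilon} form with $C=1$," but that form is $dt+\delta(f\,dt+y\,d\theta)$, which is emphatically not $dt$; you cannot have both.

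The correct reading of condition (4) is the opposite of "untouched": on $[-a,a]\times T^2$ you must \emph{change} the stable Hamiltonian $1$-form $dt$ into the genuine contact form $dt+\delta(f\,dt+y\,d\theta)$, which amounts to taking $g_\delta(y)=\delta y$ and $h_\delta(y)=\delta f(y)$ on $[-a,a]$ (in particular $g_\delta'(y)=\delta>0$ there). What remains, and what the paper actually does, is to extend this to $[-c,c]$ by an odd $g_\delta$ with $g_\delta'>0$ throughout, agreeing with $g$ near $y=\pm c$, and with $h_\delta$ defined by $h_\delta'=g_\delta'\,\partial f/\partial y$ for $|y|>a$ and matched at $y=\pm a$; the freedom afforded by $c\gg a$ is used to arrange $h_\delta(\pm c)=h(\pm c)$ so the interpolation glues smoothly to the unperturbed data outside $[-c,c]\times T^2$. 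Once $\alpha'_\delta$ is genuinely contact, your subsequent remarks about porting $J_f$ to an $\alpha'_\delta$-adapted $J'_{f,\delta}$ with the model ${\partial_y\mapsto\partial_\theta}$ on $[-a,a]$, and about the $C^1$-closeness, are sound in spirit, but as written they rest on a non-contact $\alpha'_\delta$. The paragraph about forcing irrational slopes via the choice of $\chi$ is also unnecessary and somewhat beside the point: after the corrected construction the Reeb dynamics on $[-a,a]\times T^2$ is literally governed by $f$ or $f_\epsilon$, and the shell $a<|y|\le c$ has stable Hamiltonian vector field parallel to that of the unperturbed $(\alpha,\omega)$ by condition (3), so no new low-action orbits are produced by design rather than by genericity.
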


\begin{proof}
For $\delta>0$ small, let $g_\delta:[-1,1]\to \R$ be a smooth function which is a perturbation of $g$ such that (i) $g_\delta$ is odd, (ii') $g_\delta(y)=\delta y$ on $[-a,a]$, and (iii) and (iv) still hold.  We define
$$\alpha'_\delta= dt +\delta(fdt + yd\theta) \quad \mbox{ if $y\in [-a,a]$},$$
$$\alpha'_{\epsilon,\delta}=dt + \delta(f_\epsilon dt+yd\theta)\quad \mbox{ if $y\in[-a,a]$},$$
$$\alpha'_\delta= \alpha'_{\epsilon,\delta}= dt  + h_\delta(y)dt +g_\delta(y)d\theta \quad \mbox{ if $y\not\in [-a,a]$},$$
and $h_\delta'(y)=g_\delta'(y) {\bdry f\over \bdry y}$ for $y\not\in[-a,a]$ and $h_\delta(\pm a)= \delta f(\pm a)$.  If $c\gg a$, then it is not hard to see that we can choose $g_\delta$ such that $h_\delta(\pm c)= h(\pm c)$.
Then $( \alpha'_\delta,\omega)$ and $(\alpha'_{\epsilon,\delta},\omega_\epsilon)$ are stable Hamiltonian structures corresponding to contact structures, and are close to $(\alpha,\omega)$ and $(\alpha,\omega_\epsilon)$. (Strictly speaking, $d \alpha'_\delta=\phi_1\omega$ and $d \alpha'_{\epsilon,\delta}=\phi_2\omega_\epsilon$ for some functions $\phi_1$ and $\phi_2$.)

We verify the contact property for $ \alpha'_\delta$ and $ \alpha'_{\epsilon,\delta}$:  For $y\in [-a,a]$,
\begin{align*}
 \alpha'_\delta \wedge d \alpha'_\delta &= (1+\delta f)dt \wedge \delta dy d\theta + \delta y d\theta \wedge \delta df dt>0, 
\end{align*}
since we are assuming that $\delta>0$ is small. Similarly, $ \alpha'_{\epsilon,\delta}$ is contact by replacing $f$ by $f_\epsilon$ on $[-a,a]$.  For $y\not\in[-a,a]$, $ \alpha'_\delta=\alpha_{\epsilon,\delta}$ and 
\begin{align*}
 \alpha'_\delta \wedge d \alpha'_\delta &= (1+h_\delta)dt \wedge g_\delta'(y) dy d\theta + g_\delta(y) d\theta \wedge h'_\delta(y)dy dt\\
& = g'_\delta(y)[(1+h_\delta)- g_\delta(y)y ]dt dyd\theta >0 
\end{align*}
as in the proof of Claim~\ref{claim: verification of stable Hamiltonian}. 

Let $J'_{f,\delta}$ and $J'_{f_\epsilon,\delta}$ be the corresponding adapted almost complex structures that are close to $J_f$ and $J_{f_\epsilon}$ and subject to the condition that the projections of $J'_{f,\delta}|_{\ker {\alpha}'}$ and $J'_{f_\epsilon,\delta}|_{\ker \alpha'_\epsilon}$ to $[-1,1]\times (\R/\Z)$ with coordinates $(y,\theta)$ is the standard complex structure ${\bdry \over \bdry y}\mapsto {\bdry\over \bdry\theta}$.
	
The $C^1$-closeness and (1)--(4) are immediate from the construction.
\end{proof}

For the next lemma we introduce the following notation.

\begin{noti}
 If ${\mathcal M}$ is a moduli space of $J$-holomorphic curves in a symplectization for a cylindrical almost complex structure $J$, we denote by $\widehat{\mathcal M} := {\mathcal M}/\R$ the quotient of ${\mathcal M}$ by translations in the symplectization direction.
\end{noti}

\begin{lemma}
  There exist $\delta>0$ sufficiently small and $\epsilon_0=\epsilon_0(\delta)>0$ such that  Theorem~\ref{thm: Morse-Bott perturbation of moduli spaces} (2) and (3) hold for any $\epsilon$ satisfying $0<\epsilon<\epsilon_0$, with $J_0$ and $J_\epsilon$ replaced by $J_{f,\delta}'$ and $J'_{f_\epsilon,\delta}$.
\end{lemma}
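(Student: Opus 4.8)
The strategy is a bifurcation (deformation) argument: we already know from Theorem~\ref{thm: main theorem of appendix} (equivalently from the pair $(J_f, J_{f_\epsilon})$ satisfying (A6)) that Morse-Bott gluing holds for the \emph{stable Hamiltonian} structures $(\alpha,\omega)$ and $(\alpha,\omega_\epsilon)$. Lemma~\ref{lemma: cx str} produces, for each small $\delta>0$, contact forms $\alpha'_\delta$, $\alpha'_{\epsilon,\delta}$ and adapted almost complex structures $J'_{f,\delta}$, $J'_{f_\epsilon,\delta}$ that are $C^1$-close to the stable Hamiltonian data, agree with it outside $\R\times[-c,c]\times T^2$, have parallel Reeb/stable-Hamiltonian vector fields, and satisfy the normalizations (*) and (**). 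The point is that every object entering the gluing proof of Theorems~\ref{thm: 1} and \ref{thm: 2} depends continuously on the $C^1$-data, and all the estimates are \emph{open} conditions, so they survive the perturbation from $\delta=0$ to $\delta>0$ small.

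More precisely, first I would fix the combinatorial/topological data: the Morse-Bott torus $T_{\mathcal N}$, the orbit set $\boldsymbol\gamma$, the intersection bound $N=1$, the constants $c$, $a$, $b_0$ from ($\boldsymbol{\dagger}_0$)--($\boldsymbol{\dagger}_1$), which were chosen using only the \emph{finite} set $\mb_{J_f}$ and are insensitive to the $C^1$-small change of almost complex structure (the moduli space $\mb_{J'_{f,\delta}}$ stays Morse-Bott regular for $\delta$ small, by the same argument as in the ``Brief discussion on regularity'' paragraph, since regularity is an open condition and $J'_{f,\delta}=J_f$ on $[-c,c]\times T^2$). Because the Reeb vector fields of $\alpha'_\delta$ and of $\alpha$ are parallel by Lemma~\ref{lemma: cx str}(3), the gradient trajectories of $f_\epsilon$ (hence $\mathcal T_0^\epsilon$, the lifted cylinders $u_{\mathcal T^\epsilon}$ of Lemma~\ref{from wendl}, the asymptotic operator $\mathbf A$ of Section~\ref{subsection: asymptotic operator}, and its spectrum in Claim~\ref{claim: eigenfunctions and eigenvalues}) are \emph{unchanged} on $[-a,a]\times T^2$, where all the delicate near-breaking analysis takes place; the only region where $\alpha'_\delta\neq\alpha$ is $[-a,a]^c$, where the preglued curve coincides with $u_+$ (for $s\geq -T_0$, using ($\boldsymbol{\dagger}_1$)) or with an honest lifted gradient cylinder (for $s\leq -T_2$), so the $C^1$-perturbation only enters through the operators $D_+$, $D_*^{\epsilon,\aaa}$ as a bounded perturbation of norm $O(\delta)$.

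The second step is to re-run the gluing: repeat verbatim the pregluing of Section~\ref{subsection: pregluing}, the function-space setup of Section~\ref{subsection: function spaces} (the Morrey spaces, weights $g_\delta, h_\delta$ — note the unfortunate notational clash, these $\delta$'s are the Morrey weight, not the perturbation parameter, and are chosen first, then the perturbation size is taken small relative to them), the estimates Lemmas~\ref{lemma: estimate for D eta star}, \ref{qui}, \ref{quo}, and the contraction-mapping Proposition~\ref{wildfire}, with $J_f$, $J_{f_\epsilon}$ replaced throughout by $J'_{f,\delta}$, $J'_{f_\epsilon,\delta}$. Every constant in Lemmas~\ref{lemma: equivalent 1} and \ref{lemma: equivalent 2} acquires an extra additive term of size $O(\delta)$ coming from $\|D_{+,\delta}'-D_+\|$ and the difference of Hessians, which can be absorbed into $c_1\epsilon+c_2(b)$ by first taking $b$ small, then $\delta$ small, then $\epsilon_0=\epsilon_0(\delta)$ small; the invertibility of $D_-^{\epsilon,\aaa,\delta_{\mathrm{weight}}}$ (Lemma~\ref{bounded inverses}) persists since it too is open. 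Then Theorem~\ref{thm: 1} gives, for $0<\epsilon<\epsilon_0(\delta)$, a curve $u^{\epsilon,\aaa}$ with $\overline\partial_{J'_{f_\epsilon,\delta}}u^{\epsilon,\aaa}=\mathfrak p_\epsilon(\aaa)\nu$ and $|\mathfrak p_\epsilon(\aaa)-\aaa|$ small, hence a zero; Theorem~\ref{thm: 2} (the Case 1 / Case 2 bifurcation argument, which again uses only the spectral geometry of $\mathbf A$ near $[-a,a]\times T^2$, unchanged) gives uniqueness modulo $\R$. Finally I would check regularity of $\mathcal M_{J'_{f_\epsilon,\delta}}$ exactly as in the regularity discussion: the only Reeb orbits of $\alpha'_{\epsilon,\delta}$ inside $[-c,c]\times T^2$ are the perturbed Morse-Bott orbits $e,h$, so after a further $C^\infty$-small perturbation of $J'_{f_\epsilon,\delta}$ supported outside $[-c,c]\times T^2$ the moduli space becomes regular without destroying the bijection. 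Restating the conclusion: the bijection $\widehat{\mb}_{J'_{f,\delta}}\simeq\widehat{\mathcal M}_{J'_{f_\epsilon,\delta}}$ together with the index/count comparison is precisely Theorem~\ref{thm: Morse-Bott perturbation of moduli spaces}(2)--(3) in the contact setting.

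The main obstacle, and the step deserving genuine care, is controlling the $O(\delta)$ error uniformly as the gluing neck length $T_2(\epsilon)\to\infty$: the perturbation $D'_{+,\delta}-D_+$ and the Hessian difference $\mathbf Hf_{\epsilon,\delta}'-\mathbf Hf_\epsilon$ are supported on the region $[-c,c]\setminus[-a,a]$ in the $y$-direction, which by ($\boldsymbol{\dagger}_1$) is precisely where the preglued curve is already $C^\infty$-close to either $u_+$ or a lifted gradient cylinder, so the relevant difference of $\overline\partial$-operators, weighted by $g_\delta$ or $h_\delta$, is $O(\delta)$ with a constant independent of $\epsilon$ — but verifying this requires redoing the bookkeeping of Lemma~\ref{lemma: equivalent 1}(3)--(4) with the extra terms $(\mathbf Hf'_\delta(\eta_+)-\mathbf Hf_{\epsilon,\delta}'(\eta_*^{\epsilon,\aaa}))\psi_+$ and keeping track that they do not pick up factors of $T_1$ or $T_2$. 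Once this uniformity is in place, the rest is a routine re-run of Sections~\ref{subsection: setting up gluing}--\ref{subsection: gluing 2} with the perturbed data.
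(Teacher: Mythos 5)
You take a genuinely different route from the paper. The paper's proof is a ``soft'' bifurcation argument that avoids re-running any gluing analysis in the contact setting. It first observes that, for some small $\epsilon_1$ and then some $\delta = \delta(\epsilon_1)$ small relative to $\epsilon_1$, the $C^1$-closeness from Lemma~\ref{lemma: cx str} gives bijections $\widehat{\mb}_{J'_{f,\delta}}\simeq\widehat{\mb}_{J_f}$ and $\widehat{\mathcal M}_{J_{f_{\epsilon_1}}}\simeq\widehat{\mathcal M}_{J'_{f_{\epsilon_1},\delta}}$, which combined with the stable Hamiltonian gluing bijection yield $\widehat{\mb}_{J'_{f,\delta}}\simeq\widehat{\mathcal M}_{J'_{f_{\epsilon_1},\delta}}$ --- but this has the wrong order of quantifiers ($\delta$ depending on $\epsilon_1$ rather than vice versa). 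The paper then remedies this by fixing $\delta=\delta(\epsilon_1)$ and running a parametric SFT compactness argument over the family $\{J'_{f_\epsilon,\delta}\}_{\epsilon\in(0,\epsilon_1]}$: the mod-$2$ count of $\widehat{\mathcal M}_{J'_{f_\epsilon,\delta}}^{I=\op{ind}=1}(\boldsymbol\gamma,e)$ is constant in $\epsilon$ because any degeneration would produce either a two-level building or a multiple cover with a nontrivial $I=\op{ind}=0$ component, both excluded by the regularity of the limiting $J_f$. Your proposal instead proposes to re-run the entire contraction-mapping gluing of Sections~\ref{subsection: pregluing}--\ref{subsection: gluing 2} directly with the contact data, treating the contact perturbation as an extra $O(\delta)$ error to absorb into Lemmas~\ref{qui}--\ref{quo} after choosing $b$, then $\delta$, then $\epsilon_0(\delta)$. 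This does hand you the right order of quantifiers immediately, which is an honest advantage, but at the cost of redoing a substantial amount of estimation that the paper's soft argument sidesteps entirely; and the uniformity you flag at the end as ``deserving genuine care'' is precisely the place where a direct re-run is delicate.

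One claim you make needs to be corrected: you assert that because the Reeb vector fields of $\alpha'_\delta$ and $\alpha$ are parallel, the gradient-flow cylinders of Lemma~\ref{from wendl}, the asymptotic operator $\mathbf A$, and the spectrum of Claim~\ref{claim: eigenfunctions and eigenvalues} are \emph{unchanged} on $[-a,a]\times T^2$. Parallel is not equal. On $[-a,a]\times T^2$ one has $R_{\alpha'_\delta} = (1-\tfrac12\delta y^2)^{-1}R_H$ (with $R_H=\partial_t - y\partial_\theta$), so the linearized return map and hence the asymptotic operator are rescaled by a positive function that is only $\delta$-close to $1$, and the analogue of Claim~\ref{leftovers} for $J'_{f_\epsilon,\delta}$ is not literally $\mathcal D_\epsilon\eta=0$ but a $\delta$-perturbation of it. The eigenfunctions and winding numbers are indeed unaffected and the rescaling is uniformly $\delta$-close to $1$, so this does not sink your approach; but ``unchanged'' overstates it, and these $O(\delta)$ corrections would have to be tracked explicitly through Claims~\ref{leftovers}--\ref{claim: eigenfunctions and eigenvalues}, Lemma~\ref{from wendl}, and the definition of the pregluing if you actually carried out the re-run.
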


\begin{proof}
	Consider the $I=\op{ind}=1$, unconstrained, Morse-Bott regular moduli space $\mathcal{M}_{J_f}^{I=\op{ind}=1}(\boldsymbol{\gamma};\mathcal{N})$ from Section~\ref{subsection: notation}. Then  $\widehat{\mathcal{M}}_{J_f}^{I=\op{ind}=1}(\boldsymbol{\gamma};\mathcal{N})$ consists of a finite number of holomorphic maps $u_+$.	If $\delta>0$ is small, then $J_{f,\delta}'$ is also Morse-Bott regular since it is close to $J_f$ and
	\begin{equation} \label{eqn: moduli same}
	\widehat{\mathcal{M}}_{J_f}^{I=\op{ind}=1}(\boldsymbol{\gamma};\mathcal{N})\simeq \widehat{\mathcal{M}}_{J'_{f,\delta}}^{I=\op{ind}=1}(\boldsymbol{\gamma};\mathcal{N}),
      \end{equation}
where $\simeq$ indicates a bijection.  (If signs were done carefully, they would be preserved by the bijection.)
	Next, there exists $\epsilon>0$ small such that $J_{f_\epsilon}$ is regular (after possibly perturbing $J_f$) and there exists $\delta=\delta(\epsilon)>0$ such that $J'_{f_\epsilon,\delta}$ is close to $J_{f_\epsilon}$ and hence is regular and
	\begin{equation} \label{eqn: moduli same 2}
	\widehat{\mathcal{M}}_{J_{f_\epsilon}}^{I=\op{ind}=1}(\boldsymbol{\gamma},e)\simeq \widehat{\mathcal{M}}_{J'_{f_\epsilon,\delta}}^{I=\op{ind}=1}(\boldsymbol{\gamma},e),
	\end{equation}
	where $e$ is the negative elliptic orbit obtained by perturbing the Morse-Bott family. Also for the same $\epsilon>0$ small the Morse-Bott gluing theorem in the stable Hamiltonian case (Theorem~\ref{thm: main theorem of appendix}) gives a bijection
	\begin{equation} \label{eqn: moduli same 3}
	\widehat{\mathcal{M}}_{J_f}^{I=\op{ind}=1}(\boldsymbol{\gamma};\mathcal{N})\simeq \widehat{\mathcal{M}}_{J_{f_\epsilon}}^{I=\op{ind}=1}(\boldsymbol{\gamma},e).
	\end{equation}
	Combining \eqref{eqn: moduli same}, \eqref{eqn: moduli same 2}, and \eqref{eqn: moduli same 3} gives
	\begin{equation} \label{eqn: moduli same 4}
	\widehat{\mathcal{M}}_{J'_{f,\delta}}^{I=\op{ind}=1}(\boldsymbol{\gamma};\mathcal{N}) \simeq \widehat{\mathcal{M}}_{J'_{f_\epsilon,\delta}}^{I=\op{ind}=1}(\boldsymbol{\gamma},e),
	\end{equation}
	for some $\epsilon>0$ small and $\delta=\delta(\epsilon)>0$ small.  The difficulty is that  we want $\epsilon$ to depend on $\delta$,  not the other way around.

  To remedy this we start with $\epsilon_1>0$ small, choose $\delta=\delta(\epsilon_1)>0$ small such that \eqref{eqn: moduli same 2} holds with $\epsilon=\epsilon_1$, and apply the bifurcation method to the $1$-parameter family $\{J'_{f_\epsilon,\delta}\}_{\epsilon\in (0,\epsilon_1]}$. We may assume that $J'_{f_{\epsilon_1},\delta}$ is regular and that $\{J'_{f_\epsilon,\delta}\}_{\epsilon\in(0, \epsilon_1]}$ is regular as a family.  By \eqref{eqn: moduli same}--\eqref{eqn: moduli same 4}, $\widehat{\mathcal{M}}_{J'_{ f_{\epsilon_1}, \delta}}^{I=\op{ind}=1}(\boldsymbol{\gamma},e)$  consists of a finite number of holomorphic maps (up to translation in the target) that are close to breaking and is in bijection with $\widehat{\mathcal{M}}_{J_f}^{I=\op{ind}=1}(\boldsymbol{\gamma};\mathcal{N})$.
	
We claim that for $\epsilon_1$ and $\delta$ sufficiently small,
$$\# \widehat{\mathcal{M}}_{J'_{f_\epsilon,\delta}}^{I=\op{ind}=1}(\boldsymbol{\gamma}, e) \equiv \# \widehat{\mathcal{M}}_{J'_{f_{\epsilon_1}, \delta}}^{I=\op{ind}=1}(\boldsymbol{\gamma}, e) \ \mbox{ mod 2}$$ 
for all $\epsilon\in  (0,\epsilon_1]$. To this end we consider the $1$-dimensional parametric moduli space which, slightly abusing notation, we denote by $\coprod_{\epsilon\in (0,\epsilon_1]}\widehat{\mathcal{M}}_{J'_{f_\epsilon, \delta}}^{I=\op{ind}=1}(\boldsymbol{\gamma}, e)$.  Note that the Reeb orbits do not vary as $\epsilon$ varies by Lemma~\ref{lemma: cx str} (2).
The claim is a consequence of the following claim: For $\epsilon_1$ and $\delta$ small there is no $u_{\tilde \epsilon} \in \bdry (\coprod_{\epsilon\in (0,\epsilon_1]}\widehat{\mathcal{M}}_{J'_{f_\epsilon, \delta}}^{I=\op{ind}=1}(\boldsymbol{\gamma}, e))$, where $u_{\tilde \epsilon}$ is a limit $J'_{f_{\tilde \epsilon},\delta}$-holomorphic curve/building for some $\tilde \epsilon\in (0,\epsilon_1)$.  
Arguing by contradiction, suppose there exist sequences $\{(\delta_i, \tilde \epsilon_i)\}_{i=1}^\infty$ and $\{u_{\tilde \epsilon_i}\}_{i=1}^\infty$ such that $(\delta_i,\tilde \epsilon_i)\to (0,0)$ and $u_{\tilde \epsilon_i}$ converges to a $J_f$-holomorphic limit curve $u$ which is:
\begin{itemize}
\item[(i)] a $2$-level holomorphic building $\tilde{u}_1 \cup \tilde{u}_2$, one of whose components --- say $\tilde{u}_1$ ---  satisfies $I(\tilde{u}_1)= \op{ind}(\tilde{u}_1)=0$; or
\item[(ii)] a multiple cover of a holomorphic map $\tilde{v}$ with $I(\tilde{v})= \op{ind}(\tilde{v}) =0$;
\end{itemize}
and neither can occur since $u$ is $J_f$-holomorphic and $J_f$ is regular.
\end{proof}


\begin{thebibliography}{}

\bibitem[BH]{BH}
E.\ Bao and K.\ Honda, \textit{Semi-global Kuranishi charts and the definition of contact homology},  Adv.\ Math.\ {\bf 414} (2023), 108864.

\bibitem[Bo1]{Bo1}
F.\ Bourgeois, \textit{A Morse Bott approach to contact homology},
from: ``Symplectic and contact topology: interactions and
perspectives (Toronto, ON/Montr\'eal, QC, 2001)'', Fields Inst.
Comm. {\bf 35}, Amer.\ Math.\ Soc., Providence, RI (2003) 55--77.

\bibitem[Bo2]{Bo2}
F.\ Bourgeois, \textit{A Morse-Bott approach to contact homology},
Ph.D.\ thesis, 2002.

\bibitem[BM]{BM}
F.\ Bourgeois and K.\ Mohnke, \textit{Coherent orientations in symplectic field theory},
 Math.\ Z.\ {\bf 248} (2004), 123--146.

\bibitem[BEHWZ]{BEHWZ}
F.\ Bourgeois, Y.\ Eliashberg, H.\ Hofer, K.\ Wysocki and E.\
Zehnder, \textit{Compactness results in symplectic field theory},
Geom.\ Topol.\ {\bf 7} (2003), 799--888 (electronic).

\bibitem[CH]{CH}
V.\ Colin and K.\ Honda, \textit{Constructions de champs de Reeb
sous contr{\^o}le et applications}, Geom.\ Topol.\ {\bf 9} (2005), 2193--2226
(electronic).

\bibitem[CH2]{CH2}
V.\ Colin and K.\ Honda, \textit{Reeb vector fields and open book decompositions}, J.\ Eur.\ Math.\ Soc.\ (JEMS) {\bf 15} (2013), 443--507.

\bibitem[CGH1]{CGH1}
V.\ Colin, P.\ Ghiggini and K.\ Honda, \textit{Equivalence of Heegaard Floer homology and embedded contact homology via open book decompositions}, Proc.\ Nat.\ Acad.\ Sci.\ {\bf 108} (2011), 8101--8105.

\bibitem[CGH2]{CGH2}
V.\ Colin, P.\ Ghiggini and K.\ Honda, \textit{The equivalence of
Heegaard Floer homology and embedded contact homology via open book
decompositions I}, preprint 2012.  \texttt{ arXiv:1208.1074}.

\bibitem[CGH3]{CGH3}
V.\ Colin, P.\ Ghiggini and K.\ Honda, \textit{The equivalence of
Heegaard Floer homology and embedded contact homology via open book
decompositions II}, preprint 2012.  \texttt{ arXiv:1208.1077}.

\bibitem[CGH4]{CGH4}
V.\ Colin, P.\ Ghiggini and K.\ Honda, \textit{The equivalence of
Heegaard Floer homology and embedded contact homology via open book
decompositions: from hat to plus}, preprint 2012.  \texttt{ arXiv:1208.1536}.

\bibitem[CGHH]{CGHH}
V.\ Colin, P.\ Ghiggini, K.\ Honda and M.\ Hutchings,
\textit{Sutures and contact homology I}, Geom.\ Topol.\ {\bf 15} (2011), 1749--1842.

\bibitem[CHL]{CHL}
V.\ Colin, K.\ Honda and F.\ Laudenbach, \textit{On the flux of pseudo-Anosov homeomorphisms}, Algebr.\ Geom.\ Topol.\ {\bf 8} (2008), 2147--2160.

\bibitem[Cr]{Cr}
D.\ Cristofaro-Gardiner, \textit{The absolute gradings on embedded contact homology and Seiberg-Witten Floer cohomology}, Alg.\ Geom.\ Topol.\ {\bf 13} (2013), 2239--2260.

\bibitem[Dr]{Dr}
D.\ Dragnev, \textit{Fredholm theory and transversality for
noncompact pseudoholomorphic maps in symplectizations}, Comm.\ Pure
Appl.\ Math.\ {\bf 57}  (2004), 726--763.

\bibitem[E]{eisenbud}
D.\ Eisenbud, \textit{Commutative algebra. With a view toward algebraic geometry}, Graduate Texts in Mathematics, {\bf 150}. Springer-Verlag, New York, 1995.

\bibitem[EGH]{EGH}
Y.\ Eliashberg, A.\ Givental and H.\ Hofer, \textit{Introduction to
symplectic field theory}, GAFA 2000 (Tel Aviv, 1999), Geom.\ Funct.\
Anal.\ 2000, Special Volume, Part II, 560--673.

\bibitem[Ga]{Ga}
D.\ Gabai, \textit{Foliations and the topology of $3$-manifolds},
J.\ Differantial Geom.\ {\bf 18} (1983), 445--503.

\bibitem[Gi1]{Gi1}
E.\ Giroux, \textit{Convexit\'e en topologie de contact}, Comment.\
Math.\ Helv.\ \textbf{66} (1991), 637--677.

\bibitem[Gi2]{Gi2}
E.\ Giroux, \textit{G\'eom\'etrie de contact:\ de la dimension trois
vers les dimensions sup\'erieures}, Proceedings of the International
Congress of Mathematicians, Vol.\ II (Beijing, 2002),  405--414,
Higher Ed.\ Press, Beijing, 2002.

\bibitem[HLS]{HLS}
H.\ Hofer, V.\ Lizan and J.-C.\ Sikorav, \textit{On genericity for
holomorphic curves in four-dimensional almost complex manifolds},
J.\ Geom.\ Anal.\ {\bf 7} (1997), 149--159.

\bibitem[Ho1]{Ho1}
H.\ Hofer, \textit{Pseudoholomorphic curves in symplectizations with
applications to the Weinstein conjecture in dimension three},
Invent.\ Math.\ {\bf 114} (1993), 515--563.

\bibitem[Ho2]{Ho2}
H.\ Hofer, \textit{Holomorphic curves and dynamics in dimension
three}, Symplectic geometry and topology (Park City, UT, 1997),
35--101, IAS/Park City Math.\ Ser., 7, Amer. Math. Soc., Providence,
RI, 1999.

\bibitem[Hu]{Hu}
M.\ Hutchings, \textit{An index inequality for embedded
pseudoholomorphic curves in symplectizations}, J.\ Eur.\ Math.\
Soc.\ (JEMS) {\bf 4}  (2002), 313--361.

\bibitem[Hu2]{Hu2}
M.\ Hutchings, \textit{The embedded contact homology index
revisited}, New perspectives and challenges in symplectic field
theory, 263--297, CRM Proc. Lecture Notes, 49, AMS, 2009.

\bibitem[Hu3]{Hu3}
M.\ Hutchings, \textit{Lecture notes on embedded contact homology}. Contact and symplectic topology, 389--484.
Bolyai Soc.\ Math.\ Stud., 26, J\'anos Bolyai Mathematical Society, Budapest, 2014.

\bibitem[HS1]{HS1}
M.\ Hutchings and M.\ Sullivan, \textit{The periodic Floer homology
of a Dehn twist}, Algebr.\ Geom.\ Topol.\ {\bf 5} (2005), 301--354.

\bibitem[HS2]{HS2}
M.\ Hutchings and M.\ Sullivan, \textit{Rounding corners of polygons
and the embedded contact homology of $T^3$}, Geom.\ Topol.\ {\bf 10}
(2006), 169--266 (electronic).

\bibitem[HT1]{HT1}
M.\ Hutchings and C.\ Taubes, \textit{Gluing pseudoholomorphic
curves along branched covered cylinders I},  J.\ Symplectic Geom.\
{\bf 5} (2007),  43--137.

\bibitem[HT2]{HT2}
M.\ Hutchings and C.\ Taubes, \textit{Gluing pseudoholomorphic
curves along branched covered cylinders II}, J.\ Symplectic Geom.\ {\bf 7} (2009),
1--105.

\bibitem[HT3]{HT3}
M.\ Hutchings and C.\ Taubes, \textit{Proof of the Arnold chord conjecture in three dimensions 1}, Math.\ Res.\ Lett.\ {\bf 18} (2011), 295--313.

\bibitem[HT4]{HT4}
M.\ Hutchings and C.\ Taubes, {\em Proof of the Arnold chord
conjecture in three dimensions II}, Geom.\ Topol.\ {\bf 17} (2013), 2601--2688.

\bibitem[HT5]{HT5}
M.\ Hutchings and C.\ Taubes, {\em The Weinstein conjecture for stable Hamiltonian
structures},  Geom.\ Topol.\ {\bf 13} (2009), 901--951.

\bibitem[HWZ]{HWZ}
H.\ Hofer, K.\ Wysocki and E.\ Zehnder, {\em Finite energy foliations of tight
three-spheres and Hamiltonian dynamics}, Ann.\ of Math.\ (2) {\bf 157} (2003),
125--255.

\bibitem[HWZ1]{HWZ1}
H.\ Hofer, K.\ Wysocki and E.\ Zehnder, \textit{Properties of
pseudo-holomorphic curves in symplectizations. II. Embedding
controls and algebraic invariants}, Geom.\ Funct.\ Anal.\ {\bf 5}
(1995), 270--328.

\bibitem[HWZ2]{HWZ2}
H.\ Hofer, K.\ Wysocki and E.\ Zehnder, \textit{Properties of pseudoholomorphic curves
in symplectisation. IV. Asymptotics with degeneracies}, Contact and symplectic
geometry (Cambridge, 1994), 78--117, Publ. Newton Inst., 8, Cambridge Univ. Press,
Cambridge, 1996.

\bibitem[IS]{IS}
S.\ Ivashkovich and V.\ Shevchishin, \textit{Gromov compactness theorem for $J$-complex curves with boundary}, Internat.\ Math.\ Res.\ Notices {\bf 2000}, 1167--1206.

\bibitem[Ju]{Ju}
A.\ Juh\'asz, \textit{Holomorphic discs and sutured manifolds},
Algebr.\ Geom.\ Topol.\ {\bf 6} (2006), 1429--1457 (electronic).

\bibitem[KrM]{KrM}
P.\ Kronheimer and T.\ Mrowka, \textit{Monopoles and three-manifolds}, New Mathematical Monographs, 10. Cambridge University Press, Cambridge, 2007.

\bibitem[KLT1]{KLT1}
\c{C}.\ Kutluhan, Y.\ Lee and C.\ Taubes, {\em HF=HM, I: Heegaard Floer homology and Seiberg-Witten Floer homology}, Geom.\ Topol.\ {\bf 24} (2020), 2829--2854.

\bibitem[KLT2]{KLT2}
\c{C}.\ Kutluhan, Y.\ Lee and C.\ Taubes, {\em HF=HM, II: Reeb orbits and holomorphic curves for the ech/Heegaard-Floer correspondence}, Geom.\ Topol.\ {\bf 24} (2020), 2855--3012.

\bibitem[KLT3]{KLT3}
\c{C}.\ Kutluhan, Y.\ Lee and C.\ Taubes, {\em HF=HM, III: Holomorphic curves and the differential for the ech/Heegaard Floer correspondence}, Geom.\ Topol.\ {\bf 24} (2020), 3013--3218.

\bibitem[KLT4]{KLT4}
\c{C}.\ Kutluhan, Y.\ Lee and C.\ Taubes, {\em HF=HM, IV: The Seiberg-Witten Floer homology and ech correspondence}, Geom.\ Topol.\ {\bf 24} (2020), 3219–-3469.

\bibitem[KLT5]{KLT5}
\c{C}.\ Kutluhan, Y.\ Lee and C.\ Taubes, {\em HF=HM, V: Seiberg-Witten-Floer homology and handle additions}, Geom.\ Topol.\ {\bf 24} (2020), 3471--3748.

\bibitem[KS]{KS}
\c{C}.\ Kutluhan, S.\ Sivek and C.\ Taubes, {\em Sutured ECH is a natural invariant}, Mem.\ Amer.\ Math.\ Soc.\ {\bf 275} (2022), no.\ 1350, v+136 pp.


\bibitem[La]{La}
S.\ Lang, {\em Algebra}, Revised third edition. Graduate Texts in Mathematics, 211. Springer-Verlag, New York, 2002.


\bibitem[McDS]{McDS}
D.\ McDuff and D.\ Salamon, \textit{$J$-holomorphic curves and symplectic topology.} American Mathematical Society Colloquium Publications, {\bf 52}. American Mathematical Society, Providence, RI, 2004.

\bibitem[Mo]{Mo}
C.\ Morrey, \textit{Multiple integrals in the calculus of variations},  Die Grundlehren der mathematischen Wissenschaften, Band 130 Springer-Verlag New York, Inc., New York 1966. 


\bibitem[MW]{MW}
M.\ Micallef and B.\ White, \textit{The structure of branch points in minimal surfaces
and in pseudoholomorphic curves}, Ann.\ of Math.\ (2) {\bf 141} (1995), 35--85.


\bibitem[OSz1]{OSz1}
P.\ Ozsv\'ath and Z.\ Szab\'o, \textit{Holomorphic disks and
topological invariants for closed three-manifolds}, Ann.\ of Math.\
(2) {\bf 159} (2004), 1027--1158.

\bibitem[OSz2]{OSz2}
P.\ Ozsv\'ath and Z.\ Szab\'o, \textit{Holomorphic disks and
three-manifold invariants: properties and applications},  Ann.\ of
Math.\ (2) {\bf 159}  (2004), 1159--1245.

\bibitem[T1]{T1}
C.\ Taubes, \textit{The Seiberg-Witten equations and the Weinstein
conjecture}, Geom.\ Topol.\ {\bf 11} (2007), 2117--2202.

\bibitem[T2]{T2}
C.\ Taubes, \textit{Embedded contact homology and Seiberg-Witten
Floer cohomology I}, Geom.\ Topol.\ {\bf 14} (2010), 2497--2581.

\bibitem[T3]{T3}
C.\ Taubes, \textit{A compendium of pseudoholomorphic beasts in
$\R\times (S^1\times S^2)$}, Geom.\ Topol.\ {\bf 6} (2002),
657--814.

\bibitem[T4]{T4}
C.\ Taubes, \textit{The structure of pseudoholomorphic subvarieties
for a degenerate almost complex structure and symplectic form on
$S^1\times B^3$}, Geom.\ Topol.\ {\bf 2} (1998), 221--332.


\bibitem[U]{U}
I.\ Ustilovsky, \textit{Infinitely many contact structures on
$S^{4m+1}$}, Internat.\ Math.\ Res.\ Notices {\bf 14} (1999),
781--791.


\bibitem[W]{W}
C.\ Weibel, \textit{An introduction to homological algebra}, Cambridge Studies in Advanced Mathematics, 38. Cambridge University Press, Cambridge, 1994.

\bibitem[We]{We}
C.\ Wendl, \textit{Finite energy foliations on overtwisted contact
manifolds}, Geom.\ Topol.\ {\bf 12} (2008), 531--616.

\bibitem[We2]{We2}
C.\ Wendl, \textit{Finite energy foliations and surgery on
transverse links}, Ph.D.\ thesis, 2005.

\bibitem[We3]{We3}
C.\ Wendl, \textit{Automatic transversality and orbifolds of
punctured holomorphic curves in dimension four},  Comment.\ Math.\ Helv.\ {\bf 85} (2010), 347--407.

\bibitem[We4]{We4}
C.\ Wendl, \textit{A hierarchy of local symplectic filling obstructions for contact
$3$-manifolds}, Duke Math.\ J.\  {\bf 162} (2013), 2197--2283.

\bibitem[Y]{Y}
M.-L.\ Yau, \textit{A holomorphic $0$-surgery model for open books},
Int.\ Math.\ Res.\ Not.\ IMRN  2007, Art.\ ID rnm041, 23 pp.

\bibitem[Yao1]{Yao1}
Y.\ Yao, \textit{From cascades to $J$-holomorphic curves and back}, preprint 2022. \texttt{arxiv:2206.04334}.

\bibitem[Yao2]{Yao2}
Y.\ Yao, \textit{Computing embedded contact homology in Morse-Bott settings}, preprint 2022.  \texttt{arXiv:2211.13876}.

\end{thebibliography}
\end{document}